\documentclass{amsart}
\usepackage[utf8]{inputenc} 
\usepackage{url}            
\usepackage[colorinlistoftodos]{todonotes}
\usepackage{float}
\usepackage{amsrefs}
\usepackage[colorlinks,linkcolor=blue,citecolor=blue]{hyperref}
\usepackage{graphicx}
\usepackage{amsmath,amssymb,graphicx}
\usepackage{comment}
\usepackage{color}
\usepackage{dsfont}
\usepackage{ tipa }
\usepackage{amssymb,amsmath,amsthm}
\usepackage[english]{babel}

\usepackage{macros}

\title[Classifying hyperbolic ergodic stationary measures]{Classifying hyperbolic ergodic stationary measures on compact complex surfaces with large automorphism groups}

\author{
 Megan Roda }

\date{\today} 
\address{
Department of Mathematics,
University of Chicago, Chicago, IL 60637
} 

\email{mroda@uchicago.edu}

\begin{document}

\begin{abstract}

Let $X$ be a compact complex surface. Consider a finitely supported probability measure $\mu$ on $\aut(X)$ such that $\Gamma_{\mu} = \langle \supp(\mu)\rangle<\aut(X)$ is non-elementary. We do not assume that $\Gamma_{\mu}$ contains any parabolic elements. In this paper, we study and classify hyperbolic, ergodic $\mu$-stationary probability measures.

\end{abstract}
\maketitle
\tableofcontents

\section{Introduction}
\label{sec:intro}

In this paper, we use techniques from homogeneous and smooth dynamics to further the classification of stationary measures on compact complex surfaces.

\subsection{Main theorem and background}

\begin{theorem}\label{bigthmeasy}
    Let $X$ be a compact complex surface, $\mu$ a finitely supported probability measure on the group of biholomorphisms $\aut(X)$ such that $\Gamma_{\mu}= \langle \supp(\mu) \rangle$ is non-elementary, and $\nu$ a hyperbolic ergodic $\mu$-stationary probability measure on $X$. Assume there is no $\Gamma_{\mu}$-invariant algebraic curve and that the Lyapunov exponents sum to be non-negative. Then, $\nu$ is invariant, and exactly one of the following statements hold
    \begin{enumerate}
        \item $\nu$ is finitely supported,
        \item the supports of the stable and unstable conditional measures are real 1-dimensional curves, the measures are absolutely continuous with respect to Lebesgue, and they are jointly integrable, or
        \item $\nu$ is absolutely continuous with respect to volume.
    \end{enumerate}
\end{theorem}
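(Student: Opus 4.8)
The plan is to combine the nonuniform hyperbolicity machinery for random holomorphic dynamical systems with an exponential drift argument in the spirit of Benoist--Quint, Brown--Rodriguez Hertz and Eskin--Lindenstrauss, using the holomorphic structure at every step to gain rigidity, and then to bootstrap via an entropy identity that exploits the hypothesis that the Lyapunov exponents sum to be non-negative. First I would set up Oseledets/Pesin theory for the skew product over the two-sided Bernoulli shift on $\supp(\mu)^{\mathbb{Z}}$: since $\nu$ is hyperbolic, over $\mu^{\mathbb{Z}}$-a.e.\ trajectory $\omega$ and $\nu$-a.e.\ $x$ there are stable and unstable manifolds $W^{s}_{\omega}(x)$, $W^{u}_{\omega}(x)$, and because every element of $\supp(\mu)$ is a biholomorphism and the positive and negative parts of the Lyapunov spectrum are each a single complex exponent $\lambda^{+}>0>\lambda^{-}$ (each of real multiplicity two), these manifolds are injectively immersed Riemann surfaces; I then record the disintegrations $\nu^{u}_{\omega}$, $\nu^{s}_{\omega}$ of $\nu$ along the unstable and stable leaves. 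The hypothesis that $\Gamma_{\mu}$ admits no invariant algebraic curve enters here and below (following Cantat--Dujardin) to guarantee that the action is genuinely two-dimensional along $\nu$, in particular that $\nu$ is not carried by a curve, which will restrict the possible ``transverse'' dimension of $\nu^{u}$ inside the Riemann surface $W^{u}_{\omega}(x)$ to the values $0,1,2$; non-elementarity of $\Gamma_{\mu}$ supplies the recurrence and irreducibility needed to run the drift.

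The central dichotomy is whether $\nu^{u}_{\omega}$ is atomic. If it is, its atoms of maximal weight form a finite set which is equivariant along the walk, so by ergodicity $\nu$ is supported on a single finite $\Gamma_{\mu}$-orbit; in particular it is finitely supported and $\Gamma_{\mu}$-invariant, conclusion (1). If $\nu^{u}_{\omega}$ is non-atomic I would run the exponential drift along unstable leaves: the non-triviality of $\nu^{u}$ together with the recurrence from stationarity produces additional invariance of $\nu$, namely invariance under a nontrivial translation flow along the unstable Riemann surfaces whose generator spans a $Df$-equivariant subbundle $E\subseteq E^{u}$. The holomorphic structure is used crucially: it forces the translation structure on $W^{u}_{\omega}(x)\cong\mathbb{C}$ to be the affine one coming from uniformization, it forces the complementary conditional structure within each leaf transverse to $E$ to be trivial (atomic) — equivalently, re-running the drift transverse to $E$ either enlarges $E$ or terminates — so that $\nu^{u}$ is exactly $\dim_{\mathbb{R}}E$-dimensional and absolutely continuous in that dimension, with $\dim_{\mathbb{R}}E\in\{1,2\}$. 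Running the same mechanism after passing to the reversed walk (legitimate once enough invariance is in hand, or equivalently packaging $\{\nu^{u}_{\omega}\}$ into a closed positive current and using non-elementarity to see that this current, hence $\nu$, is $\Gamma_{\mu}$-invariant) upgrades $\mu$-stationarity of $\nu$ to genuine $\Gamma_{\mu}$-invariance.

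It remains to split the non-atomic case according to $\dim_{\mathbb{R}}E$. If $E=E^{u}$, then $\nu^{u}$ is absolutely continuous with respect to area on unstable leaves and the random Ledrappier--Young / Ruelle entropy formula gives $h_{\mu}(\nu)=2\lambda^{+}$; since $\nu$ is now invariant one has $h_{\mu}(\nu)=h_{\check\mu}(\nu)\le 2|\lambda^{-}|$, whence $\lambda^{+}+\lambda^{-}\le 0$, and combined with the hypothesis this forces $\lambda^{+}+\lambda^{-}=0$ together with equality $h_{\check\mu}(\nu)=2|\lambda^{-}|$, i.e.\ $\nu$ is also absolutely continuous along stable leaves. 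A local product structure argument plus absolute continuity of the stable and unstable holonomies then yields $\nu\ll\mathrm{vol}$, conclusion (3); this is precisely where the sign hypothesis is indispensable, since otherwise $\nu$ could be SRB yet transversally singular. If instead $E$ is a real line, $\nu^{u}$ is absolutely continuous with respect to arclength on a $\Gamma_{\mu}$-equivariant real-analytic $1$-dimensional sub-foliation $\mathcal{F}^{u}$ of the unstable leaves; equivariance, invariance of $\nu$, and the mirror of this argument on the stable side produce the analogous $\mathcal{F}^{s}$, and the pair $(\mathcal{F}^{u},\mathcal{F}^{s})$ is jointly integrable because near $\supp\nu$ it integrates to a $\Gamma_{\mu}$-invariant family of totally real surfaces carrying $\nu$ on which the induced random dynamics is hyperbolic in the classical surface sense — this forces $\nu^{s}$ to be $1$-dimensional as well and is conclusion (2). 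The three alternatives are mutually exclusive since the conditional measures have distinct dimensions.

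The step I expect to be the main obstacle is the drift argument in the complex-surface setting, together with the bookkeeping that pins down exactly how much extra invariance it produces: obtaining the rigid trichotomy of conditional dimensions $0,1,2$ — rather than some fractional Hausdorff dimension — requires genuinely using holomorphicity, so that the unstable conditionals ``see'' the complex-analytic structure and cannot be porous in an intermediate way, and the $1$-dimensional alternative, which has no analogue in the real surface diffeomorphism theory, is the delicate new phenomenon; in particular the passage from a $1$-dimensional equivariant sub-foliation to an invariant totally real surface, forcing the stable and unstable dimensions to agree, is the technical heart of case (2). A secondary difficulty is the SRB-to-volume bootstrap, where the entropy identity forced by $\lambda^{+}+\lambda^{-}=0$ must be combined with absolute continuity of holonomies to pass from ``absolutely continuous along both laminations'' to ``absolutely continuous with respect to $4$-dimensional volume''.
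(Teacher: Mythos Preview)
Your overall architecture---a trichotomy on the real dimension of the extra invariance along unstable leaves, combined with entropy inequalities exploiting $\lambda^+ + \lambda^- \ge 0$---matches the paper's. But there are two substantive gaps.

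First, your atomic case is wrong as written. You claim that if the unstable conditionals $\nu^u_\omega$ are atomic, ``atoms of maximal weight form a finite set equivariant along the walk, so by ergodicity $\nu$ is supported on a single finite orbit.'' This confuses atoms of the leafwise conditionals with atoms of $\nu$. Having $\nu^u_{x,\omega} = \delta_x$ for a.e.\ $(x,\omega)$ says nothing directly about the support of $\nu$; an ergodic measure with zero fiber entropy can perfectly well be diffuse. The paper devotes all of Section~6 (Proposition~\ref{finite}) to this step: assuming $\nu$ is invariant with trivial stable conditionals but \emph{not} finitely supported, it runs a full ``floating Benoist--Quint'' drift argument to produce two distinct support points on a single unstable leaf, contradicting triviality of the unstable conditionals. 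This is one of the two hardest parts of the paper, and you have skipped it.

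Second, your mechanism for producing the trichotomy and the invariance is different from the paper's and is underspecified where it matters. The paper does not run a drift to discover the subbundle $E$. Instead it imports from \cite{BEF} a black-box result (Theorem~\ref{4.1.5}): there always exists a measurable family of subgroups $U^+(x,\omega,k) \subset G^{ssr}(W^u)$ compatible with $\hat m$ for which a quantitative non-integrability condition (random QNI) \emph{fails}. The dimension $d_+ = \dim_{\mathbb R} U^+$ is then $0$, $1$, or $2$ by ergodicity. The work in the paper is to prove that QNI \emph{does} hold in the bad configurations (Lemma~\ref{0.1}: $d_+=0$ with nontrivial stable conditionals; Lemma~\ref{0.5}: $d_+=1$ with $\dim \mathcal L^s = 2$), forcing constraints. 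Invariance is then read off in each case from a refined Ledrappier--Young inequality (Proposition~\ref{prop4.3.8}) that tracks the combinatorial entropy $h(\mu)$ of the shift separately: comparing $h_{\hat m}(F) \ge h(\mu) + \lambda^+ \dim T_x U^+$ with $h_{\hat m}(F^{-1}) \le h_{\hat m}(F^{-1}, W_1^-) - \lambda^- \dim T_x \mathcal L^s$ and using $h_{\hat m}(F) = h_{\hat m}(F^{-1})$ together with $\lambda^+ + \lambda^- \ge 0$ forces $h_{\hat m}(F^{-1}, W_1^-) = h(\mu)$, which is equivalent to invariance. Your route---``package $\{\nu^u_\omega\}$ into a closed positive current and use non-elementarity''---is not how the paper proceeds and would need its own justification. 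Finally, joint integrability in case~(2) is not a consequence of ``integrating to a totally real surface''; it is proved directly (Proposition~\ref{full}, Section~5) by another drift argument showing that failure of joint integrability would produce an extra support line in an unstable leaf, contradicting $d_+=1$.
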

By $\mu$-stationary, we mean that 
 $$\int f_*\nu d\mu(f) = \nu.$$
Hyperbolicity is the assumption that there are no zero Lyapunov exponents. We assume this as it is conjectured that all measures in this setting are hyperbolic. Non-elementary means that the induced subgroup, $\Gamma^* < GL(H^2(X;\bbZ))$, acting on cohomology of $X$ contains a non-Abelian free group. It turns out that this forces $X$ to be K\"ahler and projective, see \cite{CDsurvey}. Further, the latter reference shows that $X$ must be one of four types of objects: a K3 surface, an Enriques surface, an Abelian surface or a rational surface. In the case of K3 surfaces, Enriques surfaces, and Abelian surfaces, we always have a finite $\aut(X)$-invariant volume form which forces the Lyapunov exponents to sum to zero. Hence these three cases are covered by Theorem \ref{bigthmeasy}. In the case of rational surfaces, some, such as the Coble family of examples carry non-elementary actions and a finite $\aut(X)$-invariant volume form \cite{CDsurvey} and so Theorem \ref{bigthmeasy} applies. This is not true in general for all rational surfaces, see for example the family of Lesieutre examples in \cites{Les,CDsurvey} which carry non-elementary actions but no $\aut(X)$-invariant volume form.

In all cases of our theorem we have homogeneity, meaning that the stable and unstable conditional measures are supported on the orbit of some algebraic group and are the pushforward of the Haar measure of that algebraic group onto the orbit (under the orbit map). 

Benoist and Quint \cite{BQ} give a similar classification result in the homogeneous setting. When $G$ is a connected almost simple Lie group with a probability measure $\mu$ whose support generates a Zariski-dense subgroup, they give a classification of $\mu$-stationary measures on the homogeneous space $G/\Lambda$ where $\Lambda$ is a lattice. They also cover the Abelian surfaces case of our theorem.

In \cite{CDinv}, Cantat and Dujardin classify all invariant probability measures (with some further analysis) in our setting with the added assumption that $\Gamma_{\mu}$ contains a parabolic element. Even if $\aut(X)$ contains a parabolic element, it is not a given that $\Gamma_{\mu}$ contains a parabolic; being non-elementary comes down to having two loxodromic elements that generate a non-Abelian free group. In Appendix \ref{sec:example} we demonstrate that it is even possible to have $X$ such that $\aut(X)$ that does not contain any parabolic elements. Hence our result, which does not assume the existence of a parabolic element, extends the classification of \cite{CDinv} to this setting.

One can see \cite{CDsurvey} for more about what has been done to classify invariant probability measures for non-elementary groups when we do not assume the existence of a parabolic element. The latter reference also provides an analysis of some rational surfaces that carry non-elementary actions.

\subsection{Strategy}

Our main work is to classify the measures using the `Benoist Quint' argument developed in \cite{BQ} (see also \cite{EM} and \cite{BRH}). Alternatives 1 and 2 of Theorem \ref{bigthmeasy} are shown using this technique given that we have already demonstrated invariance of the stationary measure. With an understanding of the notation given in Section \ref{sec:breakdown}, one can read the proof sketches of alternatives 1 and 2 in Sections \ref{sec:outline2} and \ref{sec:outline1}.

To achieve the invariance of the stationary measures, we work to establish when quantitative non-integrability (QNI) conditions hold in Section \ref{sec:twolem} in order to use the results of \cite{BEF}.

Sections \ref{sec:prelim} and \ref{sec:sec3} establish the notation, background and tools needed to prove everything in Sections~\ref{sec:twolem}, \ref{sec:Sect3} and \ref{sec:alt1}.

\subsection*{Acknowledgments}
The author would like to thank her advisor, Alex Eskin, for his mentorship and for many helpful discussions throughout the course of this work. The author would also like to thank Serge Cantat for providing her with an outline of how one might give the example in Appendix \ref{sec:example}.

\section{Preliminaries}
\label{sec:prelim}

Let $X$ be a compact K\"ahler surface and let $\aut(X)$ be the group of biholomorphisms of $X$ (i.e. holomorphic diffeomorphisms). Note that a K\"ahler manifold is a Riemannian manifold. We fix some Riemannian metric on $X$ and let $\Vert\cdot \Vert $ be the norm on $T_xX$ with respect to this metric.

 Let $\mu$ be a finitely supported probability measure on $\aut(X)$, and $\nu$ an ergodic Borel $\mu$-stationary measure. By $\mu$-stationary, we mean that the measure $\nu$ is invariant on average. To be more precise, this means that
 \begin{equation}
     \int f_*\nu d\mu(f) = \nu. \label{stationary}
 \end{equation}
 By ergodic we mean that for any measurable set $A$ such that $\nu(A\Delta f(A))=0$ for $\mu$-a.e. $f\in \aut(X)$, we have that $\nu(A)=0$ or 1. Equivalently, $\nu$ being ergodic means that it cannot be written as a proper convex combination of two distinct Borel $\mu$-stationary measures. 

Let $\Gamma_{\mu}$ be the subgroup generated by the support of $\mu$. We require that $\Gamma_{\mu}$ be non-elementary, which means that the induced subgroup $\Gamma^* < GL(H^2(X;\bbZ))$ acting on cohomology of $X$, contains a non-Abelian free group. By \cite{CDsurvey}, $X$ must be projective. Further, it is worth noting that the latter reference shows that amongst compact complex surfaces, only K\"ahler surfaces can carry non-elementary actions;  \cite{C} showed that only K\"ahler surfaces can carry automorphisms of positive topological entropy and \cite{CDsurvey} demonstrate that these elements are present when a surface carries a non-elementary action. In the end, \cite[Theorem 1.3]{CDsurvey} gives us that only four types of objects can carry these kinds of actions: complex K3 surface, Enriques surface, Abelian surface, or rational surface. 

It is worth remarking that there are two different definitions of K3 surface. Firstly, there is a more general definition of a complex analytic K3 which is a simply connected compact complex manifold of dimension 2 with a nowhere vanishing holomorphic 2-form. Not all of these are projective. There is a also the notion of an algebraic K3 surface defined over~$\bbC$, this set of surfaces can be embedded in the set of complex analytic K3s using Serre's GAGA principle. The image of these algebraic K3 surfaces in the set of complex analytic K3s turn out to be precisely the projective ones, and hence these are the ones we are interested in. One can see~\cite{Huy} as a standard reference for more details. We will work with examples of this kind in Appendix~\ref{sec:example}.

The elements of $\aut(X)$ are classified as loxodromic (hyperbolic), parabolic or elliptic. An element $f$ is loxodromic if $f^*$, its image in~$GL(H^2(X,\bbZ))$, has spectral radius strictly greater than 1. The Gromov-Yomdin Theorem (see~\cites{G1,G2,Yom}) implies that these are the elements of positive topological entropy. An element is elliptic if $f^*$ has finite order. If it is neither of those two, it is parabolic; this means that it is virtually unipotent, i.e. $(f^*)^k$ is unipotent but not the identity for some~$k$.

One can see this classification of automorphisms of $X$ using hyperbolic geometry~\cite{Csurvey}. Since we are in a setting where $X$ is K\"ahler, the Hodge decomposition tells us that the de Rham cohomology splits into a direct sum of Dolbeault cohomology groups:
$$H^2(X;\bbC) = \bigoplus_{p+q=2} H^{p,q}(X;\bbC) = H^{2,0}(X;\bbC) \oplus H^{1,1}(X;\bbC) \oplus H^{0,2}(X;\bbC).$$
Classes in these $H^{p,q}(X;\bbC)$ (Dolbeault cohomology groups) are represented by forms of type $(p,q)$ that are $\overline{\partial}$-closed. We take $H^{1,1}(X;\bbR)$ with its natural non-degenerate intersection form, $\langle \cdot | \cdot \rangle$, of signature $(1, h^{1,1}(X)-1)$, it is isometric to the standard quadratic form on Minkowski space of dimension $h^{1,1}(X)$. Taking the connected component of the hyperboloid $\{v \in H^{1,1}(X;\bbR) : \langle v|v\rangle = 1\}$ that intersects the K\"ahler cone (it is unique), we have a model of hyperbolic space $\bbH_X$. The automorphisms of $X$ act by isometries with respect to the intersection form, preserve the K\"ahler cone, and hence, preserve $\bbH_X$. Viewing elements of $\aut(X)$ as elements of $\isom(\bbH_X)$ we can classify them as elliptic, parabolic and hyperbolic and this is equivalent to the definitions we gave before. 

As explained before, hyperbolic elements are present when a surface carries a non-elementary action, but parabolic elements do not necessarily have to be present. We give an example of a K3 surface $X$ where $\aut(X)$ does not contain any parabolic elements in Appendix \ref{sec:example}.

\subsection{Setup of the random dynamical system}

Now we can setup our dynamical system, which is indeed `random' as we will draw diffeomorphisms to act on $X$ using the law given by $\mu$. Let $\Omega = \supp(\mu)^{\bbZ}$. We typically write an element of $\Omega$ as $\omega = (\dots, f_{-2},f_{-1},f_0,f_1,f_2,\dots)$. We define the dynamics associated to a given element of $\Omega$ by
 \begin{align}\label{forwards}
     f_{\omega}^n &\defeq f_{n-1}\circ \dots \circ f_0  \ \ \text{for }n\geq 1, \\
     f_{\omega}^n &\defeq (f_{-n})^{-1}\circ \dots \circ (f_{-1})^{-1} \ \ \text{for }n<0,\label{backwards}
 \end{align}
 and $f_{\omega}^0$ is the identity.
Let $Y\defeq X\times \Omega$. We use the canonical projection map $\pi_X: Y\to X$ to make the identification between $X \times \{\omega\} \subset Y$ and $X$ itself, denoting this $X_{\omega}$. Also let $\sigma:\Omega \to \Omega$ denote the two-sided shift map. We define the (invertible) skew product map by
 \begin{align}
&F(x,\omega) = (f_{\omega}^1(x),\sigma(\omega)),\label{fskew} \\
&F^{-1}(x,\omega) = (f_{\omega}^{-1}(x),\sigma^{-1}(\omega)).\label{bskew}
 \end{align}
Occasionally, for $n\in \bbZ$ we will write $F_{\omega}^n(x)$ which denotes the $X$-component of $F^n(x,\omega)$.

We define the tangent bundle of $Y$ as $TY\defeq TX\times \Omega$ and denote $DF$ as the tangent map $$D_{(x,\omega)}F : \{\omega\}\times T_xX\to \{\sigma(\omega)\}\times T_{f_{\omega}^1x}X,$$ induced by $D_xf_{\omega}^1$. 
Similarly we denote $DF^{-1}$ the tangent map induced by $D_xf_{\omega}^{-1}$.

Additionally, we have a distance metric $d_Y$ on $Y$, where 
\begin{align} d_Y((x,\omega),(x',\omega')) = \max \{d_X(x,x'), \ d_{\Omega}(\omega,\omega') \}.\end{align}

Here $d_{\Omega}$ is the distance metric on $\Omega$ given by
\begin{align}
d_{\Omega}(\omega,\omega') = \sum_{-\infty}^{\infty} \frac{1}{2^{|n|}d(\omega_n,\omega_n')},
\end{align}
where $\omega_n$ is the $n$th entry of $\omega\in \Omega$ and $d$ is the discrete metric. 

For $\omega\in \Omega$, let $\omega^+$ be the non-negative indexed part of the bi-infinite vector $\omega$ and $\omega^-$ be the negative indexed part of the vector $\omega$. We then decompose $\Omega$ as $\Omega = \Omega^- \times \Omega^+$, where $\Omega^+$ is the space of 1-sided infinite vectors $\omega^+$ and $\Omega^-$ is the 1-sided infinite vectors $\omega^-$.

\begin{lemma}[Proposition 1.2, \cite{K}, \cite{LQ}]\label{m}
    There exists a unique $F$-invariant Borel probability measure $m$ on $Y$ such that the image of $m$ under the canonical projection map $\pi_{X\times \Omega^+}: Y \to X\times \Omega^+$ is $\nu \times \mu^{\bbN}$. Additionally, under the canonical projection maps $\pi_{\Omega} : Y\to \Omega$ and $\pi_X: Y\to X$ the image of $m$ is $\mu^{\bbZ}$ and $\nu$ respectively. This measure $m$ is given by the weak-$*$ limit
    $$m = \lim_{n\to \infty}(F^n)_*(\nu\times \mu^{\bbZ}).$$
    \end{lemma}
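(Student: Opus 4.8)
The plan is to construct $m$ as a limit of pushforwards and verify the claimed properties. Since $\nu$ is $\mu$-stationary, the product $\nu \times \mu^{\bbN}$ on $X \times \Omega^+$ is easily checked to be stationary for the one-sided skew product, and more importantly, for the \emph{invertible} skew product $F$ on $Y = X \times \Omega$ one observes that the sequence $(F^n)_*(\nu \times \mu^{\bbZ})$ has a good structure: its $\Omega$-marginal is always $\mu^{\bbZ}$ (since $\sigma$ preserves $\mu^{\bbZ}$), and its conditional measures on the fibers $X_\omega$ depend, after $n$ steps, only on $f_{-1}, \dots, f_{-n}$, i.e. on finitely many negative coordinates. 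Concretely, the conditional measure of $(F^n)_*(\nu\times\mu^{\bbZ})$ on $X_\omega$ is $(f_\omega^{-1} \circ \cdots)$—more precisely $(f_{-1})^{-1}\circ\cdots\circ(f_{-n})^{-1}$ pushed forward—wait, one must be careful with the direction: it is $(f_{n-1}\circ\cdots\circ f_0)_*\nu$ reindexed, so that the relevant claim is that these conditionals form a martingale.

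First I would set $\nu_\omega^n \defeq (f^n_{\sigma^{-n}\omega})_* \nu$, the natural candidate conditional measures after $n$ steps, and show using stationarity that $\{\nu_\omega^n\}$ is a bounded martingale with respect to the filtration generated by the coordinates $f_{-1}, f_{-2}, \dots$ of $\omega$; the martingale property is exactly the stationarity identity \eqref{stationary} applied fiberwise. By the martingale convergence theorem (in the space of probability measures on the compact space $X$, using a countable dense set of test functions), $\nu_\omega \defeq \lim_{n\to\infty} \nu_\omega^n$ exists for $\mu^{\bbZ}$-a.e.\ $\omega$, and depends only on $\omega^-$. Then I would define $m$ on $Y$ by disintegration: $dm(x,\omega) = d\nu_\omega(x)\, d\mu^{\bbZ}(\omega)$. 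One checks directly that $m$ is $F$-invariant (the martingale limit is $F$-equivariant by construction), that $\pi_{\Omega *} m = \mu^{\bbZ}$, that $\pi_{X*}m = \int \nu_\omega \, d\mu^{\bbZ}(\omega) = \nu$ (the last equality because the martingale has constant expectation equal to $\nu$), and that $\pi_{X \times \Omega^+ *} m = \nu \times \mu^{\bbN}$ (since $\nu_\omega$ is independent of $\omega^+$ and averages to $\nu$). The identification with the weak-$*$ limit $\lim (F^n)_*(\nu \times \mu^{\bbZ})$ follows because the $n$-th pushforward has conditionals exactly $\nu_\omega^n$ on a sub-$\sigma$-algebra and $\mu^{\bbZ}$-marginal, so it converges weakly to $m$.

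For uniqueness, suppose $m'$ is any $F$-invariant probability measure with $\pi_{X \times \Omega^+ *} m' = \nu \times \mu^{\bbN}$. Disintegrate $m'$ over $\Omega$ as $d m'(x,\omega) = d\nu'_\omega(x)\, d\mu^{\bbZ}(\omega)$. The pushforward condition forces $\int \nu'_\omega \, d\mu^{\bbZ}(\omega^-) = \nu$ for $\mu^{\bbN}$-a.e.\ $\omega^+$, and $F$-invariance forces $\nu'_\omega$ to again satisfy the fiberwise stationarity relation; together these show $\{\nu'_{\sigma^{-n}\omega}\text{ pushed forward}\}$ must agree with the martingale $\{\nu_\omega^n\}$ and hence $\nu'_\omega = \nu_\omega$ a.e. The main obstacle I anticipate is purely bookkeeping: getting the index conventions in \eqref{forwards}--\eqref{bskew} consistent so that the martingale is adapted to the \emph{past} coordinates $\omega^-$ and the limit genuinely is $F$-equivariant — the measure-theoretic content (martingale convergence, disintegration, weak-$*$ compactness of $\mathrm{Prob}(X)$) is standard, as this is essentially Proposition 1.2 of \cite{K}, so I would likely just cite \cite{K} and \cite{LQ} for the full details while indicating this martingale construction as the idea.
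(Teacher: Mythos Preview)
Your proposal is correct and outlines the standard martingale construction found in the cited references. Note, however, that the paper itself gives no proof of this lemma: it is stated with attribution to \cite{K} (Proposition~1.2) and \cite{LQ} and left at that, so your write-up is already more detailed than what appears in the paper.
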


\begin{remark}\label{nuinvar}
    If $\nu$ is invariant under $\supp(\mu)$, then $m = \nu \times \mu^{\bbZ}$.
\end{remark}

\begin{remark}
    \label{skewergodic}
Additionally, $\nu$ is ergodic $\mu$-stationary if and only if $m_+\defeq \nu \times \mu^{\bbN}$ is ergodic under the non-invertible skew product map $F$ on $X\times \aut(X)^{\bbN}$. Further this happens if and only if $m$ is ergodic under the invertible skew product map $F$ (see \cite{BQ2} Propositions 1.8 and 1.9, \cite{CD} Section 7.1).
\end{remark}

\subsection{Oseledets' theorem and the global stable and unstable manifolds}

We will assume that $\mu$ satisfies the following moment condition:\\ let  $\Vert f\Vert_{C^1}\defeq \max_{x\in X}\Vert D_xf\Vert $, we ask that 
\begin{equation}\label{IC}
    \int \left( \log \Vert f\Vert_{C^1} + \log\Vert f^{-1}\Vert_{C^1} \right) d\mu(f) < \infty.
\end{equation}

Note that the finiteness of the above integral does not depend on the choice of Riemannian metric.

\begin{remark}\label{higher}
The above moment condition is required for Oseledets' theorem, but it also implies higher moment conditions, i.e. replacing $C^1$ with $C^k$ (this norm computed in charts). This follows from the Cauchy estimates \cite[Lemma 4.1]{CD}. This is needed to use Pesin theory.
\end{remark}

This moment condition given by Equation (\ref{IC}) and Kingsman's subadditive ergodic theorem give us that the following limits exist $m$-almost surely:
\begin{align}\label{lya}
    \lambda^+ = \lim_{n\to \infty}\frac{1}{n}\log \Vert D_xf_{\omega}^n\Vert,\
    \lambda^- =\lim_{n\to \infty}\frac{1}{n}\log \Vert (D_xf_{\omega}^n)^{-1}\Vert^{-1}. 
\end{align}
We will assume that $\lambda^+>0>\lambda^-$ (i.e. hyperbolicity).
 Applying Oseledets' theorem~\cite[Chapter III]{K}, we have that for $m$-a.e. $(x,\omega)\in Y$, there exists a decomposition $$T_xX_{\omega}\cong T_xX = E^s(x,\omega)\oplus E^u(x,\omega),$$ such that for $v\in E^s(x,\omega)\setminus \{0\}$ we have that 
 \begin{equation}
     \lim_{n\to \pm \infty}\frac{1}{n} \log \Vert D_xf_{\omega}^n(v)\Vert = \lambda^-,
 \end{equation}
 and for $v\in E^u(x,\omega)\setminus \{0\}$ we have
 \begin{equation}
     \lim_{n\to \pm \infty} \frac{1}{n} \log \Vert D_xf_{\omega}^n(v)\Vert = \lambda^+.
 \end{equation}
 Additionally, we have that the map $(x,\omega)\mapsto E^{s/u}(x,\omega)$ is measurable and that this decomposition is $DF$-equivariant in the sense that $$D_{(x,\omega)}F(E^{s/u}(x,\omega)) = E^{s/u}(F(x,\omega)).$$ 

We will denote $\Lambda$ to be the conull subset of $Y$ where this theorem holds true, it is often called the Oseledets' set. In the above, $\lambda^+$ and $\lambda^-$ are called Lyapunov exponents and $E^{s/u}$ are called the Oseledets' subspaces. 

The work of \cite{CD} summarizes the conclusions of Pesin theory in our setting in Section 7 of their paper. We get from this the existence of local and global stable and unstable manifolds for $m$-a.e. $(x,\omega)\in Y$. The global stables and unstables, denoted $W^s(x,\omega)$ and $W^u(x,\omega)$ respectively, are given by
\begin{align}
    W^s(x,\omega) &= \{(y,\omega)\in X_{\omega} : \limsup_{n\to \infty} \frac{1}{n}\log d_X(f^n_{\omega}(y),f_{\omega}^n(x))<0\},\label{stable}\\
    W^u(x,\omega) &=  \{(y, \omega)\in X_{\omega} : \limsup_{n\to -\infty} \frac{1}{|n|}\log d_X(f^n_{\omega}(y),f_{\omega}^n(x))<0\},\label{unstable}
\end{align}
where $d_X$ is the Riemannian distance computed in $X$. We view these manifolds as subsets of $X_{\omega}$ which means they live in the skew product. Further, \cite[Proposition 7.10]{CD} gives us that these global stables and unstable manifolds are biholomorphic to~$\bbC$ for $m$-a.e. $(x,\omega)$. In fact, they are parameterized by injectively immersed entire curves.

\subsection{Suspension flow}
\label{sec:susp}
We will establish two `suspension' flows; the `standard' flow and the `time changed' flow. The latter will be constructed in Section \ref{sec:flow} where we have to use normal form coordinates. The space we will be flowing on is $Z = Y\times [0,1[$ which is called the `suspension'. 

The  `standard' flow is denoted $F^t:Z\to Z$ for $t\in \bbR$. For $t\geq 0$, we let $$F^t(x,\omega,k) = (F^n(x,\omega),k+t \mod 1),$$ where $F$ is as defined in Equation \eqref{fskew} and $n = \lfloor k+t\rfloor$. For $t<0$ the flow is given by $$F^t(x,\omega,k) = ( F^{n}(x,\omega) , k+t \mod 1),$$ where $F^{-1}$ is as in Equation \eqref{bskew} and $n = \lfloor k+t \rfloor$ (note that for example $\lfloor -0.8\rfloor = -1$). The natural invariant measure for this flow is $\hat{m} \defeq m\times dt$ where $dt$ is the Lebesgue measure on $[0,1[$. 

Note that the $[0,1[$-component of an element of $Z$ does not play a role in determining the global stable or unstable manifold, hence $W^u(x,\omega,k) \defeq W^u(x,\omega)\subset X$.

\subsection{Conditional measures and the results of \cite{BEF}}
\label{sec:cond}
The following definition is from \cite[Section IV.2]{LQ}, but see also \cite[Section B.4]{EM}, \cite[Section 6.3]{BRH}, \cite[Section 9.3]{MT}, and \cite{BEF}.

\begin{definition}[\cite{LQ}, Section IV.2]\label{part}
    A measurable partition $\eta^s$ of $Y$ is subordinate to the stable manifolds if for $m$-a.e. $(x,\omega),$ we have the following properties:
    \begin{enumerate}
        \item $\eta^s_{\omega}(x)\defeq \{y\in X : (y,\omega)\in \eta^s(x,\omega)\} \subset W^s(x,\omega)$,
        \item $\eta^s_{\omega}(x)$ contains an open neighbourhood of $x$ in $W^s(x,\omega)$ in the submanifold topology.
    \end{enumerate}
    Where $\eta^s(x,\omega)$ is an atom of the partition. A similar definition holds for the unstable manifolds, denote this partition $\eta^u$.
\end{definition}

\begin{proposition}[\cite{LQ}, Proposition 2.1, \cite{CD}, Section 7.6] \label{2.1}
    Such a measurable partition as described in Definition \ref{part} exists and we can also ask that it has the following properties:
\begin{enumerate}
    \item $F^{-1}\eta^s \leq \eta$ where $F^{-1}\eta^s(x,\omega) \defeq F^{-1}(\eta^s(F(x,\omega)))$, 
    \item $\{X\times \{\omega\}\} : \omega \in \Omega\} \leq \eta^s$,
   
\end{enumerate}
where if two partitions $P$ and $Q$ are such that $P\leq Q$ then for all $B\in Q$, there exists $A\in P$ such that $B\subset A$. There is a similar statement for the unstable version of Definition \ref{part}. 
\end{proposition}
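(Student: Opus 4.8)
\emph{Overview.} The plan is to carry out the classical construction of a partition subordinate to the stable lamination, due in this generality to Ledrappier--Strelcyn and Liu--Qian; see \cite[Section IV.2]{LQ} and the adaptation to the present skew--product setting in \cite[Section 7.6]{CD}. The idea is to manufacture a single ``seed'' measurable partition whose atoms are, on a set of large $m$-measure, small pieces of local stable manifolds, and then to saturate it under the backward dynamics of $F$.

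\emph{The seed partition.} Using the Pesin theory recalled in \cite[Section 7]{CD}: for each $\epsilon>0$ there is a compact set $R=R_\epsilon\subseteq\Lambda$ with $m(R)>1-\epsilon$ on which the local stable manifolds $W^s_{\mathrm{loc}}(x,\omega)$ have intrinsic size bounded below by some $\delta_\epsilon>0$, vary continuously with the point, and are forward--contracted at a tempered exponential rate. Fix such an $R$, a density point $p_0\in R$, and a small Borel ``rectangle'' $B\subseteq Y$ around $p_0$ of diameter much less than $\delta_\epsilon$, chosen with negligible boundary so that no subsequent Borel--Cantelli estimate is obstructed by returns near $\partial B$. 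Let $\xi_0$ be the partition of $Y$ whose atoms inside $B$ are the plaques $W^s_{\mathrm{loc}}(q)\cap B$ for $q\in R$, refined by a transversal so that $\xi_0|_B$ is a measurable partition, and whose only other atom is $Y\setminus B$; set $\xi\defeq\xi_0\vee\{X_\omega:\omega\in\Omega\}$, a measurable partition each of whose atoms lies in a single fiber $X_\omega$.

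\emph{Saturation and the easy properties.} Define $\eta^s\defeq\bigvee_{n\geq0}F^{-n}\xi$; this is a countable join of measurable partitions, hence measurable. Since the fiber partition $\{X_\omega:\omega\in\Omega\}$ is $F$-invariant and refined by $\xi$, it is refined by every $F^{-n}\xi$ and hence by $\eta^s$, which is property (2). A direct computation with atoms shows $F\bigl(\eta^s(x,\omega)\bigr)\subseteq\eta^s\bigl(F(x,\omega)\bigr)$, i.e. $F^{-1}\eta^s\leq\eta^s$, which is property (1). There remains the subordination of Definition \ref{part}: for $m$-a.e. $p=(x,\omega)$ one has $\eta^s(p)=\bigcap_{n\geq0}F^{-n}\bigl(\xi(F^np)\bigr)$; by Poincar\'e recurrence $F^np\in B$ for infinitely many $n\geq0$, and each such factor confines $F^n\bigl(\eta^s(p)\bigr)$ to the stable plaque through $F^np$. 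Combined with the $F$-equivariance $F^{-n}\bigl(W^s(F^np)\bigr)=W^s(p)$, this forces $\eta^s(p)\subseteq W^s(p)$, which is part (1) of Definition \ref{part}.

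\emph{The atoms contain neighbourhoods --- the main obstacle.} The step I expect to carry the real content is part (2) of Definition \ref{part}: that $\eta^s(p)$ contains an open neighbourhood of $p$ in $W^s(p)$, i.e. that the infinitely many return--time constraints do not collapse the atom. Here one argues that, because $W^s_{\mathrm{loc}}(F^np)\cap B$ has intrinsic size $\geq\delta_\epsilon$ along the times $n$ with $F^np\in R\cap B$, while a fixed small stable ball $W^s_r(p)$ is contracted by $F^n$ at a definite exponential rate (Oseledets/Pesin), for $m$-a.e. $p$ all but finitely many return--time factors already contain $W^s_r(p)$; the finitely many remaining factors each cut out a relatively open neighbourhood of $p$ in $W^s(p)$ (stable plaques are relatively open and each $F^n$ is a diffeomorphism), so their intersection with $W^s_r(p)$ is still an open neighbourhood of $p$. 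Making this quantitative is a Borel--Cantelli argument that also uses the regularity of $\partial B$ fixed above, and it is the one genuinely technical point; everything else is bookkeeping. Finally, the unstable statement follows verbatim after replacing $F$ by $F^{-1}$, which interchanges the stable and unstable manifolds.
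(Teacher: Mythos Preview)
The paper does not give its own proof of this proposition: it is quoted verbatim from \cite[Proposition~2.1]{LQ} and \cite[Section~7.6]{CD}, and the surrounding text only records the consequence that $\eta^s(x,\omega)\subset F^{-1}\eta^s(F(x,\omega))$. Your sketch is precisely the classical Ledrappier--Strelcyn/Liu--Qian construction carried out in those references, so there is nothing to compare---your outline is correct and is essentially what one finds in the cited sources.
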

    In \cite[Section 7.6]{CD}, they also quote the above proposition and call this partition a stable (or unstable) Pesin partition. Property 1. in Proposition \ref{2.1} tells us that $\eta^s(x,\omega) \subset F^{-1}\eta^s(F(x,\omega))$. 
Associated to measurable partitions are families of conditional measures supported on the atoms of the partition. Here we let $\{m_{x,\omega}^{\eta^s}\}_{(x,\omega)\in Y}$ denote our conditional measures supported on $\eta^s(x,\omega)$. Letting $\calA^s$ be the $\sigma$-algebra of $m$-measurable subsets which are unions of the $\eta^s(x,\omega)$, then these measures are such that for any $\varphi : Y\to \bbR$
\begin{align}
    \bbE[\varphi | \calA^s](x,\omega) = \int_{Y}\varphi dm_{x,\omega}^{\eta^s}.
\end{align}
Note that though the stable or unstable manifolds forms a partition of a full-measure subset of $Y$ this partition is generally not measurable. This is why we have to work with partitions that are subordinate to the stable or unstable manifolds.

We define $\{m^{s}_{x,\omega}\}_{(x,\omega)\in Y}$ such that $m^{s}_{x,\omega}(\eta^s(x,\omega)) = 1$ and for any compact set $K \subset W^s(x,\omega)$ we take $t>0$ such that 
$$F^t(K\times \{\omega\})=F_{\omega}^t(K)\times \{\sigma^t(\omega)\} \subset \eta^s(F^t(x,\omega)),$$ 
where $F^t_{\omega}(x)$ is the $X$ component of $F^t(x,\omega)$.
Then, we let  
\begin{align}
    m^{s}_{x,\omega}(K) \defeq \frac{(F_{\omega}^{-t})_*m_{F_{\omega}^tx,\sigma^t(\omega)}^{\eta^s}(K)}{(F_{\omega}^{-t})_*m^{\eta}_{F_{\omega}^tx,\sigma^t(\omega)}(\eta^s(x,\omega))}.
\end{align}
This is a locally finite, infinite measure on the (entire) stable leaf which we call a leafwise conditional measure or stable conditional measure. The right hand side is well-defined because $m$ is $F$-invariant. The measures $m^s_{x,\omega}$ are supported on the stable leaves. Let $\{m^{u}_{x,\omega}\}_{(x,\omega)\in Y}$ denote the unstable analogues which we will refer to as unstable conditional measures.

\subsection{Compatible families and quantitative non-integrability}
\label{sec:QNI+}

We shall follow the definitions of \cite{BEF} in this section.

According to Appendix C of \cite{BEF}, unstable manifolds admit a `subresonant structure'. We denote $G^{ssr}(W^u(x,\omega,k))$ as the set of strictly subresonant transformations on $W^u(x,\omega,k)$. In our setting, these maps are linear transformations on $W^u(x,\omega,k)$ in normal form coordinates (see Section \ref{sec:NFC}). The groups $G^{ssr}(W^u(x,\omega,k))$ are unipotent algebraic groups. We will study the action of connected unipotent algebraic subgroups $U^+(x,\omega,k)\subset G^{ssr}(W^u(x,\omega,k))$. We denote $U^+[q,\omega,k]\defeq U^+(x,\omega,k)q$ as the orbit of the action of $U^+(x,\omega,k)$ on $q\in W^u(x,\omega,k)$. 

We define the `combinatorial' unstable and stable manifolds as follows
\begin{align}\label{Wu1}
    \hat{W}^u(x,\omega,k) \defeq  W^u(x,\omega,k) \times W^+(\omega,k),\\
    \hat{W}^s(x,\omega,k) \defeq  W^s(x,\omega,k) \times W^-(\omega,k),\label{Ws1}
\end{align}
where
\begin{align*}
    W^+(\omega,k) &\defeq \{ (\omega',k')\in \Omega\times [0,1[ : k'=k, \ \text{and for }n\in \bbN\text{ large, }f'_{-n} = f_{-n}   \},\\
     W^-(\omega,k) &\defeq \{ (\omega',k')\in \Omega\times [0,1[ : k'=k, \ \text{and for }n\in \bbN\text{ large, }f'_{n} = f_{n}   \},
\end{align*}
and where $\omega = (\dots, f_{-2},f_{-1},f_0,f_1,f_2,\dots)$.

\begin{definition}[Random compatible family of subgroups]\label{compat}
We say that a measurable family of connected algebraic subgroups $(x,\omega)\mapsto U^+(x,\omega,k)\subset G^{ssr}(W^u(x,\omega,k))$ is compatible with $\hat{m}$ if we have the following:
\begin{enumerate}
    \item The subgroups are $F^t$-equivariant.
    \item For $\mu^{\bbZ}$-almost every $\omega$ we require that the sets $U^+[x,\omega,k]\cap \eta^u_{\omega}(x)$ form a partition of $X$. Further, the conditional measures along $U^+[x,\omega,k]\cap \eta^u_{\omega}(x)$ are proportional to the restriction of the `Haar measure' which in this setting is the unique $U^+(x,\omega,k)-$invariant measure on $U^+[x,\omega,k]$.
    \item Given $(x',\omega',k') \in \hat{W}^u(x,\omega,k)$, we have the natural identification of the group $G^{ssr}(x,\omega,k)$ with the group $G^{ssr}(x',\omega',k')$, viewing both as acting on $W^u(x,\omega,k) =W^u(x',\omega',k')$. Further, there is a conull set $X_0\subset X$ such that if the points $(x,\omega,k),(x',\omega',k')\in X_0$ and $(x',\omega',k')\in U^+[x,\omega,k]$, then $U^+(x',\omega',k') = U^+(x,\omega,k)$
    \item $U^+(x,\omega,k)$ is `maximal', i.e. it contains the largest connected subgroup of $G^{ssr}(x,\omega,k)$ that preserves the Haar measure on $U^+[x,\omega,k]$.

\end{enumerate}
\end{definition}

\begin{remark}
    We allow $U^+(x,\omega,k) = \{e\}$.
\end{remark}

Now let 
\begin{align}\label{combo+}
    W^+_1(\omega,k) \defeq  \{ (\omega',k')\in \Omega\times [0,1[ : k'=k, (\omega')^- = \omega^-\},\\
      W^-_1(\omega,k) \defeq  \{ (\omega',k')\in \Omega\times [0,1[ : k'=k, (\omega')^+ = \omega^+\}.
\end{align}

Recall $\eta^s$ from Definition \ref{part} and the unstable counterpart $\eta^u$. We will define
\begin{align*}
    \eta^u_{\omega, t}(x) &= F^{-t}(\eta^u_{\sigma^t(\omega)}(F^t_{\omega}(x)) \subset X_{\omega},\\
    \eta^s_{\omega, t}(x) &= F^{t}(\eta^s_{\sigma^{-t}(\omega)}(F^{-t}_{\omega}(x)) \subset X_{\omega},
\end{align*}
where $F^t_{\omega}(x)$ is the $X$-component of $F^t(x,\omega)$.

It is easy to see that $\eta^u_{\omega,t}(x)$ and $\eta^s_{\omega,t}(x)$ are smaller than (and live inside) $\eta^u_{\omega}(x)$ and $\eta^s_{\omega}(x)$ resp.

Now let
 \begin{align}\label{etau1}
     \eta^u_t[x,\omega,k] &\defeq \eta^u_{\omega, t}(x) \times W_1^+(\omega,k) \subset W^u(x,\omega,k) \times W_1^+(\omega,k) \subset \hat{W}^u(x,\omega,k),\\
     \eta^s_t[x,\omega,k] &\defeq \eta^s_{\omega, t}(x) \times W^-_1(\omega,k) \subset W^s(x,\omega,k) \times W_1^-(\omega,k)\subset \hat{W}^s(x,\omega,k).\label{etas1}
 \end{align}

The natural measure on $W_1^+(\omega,k)$ is $$\tilde{\mu}^u_{\omega,k} \defeq \delta_{\omega^-} \times \mu^{\bbN} \times \delta_k,$$ on $\Omega^- \times \Omega^+ \times [0,1[$. Similarly for $W_1^-$ we have $\tilde{\mu}_{\omega,k}^s$. Let $\tilde{m}^u_{x,\omega}$ be the restriction of the conditional measure of $m^u_{x,\omega}$ to $U^+[x,\omega,k]\cap \eta^u_{\omega}(x)$. Note that $\tilde{m}^u_{x,\omega}$ is proportional to the `Haar' measure as in (2) of Definition \ref{compat}.

Consider the measures 
\begin{align}\label{measu}
    \hat{m}^u_{x,\omega,k} \defeq \tilde{m}^u_{x,\omega} \times \tilde{\mu}^u_{\omega,k},\\
    \hat{m}^s_{x,\omega,k} \defeq  m^s_{x,\omega} \times \tilde{\mu}^s_{\omega,k}.\label{meass}
\end{align}
 They live on $\hat{W}^u(x,\omega,k)$ (or more specifically $U^+[x,\omega,k]\times W^+_1(\omega,k)$) and $\hat{W}^s(x,\omega,k)$ respectively.

\begin{definition}[Random QNI]\label{QNIdef}
    Let $(x,\omega,k)\mapsto U^+(x,\omega,k)$ be a measurable family of connected algebraic subgroups satisfying Definition \ref{compat}, we define
    \begin{align*}
        \hat{U}[x,\omega,k] \defeq  U^+[x,\omega,k] \times W_1^+ \subset \hat{W}^u(x,\omega,k).
    \end{align*}
    We say that $U^+(x,\omega,k)$ satisfies the random QNI condition if the following holds: \\
    $\bullet$ there exists $\alpha_0>0$ and,\\
    $\bullet$ for every $\delta>0$ there exists a compact set $K\subset Z$ with $\hat{m}(K)>1-\delta$ and constants $C=C(\delta)>0$ and $\ell_0 = \ell_0(\delta)>0$ such that:\\
    If $\ell>\ell_0$ and $\hat{x}_{1/2} = (x_{1/2},\omega_{1/2},k_{1/2})\in K$, $\hat{x}_1 = F^{\ell/2}(\hat{x}_{1/2})$, $\hat{x} = F^{-\ell/2}(\hat{x}_{1/2})\in K$ then\\
    $\bullet$ there is a subset $S = S(\hat{x}_{1/2},\ell) \subset \eta^u_{\ell/2}[\hat{x}_{1/2}]\cap \hat{U}^+[\hat{x}_{1/2}] \subset \hat{W}^u(\hat{x}_{1/2})$ such that  $\hat{m}^u_{\hat{x}_{1/2}}(S)\geq (1-\delta)\hat{m}^u_{\hat{x}_{1/2}}(\eta^u_{\ell/2}[\hat{x}_{1/2}]\cap \hat{U}^+[\hat{x}_{1/2}])$ and for each $\hat{y}_{1/2}\in S$ we have\\
    $\bullet$ that there is a subset $S' = S'(\hat{y}_{1/2},\ell))\subset \eta^s_{\ell/2}[\hat{x}_{1/2}]\subset \hat{W}^s(\hat{x}_{1/2})$ such that  $\hat{m}^s_{\hat{x}_{1/2}}(S') \geq (1-\delta)\hat{m}^s_{\hat{x}_{1/2}}(\eta^s_{\ell/2}[\hat{x}_{1/2}])$ so that if $\hat{z}_{1/2}\in S'$, then
    \begin{align}
        d_X(U^+_{q}[\hat{z}_{1/2}],W^{cs}_{q}(\hat{y}_{1/2}))\geq Ce^{-\alpha_0\ell},
    \end{align}
    where $U^+_q[x]$ is $U^+[x]$ intersected with $W^u_q(x,\omega)$, i.e. the local stable manifold, see Theorem \ref{locstablemfld}.

\end{definition}

Now we state a theorem from \cite{BEF}: 
\begin{theorem}[\cite{BEF}, Corollary 4.1.5]\label{4.1.5}
    Let $\mu$ be a finitely supported probability measure on $\aut(X)$, $\nu$ a $\mu$-stationary ergodic probability measure, $\hat{m}$ as in Section \ref{sec:susp}, then there exists a measurable family of connected algebraic subgroups $(x,\omega,k)\mapsto U^+(x,\omega,k)$ satisfying Definition \ref{compat} that does not satisfy Definition \ref{QNIdef}, i.e. random QNI does not hold. 
\end{theorem}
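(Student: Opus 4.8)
The plan is to deduce Theorem~\ref{4.1.5} directly from \cite[Corollary 4.1.5]{BEF}, so the only real task is to check that our random dynamical system meets the standing hypotheses of that corollary. Our surface $X$ is compact K\"ahler (in fact projective), hence a smooth compact Riemannian manifold; the measure $\mu$ is finitely supported, so the integrability condition \eqref{IC} holds trivially and, by Remark~\ref{higher}, so do all the higher $C^k$ moment conditions needed to construct the normal form coordinates of Section~\ref{sec:NFC} and to run Pesin theory; the measure $\nu$ is $\mu$-stationary and ergodic, so by Remark~\ref{skewergodic} the skew product $(Y,F,m)$, and hence its suspension $(Z,F^t,\hat{m})$, is ergodic; and our standing hyperbolicity assumption gives $\lambda^+>0>\lambda^-$. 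These are precisely the hypotheses under which \cite{BEF} builds the subresonant groups $G^{ssr}(W^u(x,\omega,k))$, the notion of a random compatible family (Definition~\ref{compat}), and the random QNI condition (Definition~\ref{QNIdef}), so \cite[Corollary 4.1.5]{BEF} applies and produces a compatible family $(x,\omega,k)\mapsto U^+(x,\omega,k)$ for which random QNI fails.

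For orientation I record the shape of the argument behind \cite[Corollary 4.1.5]{BEF}. One first constructs a \emph{maximal} compatible family: among all measurable, $F^t$-equivariant families of connected subgroups $V(x,\omega,k)\subset G^{ssr}(W^u(x,\omega,k))$ whose orbits carry the (suitably normalized) restrictions of the leafwise conditional measures $m^u_{x,\omega}$, one selects one of maximal dimension; this is legitimate because the dimension of $G^{ssr}(W^u(x,\omega,k))$ is uniformly bounded and $\dim V(x,\omega,k)$ is an $F^t$-invariant measurable function, hence $\hat{m}$-almost everywhere constant. One then checks that this choice also satisfies the partition and Haar property (2), the coherence property (3), and the normalizer-type maximality (4) of Definition~\ref{compat}; call the resulting family $U^+$. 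Now suppose, for contradiction, that $U^+$ satisfies random QNI. Then one is in a position to run the exponential drift / coupling argument of \cite{BEF}, the random, non-homogeneous analogue of the factorization argument of \cite{EM} and of the Benoist--Quint ``Big Theorem'' (see also \cite{BRH}): QNI guarantees that, for a definite proportion of pairs of points on a common combinatorial unstable leaf $\hat{W}^u$, the two orbits --- after flowing for time $\ell$ and being re-synchronized along the combinatorial stable leaf $\hat{W}^s$ --- separate at a controlled polynomial rate, and in a direction transverse to the orbit $U^+[x,\omega,k]$. Letting $\ell\to\infty$ along a suitable sequence and controlling the error terms with the usual $\varepsilon$-regularity and martingale-convergence estimates, one extracts a nontrivial one-parameter subgroup of $G^{ssr}(W^u(x,\omega,k))$ that is not contained in $U^+(x,\omega,k)$ yet still preserves the conditionals $m^u_{x,\omega}$. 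The connected group generated by this subgroup together with $U^+(x,\omega,k)$ is then strictly larger and shares the same invariance, contradicting the maximality of $U^+$; hence random QNI must fail for $U^+$.

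The substantive content --- all of it carried out in \cite{BEF}, none of it reproved here --- is this last step: making the drift construction rigorous in the random, Pesin-theoretic setting. There is no ambient group, so ``drift'' must be detected inside the normal form coordinates on $W^u$; the re-synchronization of trajectories has to respect the $\Omega$- and $[0,1[$-coordinates, which is why the combinatorial leaves $\hat{W}^{u/s}$ and the product measures $\hat{m}^{u/s}_{x,\omega,k}$ are introduced; and making the unstable expansion rates uniform forces one to replace the standard suspension flow by the time-changed flow constructed in Section~\ref{sec:flow}. This is the step I expect to be the main obstacle in any self-contained treatment; for the purposes of the present paper we need only the packaged statement of Theorem~\ref{4.1.5}, which is then combined with the QNI analysis of Section~\ref{sec:twolem} to constrain $U^+$ and, ultimately, to produce the trichotomy of Theorem~\ref{bigthmeasy}.
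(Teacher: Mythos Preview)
Your proposal is correct and matches the paper's approach exactly: Theorem~\ref{4.1.5} is stated in the paper as a direct citation of \cite[Corollary~4.1.5]{BEF} with no proof given, and you do the same, adding a helpful verification of hypotheses and a sketch of the underlying drift argument that the paper omits.
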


\subsection{Proof of Theorem \ref{bigthmeasy}}
\label{sec:breakdown}

In this section, we will give the proof of Theorem~\ref{bigthmeasy} except for Lemma \ref{0.1},  Proposition \ref{finite}, Lemma~\ref{0.5} and Proposition~\ref{full}. These will be proven later in the paper. 

Recall the assumptions made in Section \ref{sec:intro}, i.e. that $\Gamma_{\mu}$ is non-elementary, that $\supp(\mu)<\infty$, that there is no $\Gamma_{\mu}$-invariant algebraic curve, and that $\lambda^++\lambda^-\geq 0$.

From Theorem \ref{4.1.5}, take our measurable family of connected algebraic subgroups $(x,\omega,k)\mapsto U^+(x,\omega,k)$. This family of subgroups satisfies Definition \ref{compat} but does not satisfy random QNI. Let $d_+ \defeq \dim_{\bbR}[U^+(x,\omega,k)]$. This $d_+$ is constant a.e. by property (1) of Definition \ref{compat} combined with ergodicity. There are three cases to address and these cases correspond to the three alternatives in Theorem \ref{bigthmeasy}.

Define $\calL^s(x,\omega,0)$ to be the Zariski closure of the support of the stable conditional, $m_{x,\omega}^{s}$, inside $W^s(x,\omega)\subset X$, and $\calL^u(x,\omega,0)$ to be the unstable analogue. Note that we have no concrete notion of Zariski closure ambiently, but under normal form coordinates (discussed later in Section \ref{sec:NFC}), you are now in $\bbC$ and you can take Zariski closure and then push it back.

Let $h(\mu)$ be the entropy of $\mu$. Let $\zeta$ be a measurable partition of $Z$ such that $F\zeta \leq \zeta$ and define 
$$h_{\hat{m}}(F,W^+_1) \defeq H_{\hat{m}}(F^{-1}\zeta|\zeta) = -\int \log \hat{m}_{\hat{x}}^{\zeta}(F^{-1}\zeta(\hat{x}))d\hat{m}(\hat{x}),$$
where $\hat{m}_{\hat{x}}^{\zeta}$ is the conditional measure at $\hat{x}\in Z$ relative to the partition $\zeta$. We now take $\zeta$ to be the partition of $Z$ such that $\zeta(\hat{x}) = W_1^+(\hat{x})$, then we define
\begin{align}\label{combent+}
h_{\hat{m}}(F,W^+_1) = h_{\hat{m}}(F,\zeta).
\end{align}
This quantity is the entropy of the 1-sided shift on the set of futures, the entropy we get from the combinatorial unstable $W^+_1$. There is a similar quantity, the entropy we get from the combinatorial stable $W^-_1$.

We will state the following Proposition from \cite[Section 4]{BEF} which is a version of the Ledrappier Young formula for our setting:
\begin{proposition}[\cite{BEF}, Section 4]\label{prop4.3.8}
    For $\hat{m}$-a.e. $(x,\omega,k)\in Z$ we have
    \begin{enumerate}
        \item $h_{\hat{m}}(F)\geq h_{\hat{m}}(F,W_1^+) + \lambda^+\dim (E^u(\hat{x}) \cap T_xU^+[\hat{x}])$,
        \item $h_{\hat{m}}(F,W_1^+) = h(\mu)$,
        \item $h_{\hat{m}}(F^{-1}) \leq h_{\hat{m}}(F^{-1},W_1^-) - \lambda^-\dim(E^s(\hat{x})\cap T_x\calL^s(\hat{x}))$,
        \item $h_{\hat{m}}(F^{-1},W_1^-) \leq h(\mu)$ with equality if and only if $\nu$ is $\mu$-invariant.    
    \end{enumerate}
\end{proposition}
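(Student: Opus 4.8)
I would read Proposition~\ref{prop4.3.8} as the Ledrappier--Young formula for the suspended skew product $(Z,\hat m,F^t)$, organized along two ``flags'': the forward flag (for $F$) and the backward flag (for $F^{-1}$), each of which splits into a purely combinatorial piece coming from the Bernoulli base $(\Omega,\mu^{\bbZ},\sigma)$ and a geometric piece coming from the $X$-fiber. Items (1) and (3) are then the two halves of the Ledrappier--Young inequality --- a lower bound for $h_{\hat m}(F)$ coming from the expansion we can \emph{locate} (the combinatorial unstable $W_1^+$ together with the part of $E^u$ tangent to the orbit $U^+[\hat x]$), and a Margulis--Ruelle type upper bound for $h_{\hat m}(F^{-1})$ in terms of the dimension of the manifold $\calL^s$ carrying the stable conditional. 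Items (2) and (4) are the evaluation of the two combinatorial pieces. I would prove (2) and (4) first, since they are soft; the deficit in (4) is exactly the Furstenberg entropy of $(\nu,\mu)$ and is where the possible non-invariance of $\nu$ enters, so (4) is really the arithmetic heart of the proposition, while the genuinely technical work lives in (1) and (3).

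\textbf{Items (2) and (4).} Using Lemma~\ref{m} I would first record the disintegration of $m$ over the future coordinates: $m=\int_\Omega \delta_\omega\times \nu_{\omega^-}\,d\mu^{\bbZ}(\omega)$, where $\nu_{\omega^-}=\lim_{n\to\infty}(f_{-1}\circ\cdots\circ f_{-n})_*\nu$ depends only on the past $\omega^-$ and $\int \nu_{\omega^-}\,d\mu^{\bbN}=\nu$ by stationarity. Taking $\zeta$ with atoms $W_1^+(\hat x)$ as in \eqref{combent+}, an unwinding of $F^{-1}\zeta(\hat x)=F^{-1}(\zeta(F\hat x))$ shows that the extra conditioning of $F^{-1}\zeta$ relative to $\zeta$ is precisely the first future symbol $\omega_0$, so $h_{\hat m}(F,W_1^+)=H_{\hat m}(\omega_0\mid x,\omega^-,k)$. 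Since under $\mu^{\bbZ}$ the symbol $\omega_0$ is independent of $\omega^-$, and the fibre point $x$ is (by the displayed disintegration) a function of $\omega^-$ alone together with independent randomness, $\omega_0$ is independent of $(x,\omega^-,k)$; hence $h_{\hat m}(F,W_1^+)=H(\mu)=h(\mu)$, which is (2). The symmetric computation for $F^{-1}$ and the partition with atoms $W_1^-(\hat x)$ gives $h_{\hat m}(F^{-1},W_1^-)=H_{\hat m}(\omega_{-1}\mid x,\omega^+,k)=H_{\hat m}(\omega_{-1}\mid x)\leq H(\mu)=h(\mu)$, the inequality being equality iff $\omega_{-1}$ and $x$ are independent. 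Peeling the outermost map off $\nu_{\omega^-}$ and integrating over the remaining i.i.d.\ coordinates shows that the conditional law of $x$ given $\omega_{-1}=f$ is exactly $f_*\nu$, so independence holds iff $f_*\nu=\nu$ for $\mu$-a.e.\ $f$, i.e.\ iff $\nu$ is $\mu$-invariant (equivalently, by Remark~\ref{nuinvar}, $m=\nu\times\mu^{\bbZ}$). This proves (4).

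\textbf{Items (1) and (3).} Here I would follow the scheme of \cite[Section~4]{BEF} (see also \cite{EM},\cite{BRH}). For (1), choose a measurable partition of $Z$ subordinate to the combinatorial unstable $\hat W^u$ that refines the partition with atoms $W_1^+$ and, on the geometric factor, is equivalent leafwise to Lebesgue measure along the orbit $U^+[\hat x]\subset W^u(\hat x)$; this is legitimate because, by item~(2) of Definition~\ref{compat}, the unstable conditional $m^u_{x,\omega}$ restricted to $U^+[x,\omega,k]\cap\eta^u_\omega(x)$ is proportional to Haar on the unipotent group $U^+(x,\omega,k)$, so its pointwise dimension there equals $\dim_{\bbR}(E^u(\hat x)\cap T_xU^+[\hat x])$. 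Running the Rokhlin/Ledrappier--Young entropy computation with this partition, the combinatorial factor contributes $h_{\hat m}(F,W_1^+)$ and the geometric factor contributes $\lambda^+$ times that pointwise dimension, and since the resulting quantity is a lower bound for the Kolmogorov--Sinai entropy one obtains $h_{\hat m}(F)\geq h_{\hat m}(F,W_1^+)+\lambda^+\dim_{\bbR}(E^u(\hat x)\cap T_xU^+[\hat x])$. For (3), I would instead run the dual Margulis--Ruelle upper bound for $F^{-1}$: $h_{\hat m}(F^{-1})$ is at most the combinatorial stable entropy $h_{\hat m}(F^{-1},W_1^-)$ plus $-\lambda^-$ times the lower pointwise dimension of the stable conditional $m^s_{x,\omega}$ along $W^s(x,\omega)$; since $\operatorname{supp}m^s_{x,\omega}\subset\calL^s(x,\omega,0)$ by definition, that dimension is $\leq\dim_{\bbR}(E^s(\hat x)\cap T_x\calL^s(\hat x))$, which gives (3).

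\textbf{Main obstacle.} The delicate part is not the entropy bookkeeping per se but making the leafwise geometric objects in (1) and (3) well defined and $F^t$-equivariant: one must build the refining partitions compatibly with the suspension flow and, crucially, with the normal-form coordinates and the strictly subresonant structure of \cite[Appendix~C]{BEF}, so that ``Haar on $U^+[\hat x]$'' and ``Zariski closure $\calL^s(x,\omega,0)$'' transform correctly under $F$, and one must handle the locally finite infinite leafwise measures $m^s_{x,\omega},m^u_{x,\omega}$ of Section~\ref{sec:cond}. This is exactly the content of \cite[Section~4]{BEF}, which I would quote for these two inequalities; items (2) and (4), by contrast, are purely a matter of unwinding the disintegration of $m$ and are essentially self-contained.
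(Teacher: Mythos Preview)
The paper does not prove Proposition~\ref{prop4.3.8}; it is quoted from \cite[Section~4]{BEF} and used as a black box. Your proposal therefore goes beyond the paper by supplying the argument. Your treatment of (2) and (4) via the disintegration $m=\int\delta_\omega\times\nu_{\omega^-}\,d\mu^{\bbZ}(\omega)$ is correct and is the standard one (one small point to make explicit: the reduction $H(\omega_{-1}\mid x,\omega^+,k)=H(\omega_{-1}\mid x)$ uses that $\omega^+\perp(x,\omega^-)$ implies the \emph{conditional} independence $\omega^+\perp\omega_{-1}\mid x$, not merely the marginal independence $\omega^+\perp\omega_{-1}$). For (1) and (3) you correctly identify the Ledrappier--Young mechanism---Haar conditionals on $U^+[\hat x]$ from Definition~\ref{compat}(2) giving the lower bound, and the support dimension of $m^s_{x,\omega}$ inside $\calL^s$ giving the Margulis--Ruelle upper bound---and defer the construction of the compatible partitions and their equivariance under normal forms to \cite{BEF}; this is exactly how the present paper treats these items.
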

Additionally, we have the following result that we take from \cite{BEF} which follows immediately from \cite{Led84}, Theorem 3.4.
\begin{proposition}[\cite{BEF}, Section 4]\label{Prop4.3.10}
    For $m$-a.e. $(x,\omega)\in Y$, if the unstable conditional measure $m^u_{x,\omega}$ is supported on an embedded submanifold $N_{x,\omega}\subset W^u(x,\omega)$, then 
    \begin{align}
        h_{\hat{m}}(F) \leq h(\mu) + \lambda^+\dim(E^u(x,\omega)\cap T_xN_{x,\omega})
    \end{align}
    with equality if and only if $m^u_{x,\omega}$ is equivalent to the Lebesgue measure on $N_{x,\omega}$ for $m$-a.e. $(x,\omega)\in Y$. A similar statement holds for stables.
\end{proposition}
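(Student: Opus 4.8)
The plan is to recognise this as the random skew-product incarnation of Ledrappier's entropy--dimension theorem for surface maps, so that it reduces to \cite[Theorem 3.4]{Led84} together with the Pesin-theoretic bookkeeping already set up in \cite{CD} and \cite{BEF}. First I would split the entropy of $F$ into a ``combinatorial'' contribution coming from the shift on the futures and a ``geometric'' contribution coming from the expansion of the fiber maps $f_{\omega}^{n}$ along the unstable directions inside $X_{\omega}$. By the Abramov--Rokhlin formula for the skew product over the Bernoulli shift on the finite alphabet $\supp(\mu)$ --- equivalently, by part (2) of Proposition \ref{prop4.3.8}, which already identifies the combinatorial part with $h(\mu)$ --- it is enough to bound the fiberwise unstable entropy $h_{\hat{m}}(F)-h(\mu)$ from above by $\lambda^{+}\dim\!\big(E^{u}(x,\omega)\cap T_{x}N_{x,\omega}\big)$, with equality precisely when $m^{u}_{x,\omega}$ is equivalent to Lebesgue on $N_{x,\omega}$.

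For the geometric part I would work with a measurable partition $\eta^{u}$ subordinate to the unstable manifolds (Definition \ref{part}, Proposition \ref{2.1}) and its probability conditional measures $m^{\eta^{u}}_{x,\omega}$; these differ from the infinite leafwise measures $m^{u}_{x,\omega}$ only by restriction and renormalisation, so they carry the same pointwise dimension and the same absolute-continuity status along $W^{u}(x,\omega)$. Writing the fiberwise unstable entropy via the Rokhlin formula, and using (Ledrappier--Young) that this local entropy equals the full fiberwise entropy, one is exactly in position to invoke \cite[Theorem 3.4]{Led84}, applied on the unstable leaf $W^{u}(x,\omega)$: since the dynamics is holomorphic, $E^{u}(x,\omega)=T_{x}W^{u}(x,\omega)$ is a conformally expanded complex line, and Ledrappier's theorem bounds the fiberwise entropy by $\lambda^{+}$ times the dimension of the unstable conditional, with equality exactly when that conditional is absolutely continuous with respect to the induced Lebesgue measure on its support. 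As $m^{u}_{x,\omega}$ is by hypothesis carried by the embedded submanifold $N_{x,\omega}\subset W^{u}(x,\omega)$ and $E^{u}(x,\omega)\cap T_{x}N_{x,\omega}=T_{x}N_{x,\omega}$, the dimension of the conditional is at most $\dim_{\mathbb{R}}N_{x,\omega}=\dim\!\big(E^{u}(x,\omega)\cap T_{x}N_{x,\omega}\big)$, which gives the inequality; in the equality case the absolute continuity furnished by \cite{Led84}, together with the support being contained in $N_{x,\omega}$, forces $m^{u}_{x,\omega}$ to be equivalent to the induced Lebesgue (Hausdorff) measure on $N_{x,\omega}$. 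Running the same argument for $F^{-1}$, $\eta^{s}$, $W^{s}$ and $\lambda^{-}$ yields the stable statement.

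The step I expect to be the real obstacle is not a computation but the translation itself: \cite[Theorem 3.4]{Led84} is stated for a single $C^{2}$ diffeomorphism of a compact \emph{real} surface, whereas here the object is the random skew product $F$ over a Bernoulli shift on a compact complex surface, so the unstable direction is two real dimensions carrying a single (conformal) exponent. One therefore has to appeal to the fibered version of Ledrappier--Young theory --- legitimate because the moment condition \eqref{IC}, promoted by Remark \ref{higher} and the Cauchy estimates to $C^{k}$ control of the fiber maps in charts, makes the Oseledets and Pesin structure behave as in the compact $C^{2}$ case used in \cite{CD} --- and one has to keep track of the fact that the conditional may live on a proper submanifold, so that ``dimension'' means $\dim N_{x,\omega}$ and ``absolutely continuous'' is with respect to the Hausdorff measure of $N_{x,\omega}$. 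Finally one should check that the three boundary regimes $\dim N_{x,\omega}\in\{0,1,2\}$ (atomic, real-curve and full-area conditionals, matching the three alternatives of Theorem \ref{bigthmeasy}) are handled uniformly, and that passing between the infinite leafwise measures $m^{u}_{x,\omega}$ and the finite partition conditionals $m^{\eta^{u}}_{x,\omega}$ does not affect the dimension data; both are routine once the framework is in place.
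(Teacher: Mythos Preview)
Your proposal is correct and aligns with the paper's own treatment: the paper does not give a proof but simply attributes the statement to \cite{BEF} and notes that it ``follows immediately from \cite{Led84}, Theorem 3.4.'' Your sketch is precisely the unpacking of that citation---splitting off the combinatorial entropy $h(\mu)$ via Abramov--Rokhlin (Proposition~\ref{prop4.3.8}(2)) and then applying Ledrappier's fiberwise entropy--dimension result on the unstable leaves---so there is nothing to correct.
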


\subsubsection{Alternative 1}
The first alternative of Theorem \ref{bigthmeasy}, that $\nu$ is finitely supported, corresponds to the case $d_+=0$. Here $U^+(x,\omega,k) = \{e\}$ a.s.

\begin{lemma}\label{0.1}
    Suppose $\Gamma_{\mu}$ is non-elementary, $U^+(x,\omega,k) = \{e\}$ a.e., and the conditional measure $m_{x,\omega}^{s}$ along $W^s(x,\omega,k)$ is non-trivial for $\hat{m}$-a.e. $(x,\omega,k)\in Z$. Then random QNI (Definition \ref{QNIdef}) holds.
\end{lemma}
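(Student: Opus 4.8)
The plan is to unwind Definition~\ref{QNIdef} in the degenerate regime $U^+(x,\omega,k)=\{e\}$, where the random QNI inequality collapses to a quantitative transversality between two stable manifolds through one and the same point of $X$, and then to read that transversality off from the randomness of the dynamics together with the non-triviality of $m^s_{x,\omega}$. First I would carry out the reduction: since $U^+(\hat x)=\{e\}$ a.e., the orbit $U^+[\hat x_{1/2}]$ is the single point $x_{1/2}$, so $\eta^u_{\ell/2}[\hat x_{1/2}]\cap\hat U^+[\hat x_{1/2}]=\{x_{1/2}\}\times W^+_1(\omega_{1/2},k_{1/2})$, and every $\hat y_{1/2}$ in this atom has $X$-coordinate $x_{1/2}$ and an $\Omega$-coordinate $\omega'$ agreeing with $\omega_{1/2}$ on the past but with $\mu^{\bbN}$-random future $(\omega')^+$. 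Likewise $U^+_q[\hat z_{1/2}]=\{z_{1/2}\}$ with $z_{1/2}\in\eta^s_{\omega_{1/2},\ell/2}(x_{1/2})\subset W^s(x_{1/2},\omega_{1/2})$, and the $X$-projection of $W^{cs}_q(\hat y_{1/2})$ is a $q$-local piece of $W^s(x_{1/2},\omega')$, whose tangent at $x_{1/2}$ is the contracting Oseledets line $E^s(x_{1/2},\omega')$ --- which depends on $x_{1/2}$ and the future $(\omega')^+$ only. As $d_X$ ignores the $\Omega$- and $[0,1[$-coordinates, the QNI inequality becomes: for a $(1-\delta)$-proportion of futures $(\omega')^+$ and of points $z_{1/2}$, one has $d_X(z_{1/2},W^s_q(x_{1/2},\omega'))\ge Ce^{-\alpha_0\ell}$. (If $m^s$ were a Dirac mass then $z_{1/2}=x_{1/2}\in W^s_q(x_{1/2},\omega')$ and the left side would vanish, so the non-triviality hypothesis is genuinely what makes the statement true.)

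The second step is the transversality input. I would show that for $m$-a.e.\ $(x,\omega)$ the conditional law of $\omega^+\mapsto E^s(x,\omega^+)$, with $x$ and the past frozen and $\omega^+\sim\mu^{\bbN}$, has no atom; equivalently, that there is no measurable $\Gamma_\mu$-equivariant line field on $(X,\nu)$. An atom would, using the $F$-equivariance of $(x,\omega)\mapsto E^s(x,\omega)$ together with the fact that $E^s$ depends on the future alone, produce such an equivariant line field; since $m^s$ non-trivial forces $\nu$ not to be supported on a periodic orbit, this is ruled out by non-elementarity and the absence of a $\Gamma_\mu$-invariant algebraic curve (cf.\ \cite{CDinv}, \cite{CDsurvey}). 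Non-atomicity yields $E^s(x_{1/2},\omega')\ne E^s(x_{1/2},\omega_{1/2})$ for $\mu^{\bbN}$-a.e.\ $(\omega')^+$ and a.e.\ reference $\omega_{1/2}$, and an Egorov argument on $Z$ upgrades this to a uniform bound: for each $\delta$ there is a compact $K\subset Z$ with $\hat m(K)>1-\delta$ and a $\theta_0=\theta_0(\delta)>0$ such that, after discarding an $\hat m^u$-proportion $<\delta$ of the $\hat y_{1/2}$, the angle at $x_{1/2}$ between $W^s_q(x_{1/2},\omega_{1/2})$ and $W^s_q(x_{1/2},\omega')$ exceeds $\theta_0$. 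Combined with the uniform $C^2$ control of local stable manifolds on $K$ (Remark~\ref{higher} and Pesin theory), this gives $d_X(z_{1/2},W^s_q(x_{1/2},\omega'))\ge\tfrac12\theta_0\,d_X(z_{1/2},x_{1/2})$ for all $z_{1/2}$ close enough to $x_{1/2}$, hence for all $z_{1/2}\in\eta^s_{\omega_{1/2},\ell/2}(x_{1/2})$ once $\ell$ is large.

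It then remains to discard the $z_{1/2}$ lying in a tiny ball around $x_{1/2}$, and this is where non-triviality of $m^s$ enters quantitatively. Fixing $\alpha_0>|\lambda^-|/2$ (say $\alpha_0=|\lambda^-|$), the bad set of $z_{1/2}$ sits inside $B(x_{1/2},2\theta_0^{-1}Ce^{-\alpha_0\ell})\cap W^s(x_{1/2},\omega_{1/2})$, whereas $\eta^s_{\omega_{1/2},\ell/2}(x_{1/2})=F^{\ell/2}(\eta^s(F^{-\ell/2}(x_{1/2},\omega_{1/2})))$ has radius comparable to $e^{\lambda^-\ell/2}$ on $K$, which is exponentially larger. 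Since $m^s_{x,\omega}$ non-trivial forces its Zariski closure to be all of $W^s$, the stable entropy is positive, so by Ledrappier--Young (cf.\ Propositions~\ref{prop4.3.8} and \ref{Prop4.3.10}) the measures $m^s_{x,\omega}$ have a positive lower pointwise dimension $m$-a.e., which an Egorov reduction makes uniform on $K$. Using the $F$-equivariance of the leafwise measures to identify $m^s_{x_{1/2},\omega_{1/2}}(\eta^s_{\omega_{1/2},\ell/2}(x_{1/2}))$ with a normalisation constant decaying only exponentially in $\ell$ at the rate of the stable entropy, one finds that the ratio of the $m^s$-mass of the tiny ball to that of $\eta^s_{\omega_{1/2},\ell/2}(x_{1/2})$ is $\lesssim e^{-c(\alpha_0-|\lambda^-|/2)\ell}<\delta$ for $\ell$ large. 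Combining the two exceptional sets, the QNI inequality holds for the required proportion of $(\hat y_{1/2},\hat z_{1/2})$ with $2\delta$ in place of $\delta$; rescaling $\delta$ at the outset finishes the proof.

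I expect the main obstacle to be the transversality step --- proving that the stable direction genuinely depends on the future, i.e.\ ruling out a measurable $\Gamma_\mu$-equivariant line field, using only non-elementarity and the absence of an invariant algebraic curve, and crucially \emph{without} already knowing that $\nu$ is invariant, since invariance of $\nu$ is one of the conclusions this lemma is meant to help establish. A secondary technical point is making the lower-pointwise-dimension estimate for $m^s_{x,\omega}$ uniform over the non-compact base, which the standard Luzin/Egorov reductions should handle.
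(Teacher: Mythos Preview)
Your reduction in the first paragraph is correct and matches the paper: when $U^+=\{e\}$, the QNI inequality collapses to a lower bound on $d_X(z_{1/2},W^s_q(x_{1/2},\omega'))$ with $z_{1/2}\in W^s(x_{1/2},\omega_{1/2})$ and $(\omega')^+$ a fresh future.

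The genuine gap is exactly where you flagged it: the transversality step. You propose to show that the conditional law of $\omega^+\mapsto E^s(x,\omega^+)$ has no atom, i.e.\ that there is no measurable $\Gamma_\mu$-equivariant line field. But this can fail. Theorem~\ref{thmC} (Cantat--Dujardin, Theorem~C) gives a trichotomy, and in its alternative (a) --- $\nu$ invariant with vanishing fiber entropy --- the field $E^s(x,\omega)$ \emph{is} $\Gamma_\mu$-invariant, hence independent of the future. Nothing in the hypotheses of Lemma~\ref{0.1} excludes alternative~(a); on the contrary, the whole $d_+=0$ branch of the argument is designed to operate precisely in that regime (see the remarks after Corollary~\ref{8.2}). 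So your angle bound $\angle(E^s(x_{1/2},\omega'),E^s(x_{1/2},\omega_{1/2}))\ge\theta_0$ need not exist, and the linear estimate $d_X(z_{1/2},W^s_q(x_{1/2},\omega'))\ge\tfrac12\theta_0\,d_X(z_{1/2},x_{1/2})$ collapses.

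The paper's substitute is Corollary~\ref{8.2}, proved via Ahlfors--Nevanlinna currents and Furstenberg theory on $\bbP(H^{1,1})$: for a.e.\ $(x,\omega)$ and a.e.\ $(\omega')^+$, the \emph{entire} curves $W^s(x,\omega)$ and $W^s(x,\omega')$ are distinct, even when their tangent lines at $x$ coincide. One then defines $R$ to be the first derivative order at which the two stable curves (as jets at $x$) differ; by ergodicity $R$ is constant a.e. The correct separation estimate is $d_X(z_{1/2},W^s(\hat y_{1/2}))\gtrsim d_X(z_{1/2},x_{1/2})^R$ rather than linear, and accordingly the paper takes $\alpha_0=\tfrac12 R(\lambda^++\epsilon)$. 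Your approach implicitly assumes $R=1$, which is unjustified. The remainder of the argument --- showing that the $m^s$-mass of a tiny ball around $x_{1/2}$ is negligible relative to $\eta^s_{\omega_{1/2},\ell/2}(x_{1/2})$ --- the paper handles not via pointwise dimension but via non-atomicity (Lemma~\ref{algcurve1}) pushed along the dynamics (Lemmas~\ref{keta} and~\ref{annbd}); your dimension route is plausible but would also need to be calibrated to the exponent $R$.
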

We will prove this in Section \ref{sec:twolem} once we establish more notation and a few lemmas.

\begin{corollary}\label{0.2}
In our setting, if $d_+=0$, then $\nu$ is $\Gamma_{\mu}$ invariant, and the stable conditional measures $m_{x,\omega}^{s}$ on $W^s(x,\omega,k)$ and the unstable conditional measures $m_{x,\omega}^u$ on $W^u(x,\omega)$ are trivial for $\hat{m}$-a.e. $(x,\omega,k)$.
\end{corollary}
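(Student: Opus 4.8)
The plan is to combine the contrapositive of Lemma~\ref{0.1} with the Ledrappier--Young inequalities of Proposition~\ref{prop4.3.8} and a duality argument for the inverse skew product $F^{-1}$. We have fixed, via Theorem~\ref{4.1.5}, a compatible family $U^+$ that does \emph{not} satisfy random QNI, and we are in the case $d_+=0$, so $U^+(x,\omega,k)=\{e\}$ for $\hat{m}$-a.e. $(x,\omega,k)$. Since $\Gamma_\mu$ is non-elementary, Lemma~\ref{0.1} read contrapositively says that $m^s_{x,\omega}$ cannot be non-trivial on a conull set: otherwise random QNI would hold, contradicting the choice of $U^+$. Thus $m^s_{x,\omega}$ is a single atom on a set of positive $\hat{m}$-measure, and since this locus is $F^t$-invariant and $\hat{m}$ is ergodic (Remark~\ref{skewergodic} together with ergodicity of $\nu$), it is trivial $\hat{m}$-a.e. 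In particular $\calL^s(x,\omega,0)$ is a point, so $\dim\!\bigl(E^s(\hat{x})\cap T_x\calL^s(\hat{x})\bigr)=0$ a.e.

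Next I would deduce invariance of $\nu$ from Proposition~\ref{prop4.3.8}. With $U^+(x,\omega,k)=\{e\}$, parts (1)--(2) give $h_{\hat{m}}(F)\ge h_{\hat{m}}(F,W_1^+)=h(\mu)$; with $\dim(E^s\cap T_x\calL^s)=0$, parts (3)--(4) give $h_{\hat{m}}(F^{-1})\le h_{\hat{m}}(F^{-1},W_1^-)\le h(\mu)$; and since $h_{\hat{m}}(F)=h_{\hat{m}}(F^{-1})$ all these are equalities, in particular $h_{\hat{m}}(F^{-1},W_1^-)=h(\mu)$. This is precisely the equality case of Proposition~\ref{prop4.3.8}(4), so $\nu$ is $\mu$-invariant, hence invariant under $\supp(\mu)$ and therefore under $\Gamma_\mu=\langle\supp(\mu)\rangle$.

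For the triviality of the unstable conditionals I would pass to the reversed system. Once $\nu$ is $\Gamma_\mu$-invariant it is ergodic and invariant for $\check\mu$, the pushforward of $\mu$ under $f\mapsto f^{-1}$, and $\Gamma_{\check\mu}=\Gamma_\mu$ is non-elementary with no invariant algebraic curve. Applying Theorem~\ref{4.1.5} to $(F^{-1},\check\mu,\nu)$ produces a maximal compatible family $U^-$ acting on the unstable manifolds of $F^{-1}$, which are the stable manifolds $W^s(x,\omega)$ of $F$, and not satisfying random QNI for $F^{-1}$. The analogue of Proposition~\ref{prop4.3.8}(1)--(2) for $F^{-1}$, together with $h_{\hat{m}}(F^{-1})=h_{\hat{m}}(F^{-1},W_1^-)=h(\mu)$ from the previous step, forces $(-\lambda^-)\dim\!\bigl(E^s(\hat{x})\cap T_xU^-[\hat{x}]\bigr)=0$; since $\lambda^-<0$ and $T_xU^-[\hat{x}]\subseteq T_xW^s(x,\omega)=E^s(\hat{x})$, the orbit $U^-[\hat{x}]$ is $0$-dimensional, and as $U^-$ acts by translations on the one-dimensional leaf in normal form coordinates, $U^-(x,\omega,k)=\{e\}$ a.e. Lemma~\ref{0.1} applied to $(F^{-1},\check\mu)$, contrapositively once more, then shows that the stable conditional of $F^{-1}$ --- i.e. the unstable conditional $m^u_{x,\omega}$ of $F$ --- is trivial $\hat{m}$-a.e., completing the proof. (Alternatively, once one knows $m^u_{x,\omega}$ is carried by an embedded submanifold, the equality $h_{\hat{m}}(F)=h(\mu)$ and Proposition~\ref{Prop4.3.10} force that submanifold to be a point.)

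The main obstacle I anticipate is the passage to $F^{-1}$: one must check that the subresonant/normal-form structure of Appendix~C of~\cite{BEF}, the construction of a maximal compatible family, and Lemma~\ref{0.1} itself all transfer verbatim to $(F^{-1},\check\mu)$. This should be legitimate because $F^{-1}$ is again an invertible skew product of biholomorphisms satisfying the same moment condition, and because Lemma~\ref{0.1} makes no use of the sign of $\lambda^++\lambda^-$; but it requires matching up, with care, the unstable data of $F^{-1}$ (manifolds, Oseledets spaces, leafwise conditional measures) with the stable data of $F$. A minor secondary point is upgrading the triviality statements from positive measure to full measure, which follows from $F^t$-invariance of the relevant loci and ergodicity of $\hat{m}$.
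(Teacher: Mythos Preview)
Your proof is correct and follows essentially the same route as the paper: contrapositive of Lemma~\ref{0.1} gives triviality of $m^s$, the entropy chain from Proposition~\ref{prop4.3.8} gives invariance, and the passage to $\check\mu$ handles $m^u$. The only difference is in how you obtain $U^-=\{e\}$ for the reversed system: you re-invoke the entropy inequality (Proposition~\ref{prop4.3.8}(1) for $F^{-1}$) to force $\dim T_xU^-[\hat x]=0$, whereas the paper simply notes that $m^u_{\check\omega}=m^s_\omega$ is already known to be trivial, so any compatible family on the $\check\mu$-unstables is automatically trivial (Definition~\ref{compat}(2) forces $U^+_{\check\mu}[\hat x]\subset\calL^u_{\check\mu}=\calL^s_\mu=\{x\}$). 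Both are valid; the paper's shortcut avoids re-running the Ledrappier--Young bound.
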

    \begin{proof}

        Since we are taking $U^+(x,\omega,k)$ as in Theorem \ref{4.1.5}, random QNI does not hold. Lemma \ref{0.1} gives us that we must have that the stable conditional measure $m_{x,\omega}^{s}$ on $W^s(x,\omega,k)$ is trivial for $\hat{m}$ a.e. $(x,\omega,k)$.

        It remains to discuss $\Gamma_{\mu}$-invariance of $\nu$. Since the stable conditional measures are trivial a.e., this follows from~\cite[Section 4]{BEF}. We will directly give their proof below:

Since $\mu$ is finitely supported, it can be written as $\sum_i p_i\delta_{f_i}$ for $f_i\in \supp(\mu)$ and for some $p_i$. Hence, its entropy, $h(\mu) = -\sum_{i}p_i\log(p_i)$ is clearly finite.

Recall $W^+_1$ from Equation \eqref{combo+}; 
 $h_{\hat{m}}(F, W_1^+)$ is the entropy of the 1-sided shift on the set of futures as in Equation \eqref{combent+}. Note that as $\nu$ is $\mu$-stationary, our measure $m$ from Lemma \ref{m} projects to the product $\nu \times \mu^{\bbN}$ and so $h_{\hat{m}}(F, W_1^+) = h(\mu)$ (see Proposition \ref{prop4.3.8}). Note that this is not apriori the case for $h_{\hat{m}}(F^{-1}, W_1^-)$, the analogue on the set of pasts. We work to show this in order to get invariance using (4) from Proposition \ref{prop4.3.8}.

We have that by Proposition \ref{prop4.3.8} (1), we have the following lower bound on the entropy of our map:
\begin{align}\label{entropy1}
    h_{\hat{m}}(F)\geq h(\mu) + \lambda^+\cdot \dim(T_xU^+). 
\end{align}

On the other hand, Proposition \ref{prop4.3.8} (3) gives us the lower bound:
\begin{align}\label{entropy2}
    h_{\hat{m}}(F^{-1}) &\leq h_{\hat{m}}(F^{-1},W_1^-)  - \lambda^-\dim( T_x\calL^s).
\end{align}

In our case Equations \eqref{entropy1} and $\eqref{entropy2}$ simplify because the conditional measures $m_{x,\omega}^{s}$ are trivial and so $\calL^s$ has dimension zero. Additionally we have assumed that $d_+=0$ and so $T_xU^+$ is also zero dimensional. So we have
\begin{align}
    h_{\hat{m}}(F)&\geq h(\mu),\\ 
    h_{\hat{m}}(F^{-1}) &\leq h_{\hat{m}}(F^{-1},W_1^-). 
\end{align}
 Also, by Proposition \ref{prop4.3.8}, we have that $h_{\hat{m}}(F^{-1},W_1^-)\leq h(\mu)$. So we get that
\begin{align}
    0 \leq h(\mu) - h_{\hat{m}}(F^{-1},W_1^-)  \leq -h_{\hat{m}}(F^{-1}) + h_{\hat{m}}(F).
\end{align}

But $h_{\hat{m}}(F^{-1}) = h_{\hat{m}}(F)$ and hence $h(\mu) = h_{\hat{m}}(F^{-1},W_1^-)$ which implies that $\nu$ is invariant by Proposition \ref{prop4.3.8}. 

Now, let $\check{\mu}$ be the measure defined by $\check{\mu}(g) = {\mu}(g^{-1})$, clearly $\nu$ is still $\check{\mu}$-invariant. Letting $\check{\omega} = (\dots f_2^{-1},f_1^{-1},f_0^{-1},f_{-1}^{-1},f_{-2}^{-1},\dots )$ we also have that
\begin{align}
W^u(x,\omega,k) = W^s(x,\check{\omega},k),& \ W^s(x,\omega,k) = W^u(x,\check{\omega},k)\\
m_{x,\omega}^{s} = m_{x,\check{\omega}}^{u},& \ \ m_{x,\omega}^{u} = m_{x,\check{\omega}}^{s},\\
\calL^u(x,\omega,0) =& \calL^s(x,\check{\omega},0).
\end{align}
This tells us that $m_{x,\omega}^u$ is also trivial a.e. which completes the proof.
\end{proof}

\begin{proposition}\label{finite}
    If $\Gamma_{\mu}$ is non-elementary, $\nu$ is $\Gamma_{\mu}$-invariant and is not finitely supported, then the conditional measure $m_{x,\omega}^{s}$ on $W^s(x,\omega,k)$ is non-trivial a.e.
\end{proposition}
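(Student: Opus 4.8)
The plan is to prove the contrapositive: assuming $\nu$ is $\Gamma_\mu$-invariant and that the stable conditional measures $m^s_{x,\omega}$ are trivial (equal to a point mass at $x$) for $\hat m$-a.e.\ $(x,\omega,k)$, I would show that $\nu$ is finitely supported. The first step is to reduce ``$\nu$ finitely supported'' to ``$\nu$ has an atom'': if $\nu(\{x_0\})=a>0$ for some $x_0$, then $A\defeq\{x\in X:\nu(\{x\})=a\}$ is finite (its cardinality is at most $1/a$), and for each $x\in A$ the stationarity identity $a=\nu(\{x\})=\int\nu(\{f^{-1}x\})\,d\mu(f)\le a$ forces $f^{-1}x\in A$ for $\mu$-a.e.\ $f$; hence $A$ is $\Gamma_\mu$-invariant, and by ergodicity of $\nu$ we get $\nu=\frac1{|A|}\sum_{x\in A}\delta_x$. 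So it remains to rule out that $\nu$ is non-atomic.

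Next I would run the Ledrappier--Young bookkeeping of Section \ref{sec:breakdown}. Triviality of $m^s_{x,\omega}$ means $\calL^s(x,\omega,0)$ is a single point, so $\dim(E^s(\hat x)\cap T_x\calL^s(\hat x))=0$; then Proposition \ref{prop4.3.8}(3)--(4) (or, equivalently, Proposition \ref{Prop4.3.10} applied to the stable leaf with $N_{x,\omega}=\{x\}$) gives $h_{\hat m}(F^{-1})\le h_{\hat m}(F^{-1},W_1^-)\le h(\mu)$, while Proposition \ref{prop4.3.8}(1)--(2) gives $h_{\hat m}(F)\ge h(\mu)$. Since $h_{\hat m}(F)=h_{\hat m}(F^{-1})$, all of these inequalities are equalities; in particular $h_{\hat m}(F)=h(\mu)$, and substituting this back into Proposition \ref{prop4.3.8}(1) forces $\lambda^+\dim(E^u(\hat x)\cap T_xU^+[\hat x])=0$, hence (as $T_xU^+[\hat x]\subset E^u(\hat x)$ and $\lambda^+>0$) that $d_+=0$ and $U^+(x,\omega,k)=\{e\}$ a.e. With $d_+=0$, Corollary \ref{0.2} (whose proof uses only $d_+=0$) also yields that the unstable conditionals $m^u_{x,\omega}$ are trivial. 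So the problem is reduced to the following: a $\Gamma_\mu$-invariant ergodic hyperbolic $\nu$ with $h_{\hat m}(F)=h(\mu)$ and with \emph{both} leafwise conditional measures trivial cannot be non-atomic.

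This last reduction is where I expect the real difficulty to lie, and where non-elementarity of $\Gamma_\mu$ must enter in an essential way: the statement is false for elementary groups (for a single Anosov diffeomorphism of $\mathbb T^2$ one obtains non-atomic, hyperbolic, invariant measures with trivial stable \emph{and} unstable conditionals, e.g.\ by embedding a Sturmian subshift), so the argument cannot be purely entropic. To finish I would feed the hypotheses $U^+=\{e\}$ together with the hypothetical non-triviality of a leafwise measure back into the quantitative-non-integrability mechanism underlying Lemma \ref{0.1} and \cite{BEF}: a non-atomic $\nu$ with trivial conditionals and vanishing fibered entropy would have to be supported, leafwise, on a $\Gamma_\mu$-invariant object intermediate between a point and the whole surface --- i.e.\ an algebraic curve --- which is excluded by hypothesis; alternatively one can exploit the complex-analytic rigidity of the stable and unstable manifolds (they are injectively immersed copies of $\bbC$) to directly force an atom. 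Packaging this step cleanly, rather than the routine entropy computation, is what I would regard as the crux of the proof.
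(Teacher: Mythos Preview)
Your reduction is sound: the contrapositive, the atom-to-finite-support step, and the entropy bookkeeping (Proposition~\ref{prop4.3.8}) correctly yield $d_+=0$ and, via Corollary~\ref{0.2}, triviality of both $m^s_{x,\omega}$ and $m^u_{x,\omega}$. You also correctly isolate the crux---ruling out a non-atomic $\nu$ with both leafwise conditionals trivial---and correctly note that non-elementarity is essential (your Sturmian example is apt). But you do not prove the crux. Your two suggestions are not arguments: the ``intermediate $\Gamma_\mu$-invariant object'' heuristic does not apply, since trivial conditionals mean the leafwise support \emph{is} a point, not something between a point and a leaf; and ``complex-analytic rigidity of the leaves'' is not turned into any concrete mechanism for producing an atom. ``Feed back into the QNI mechanism of Lemma~\ref{0.1}'' is closer in spirit, but Lemma~\ref{0.1} needs a non-trivial stable conditional to begin with, which is exactly what you have assumed away.

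The paper fills this gap with a direct Benoist--Quint drift argument (Section~\ref{sec:alt1}), parallel to the proof of Proposition~\ref{full}. Because $\nu$ is non-atomic its support is uncountable, so one can choose two nearby generic points $(\hat x,\hat\omega),(\hat y,\hat\omega)$ in a good Lusin set with $\hat y\notin W^s(\hat x,\hat\omega)$; one then flows forward by a large $T$, picks a new $\tilde\omega$, flows back by $m$, chooses a new future $(\omega')^+$ using Corollary~\ref{8.2} (randomness of the stable manifolds, which is where non-elementarity and the no-invariant-curve hypothesis enter), and flows forward by $\ell(m)$ under the time-changed flow. A Bi-Lipschitz estimate on the map $m\mapsto d(m)$ together with Birkhoff and the law-of-last-jump (Lemma~\ref{LLJ}) lands all intermediate points in good sets. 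Sending the initial separation to zero forces $\ell\to\infty$ and produces, in the limit, two distinct points of $\supp\nu$ on a single unstable leaf, both in the support of $m^u$---contradicting the assumed triviality of $m^u_{x,\omega}$. This is the missing engine your proposal lacks.
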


Proposition \ref{finite} will take work and its proof is given in Section \ref{sec:alt1}.

\begin{proposition}\label{endfinite}
    In the case $d_+=0$, the measure $\nu$ is finitely supported.
\end{proposition}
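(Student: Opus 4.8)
The plan is to combine the two preceding statements, Corollary \ref{0.2} and Proposition \ref{finite}, to close the case $d_+ = 0$.

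First I would invoke Corollary \ref{0.2}: in the case $d_+ = 0$, it gives that $\nu$ is $\Gamma_{\mu}$-invariant and that the stable conditional measures $m^s_{x,\omega}$ on $W^s(x,\omega,k)$ are trivial for $\hat{m}$-a.e.\ $(x,\omega,k)$. Next I would argue by contradiction: suppose $\nu$ is not finitely supported. Since $\Gamma_{\mu}$ is non-elementary and $\nu$ is $\Gamma_{\mu}$-invariant and not finitely supported, Proposition \ref{finite} applies and yields that the stable conditional measure $m^s_{x,\omega}$ on $W^s(x,\omega,k)$ is \emph{non-trivial} a.e. This directly contradicts the triviality of the stable conditionals obtained from Corollary \ref{0.2}. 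Hence $\nu$ must be finitely supported.

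I do not anticipate a genuine obstacle here, since the proposition is essentially the logical conjunction of two results proven elsewhere in the paper; the only point requiring minor care is checking that the hypotheses of Proposition \ref{finite} are met — namely that $\Gamma_{\mu}$ is non-elementary (a standing assumption) and that $\nu$ is $\Gamma_{\mu}$-invariant (supplied by Corollary \ref{0.2}) — and that "not finitely supported" is exactly the negation we are assuming for contradiction. The real work has been deferred to the proofs of Lemma \ref{0.1} (hence Corollary \ref{0.2}) in Section \ref{sec:twolem} and of Proposition \ref{finite} in Section \ref{sec:alt1}, so this proof itself is a short wrap-up.

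Concretely, the write-up would read: assume for contradiction that $\nu$ is not finitely supported. By Corollary \ref{0.2}, since $d_+ = 0$, the measure $\nu$ is $\Gamma_{\mu}$-invariant and the stable conditionals $m^s_{x,\omega}$ are trivial a.e. But then Proposition \ref{finite}, whose hypotheses ($\Gamma_{\mu}$ non-elementary, $\nu$ invariant, $\nu$ not finitely supported) are all satisfied, forces $m^s_{x,\omega}$ to be non-trivial a.e., a contradiction. Therefore $\nu$ is finitely supported, which is Alternative 1 of Theorem \ref{bigthmeasy}.
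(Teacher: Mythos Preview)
Your proposal is correct and matches the paper's approach exactly: the paper's proof simply states that the result ``follows immediately from Corollary \ref{0.2} and Proposition \ref{finite},'' and your contradiction argument is precisely the natural unpacking of that sentence.
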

\begin{proof}
    This follows immediately from Corollary \ref{0.2} and Proposition \ref{finite}.
\end{proof}

\subsubsection{Alternative 2}
\label{sec:alt22}
The second alternative of Theorem \ref{bigthmeasy} is the $d_+=1$ case.

\begin{lemma} \label{0.5}
    If $\dim(\calL^s)=2$ a.e. then Definition \ref{QNIdef}, random QNI, holds.
\end{lemma}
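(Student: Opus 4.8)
The plan is to argue by contradiction using the Ledrappier–Young-type formula of Proposition \ref{prop4.3.8} together with the supporting result Proposition \ref{Prop4.3.10}, in the same spirit as the proof of Corollary \ref{0.2}. Suppose $\dim(\calL^s) = 2$ a.e.\ but random QNI fails for the compatible family $U^+(x,\omega,k)$ coming from Theorem \ref{4.1.5}. The first step is to observe that when $\dim(\calL^s) = 2$, the stable conditional measure $m^s_{x,\omega}$ has full (real) dimension inside the two-real-dimensional curve $W^s(x,\omega) \cong \bbC$; in particular its support is not a proper subvariety, so $E^s(\hat x) \cap T_x\calL^s(\hat x)$ is one-complex-dimensional and $\dim(E^s(\hat x) \cap T_x\calL^s(\hat x)) = 1$ in the real count used in Proposition \ref{prop4.3.8}(3). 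Wait — I should be careful about the convention: since $W^s$ is a complex curve, $E^s$ is one complex dimensional, and I will take the relevant dimension to be $1$; the point is simply that it is as large as possible.

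Next I would run the entropy inequalities. Since $\nu$ is $\mu$-stationary we always have $h_{\hat m}(F, W_1^+) = h(\mu)$ by Proposition \ref{prop4.3.8}(2). By the symmetry trick from Corollary \ref{0.2} (passing to $\check\mu$ and $\check\omega$), the hypothesis $\dim(\calL^s) = 2$ is equivalent to $\dim(\calL^u) = 2$, so the unstable conditional $m^u_{x,\omega}$ is a finite measure on the full curve $W^u(x,\omega)$ with support of full dimension. Applying Proposition \ref{Prop4.3.10} with $N_{x,\omega} = W^u(x,\omega)$ (the whole unstable manifold, which is trivially an embedded submanifold), we get $h_{\hat m}(F) \le h(\mu) + \lambda^+$. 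I then combine this with the lower bound $h_{\hat m}(F) \ge h_{\hat m}(F, W_1^+) + \lambda^+ \dim(E^u(\hat x) \cap T_xU^+[\hat x]) = h(\mu) + \lambda^+ \dim(E^u \cap T_xU^+)$ from Proposition \ref{prop4.3.8}(1). Since $d_+ = 1$ forces $T_xU^+$ to be one-complex-dimensional inside the complex curve $W^u$, hence equal to $E^u$, we get $\dim(E^u \cap T_xU^+) = 1$ and therefore $h_{\hat m}(F) = h(\mu) + \lambda^+$, i.e.\ equality holds in Proposition \ref{Prop4.3.10}. The equality clause then says $m^u_{x,\omega}$ is equivalent to Lebesgue on $W^u(x,\omega)$; symmetrically $m^s_{x,\omega}$ is equivalent to Lebesgue on $W^s(x,\omega)$.

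Now I would derive the contradiction. The content of the lemma is that QNI cannot fail in this full-dimensional regime: with $U^+[x,\omega,k]$ an open set in $W^u$ and $W^{cs}$ a complementary full codimension object, the transversality quantity $d_X(U^+_q[\hat z_{1/2}], W^{cs}_q(\hat y_{1/2}))$ being forced to be exponentially small on a large-measure set of triples is incompatible with $m^u$, $m^s$ being absolutely continuous and $U^+$ sweeping out all of $W^u$ — essentially because in this situation the (local) unstable and stable manifolds already foliate a full-measure chunk of $X$ with absolutely continuous conditionals, so $\hat m$ is absolutely continuous with respect to volume and there is no room for a positive-codimension coincidence locus of exponentially small size. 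Concretely I expect to invoke the Fubini/absolute-continuity structure: $\hat m^u_{\hat x} \times \hat m^s_{\hat x}$ locally models $\hat m$ up to a positive density, so the QNI failure would force a volume-positive set of $(\hat y, \hat z)$ with $W^{cs}(\hat y)$ meeting $U^+[\hat z]$ anomalously closely, contradicting that these are two transverse absolutely continuous laminations.

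The main obstacle I anticipate is making this last step rigorous rather than heuristic: one has to turn "$m^u$ and $m^s$ are Lebesgue, hence there is no QNI obstruction" into an actual proof that Definition \ref{QNIdef} is satisfied, which means producing the sets $S$, $S'$ and the constants $C$, $\alpha_0$ explicitly from the absolute continuity. I would handle this by reducing, via the compatibility of the family (Definition \ref{compat}(2)) and the explicit product structure of $\hat m^u_{x,\omega,k}$ and $\hat m^s_{x,\omega,k}$ in \eqref{measu}–\eqref{meass}, to a statement purely about Lebesgue measure on $\bbC$ in normal form coordinates, where transversality of two full-dimensional absolutely continuous families makes the required lower bound $d_X(\cdot,\cdot) \ge Ce^{-\alpha_0 \ell}$ automatic on the stated large-measure subsets (indeed with no exponential loss needed, any $\alpha_0 > 0$ works). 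This normal-form reduction is the same machinery used elsewhere in Section \ref{sec:twolem}, so it should be available; the bookkeeping of which conull sets and compact sets to intersect is the fiddly part, not any genuinely new idea.
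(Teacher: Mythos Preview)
Your proposal has genuine gaps that prevent it from working. The entropy argument in your steps 1--4 is flawed in several places. All dimensions in Propositions \ref{prop4.3.8} and \ref{Prop4.3.10} are real (compare the proofs of Lemma \ref{0.6} and Proposition \ref{3alt}): when $d_+=1$ the space $T_xU^+$ is a real line inside the real-$2$-dimensional $E^u$, not a complex line equal to $E^u$; and when $\dim(\calL^s)=2$ one has $T_x\calL^s=E^s$, so $\dim(E^s\cap T_x\calL^s)=2$, not $1$. More damagingly, even if you had $\dim(\calL^u)=2$, Proposition \ref{Prop4.3.10} with $N_{x,\omega}=W^u(x,\omega)$ gives only $h_{\hat m}(F)\le h(\mu)+2\lambda^+$, so the squeeze against the lower bound $h(\mu)+\lambda^+$ from Proposition \ref{prop4.3.8}(1) does not force equality. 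The $\check\mu$ symmetry you invoke to pass from $\calL^s$ to $\calL^u$ requires $\nu$ to be $\mu$-invariant (so that $m=\nu\times\mu^{\bbZ}$ agrees with the $\check\mu$-side measure); invariance is established in Lemma \ref{0.6} \emph{using} Lemma \ref{0.5}, so this is circular. Finally, none of these manipulations actually use the assumption ``QNI fails,'' so they could not constitute a proof by contradiction even if the numerics worked.

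The part you flag as ``the main obstacle'' is in fact the entire content of the lemma, and the paper's proof is a direct construction of the data in Definition \ref{QNIdef}, not an entropy argument. The mechanism is a dimension count: with $\hat z_{1/2}$ ranging over the real-$2$-dimensional $W^s(\hat x_{1/2})$ and with $U^+[\hat z_{1/2}]$ a real line, the set of $z$ for which the linearized line $L_{z,v}$ meets the linearized plane $Q$ tangent to $W^s(\hat y_{1/2})$ is a single line in $P=E^s(\hat x_{1/2})$ (Lemma \ref{firstap}). One then controls the error from linearization and from the H\"older variation of the direction $v$ along $W^s(\hat x_{1/2})$ via the stable holonomy $P^-$ (Section \ref{stableholon}), so the bad locus lies in a thin tube around a degree-one curve in $W^s(\hat x_{1/2})$. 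The hypothesis $\dim(\calL^s)=2$ enters precisely here, through Lemmas \ref{algcurve} and \ref{algcor}: $m^s_{x,\omega}$ assigns no mass to algebraic curves, so this tube can be excised at cost $\le\delta/4$, yielding a legitimate $S'$ on which the QNI lower bound holds. Your heuristic about ``two transverse absolutely continuous laminations'' misses the point: absolute continuity of $m^s$ is not assumed (it is part of the \emph{conclusion} of Lemma \ref{0.6}); what is used is that $m^s$ is genuinely $2$-dimensional in the sense of not charging curves, which is exactly what lets one discard the $1$-dimensional bad set.
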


We will prove this in Section \ref{sec:twolem}.

\begin{lemma}\label{0.6}
If $d_+=1$, then $\nu$ is $\Gamma_{\mu}$-invariant and  $\calL^u(x,\omega,0) = U^+[x,\omega,k]$.
\end{lemma}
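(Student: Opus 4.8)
The strategy mirrors the proof of Corollary~\ref{0.2}: combine the non-QNI hypothesis (which comes from our choice of $U^+(x,\omega,k)$ via Theorem~\ref{4.1.5}) with the Ledrappier--Young inequalities of Proposition~\ref{prop4.3.8} to squeeze the entropies, and then read off both conclusions. First I would observe that since $d_+=1$, the unstable manifold $W^u(x,\omega)$ is a $1$-dimensional complex (hence real $2$-dimensional) curve, and $U^+[x,\omega,k]$ is a $1$-dimensional orbit inside it; by property (2) of Definition~\ref{compat} the conditional measure $m^u_{x,\omega}$ restricted to $U^+[x,\omega,k]\cap\eta^u_\omega(x)$ is proportional to Haar, so in particular $m^u_{x,\omega}$ is non-trivial and, being supported on the orbit $U^+[x,\omega,k]$, its Zariski closure $\calL^u(x,\omega,0)$ contains $U^+[x,\omega,k]$. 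Since $U^+$ is itself Zariski closed (it is a unipotent algebraic subgroup acting in normal form coordinates, and its orbit on $\bbC$ is an affine line or a point) and one-dimensional, we already have $U^+[x,\omega,k]\subseteq\calL^u(x,\omega,0)$ with both at most $1$-dimensional; the remaining content is to rule out $\dim\calL^u(x,\omega,0)=2$ while $\dim T_xU^+=1$.

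The entropy argument runs as follows. Apply Proposition~\ref{prop4.3.8}(1),(2) to get $h_{\hat m}(F)\geq h(\mu)+\lambda^+\dim(E^u(\hat x)\cap T_xU^+[\hat x]) = h(\mu)+\lambda^+$, using $d_+=1$ and that $U^+[\hat x]\subset W^u$ so its tangent space lies in $E^u$. Apply Proposition~\ref{prop4.3.8}(3),(4) to get $h_{\hat m}(F^{-1})\leq h_{\hat m}(F^{-1},W_1^-)-\lambda^-\dim(E^s(\hat x)\cap T_x\calL^s(\hat x))\leq h(\mu)-\lambda^-\dim(E^s(\hat x)\cap T_x\calL^s(\hat x))$. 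Since $h_{\hat m}(F)=h_{\hat m}(F^{-1})$, subtracting gives
\begin{align}
    \lambda^+ \leq -\lambda^-\dim\bigl(E^s(\hat x)\cap T_x\calL^s(\hat x)\bigr) + \bigl(h(\mu)-h_{\hat m}(F^{-1},W_1^-)\bigr),
\end{align}
and since the last parenthesis is $\geq 0$ and $\dim(E^s\cap T_x\calL^s)\leq \dim W^s = 2$, we learn in particular that $\calL^s$ cannot be forced to be $0$-dimensional; more precisely, if $\dim\calL^s<2$ then $\dim(E^s\cap T_x\calL^s)\leq 1$, and I would push the inequality to its limit. Because $\lambda^++\lambda^-\geq 0$, i.e. $\lambda^+\geq-\lambda^->0$, the displayed inequality forces $\dim(E^s(\hat x)\cap T_x\calL^s(\hat x))\geq 1$ and, combined with the nonnegativity of $h(\mu)-h_{\hat m}(F^{-1},W_1^-)$, pins down $\dim\calL^s$; if it forced $\dim\calL^s=2$ then Lemma~\ref{0.5} would say random QNI holds, contradicting our choice of $U^+$. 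Hence $\dim\calL^s=1$, which then forces the parenthetical entropy defect to vanish, i.e. $h(\mu)=h_{\hat m}(F^{-1},W_1^-)$, which by Proposition~\ref{prop4.3.8}(4) gives $\mu$-invariance of $\nu$; and it forces all the inequalities in Proposition~\ref{prop4.3.8}(1)--(4) to be equalities, which is what will let me identify $\calL^u$ with $U^+[x,\omega,k]$.

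For the identification $\calL^u(x,\omega,0)=U^+[x,\omega,k]$: having shown $\nu$ is invariant, I apply the same argument to the reflected data $\check\mu$, $\check\omega$ as in the end of the proof of Corollary~\ref{0.2}, under which $W^u(x,\omega,k)=W^s(x,\check\omega,k)$, $m^u_{x,\omega}=m^s_{x,\check\omega}$ and $\calL^u(x,\omega,0)=\calL^s(x,\check\omega,0)$; the conclusion $\dim\calL^s=1$ for the $\check\mu$-system reads $\dim\calL^u(x,\omega,0)=1$. Since $U^+[x,\omega,k]\subseteq\calL^u(x,\omega,0)$ and both are irreducible $1$-dimensional (the orbit of a connected unipotent group is irreducible, and $\calL^u$ is the Zariski closure of the support of a measure whose restriction to $U^+[\hat x]$ is Haar, concentrated on that single orbit), equality of dimensions of one containing the other gives $\calL^u(x,\omega,0)=U^+[x,\omega,k]$. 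The main obstacle I anticipate is the bookkeeping in the entropy chain: one must be careful that $\dim(E^u\cap T_xU^+)$ really equals $d_+=1$ (it does, because $U^+[\hat x]$ is an orbit inside the unstable manifold $W^u$, whose tangent space is exactly $E^u$ in this surface setting), and that the strict inequality $\lambda^++\lambda^->0$ versus the boundary case $\lambda^++\lambda^-=0$ is handled — in the boundary case the squeeze still works because the term $h(\mu)-h_{\hat m}(F^{-1},W_1^-)$ is available as slack, but one should check the logic closes without needing $\lambda^++\lambda^->0$ strictly, matching the hypothesis $\lambda^++\lambda^-\geq 0$ of Theorem~\ref{bigthmeasy}. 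A secondary subtlety is making the ``Zariski closure'' talk rigorous, which per the remark after the definition of $\calL^s$ is done in normal form coordinates; I would invoke that reduction rather than re-derive it.
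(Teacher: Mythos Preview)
Your proposal is correct and follows the same approach as the paper's proof. One point to clean up: the claim that your displayed inequality alone ``forces $\dim(E^s\cap T_x\calL^s)\geq 1$'' is not justified (if $\dim=0$ it reads $\lambda^+\leq h(\mu)-h_{\hat m}(F^{-1},W_1^-)$, which need not fail); the paper instead invokes Lemma~\ref{0.5} \emph{first} to get $\dim\calL^s\leq 1$, then chains $h(\mu)+\lambda^+\leq h_{\hat m}(F)=h_{\hat m}(F^{-1})\leq h_{\hat m}(F^{-1},W_1^-)-\lambda^-\leq h(\mu)-\lambda^-$ to obtain $\lambda^++\lambda^-\leq 0$, whence equality throughout yields both $\dim\calL^s=1$ and invariance simultaneously---after which the $\check\mu$ reflection and the inclusion $U^+[x,\omega,k]\subset\calL^u$ from Definition~\ref{compat}(2) finish exactly as you outline.
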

\begin{proof}
We use a proof from~\cite[Section 4]{BEF}  similarly to the way we did in our proof of Corollary \ref{0.2}.

We have that $h(\mu)<\infty$ and by Proposition \ref{prop4.3.8}, $h(\mu) = h_{\hat{m}}(F,W_1^+)$. We now consider the equations from (1) and (3) of Proposition \ref{prop4.3.8}. Lemma \ref{0.5} gives us that $\dim(T_x\calL^s)\leq 1$, also $d_+=1$ and so $\dim(T_xU^+)=1$. Now our equations simplify to be
\begin{align*}
    h_{\hat{m}}(F)&\geq h(\mu) + \lambda^+,\\
    h_{\hat{m}}(F^{-1}) &\leq h_{\hat{m}}(F^{-1},W_1^-) - \lambda^-\dim( T_x\calL^s) \leq h_{\hat{m}}(F^{-1},W_1^-) - \lambda^-.
\end{align*}
Since $h_{\hat{m}}(F^{-1},W_1^-) \leq h(\mu)$ (Proposition \ref{prop4.3.8}) and additionally, $\lambda^++\lambda^-\geq 0$ we have that
\begin{align}
    0\leq h(\mu)-h_{\hat{m}}(F^{-1},W_1^-) + \lambda^- + \lambda^+ \leq h_{\hat{m}}(F) - h_{\hat{m}}(F^{-1}).
\end{align}
The fact that $h_{\hat{m}}(F) = h_{\hat{m}}(F^{-1})$ gives us that $h(\mu)=h_{\hat{m}}(F^{-1},W_1^-)$ and so again,~$\nu$ is invariant by Proposition~\ref{prop4.3.8}.

Further, we see that
\begin{align}\label{this}
    h(\mu)  -\lambda^-\dim(T_x\calL^s) = h_{\hat{m}} (F^{-1}) = h(\mu) + \lambda^+,
\end{align}
This forces $\dim(\calL^s)=1$.

The equality in Equation \eqref{this} also allows us to use Proposition \ref{Prop4.3.10} to say that~$m^s_{x,\omega}$ is equivalent to Lebesgue on $\calL^s$. 

Let $\check{\mu}$ be the measure defined by $\check{\mu}(g) = {\mu}(g^{-1})$, clearly $\nu$ is still $\check{\mu}$-invariant. Letting $\check{\omega} = (\dots f_2^{-1},f_1^{-1},f_0^{-1},f_{-1}^{-1},f_{-2}^{-1},\dots )$ we also have that
\begin{align*}
W^u(x,\omega,k) = W^s(x,\check{\omega},k),& \ W^s(x,\omega,k) = W^u(x,\check{\omega},k),\\
m_{x,\omega}^{s} = m_{x,\check{\omega}}^{u},& \ \ m_{x,\omega}^{u} = m_{x,\check{\omega}}^{s},\\
\calL^u(x,\omega,0) =& \calL^s(x,\check{\omega},0).
\end{align*}
So our argument also gives us that for a.e. $(x,\omega,k)\in Z$, $m_{x,\omega}^{u}$ is absolutely continuous to Lebesgue on $\calL^u$ which is also 1-dimensional. Now note that our definition of subgroups compatible with the measure $\hat{m}$ item (2). of Definition \ref{compat} tells us that $U^+[x,\omega,k]\subset \calL^u(x,\omega,0)$, but both are 1-dimensional, thus $U^+[x,\omega,k] = \calL^u(x,\omega,0)$. In particular, $U^+[x,\check{\omega},k] = \calL^u(x,\check{\omega},0)= \calL^s(x,\omega,0)$. We therefore set $U^-(x,\omega,k) \defeq U^+(x,\check{\omega},k)$, and then clearly $\calL^s(x,\omega,0) = U^-[x,\omega,k]$. This completes the proof.
\end{proof}

In this alternative, we also have the statement of `joint integrability':
\begin{proposition}\label{full}
    For $m$-a.e. $(\hx,\ho)\in Y$, there exists a neighbourhood $U$ of $\hx$ in $X$ such that for $\nu$-a.e. $\hy\in U$ we have that $U_{loc}^+[\hx,\ho]\cap U^-_{loc}[\hy,\ho]\neq \emptyset$ and $U^-_{loc}[\hx,\ho]\cap U^+_{loc}[\hy,\ho]\neq \emptyset$. 
\end{proposition}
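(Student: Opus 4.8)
The plan is to exploit the ``Benoist--Quint'' type argument underlying the classification: in Alternative 2 we already know from Lemma~\ref{0.6} that $\nu$ is $\Gamma_\mu$-invariant, that $d_+ = \dim_{\bbR} U^+(x,\omega,k) = 1$, that $\calL^u(x,\omega,0) = U^+[x,\omega,k]$ and $\calL^s(x,\omega,0) = U^-[x,\omega,k]$, and that the stable and unstable conditional measures $m^s_{x,\omega}$, $m^u_{x,\omega}$ are absolutely continuous with respect to Lebesgue on these real $1$-dimensional curves. Joint integrability is exactly the statement that the $U^+$ and $U^-$ foliations close up locally on a $\nu$-full set, so the natural strategy is to \emph{derive it from the failure of random QNI} for the chosen compatible family: if the $U^\pm$ webs did \emph{not} close up, one could propagate transverse separation between a generic $U^+$-orbit and the center-stable manifold of a generic nearby point, and this is precisely the quantitative estimate $d_X(U^+_q[\hz_{1/2}], W^{cs}_q(\hy_{1/2})) \geq C e^{-\alpha_0 \ell}$ that Definition~\ref{QNIdef} forbids. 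Thus I would argue by contradiction: assume the joint integrability conclusion fails on a set of positive measure and manufacture a QNI-type lower bound, contradicting the choice of $U^+(x,\omega,k)$ from Theorem~\ref{4.1.5}.

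The key steps, in order, are as follows. First I would set up the local picture: using normal form coordinates (Section~\ref{sec:NFC}) identify $W^u(x,\omega)$ and $W^s(x,\omega)$ with $\bbC$, so that $U^+[x,\omega,k]$ and $U^-[x,\omega,k]$ become (orbits of) unipotent one-parameter subgroups, i.e. affine lines in these coordinates, and the local versions $U^+_{loc}$, $U^-_{loc}$ are honest smooth real curves through $\hx$. Second, I would show that for $m$-a.e.\ $(\hx,\ho)$ and $\nu$-a.e.\ nearby $\hy$ the \emph{transverse intersection} $U^+_{loc}[\hx,\ho]\cap U^-_{loc}[\hy,\ho]$ is automatically nonempty for dimension reasons: the ambient surface is real $4$-dimensional, $E^u$ and $E^s$ are complementary $2$-planes, and $U^+_{loc}[\hx]$ (real $1$-dimensional, tangent to a line in $E^u$) together with the center-stable manifold $W^{cs}_{loc}(\hy)$ (real $3$-dimensional, tangent to $E^s \oplus$ (the $E^u$-direction transverse to $U^+$)) meet transversally; the point is that $W^{cs}_{loc}(\hy)$ contains $U^-_{loc}[\hy]$, so one must upgrade the intersection $U^+_{loc}[\hx]\cap W^{cs}_{loc}(\hy)$ to an intersection with $U^-_{loc}[\hy]$ itself. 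Third --- and this is the heart --- I would run the contradiction: suppose on a positive-$\hat m$-measure set the intersection $U^+_{loc}[\hx]\cap U^-_{loc}[\hy]$ is empty (equivalently, the intersection point with $W^{cs}_{loc}(\hy)$ lies off $U^-_{loc}[\hy]$ by a definite amount). Pushing forward by $F^{\ell/2}$ and pulling back by $F^{-\ell/2}$ as in Definition~\ref{QNIdef}, the $U^+$-orbit expands by $e^{\lambda^+\ell/2}$ while the distance to $W^{cs}$ is measured after the stable contraction by $e^{\lambda^-\ell/2}$; combining with $\lambda^+ + \lambda^- \geq 0$ and the absolute continuity of $m^u, m^s$ (which lets one find the large-measure subsets $S$, $S'$ required by QNI using a Fubini/Lusin argument and the regularity of normal forms), one produces exactly the estimate $d_X(U^+_q[\hz_{1/2}], W^{cs}_q(\hy_{1/2}))\geq C e^{-\alpha_0\ell}$. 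That contradicts Theorem~\ref{4.1.5}, forcing the intersections to be nonempty $\nu$-a.e.; the second statement ($U^-_{loc}[\hx]\cap U^+_{loc}[\hy]\neq\emptyset$) follows by the same argument applied to $\check\mu$ and $\check\omega$, exchanging the roles of $U^+$ and $U^-$ via $U^-(x,\omega,k) = U^+(x,\check\omega,k)$.

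The main obstacle I anticipate is the third step: converting ``failure of local closing-up of the two webs'' into a \emph{quantitative, uniformly-over-a-large-measure-set} separation estimate of exactly the form in Definition~\ref{QNIdef}. The difficulties are (a) the $U^\pm$ are only \emph{measurable} families of subgroups, so the curves $U^\pm_{loc}[\cdot]$ vary only measurably and one must pass to a Lusin-type compact set on which they vary continuously and with uniform size; (b) one needs the nonatomicity/absolute continuity of $m^s_{x,\omega}$, $m^u_{x,\omega}$ (from Lemma~\ref{0.6}) to guarantee that the subsets $S \subset \eta^u_{\ell/2}[\hat x_{1/2}]\cap\hat U^+[\hat x_{1/2}]$ and $S'\subset\eta^s_{\ell/2}[\hat x_{1/2}]$ of relative measure $\geq 1-\delta$ actually exist while still carrying the separation; and (c) the bookkeeping of how the normal-form linearization distorts distances under $F^{\pm\ell/2}$, which requires the higher moment conditions of Remark~\ref{higher} and the tempered regularity of the normal-form charts, to ensure the contraction/expansion rates that enter the estimate are genuinely $\lambda^\pm$ up to $o(\ell)$ errors absorbed into $\alpha_0$. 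Once these technical points are handled, the contradiction with Theorem~\ref{4.1.5} is immediate, and joint integrability follows.
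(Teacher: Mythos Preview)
Your proposal has a genuine gap at the heart of step three, and the paper in fact proves Proposition~\ref{full} by a completely different mechanism.

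The gap is the implication ``failure of $U^+_{loc}[\hx]\cap U^-_{loc}[\hy]\neq\emptyset$ $\Rightarrow$ random QNI holds''. The QNI estimate in Definition~\ref{QNIdef} bounds $d_X(U^+_q[\hz_{1/2}],W^{cs}_q(\hy_{1/2}))$ from below, and in this setting the $X$--projection of $W^{cs}(\hy_{1/2})$ is simply the complex--$1$--dimensional stable leaf $W^s(\hy_{1/2})$, not the real $3$--manifold you describe (there is no further dominated splitting of $E^u$ producing such an object; $U^+$ is only a measurable real line inside the holomorphic $E^u$). Since $U^-_{loc}[\hy]$ is a \emph{real} $1$--curve strictly contained in the \emph{real} $2$--dimensional $W^s_{loc}(\hy)$, it is entirely possible that $U^+_{loc}[\hx]$ meets $W^s_{loc}(\hy)$ (so the QNI distance is zero) while missing $U^-_{loc}[\hy]$. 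Thus your ``equivalently, the intersection point with $W^{cs}_{loc}(\hy)$ lies off $U^-_{loc}[\hy]$ by a definite amount'' does not translate into $d_X(U^+,W^{cs})>0$, and no contradiction with Theorem~\ref{4.1.5} follows. Relatedly, your second--step transversality count fails for the same reason: $U^+_{loc}[\hx]$ is real $1$--dimensional and $W^s_{loc}(\hy)$ is real $2$--dimensional inside a real $4$--manifold, so their dimensions sum to $3$ and they do \emph{not} generically intersect.

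The paper does not contradict Theorem~\ref{4.1.5} at all here. It first reduces Proposition~\ref{full} to the weaker statement (Proposition~\ref{U-}) that $U^+_{loc}[\hy,\ho]\cap W^s_{loc}(\hx,\ho)\neq\emptyset$ for a.e.\ nearby $\hy$, together with its flipped and time--reversed versions; the upgrade to intersections with $U^\pm$ comes only after combining these. To prove Proposition~\ref{U-}, the paper assumes the NJI condition $U^+_{loc}[\hy,\ho]\cap W^s_{loc}(\hx,\ho)=\emptyset$ on a positive--measure set and runs a Benoist--Quint/Eskin--Mirzakhani ``exponential drift'' argument: one flows the pair $(\hx,\hy)$ forward by a large time $T$ to make the two unstable planes nearly coincide, switches to a new $\tilde\omega$, flows backward by a carefully chosen $m$, changes the future, and then flows forward by a time $\ell(m)$ under the time--changed suspension flow, with the ``legs'' of a $V$--shaped diagram matched via bi--Lipschitz maps $d_1,d_2$ and a martingale (``law of last jump'') argument to land all endpoints in good sets. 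Sending $\rho\to 0$ (hence $\ell\to\infty$), one produces in the limit \emph{two} distinct real support lines inside a \emph{single} unstable leaf at a point of an arbitrary compact set $K_{00}\subset\calW^c$. This forces $\calL^u\supsetneq U^+$ on a positive--measure set, contradicting Lemma~\ref{0.6}, not Theorem~\ref{4.1.5}.
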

  Here the subscript $loc$ means it is of size $q(x,\omega)$ as defined in Section \ref{sec:stablemfld}. Proposition \ref{full} will be proven in Section \ref{sec:Sect3}.

\subsubsection{Alternative 3}

The third alternative in Theorem \ref{bigthmeasy} comes from the final case that $d_+=2$.

\begin{proposition}\label{3alt}
    The stable and unstable conditional measures are absolutely continuous with respect to Lebesgue.
\end{proposition}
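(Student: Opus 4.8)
Proof proposal for Proposition \ref{3alt} ($d_+ = 2$ case).

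The plan is to run the same entropy bookkeeping used in the proofs of Corollary \ref{0.2} and Lemma \ref{0.6}, but now with the maximal possible dimension $d_+ = 2$. First I would observe that since $d_+ = 2$ and $U^+[x,\omega,k] \subset W^u(x,\omega)$ with $\dim_{\bbR} W^u(x,\omega) = 2$, the orbit $U^+[x,\omega,k]$ is open in $W^u(x,\omega)$; hence by item (2) of Definition \ref{compat} the unstable conditional measure $m^u_{x,\omega}$, being proportional to Haar on the $U^+$-orbits and partitioned by them, is (locally) proportional to the $U^+$-invariant measure on a $2$-dimensional orbit. Since $W^u(x,\omega)$ is biholomorphic to $\bbC$ and the subresonant group acts by (real) affine-type transformations in normal form coordinates, the Haar measure on a $2$-dimensional orbit is equivalent to Lebesgue on $W^u(x,\omega)$. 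This already gives absolute continuity of $m^u_{x,\omega}$; I would record it but also rederive it via entropy so that the stable side comes out in parallel.

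Next I would feed $\dim(E^u(\hat x) \cap T_x U^+[\hat x]) = 2$ into Proposition \ref{prop4.3.8}(1)–(2), giving
\[
h_{\hat m}(F) \ \geq\ h(\mu) + 2\lambda^+ .
\]
For the stable side I want the matching upper bound. Here I need to know $\dim(\calL^s) = 2$: indeed Lemma \ref{0.5} says that if $\dim(\calL^s) = 2$ then random QNI holds, and since we chose $U^+$ from Theorem \ref{4.1.5} for which random QNI fails, this route is blocked — so actually $\dim(\calL^s) \leq 1$ is what Lemma \ref{0.5} would force, which looks like a problem. The resolution I would pursue: when $d_+ = 2$ the unstable conditional is fully $2$-dimensional and absolutely continuous, so by passing to the reflected measure $\check\mu$ and $\check\omega$ (as in the last paragraphs of Corollary \ref{0.2} and Lemma \ref{0.6}) the \emph{stable} conditional of the original system equals the \emph{unstable} conditional of the $\check\mu$-system. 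Thus it suffices to prove $\check d_+ \defeq \dim_\bbR U^+(x,\check\omega,k) = 2$ as well, i.e. that the two "sides" have the same $U$-dimension. I expect this to follow from the entropy symmetry $h_{\hat m}(F) = h_{\hat m}(F^{-1})$: running Proposition \ref{prop4.3.8}(1) for $F$ and for $F^{-1}$ (the latter via $\check\omega$) and combining with $h_{\hat m}(F^{-1},W_1^-)\le h(\mu)$, $h_{\hat m}(F,W_1^+) = h(\mu)$, one gets
\[
h(\mu) + \lambda^+ \check d_+ \ \le\ h_{\hat m}(F^{-1}) \ =\ h_{\hat m}(F)
\]
and, using Proposition \ref{Prop4.3.10} together with $\lambda^+ + \lambda^- \ge 0$, a chain of inequalities that can only close up when $\check d_+ = 2$ and $h_{\hat m}(F^{-1},W_1^-) = h(\mu)$ — the latter again giving $\mu$-invariance of $\nu$ via Proposition \ref{prop4.3.8}(4), consistent with the statement of Theorem \ref{bigthmeasy}.

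Once both $d_+ = 2$ and $\check d_+ = 2$ are in hand, I would invoke Proposition \ref{Prop4.3.10} on each side: the unstable conditional $m^u_{x,\omega}$ is supported on the embedded submanifold $W^u(x,\omega)$ itself (dimension $2$), and the entropy equality $h_{\hat m}(F) = h(\mu) + 2\lambda^+$ is precisely the equality case in Proposition \ref{Prop4.3.10}, so $m^u_{x,\omega}$ is equivalent to Lebesgue on $W^u(x,\omega)$; symmetrically (via $\check\mu$, $\check\omega$) $m^s_{x,\omega}$ is equivalent to Lebesgue on $W^s(x,\omega)$. Finally, since the holonomy between stable and unstable leaves is absolutely continuous (standard Pesin theory, available here as quoted from \cite{CD}, Section 7), the local product structure of $m$ together with Lebesgue-equivalence of both families of conditionals upgrades to absolute continuity of $\nu$ itself with respect to the volume form on $X$ — which is alternative 3 of Theorem \ref{bigthmeasy}.

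\textbf{Main obstacle.} The delicate point is the middle step: Lemma \ref{0.5} seems to say $\dim(\calL^s)\le 1$ whenever random QNI fails, which would superficially contradict wanting a $2$-dimensional stable conditional. The way out is that $\calL^s$ is the Zariski closure of the \emph{stable} conditional while the obstruction in Theorem \ref{4.1.5} is phrased via the \emph{unstable} subresonant group $U^+$; the symmetry $F \leftrightarrow F^{-1}$ (equivalently $\mu \leftrightarrow \check\mu$) must be used to transfer the $d_+=2$ hypothesis to the stable side, and one has to check carefully that Theorem \ref{4.1.5} applied to $\check\mu$ yields a family with $\check d_+ = 2$ — i.e. that "$d_+ = 2$" is genuinely a symmetric condition. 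Getting this transfer right, and making sure the entropy inequalities actually force $\check d_+ = 2$ rather than merely $\check d_+ \ge 1$, is where the real work lies; everything after that is a routine application of the Ledrappier–Young equality case and absolute continuity of Pesin holonomies.
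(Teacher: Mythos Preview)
Your ``main obstacle'' is a phantom, and it led you down an unnecessarily complicated detour. Two points:

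First, Lemma~\ref{0.5} is stated and proved under the standing hypothesis $d_+=1$; see the opening line of its proof (``We are given that $\dim(U^+[x,\omega,k])=1$ a.e.'') and its placement in Section~\ref{sec:alt22}. It simply does not apply in the $d_+=2$ case, so there is no tension to resolve.

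Second, and more to the point, you do \emph{not} need to know $\dim(\calL^s)=2$ a priori in order to run the entropy chain. You only need the trivial bound $\dim(\calL^s)\le 2$, which holds because $\calL^s(x,\omega,0)\subset W^s(x,\omega)$ and $\dim_\bbR W^s=2$. Feeding this and $\dim(T_xU^+)=2$ into Proposition~\ref{prop4.3.8} gives
\[
h(\mu)+2\lambda^+ \;\le\; h_{\hat m}(F) \;=\; h_{\hat m}(F^{-1}) \;\le\; h_{\hat m}(F^{-1},W_1^-) - \lambda^-\dim(\calL^s) \;\le\; h(\mu) - 2\lambda^-,
\]
and combining with $\lambda^++\lambda^-\ge 0$ forces every inequality to be an equality. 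In particular $\dim(\calL^s)=2$ comes out as a \emph{conclusion}, along with $h_{\hat m}(F^{-1},W_1^-)=h(\mu)$ (hence invariance of $\nu$) and the equality case of Proposition~\ref{Prop4.3.10}, giving $m^s_{x,\omega}$ equivalent to Lebesgue on $\calL^s=W^s$. The $\check\mu$ swap then handles the unstable side. This is exactly the paper's proof, and it is considerably simpler than your proposed $\check d_+$ argument --- which, as you yourself note, is not fully justified. Your opening direct observation (that $d_+=2$ already makes $m^u_{x,\omega}$ Haar on the full unstable via item~(2) of Definition~\ref{compat}) is correct but redundant once the entropy argument is run symmetrically.
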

\begin{proof}
    We use a proof from Section 4 of \cite{BEF} as we did in the proofs of Corollary~\ref{0.2} and Lemma \ref{0.6}. Note that now $\dim(T_xU^+)=2$ and that $\dim(\calL^s) \leq 2$. So our equations from Proposition \ref{prop4.3.8} simplify to 
    \begin{align*}
        h_{\hat{m}}(F) &\geq h(\mu) + 2\lambda^+\\
        h_{\hat{m}}(F^{-1}) &\leq  h_{\hat{m}}(F^{-1},W_1^{-})-2\dim( T_x\calL^s)\leq  h_{\hat{m}}(F^{-1},W_1^{-})-2\lambda^-.
    \end{align*}
Since we have that $h_{\hat{m}}(F^{-1},W_1^-)\leq h(\mu)$ (Proposition \ref{prop4.3.8}) and $\lambda^+ +\lambda^-\geq 0$ we have that
\begin{align}
    0\geq h(\mu) - h_{\hat{m}}(F^{-1},W_1^-) + 2\lambda^- + 2\lambda^+ \leq h_{\hat{m}}(F) - h_{\hat{m}}(F^{-1}).
\end{align}
Since $h_{\hat{m}}(F) = h_{\hat{m}}(F^{-1})$, we get that $h(\mu) = h_{\hat{m}}(F^{-1},W_1^-)$ and so $\nu$ is invariant by Proposition \ref{prop4.3.8}. Further
\begin{align}
    h(\mu) - \lambda^-\dim( T_x\calL^s) = h_{\hat{m}}(F^{-1}) = h(\mu) + 2\lambda^+,
\end{align}
    and so $\dim(\calL^s) = 2$. This equality allows us to use Proposition \ref{Prop4.3.10} to say that $m_{x,\omega}^s$ is equivalent to Lebesgue on $\calL^s$.

    Then, we run the same argument using $\check{\mu}$ as in Lemma \ref{0.6} and get the same homogeneity for the unstable conditional measures and that $U^+[x,\omega,k] = \calL^u(x,\omega,k)$ and $U^-[x,\omega,k] = \calL^s(x,\omega,k)$ as before.
\end{proof}
It follows immediately using \cite{LY2}, Corollary H, that $\nu$ is absolutely continuous with respect to Lebesgue.

\section{Tools required for the proof of Theorem \ref{bigthmeasy}}
\label{sec:sec3}

\subsection{Furstenberg theory}

This section presents material from Sections 2 and 5 of \cite{CD} in an effort to establish Corollary \ref{8.2} in Section \ref{sec:sect8}.

Recall from Section \ref{sec:prelim} that $H^{1,1}(X;\bbR)$, with the quadratic form it gets from the intersection form $\langle \cdot |\cdot \rangle$, is isometric to the standard quadratic form on Minkowski space $\bbR^{1,n}$ (where $n=h^{1,1}(X)-1$). We say that a subspace $W\subset \bbR^{1,n}$ is of Minkowski type if the quadratic form restricted to $W$ is non-degenerate and of signature $(1, \dim(W)-1)$. Then a subspace of $H^{1,1}(X;\bbR)$ is of Minkowski type if it is isometric to a Minkowski type subspace of $\bbR^{1,n}$. 

Let $\Gamma$ be a non-elementary subgroup of $O^+_{1,n}(\bbR)$.

\begin{lemma}[Lemma 2.6, Proposition 2.8, \cite{CD}]
One has a $\Gamma$-invariant decomposition
    \begin{align}
        H^{1,1}(X;\bbR) = H^{1,1}(X;\bbR)_+ \oplus H^{1,1}(X;\bbR)_0.
    \end{align}
    Here, $H^{1,1}(X;\bbR)_+$ is of Minkowski type, and the restriction of $\Gamma$ to this subspace is non-elementary. Further, $H^{1,1}(X;\bbR)_0$ is such that the restriction of the quadratic form to this subspace is negative definite and the restriction of $\Gamma$ to this subspace is contained in a compact subgroup of $GL(H^{1,1}(X;\bbR))$.
\end{lemma}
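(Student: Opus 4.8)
The proof reproduces the argument of Cantat--Dujardin; for brevity write $H$ for $H^{1,1}(X;\bbR)$, and let $H = H_+\oplus H_0$ denote the decomposition to be constructed. The plan is to read the decomposition off the action of $\Gamma$ on hyperbolic space. Since $\Gamma < O^+_{1,n}(\bbR)$ preserves the positive light cone, it acts by isometries on the associated model $\bbH^n$ of hyperbolic $n$-space and on its boundary $\partial\bbH^n$, which we identify with the set of isotropic lines of $\bbR^{1,n}$ (each isotropic line meets the closed positive cone in a unique ray). Let $\Lambda\subset\partial\bbH^n$ be the limit set of $\Gamma$. Because $\Gamma$ is non-elementary, $\Lambda$ is an infinite (indeed perfect) $\Gamma$-invariant set and $\Gamma$ has no finite orbit in $\overline{\bbH^n}$; in particular $\Gamma$ fixes no point of $\partial\bbH^n$.

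First I would set $H_+\defeq\mathrm{span}_{\bbR}\{\,\ell : \ell\in\Lambda\,\}$, the linear span in $H$ of the isotropic lines coming from the limit set; it is $\Gamma$-invariant because $\Gamma$ preserves $\Lambda$. The main point is that the intersection form is non-degenerate on $H_+$. Its radical $R$ is $\Gamma$-invariant (if $\gamma\in\Gamma$ and $w\in R$ then $\langle\gamma w|H_+\rangle=\langle w|\gamma^{-1}H_+\rangle=\langle w|H_+\rangle=0$) and totally isotropic; since a totally isotropic subspace of $\bbR^{1,n}$ has dimension at most $1$, we get $\dim R\leq 1$, and $\dim R=1$ would make $R$ a $\Gamma$-fixed isotropic line, i.e.\ a $\Gamma$-fixed boundary point, which is excluded. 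Hence $R=0$. Moreover $\Lambda$ contains two distinct isotropic lines $\bbR u\neq\bbR v$; as $\mathrm{span}(u,v)$ cannot be totally isotropic, $\langle u|v\rangle\neq 0$, so $\mathrm{span}(u,v)$ is a hyperbolic plane. Thus $H_+$ is non-degenerate with exactly one positive direction, that is, of signature $(1,\dim H_+-1)$, hence of Minkowski type.

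Then I would put $H_0\defeq H_+^{\perp}$. Non-degeneracy of $H_+$ yields the orthogonal direct sum $H=H_+\oplus H_0$, with $H_0$ again $\Gamma$-invariant (as $\Gamma$ preserves $H_+$ and the form) and non-degenerate of signature $(1,n)-(1,\dim H_+-1)$, hence negative definite. Therefore $\Gamma$ acts on $H_0$ through the orthogonal group of a negative-definite form, which is compact, and extending these maps by the identity on $H_+$ realizes $\Gamma|_{H_0}$ inside a compact subgroup of $GL(H)$. Finally, all isotropic lines of limit points lie in $H_+$ by construction, so $\Lambda$ is contained in the boundary of the hyperbolic space attached to the Minkowski-type subspace $H_+$, and the $\Gamma$-orbit of a point of that hyperbolic space accumulates exactly on $\Lambda$; since $\Lambda$ is infinite, $\Gamma$ restricted to $H_+$ is non-elementary. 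This establishes all the asserted properties.

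The only genuinely non-formal inputs are two classical facts --- that a non-elementary group of hyperbolic isometries has an infinite limit set and no finite orbit at infinity, and the Lorentzian fact that totally isotropic subspaces of $\bbR^{1,n}$ are at most lines --- and I would regard the non-degeneracy of $H_+$ as the crux of the argument; once it is in place, everything else is bookkeeping with signatures.
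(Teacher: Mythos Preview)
The paper does not prove this lemma; it is quoted verbatim as Lemma~2.6 and Proposition~2.8 of \cite{CD} and used as background. Your argument is correct and is precisely the Cantat--Dujardin construction: take $H_+$ to be the span of the limit set, observe that its radical would be a $\Gamma$-fixed isotropic line (impossible for non-elementary $\Gamma$), and read off the signatures. There is nothing to compare against in the present paper, and your write-up faithfully reproduces the cited source.
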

One can do a similar decomposition for $NS(X;\bbR)$ and one has $$ H^{1,1}(X;\bbR)_+=NS(X;\bbR)_+,$$ on which $\Gamma$ acts non-elementary (see \cite[Proposition 2.12]{CD}). We let 
$$\Pi_{\Gamma} \defeq H^{1,1}(X;\bbR)_+.$$

The limit set of $\Gamma$ (which is non-elementary), denoted $\lim(\Gamma)$, is the accumulation set of any $\Gamma$ orbit on $\bbP(v)$ for $v\not\in \Pi_{\Gamma}^{\perp}$. There are many equivalent definitions, see~\cite[2.3.6]{CD}. In our setting, $\lim(\Gamma) \subset \bbP(\Pi_{\Gamma})\cap \bbP(\nef(X))$ (see \cite[Lemmas 2.16, 2.17]{CD}).

Furstenberg theory gives us that there is a unique stationary measure $\nu_{\bbP(\Pi_{\Gamma})}$ on $\bbP(\Pi_{\Gamma}) \subset \bbP(H^{1,1}(X;\bbR))$ and it does not charge
any proper projective subspace of $\bbP(\Pi_{\Gamma})$. We will further describe this measure below in Proposition \ref{partial}.

Fix $\kappa_0$ is a Kahler form. Let the mass of a class $a$ be defined by 
\begin{align}\label{mass}
M(a) = \langle a | \kappa_o \rangle.
\end{align}

\begin{lemma}[Lemma 5.6, \cite{CD}]\label{ew}
    For $\mu^{\bbZ}$-a.e. $\omega\in \Omega$, there exists a unique nef class $e(\omega)$ with $M(e(\omega))=1$ and as $n\to \infty$ we have
    \begin{align}
        \frac{1}{M((f^n_{\omega})^*)a)}(f^n_{\omega})^*a \to e(\omega),
    \end{align}
    where $a$ is any pseudo class with $a^2>0$ (in particular any K\"ahler class). Also $e(\omega)$ is a.s. isotropic and $\bbP(e(\omega))$ is a point of $\lim(\Gamma)$.
\end{lemma}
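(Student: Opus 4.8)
The plan is to deduce this from Furstenberg theory for the non-elementary linear action of $\Gamma=\Gamma^*$ on the irreducible Minkowski piece $\Pi_{\Gamma}=H^{1,1}(X;\bbR)_+$, converting the ``convergence to a boundary point of $\bbH_X$'' coming from the random walk into the stated convergence of normalized classes. First I would reduce to $\Pi_{\Gamma}$: by the decomposition lemma recalled above, $\Gamma$ preserves $H^{1,1}(X;\bbR)=\Pi_{\Gamma}\oplus H^{1,1}(X;\bbR)_0$, acts non-elementarily on the Minkowski-type subspace $\Pi_{\Gamma}$, and acts through a compact group on the negative-definite complement $H^{1,1}(X;\bbR)_0$. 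Non-elementarity of $\Gamma|_{\Pi_{\Gamma}}$ is precisely the statement that this action is strongly irreducible and proximal: a $\Gamma$-invariant proper subspace would meet the hyperboloid in an invariant point, pair of points, or totally geodesic subspace, contradicting non-elementarity, while a loxodromic element acts proximally on $\bbP(\Pi_{\Gamma})$ with attracting eigenline spanned by an isotropic nef class. These are exactly the structural inputs of \cite[Section 2]{CD} recalled before the statement, and they are what make the unique non-degenerate stationary measure $\nu_{\bbP(\Pi_{\Gamma})}$ available.

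Next I would invoke the Furstenberg contraction theorem for the random product $(f^n_{\omega})^*=f_0^*\circ f_1^*\circ\cdots\circ f_{n-1}^*$ of the i.i.d.\ isometries $f_i^*$ of $\bbH_X$ (whose common law generates $\Gamma$): for $\mu^{\bbZ}$-a.e.\ $\omega$ the normalized operators $(f^n_{\omega})^*|_{\Pi_{\Gamma}}/\Vert(f^n_{\omega})^*|_{\Pi_{\Gamma}}\Vert$ converge to a rank-one operator whose image is a line $\bbR\,e(\omega)\subset\Pi_{\Gamma}$ and whose kernel is the hyperplane $e(\omega)^{\perp}$, and $\bbP((f^n_{\omega})^*)$ converges to $\bbP(e(\omega))$ uniformly on compact subsets of $\bbP(\Pi_{\Gamma})\setminus\bbP(e(\omega)^{\perp})$, with $\bbP(e(\omega))$ distributed according to $\nu_{\bbP(\Pi_{\Gamma})}$. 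Since the $(f^n_{\omega})^*$ preserve the positive cone, $e(\omega)$ lies in its closure; rescaling, we fix the generator $e(\omega)$ by $M(e(\omega))=1$ — legitimate because any nonzero class in the closed positive cone pairs strictly positively with the K\"ahler class $\kappa_0$, so $M>0$ there and is coercive on that cone.

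Then I would pass to an arbitrary pseudoeffective class $a$ with $a^2>0$ and extract the remaining assertions. Write $a=a_++a_0$ along $\Pi_{\Gamma}\oplus H^{1,1}(X;\bbR)_0$; the Hodge-index inequality $\langle a_+|a_+\rangle\ge\langle a|a\rangle>0$, which holds along any path in the positive cone and so joins $a_+$ inside $\{b^2>0\}\cap\Pi_{\Gamma}$ to the $\Pi_{\Gamma}$-component of $\kappa_0$, shows $a_+$ lies in the open positive cone of $\Pi_{\Gamma}$; in particular $a_+\notin e(\omega)^{\perp}$, since $e(\omega)^{\perp}$ is negative semidefinite and thus contains no class of positive square. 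Hence $(f^n_{\omega})^*a_+/M((f^n_{\omega})^*a_+)\to e(\omega)$ by the previous step. On the other hand $(f^n_{\omega})^*a_0$ stays in a fixed compact set while $M((f^n_{\omega})^*a_+)\to\infty$ (the hyperbolic point $(f^n_{\omega})^*(a_+/\sqrt{\langle a_+|a_+\rangle})$ escapes to the boundary of $\bbH_X$ and $M$ is coercive), so $M((f^n_{\omega})^*a)/M((f^n_{\omega})^*a_+)\to 1$ and the $a_0$-contribution to $(f^n_{\omega})^*a/M((f^n_{\omega})^*a)$ vanishes; therefore $(f^n_{\omega})^*a/M((f^n_{\omega})^*a)\to e(\omega)$ for every such $a$. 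This yields independence of $a$ and hence uniqueness of $e(\omega)$ (a limit in a Hausdorff space, pinned by $M=1$). Isotropy is then immediate: $\langle(f^n_{\omega})^*a|(f^n_{\omega})^*a\rangle=\langle a|a\rangle$ is constant while $M((f^n_{\omega})^*a)\to\infty$, so $\langle e(\omega)|e(\omega)\rangle=\lim\langle a|a\rangle/M((f^n_{\omega})^*a)^2=0$. Finally $\bbP(e(\omega))$ is a genuine accumulation point of the orbit $\Gamma\cdot\bbP(a)$ (it cannot be reached in finitely many steps since $(f^n_{\omega})^*a$ has positive square and $e(\omega)$ is isotropic) with $a\notin\Pi_{\Gamma}^{\perp}$, so $\bbP(e(\omega))\in\lim(\Gamma)\subset\bbP(\Pi_{\Gamma})\cap\bbP(\nef(X))$; combined with $M(e(\omega))=1>0$ this makes $e(\omega)$ nef.

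The main obstacle is the third step: upgrading convergence from a single reference class to an arbitrary pseudoeffective class of positive square while controlling the mass normalization — namely that $M$ stays positive and coercive along the orbit, and that $M((f^n_{\omega})^*a)$ is asymptotic to its $\Pi_{\Gamma}$-component — and, underlying everything, invoking the Furstenberg contraction theorem in exactly the proximal, strongly irreducible form afforded by the structure theory of \cite[Section 2]{CD}.
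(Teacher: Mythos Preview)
The paper does not supply its own proof of this lemma: it is stated verbatim as Lemma 5.6 of \cite{CD} and used as a black box. Your proposal is therefore not competing with any argument in the paper itself but rather reconstructing the Furstenberg-theoretic proof from the original reference, and your outline---reduce to the irreducible Minkowski piece $\Pi_{\Gamma}$, apply proximality and strong irreducibility to get almost-sure convergence of the normalized random products to a rank-one limit, then control the $H^{1,1}(X;\bbR)_0$-component and the mass normalization to pass to arbitrary $a$ with $a^2>0$---is the correct skeleton and matches the approach of \cite[Section 5]{CD}.
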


\begin{proposition}[Theorem 5.8, \cite{CD}]
\label{partial}
    The probability measure $\nu_{\partial}$ is given by
    \begin{align}
        \nu_{\partial} = \int \delta_{\bbP(e(\omega))} d\nu^{\bbN}.
    \end{align}
    This measure is $\mu$-stationary, ergodic and has no atoms. It is supported on $\lim(\Gamma)$ and further, if it assigns positive measure to any subset $\Lambda \subset \lim(\Gamma)$, then $\Lambda$ is an uncountable set. It is also the unique stationary measure on $\bbP(H^{1,1}(X;\bbR))$ with $\nu_{\partial}(\bbP(\Pi_{\Gamma}^{\perp})) = 0$.
\end{proposition}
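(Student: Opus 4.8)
The plan is to identify $\nu_\partial$ with the Furstenberg measure $\nu_{\bbP(\Pi_\Gamma)}$ discussed above and to transfer that measure's known properties. Recall that the $\Gamma$-action on $\bbP(\Pi_\Gamma)$ is strongly irreducible and proximal: $\Gamma|_{\Pi_\Gamma}$ is non-elementary of Minkowski type, hence contains a loxodromic isometry of the associated hyperbolic space, which produces proximal elements with attracting points in $\lim(\Gamma)$, while non-elementariness excludes invariant finite unions of proper subspaces. So Furstenberg's theorem (see \cite{CD}, Section 5) gives a \emph{unique} $\mu$-stationary probability measure $\nu_{\bbP(\Pi_\Gamma)}$ on $\bbP(\Pi_\Gamma)$; it is then automatically ergodic (being the unique extreme point of the stationary simplex there), has no atoms, and gives zero mass to every proper projective subspace. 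It therefore suffices to prove (a) $\nu_\partial$ is a $\mu$-stationary measure supported on $\bbP(\Pi_\Gamma)$, so that $\nu_\partial=\nu_{\bbP(\Pi_\Gamma)}$ and the qualitative properties transfer, and (b) every $\mu$-stationary measure $\nu'$ on $\bbP(H^{1,1}(X;\bbR))$ with $\nu'(\bbP(\Pi_\Gamma^\perp))=0$ coincides with $\nu_\partial$.

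For (a), I would first establish the equivariance $\bbP(e(\omega))=f_0\cdot\bbP(e(\sigma\omega))$ for $\mu^{\bbZ}$-a.e.\ $\omega$, where $\cdot$ denotes the action of $\aut(X)$ on $\bbP(H^{1,1}(X;\bbR))$ via $f\mapsto f^*$: apply the limit of Lemma~\ref{ew} to both $\omega$ and $\sigma\omega$, use $(f^n_\omega)^*=f_0^*\circ(f^{n-1}_{\sigma\omega})^*$, and note that the ratio of normalizing masses converges and cancels in projective space. Since $\bbP(e(\cdot))$ depends only on the forward symbols, stationarity follows:
\begin{align*}
\int\varphi\,d\nu_\partial=\int\varphi\big(f_0\cdot\bbP(e(\sigma\omega))\big)\,d\mu^{\bbN}(\omega)=\int\!\!\int\varphi(f\cdot y)\,d\nu_\partial(y)\,d\mu(f)=\int f_*\nu_\partial(\varphi)\,d\mu(f),
\end{align*}
using that $f_0$ is $\mu$-distributed independently of the remaining symbols. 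Lemma~\ref{ew} also records that $e(\omega)$ is a.s.\ isotropic with $\bbP(e(\omega))\in\lim(\Gamma)\subset\bbP(\Pi_\Gamma)$, so $\nu_\partial$ is supported there; this gives (a). One can also see non-atomicity directly: if the atoms of maximal positive mass formed a finite nonempty set $A$, stationarity would force $f(A)=A$ for $\mu$-a.e.\ $f$, hence for all of $\Gamma_\mu$, giving a finite $\Gamma_\mu$-invariant subset of $\lim(\Gamma)$, impossible for a non-elementary group; and the asserted uncountability of every positive-measure subset is merely a restatement of non-atomicity.

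The heart of the matter, and the step I expect to be the main obstacle, is (b): contraction of the cocycle into $\bbP(\Pi_\Gamma)$ along $\nu'$-typical directions. Using $H^{1,1}(X;\bbR)=\Pi_\Gamma\oplus\Pi_\Gamma^\perp$, with $\Gamma$ acting by a relatively compact group on $\Pi_\Gamma^\perp$ and, via the loxodromic element, with a positive gap between the top two Lyapunov exponents on $\Pi_\Gamma$, one extends Lemma~\ref{ew}: for each \emph{fixed} class $v\notin\Pi_\Gamma^\perp$ and $\mu^{\bbZ}$-a.e.\ $\omega$, the normalized classes $(f^n_\omega)^*v/M((f^n_\omega)^*v)$ converge to $e(\omega)$, because the $\Pi_\Gamma^\perp$-component stays $M$-bounded while the $\Pi_\Gamma$-component grows and aligns with the top direction $e(\omega)$, the exceptional event (that $v$ lies in the random contracting hyperplane of $\Pi_\Gamma$) carrying $\mu^{\bbZ}$-probability zero since the law of that hyperplane charges no point. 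By Fubini this convergence holds for $\nu'$-a.e.\ $v$ and $\mu^{\bbZ}$-a.e.\ $\omega$, so the pushforwards $((f^n_\omega)^*)_*\nu'$ converge weakly to $\delta_{\bbP(e(\omega))}$ for a.e.\ $\omega$; iterating $\nu'=\int((f^n_\omega)^*)_*\nu'\,d\mu^{\bbN}(\omega)$ and letting $n\to\infty$ yields $\nu'=\int\delta_{\bbP(e(\omega))}\,d\mu^{\bbN}=\nu_\partial$. The delicate points — the extension of Lemma~\ref{ew} to all of $\bbP(H^{1,1}(X;\bbR))\setminus\bbP(\Pi_\Gamma^\perp)$ and the non-atomicity of the dual stationary measure — I would handle with the cocycle estimates already assembled in \cite{CD}, Sections 2 and 5. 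Together, (a), (b) and the Furstenberg uniqueness establish the proposition.
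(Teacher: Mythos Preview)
The paper does not give its own proof of this proposition: it is quoted verbatim as Theorem~5.8 of \cite{CD} and used as a black box, with only the preceding sentence (``Furstenberg theory gives us that there is a unique stationary measure $\nu_{\bbP(\Pi_{\Gamma})}$\dots'') as context. So there is no in-paper argument to compare against.

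That said, your sketch is a correct outline of the standard Furstenberg-theory proof and matches what \cite{CD} does in Sections~2 and~5: strong irreducibility and proximality on $\Pi_\Gamma$ give the unique stationary measure there; the equivariance $\bbP(e(\omega))=f_0\cdot\bbP(e(\sigma\omega))$ yields stationarity of $\nu_\partial$; and the contraction argument via the Lyapunov gap on $\Pi_\Gamma$ together with bounded dynamics on $\Pi_\Gamma^\perp$ gives the uniqueness statement on $\bbP(H^{1,1}(X;\bbR))\setminus\bbP(\Pi_\Gamma^\perp)$. One small quibble: in your equivariance line the action of $\aut(X)$ on cohomology by pullback is contravariant, so the identity should read $\bbP(e(\sigma\omega))=f_0^*\cdot\bbP(e(\omega))$ (equivalently $\bbP(e(\omega))=(f_0^{-1})^*\cdot\bbP(e(\sigma\omega))$); this is harmless for the stationarity computation but worth getting the direction right.
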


\subsection{Stable currents}
\label{stcur}
In this section we will deal with currents of type $(1,1)$. We will briefly interlude to some basics about currents following \cite[Sections 5, 6]{Csurvey}.

Let $\Lambda^{1,1}(X;\bbR)$ be the set of smooth real valued $(1,1)$ forms on $X$ with the Fr\'echet topology. Currents of type $(1,1)$ act like a dual to this space, they are continuous linear functionals on $\Lambda^{1,1}(X;\bbR)$. If $T$ is a current we write its action on some $\varphi \in \Lambda^{1,1}(X;\bbR)$ as $\langle T | \varphi \rangle$. We provide two important examples:
\begin{enumerate}
    \item Take $\alpha$ a continuous $(1,1)$ form. It defines a current: $$\langle \{\alpha\} | \varphi \rangle = \int_X \alpha \wedge \varphi.$$
    \item Take $C\subset X$ a curve. Then, $C$ (regardless of whether or not it is singular) defines a current: $$\langle \{C\} | \varphi \rangle = \int_C \varphi.$$ 
\end{enumerate}

We say a current is positive if it takes non-negative values on the convex cone of positive forms (i.e. (1,1) forms $\varphi$ such that $\varphi(u, \sqrt{-1}u)\geq 0$). We say that a current $T$ is closed if it vanishes on the subset of $\Lambda^{1,1}(X;\bbR)$ of exact forms. In Example~(1) above, the current defined by $\alpha$ is closed if and only if $\alpha$ is a closed form. It is positive if and only if $\alpha$ is a positive form. Example~(2) is closed and positive. For any closed current $T$, there is a unique cohomology class, $[T]$ such that
$$\langle T | \varphi \rangle  = \langle [T] | [\varphi]\rangle.$$

If $T$ a current is closed, then $\langle T| \varphi \rangle = \langle [T]| [\varphi] \rangle$ and we can define its mass analogously to how we did before in Equation \eqref{mass} as $M(T) = \langle [T]| \kappa_0 \rangle$.

\begin{theorem}[\cite{CD}, Corollary 6.13]\label{only}
    For a.e. $\omega\in \Omega$ we have that
    \begin{enumerate}
\item there exists unique closed positive current $T_{\omega}^s$ in the cohomology class of $e(\omega)$ (see Lemma \ref{ew}), 
\item for every K\"ahler form $\kappa$, as $n\to \infty$, we have
\begin{align*}
    \frac{1}{M((f^n_{\omega})^*\kappa)}(f^n_{\omega})^*\kappa \to T_{\omega}^s.
\end{align*}
    \end{enumerate}
\end{theorem}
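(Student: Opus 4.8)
\textbf{Proof plan for Theorem~\ref{only}.}

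The plan is to realize $T^s_\omega$ as a limit of pullbacks of a fixed Kähler form, exactly as in item (2), and to extract existence and uniqueness from the Furstenberg-theoretic facts already assembled (Lemma~\ref{ew} and Proposition~\ref{partial}), following the argument of \cite[Section 6]{CD}. First I would fix a Kähler form $\kappa$ and consider the sequence of closed positive $(1,1)$-currents
\[
T_n(\omega) \defeq \frac{1}{M((f^n_\omega)^*\kappa)}(f^n_\omega)^*\kappa,
\]
each of mass $1$. Since the space of closed positive currents of mass $1$ is compact in the weak-$*$ topology (the unit ball of a dual space, with mass controlled by a single cohomology pairing against $\kappa_0$), any subsequence has a weak-$*$ limit $T$, and $T$ is again closed and positive. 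By Lemma~\ref{ew}, the cohomology classes $[T_n(\omega)] = \frac{1}{M((f^n_\omega)^*\kappa)}(f^n_\omega)^*[\kappa]$ converge to $e(\omega)$ for $\mu^{\mathbb Z}$-a.e.\ $\omega$, so every subsequential limit $T$ lies in the class $e(\omega)$. Thus the content of the theorem is that this limit is \emph{unique} (independent of the subsequence, and ultimately independent of $\kappa$), which then upgrades subsequential convergence to genuine convergence and simultaneously gives the uniqueness statement in item (1).

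The key step is the uniqueness. Here I would invoke the fact, from the structure of the limit set and the Furstenberg measure $\nu_\partial$ in Proposition~\ref{partial}, that $e(\omega)$ is an extremal isotropic nef class whose associated ray in $\bbP(\nef(X))$ is a point of $\lim(\Gamma)$; by \cite[Section 6]{CD} such a class supports a \emph{unique} closed positive current. The mechanism is that the intersection form is negative definite on $e(\omega)^\perp/\mathbb R e(\omega)$ (since $e(\omega)^2 = 0$ and $e(\omega)$ lies on the boundary of a Minkowski-type cone), so if $T$ and $T'$ were two distinct closed positive currents in the class $e(\omega)$, then $T - T'$ is a closed current cohomologous to $0$, and one derives a contradiction by pairing, using that a closed positive current in an isotropic class is extremal — concretely, writing $T$ as an average of $T'$ and $T - (T' - T)$-type comparisons forces $T = T'$. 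Alternatively, and perhaps more robustly in this random setting, one uses $F$-equivariance: the cocycle relation $(f^n_\omega)^*\kappa$ pulls back correctly under the shift, so $T^s_{\sigma^{-1}\omega}$ and $(f_{-1})^*T^s_\omega$ must be proportional, and combining this equivariance with the contraction properties of the pullback action on the Minkowski space (the same contraction that produces $e(\omega)$ in Lemma~\ref{ew}) pins down the current.

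The main obstacle is exactly this passage from cohomological convergence to convergence of currents: cohomology classes converging does not by itself force the currents to converge, because there could in principle be a ``cohomologically trivial'' oscillating part. Overcoming it requires the extremality/rigidity of closed positive currents in boundary (isotropic) classes, which is the geometric heart of \cite[Corollary 6.13]{CD}; I would cite that structure and spell out how Lemma~\ref{ew} and Proposition~\ref{partial} supply precisely the hypotheses (isotropy of $e(\omega)$, its membership in $\lim(\Gamma)$, non-degeneracy of the form on $\Pi_\Gamma$) needed to apply it. Once uniqueness is in hand, item (2) is immediate: every subsequential weak-$*$ limit of $T_n(\omega)$ equals the unique current $T^s_\omega$, hence the full sequence converges, and the limit is manifestly independent of the choice of Kähler form $\kappa$ since any two choices give sequences with the same cohomological limit $e(\omega)$ and therefore the same current.
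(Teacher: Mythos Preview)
The paper does not give a proof of this theorem at all: it is quoted verbatim as \cite[Corollary~6.13]{CD} and used as a black box. There is therefore no ``paper's own proof'' to compare your proposal against.

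That said, your outline follows the standard shape of the argument in \cite{CD}: compactness of mass-$1$ closed positive currents gives subsequential limits, Lemma~\ref{ew} pins down their cohomology class as $e(\omega)$, and the real content is uniqueness of the closed positive current in that class. Where your sketch is thin is precisely at that uniqueness step. The argument ``$T-T'$ is closed and cohomologous to $0$, derive a contradiction by pairing'' does not work as stated, because $T-T'$ is not positive and the intersection pairing of currents is not defined in that generality; the signature observation on $e(\omega)^\perp/\bbR e(\omega)$ is a statement about cohomology, not about currents. In \cite{CD} the uniqueness is obtained by a genuinely analytic argument (control of local potentials and Lelong numbers of the limiting currents, together with the specific structure of the random pullback dynamics), not by a purely cohomological extremality trick. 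Your alternative suggestion via equivariance and contraction is closer in spirit, but as written it is circular: equivariance alone does not single out a current unless you already know the limit exists. If you want to reconstruct the proof rather than cite it, that analytic input is the step you would need to supply.
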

We call $T_{\omega}^s$ the stable current associated to $\omega$. One can do similar constructions and get an unstable version of this result.

\subsection{Ahlfors Nevanlinna currents}
\label{sec:sect8}

Here we follow \cite[Section 8]{CD}. Recall that stable and unstable manifolds are injectively immersed entire curves $\psi: \bbC\to X$; we use this to construct a current like example (2) of Section \ref{stcur} where the curve $C$ is taken to be $\psi(B(0,t))$ where $B(0,t)$ is the ball in $\bbC$ centered at the origin of radius $t$. This current is denoted $\{\psi(B(0,t))\}$, i.e. the same notation as before. Let
\begin{align}
    A(R) &=\langle \{\psi(B(0,t))\} | \kappa_0  \rangle =  \int_{\psi(B(0,R))} \kappa_0, \\ T(R) &= \int_0^RA(t) \frac{dt}{t}.
\end{align}

Now, \cite{Brun} (see \cite[Proposition 8.1]{CD}) tells us that there are sequences $R_n\to \infty$ such that the sequence of currents given by
$$\frac{1}{T(R_n)} \int_0^{R_n} \{\psi(B(0,t))\} \frac{dt}{t},$$
converge to a closed positive current $T$. We call this closed positive current $T$ an Alhfors Nevanlinna current associated to the curve $\psi$. 

\begin{theorem}[\cite{CD}, Theorem 8.2/Corollary 8.3]\label{thm8.2}
    For $X$ a projective complex K\"ahler surface, $\mu$ a probability measure on $\aut(X)$, $\nu$ a hyperbolic $\mu$-stationary measure satisfying the moment condition in Equation \eqref{IC}, if there is no $\Gamma$-invariant algebraic curve then for $m$-a.e. $(x,\omega)$, the only normalized Ahlfors-Nevanlinna current associated to $W^s(x,\omega)$ is $T_{\omega}^s$. 
\end{theorem}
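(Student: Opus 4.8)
The plan is to show that every normalized Ahlfors--Nevanlinna current $T$ associated to the entire curve $\psi\colon\bbC\to X$ with $\psi(\bbC)=W^s(x,\omega)$ has cohomology class $[T]=e(\omega)$; once this is established, the uniqueness of the closed positive current in the nef class $e(\omega)$ provided by Theorem~\ref{only} identifies $T$ with $T^s_\omega$, which is exactly the assertion.

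First I would record the soft properties of such a $T$. By construction it is closed, positive, and supported in $\overline{\psi(\bbC)}=\overline{W^s(x,\omega)}$; the logarithmic averaging $\int_0^{R_n}(\cdot)\,\tfrac{dt}{t}$ together with the normalization by $T(R_n)$ makes $[T]$ a nef class with $M([T])=1=M(e(\omega))$, so in particular $\langle [T]|[T]\rangle\ge 0$. Using the $\Gamma_\mu$-invariant splitting $H^{1,1}(X;\bbR)=\Pi_\Gamma\oplus\Pi_\Gamma^{\perp}$ and the fact that $\Gamma_\mu$ acts through a compact group on $\Pi_\Gamma^{\perp}$, the problem reduces to pinning down the $\Pi_\Gamma$-component of $[T]$.

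The main step is an equivariance argument. For each $n$ the automorphism $f^n_\omega$ maps $W^s(x,\omega)$ onto $W^s(F^n(x,\omega))$, and the Ahlfors--Nevanlinna construction is natural under $\aut(X)$: pushing $T$ forward by $f^n_\omega$ and keeping the exhaustion and normalization inherited from $\psi$, one obtains, up to a positive scalar, an Ahlfors--Nevanlinna current of $W^s(F^n(x,\omega))$. Thus $(x,\omega)\mapsto\{[S]:S\text{ an AN current of }W^s(x,\omega)\}$ is a measurable, $\aut(X)$-equivariant assignment of nonempty closed convex subsets of the nef cone. Since $\Gamma_\mu$ is non-elementary, its action on $\bbP(\Pi_\Gamma)$ (equivalently on $\overline{\bbH_X}$) is proximal on $\lim(\Gamma)$, so a standard Furstenberg contraction argument forces this assignment to reduce a.e.\ to a single class, i.e.\ a measurable equivariant map $(x,\omega)\mapsto[T]$. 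The uniqueness part of Furstenberg theory behind Lemma~\ref{ew} and Proposition~\ref{partial} then leaves only two candidates for such a map valued in $\lim(\Gamma)$, the ``stable'' one $\omega\mapsto e(\omega)$ and its ``unstable'' analogue; to rule out the latter I would exhibit $T^s_\omega$ itself as an Ahlfors--Nevanlinna current of $W^s(x,\omega)$ — using that $\overline{W^s(x,\omega)}$ is dense in $\supp T^s_\omega$ and that $T^s_\omega$ is Lebesgue along the stable lamination, so the exhaustion currents of $\psi$ equidistribute to $T^s_\omega$ — whence $e(\omega)=[T^s_\omega]$ belongs to the equivariant family and therefore equals $[T]$.

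The quantitative heart — and the step I expect to be the main obstacle — is the naturality of the Ahlfors--Nevanlinna characteristic under the maps $f^n_\omega$, together with the equidistribution of the exhaustion currents of $\psi$ toward $T^s_\omega$. Here the hyperbolic dynamics enters: $f^n_\omega$ contracts $W^s(x,\omega)$ at the exponential rate $e^{n\lambda^-}$ with $\lambda^-<0$, while by Theorem~\ref{only} the normalized pullbacks $(f^n_\omega)^*\kappa_0/M((f^n_\omega)^*\kappa_0)$ converge to $T^s_\omega$; comparing $A_n(t)=\int_{\psi(B(0,t))}(f^n_\omega)^*\kappa_0=\operatorname{area}_{\kappa_0}\!\big(f^n_\omega(\psi(B(0,t)))\big)$ with $A(t)$ and controlling the resulting ratios of Nevanlinna characteristics requires the uniform Pesin geometry of the local stable manifolds (the sizes $q(\cdot)$ and the graph bounds), the higher moment estimates of Remark~\ref{higher}, and a Brunella-type exhaustion and diagonal argument. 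Finally, the hypothesis that there is no $\Gamma_\mu$-invariant algebraic curve is used in two places: it underlies the uniqueness in Theorem~\ref{only} of the closed positive current in the class $e(\omega)$, and it excludes the degenerate possibility that $T$, or equivalently the Zariski closure of $W^s(x,\omega)$, is carried by an algebraic curve — such a curve would belong to a $\Gamma_\mu$-permuted family of curves of bounded degree and hence produce a $\Gamma_\mu$-invariant algebraic curve, contradicting the hypothesis.
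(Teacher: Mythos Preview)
The paper does not prove this statement; it quotes it from \cite[Theorem~8.2/Corollary~8.3]{CD} and uses it as a black box (the only thing proved in the present paper is the downstream Corollary~\ref{8.2}). There is therefore no proof here to compare your proposal against.

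Taken on its own terms, your skeleton---show $[T]=e(\omega)$ and then invoke the uniqueness in Theorem~\ref{only}---is the right shape, but two steps are not supported. First, collapsing the set-valued assignment $(x,\omega)\mapsto\{[S]:S\text{ an AN current of }W^s(x,\omega)\}$ to a single class by ``Furstenberg contraction'' is not a standard consequence of proximality; proximality gives uniqueness of the stationary measure on $\bbP(\Pi_\Gamma)$, not that measurable equivariant compact-convex-set-valued maps collapse to points. The route that actually works is to show first that every AN class is \emph{isotropic} ($[T]^2=0$), so that $\bbP[T]\in\partial\bbH_X$---this is where the no-invariant-curve hypothesis is really used, to exclude $W^s(x,\omega)$ from lying in an algebraic curve---after which the one-sided contraction behind Lemma~\ref{ew} forces $\bbP[T]=\bbP e(\omega)$ directly, with no set-valued detour and no need to distinguish a ``stable'' from an ``unstable'' candidate. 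Second, your way of ruling out the ``unstable'' alternative, by exhibiting $T^s_\omega$ itself as an AN current of $W^s(x,\omega)$ via ``$\overline{W^s(x,\omega)}$ dense in $\supp T^s_\omega$'' and equidistribution of the exhaustion currents of $\psi$ toward $T^s_\omega$, is circular: those assertions are essentially the conclusion of the theorem, not inputs to it. (A minor point: Theorem~\ref{only} as stated does not require the no-invariant-curve hypothesis, contrary to what you write.)
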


\begin{corollary}\label{8.2}
    In the setting of Theorem \ref{thm8.2}, for $m$-a.e. $(x,\omega)$, and $\mu^{\bbZ}$-a.e. $\omega'$, $W^s(x,\omega) \neq W^s(x,\omega')$. 
\end{corollary}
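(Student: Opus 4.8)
The plan is to combine Theorem \ref{thm8.2} with the non-atomicity of $\nu_\partial$ from Proposition \ref{partial}. The mechanism is that the normalized Ahlfors--Nevanlinna current of a stable manifold is determined by that manifold, so if $W^s(x,\omega)=W^s(x,\omega')$ then the stable currents $T^s_\omega$ and $T^s_{\omega'}$ coincide, hence so do their cohomology classes $e(\omega)$ and $e(\omega')$; but those classes are distributed by an atomless measure, so such a coincidence has probability zero.

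First I would record that every relevant object factors through the future: since $f^n_\omega=f_{n-1}\circ\cdots\circ f_0$ for $n\ge 1$ involves only $f_0,\dots,f_{n-1}$, the stable manifold $W^s(x,\omega)$ (by \eqref{stable}), the class $e(\omega)$ (by Lemma \ref{ew}), and the current $T^s_\omega$ (by Theorem \ref{only}) depend only on $(x,\omega^+)$; similarly, the conull set $G_0\subset Y$ on which $W^s(x,\omega)$ is a well-defined injectively immersed entire curve whose unique normalized Ahlfors--Nevanlinna current is $T^s_\omega$ is, up to a null set, a union of fibers $\{x\}\times\Omega^-\times\{\omega^+\}$. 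Since $(\pi_{X\times\Omega^+})_*m=\nu\times\mu^{\bbN}$ by Lemma \ref{m}---crucially a \emph{product}---the image $\tilde G_0\subset X\times\Omega^+$ of $G_0$ has full $\nu\times\mu^{\bbN}$ measure. The desired statement then unwinds to the following: the set $B$ of triples $(x,\omega^+,(\omega')^+)$ with $(x,\omega^+),(x,(\omega')^+)\in\tilde G_0$ and $W^s(x,\omega^+)=W^s(x,(\omega')^+)$ is null for $\nu\times\mu^{\bbN}\times\mu^{\bbN}$.

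Next I would run the implications on $B$. If $W^s(x,\omega^+)=W^s(x,(\omega')^+)$, these are the same injectively immersed entire curve through $x$; any two injective parametrizations through $x$ differ by an affine automorphism of $\bbC$ fixing $0$, which only rescales the exhaustion $B(0,R)$ and so leaves the set of normalized Ahlfors--Nevanlinna currents unchanged. By Theorem \ref{thm8.2} that set equals both $\{T^s_\omega\}$ and $\{T^s_{\omega'}\}$, so $T^s_\omega=T^s_{\omega'}$; passing to cohomology classes, $e(\omega)=[T^s_\omega]=[T^s_{\omega'}]=e(\omega')$, whence $\bbP(e(\omega))=\bbP(e(\omega'))$. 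Thus $B\subset\{(x,\omega^+,(\omega')^+):\bbP(e(\omega))=\bbP(e(\omega'))\}$. Finally, by Proposition \ref{partial} the pushforward of $\mu^{\bbN}$ under $\omega^+\mapsto\bbP(e(\omega))$ is the atomless measure $\nu_\partial$, so integrating out the independent coordinate $x$ and using Fubini, the measure of this set is $\int\nu_\partial(\{\bbP(e(\omega))\})\,d\mu^{\bbN}(\omega^+)=0$; hence $B$ is null and the corollary follows.

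I expect the only genuinely delicate point to be the bookkeeping of the second paragraph---confirming that $W^s$, $e(\omega)$, $T^s_\omega$ and the ambient conull set all depend only on $\omega^+$, and that $m$ restricted to $X\times\Omega^+$ is precisely the product $\nu\times\mu^{\bbN}$, which is what lets one decouple the future of $\omega$ from the future of the independently sampled $\omega'$ above a common $x$. With that in hand, Theorem \ref{thm8.2} and the atomlessness of $\nu_\partial$ finish the argument with nothing further to compute.
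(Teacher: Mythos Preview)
Your proposal is correct and follows essentially the same approach as the paper: both argue that equality of stable manifolds forces equality of the (unique) Ahlfors--Nevanlinna currents $T^s_\omega=T^s_{\omega'}$ via Theorem~\ref{thm8.2}, hence equality of the cohomology classes $e(\omega)=e(\omega')$, which is a null event by the atomlessness of $\nu_\partial$ in Proposition~\ref{partial}. Your write-up is in fact more careful than the paper's about the measure-theoretic bookkeeping (dependence on $\omega^+$ only, the product structure $(\pi_{X\times\Omega^+})_*m=\nu\times\mu^{\bbN}$, and the Fubini step), which the paper leaves implicit.
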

\begin{proof}
    Theorem \ref{thm8.2} tells us that in our setting we have that for a.e. $(x,\omega)$, the only normalized Alhfors-Nevanlinna current associated to $W^s(x,\omega)$ is $T^s_{\omega}$. Combining this with Proposition \ref{partial} and Theorem \ref{only} we get that $e(\omega)$ is distinct from $e(\omega')$ for a.e. $\omega'$ and that $T^s_{\omega}$ is the unique closed positive current in the cohomology class of $e(\omega)$. This tells us that $T_{\omega}^s$ is in its own distinct cohomology class for a.e. $\omega$. Relating this back to the stable manifolds, we see that two stable manifolds cannot be the same if they have different $T^s_{\omega}$ and so the result stands.
\end{proof}
\begin{remark}\label{8.2'}
    The analogue of Corollary \ref{8.2} for unstable manifolds also holds. 
\end{remark}
Let $\Lambda_s$ be the conull subset of $Y$ such that Corollary \ref{8.2} holds.

Theorem~\ref{thm8.2} and Corollary~\ref{8.2} apply generally in our setting, regardless of the value of $d_+$ in Section~\ref{sec:breakdown}. We will use this theorem for the case $d_+=0$ to prove Lemma \ref{0.5} in Section \ref{sec:twolem} and Proposition~\ref{finite} in Section~\ref{sec:alt1}. In the case that $d_+=1$, we use~\cite[Theorem C]{CD} which is stated below in the next section as Theorem~\ref{thm8.2}. This theorem give us that the Oseledets' directions are random in the setting $d_+=1$. This is because in this case we have positive fiber entropy and hence land in case c) of Theorem~\ref{thmC}. In the case $d_+=0$ we have zero fiber entropy and hence are in alternative a) of Theorem~\ref{thmC}.

 \subsection{Fibered entropy and Theorem C of \cite{CD}}

\label{sec:fiber}
We give the following result of~\cite{CD} which gives us randomness of Oseledets' directions in the case of positive fiber entropy:

\begin{theorem}[\cite{CD}, Theorem C, Theorem 9.1)]
\label{thmC}
    Let $X$ be a complex projective surface and $\mu$ a finitely supported probability measure on $\aut(X)$. If the group generated by $\supp(\mu)$, denoted $\Gamma_{\mu}$, is non-elementary, then any hyperbolic, ergodic $\mu$-stationary measure $\nu$ on $X$ satisfies (exactly) one of the following: 
    \begin{packedalph}
        \item $\nu$ is invariant and its fiber entropy, $h_{\nu}(X,\mu)$, vanishes,
        \item $\nu$ is supported on an algebraic curve that is $\Gamma_{\mu}$-invariant,
        \item the field of Oseledets' stable directions, i.e. the $E^s(x,\omega)$,  is not $\Gamma_{\mu}$-invariant. That is to say, it depends on the choice of future in $\omega$.
    \end{packedalph}
\end{theorem}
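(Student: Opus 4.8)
The plan is to reduce the whole trichotomy to a single dichotomy about the Oseledets' derivative cocycle, and then run an entropy argument on one side and a current argument on the other. Recall first that, by the characterisation in \eqref{stable}, $E^s(x,\omega)$ is the forward-contracting direction, so it is a measurable function of $x$ and of the future $\omega^+=(f_0,f_1,\dots)$. Projecting the dynamics to the projectivised tangent bundle $\bbP(TX)$ and pushing $m$ forward along $(x,\omega)\mapsto E^s(x,\omega)$, we face a clean dichotomy: either this section genuinely varies with $\omega^+$ over a positive-measure set of $x$ --- that is exactly alternative (c), and there is nothing more to prove --- or it descends to a $\nu$-measurable line field $L$ on $X$ with $D_xf(L(x))=L(fx)$ for $\mu$-a.e.\ $f$ and $\nu$-a.e.\ $x$; by ergodicity this makes $L$ honestly $\Gamma_\mu$-equivariant, equivalently the push-forward of $m$ to $\bbP(TX)$ is $\Gamma_\mu$-invariant. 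From now on one assumes this ``non-random'' case and must produce alternative (a) or (b).

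Second, split according to the fiber entropy $h_\nu(X,\mu)$. If $h_\nu(X,\mu)=0$, I would run the Ledrappier--Young bookkeeping (the analogues in this setting of Propositions \ref{prop4.3.8} and \ref{Prop4.3.10}): vanishing fiber entropy forces the unstable leafwise conditionals to be trivial, and then $h_{\hat{m}}(F)=h(\mu)=h_{\hat{m}}(F^{-1})$ forces the stable leafwise conditionals to be trivial too, at which point the inequalities of Proposition \ref{prop4.3.8} give $h_{\hat{m}}(F^{-1},W_1^-)=h(\mu)$, i.e.\ $\nu$ is $\Gamma_\mu$-invariant --- exactly as in the proof of Corollary \ref{0.2}. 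Together with $h_\nu(X,\mu)=0$ this is alternative (a); in effect (a) is just ``$h_\nu(X,\mu)=0$'', the invariance being a free consequence of the triviality of the conditionals.

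Third, suppose $h_\nu(X,\mu)>0$. Then the unstable conditional measures are non-trivial and (via the same entropy bookkeeping, now with a nonzero dimension term) the stable conditional measures are non-trivial as well, so the global stable manifolds $W^s(x,\omega)$ are genuine injectively immersed entire curves carrying positive leafwise mass, to which $L=E^s$ is everywhere tangent. Since we are in the non-random case the tangent direction $L(x)$ is the same for all these curves through $x$, even though by Corollary \ref{8.2} the curves themselves differ for $\mu^{\bbZ}$-a.e.\ choice of future, and by Theorem \ref{thm8.2} together with Theorem \ref{only} and Proposition \ref{partial} their normalised Ahlfors--Nevanlinna currents $T_\omega^s$ lie in genuinely distinct cohomology classes $e(\omega)$. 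The idea now is to upgrade $L$ from measurable to holomorphic --- this is where one needs an invariance-principle type regularity statement for the invariant line field of a pinching holomorphic cocycle --- so that $L$ becomes the tangent field of a singular holomorphic foliation $\mathcal{F}$ on $X$ invariant under $\Gamma_\mu$. A non-elementary $\Gamma_\mu<\aut(X)$ cannot preserve such an $\mathcal{F}$ unless $\mathcal{F}$ carries a $\Gamma_\mu$-invariant algebraic curve (its singular locus, a separatrix, or the tangency divisor with an ample class being invariant); this yields alternative (b). Conversely, if there is no $\Gamma_\mu$-invariant algebraic curve we have reached a contradiction, so the positive-entropy case with no invariant curve simply cannot be ``non-random'' --- hence alternative (c) holds. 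When (b) does hold, ergodicity of $\nu$ together with a Furstenberg argument on the finitely many irreducible components of the invariant curve forces $\nu$ to be supported on it. Mutual exclusivity is then clear: $(\mathrm a)\Leftrightarrow h_\nu(X,\mu)=0$ rules out (c) and, generically, (b); (b) puts $\nu$ on a curve where the dynamics is essentially one-dimensional, ruling out (a) and (c).

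The step I expect to be the real obstacle is the ``non-random $\Rightarrow$ rigidity'' implication in the positive-entropy case: passing from a merely measurable $\Gamma_\mu$-invariant line field (equivalently a $\Gamma_\mu$-invariant measure on $\bbP(TX)$ projecting to $\nu$) to an actual holomorphic foliation, and then invoking holomorphic rigidity of invariant foliations under non-elementary automorphism groups to extract the invariant curve. This requires both an invariance-principle regularity argument in the holomorphic category and the Ahlfors--Nevanlinna current analysis of \cite{CD} to convert that regularity into statements about cohomology classes and the limit set; by contrast, the entropy bookkeeping and the passage between the stable and unstable sides are routine.
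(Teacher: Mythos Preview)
This theorem is not proved in the present paper; it is quoted from \cite{CD} (their Theorem~C, proved there as Theorem~9.1) and used as a black box, so there is no in-paper proof to compare your proposal against.

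Your outline is structurally close to Cantat--Dujardin's actual argument: split off case~(c), and in the non-random case split on fiber entropy, with the positive-entropy branch producing a $\Gamma_\mu$-invariant holomorphic foliation whose rigidity forces the invariant algebraic curve. You correctly identify the regularity upgrade (measurable $\Gamma_\mu$-invariant line field $\to$ holomorphic foliation) together with the classification of foliations preserved by a non-elementary group as the heart of the matter; this is indeed where \cite{CD} does the real work, combining the invariance principle, Pesin theory for the stable lamination, and structure results for holomorphic foliations on projective surfaces.

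There is, however, a genuine gap in your step~2. From $h_\nu(X,\mu)=0$ you correctly obtain trivial unstable conditionals and $h_{\hat m}(F)=h(\mu)=h_{\hat m}(F^{-1})$, but the inequality of Proposition~\ref{prop4.3.8}(3), namely $h_{\hat m}(F^{-1})\le h_{\hat m}(F^{-1},W_1^-)-\lambda^-\dim(T_x\calL^s)$ with $-\lambda^->0$, does \emph{not} force $\dim\calL^s=0$ nor $h_{\hat m}(F^{-1},W_1^-)=h(\mu)$: both terms on the right could exceed $h(\mu)$ individually while their difference still equals $h(\mu)$. Your appeal to ``exactly as in the proof of Corollary~\ref{0.2}'' is misplaced, since there the triviality of the stable conditionals is an \emph{input} (supplied by Lemma~\ref{0.1} and the failure of QNI), not an output of the entropy bookkeeping. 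In \cite{CD} the invariance of $\nu$ in the non-random case is not obtained this way; rather, once $E^s$ is independent of the future, the invariance principle makes the lifted stationary measure on $\bbP(TX)$ genuinely $\mu$-invariant, and it is this projective invariance that is exploited to drive the dichotomy between (a) and (b).
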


Case c) gives us that for almost every $(x,\omega)\in \Lambda$, a.e. $\omega' \in \Omega$ is such that $E^s(x,\omega) \neq E^s(x,\omega')$. It is similar for the unstables.

As noted previously, in the case that $d_+=0$, i.e. alternative 1 from Section \ref{sec:breakdown}, our fiber entropy is zero and we are in case a) of Theorem \ref{thmC}. Hence we only have that the stable manifolds are a.e. distinct for different $\omega\in \Omega$, see Section \ref{sec:sect8}, Corollary 8.2.

\subsubsection{Elementary lemma}

Fix $x\in X$ and consider the family of measures on $Gr(1,\bbC^2)$ given by $\rho_{(x,\omega)}^s \defeq \delta_{E^s(x,\omega)}$. By Theorem \ref{thmC} above, for a.e. $\omega^+\in \Omega^+$ these measures are distinct. Let $P^s_x \defeq \int_{\Omega^+} \rho_{(x,\omega)}^s d\mu^{\bbN}$, then this measure is non-atomic. Similarly, we define $\rho_{(x,\omega)}^u \defeq \delta_{E^u(x,\omega)}$ and let $P^u_x \defeq \int_{\Omega^-} \rho_{(x,\omega)}^u d\mu^{\bbN}$; this measure is also non-atomic.

\begin{lemma}\label{anglecontrol}
Consider $P$ a non-atomic probability measure on a compact metric space $M$. For every sequence $\epsilon_n\to 0$ there exists a sequence $\delta_n\to 0$ such that for all $x\in M$ and $n\in \bbN$ we have
\begin{align}
    P(B(x,\epsilon_n))<\delta_n.
\end{align}
\end{lemma}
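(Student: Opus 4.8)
The plan is to argue by contradiction using a compactness/covering argument together with the inner regularity of $P$. Suppose the conclusion fails. Then there is a sequence $\epsilon_n \to 0$ for which no sequence $\delta_n \to 0$ works; after passing to a subsequence, this means there is a fixed $\eta > 0$, a subsequence (still denoted $\epsilon_n$) with $\epsilon_n \to 0$, and points $x_n \in M$ with $P(B(x_n,\epsilon_n)) \geq \eta$ for all $n$. By compactness of $M$, pass to a further subsequence so that $x_n \to x_\infty \in M$.

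Next I would localize the mass. Fix any $r > 0$. For $n$ large enough we have $\epsilon_n < r/2$ and $d(x_n, x_\infty) < r/2$, hence $B(x_n,\epsilon_n) \subset B(x_\infty, r)$, so $P(B(x_\infty,r)) \geq \eta$. Since $r > 0$ was arbitrary, this gives $P(B(x_\infty,r)) \geq \eta$ for every $r > 0$. Now let $r \to 0$: the sets $B(x_\infty,r)$ decrease to $\bigcap_{r>0} B(x_\infty,r) = \{x_\infty\}$ (using that $M$ is a metric space, so points are closed and this intersection is the singleton), and by continuity of the measure from above (valid since $P$ is finite) we get $P(\{x_\infty\}) = \lim_{r\to 0} P(B(x_\infty,r)) \geq \eta > 0$. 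This contradicts the assumption that $P$ is non-atomic.

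To be careful about the logical form of the statement: what must be produced is a single sequence $\delta_n \to 0$ such that the bound $P(B(x,\epsilon_n)) < \delta_n$ holds uniformly in $x \in M$. Equivalently, setting $\delta_n := \sup_{x \in M} P(B(x,\epsilon_n)) + 1/n$ (the $+1/n$ to make the inequality strict), it suffices to show $\sup_{x \in M} P(B(x,\epsilon_n)) \to 0$ as $n \to \infty$; the argument of the previous paragraph, run on an arbitrary subsequence realizing the $\limsup$ of these suprema, shows precisely that this $\limsup$ is $0$. One small point to verify is that $x \mapsto P(B(x,\epsilon_n))$ attains or at least approaches its supremum along a convergent sequence of points, which is exactly what the compactness extraction handles, so no semicontinuity of this function is actually needed.

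The main (and only real) obstacle is essentially bookkeeping: making sure the contradiction is extracted from the correct quantifier order, i.e. that failure of "there exists $\delta_n \to 0$ working for all $x$" really does yield a fixed $\eta$ and a subsequence of the $\epsilon_n$ along which the suprema stay bounded below by $\eta$. Everything after that — the inclusion of small balls into a fixed ball around the limit point, and continuity from above of a finite measure — is routine.
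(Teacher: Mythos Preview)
Your proof is correct. The paper does not actually give a proof of this lemma; it simply labels it an ``elementary lemma'' and moves on, so there is nothing to compare against. Your compactness-plus-contradiction argument is the standard one: extract a convergent subsequence of near-maximizing centers, trap the small balls inside a fixed ball around the limit point, and use continuity from above of the finite measure $P$ to force an atom. The only cosmetic point is that from $\sup_x P(B(x,\epsilon_{n_k})) \geq \eta$ you get points with measure $\geq \eta - 1/k$ rather than $\geq \eta$, but replacing $\eta$ by $\eta/2$ (or carrying the $-1/k$) changes nothing.
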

This elementary lemma will be useful in Sections \ref{proof0.5} and \ref{sec:Sect3}.

\subsection{Non-uniform hyperbolicity and Pesin theory}
\label{sec:Sec22}

Many of the theorems used in non-uniformly hyperbolic settings like our own were originally only written for the case of single iterate maps (with restrictions on the measure $\nu$) and are due to Pesin (\cite{P1}, \cite{P2}) Many of these theorems have versions in our setting, the `random dynamics' setting, where instead of a single map $f$ acting as the dynamics, we have $f_{\omega}^n$ as in Equations \eqref{forwards} and \eqref{backwards}. The books written by \cite{K} and \cite{LQ} are standard references for this material. 

Let $\epsilon_0\ll \min\{1,\lambda^+/200, -\lambda^-/200\}.$

Since we have assumed that we have no zero Lyapunov exponents, we have ``non-uniform hyperbolicity'': upper and lower bounds on the action of $D_xf_{\omega}^n$ on the Oseledets' subspaces, as well as bounds on the angles between these subspaces. In the case of uniform hyperbolicity, these bounds are point independent, but this is not the case for non-uniform hyperbolicity. We instead get a statement of the following form:
\begin{lemma} [\cite{BRH}, Lemma 6.2, see also \cite{BP}, Lemma 3.5.7 and \cite{LQ}, Lemma 3.2] 
    Given the integrability condition, \eqref{IC}, \label{NUH}
    there is a measurable function $L: X \times \Omega\to [1,\infty[$ such that for $\mu$-a.e. $(x,\omega)$ (i.e. those in $\Lambda$) and $n\in \bbZ$ we have
    \begin{enumerate}
        \item For $v\in E^s(x,\omega)$,
        $$L(x,\omega)^{-1}e^{n\lambda^- - |n|\frac{1}{2}\epsilon_0}\Vert v\Vert \leq \Vert Df_{\omega}^nv\Vert \leq L(x,\omega)e^{n\lambda^-+|n|\frac{1}{2}\epsilon_0}\Vert v\Vert.$$
        \item For $v\in E^u(x,\omega)$,
        $$L(x,\omega)^{-1}e^{n\lambda^+ - |n|\frac{1}{2}\epsilon_0}\Vert v\Vert \leq \Vert Df_{\omega}^nv\Vert \leq L(x,\omega)e^{n\lambda^++|n|\frac{1}{2}\epsilon_0}\Vert v\Vert.$$
        \item It holds\footnote{
    Here, $\angle$ denotes Riemannian angle between the subspaces.} that
        $$\angle \Big( E^s[f_{\omega}^n(x),\sigma^n(\omega)],E^u[f_{\omega}^n(x),\sigma^n(\omega)] \Big) >\frac{1}{L(x,\omega)}e^{-|n|\epsilon_0}.$$
        \item It holds that 
        $$L(f_{\omega}^n(x),\sigma^n(\omega)) \leq L(x,\omega)e^{\epsilon_0|n|}.$$
    \end{enumerate}
\end{lemma}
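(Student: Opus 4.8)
The statement to prove is Lemma~\ref{NUH}, the standard ``Lyapunov charts / tempered-regularity'' package for the random cocycle $D_xf_\omega^n$. The plan is to deduce it from the Oseledets theorem (already invoked for the set $\Lambda$) together with the integrability condition~\eqref{IC}, essentially by repeating the proof in \cite[Lemma 6.2]{BRH} or \cite[Lemma 3.5.7]{BP} adapted to the skew-product notation of Section~\ref{sec:susp}. First I would fix a version of $\epsilon_0$ with $\epsilon_0 \ll \min\{1,\lambda^+/200,-\lambda^-/200\}$ as in Section~\ref{sec:Sec22}, and for each $(x,\omega)\in\Lambda$ define the candidate function by taking the supremum over $n\in\bbZ$ of the deviations between the true growth of $\|D_xf_\omega^n v\|$ on the Oseledets subspaces and the ideal exponential $e^{n\lambda^{\pm}}$, rescaled by $e^{-|n|\epsilon_0/2}$, together with the reciprocal of the angle $\angle(E^s,E^u)$ rescaled by $e^{-|n|\epsilon_0}$. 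Concretely,
\begin{align*}
L(x,\omega) \defeq \sup_{n\in\bbZ}\ \max\Bigl\{
&\sup_{0\neq v\in E^s}\tfrac{\|D_xf_\omega^n v\|}{\|v\|}e^{-n\lambda^- - |n|\epsilon_0/2},\
\sup_{0\neq v\in E^s}\tfrac{\|v\|}{\|D_xf_\omega^n v\|}e^{n\lambda^- - |n|\epsilon_0/2},\\
&\sup_{0\neq v\in E^u}\tfrac{\|D_xf_\omega^n v\|}{\|v\|}e^{-n\lambda^+ - |n|\epsilon_0/2},\
\sup_{0\neq v\in E^u}\tfrac{\|v\|}{\|D_xf_\omega^n v\|}e^{n\lambda^+ - |n|\epsilon_0/2},\\
&\ e^{|n|\epsilon_0}\angle\bigl(E^s[f_\omega^n x,\sigma^n\omega],E^u[f_\omega^n x,\sigma^n\omega]\bigr)^{-1},\ 1 \Bigr\}.
\end{align*}

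\textbf{Key steps.} With this definition, items (1), (2) and (3) are immediate from unwinding the supremum (the lower bounds in (1),(2) come from the second entry in each pair, applied at $n$, or equivalently by applying the upper bound to the inverse cocycle at $\sigma^n\omega$). The two substantive points are: (a) \emph{finiteness} of $L$ for $m$-a.e.\ (equivalently $\mu^{\bbZ}$-a.e., via Lemma~\ref{m}) $(x,\omega)$, and (b) the \emph{tempered (slow-growth) estimate} (4), $L(f_\omega^n x,\sigma^n\omega)\le L(x,\omega)e^{\epsilon_0|n|}$. For (a): on the Oseledets set the quantities $\frac1n\log\|D_xf_\omega^n v\|$ converge to $\lambda^{\pm}$ and $\frac1n\log\angle(E^s,E^u)\to 0$ along $n\to\pm\infty$ (the angle decay being subexponential, a standard consequence of Oseledets plus the integrability of $\log\|f^{\pm1}\|_{C^1}$); hence each quantity inside the sup is eventually dominated by the factor $e^{|n|\epsilon_0/2}$ or $e^{|n|\epsilon_0}$, so the sup is finite, while measurability follows since it is a countable supremum of measurable functions. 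For (b): this is the cocycle/shift-equivariance computation. Writing $(x',\omega') = F^n(x,\omega) = (f_\omega^n x,\sigma^n\omega)$, one has $D_{x'}f_{\omega'}^m = D_xf_\omega^{n+m}\circ(D_xf_\omega^n)^{-1}$ and $E^{s/u}(x',\omega') = D_xf_\omega^n(E^{s/u}(x,\omega))$ by $DF$-equivariance (from Oseledets), so each defining quantity for $L(x',\omega')$ at index $m$ is controlled by the corresponding quantity for $L(x,\omega)$ at index $n+m$ times an error of the form $e^{(|n+m|-|m|)\epsilon_0/2}\le e^{|n|\epsilon_0/2}$ (and similarly $e^{|n|\epsilon_0}$ for the angle term); taking the sup over $m$ gives $L(x',\omega')\le L(x,\omega)\,e^{|n|\epsilon_0}$, possibly after a harmless rescaling of constants absorbed into the choice of $\epsilon_0$.

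\textbf{Main obstacle.} The only genuinely delicate point is the subexponential decay of the angle $\angle(E^s[f_\omega^nx,\sigma^n\omega],E^u[f_\omega^nx,\sigma^n\omega])$ as $|n|\to\infty$ — i.e.\ that this angle cannot shrink faster than $e^{-|n|\epsilon_0}$ — which is what makes the angle term in $L$ finite. In the classical single-map setting this is \cite[Lemma 3.5.7]{BP}; in the random setting it follows from \cite[Lemma 3.2]{LQ} / \cite[Lemma 6.2]{BRH}, and the mechanism is the same: if the angle decayed too fast along a subsequence, the two Oseledets subspaces would fail to be well-separated under the cocycle, contradicting the distinct exponents $\lambda^+ > \lambda^-$ together with the fact that $\frac1n\log^+\|D_xf_\omega^{\pm n}\|$ and $\frac1n\log^+\|(D_xf_\omega^{\pm n})^{-1}\|$ have vanishing $\limsup$ growth rate beyond $|\lambda^{\pm}|$ (Kingman plus~\eqref{IC}). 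I would simply cite \cite[Lemma 6.2]{BRH} (equivalently \cite[Lemma 3.5.7]{BP}, \cite[Lemma 3.2]{LQ}) for this, since the proof is identical to the references and our only hypothesis beyond theirs, the moment condition~\eqref{IC}, is exactly the one they use; everything else is the bookkeeping in the display above.
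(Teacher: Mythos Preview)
The paper does not prove this lemma at all; it is stated with citation to \cite[Lemma~6.2]{BRH}, \cite[Lemma~3.5.7]{BP}, \cite[Lemma~3.2]{LQ} and then used as a black box, so your decision to ultimately defer to those references matches the paper exactly.

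That said, the sketch you give for item~(4) has a real gap. Your shift computation is correct for the \emph{angle} term: the defining quantity at $(F^n(x,\omega),m)$ really equals the one at $((x,\omega),n+m)$ times $e^{(|m|-|n+m|)\epsilon_0}\le e^{|n|\epsilon_0}$. But it fails for the \emph{norm} terms. Writing $v'=D_xf_\omega^n v$ one has
\[
\frac{\|D_{x'}f_{\omega'}^{m}v'\|}{\|v'\|}=\frac{\|D_xf_\omega^{\,n+m}v\|}{\|D_xf_\omega^{\,n}v\|},
\]
and the denominator is $\|D_xf_\omega^n v\|$, not $\|v\|$; controlling the extra factor $\|v\|/\|D_xf_\omega^n v\|$ costs a second copy of $L(x,\omega)$. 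With your displayed definition one therefore only obtains $L(F^n(x,\omega))\le L(x,\omega)^{2}e^{|n|\epsilon_0}$, and no ``harmless rescaling of $\epsilon_0$'' closes this (iterating would give $L^{2^k}$). The standard repair, which is precisely the content of the Pesin tempering-kernel step in the cited references, is a second pass: having built a measurable $K\ge 1$ satisfying (1)--(3) and checked that $\tfrac{1}{|n|}\log K(F^n(x,\omega))\to 0$ a.e., one sets
\[
L(x,\omega)\;:=\;\sup_{n\in\bbZ}\,K\bigl(F^n(x,\omega)\bigr)e^{-|n|\epsilon_0},
\]
which dominates $K$ (so (1)--(3) persist) and for which (4) follows directly from $|m-n|\ge |m|-|n|$. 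Since you already plan to cite \cite{BRH}, \cite{BP}, \cite{LQ} for the subexponential angle decay, the same citation covers this tempering step; just be aware that the single-sup $L$ you wrote down is not itself the tempered function.
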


\subsubsection{Lyapunov norms}

Lyapunov norms are an adapted norm meant to have the action of $D_xf$ reflect the Lyapunov exponents after one step in the random walk, see \cite[Section 9.2]{BRH} or \cite[chapter VI, Section 3]{LQ}. One can also see \cite{LY} for the single iterate case.

For $(x,\omega)\in Y$, $b \in  \{s,u\}$ and $v\in E^{b}(x,\omega)$ we define the Lyapunov norm (two-sided) as
$$|v|^{b}_{\epsilon_0,(x,\omega)} \defeq \left(\sum_{n\in \bbZ} \Vert Df_{\omega}^n v\Vert^2 e^{-2\lambda^{b}n-2\epsilon_0|n|} \right)^{1/2}.$$
These sums converge for $m$-a.e. $(x,\omega)\in Y$ by Lemma \ref{NUH}, also for $v\in E^{b}(x,\omega)$ we have:
$$|v|^{b}_{\epsilon_0,(x,\omega)}\geq \Vert v\Vert.$$

\begin{lemma}
    [Lemma 9.2 of \cite{BRH}] For $m$-a.e. $(x,\omega)\in Y$, $v\in E^{b}(x,\omega)$, $n\in \bbZ$ and $k\geq 0$ we have

 \begin{equation}
e^{n\lambda^{b}-|n|\epsilon_0} |v|^{b}_{\epsilon_0,(x,\omega)} \leq |Df_{\omega}^nv|^{b}_{\epsilon_0, (f_{\omega}^n(x),\sigma(\omega))} \leq e^{n\lambda^b+|n|\epsilon_0}|v|^b_{\epsilon_0,(x,\omega)} .
\end{equation}

\end{lemma}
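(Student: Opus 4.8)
The plan is to unwind the definition of the Lyapunov norm and exploit the cocycle identity $Df^m_{\sigma^n(\omega)}\circ Df^n_\omega = Df^{m+n}_\omega$ together with an index shift. First I would fix an $F$-invariant conull subset of $Y$ on which the conclusions of Lemma~\ref{NUH} hold and on which the defining series of $|\cdot|^b_{\epsilon_0,(\cdot)}$ converges absolutely; such a set exists because $m$ is $F$-invariant and the convergence of the series at any point is forced by the exponential bounds (1)--(2) of Lemma~\ref{NUH}. For $(x,\omega)$ in this set, $v\in E^b(x,\omega)$ and $n\in\bbZ$, the $DF$-equivariance of the Oseledets splitting gives $Df^n_\omega v\in E^b(f^n_\omega(x),\sigma^n(\omega))$, so the left-hand quantity is well defined, and writing out its square I would get
\begin{align*}
\big(|Df^n_\omega v|^b_{\epsilon_0,(f^n_\omega(x),\sigma^n(\omega))}\big)^2 = \sum_{m\in\bbZ}\big\|Df^m_{\sigma^n(\omega)}(Df^n_\omega v)\big\|^2 e^{-2\lambda^b m-2\epsilon_0|m|}.
\end{align*}
Applying the cocycle identity turns the summand into $\|Df^{m+n}_\omega v\|^2 e^{-2\lambda^b m-2\epsilon_0|m|}$.

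Next I would substitute $j=m+n$ and pull out $e^{2\lambda^b n}$, obtaining
\begin{align*}
\big(|Df^n_\omega v|^b_{\epsilon_0,(f^n_\omega(x),\sigma^n(\omega))}\big)^2 = e^{2\lambda^b n}\sum_{j\in\bbZ}\|Df^{j}_\omega v\|^2 e^{-2\lambda^b j-2\epsilon_0|j-n|}.
\end{align*}
The only remaining point is to compare the weight $e^{-2\epsilon_0|j-n|}$ with $e^{-2\epsilon_0|j|}$: the triangle inequality $\big||j|-|n|\big|\le |j-n|\le |j|+|n|$ yields the bounds $e^{-2\epsilon_0|n|}e^{-2\epsilon_0|j|}\le e^{-2\epsilon_0|j-n|}\le e^{2\epsilon_0|n|}e^{-2\epsilon_0|j|}$ uniformly in $j$. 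Substituting these into the displayed identity and recognizing that $\sum_{j}\|Df^j_\omega v\|^2 e^{-2\lambda^b j-2\epsilon_0|j|}=\big(|v|^b_{\epsilon_0,(x,\omega)}\big)^2$ gives
\begin{align*}
e^{2\lambda^b n-2\epsilon_0|n|}\big(|v|^b_{\epsilon_0,(x,\omega)}\big)^2\le \big(|Df^n_\omega v|^b_{\epsilon_0,(f^n_\omega(x),\sigma^n(\omega))}\big)^2\le e^{2\lambda^b n+2\epsilon_0|n|}\big(|v|^b_{\epsilon_0,(x,\omega)}\big)^2,
\end{align*}
and taking square roots is exactly the claimed estimate (the parameter $k\ge 0$ appearing in the statement plays no role in this two-sided inequality).

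This is essentially a bookkeeping computation, so I do not expect a serious obstacle. The one place that demands a little care is the very first step — choosing the conull set to be $F$-invariant, so that the Lyapunov norm is simultaneously \emph{defined} and the series \emph{absolutely convergent} at both $(x,\omega)$ and $(f^n_\omega(x),\sigma^n(\omega))$; it is this that licenses the term-by-term manipulation and the free re-indexing of the sum. Everything after that is the cocycle identity plus the elementary triangle-inequality bound on the exponential weights.
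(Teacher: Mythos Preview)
Your argument is correct and is the standard direct computation: expand the defining series at the image point, apply the cocycle identity, re-index, and bound the shifted weight $e^{-2\epsilon_0|j-n|}$ against $e^{-2\epsilon_0|j|}$ via the triangle inequality. The paper does not supply its own proof of this lemma; it simply quotes the result from \cite{BRH}, so there is nothing further to compare against, and your write-up is exactly the argument one finds in that reference.
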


From here, we pass to Lyapunov charts which rely on the construction of the Lyapunov norm. These charts takes this norm to the standard Euclidean norm and our Oseledets' directions to the standard orthogonal coordinate system in $\bbR^4$.

\subsubsection{Lyapunov charts}
\label{sec:Lyapcharts}

For every $0<\epsilon <\epsilon_0$, there is a measurable function $r : Y \to [1,\infty[$ and a full measure set $\Lambda \subset Y$ (same as the Oseledets' set) such that 
\begin{itemize}
    \item For $(x,\omega)\in \Lambda$ there is a neighbourhood $U(x,\omega) \subset X$ containing $x$ and a $C^{\infty}$ diffeomorphism $\varphi_{(x,\omega)} : U(x,\omega) \to B(r(x,\omega)^{-1})$ (where $B(r)$ is a ball of radius $r$ in $\bbR^4$) with
    \begin{itemize}
        \item $\varphi_{(x,\omega)}(x)=0$,
        \item $D\varphi_{(x,\omega)}(E^u(x,\omega)) = \bbR \times \bbR \times \{0\} \times \{0\} \defeq \bbR^u$,
        \item $D\varphi_{(x,\omega)}(E^s(x,\omega)) =  \{0\} \times \{0\} \times \bbR \times \bbR \defeq \bbR^s$.
    \end{itemize}
    \item Let $\tilde{f}_{(x,\omega)} = \varphi_{(f_{\omega}(x),\sigma(\omega))}\circ f_{\omega}^1 \circ \varphi_{(x,\omega)}^{-1}$ and\\ $\tilde{f}^{-1}_{(x,\omega)} = \varphi_{((f_{\omega}^{-1}(x),\sigma^{-1}(\omega))} \circ f_{\omega}^{-1}\circ \varphi_{(x,\omega)}^{-1}$ then,
    \begin{itemize}
        \item $\tilde{f}_{(x,\omega)}(0)=0$,
        \item $e^{\lambda^+-\epsilon}|v|\leq |D_0\tilde{f}_{(x,\omega)}v| \leq e^{\lambda^++\epsilon}|v|$, for $v\in \bbR \times \bbR \times \{0\} \times \{0\}$,\\ 
        $e^{\lambda^--\epsilon}|v|\leq |D_0\tilde{f}_{(x,\omega)}v| \leq e^{\lambda^-+\epsilon}|v|$ for $v\in \{0\} \times \{0\} \times \bbR \times \bbR$,
        \item Lip$(\tilde{f}_{(x,\omega)} - D_0\tilde{f}_{(x,\omega)})<\epsilon$,
        \item Lip$(D\tilde{f}_{(x,\omega)})<r(x,\omega)$.
    \end{itemize}
    \item Similar properties for $\tilde{f}_{(x,\omega)}^{-1}$.
    \item There exists a uniform constant $\psi$ with $\psi^{-1}\leq $Lip$(\varphi_{(x,\omega)}) \leq r(x,\omega)$ for all $(x,\omega)\in \Lambda$.
    \item $r((f_{\omega}^n(x),\sigma^n(\omega)) \leq r(x,\omega)e^{|n|\epsilon}$ for all $n\in \bbZ$.
\end{itemize}

Let us set notation for the action of $f$ in the Lyapunov chart. Given $\omega = (\dots f_{-1},f_0,f_1,\dots)$, define
\begin{align}\label{tildef}
\tilde{f}_{(x,\omega)} = \varphi_{(f_{\omega}(x),\sigma(\omega))}\circ f_{\omega}^1 \circ \varphi_{(x,\omega)}^{-1} = \varphi_{(f_{\omega}(x),\sigma(\omega))}\circ f_0 \circ \varphi_{(x,\omega)}^{-1}.
\end{align}
We will denote $\tilde{f}_{(x,\omega)}$ as $\tilde{f}_0$ in settings where $(x,\omega)$ has been made clear. Further, let
\begin{align}\label{tildefn}
    \tilde{f}_n\defeq \tilde{f}_{(f^{n}x,\sigma^n(\omega))} = \varphi_{(f_{\omega}^{n+1}(x),\sigma^{n+1}(\omega))}\circ f_n \circ \varphi_{(f_{\omega}^{n}(x),\sigma^n(\omega))}^{-1}.
\end{align}
Then we can write $\tilde{f}_{(x,\omega)}^n = \tilde{f}_{n-1}\circ \dots \circ \tilde{f}_0$.

\subsubsection{Into charts -- distortion estimates}
\label{sec:distortion}

 Working in Lyapunov charts gives us control over the first derivative, $D_0\tilde{f}_{(x,\omega)}$, by giving us uniform hyperbolicity. In this section we work to control the action of the second derivative, $D^2_xf_{\omega}$, obtaining a na\"ive bound to use in subsequent sections. Note that we only use the standard flow in this section.

Let $(x,\omega)\in \Lambda$, we get that there is a neighbourhood $U(x,\omega)\subset X$ of $x$ and a $C^{\infty}$ diffeomorphism $\varphi_{(x,\omega)} : U(x,\omega)\to B(r(x,\omega)^{-1})$ with the properties defined in Section~\ref{sec:Lyapcharts}. As in \ref{sec:Lyapcharts}, will use $|\cdot |$ to denote Euclidean norm in the chart. Let $k\in [0,1[$. We define 
\begin{align}\label{chartflow}
\tilde{F}_{(x,\omega),k}^t  :\bbR^4 \to \bbR^4, \ \ \tilde{F}_{(x,\omega),k}^t(v) = \tilde{f}^{\lfloor t+k\rfloor}_{(x,\omega)}(v), 
\end{align}
where $\tilde{f}_{(x,\omega)}$ is as defined in Equation \eqref{tildef} and $t$ is the amount we `flow' by (on $Z$). This is the action of `standard' flow on $X$ written in Lyapunov charts.

Taylor expanding our flow in charts, we get that there is a point $w$ on the line between $0$ and $v$ such that 
\begin{align}\label{tay}
    \tilde{F}_{(x,\omega),k}^t(v)- \tilde{F}_{(x,\omega),k}^t(0) = D\tilde{F}_{(x,\omega),k}^t(0)v + \frac{1}{2}v^TD^2 \tilde{F}_{(x,\omega),k}^t(0)(w)v.
\end{align}
Using this Equation \eqref{tay} we get the upper bound, 
\begin{align}\label{upper}
    |\tilde{F}_{(x,\omega),k}^t(v)- \tilde{F}_{(x,\omega),k}^t(0)|\leq |D\tilde{F}_{(x,\omega),k}^t(0)v|  +\frac{1}{2}\sup_{w\in [0,v]}|D^2 \tilde{F}_{(x,\omega),k}^t(0)(w)|\cdot |v|^2.
\end{align}

Additionally we will need a lower bound. Using Equation \eqref{tay} with the reverse triangle inequality we get
\begin{align} \label{lower}
    |\tilde{F}_{(x,\omega),k}^t(v)- \tilde{F}_{(x,\omega),k}^t(0)|\geq  
\left| |D\tilde{F}_{(x,\omega),k}^t(0)v| - \frac{1}{2}|v^TD^2\tilde{F}_{(x,\omega),k}^t(w)v| \right|.
\end{align}

Now, to bound the second derivative, we look at the action of a diffeomorphism on a subspace of the space of 2-jets for $\bbR^4$ based at the point 0 (it gives a linear map). In general, the space of k-jets based at a point $x_0$ is denoted $J_{x_0}^k(\bbR^n,\bbR^m)$ and is essentially the space of all $k$th Taylor polynomials of smooth functions $f:\bbR^n \to \bbR^m$ based at a point $x_0$. More precisely, it is the set of all equivalence classes of smooth functions $f:\bbR^n\to \bbR^m$ such that $f\sim g$ if $f$ and $g$, as well as all their first $k$ partials agree at the point $x_0$. We will denote $j_{x_0}^k(f)$ to be the $k$th Taylor polynomial of $f$ centered at $x_0$. For any $y_0\in \bbR^m$, the real vector space $J_{x_0}^k(\bbR^n,\bbR^m)$ has the subspace,
\begin{align}
    J_{x_0}^k(\bbR^n,\bbR^m)_{0} \defeq \{j_{x_0}^k(f) \in J_{x_0}^k(\bbR^n,\bbR^m) : f(x_0)=0   \}.
\end{align}

For our purposes we take $n=m=4$, $k=2$, and $x_0=0$; we denote this $J_0^2(\bbR^4)_0$. Similarly, as above, for smooth functions $f:\bbR^4\to \bbR^4$ we write $j_0^2(f)\in J^2_0(\bbR^4)$ for the 2nd Taylor polynomial of $f$ at 0. Note that if $f,g:\bbR^4\to \bbR^4$ are smooth functions mapping the origin to the origin, we have the property that 
\begin{align}\label{alg}
j_0^2(f \circ g) = j_{g(0)}^2(f)\circ j_0^2(g).
\end{align}
Note that if $t\in \bbR^+$ is such that $\lfloor t+k\rfloor = n+1$ and $\omega = (\dots \tilde{f}_{0},\dots \tilde{f}_n, \dots )$, then $\tilde{F}^t_{(x,\omega),k}$ is composed of the diffeomorphisms $\tilde{f}_{0},\dots \tilde{f}_n$. Since we are working in charts, these diffeomorphisms always take the origin to the origin.

Further, a smooth function $f:\bbR^4\to \bbR^4$ that maps the origin to the origin gives us a map on the space $J_0^2(\bbR^4)_0$ by pre-composition: 
\begin{align}
    J_0^2(\bbR^4)_0 &\to J_0^2(\bbR^4)_0,\\
    h(z)&\mapsto  h\circ j_0^2(f)(z) \mod z^{\otimes 3}. \label{precomp}
\end{align}
We endeavor to write out this map as a matrix. First one needs to write down a basis for $J_0^2(\bbR^4)_0$; recall that the $k$th Taylor polynomial for a smooth function $f:\bbR^n\to \bbR$ can be written as
\begin{align}
    j_{x_0}^k(f) = \sum_{0\leq |\beta| \leq k} \frac{D^{\beta}f(x_0)}{\beta!}(x-x_0)^{\beta},
\end{align}
where $\beta$ is a multi-index $\beta = (\beta_1,\dots \beta_n)$ with $\beta_i\in \{0,\dots k\}$, $|\beta | = \sum \beta_i$, $\beta! = \prod_{i=1}^n \beta_i!$, and $(x-x_0)^{\beta} = \prod_{i=1}^n(x_i-(x_0)_i)^{\beta_i}$. Hence we can write down a basis for $J_0^2(\bbR^4,\bbR)_0$ as $$\{x_1,x_2,x_3,x_4,x_1^2,\dots x_4^2,x_1x_2,\dots x_1x_4,x_2x_3,x_2x_4,x_3x_4\}.$$
We then write $f:\bbR^n\to \bbR^n$ as $f(x_1,\dots ,x_n) = (f_1(x_1,\dots , x_n), \dots f_n(x_1,\dots ,x_n))$, where $f_i : \bbR^n \to \bbR$. We see that we can write $J_0^2(\bbR^4) = J_0^2(\bbR^4,\bbR)\oplus \dots \oplus J_0^2(\bbR^4,\bbR)$. 

We see that the matrix representing the precomposition map (with respect to $f:\bbR^4\to \bbR^4$) in Equation \eqref{precomp} needs to be $60\times 60$ because it needs to be $15\times 15$ for the precomposition action of $f:\bbR^4\to \bbR^4$ on each component $J_0^2(\bbR^4,\bbR)$ (given the size of the basis of $J_0^2(\bbR^4,\bbR)$ is 15). To demonstrate what is going on, we will perform this computation in a simpler case for the map defined by $f:\bbR^2\to \bbR^2$ on $J_0^2(\bbR^2)_0$ where on each $J_0^2(\bbR^2,\bbR)$ component it is only a 5$\times 5$ matrix. 

Let $h\in J_0^2(\bbR^2)_0$, then it is of the form $ h (x,y) = (h_1(x,y),h_2(x,y))$ where
$$h_i(z) = h_i(x,y) = a_1x + a_2y + a_3x^2 + a_4y^2+a_5xy \in J_0^2(\bbR^2,\bbR).$$
Then
\begin{align*}
&j_0^2(f)(h_i(z)) = h_i(J_0^2(f)(z)) = a_1(\frac{\partial f_1}{\partial x}x + \frac{\partial f_1}{\partial y}y + \frac{\partial^2 f_1}{\partial x^2}x^2 + \frac{\partial^2 f_1}{\partial y^2}y^2 + \frac{\partial^2 f_1}{\partial x\partial y}xy)\\
&+ a_2(\frac{\partial f_2}{\partial x}x + \frac{\partial f_2}{\partial y}y + \frac{\partial^2 f_2}{\partial x^2}x^2 + \frac{\partial^2 f_2}{\partial y^2}y^2 + \frac{\partial^2 f_2}{\partial x\partial y}xy)\\
&+ a_3(\frac{\partial f_1}{\partial x}x + \frac{\partial f_1}{\partial y}y + \frac{\partial^2 f_1}{\partial x^2}x^2 + \frac{\partial^2 f_1}{\partial y^2}y^2 + \frac{\partial^2 f_1}{\partial x\partial y}xy)^2\\
&+ a_4(\frac{\partial f_2}{\partial x}x + \frac{\partial f_2}{\partial y}y + \frac{\partial^2 f_2}{\partial x^2}x^2 + \frac{\partial^2 f_2}{\partial y^2}y^2 + \frac{\partial^2 f_2}{\partial x\partial y}xy)^2\\
&+ a_5(\frac{\partial f_1}{\partial x}x + \frac{\partial f_1}{\partial y}y + \frac{\partial^2 f_1}{\partial x^2}x^2 + \frac{\partial^2 f_1}{\partial y^2}y^2 + \frac{\partial^2 f_1}{\partial x\partial y}xy)\\
&(\frac{\partial f_2}{\partial x}x + \frac{\partial f_2}{\partial y}y + \frac{\partial^2 f_2}{\partial x^2}x^2 + \frac{\partial^2 f_2}{\partial y^2}y^2 + \frac{\partial^2 f_2}{\partial x\partial y}xy), \\
\end{align*}
In the above, the variable $x$ in $h_i(x,y)$ gets replaced with the Taylor polynomial of $f_1$ and the variable $y$ in $h_i(x,y)$ gets replaced by the Taylor polynomial of $f_2$ (where $f(z)=f(x,y)=(f_1(x,y),f_2(x,y))$.\\

Rearranging we get
\begin{align*}
j_0^2(f)(h(z)) &= x(a_1\frac{\partial f_1}{\partial x} + a_2 \frac{\partial f_2}{\partial x})+ y(a_1\frac{\partial f_1}{\partial y} + a_2 \frac{\partial f_2}{\partial y})\\ 
&+ x^2(a_1\frac{\partial^2 f_1}{\partial x^2} + a_2 \frac{\partial^2f_2}{\partial x^2} + a_3\left(\frac{\partial f_1}{\partial x}\right)^2  +a_4 \left(\frac{\partial f_2}{\partial x}\right)^2 + a_5 \left(\frac{\partial f_1}{\partial x}\cdot \frac{\partial f_2}{\partial x}\right))\\
&+ y^2(a_1\frac{\partial^2 f_1}{\partial y^2} + a_2 \frac{\partial^2f_2}{\partial y^2} + a_3\left(\frac{\partial f_1}{\partial y}\right)^2  +a_4 \left(\frac{\partial f_2}{\partial y}\right)^2 + a_5 \left(\frac{\partial f_1}{\partial y}\cdot \frac{\partial f_2}{\partial y}\right))\\
&+ xy\left( a_1\frac{\partial^2f_1}{\partial x \partial y} + a_2 \frac{\partial^2 f_2}{\partial x \partial y} + a_3 \frac{\partial f_1 }{\partial x}\frac{\partial f_1 }{\partial y} + a_4  \frac{\partial f_2 }{\partial x}\frac{\partial f_2 }{\partial y} + a_5  \frac{\partial f_1 }{\partial y}\frac{\partial f_2 }{\partial x}  \right)
\end{align*}

So then the associated matrix (fixing the basis $\{x,y,x^2,y^2,xy\}$ in this order) is 
\begin{align} \label{Tf}
\tilde{T}_f=\begin{pmatrix}
\frac{\partial f_1}{\partial x} & \frac{\partial f_2}{\partial x} & 0 & 0 & 0\\
\frac{\partial f_1}{\partial y} & \frac{\partial f_2}{\partial y} & 0 & 0 & 0\\
\frac{\partial^2 f_1}{\partial x^2} & \frac{\partial^2 f_2}{\partial x^2} & \left(\frac{\partial f_1}{\partial x}\right)^2 & \left( \frac{\partial f_2}{\partial x}\right)^2 & \frac{\partial f_1}{\partial x}\cdot \frac{\partial f_2}{\partial x}\\
\frac{\partial^2 f_1}{\partial y^2} & \frac{\partial^2 f_2}{\partial y^2} & \left(\frac{\partial f_1}{\partial y}\right)^2 & \left( \frac{\partial f_2}{\partial y}\right)^2 & \frac{\partial f_1}{\partial y}\cdot \frac{\partial f_2}{\partial y}\\
\frac{\partial^2 f_1}{\partial x\partial y} & \frac{\partial^2 f_2}{\partial x\partial y} & \frac{\partial f_1}{\partial x}\frac{\partial f_1}{\partial y} & \frac{\partial f_2}{\partial x}\frac{\partial f_2}{\partial y} & \left(  \frac{\partial f_1}{\partial x}\frac{\partial f_2}{\partial y} +  \frac{\partial f_2}{\partial x}\frac{\partial f_1}{\partial y}  \right)
\end{pmatrix}
\end{align}
In the top left block (2x2) we get the Jacobian of $f$, in the bottom left block (3x2) we have second derivatives. In the right most bottom block (3x3) we get products of the first partials. Then overall, for the action of $f$ on $J^2_0(\bbR^2)_0$ we get a $10\times 10$ (2 copies of this matrix on the diagonal). We would call this $T_f$. A similar construction occurs for our setting with maps $f:\bbR^4\to \bbR^4$ acting on $J_0^2(\bbR^4)_0$, again, call this matrix $T_f$. 

Let us return to our setting now.
\begin{lemma}[\cite{FM}]
Fix $(x,\omega,k)\in Z$ and let $t>0$, consider $\tilde{F}^t_{(x,\omega),k}$, there exists $C$ such that $|D^2\tilde{F}^t_{(x,\omega),k}|\leq C_{t,(x,\omega,k)}^n$ where $n = \lfloor t+k\rfloor$. In particular, if $\supp(\mu)<\infty$, then there exists $C$ such that for any $(x,\omega,k)\in Z$ and $t>0$ we have that $|D^2\tilde{F}^t_{(x,\omega),k}|\leq C^n$ where $n = \lfloor t+k\rfloor$.   
\end{lemma}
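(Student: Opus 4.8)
Set $n=\lfloor t+k\rfloor$. By the definition of the charted flow in~\eqref{chartflow} together with the composition law recorded just after~\eqref{tildefn},
$$\tilde F^t_{(x,\omega),k}=\tilde f_{n-1}\circ\tilde f_{n-2}\circ\cdots\circ\tilde f_0,$$
where each $\tilde f_j=\tilde f_{(f_\omega^j x,\sigma^j(\omega))}$ is the Lyapunov-chart representative of the $j$-th generator drawn in $\omega$, defined on a ball about $0$ in $\bbR^4$ and fixing $0$. The structural input is the $2$-jet algebra identity~\eqref{alg}: iterating it and passing to the matrix form~\eqref{Tf}, the matrix attached to $\tilde F^t_{(x,\omega),k}$ is the product $T_{\tilde f_{n-1}}\cdots T_{\tilde f_0}$. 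The essential point is that one must \emph{not} bound this by the product of the operator norms of the factors (that only yields growth of order $e^{O(n^2)}$), but instead exploit the block lower-triangular shape of each $T_{\tilde f_j}$ in~\eqref{Tf} — Jacobian block on the diagonal, Hessian block below it — so that the below-diagonal block of the product is exactly the iterated chain-rule expression for the Hessian of the composition. Equivalently, and this also controls $D^2$ over the whole chart domain rather than only at $0$, one differentiates the composition twice: with $G_i=\tilde f_i\circ\cdots\circ\tilde f_0$ one gets
$$\Vert D^2\tilde F^t_{(x,\omega),k}\Vert_\infty\ \le\ \sum_{j=0}^{n-1}\Big(\prod_{l=j+1}^{n-1}\Vert D\tilde f_l\Vert_\infty\Big)\,\Vert D^2\tilde f_j\Vert_\infty\,\Big(\prod_{l=0}^{j-1}\Vert D\tilde f_l\Vert_\infty\Big)^{2},$$
the norms taken over the relevant chart domains.

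\textbf{Per-step estimates and the bound at a fixed point.}
The first-order factors are uniformly bounded: from the Lyapunov chart properties of Section~\ref{sec:Lyapcharts}, $D_0\tilde f_j$ preserves the orthogonal splitting $\bbR^u\oplus\bbR^s$ (by $DF$-equivariance of the Oseledets spaces and the normalization of $\varphi$) with $\Vert D_0\tilde f_j|_{\bbR^u}\Vert\le e^{\lambda^++\epsilon}$ and $\Vert D_0\tilde f_j|_{\bbR^s}\Vert\le e^{\lambda^-+\epsilon}$, while $\mathrm{Lip}(\tilde f_j-D_0\tilde f_j)<\epsilon$ forces $\Vert D\tilde f_j(v)-D_0\tilde f_j\Vert\le\epsilon$ for every $v$; hence $\Vert D\tilde f_j\Vert_\infty\le e^{\lambda^++\epsilon}+\epsilon=:\beta$, independent of $j$ and of the basepoint. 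The only second-order input is $\Vert D^2\tilde f_j\Vert_\infty=\mathrm{Lip}(D\tilde f_j)<r(f_\omega^j x,\sigma^j(\omega))\le r(x,\omega)e^{j\epsilon}$, using the chart bound $\mathrm{Lip}(D\tilde f_{(x,\omega)})<r(x,\omega)$ and the tempering estimate $r((f_\omega^n x,\sigma^n\omega))\le r(x,\omega)e^{|n|\epsilon}$. Substituting, the $j$-th term of the sum is at most $\beta^{\,n-1-j}\cdot r(x,\omega)e^{j\epsilon}\cdot\beta^{\,2j}=r(x,\omega)\beta^{\,n-1}(\beta e^{\epsilon})^{j}$, so (since $\beta e^{\epsilon}>1$)
$$\Vert D^2\tilde F^t_{(x,\omega),k}\Vert_\infty\le n\,r(x,\omega)\,\beta^{\,2(n-1)}e^{(n-1)\epsilon}\ \le\ C_{(x,\omega,k)}^{\,n},$$
for instance with $C_{(x,\omega,k)}=2\,r(x,\omega)\,\beta^{2}e^{\epsilon}$ (using $r(x,\omega)\ge 1$ and $n\le 2^n$). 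This proves the first assertion; note that the constant may in fact be chosen independent of $t$.

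\textbf{The uniform (finite-support) statement and the main obstacle.}
When $\supp(\mu)$ is finite the generators form a finite set, so $M:=\max_{f\in\supp(\mu)}\Vert f\Vert_{C^2}<\infty$ (finite by compactness of $X$, and compatible with the higher-moment control of Remark~\ref{higher}). Running the identical chain-rule estimate for $f_\omega^n=f_{n-1}\circ\cdots\circ f_0$ directly on $X$ — in a fixed finite atlas, so chart transitions contribute only a bounded factor — gives $\Vert Df_\omega^n\Vert_{C^0(X)}\le C_0^{\,n}$ and $\Vert D^2 f_\omega^n\Vert_{C^0(X)}\le n\,C_0^{\,2n}\le C_1^{\,n}$ with $C_0,C_1$ independent of $\omega$. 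One then transports this through $\tilde F^t_{(x,\omega),k}=\varphi_{(f_\omega^n x,\sigma^n\omega)}\circ f_\omega^n\circ\varphi_{(x,\omega)}^{-1}$, using the Lipschitz control on the chart maps and their inverses. The genuinely delicate point — and the step I expect to be the main obstacle — is precisely this transport: the chart maps carry the basepoint-dependent size function $r$, and one must verify that the first- and second-order data of $\varphi_{(x,\omega)}^{\pm1}$ only contribute factors that can be absorbed into a single base constant $C$, independent of $(x,\omega,k)$ and $t$ (for instance by measuring the second derivative in the Lyapunov norm, in which the chart maps are controlled uniformly). Everything else is the routine chain-rule/jet-algebra bookkeeping carried out above, and the precise argument is the one of~\cite{FM}.
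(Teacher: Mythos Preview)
Your argument for the pointwise bound is correct but proceeds differently from the paper. The paper stays entirely inside the $2$-jet matrix calculus: it bounds every entry of each $T_{\tilde f_j}$ by a single constant $C_{\tilde f_j}$, observes that entries of a product of $n$ such $60\times 60$ matrices are bounded by $(\sqrt{60}\,\max_j C_{\tilde f_j})^{n}$, and reads off the Hessian of the composition from the appropriate block. No block-triangular structure is invoked: since the statement allows $C_{t,(x,\omega,k)}$ to depend on $t$, the paper simply sets $C_{t,(x,\omega,k)}=\sqrt{60}\,\max_{0\le j\le n}C_{\tilde f_j}$ and is done --- your $e^{O(n^2)}$ concern never arises under that reading. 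You instead pass through the explicit chain-rule sum for $D^2$ of a composition and feed in the specific Lyapunov-chart controls (the uniform first-derivative bound $\beta$ and the tempered second-derivative bound $r(x,\omega)e^{j\epsilon}$). This buys you a constant independent of $t$, a refinement the paper's cruder entrywise product does not give; your remark about needing the triangular structure is really a statement about achieving that $t$-independence, not about proving the lemma as stated.

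For the finite-support uniform clause, the paper does not isolate the chart-transport step you flag as the main obstacle: it simply asserts that because $\supp(\mu)$ is finite and $X$ is compact, the per-step constants $C_{\tilde f_j}$ can be bounded independently of $\omega$ (hence of the point), so the final bound depends only on the number $n$ of diffeomorphisms drawn. Your concern about the $r$-dependence entering through $\varphi^{\pm 1}$ is reasonable, but in the paper's treatment the uniform case is dispatched in one sentence rather than via the transport argument you outline.
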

\begin{proof}
Fix $(x,\omega,k)\in Z$.
Since $X$ is compact, every first and second partial derivative of $f\in \supp(\mu)$ has a na\"ive bound on it, call the maximum of these $C_f'$. Entries in our matrix $T_f$ then are bounded above by $C_f \defeq 2(C_f')^2$ as they may be products and/or sums of the first or second derivatives. Now if we take $t>0$ and flow by it under $\omega$, we have diffeomorphisms, $\tilde{f}_0,\dots \tilde{f}_{n} :\bbR^4\to \bbR^4$ where $n = \lfloor t+k \rfloor$. The map $T_{\tilde{f}_n\circ \dots \circ \tilde{f}_0}$ associated to the action of $\tilde{f}_n\circ \dots \circ \tilde{f}_0$ on $J_0^2(\bbR^4)_0$ splits into matrices $T_{\tilde{f}_n}\cdot \dots \cdot T_{\tilde{f}_0}$.
Each $T_{\tilde{f}_i}$ and $T_{\tilde{f}_n\circ \dots \circ \tilde{f}_0}$ is in a form similar to that described above in Equation \eqref{Tf}, but as explained, ours is a $60\times 60 $ matrix. In particular, the first and second partials of $\tilde{f}_i$, resp. $\tilde{f}_n\circ \dots \circ \tilde{f}_0$ in these matrices are arranged in an analogous way to Equation \eqref{Tf}. Since $T_{\tilde{f}_n}\cdot \dots \cdot T_{\tilde{f}_0} = T_{\tilde{f}_n\circ \dots \circ \tilde{f}_0}$, the first and second partials of $T_{\tilde{f}_n\circ \dots \circ \tilde{f}_0}$ are products and sums of the first and second partials of the $\tilde{f}_i$. 

Given that each entry in $T_{\tilde{f}_i}$ is bounded above by $C_{\tilde{f}_i}$ and that the size of the matrix is $60\times 60$, multiplying any two $T_{\tilde{f}_i}$ and $T_{\tilde{f}_j}$ gives a bound on the entries of the product given by $60
C_{\tilde{f}_i}\cdot C_{\tilde{f}_j}$. So for a given set of diffeomorphisms $\tilde{f}_1,\dots \tilde{f}_n$ we let
\begin{align}\label{Const}
C_{t,(x,\omega,k)} \defeq \sqrt{60}\max_{0\leq i\leq n}C_{\tilde{f}_i}. 
\end{align}
We conclude that all second derivatives of $\tilde{f}_n\circ \dots \circ \tilde{f}_0$ are bounded above by $C_{t,(x,\omega,k)}^n$, and so $|D^2\tilde{F}^t_{(x,\omega),k}|\leq C_{t,(x,\omega,k)}^n$. This $C_{t,(x,\omega,k)}$ can be taken to be independent of $\omega\in \Omega$ because $\supp(\mu)<\infty$ and $X$ is compact. Hence this na\"ive bound is only dependent on the number of diffeomorphisms we draw.
\end{proof}

\subsubsection{Stable and unstable manifolds}
\label{sec:stablemfld}

 Here we state a version of the local stable manifold theorem from \cite{BRH} that turns a local stable (or unstable) manifold into a holomorphic graph in the Lyapunov charts.

\begin{proposition}[\cite{BRH}, Theorem 6.4]\label{locstablemfld} 
    For $(x,\omega)\in \Lambda$ there exists holomorphic functions $$h^s_{(x,\omega)} : \bbR^s(r(x,\omega)^{-1}) \to \bbR^u(r(x,\omega)^{-1}),$$ $$h^u_{(x,\omega)} : \bbR^u(r(x,\omega)^{-1}) \to \bbR^s(r(x,\omega)^{-1}),$$ (where $r(x,\omega)$ is from Section \ref{sec:Lyapcharts}) such that,
    for any $*\in \{\text{s,u}\}$,
    \begin{enumerate}
        \item $h^*_{(x,\omega)}(0) = 0,$
        \item $D_0h^*_{(x,\omega)}=0,$
        \item $\Vert Dh^*_{(x,\omega)}\Vert\leq 1/3.$
    \end{enumerate}

Define
\begin{align}
W_{q(x,\omega)}^s(x,\omega) = \varphi^{-1}_{(x,\omega)}(\text{graph}(h^s_{(x,\omega)})),\quad 
W_{q(x,\omega)}^u(x,\omega) = \varphi^{-1}_{(x,\omega)}(\text{graph}(h^u_{(x,\omega)})),
\end{align}
where $q(x,\omega)$ is a measurable function dependent on $r(x,\omega)$. These are our local stable and unstable manifolds respectively. They have the following properties:
\begin{enumerate}
\item If $y\in W_{q(x,\omega)}^s(x,\omega)$ then for every $n\in \bbN$ we have
\begin{align*}
    d_X(F^n(x,\omega),F^n(y,\omega)) \leq k(x,\omega)\exp((\lambda^-+\delta)n)d_X(x,y).
\end{align*}
\item If $y\in W_{q(x,\omega)}^u(x,\omega)$ then for every $n\in \bbN$ we have
\begin{align*}
    d_X(F^{-n}(x,\omega),F^{-n}(y,\omega)) \leq k(x,\omega)\exp(-(\lambda^+-\delta)n)d_X(x,y).
\end{align*}
\item We have the following inclusions:
\begin{align*}
    F(W^s_{q(x,\omega)}(x,\omega)) \subset W^s_{q(F(x,\omega))}(F(x,\omega)), \\ F^{-1}(W^u_{q(F(x,\omega))}(F(x,\omega))) \subset W_{q(x,\omega)}^u(x,\omega).
\end{align*}    
\end{enumerate}
Where $k(x,\omega)$ is a measurable function dependent on $r(x,\omega)$.

\end{proposition}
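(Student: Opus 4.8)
\emph{Approach and setup.} The statement is the Pesin local (un)stable manifold theorem for the biholomorphic random product, so the plan is to run the Hadamard--Perron graph transform inside the Lyapunov charts of Section~\ref{sec:Lyapcharts}; the feature particular to our setting is that the invariant graphs turn out to be holomorphic. It suffices to construct $h^s_{(x,\omega)}$, since $h^u_{(x,\omega)}$ is obtained by the same argument applied to $F^{-1}$ and $\check\omega$ via the dictionary $W^u(x,\omega,k)=W^s(x,\check\omega,k)$. Fix $(x,\omega)\in\Lambda$. Because every $D_xf^1_\omega$ is $\bbC$-linear, the Oseledets subspaces $E^s(x,\omega)$ and $E^u(x,\omega)$ are complex lines, and the Lyapunov charts of Section~\ref{sec:Lyapcharts} may be taken holomorphic: one starts from a holomorphic coordinate chart on $X$ centred at the base point, postcomposes with a $\bbC$-linear isomorphism of $\bbC^2$ carrying the images of $E^u$ and $E^s$ to $\bbR^u$ and $\bbR^s$, and rescales each complex factor so that the chart's Euclidean norm agrees with the Lyapunov norm of Section~\ref{sec:Sec22} on $E^s$ and on $E^u$. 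In these charts the maps $\tilde f_n$ of Equation~\eqref{tildefn} fix $0$, the linear part $D_0\tilde f_n$ preserves the splitting $\bbR^u\oplus\bbR^s$ with contraction $\le e^{\lambda^-+\epsilon}$ on $\bbR^s$ and expansion $\ge e^{\lambda^+-\epsilon}$ on $\bbR^u$, $\mathrm{Lip}(\tilde f_n-D_0\tilde f_n)<\epsilon$, and $r((f^n_\omega x,\sigma^n\omega))\le r(x,\omega)e^{|n|\epsilon}$; the $C^2$-bounds used below are supplied by the jet estimates of Section~\ref{sec:distortion}.

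\emph{Graph transform and the limiting graph.} For $n\ge 0$ let $\mathcal G_n$ be the set of holomorphic graphs $\{(g(v),v):v\in\bbR^s(r(F^n(x,\omega))^{-1})\}$ with $g(0)=0$, $D_0g=0$, $\|Dg\|\le 1/3$. Since $\tilde f_n^{-1}$ contracts $\bbR^u$ by $\approx e^{-\lambda^+}$ and expands $\bbR^s$, the pullback $\Gamma\mapsto\tilde f_n^{-1}(\Gamma)$, restricted in the $\bbR^s$-variable to the appropriate ball, is a well-defined map $\mathcal F_n:\mathcal G_{n+1}\to\mathcal G_n$ --- the cone estimate keeping $\|Dg\|\le1/3$, as well as the domain bookkeeping, uses repeatedly that $\epsilon$ is small compared to $\lambda^+$ and to $-\lambda^-$, which holds since $\epsilon<\epsilon_0\ll\min\{\lambda^+,-\lambda^-\}$ --- and $\mathcal F_n$ contracts the $C^0$-distance between graphs by a factor bounded away from $1$ (comparable to $e^{-\lambda^+}$). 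Starting from the zero graphs and iterating from step $N$ down to $0$, the functions $\mathcal F_0\mathcal F_1\cdots\mathcal F_{N-1}(0)$ form a $C^0$-Cauchy sequence on $\bbR^s(r(x,\omega)^{-1})$ (here one uses $r\ge 1$ to bound the initial data uniformly); its limit $h^s_{(x,\omega)}$ still satisfies $\|Dh^s_{(x,\omega)}\|\le 1/3$, so as a $C^0$-limit of holomorphic maps with uniformly bounded derivative it is holomorphic by normal families, and $h^s_{(x,\omega)}(0)=0$, $D_0h^s_{(x,\omega)}=0$ pass to the limit (the latter amounting to tangency of the graph to $\bbR^s=D\varphi_{(x,\omega)}(E^s(x,\omega))$, a condition each $\mathcal F_n$ preserves because $D_0\tilde f_n$ preserves $\bbR^s$). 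Setting $W^s_{q(x,\omega)}(x,\omega)=\varphi_{(x,\omega)}^{-1}(\mathrm{graph}(h^s_{(x,\omega)}))$ with $q(x,\omega)$ comparable to $r(x,\omega)^{-1}$, measurability of $q$ (and of $k$ below) follows from that of $r$ and the chart data.

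\emph{Dynamical estimates.} By construction $\mathcal F_0(\mathrm{graph}(h^s_{F(x,\omega)}))=\mathrm{graph}(h^s_{(x,\omega)})$, i.e. $\tilde f_0(\mathrm{graph}(h^s_{(x,\omega)}))\subset\mathrm{graph}(h^s_{F(x,\omega)})$; since $\tilde f_0$ contracts the $\bbR^s$-direction and $r(F(x,\omega))\le e^\epsilon r(x,\omega)$ with $\epsilon<-\lambda^-$, the image stays inside the ball on which $h^s_{F(x,\omega)}$ is defined, and applying $\varphi^{-1}$ gives $F(W^s_{q(x,\omega)}(x,\omega))\subset W^s_{q(F(x,\omega))}(F(x,\omega))$. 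For the contraction rate, on $\mathrm{graph}(h^s_{(x,\omega)})$ the $\bbR^u$-coordinate is at most $\tfrac13$ of the $\bbR^s$-coordinate, so $\tilde f_n$ restricted to the graph is $e^{\lambda^-+2\epsilon}$-Lipschitz; iterating $n$ times, transferring back to $X$ using $\psi^{-1}\le\mathrm{Lip}(\varphi_{(\cdot)})\le r(\cdot)$ and $r(F^n(x,\omega))\le r(x,\omega)e^{n\epsilon}$, and taking $\epsilon$ small relative to $\delta$, yields $d_X(F^n(x,\omega),F^n(y,\omega))\le k(x,\omega)\exp((\lambda^-+\delta)n)\,d_X(x,y)$ for all $y\in W^s_{q(x,\omega)}(x,\omega)$ and $n\in\bbN$. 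The unstable statements, including $F^{-1}(W^u_{q(F(x,\omega))}(F(x,\omega)))\subset W^u_{q(x,\omega)}(x,\omega)$, are the images of these under $\omega\mapsto\check\omega$.

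\emph{Main obstacle.} The Hadamard--Perron scheme in the $C^\infty$ category is routine; the real content here is holomorphicity, which forces one to run the entire construction with holomorphic Lyapunov charts and holomorphic graphs. Thus the delicate step is to verify that the chart construction of Section~\ref{sec:Lyapcharts}, which at first sight uses a Riemannian exponential map, can be carried out with holomorphic coordinates on $X$ while retaining the uniform hyperbolicity of $D_0\tilde f_n$, the bound $\mathrm{Lip}(\tilde f_n-D_0\tilde f_n)<\epsilon$, and the slow growth of $r$; once this is secured, the class of holomorphic graphs with $\|Dg\|\le1/3$ is $\mathcal F_n$-invariant and complete, and the limiting graph is holomorphic for free. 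Equivalently, one may first produce a $C^\infty$ local stable manifold whose tangent plane at every point $y$ is the complex line $E^s(y,\omega)$ --- hence an immersed complex curve --- and then note that in a holomorphic chart adapted to $E^u(x,\omega)\oplus E^s(x,\omega)$ it is a holomorphic graph; reconciling that chart with the Lyapunov chart is the same point.
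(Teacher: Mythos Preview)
The paper does not supply its own proof of this proposition: it is quoted verbatim as \cite[Theorem~6.4]{BRH} and used as a black box, with the only follow-up remark being that Lusin's theorem lets one replace $k$ and $q$ by uniform constants on a large compact set. So there is no ``paper's proof'' to compare against.

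That said, your outline is the standard Hadamard--Perron graph transform argument adapted to the holomorphic setting, and it is essentially how \cite{BRH} (and, in the real category, \cite{LQ}, \cite{BP}) establish the local stable manifold theorem for random dynamical systems. The key points---running the contraction mapping on the space of Lipschitz graphs in Lyapunov charts, propagating the cone condition, and obtaining holomorphicity of the limit either by taking holomorphic charts from the outset or by observing that the resulting $C^1$ manifold is a complex curve since its tangent is the complex line $E^s$---are all correctly identified. Your closing remark about the ``main obstacle'' is apt: the passage from the standard real $C^r$ statement to the holomorphic one hinges precisely on being able to choose charts compatible with the complex structure while retaining the slow-variation estimate on $r$, and this is exactly what \cite{BRH} arranges in their Section~6 (see also \cite[Proposition~7.8]{CD} for the holomorphic conclusion in this specific setting).
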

For the cost of some $\epsilon_{loc}>0$ small, one can use Lusin's theorem to replace $k$ and $q$ with uniform constants in the Proposition \ref{locstablemfld} on some compact set $K_{loc}$ of measure $m(K_{loc})>1-\epsilon_{loc}$.

The global stable and unstable manifolds are defined by 
\begin{align*}
    W^s(x,\omega) &= \bigcup_{n\geq 0}F^{-n}\left( W^s_{q(x,\omega)}(F^n(x,\omega)) \right), \\
    W^u(x,\omega) &= \bigcup_{n\geq 0}F^{n}\left( W^u_{q(x,\omega)}(F^{-n}(x,\omega)) \right).
\end{align*}
As explained in Section \ref{sec:prelim}, $W^{s/u}(x,\omega)$ are injectively immersed holomorphic curves in $X_{\omega}$ that are biholomorphic to $\bbC$ for $m$-a.e. $(x,\omega)$ (see \cite[Proposition 7.8]{CD}) and are equivalent to what is given in Equations \eqref{stable} and \eqref{unstable} resp.

Let $K_{hol}$ be the conull set of $(x,\omega)\in Y$ for which our global stables and unstables are biholomorphic to $\bbC$. Let $\Lambda_{loc} \defeq K_{loc}\cap K_{hol}$ and note that it is almost conull.

\subsection{H\"older continuity of the Oseledets' subspaces}
\label{sec:Hcont}

For this section we largely follow the conventions and results of \cite[Chapter 3, Section 4]{LQ}. 

Given a metric space $X$ and a family of subspaces $\{E_x\}_{x\in X}$ in a Hilbert space $H$, this family is called ``$\alpha$-H\"older continuous in $x$'' for some $\alpha>0$ if there is some $L>0$ such that for any $x,y\in X$ we have 
\begin{align}
    d_{Gr}(E_x,E_y) \leq L d_X(x,y)^{\alpha},
\end{align}
where 
\begin{align}\label{grassmetric}
d_{Gr}(V,W) = \max \left\{\sup_{v\in V, \ |v|=1} \inf_{w\in W} |v-w|   , \ \sup_{w\in W, \ |w|=1} \inf_{v\in V}  |v-w|     \right\},
\end{align}
and $|v|$ is the Euclidean norm.

Now we pass to considering a family $\{E_x\}_{x\in X}$ where $E_x\subset T_xX_{\omega}$. In order to make sense of comparing subspaces in different tangent spaces, we use parallel transport. Let $\rho_0$ be such that $\exp_z$ is a $C^{\infty}$ diffeomorphism on $B(z,\rho_0)$ for all $z\in X$ and let $x,y\in X$ be such that $d_X(x,y)<\rho_0$. Denote $P(x,y)$ as the isometry from $T_xX$ to $T_yX$ that is parallel transport along the geodesic between $x$ and $y$, then we can define
\begin{align}\label{dist2}
    d(E_x,E_y) \defeq \begin{cases}
        1 & \text{if } d_X(x,y)\geq \rho_0/4,\\
        d_{Gr}(E_x,P(y,x)E_y) & \text{if }d_X(x,y)<\rho_0/4.
    \end{cases}
\end{align}

\begin{definition}
    Given $\Delta \subset X$, a family of subspaces $\{E_x\}_{x\in \Delta}$ such that $E_x\subset T_xX$ is $\alpha$-H\"older continuous on $\Delta$ if there is $L>0$ such that for any $x,y\in \Delta$ we have
    \begin{align*}
        d(E_x,E_y) \leq L (d_X(x,y)^{\alpha}).
    \end{align*}
\end{definition}

Next we define a form of ``Pesin set''; a set on which we have uniform hyperbolicity rather than non-uniform hyperbolicity. It is on a set of this form where we will be able to achieve H\"older continuity for our Oseledets' directions. Note that the sets in the definition may be empty depending on the choice of parameters. 
\begin{definition}\label{pesin}
    Let $\{f_i\}_{i=0}^{\infty} \in \supp(\mu)^{\bbN}$ and let $f^n = f_{n-1}\circ \dots \circ f_0$. Let $a<b$, $C\geq 1$ and define $\Delta_{a,b,C}\subset X$ to be the subset of points where there are splittings
    \begin{align*}
        T_xX = E_x \oplus E_x^{\perp},
    \end{align*}
    such that for $n\in \bbN$ we have that
    \begin{align*}
        \Vert D_xf^nv\Vert&\leq Ce^{an}\Vert v\Vert, \ v\in E_x,\\
        \Vert D_xf^nv\Vert&\geq C^{-1}e^{bn}\Vert v\Vert, \ v\in E_x^{\perp}.
    \end{align*}
\end{definition}
Since our sequence $\{f_i\}_{i=0}^{\infty}$ is given by either the non-negative indexed half of the vector $\omega$, denoted $\omega^+$, or the negative indexed half, denoted $\omega^-$, in application we will denote $\Delta_{a,b,C}^{\omega^+}$ for where the $\{f_i\}_{i=0}^{\infty}$ set comes from $\omega^+$, and similarly as $\Delta_{a,b,C}^{\omega^-}$ for where this sequence of functions is from $\omega^-$.

\begin{lemma}[\cite{LQ}, Chapter 3, Section 4, Lemma 4.2] \label{4.2}
    There exists $\Omega_0 \subset \Omega$ such that $\sigma(\Omega_0)\subset \Omega$ and $\mu^{\bbZ}(\Omega_0)=1$ and there exists measurable function $C: \Omega_0 \to ]0,\infty[$ such that for $\omega \in \Omega_0$ and $n\in \bbN$ we have 
    \begin{align}
        \prod_{i=0}^{n-1} \Vert f_i(\omega)\Vert_{C^{1,1}*} \leq C(\omega)e^{2c_0n},
    \end{align}
    where $f_i(\omega)$ is the function in the $i$th position in the vector $\omega$ and 
    \begin{align}\label{c_0}
    c_0 = \int \log \Vert f\Vert_{C^{1,1*}}d\mu(f) <\infty.\end{align}
    The norm $C^{1,1*}$ is equivalent to the usual $C^{1,1}$ norm, $C^{1,1*}$ is given by
    \begin{equation}
        \Vert f\Vert_{C^{1,1*}} = \Vert Df\Vert_{\infty} + \lip(Df).
    \end{equation}

\end{lemma}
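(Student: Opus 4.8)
The plan is to observe that, after taking logarithms, the quantity to be bounded is a Birkhoff sum for the Bernoulli shift, and then to apply the pointwise ergodic theorem. Set $\phi(\omega) \defeq \log \Vert f_0(\omega) \Vert_{C^{1,1*}}$, where $f_0(\omega)\in\supp(\mu)$ is the diffeomorphism occupying the $0$-th slot of $\omega$. Since $\sigma$ is the left shift on coordinates, the diffeomorphism $f_i(\omega)$ in the $i$-th slot satisfies $f_i(\omega)=f_0(\sigma^i\omega)$, so that
\begin{align*}
\log \prod_{i=0}^{n-1}\Vert f_i(\omega)\Vert_{C^{1,1*}} = \sum_{i=0}^{n-1}\phi(\sigma^i \omega).
\end{align*}
First I would record that $\phi\in L^1(\mu^{\bbZ})$ with $\int\phi\,d\mu^{\bbZ}=c_0<\infty$: for $\mu$ finitely supported this is immediate, as $\phi$ takes only finitely many finite values, and in general it follows from the moment condition \eqref{IC} together with the Cauchy estimates of Remark \ref{higher}, which bound the $C^{1,1}$ norm of $f$ by a multiple of its $C^2$ norm. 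I would also note $\phi\geq 0$: for any diffeomorphism of the compact manifold $X$, the change of variables formula and the bound $|\det D_xf|\leq\Vert D_xf\Vert^4$ give $\Vert Df\Vert_\infty\geq 1$, hence $\Vert f\Vert_{C^{1,1*}}\geq 1$; in particular $c_0\geq 0$, and $c_0>0$ in our setting because $\supp(\mu)$ contains loxodromic elements, which are not isometries.

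Next I would apply the Birkhoff ergodic theorem to the measure-preserving system $(\Omega,\mu^{\bbZ},\sigma)$, which is ergodic since it is a Bernoulli shift. This yields a conull set $\Omega_0'\subset\Omega$ on which $\tfrac1n\sum_{i=0}^{n-1}\phi(\sigma^i\omega)\to c_0$. Put $g_n(\omega)\defeq\sum_{i=0}^{n-1}\phi(\sigma^i\omega)-2c_0 n$, so that $g_n(\omega)/n\to -c_0\leq 0$ for every $\omega\in\Omega_0'$. If $c_0>0$, then $g_n(\omega)\to-\infty$, so for each such $\omega$ there is $N(\omega)$ with $g_n(\omega)<0$ for all $n\geq N(\omega)$, and hence $\sup_{n\geq 1}g_n(\omega)$ is a maximum of $0$ and finitely many finite numbers, so
\begin{align*}
\Phi(\omega)\defeq\max\Bigl\{0,\ \sup_{n\geq 1}g_n(\omega)\Bigr\}<\infty.
\end{align*}
If instead $c_0=0$, then $\phi\geq 0$ and $\int\phi\,d\mu^{\bbZ}=0$ force $\phi=0$ a.e., so $g_n\equiv 0$ and $\Phi\equiv 0$. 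As each $g_n$ is measurable, so is the countable supremum $\Phi$.

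Finally I would set $\Omega_0\defeq\Omega_0'$ (one may intersect with all its $\sigma$-translates to make it $\sigma$-invariant if desired; it stays conull, and in any case $\sigma(\Omega_0)\subset\Omega$ holds trivially) and $C(\omega)\defeq e^{\Phi(\omega)}$, a measurable function $\Omega_0\to\,]0,\infty[\,$. Then for every $\omega\in\Omega_0$ and $n\in\bbN$,
\begin{align*}
\sum_{i=0}^{n-1}\phi(\sigma^i\omega)\leq 2c_0 n+\Phi(\omega)\quad\Longleftrightarrow\quad\prod_{i=0}^{n-1}\Vert f_i(\omega)\Vert_{C^{1,1*}}\leq C(\omega)\,e^{2c_0 n},
\end{align*}
which is the desired inequality. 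I do not anticipate a serious obstacle: the argument is the ergodic theorem together with elementary bookkeeping. The only points that need a little care are the integrability of $\phi$ (trivial under the finite-support hypothesis, and otherwise supplied by Remark \ref{higher}), the strict positivity $c_0>0$ which makes the asymptotic slope of $g_n$ negative so that $\sup_n g_n<\infty$, and the measurability of $C$ as a countable supremum of measurable functions.
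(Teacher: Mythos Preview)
Your argument is correct and is the standard one: take logarithms, recognize a Birkhoff sum for the ergodic Bernoulli shift $(\Omega,\mu^{\bbZ},\sigma)$, apply the pointwise ergodic theorem, and use that the averages converge to $c_0$ so that the deviations $g_n(\omega)=\sum_{i=0}^{n-1}\phi(\sigma^i\omega)-2c_0n$ are eventually negative and hence uniformly bounded above for each $\omega$. The paper does not supply its own proof of this lemma; it is quoted directly from \cite{LQ}, Chapter~III, Section~4, Lemma~4.2, and the proof there is exactly the Birkhoff argument you give.

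Two minor remarks. First, your claim that $c_0>0$ because $\supp(\mu)$ contains loxodromic elements is not quite the right justification (loxodromicity is a statement about $f^*$ on cohomology, and the elements of $\supp(\mu)$ themselves need not be loxodromic even if the group they generate contains such elements); however this does not matter, since you correctly handle the degenerate case $c_0=0$ anyway, and since $\mu$ is finitely supported the integrability of $\phi$ is automatic. Second, the condition $\sigma(\Omega_0)\subset\Omega$ in the statement is trivially satisfied, as you note; the intended condition in \cite{LQ} is $\sigma(\Omega_0)\subset\Omega_0$, which you obtain by intersecting over all forward $\sigma$-iterates.
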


Equation \eqref{c_0} is finite by the moment condition, see Remark \ref{higher}.

\begin{remark}\label{4.2-}
    In the same vain, a similar statement can be written for our backwards random walk where we get a $\sigma^{-1}$-invariant set $\Omega_0^-\subset \Omega$, $\mu^{\bbZ}(\Omega_0^-)=1$ and measurable function $C^-:\Omega_0^- \to ]0,\infty[$ such that for $\omega \in \Omega_0^-$ and $n\in \bbN$ we have 
    \begin{align}
        \prod_{i=1}^{n-1} \Vert f_i^-(\omega)\Vert_{C^{1,1}*} \leq C^-(\omega)e^{2c_0^-n},
    \end{align}
    where $f_i^-(\omega)$ is the inverse to the function in the $-i$th position in the vector $\omega$ and 
    \begin{align}\label{c_0-}
    c_0^- = \int \log \Vert f^{-1}\Vert_{C^{1,1*}}d\mu(f) <\infty.\end{align}
   \end{remark}

\begin{lemma}\label{cCunif}
    There exists an almost conull set $\Omega_{\epsilon} \subset \Omega$ on which $C(\omega)$ and $c_0$ from Lemma \ref{4.2} and $C^-(\omega)$ and $c_0^-$ from Remark \ref{4.2-} can be chosen uniformly.
\end{lemma}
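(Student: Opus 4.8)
\textbf{Proof plan for Lemma \ref{cCunif}.}

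The plan is to show that both $C(\omega)$ and $C^-(\omega)$, being finite $\mu^{\bbZ}$-almost everywhere measurable functions, can be made uniformly bounded after removing a set of arbitrarily small measure, and that $c_0$ and $c_0^-$ are already constants. First I observe that $c_0$ and $c_0^-$ (Equations \eqref{c_0} and \eqref{c_0-}) are genuine constants — they are integrals against $\mu$ and do not depend on $\omega$ — so there is nothing to arrange for them; they are finite by the moment condition \eqref{IC} together with Remark \ref{higher} (the Cauchy estimates upgrade the $C^1$ moment condition to a $C^{1,1}$ moment condition). Thus the content of the statement is entirely about the measurable functions $C(\cdot)$ on $\Omega_0$ and $C^-(\cdot)$ on $\Omega_0^-$.

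Next I would apply Lusin's theorem (or simply the fact that a finite measurable function is bounded off a set of small measure): since $C:\Omega_0\to\,]0,\infty[$ is measurable and $\mu^{\bbZ}(\Omega_0)=1$, for any $\epsilon>0$ there is a constant $M_1=M_1(\epsilon)$ and a measurable set $\Omega_{\epsilon}^{(1)}\subset \Omega_0$ with $\mu^{\bbZ}(\Omega_{\epsilon}^{(1)})>1-\epsilon/2$ on which $C(\omega)\leq M_1$. Likewise there is $M_2=M_2(\epsilon)$ and $\Omega_{\epsilon}^{(2)}\subset \Omega_0^-$ with $\mu^{\bbZ}(\Omega_{\epsilon}^{(2)})>1-\epsilon/2$ on which $C^-(\omega)\leq M_2$. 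Setting $\Omega_{\epsilon}\defeq \Omega_{\epsilon}^{(1)}\cap \Omega_{\epsilon}^{(2)}$ gives a set with $\mu^{\bbZ}(\Omega_{\epsilon})>1-\epsilon$ on which both $C(\omega)$ and $C^-(\omega)$ are bounded by $\max\{M_1,M_2\}$, so they (and the constants $c_0,c_0^-$) may be replaced by these uniform values in the estimates of Lemma \ref{4.2} and Remark \ref{4.2-}. Since $\epsilon>0$ is arbitrary, $\Omega_{\epsilon}$ is almost conull in the sense used throughout the paper.

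This argument is essentially routine, so I do not anticipate a real obstacle; the only point requiring a moment's care is that the inequalities in Lemma \ref{4.2} and Remark \ref{4.2-} survive replacing $C(\omega)$ by the larger constant $M_1$ and $c_0$ by itself — which is immediate since enlarging the constant on the right-hand side only weakens the bound — and that intersecting the two full-measure base sets $\Omega_0$ and $\Omega_0^-$ with the two Lusin sets still leaves a set of measure greater than $1-\epsilon$. One should also note that the resulting $\Omega_{\epsilon}$ need not be shift-invariant, but this is not claimed and is not needed: in the applications one only uses the uniform bounds along a fixed $\omega$ (and its forward iterates, which is handled separately by the subexponential growth of $r$, $L$, etc. in Lemmas \ref{NUH} and Section \ref{sec:Lyapcharts}).
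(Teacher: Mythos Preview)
Your proposal is correct and follows essentially the same approach as the paper: the paper also notes that $c_0$ and $c_0^-$ are already constants (integrals against $\mu$), and then uses exactly the measurability argument you describe---writing $E_n=\{\omega:C(\omega)\le n\}$, observing $\mu^{\bbZ}(E_n)\to 1$, choosing $n$ large enough, and intersecting with the analogous set for $C^-$. Your additional remarks (that enlarging the constant only weakens the bound, and that shift-invariance of $\Omega_\epsilon$ is not claimed) are correct and harmless elaborations.
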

\begin{proof}
Choosing $c_0$ and $c_0^-$ uniformly is trivial because of our moment condition (see Equation \eqref{IC}) we can take both constants to be the value of the integral in Equation \eqref{IC}.

Now, since $C(\omega)$ is a measurable function, the set $E_n = \{\omega \in \Omega_0: C(\omega) \leq n\}$ is a measurable set such that $E_n \to \Omega'$ as $n\to \infty$. Hence $\mu^{\bbZ}(E_n) \to \mu^{\bbZ}(\Omega') = 1$. Let $\epsilon_C>0$, we can find $n$ such that $\mu^{\bbZ}(E_n)>1-\epsilon_{C}$, call this set $\Omega_{\epsilon_C}\subset \Omega$ and take $C = \max\{1,n\}$. On this set $\Omega_{\epsilon_C}$ we have that 
\begin{align}
        \prod_{i=0}^{n-1} \Vert f_i(\omega)\Vert_{C^{1,1}} \leq Ce^{2c_0n}.
    \end{align}
    One can do similarly for the set $\Omega_0^-$ defined in Remark \ref{4.2-}, one gets a set $\Omega_{\epsilon_{C^-}}$ where
\begin{align}\label{needed1}
    \prod_{i=1}^{n-1} \Vert f_i^-(\omega)\Vert_{C^{1,1}*} \leq C^-e^{2c_0^-n}.
\end{align}

Let $\Omega_{\epsilon}\defeq \Omega_{\epsilon_C}\cap \Omega_{\epsilon_{C^-}}$. This completes the proof.
\end{proof}

\begin{proposition}[\cite{LQ}, Chapter 3, Section 4, Corollary 4.1]\label{holder}
    Let $\{f_i\}_{i=0}^{\infty}$ be a sequence of diffeomorphisms $f_i:X\to X$ such that 
    \begin{align}
        \prod_{i=0}^{n-1} \Vert f_i\Vert_{C^{1,1}*} \leq Ce^{cn},
    \end{align}
    for some $C\geq 1$, $c>0$. Fix $\hat{C}\geq 1$, and $a<b$ and consider the set $\Delta_{a,b,\hat{C}}\subset X$ associated to this sequence as in Definition \ref{pesin}. The family $\{E_x\}_{x\in \Delta_{a,b,\hat{C}}}$ is $\alpha$-H\"older continuous in $x$ on $\Delta_{a,b,\hat{C}}$ with constant $2\hat{C}^2e^{b-a}$ and $\alpha = \frac{a-b}{a-d}$ where $$d = \ln(2C^2) + 2c + |\ln(\rho_0/4)| + |a|.$$
\end{proposition}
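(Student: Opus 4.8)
This is essentially \cite[Ch.~3, \S4, Corollary 4.1]{LQ}; we recall the shape of the argument. Fix $x,y\in\Delta_{a,b,\hat C}$. If $d_X(x,y)\ge\rho_0/4$ there is nothing to prove once $L$ is taken large enough (recall $d(E_x,E_y)\le 1$ in that range), so assume $d_X(x,y)<\rho_0/4$ and identify nearby tangent spaces by parallel transport $P(\cdot,\cdot)$ along minimizing geodesics. The plan is the classical graph--transform estimate: show that $P(x,y)E_y$ is the graph of a linear map $E_x\to E_x^{\perp}$ of operator norm $\le 2\hat C^{2}e^{b-a}d_X(x,y)^{\alpha}$, which by \eqref{grassmetric} and \eqref{dist2} gives the asserted H\"older bound (running the same estimate with the roles of $x$ and $y$ interchanged, or simply noting that a graph of small slope is a graph in both directions, controls the second supremum in \eqref{grassmetric}).

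\emph{Finite-time derivative comparison.} Write $f^{n}=f_{n-1}\circ\cdots\circ f_{0}$. Telescoping $D_xf^{n}-D_yf^{n}$ over the $n$ steps and using $d_X(f^{k}x,f^{k}y)\le\bigl(\prod_{j<k}\Vert Df_j\Vert\bigr)d_X(x,y)$ together with the hypothesis $\prod_{j=0}^{n-1}\Vert f_j\Vert_{C^{1,1*}}\le Ce^{cn}$ --- which simultaneously bounds the first derivatives, their Lipschitz constants, and hence both the orbit divergence and the compounding of $\mathrm{Lip}(Df^{n})$ --- one obtains (comparing via $P$)
\[
\bigl\Vert D_xf^{n}-D_yf^{n}\bigr\Vert\ \le\ n\,C^{2}e^{2cn}\,d_X(x,y),
\]
valid for all $n$ with $Ce^{cn}d_X(x,y)<\rho_0/4$, i.e.\ while the two orbit segments stay inside a common normal chart. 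This step is where the quantities $\ln(2C^{2})$, $2c$ and $|\ln(\rho_0/4)|$ appearing in $d$ enter; the polynomial factor $n$ is absorbed into the constants at the end.

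\emph{Graph estimate via the hyperbolicity gap.} Let $u\in E_y$ be a unit vector, transport it to $T_xX$, and decompose $u=u^{s}+u^{\perp}$ with $u^{s}\in E_x$ and $u^{\perp}\in E_x^{\perp}$ (the angle between the two summands on $\Delta_{a,b,\hat C}$ is bounded below in terms of $\hat C$, so $|u^{s}|,|u^{\perp}|\lesssim 1$). For each admissible $n$, applying Definition~\ref{pesin} on $E_x^{\perp}$, then the triangle inequality, then Definition~\ref{pesin} on $E_x$, the comparison step above, and $\Vert D_yf^{n}u\Vert\le\hat Ce^{an}$ (since $u\in E_y$) gives
\[
\hat C^{-1}e^{bn}|u^{\perp}|\ \le\ \Vert D_xf^{n}u^{\perp}\Vert\ \le\ \Vert D_xf^{n}u\Vert+\Vert D_xf^{n}u^{s}\Vert\ \le\ 2\hat Ce^{an}+nC^{2}e^{2cn}d_X(x,y),
\]
hence $|u^{\perp}|\le 2\hat C^{2}e^{(a-b)n}+\hat C\,nC^{2}e^{(2c-b)n}d_X(x,y)$. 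The first term decreases in $n$ and the second (generically) increases, so one takes $n$ to be the integer part of the value balancing them; that value is comparable to $|\ln d_X(x,y)|/(d-a)$ with $d$ exactly as in the statement, and since $d-a\ge 2c$ it automatically satisfies $Ce^{cn}d_X(x,y)<\rho_0/4$. Substituting this $n$ and accounting for the rounding (which produces the factor $e^{b-a}$) yields $|u^{\perp}|\le 2\hat C^{2}e^{b-a}\,d_X(x,y)^{\alpha}$ with $\alpha=\tfrac{b-a}{d-a}=\tfrac{a-b}{a-d}$, as claimed.

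The only genuine difficulty is bookkeeping: recovering the \emph{exact} exponent $\alpha=\tfrac{a-b}{a-d}$ (rather than merely \emph{some} positive exponent) forces one to track precisely how the normal-chart radius $\rho_0$, the growth constant $C$, the polynomial factor $n$, and the sign of $a$ enter both the admissibility constraint on $n$ and the rounding to an integer. Conceptually the proof is exactly the cone/graph--transform estimate above; making every inequality tight enough to reproduce the stated $d$ is what needs care.
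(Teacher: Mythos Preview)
The paper does not supply its own proof of this proposition; it is quoted verbatim from \cite{LQ} and used as a black box. Your sketch is the standard graph-transform argument that Liu--Qian carry out, and its structure (finite-time derivative comparison via the $C^{1,1*}$ bound, then balancing the hyperbolic gap $e^{(a-b)n}$ against the orbit-divergence term to optimize $n$) is correct; as you note yourself, the only content beyond this outline is the bookkeeping needed to land on the precise constants $2\hat C^{2}e^{b-a}$ and $\alpha=\tfrac{a-b}{a-d}$, for which one must consult \cite{LQ} directly.
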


Fix $\epsilon>0$ and $N\in \bbN$ and define $\Lambda_{\epsilon, N}^{s,\omega}$ to be the set of all $x\in X$ such that for any $n\geq N$, $v\in E^s(x,\omega)$ we have 
\begin{align} \label{f}
    e^{n(\lambda^- - \epsilon)}\Vert v\Vert \leq \Vert D_xf_{\omega}^nv\Vert \leq e^{n(\lambda^-+\epsilon)}\Vert v\Vert.
\end{align}
and for $v \in E^u(x,\omega)$,
\begin{align}\label{f'}
     e^{n(\lambda^+ - \epsilon)}\Vert v\Vert \leq \Vert D_xf_{\omega}^nv\Vert \leq e^{n(\lambda^+ +\epsilon)}\Vert v\Vert.
\end{align}
Similarly, define $\Lambda_{\epsilon,N}^{u,\omega}$ to be the set of $x\in X$ such that for all $n \geq N$ we have that for $v\in E^s(x,\omega)$, 
\begin{align}\label{b''}
    e^{-n(\lambda^-+\epsilon)}\Vert v\Vert \leq \Vert D_xf_{\omega}^{-n}v\Vert \leq e^{-n(\lambda^- - \epsilon)}\Vert v\Vert,
\end{align}
and for $v\in E^u(x,\omega)$,
\begin{align}\label{b'}
    e^{-n(\lambda^++\epsilon)}\Vert v\Vert \leq \Vert D_xf_{-\omega}^{-n}v\Vert \leq e^{-n(\lambda^+ - \epsilon)}\Vert v\Vert.
\end{align}
Note that if $N_1<N_2$, then $\Lambda_{\epsilon,N_1}^{s,\omega} \subset \Lambda_{\epsilon,N_2}^{s,\omega}$ and $\Lambda_{\epsilon,N_1}^{u,\omega} \subset \Lambda_{\epsilon,N_2}^{u,\omega}$.

\begin{lemma}
For any $(x,\omega)\in \Lambda$ and any $\epsilon>0$ there exists $N_s, N_u\in \bbN$ such that $x$ belongs to  $\Lambda_{\epsilon, N_s}^{s,\omega}$ and $\Lambda_{\epsilon,N_u}^{u,\omega}$.
\end{lemma}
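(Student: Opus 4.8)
The plan is to read this off directly from Oseledets' theorem, the only point deserving a remark being that the estimates — which are a priori asked for \emph{all} $v$ in the relevant Oseledets subspace — need in fact be verified only for a single nonzero vector in each. So, first I would fix $(x,\omega)\in\Lambda$ and note that both sides of each of \eqref{f}, \eqref{f'}, \eqref{b''}, \eqref{b'} scale linearly in $v$, hence it suffices to check them on unit vectors; and since $D_xf_\omega^n$ is $\bbC$-linear and the Lyapunov exponents $\lambda^+>\lambda^-$ are distinct, each of $E^s(x,\omega)$, $E^u(x,\omega)$ is a complex line, so for nonzero $v,v'$ lying in the same one of these spaces the ratios $\Vert D_xf_\omega^n v\Vert/\Vert v\Vert$ and $\Vert D_xf_\omega^n v'\Vert/\Vert v'\Vert$ agree. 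Thus it is enough to fix one nonzero $v^s\in E^s(x,\omega)$ and one nonzero $v^u\in E^u(x,\omega)$.

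For $x\in\Lambda_{\epsilon,N_s}^{s,\omega}$: Oseledets' theorem gives $\tfrac1n\big(\log\Vert D_xf_\omega^n v^s\Vert-\log\Vert v^s\Vert\big)\to\lambda^-$ and $\tfrac1n\big(\log\Vert D_xf_\omega^n v^u\Vert-\log\Vert v^u\Vert\big)\to\lambda^+$ as $n\to+\infty$, so there are $N_1,N_2$ with \eqref{f} holding for $n\ge N_1$ and \eqref{f'} holding for $n\ge N_2$; taking $N_s\defeq\max(N_1,N_2)$ gives $x\in\Lambda_{\epsilon,N_s}^{s,\omega}$. For $x\in\Lambda_{\epsilon,N_u}^{u,\omega}$: the $n\to-\infty$ half of Oseledets' theorem, after the substitution $n\mapsto-n$, gives $\tfrac1n\log\Vert D_xf_\omega^{-n}v^s\Vert\to-\lambda^-$ and $\tfrac1n\log\Vert D_xf_\omega^{-n}v^u\Vert\to-\lambda^+$ as $n\to+\infty$; choosing $N_u$ large enough that for $n\ge N_u$ both normalized logarithms are within $\epsilon$ of their limits yields \eqref{b''} and \eqref{b'}, so $x\in\Lambda_{\epsilon,N_u}^{u,\omega}$.

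There is essentially no obstacle in this proof; the only bookkeeping to keep straight is matching the sign of the exponent and the placement of $\pm\epsilon$ in each of the four displays against the corresponding Oseledets limit, and — via the complex-line observation above — that a single $N$ handles \emph{every} vector of $E^s(x,\omega)$ (resp.\ $E^u(x,\omega)$) simultaneously rather than only a dense set of them.
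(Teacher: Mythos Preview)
Your proof is correct and takes essentially the same approach as the paper, which simply states that the result ``follows immediately from Oseledets' theorem.'' You have usefully unpacked this one-liner, in particular making explicit why a single threshold $N$ suffices for all vectors in each Oseledets space (namely, because each $E^{s/u}(x,\omega)$ is a complex line so the ratio $\Vert D_xf_\omega^n v\Vert/\Vert v\Vert$ is independent of the choice of nonzero $v$).
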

\begin{proof}
    This follows immediately from Oseledets' theorem.
\end{proof}

Given a fixed $\epsilon>0$, we get a Lusin set $K_{\epsilon}$ such that if $(x,\omega)\in K_{\epsilon}$ then $N_u$ and $N_s$ are bounded above by $\tilde{N}$.
 
Note that given two points $(x,\omega),(y,\omega)\in \Lambda$ and $\epsilon>0$, each has a corresponding $N_s$, we call them $N_s^x$ and $N_s^y$ resp. then $x,y\in \Lambda_{\epsilon, N}^{s,\omega}$ for $N = \max\{N_s^x,N_s^y\}$. Similarly, if $N_u^x$ and $N_u^y$ were their corresponding $N_u$ values, then also $x,y\in \Lambda_{\epsilon, N}^{u,\omega}$.

\begin{lemma}
    There exists $L_u,L_s\in \bbR$ (independent of $\omega$) such that $$\Lambda^{s,\omega}_{\epsilon,N} \subset \Delta_{(\lambda^-+\epsilon), (\lambda^+-\epsilon),L_s}^{\omega^+}, \ \ \Lambda^{u,\omega}_{\epsilon,N} \subset \Delta_{-(\lambda^--\epsilon),-(\lambda^++\epsilon),L_u}^{\omega^-}.$$
\end{lemma}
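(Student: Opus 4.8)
The plan is to use the Oseledets splitting itself as the witnessing splitting in Definition~\ref{pesin}. Fix $x\in\Lambda_{\epsilon,N}^{s,\omega}$ and put $E_x=E^s(x,\omega)$, $E_x^{\perp}=E^u(x,\omega)$; since $\epsilon<\epsilon_0$ is small we have $\lambda^-+\epsilon<0<\lambda^+-\epsilon$, so $a=\lambda^-+\epsilon<b=\lambda^+-\epsilon$ is an admissible pair of exponents for Definition~\ref{pesin}. The sequence of diffeomorphisms underlying $\Delta^{\omega^+}$ is exactly $\omega^+=(f_0,f_1,\dots)$, so $f^n=f_\omega^n$; and for every $n\geq N$ the two bounds demanded by Definition~\ref{pesin} hold with constant $1$ — they are precisely \eqref{f} (upper bound on $E_x$) and \eqref{f'} (lower bound on $E_x^{\perp}$) from the definition of $\Lambda^{s,\omega}_{\epsilon,N}$. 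Thus the entire content of the lemma is to also cover the finitely many iterates $0\leq n\leq N-1$, and this is where the constant $L_s$ is spent.

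For those small $n$, set $M\defeq\max_{f\in\supp(\mu)}\max\{\Vert f\Vert_{C^1},\Vert f^{-1}\Vert_{C^1}\}$, which is finite (and $\geq 1$) because $\supp(\mu)$ is finite and $X$ is compact. The chain rule gives $\Vert D_xf_\omega^n\Vert\leq M^n$ and $\Vert(D_xf_\omega^n)^{-1}\Vert\leq M^n$, hence $M^{-n}\Vert v\Vert\leq\Vert D_xf_\omega^nv\Vert\leq M^n\Vert v\Vert$ for all $v\in T_xX$ and all $0\leq n\leq N-1$. Comparing with the target inequalities $\Vert D_xf_\omega^nv\Vert\leq L_se^{n(\lambda^-+\epsilon)}\Vert v\Vert$ on $E_x$ and $\Vert D_xf_\omega^nv\Vert\geq L_s^{-1}e^{n(\lambda^+-\epsilon)}\Vert v\Vert$ on $E_x^{\perp}$, one checks that both are satisfied as soon as $L_s\geq\big(Me^{|\lambda^-|+\lambda^++\epsilon}\big)^{N}$, so we take $L_s$ equal to that number. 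Since $M$, $N$, $\epsilon$ and the Lyapunov exponents $\lambda^{\pm}$ (which are a.s.\ constant by ergodicity) do not depend on $\omega$, neither does $L_s$, and we conclude $x\in\Delta^{\omega^+}_{\lambda^-+\epsilon,\lambda^+-\epsilon,L_s}$.

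The unstable inclusion is entirely symmetric: for $x\in\Lambda^{u,\omega}_{\epsilon,N}$ one works with the backward cocycle $f_\omega^{-n}$ of \eqref{backwards} — the dynamics underlying $\Delta^{\omega^-}$ — with $E^u(x,\omega)$ now playing the role of the contracting subspace and $E^s(x,\omega)$ that of the expanding one, reads the $n\geq N$ bounds off of \eqref{b''} and \eqref{b'}, matching the exponents $-(\lambda^--\epsilon)$ and $-(\lambda^++\epsilon)$ to the expanding and contracting subspaces as Definition~\ref{pesin} dictates, and absorbs the iterates $0\leq n\leq N-1$ into a constant $L_u$ built from the same $M$ in the same fashion. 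There is essentially no real obstacle here; the only things to watch are bookkeeping — confirming that for $n\geq N$ the exponents inherited from the definition of $\Lambda_{\epsilon,N}$ are (up to the admissible ordering) the ones named in the target $\Delta$-block, and that the constant used to swallow the first $N$ iterates is genuinely $\omega$-free, which it is because it is assembled only from $N$, $\epsilon$, $\lambda^{\pm}$ and the finite set $\supp(\mu)$.
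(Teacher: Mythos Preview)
Your proof is correct and follows essentially the same approach as the paper: take the Oseledets splitting $E_x=E^s(x,\omega)$, $E_x^{\perp}=E^u(x,\omega)$, read off the required bounds for $n\geq N$ directly from the defining inequalities of $\Lambda^{s,\omega}_{\epsilon,N}$ (resp.\ $\Lambda^{u,\omega}_{\epsilon,N}$), and absorb the finitely many iterates $0\leq n\leq N-1$ into a constant built from $\max_{f\in\supp(\mu)}\Vert Df^{\pm1}\Vert$, which is finite and $\omega$-independent since $\supp(\mu)$ is finite and $X$ compact. The paper writes this constant simply as $(\max_{\supp(\mu)}\{1,\Vert Df\Vert\})^N$; your more explicit choice $L_s=(Me^{|\lambda^-|+\lambda^++\epsilon})^N$ additionally accounts for the exponential weight $e^{na}$, $e^{nb}$ in Definition~\ref{pesin}, which is a harmless (and arguably more careful) enlargement.
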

\begin{proof}
 Let $L_s = (\max_{i=0,\dots N}\{1,\Vert Df_i\Vert\})^{N},$ (to make independent of $\omega$, simply let $L_s' = \max_{\supp(\mu)}\{1, \Vert Df_i\Vert \}$ and take $L_s = (L_s')^N$, we can do this since we assume $\supp(\mu)<\infty$ and $X$ is compact) then, for $y\in \Lambda_{\epsilon,N}^{s,\omega}$, all $n\in \bbN$ and $v\in E^s(y,\omega)$ we have 
\begin{align} \label{fn}
    \frac{1}{L_s}e^{n(\lambda^- - \epsilon)}\Vert v\Vert \leq \Vert D_xf_{\omega}^nv\Vert \leq L_se^{n(\lambda^-+\epsilon)}\Vert v\Vert,
\end{align}
and for $v \in E^u(y,\omega)$,
\begin{align}\label{fn'}
     \frac{1}{L_s}e^{n(\lambda^+ - \epsilon)}\Vert v\Vert \leq \Vert D_xf_{\omega}^nv\Vert \leq L_s e^{n(\lambda^+ +\epsilon)}\Vert v\Vert.
\end{align}
Hence $\Lambda_{\epsilon,N}^{s,\omega} \subset \Delta_{(\lambda^-+\epsilon),(\lambda^+-\epsilon),L_s}^{\omega^+}$.

Similarly, let $L_u = (\max_{i=0,-1\dots -N}\{1,\Vert Df_i\Vert\})^N,$ (we can make this independent of $\omega$ in the same way one does for $L_s$) then for $y\in \Lambda_{\epsilon,N}^{u,\omega}$ we have that for $v \in E^s(x,\omega),$
\begin{align}
    \frac{1}{L_u}e^{-n(\lambda^-+\epsilon)} \Vert v\Vert\leq \Vert D_xf_{\omega}^{-n}v\Vert \leq L_u e^{-n(\lambda^--\epsilon)}\Vert v\Vert,
\end{align}
and for $v\in E^u(x,\omega)$ we have 
\begin{align}
    \frac{1}{L_u}e^{-n(\lambda^++\epsilon)} \Vert v\Vert\leq \Vert D_xf_{\omega}^{-n}v\Vert \leq L_u e^{-n(\lambda^+-\epsilon)}\Vert v\Vert.
\end{align}
So we see that $\Lambda_{\epsilon,N}^{u,\omega} \subset \Delta_{-(\lambda^--\epsilon),-(\lambda^++\epsilon),L_u}$ for this $\omega$ (or really $\omega^-$).
\end{proof}

If further we have $(x,\omega) \in \tilde{\Lambda} \defeq \Lambda \cap (X\times \Omega_{\epsilon})$, then we can use Proposition \ref{holder} to say that if we pick $\epsilon>0$ and get the corresponding $N_u$ and $N_s$, then
on the subsets $\Lambda_{\epsilon,N_s}^{s,\omega} \subset \Delta_{(\lambda^-+\epsilon),(\lambda^+-\epsilon),L_s}$ and $\Lambda_{\epsilon,N_u}^{u,\omega} \subset \Delta_{-(\lambda^--\epsilon),-(\lambda^++\epsilon),L_u}$ we get the associated H\"older continuity of $E^s$ and $E^u$ respectively.

\subsection{Stable holonomies}
\label{stableholon}
Here we discuss stable holonomies and standard measurable connections. We follow the appendices of \cite{BEF}.

\begin{definition}
    We say a smooth cocycle $\calG : E\to E$ with vector bundle $E \to Y$ over $F$ has smooth stable holonomies if 
    for a.e. $(x,\omega)\in Y$ and for all $y\in W^s(x,\omega)$ we have  maps $$\calP^-_{\omega}(x,y) : E(x,\omega) \to E(y,\omega),$$ that vary smoothly in $y$ and such that if $y'\in W^s(x,\omega)$, then $$\calP^-_{\omega}(x,y') = \calP^-_{\omega}(y,y') \circ \calP^-_{\omega}(x,y),$$ (where defined) and such that they are $F$-equivariant: $$\calP^-_{\sigma(\omega)}(F(x,\omega),F(y,\omega))\circ \calG_x = \calG_y\circ \calP^-_{\omega}(x,y).$$ 
\end{definition}

The appendices (B, D and E) of \cite{BEF} explain that (using \cites{Ru,ASV}) we get the existence of holonomies for our cocycles $E^s$ and $E^u\oplus E^s/E^s$, which we denote $\calP^-_u$ and $\calP^-_s$ respectively. Combining these, we have that the stable associated graded $\text{gr}^- \defeq (E^s\oplus E^u/E^s) \oplus E^s$ also has stable holonomies $\calP^-$.

Now note that the $(E^s\oplus E^u/E^s)$ are (measurably) isomorphic to $E^u$ and so we get isomorphisms $E^u \oplus E^s \to (E^s\oplus E^u/E^s) \oplus E^s$. Combining these isomorphisms with the holonomies $\calP^-$, we get the standard measurable connection which we call $$P^- : E^u \oplus E^s \to E^u \oplus E^s.$$ Here $P^- = P^-_u\oplus P^-_s,$ where $$ P^-_u : E^u\to E^u, \ \ P^-_s : E^s \to E^s,$$ come from $\calP^-_u$ and $\calP^-_s$ respectively combined with their respective canonical isomorphisms. By the Ledrappier invariance principle (see \cites{Led,AV,EM,BEF}) these preserve measurable $F$-invariant subbundles like $U^+$. So we have maps  $$P^-_{\omega}(z,z') : TU^+[z,\omega] \to TU^+[z',\omega],$$ (where defined) with properties $$P^-_{\omega}(z,z'')= P^-_{\omega}(z'',z')\circ P^-_{\omega}(z,z'),$$ and $P^-_{\omega}(z,z)=Id$. Appendix E of \cite{BEF} explains how for any $\epsilon_{\beta}>0$ we can find a set $K_{\beta}$ (that we call a Pesin-Lusin set) of measure $1-\epsilon_{\beta}$ on which $P^{\pm}$ are H\"older continuous with some exponent $\beta$ and constant $C_{\beta}$ dependent on $\epsilon_{\beta}$. To be more precise, on $K_{\beta}$ we have
$$\Vert P_{\omega}^-(z,z') - Id\Vert \leq C_{\beta}d_X(z,z')^{\beta}.$$

One can have the above construction for the unstables too. We denote these counterparts as $\calP^+$ and $P^+$.

\begin{lemma}
    The tangent space of the unstable support of a point $(x,\omega)$, $TU^+[x,\omega]$, does not depend on $\omega^+$.
\end{lemma}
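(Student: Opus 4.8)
The plan is to prove the stronger statement that the orbit $U^+[x,\omega,k]\subset W^u(x,\omega)$ itself — not merely its tangent space at $x$ — is a function of $(x,\omega^-)$ alone; the lemma is then immediate. The point is that each of the three data from which $U^+$ is built is independent of $\omega^+$. Two of these are visibly so: by \eqref{unstable} together with \eqref{backwards}, the global unstable manifold $W^u(x,\omega)$ is defined using only the maps $f_\omega^n$ with $n\le 0$, hence only the negatively indexed coordinates $f_{-1},f_{-2},\dots$ of $\omega$, so $W^u(x,\omega)\subset X$ depends only on $(x,\omega^-)$; and the subresonant structure and normal form coordinates on this leaf (Section~\ref{sec:QNI+} and Section~\ref{sec:NFC}) are attached to the leaf itself, so the unipotent group $G^{ssr}(W^u(x,\omega,k))$, viewed as a group of transformations of $W^u(x,\omega)$, is likewise a function of $(x,\omega^-)$ only and is independent of $k$ — consistently with item~(3) of Definition~\ref{compat}, which already identifies $G^{ssr}(x,\omega,k)$ with $G^{ssr}(x,\omega',k)$ whenever $(\omega')^-=\omega^-$, since then $(x,\omega',k)\in\hat{W}^u(x,\omega,k)$.

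The substantive step is to check that the leafwise unstable conditional measure $m^u_{x,\omega}$ is, up to its normalising scalar, also a function of $(x,\omega^-)$. For this I would use the explicit form of $m$ in Lemma~\ref{m}: the fibre of $(F^n)_*(\nu\times\mu^{\bbZ})$ over $\omega$ under $\pi_\Omega$ is the pushforward of $\nu$ by $f_{-1}\circ f_{-2}\circ\cdots\circ f_{-n}$ (the $f_{-j}$ being the coordinates of $\omega$), which depends only on $f_{-1},\dots,f_{-n}$; since $m=\lim_n(F^n)_*(\nu\times\mu^{\bbZ})$, letting $n\to\infty$ shows that the fibre measures $m_\omega$ in the disintegration of $m$ over $\pi_\Omega$ are $\mathcal{B}(\Omega^-)$-measurable. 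The unstable leafwise conditionals are determined — up to scalars and up to $m$-null sets — intrinsically by the pair consisting of $m_\omega$ and the unstable foliation $\{W^u(y,\omega)\}_y$ of $X_\omega$ (this is the transverse absolute continuity underlying Section~\ref{sec:cond}; see \cite{LQ}, Chapter IV), and both ingredients are functions of $(x,\omega^-)$, so $m^u_{x,\omega}$ is too; in particular so are its support and the Zariski closure $\calL^u(x,\omega,0)$ of that support taken in normal form coordinates.

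It then remains to combine these. The family produced by Theorem~\ref{4.1.5} is, by its construction in \cite{BEF} (following \cite{BQ} and \cite{EM}), pinned down by items~(2) and~(4) of Definition~\ref{compat} as the maximal connected subgroup of $G^{ssr}(W^u(x,\omega,k))$ under which the conditional measures of $m^u_{x,\omega}$ along its orbits are invariant; since both $G^{ssr}(W^u(x,\omega,k))$ and $m^u_{x,\omega}$ depend only on $(x,\omega^-)$, so does this maximal subgroup, and therefore so do $U^+[x,\omega,k]=U^+(x,\omega,k)q$ and $T_xU^+[x,\omega]$. The step I expect to be the main obstacle is exactly this last one: one must verify that the somewhat self-referential maximality clause~(4) of Definition~\ref{compat} genuinely determines $U^+$ up to $m$-null sets in terms of the pair $(m^u,G^{ssr})$, so that re-running the construction on these $\mathcal{B}(\Omega^-)$-measurable data returns the same measurable family almost everywhere. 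The cases $d_+\in\{0,2\}$ are unconditional regardless, since then $T_xU^+[x,\omega]$ is $\{0\}$ or all of $E^u(x,\omega)=T_xW^u(x,\omega)$; and for $d_+=1$ one may alternatively invoke Lemma~\ref{0.6} to get $U^+[x,\omega,k]=\calL^u(x,\omega,0)$ directly, provided one first checks this introduces no circularity with the proof of Lemma~\ref{0.5}.
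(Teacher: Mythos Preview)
Your route is genuinely different from the paper's. The paper gives a three-line argument using the machinery set up immediately before the lemma: by the Ledrappier invariance principle just quoted, the unstable measurable connection $P^+$ carries $TU^+[\hat x]$ to $TU^+[\hat x']$ whenever $\hat x'\in\hat W^u(\hat x)$; taking $\hat x=(x,\omega^-,\omega^+)$ and $\hat x'=(x,\omega^-,(\omega')^+)$, one then observes that $P^+(\hat x,\hat x')$ is built from the backwards flag $E^u$ and $(E^u\oplus E^s)/E^u$, both of which depend only on the shared past $\omega^-$, so $P^+(\hat x,\hat x')=\mathrm{Id}$ on $E^u$ and hence $TU^+[\hat x]=TU^+[\hat x']$.

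Your argument instead descends to the raw data from which $U^+$ is assembled. The ingredients are fine: $W^u$, its subresonant structure, and (up to scalar) the leafwise conditional $m^u_{x,\omega}$ all depend only on $(x,\omega^-)$, and your computation of the $\pi_\Omega$-fibres of $(F^n)_*(\nu\times\mu^{\bbZ})$ is correct. The gap is precisely where you place it: you need the family $(x,\omega,k)\mapsto U^+(x,\omega,k)$ supplied by Theorem~\ref{4.1.5} to be a \emph{function} of the pair $(m^u_{x,\omega},G^{ssr})$. The paper does not assert this, and clause~(4) of Definition~\ref{compat} is self-referential (largest subgroup preserving Haar on \emph{its own} orbit), so uniqueness does not fall out of the axioms alone; you would have to open up the construction in \cite{BEF}. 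Your fallback via Lemma~\ref{0.6} for $d_+=1$ is logically delicate, since the present lemma lives in Section~\ref{stableholon}, upstream of Sections~\ref{sec:twolem}--\ref{sec:Sect3}. The paper's approach buys a clean proof from a single black-box input already recorded in the same section; your approach, if the canonicity step were completed, would yield the marginally stronger conclusion that the orbit $U^+[x,\omega,k]$ itself is $\omega^+$-independent.
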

\begin{proof}
    Consider $\hat{x}=(x,\omega^-,\omega^+), \hat{x}'=(x,\omega^-,(\omega')^+)\in Y$. We note that $W^u(\hat{x}) = W^u(\hat{x}')$ and consider $P^+(\hat{x},\hat{x}')(TU^+[\hat{x}]) = TU^+[\hat{x}']$. Now we note that the backwards flag $E^u$ and $E^s\oplus E^u/E^u$ are only dependent on the past which $\hat{x}$ and $\hat{x}'$ share. This forces $P^+(\hat{x},\hat{x}')$ to be the identity. 
\end{proof}

\subsection{Normal form coordinates}
\label{sec:NFC}
Normal form coordinates (NFCs) are charts on either the stable or unstable manifold, $W^s(x,\omega)$ or $W^u(x,\omega)$, that (in our case) conjugate the action of the dynamics to a linear action. This construction was originally due to \cite{KS}. Our case is simple because our stable and unstable manifolds are complex 1-dimensional. One can also find this in \cite[Lemma 3.2 and Lemma 3.3]{KK2}, \cite[Proposition 6.5 ]{BRH} and \cite{BEF}.

\begin{proposition}[\cite{BRH}, Proposition 6.5] \label{NFC}
    For $m$-a.e. $(x,\omega) \in Y$ and for any $y\in W^u(x,\omega)$ (similar for the stable case), there exists a $C^{1}$ diffeomorphism
    \begin{align}
        H_{(y,\omega)}^u : W^u(x,\omega) \to T_yW^u(x,\omega) \cong \bbC,
    \end{align}
    that satisfies
    \begin{enumerate}
    \item 
        $D_yf_{\omega} \circ H_{(y,\omega)}^u = H_{F(y,\omega)}^u \circ f_{\omega}|_{W^u(x,\omega)},$ 
    \item
        $H^u_{(y,\omega)}(y)=0,$ $ D_yH_{(y,\omega)}^u = Id,$

    \item for $z\in W^u(x,\omega)$ the change of coordinates
    \begin{align}\label{change}
        H^u_{(y,\omega)} \circ \left( H^u_{(z,\omega)}\right)^{-1}: T_zW^u(x,\omega) \to T_yW^u(x,\omega),
    \end{align}
    is an affine map.
    \end{enumerate}
\end{proposition}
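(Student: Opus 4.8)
The plan is to reduce the entire statement to the rigidity of the automorphism group of $\bbC$. Fix $(x,\omega)$ in the $F$-invariant conull set on which the global unstable manifolds are biholomorphic to $\bbC$ (the set $K_{hol}$ of Section \ref{sec:stablemfld}, following \cite[Proposition 7.10]{CD}), and recall that since $f_\omega$ is a biholomorphism of $X$ it restricts to a biholomorphism of the intrinsic Riemann surface structures between consecutive unstable leaves, $f_\omega|_{W^u(x,\omega)}\colon W^u(x,\omega)\to W^u(F(x,\omega))$. For each $y\in W^u(x,\omega)$ I would take any biholomorphism $W^u(x,\omega)\to\bbC$, translate it so that it vanishes at $y$, and post-compose with the inverse of its derivative at $y$; this produces a biholomorphism
\[ H^u_{(y,\omega)}\colon W^u(x,\omega)\to T_yW^u(x,\omega)\cong\bbC,\qquad H^u_{(y,\omega)}(y)=0,\quad D_yH^u_{(y,\omega)}=\mathrm{Id}, \]
which already gives property (2) and is in particular holomorphic, hence $C^1$ (indeed $C^\infty$). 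The one classical input is that an injective entire map $\bbC\to\bbC$ is affine, so every biholomorphism $\bbC\to\bbC$ has the form $w\mapsto aw+b$ and is determined by its $1$-jet at any single point; in particular $H^u_{(y,\omega)}$ is the \emph{unique} biholomorphism $W^u(x,\omega)\to T_yW^u(x,\omega)$ satisfying the normalization in (2), independently of the auxiliary biholomorphism chosen.

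With $H^u$ so defined, properties (1) and (3) are forced by this uniqueness. For (1): both $H^u_{F(y,\omega)}\circ f_\omega|_{W^u(x,\omega)}$ and $D_yf_\omega\circ H^u_{(y,\omega)}$ are biholomorphisms $W^u(x,\omega)\to T_{f_\omega(y)}W^u(F(x,\omega))$; evaluating value and derivative at $y$ gives $0$ and $D_yf_\omega$ in both cases (using $D_yH^u_{(y,\omega)}=\mathrm{Id}$ and $D_{f_\omega(y)}H^u_{F(y,\omega)}=\mathrm{Id}$), so the $1$-jet rigidity above forces them to coincide — here I use that the unstable leaf depends only on its leaf, so $W^u(F(y,\omega))=W^u(F(x,\omega))$, which is again $\bbC$ on the invariant conull set. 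For (3): given $z\in W^u(x,\omega)$, the change of coordinates $H^u_{(y,\omega)}\circ(H^u_{(z,\omega)})^{-1}\colon T_zW^u(x,\omega)\to T_yW^u(x,\omega)$ is a biholomorphism between two copies of $\bbC$, hence affine. The stable case is identical, applied to the backward dynamics (equivalently to $\check\mu$ as in Section \ref{sec:breakdown}), using that $W^s(x,\omega)$ is also biholomorphic to $\bbC$.

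I do not expect a serious obstacle: in complex dimension one the subresonant/normal-form machinery of \cite{KS} and \cite{BRH} collapses, because the only strictly subresonant coordinate changes available on $\bbC$ are affine and the normalization pins $H^u$ down uniquely. The only point requiring care is bookkeeping — identifying $W^u(x,\omega)$, $W^u(F(x,\omega))$, the tangent spaces $T_yW^u$ and $T_{f_\omega(y)}W^u$ and their complex structures consistently, and checking that the conull set on which leaves are $\bbC$ can be taken $F$-invariant. If one additionally wants the family $(y,\omega)\mapsto H^u_{(y,\omega)}$ to be measurable (used implicitly later), this is inherited from measurability of $(x,\omega)\mapsto W^u(x,\omega)$ and of the leafwise complex structure; alternatively one can run the limit-of-rescaled-iterates construction of \cite[Proposition 6.5]{BRH} verbatim, where the distortion estimates of Section \ref{sec:distortion} make the convergence quantitative, and then invoke uniqueness to identify the limit with the canonical $H^u$ constructed above.
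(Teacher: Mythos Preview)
The paper does not prove this proposition; it is quoted from \cite[Proposition 6.5]{BRH}, with the remark that ``our case is simple because our stable and unstable manifolds are complex 1-dimensional'' and further citations to \cite{KS}, \cite{KK2}, \cite{BEF}. Your argument is correct and gives exactly the simplification the paper alludes to but does not write out.

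Your route is genuinely different from the cited \cite{BRH}/\cite{KS} construction. There, $H^u$ is built dynamically as a limit of rescaled iterates $(D_yf_\omega^n)^{-1}\circ f_\omega^n$, with convergence coming from nonuniform distortion bounds; the affine structure in property (3) then emerges from the (sub)resonant normal-form machinery. You bypass all of this by using the analytic input specific to the present paper---that each unstable leaf is biholomorphic to $\bbC$ (\cite[Proposition 7.10]{CD}, the set $K_{hol}$)---together with the elementary fact that $\mathrm{Aut}(\bbC)$ is affine and determined by $1$-jets. This makes existence, uniqueness, the equivariance (1), and the affine transition (3) automatic in one stroke. What the dynamical construction buys, and what you correctly flag at the end, is measurability of $(y,\omega)\mapsto H^u_{(y,\omega)}$ and quantitative control (used e.g.\ for the Lusin set $K_{NFC}$ in Lemma~\ref{comp3}); your remark that this can be recovered either from measurability of the leafwise uniformization or by invoking \cite{BRH} directly and then identifying the two $H^u$'s via uniqueness is the right way to close that gap.
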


 We denote $\Lambda'$ to be the subset of $Y$ where we can use NFCs as in Proposition \ref{NFC}. 

\subsection{Time change suspension flow}
\label{sec:flow}

For the `time changed' flow we will construct a roof function that will allow us to grow the unstable manifold by exact amounts in normal form coordinates (after some slight modification to these coordinates, see Equation \eqref{newNFCform} below). More precisely, if we are in (modified) normal form coordinates (NFCs) for the unstable manifold we want flowing by time $\ell$ to expand the unstable manifold by $e^{\ell}$. We will also get the analogue for the stable manifolds, i.e. that flowing forwards by $\ell$ in (modified) NFCs under the `time changed' map will shrink distances by a factor of $e^{-\ell}$.

\subsubsection{Defining the time changed flow (forwards)}
As we see in Section \ref{sec:NFC}, the way normal form coordinates works is that the action of $f$ on the unstable manifold in these coordinates is entirely linear. Hence, this action is multiplication by $D_xf_{\omega}|_{E^u(x,\omega)}$ (or $D_xf_{\omega}|_{E^s(x,\omega)}$ if stable).
Let $\theta : Y \to \bbR$ be defined by
\begin{align}
    \theta(x,\omega) &\defeq \log |D_xf_0|_{E^u(x,\omega)}|.
\end{align}
We would use $\theta$ as our roof function for our flow, but it is not guaranteed to be positive. It is only positive on average; by Furstenburg's formula 
\begin{align}\label{furstenburg}
    \int_{X\times \Omega} \theta(x,\omega)d\nu d\mu^{\bbZ} = \lambda^+ >0.
\end{align}
We instead take a function cohomologous to $\theta$, a positive integrable function that which we call $\tau$; recall that to be cohomologous means that there is a positive and a.s. finite function $\phi$ such that
\begin{align} \label{cohom}
    \theta - \phi \circ F + \phi = \tau.
\end{align}

Since Equation \eqref{furstenburg} is also a finite quantity we could use Lemma 2.1 from \cite{BQ} get such a $\tau$. However, we prefer an explicit formula and so instead we give the following Lemma:
\begin{lemma}
    Let \begin{align}
        \tau(x,\omega) = \log|D_0\tilde{f}_{(x,\omega)}|_{\bbR^u}|,
    \end{align}
    where $\tilde{f}$ is as in Section \ref{sec:Lyapcharts}, then $\tau$ is positive and cohomologous to $\theta$.
\end{lemma}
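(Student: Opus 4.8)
The plan is to read positivity of $\tau$ straight off the defining estimates of the Lyapunov charts, and to produce the coboundary from one chain-rule identity, the transfer function being the logarithmic conformal factor of the chart along the unstable direction.

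For positivity, fix the parameter $\epsilon$ with $0<\epsilon<\epsilon_0$ used to construct the charts $\varphi_{(x,\omega)}$ in Section~\ref{sec:Lyapcharts}. For $(x,\omega)\in\Lambda$ and $v\in\bbR^u$ those charts give $e^{\lambda^+-\epsilon}|v|\le|D_0\tilde f_{(x,\omega)}v|\le e^{\lambda^++\epsilon}|v|$. The restriction $D_0\tilde f_{(x,\omega)}|_{\bbR^u}$ is conformal: by \eqref{tildef} it is the conjugate of the $\bbC$-linear map $D_xf_\omega^1$ restricted to the complex line $E^u(x,\omega)$ by the derivatives of the charts, and these restrict to conformal isomorphisms $E^u\to\bbR^u$, because on a complex line the Riemannian norm (Hermitian, since $X$ is K\"ahler) and the $\epsilon_0$-Lyapunov norm — itself a convergent sum of pull-backs of the K\"ahler metric by $\bbC$-linear maps, hence Hermitian — differ by a positive scalar. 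So $|D_0\tilde f_{(x,\omega)}|_{\bbR^u}|$ is a genuine conformal factor, the displayed bound forces $\lambda^+-\epsilon\le\tau(x,\omega)\le\lambda^++\epsilon$, and since $\epsilon<\epsilon_0\ll\lambda^+/200$ we conclude $\tau>0$ (indeed $\tau$ is bounded). Alternatively one reads $|D_0\tilde f_{(x,\omega)}|_{\bbR^u}|$ as $\det_{\bbR}(D_0\tilde f_{(x,\omega)}|_{\bbR^u})^{1/2}$, which agrees with the above and satisfies the same bound since both singular values lie in $[e^{\lambda^+-\epsilon},e^{\lambda^++\epsilon}]$.

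For the cohomology relation, I differentiate $\tilde f_{(x,\omega)}=\varphi_{F(x,\omega)}\circ f_\omega^1\circ\varphi_{(x,\omega)}^{-1}$ at $0=\varphi_{(x,\omega)}(x)$, using $\varphi_{(x,\omega)}^{-1}(0)=x$, $f_\omega^1(x)=f_0(x)$ and $\varphi_{F(x,\omega)}(f_0(x))=0$, to get $D_0\tilde f_{(x,\omega)}=D_{f_0(x)}\varphi_{F(x,\omega)}\circ D_xf_0\circ(D_x\varphi_{(x,\omega)})^{-1}$. By Oseledets equivariance $D_xf_0$ maps $E^u(x,\omega)$ onto $E^u(F(x,\omega))$, and by the chart normalization $D\varphi_{(x,\omega)}$ and $D\varphi_{F(x,\omega)}$ map these complex lines onto $\bbR^u$. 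Write $a(x,\omega)$ for the logarithmic conformal factor of $D_x\varphi_{(x,\omega)}$ restricted to $E^u(x,\omega)$ (so $a$ is measurable, and a.s.\ finite because $D_x\varphi_{(x,\omega)}$ is invertible on $\Lambda$). Restricting the identity to $\bbR^u$ and taking logarithms of conformal factors, which are additive under composition by the conformality above, gives $\tau(x,\omega)=a(F(x,\omega))+\log|D_xf_0|_{E^u(x,\omega)}|-a(x,\omega)=\theta(x,\omega)+a(F(x,\omega))-a(x,\omega)$. Hence, with $\phi\defeq-a$, which is measurable and a.s.\ finite, we obtain $\theta-\phi\circ F+\phi=\tau$, the asserted cohomology. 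The relation uses $\phi$ only through the coboundary $\phi-\phi\circ F$, so no further normalization of $\phi$ is needed; what the time-changed flow actually requires, positivity of the roof function $\tau$, was established above.

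I do not expect a real obstacle: the lemma amounts to the statement that replacing the Riemannian metric on $E^u$ by the metric induced by the Lyapunov chart alters the one-step logarithmic expansion by a coboundary. The only two points that deserve care are that the quantities $|D_0\tilde f_{(x,\omega)}|_{\bbR^u}|$ and the chart factor $a(x,\omega)$ must be honest multiplicative quantities — which is why one records the conformality, or equivalently works with square roots of real Jacobians — and the measurability and a.s.\ finiteness of $a$ (hence $\phi$), which follow at once from the construction of the Lyapunov charts in Section~\ref{sec:Lyapcharts}.
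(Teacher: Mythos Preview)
Your proof is correct and is essentially the paper's argument: your transfer function $\phi=-a$ coincides with the paper's choice $\phi(x,\omega)=\log(\Vert D_0\varphi_{(x,\omega)}^{-1}w\Vert_x/|w|)$, and your chain-rule computation is a cleaner packaging of the same identity the paper verifies by tracking a fixed test vector $w=(1,0,0,0)$. The paper does not argue positivity inside the proof but records it immediately afterwards as the inequality $\lambda^+-\epsilon\le\tau\le\lambda^++\epsilon$, exactly as you derive; your extra care about conformality (so that $|D_0\tilde f_{(x,\omega)}|_{\bbR^u}|$ is a genuine scalar and log-factors add) is a point the paper leaves implicit.
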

\begin{proof}
    Note that we can write $\theta(x,\omega) = \log\left( \frac{\Vert D_xfv\Vert_{f(x)}}{\Vert v\Vert_x}\right)$ for any $v\in E^u(x,\omega)$. Similarly, we can write $\tau(x,\omega) = \log \left( \frac{|D_0\tilde{f}_{x,\omega}w |_{\tilde{f}_{(x,\omega)}(0)}}{|w|_0} \right)$ for any $w\in \bbR^u$. We want $\phi$ such that $\phi \circ F - \phi = \theta - \tau$, note that 
    \begin{align}
       \theta-\tau= \log \left( \frac{\Vert D_xfv\Vert_{f(x)}}{|D_0\tilde{f}_{(x,\omega)}w |_{\tilde{f}_{(x,\omega)}(0)}} \right) - \log \left( \frac{|w|_0}{\Vert v\Vert_x} \right).
    \end{align}
    Then let $$\phi(x,\omega) = \log \left( \frac{\Vert v\Vert_x}{|w|_0} \right),$$ where $w = (1,0,0,0)\in \bbR^u$ and $v = D_0\varphi_{(x,\omega)}^{-1}(w) \in E^u$. Then 
    \begin{align}
        \varphi\circ F(x,\omega) = \log\left( \frac{\Vert D_0\varphi_{F(x,\omega)}^{-1}(w)\Vert_{f(x)}}{|w|} \right).
    \end{align}
    Plugging in $\varphi^{-1}_{F(x,\omega)} = f_{\omega}^1\circ \varphi^{-1}_{(x,\omega)}\circ \tilde{f}^{-1}_{(x,\omega)}$, we get 
    \begin{align}
        \varphi\circ F(x,\omega) &= \log \left( \frac{\Vert D_{\varphi^{-1}_{(x,\omega)}\circ \tilde{f}^{-1}_{(x,\omega)}(0)}f_{\omega}^1 \circ D_{\tilde{f}_{(x,\omega)}^{-1}(0)}\varphi^{-1}_{(x,\omega)}\circ  D_0\tilde{f}^{-1}_{(x,\omega)} w  \Vert_{f(x)}}{|w|} \right),\\
        &= \log \left( \frac{\Vert D_{x}f_{\omega}^1 \circ D_{0}\varphi^{-1}_{(x,\omega)}\circ  D_0\tilde{f}^{-1}_{(x,\omega)} w  \Vert_{f(x)}}{|w|} \right),\\
        &= \log \left( \frac{\Vert D_{x}f_{\omega}^1 \circ D_{0}\varphi^{-1}_{(x,\omega)}   w  \Vert_{f(x)}}{|D_0\tilde{f}_{(x,\omega)}|_{\bbR^u}|\cdot |w|} \right),\\
        &= \log \left(\frac{\Vert D_xfv\Vert_{f(x)}}{|D_0\tilde{f}_{(x,\omega)}w|}  \right).
    \end{align}
Then it is clear that for our chosen $\phi$ that we have $\phi\circ F - \phi = \theta - \tau$. This completes the proof.
\end{proof}
Additionally, by the uniform hyperbolicity estimates in Lyapunov charts we have that 
\begin{align}\label{tau1}
        \lambda^+-\epsilon \leq \tau \leq \lambda^++\epsilon.
\end{align}

Throughout the rest of this subsection, we are flowing forwards by $\ell>0$. We define for $p\in \bbN, \ p\geq 1$,
\begin{equation}
    \tau_p(x,\omega) \defeq \tau(f_{\omega}^{p-1}x,\sigma^{p-1}\omega)+\dots + \tau(f_{\omega}^2x,\sigma^2\omega)+\tau(f_{\omega}^1x,\sigma(\omega)) + \tau(x,\omega).
    \end{equation}
Then the amount of time we flow has to surpass $\tau_p(x,\omega)$ in order to apply $p$ diffeomorphisms, $f_0,f_1,\dots f_{p-1}$ to $X$ at $x$.

Now we can define the `time changed' flow on $Z$ which we will denote $F_{tc}^{\ell}$. Note that \cite{BQ} simply defines the space over which they flow using the roof function, however as we have two different flows, we take the approach of \cite{EM} and simply change the speed of the flow rather than the $[0,1[$ component of $Z$.

For $\ell\in \bbR$, $\ell\geq 0$, we let $c=(x,\omega,k)\in Z$, and define $F^{\ell}_{tc}: Z \to Z$ by
$$F_{tc}^{\ell}(c) = ( f_{\omega}^{p_{\ell}(c)}x, \sigma^{p_{\ell}(c)}(\omega),k+\ell/\tau_{p_{\ell}(c)}(x,\omega) \mod 1),$$
where for $\ell>0$, 
\begin{align}\label{p+}
p_{\ell}(c) = \max\{p\in \bbN : k+\ell/\tau_p(x,\omega) \geq 1\}.\
\end{align}
Hence we move at the speed $1/\tau_{p_{\ell}(c)}$.

\subsubsection{The time changed flow in normal form coordinates}
\label{sec:new}
Now take the NFCs we defined in Section \ref{sec:NFC}, $H_{(y,\omega)}^u : W^u(x,\omega) \to T_yW^u(x,\omega)$, and define
\begin{align}\label{newNFCform}
    H_{(y,\omega),new}^u \defeq e^{-\phi(y,\omega)}H_{(y,\omega)}^u.
\end{align}

\begin{lemma}\label{newNFC}
    The map $ H_{(y,\omega),new}^u$ satisfies
    \begin{align}
    \Vert H_{(x,\omega),new}(F_{\omega}(z))\Vert = e^{\tau(x,\omega)} \Vert H_{(x,\omega),new}^u(z)\Vert.
    \end{align}
\end{lemma}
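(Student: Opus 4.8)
The plan is to derive the identity directly from the defining property of normal form coordinates in Proposition \ref{NFC}(1) together with the cohomology relation \eqref{cohom}, namely $\theta-\phi\circ F+\phi=\tau$. Since $f_\omega$ carries $W^u(x,\omega)$ onto $W^u(F(x,\omega))$, I read the left-hand side as $H^u_{(f_\omega(x),\sigma(\omega)),new}(f_\omega(z))$, and the norm on each $T_yW^u(x,\omega)$ as the one induced by the Riemannian metric on $T_yX$, which is consistent with $D_yH^u_{(y,\omega)}=\mathrm{Id}$ in Proposition \ref{NFC}(2). With this reading the statement is an exact equality of norms, and the proof is a short computation.

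\textbf{Step 1: the unmodified coordinates.} For $z\in W^u(x,\omega)$, Proposition \ref{NFC}(1) gives $H^u_{(f_\omega(x),\sigma(\omega))}(f_\omega(z))=D_xf_\omega\bigl(H^u_{(x,\omega)}(z)\bigr)$, where $D_xf_\omega$ is the tangent map restricted to $E^u(x,\omega)=T_xW^u(x,\omega)$. Because $f_\omega=f_0$ is a biholomorphism, this restriction is $\bbC$-linear on the complex line $E^u(x,\omega)$ and hence conformal, so $\Vert D_xf_\omega(v)\Vert=\lvert D_xf_0|_{E^u(x,\omega)}\rvert\,\Vert v\Vert=e^{\theta(x,\omega)}\Vert v\Vert$ for every $v\in E^u(x,\omega)$, by the definition of $\theta$. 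Taking $v=H^u_{(x,\omega)}(z)$ yields
\begin{align*}
\bigl\Vert H^u_{(f_\omega(x),\sigma(\omega))}(f_\omega(z))\bigr\Vert=e^{\theta(x,\omega)}\bigl\Vert H^u_{(x,\omega)}(z)\bigr\Vert .
\end{align*}

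\textbf{Step 2: inserting the normalization.} Using $H^u_{(y,\omega),new}=e^{-\phi(y,\omega)}H^u_{(y,\omega)}$ from \eqref{newNFCform}, I would multiply the last display by $e^{-\phi(F(x,\omega))}$ and substitute $e^{\theta(x,\omega)}=e^{\tau(x,\omega)}e^{\phi(F(x,\omega))-\phi(x,\omega)}$, which is precisely \eqref{cohom}. This gives
\begin{align*}
\bigl\Vert H^u_{(f_\omega(x),\sigma(\omega)),new}(f_\omega(z))\bigr\Vert=e^{\tau(x,\omega)}e^{-\phi(x,\omega)}\bigl\Vert H^u_{(x,\omega)}(z)\bigr\Vert=e^{\tau(x,\omega)}\bigl\Vert H^u_{(x,\omega),new}(z)\bigr\Vert ,
\end{align*}
which is the assertion; the stable statement is identical with $E^u$ and $f_\omega$ replaced by $E^s$ and $f_\omega^{-1}$.

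I do not expect a genuine obstacle here: the content is the two-line computation above. The only point demanding care is bookkeeping — tracking which fiber $X_{\sigma^n(\omega)}$ and which base point each chart $H^u$ lives over, and recording at the outset that the relevant norm is the Riemannian one, so that holomorphy of $f_\omega$ upgrades the estimate $\lvert D_xf_0|_{E^u}\rvert=e^{\theta(x,\omega)}$ from a norm bound to an exact multiplicative scaling valid in every direction of the complex line $E^u(x,\omega)$, which is what makes the final equality hold on the nose rather than up to bounded error.
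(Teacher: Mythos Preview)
Your proof is correct and follows essentially the same route as the paper: both apply Proposition \ref{NFC}(1) to obtain $\Vert H^u_{F(x,\omega)}(f_\omega(z))\Vert = e^{\theta(x,\omega)}\Vert H^u_{(x,\omega)}(z)\Vert$, then multiply by $e^{-\phi(F(x,\omega))}$ and invoke the cohomology relation \eqref{cohom} to convert $\theta-\phi\circ F$ into $\tau-\phi$. Your explicit remark that $D_xf_\omega|_{E^u}$ is $\bbC$-linear on the complex line $E^u(x,\omega)$, hence conformal, is a useful clarification of why the norm identity holds exactly rather than merely as an inequality; the paper uses this implicitly but does not say it.
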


\begin{proof}
Note that $H_{(x,\omega)}(x) = 0$ so the initial distance between $x$ and $z$ in original NFCs is
\begin{align}
    \Vert H_{(x,\omega)}(x) - H_{(x,\omega)}(z)\Vert = \Vert H_{(x,\omega)}(z)\Vert.
\end{align}
Then applying $F$ we see that
\begin{align}
    \Vert H_{F(x,\omega)}(x) - H_{(x,\omega)}(F_{\omega}(z))\Vert = \Vert H_{(x,\omega)}(F_{\omega}(z))\Vert.
\end{align}
By condition 1 of the construction of NFCs in Proposition \ref{NFC} and the definition of $\theta$ we have that 
\begin{align}
    \Vert H_{(x,\omega)}(F_{\omega}(z))\Vert = |D_xf{\omega}|_{E^u}| \cdot \Vert H_{(x,\omega)}^u(z)\Vert = e^{\theta(x,\omega)} \cdot \Vert H_{(x,\omega)}^u(z)\Vert. 
\end{align}
Now using the definition of $ H_{(y,\omega),new}^u$ we have
\begin{equation}
    \begin{aligned}\label{new}
    \Vert H_{(x,\omega),new}(F_{\omega}(z))\Vert 
     &= e^{-\phi(F(x,\omega))} \Vert H_{(x,\omega)}(F_{\omega}(z))\Vert \\
     &= e^{-\phi(F(x,\omega))}|D_xf{\omega}|_{E^u}| \cdot \Vert H_{(x,\omega)}^u(z)\Vert \\
     &= e^{\theta(x,\omega) -\phi(F(x,\omega)) } \cdot \Vert H_{(x,\omega)}^u(z)\Vert. 
\end{aligned}
\end{equation}
Using Equation \eqref{cohom} $e^{\theta(x,\omega) - \phi(F(x,\omega)) } = e^{\tau(x,\omega) - \phi(x,\omega)}$ and so Equation \eqref{new} becomes
\begin{align}
    \Vert H_{(x,\omega),new}(F_{\omega}(z))\Vert = e^{\tau(x,\omega) - \phi(x,\omega)}\Vert H_{(x,\omega)}^u(z)\Vert = e^{\tau(x,\omega)} \Vert H_{(x,\omega),new}^u(z)\Vert.
\end{align}
This completes the proof.
\end{proof}

By construction if we put the unstable manifolds in these modified NFCs (as in Equation \eqref{newNFCform}) and flow forwards by $\ell$ under the time changed flow it expands distances by precisely $e^{\ell}$. If we put the stable manifold in these modified NFCs, flowing forwards under the time changed flow by $\ell$ shrinks distances by $e^{-\ell}$.

\subsubsection{Invertibility of the time changed flow}
\label{sec:invert}
Lastly in this section, we write down explicitly the inverse of the `time changed' flow.

We can take $\tau$ as above and define $\tau^-(x,\omega) = -\tau(f_{-1}^{-1}(x),\sigma^{-1}(\omega))$ (where $\omega =(\dots f_{-2},f_{-1},f_0,f_1,f_2,\dots) $), which is a negative function.

   Define for $p\geq 1$ 
\begin{equation}\label{tau-}
    \tau_p^-(x,\omega) = \tau^-(f_{\omega}^{-(p-1)}x,\sigma^{-(p-1)}\omega)+\dots \tau^-((f_{\omega}^{-1})x,\sigma^{-1}(\omega)) + \tau^-(x,\omega).
    \end{equation}
In this scenario we want that if $\ell<0$ surpasses $\tau_p^-(x,\omega)$ (in magnitude) we hit $x$ with $p$ diffeomorphisms from the past of $\omega$, i.e. hit $x$ with $(f_{-1})^{-1},\dots (f_{-p})^{-1}$.\\

Then we can define the `time changed flow' for $\ell<0$ as 
$$F_{tc}^{\ell}(c) = ( f_{\omega}^{-p_{\ell}^-(c)}x, \sigma^{-p_{\ell}^-(c)}(\omega),k-\ell/\tau^-_{p_{\ell}^-(c)}(\omega,x)),$$
where 
\begin{align}\label{p-}
p_{\ell}^-(c) = \max\{p\in \bbN : k-\ell/\tau_p^-(\omega,x) \leq -1\}.
\end{align}

\subsubsection{Invariant measures}

For our flow $F_{tc}^t$, the natural invariant measure $\hat{m}^{\tau}$ is given by
\begin{align}\label{timemeas}
    d\hat{m}^{\tau}(x,\omega,k) = C_{\tau}\tau(x,\omega)\cdot dm(x,\omega)dt(k),
\end{align}
where $C_{\tau}$ is a constant chosen such that $\hat{m}^{\tau}(Z) = 1$. This measure is clearly absolutely continuous with respect to $\hat{m}$.
One makes a similar observation to that of \cite[Lemma 15.1]{EM}:
\begin{lemma} [Lemma 15.1, \cite{EM}]\label{RN}
    For every $\epsilon_{\tau}>0$ there exists $\calK \subset Z$ a compact subset and $Q<\infty $ such that 
    \begin{align}
        \hat{m}^{\tau}(\calK) > 1-\epsilon_{\tau},
    \end{align}
    and for every $(x,\omega,k)\in \calK$
    \begin{align}
        \frac{d\hat{m}^{\tau}}{d\hat{m}}(x,\omega,k), \ \frac{d\hat{m}}{d\hat{m}^{\tau}}(x,\omega,k) \leq Q.
    \end{align}
\end{lemma}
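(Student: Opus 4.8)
The plan is to read the two Radon--Nikodym derivatives off directly from the explicit formula \eqref{timemeas} for $\hat{m}^{\tau}$, estimate them using the two-sided bound \eqref{tau1} on $\tau$, and then pass from an almost-everywhere bound to a genuine pointwise bound on a compact set via inner regularity. First I would observe that, since $d\hat{m}^{\tau}(x,\omega,k) = C_{\tau}\tau(x,\omega)\,dm(x,\omega)\,dt(k)$ while $d\hat{m}(x,\omega,k) = dm(x,\omega)\,dt(k)$, the density of $\hat{m}^{\tau}$ with respect to $\hat{m}$ is the function $(x,\omega,k)\mapsto C_{\tau}\tau(x,\omega)$, which is independent of $k$. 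Because $\tau$ is strictly positive --- indeed $\tau \geq \lambda^{+}-\epsilon > 0$ by \eqref{tau1}, using $\epsilon < \epsilon_{0} \ll \lambda^{+}/200$ --- this density never vanishes, so $\hat{m}$ is also absolutely continuous with respect to $\hat{m}^{\tau}$, with density $(x,\omega,k)\mapsto \bigl(C_{\tau}\tau(x,\omega)\bigr)^{-1}$.

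Next I would pin down $C_{\tau}$. By definition it is the normalising constant making $\hat{m}^{\tau}(Z)=1$, so, using $\int_{0}^{1}dt = 1$ and that $m$ is a probability measure, $C_{\tau} = \bigl(\int_{Y}\tau\,dm\bigr)^{-1}$. The estimate \eqref{tau1} gives $\lambda^{+}-\epsilon \leq \int_{Y}\tau\,dm \leq \lambda^{+}+\epsilon$, hence $C_{\tau} \in [(\lambda^{+}+\epsilon)^{-1},(\lambda^{+}-\epsilon)^{-1}] \subset (0,\infty)$. Combining this with \eqref{tau1} once more, for $\hat{m}^{\tau}$-a.e.\ $(x,\omega,k)$ one has $\tfrac{d\hat{m}^{\tau}}{d\hat{m}}(x,\omega,k) = C_{\tau}\tau(x,\omega) \leq \tfrac{\lambda^{+}+\epsilon}{\lambda^{+}-\epsilon}$ and $\tfrac{d\hat{m}}{d\hat{m}^{\tau}}(x,\omega,k) = \bigl(C_{\tau}\tau(x,\omega)\bigr)^{-1} \leq \tfrac{\lambda^{+}+\epsilon}{\lambda^{+}-\epsilon}$; thus both are bounded by the single constant $Q \defeq \tfrac{\lambda^{+}+\epsilon}{\lambda^{+}-\epsilon}$ outside some $\hat{m}^{\tau}$-null Borel set $N \subset Z$.

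Finally, to produce a compact set I would invoke inner regularity. The space $Z = Y\times[0,1[$ is locally compact, second countable and metrizable, since $Y = X\times\Omega$ is compact metrizable ($X$ compact, $\Omega = \supp(\mu)^{\bbZ}$ with $\supp(\mu)$ finite); hence the Borel probability measure $\hat{m}^{\tau}$ is tight, and the conull Borel set $Z\setminus N$ contains, for any prescribed $\epsilon_{\tau}>0$, a compact subset $\calK$ with $\hat{m}^{\tau}(\calK) > 1-\epsilon_{\tau}$. On $\calK$ both Radon--Nikodym derivatives are at most $Q$, which is the assertion. I do not expect a genuine obstacle here: unlike the general situation of \cite[Lemma 15.1]{EM}, where the roof function may be unbounded and one must exhaust $Z$ by sublevel sets of the density, in our setting \eqref{tau1} already makes $\tau$ --- and therefore both densities --- globally bounded above and below, so the only real content is the routine regularity argument producing $\calK$.
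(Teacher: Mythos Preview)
Your proof is correct. The paper does not actually prove this lemma; it simply cites \cite[Lemma~15.1]{EM} and states the result without argument. Your approach is therefore not just equivalent to the paper's --- it is more complete, since you give a self-contained proof tailored to the present setting. Your key observation is right: because \eqref{tau1} gives a uniform two-sided bound $\lambda^{+}-\epsilon \leq \tau \leq \lambda^{+}+\epsilon$ on the roof function, both Radon--Nikodym derivatives are globally bounded $\hat{m}$-a.e.\ by $Q = (\lambda^{+}+\epsilon)/(\lambda^{+}-\epsilon)$, and the compact set $\calK$ is obtained by a routine inner-regularity argument. This is indeed simpler than the general situation in \cite{EM}, where the roof function need not be bounded and one must genuinely exhaust by sublevel sets.
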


\subsection{Some good sets}

\label{sec:lusin1}

First of all, recall from Sections \ref{sec:prelim} and \ref{sec:Lyapcharts} that $\Lambda$ was the conull set of points in $Y$ where Oseledets' theorem holds and where the Lyapunov charts are defined. There is also the set $\Lambda'$ where NFCs (and modified NFCs as in Equation \eqref{newNFCform}) can be used which is explained in Section \ref{sec:NFC}. Additionally, there is the set $\Lambda_{loc}$ which is almost conull; we fix this set with $\delta < \epsilon_0$, see Section \ref{sec:stablemfld}. Lastly, recall $\tilde{\Lambda}$ from Section \ref{sec:Hcont}.

Now recall that we denote Lyapunov charts at a point $(x,\omega)\in Y$ as $\varphi_{(x,\omega)}$, see Section \ref{sec:Lyapcharts}.
\begin{lemma}\label{comp1}
Let $\epsilon_r>0$ be given, there exists a set $K_r\subset Y$ of $m$-measure at least $(1-\epsilon_r)$ and a constant $M_r>0$ such that if $(x,\omega) \in K_r$, and two points $y,z\in X$ belong to the Lyapunov chart domain of $(x,\omega)$, $U(x,\omega)$, if $d = d_X(y,z)$ and $d' = |\varphi_{(x,\omega)}(y)-\varphi_{(x,\omega)}(z)|$, we have that $d = cd'$ for some $c \in [\frac{1}{M_r},M_r]$.
   \end{lemma}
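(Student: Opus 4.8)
The plan is to read the conclusion directly off the bi-Lipschitz properties of the Lyapunov charts recorded in Section \ref{sec:Lyapcharts}, combined with a routine Lusin exhaustion applied to the measurable function $r$. Recall that for every $(x,\omega)\in\Lambda$ the chart $\varphi_{(x,\omega)}\colon U(x,\omega)\to B(r(x,\omega)^{-1})$ is a $C^\infty$ diffeomorphism and that there is a uniform constant $\psi$ with $\psi^{-1}\le\mathrm{Lip}(\varphi_{(x,\omega)})\le r(x,\omega)$; equivalently, for all $y,z\in U(x,\omega)$,
\[
\psi^{-1}\,d_X(y,z)\ \le\ \bigl|\varphi_{(x,\omega)}(y)-\varphi_{(x,\omega)}(z)\bigr|\ \le\ r(x,\omega)\,d_X(y,z).
\]

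First I would fix $M\ge 1$ and set $K_r:=\{(x,\omega)\in\Lambda : r(x,\omega)\le M\}$. Since $r$ is measurable, $\Lambda$ is conull, and the sets $\{r\le M\}$ increase to a conull set as $M\to\infty$, we may choose $M=M(\epsilon_r)$ so that $m(K_r)>1-\epsilon_r$. Then for $(x,\omega)\in K_r$ and $y,z\in U(x,\omega)$, writing $d=d_X(y,z)$ and $d'=|\varphi_{(x,\omega)}(y)-\varphi_{(x,\omega)}(z)|$, the displayed inequality gives $\psi^{-1}d\le d'\le r(x,\omega)\,d\le M\,d$, hence
\[
\tfrac{1}{M}\,d'\ \le\ d\ \le\ \psi\,d'.
\]
Thus $d=c\,d'$ with $c=d/d'\in[1/M,\psi]$, and setting $M_r:=\max\{M,\psi\}$ yields $c\in[1/M_r,M_r]$, as required.

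Since the argument is just unpacking the stated chart bounds together with a Lusin choice of $M$, there is no genuine obstacle here; the only point requiring care is that both $y$ and $z$ be assumed to lie in the same chart domain $U(x,\omega)$ so that the bi-Lipschitz estimate for $\varphi_{(x,\omega)}$ applies — and this is exactly the hypothesis of the lemma.
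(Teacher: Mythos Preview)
Your proof is correct and follows essentially the same approach as the paper: both read the bi-Lipschitz bounds $\psi^{-1}\le\mathrm{Lip}(\varphi_{(x,\omega)})\le r(x,\omega)$ from Section~\ref{sec:Lyapcharts} and then use measurability of $r$ to pass to a large set where $r\le M$. Your write-up is in fact slightly cleaner, keeping the uniform lower constant $\psi$ separate and taking $M_r=\max\{M,\psi\}$, whereas the paper folds everything into a single constant $M_r$ via the (loosely stated) remark that $\varphi_{(x,\omega)}^{-1}$ has ``the same Lipschitz constant'' as $\varphi_{(x,\omega)}$.
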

\begin{proof}

Each $\varphi_{(x,\omega)}$ is a diffeomorphism of compact sets and hence is Bi-Lipschitz for some constant $L$ (dependent on $(x,\omega)$). This means that for any $y,z\in U(x,\omega),$
\begin{align}
    \frac{1}{L}d_X(y,z) \leq |\varphi_{(x,\omega)}(y)-\varphi_{(x,\omega)}(z))| \leq Ld_X(y,z).
\end{align}
 We see that if $d$ is a distance computed in a chart $\varphi_{(x,\omega)}$ and $d'$ is the distance in ambient coordinates, we have that
\begin{align}\label{d}
    \frac{1}{L}d\leq d'\leq Ld.
\end{align}
The inverse of $\varphi_{(x,\omega)}$ has the same Lipschitz constant as $\varphi_{(x,\omega)}$, hence swapping $d'$ and $d$ in Equation \eqref{d} still holds.

Of course each $\varphi_{(x,\omega)}$ will have a different Lipschitz constant. In Section \ref{sec:Lyapcharts}, we have bounds on the Lipschitz constants of every $\varphi_{(x,\omega)}$ for $(x,\omega)\in \Lambda$: there is a uniform constant $\psi$ such that
\begin{align}
    \psi^{-1} \leq \lip(\varphi_{(x,\omega)})  \leq r(x,\omega).
\end{align}
We only need the upper bound; since $r$ is measurable, for the cost of an $\epsilon_r>0$, we can ask that $r(x,\omega)\leq M_r$ for some constant $M_r>0$ on a set $K_r$ of $m$-measure $1-\epsilon_r$. We can use $M_r$ as $L$ in the above argument for every $\varphi_{(x,\omega)}$ such that $(x,\omega)\in K_r$ and then we get the result.
\end{proof}

\begin{lemma} \label{comp3}
    Let $\epsilon_{NFC}>0$ be given, there exists a set $K_{NFC} \subset \Lambda'\subset Y$ of $m$-measure at least $(1-\epsilon_{NFC})$ and a constant $M_{NFC}$ such that if $(x,\omega) \in K_{NFC}$, then the maps $$H_{(x,\omega),new}^{s/u}:W^u(x,\omega) \to T_xW^u(x,\omega),$$ are defined and if $y\in W^u(x,\omega)$, $z\in W^s(x,\omega)$, $d = d_X(x,y) $ and $$d' = \Vert H_{(x,\omega),new}^u(y) - H^u_{(x,\omega),new}(x)\Vert , \ \ d''= \Vert H_{(x,\omega),new}^s(z) - H^s_{(x,\omega),new}(x)\Vert,$$ then $d = c'd'$, $d=c''d''$ for $c',c''\in [\frac{1}{M_{NFC}},M_{NFC}]$. 
\end{lemma}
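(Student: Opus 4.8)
The plan is to mirror the proof of Lemma~\ref{comp1} almost verbatim, transporting it from Lyapunov charts to the modified normal form coordinates. First I would recall from Proposition~\ref{NFC} that for $(x,\omega)\in\Lambda'$ the maps $H^{u}_{(x,\omega)}$ (and the stable analogue) are $C^1$ diffeomorphisms of $W^u(x,\omega)$ onto $T_xW^u(x,\omega)\cong\bbC$ with $H^u_{(x,\omega)}(x)=0$ and $D_xH^u_{(x,\omega)}=\mathrm{Id}$; multiplying by the nonzero scalar $e^{-\phi(x,\omega)}$ to pass to $H^u_{(x,\omega),new}$ (Equation~\eqref{newNFCform}) changes the derivative at $x$ only by that scalar, and in particular these remain $C^1$ diffeomorphisms. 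On any fixed compact piece of $W^u(x,\omega)$ containing $x$ — say the local unstable manifold $W^u_{q(x,\omega)}(x,\omega)$, whose size is controlled by $q(x,\omega)$ — a $C^1$ diffeomorphism between compact sets is bi-Lipschitz, so there is $L(x,\omega)\geq 1$ with $\tfrac1{L(x,\omega)}d_X(x,y)\le\Vert H^u_{(x,\omega),new}(y)-H^u_{(x,\omega),new}(x)\Vert\le L(x,\omega)\,d_X(x,y)$ for $y$ in that piece, and symmetrically for the inverse; the same for the stable coordinate.

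The second step is the Lusin/uniformization argument: the bi-Lipschitz constant $L(x,\omega)$ can be chosen to depend measurably on $(x,\omega)$ (it is built from the derivative bounds of $H^{s/u}_{(x,\omega),new}$ over a domain of size $q(x,\omega)$, and $q$, $r$, $\phi$ are all measurable), so for the cost of an $\epsilon_{NFC}>0$ we can restrict to a set $K_{NFC}\subset\Lambda'$ with $m(K_{NFC})\ge 1-\epsilon_{NFC}$ on which $L(x,\omega)\le M_{NFC}$ for a uniform constant $M_{NFC}$. On $K_{NFC}$ the displayed inequalities hold with constant $M_{NFC}$ in both directions, which is exactly the claim that $d=c'd'$ and $d=c''d''$ with $c',c''\in[\tfrac1{M_{NFC}},M_{NFC}]$. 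One should also intersect $K_{NFC}$ with the almost-conull set where the local unstable/stable manifolds of uniform size $q$ are available (the set $\Lambda_{loc}$ from Section~\ref{sec:stablemfld}) so that "$y\in W^u(x,\omega)$ near $x$'' makes sense uniformly.

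The only genuine subtlety — and the step I expect to be the main obstacle — is making precise the \emph{range} of validity: a normal form coordinate is a diffeomorphism of the whole entire curve $W^u(x,\omega)\cong\bbC$ onto $\bbC$, but it is only \emph{uniformly} bi-Lipschitz on a compact sub-piece whose size degenerates as $(x,\omega)$ leaves a Lusin set. So the correct reading of the statement is that the bi-Lipschitz comparison holds for $y$ (resp.\ $z$) ranging over a neighbourhood of $x$ in $W^u(x,\omega)$ (resp.\ $W^s(x,\omega)$) of definite size, e.g.\ the local manifold $W^{u/s}_{q(x,\omega)}(x,\omega)$, and this size is uniform on $K_{NFC}$; I would state this explicitly when invoking the lemma later. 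Apart from pinning down this domain issue, the proof is a routine repetition of the bi-Lipschitz-plus-Lusin scheme of Lemma~\ref{comp1}, now with $\varphi_{(x,\omega)}$ replaced by $H^{s/u}_{(x,\omega),new}$ and the uniform upper bound on $r(x,\omega)$ replaced by the uniform bound $M_{NFC}$ on the bi-Lipschitz constants.
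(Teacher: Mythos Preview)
Your proposal is correct and follows essentially the same approach as the paper: the paper's proof simply notes that $\phi$ (hence $e^{\phi}$) is measurable and that the Lipschitz constants of the maps $H_{(x,\omega)}^{s/u}$ are bounded above by a measurable function, then defers to the argument of Lemma~\ref{comp1}. Your write-up is in fact more careful than the paper's, particularly in flagging the domain-of-validity issue, which the paper leaves implicit.
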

\begin{proof}
   Since $\phi$ is measurable, $e^{\phi}$ is measurable. Also, the Lipschitz constants for these maps $H_{(x,\omega)}^{s/u}$ are bounded above by a measurable function, hence one runs the same argument as in Lemma \ref{comp1}. 
\end{proof}

\begin{lemma}
    \label{comp4} 
    Let $\epsilon_{exp}>0$ be given, there exists a set $K_{exp}\subset X$ of $\nu$-measure at least $(1-\epsilon_{exp})$ and a constant $M_{exp}>1$ such that if $x\in K_{exp}$ and $y\in B(x,\rho_0/4)$ (i.e. the domain of the $\exp_x$ map) then if $v = \exp_x^{-1}(y)$, we have that $$\Vert v\Vert = Md_x(x,y),$$ where $M \in [\frac{1}{M_{exp}},M_{exp}]$.
\end{lemma}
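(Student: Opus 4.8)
The plan is to argue exactly as in the proofs of Lemma \ref{comp1} and Lemma \ref{comp3}, with the Lyapunov chart $\varphi_{(x,\omega)}$ (resp. the modified normal form chart) replaced by the inverse exponential map $\exp_x^{-1}$, and with compactness of $X$ taking over the role of the measurability-plus-Lusin step used there. Recall from Section \ref{sec:Hcont} that $\rho_0>0$ is fixed so that $\exp_z$ is a $C^\infty$ diffeomorphism of $B(z,\rho_0)$ for every $z\in X$; in particular, for every $x$ the normal coordinate chart $\exp_x^{-1}$ is defined and smooth on $B(x,\rho_0/4)\subset X$, and $\exp_x(0)=x$.

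First I would set, for each $x\in X$,
\[
L(x)\defeq \max\bigl\{\lip\bigl(\exp_x|_{\overline{B}(0,\rho_0/4)}\bigr),\ \lip\bigl(\exp_x^{-1}|_{\overline{B}(x,\rho_0/4)}\bigr)\bigr\},
\]
the Lipschitz constants being computed using $\Vert\cdot\Vert$ on $T_xX$ and $d_X$ on $X$. The second step is to observe that $x\mapsto L(x)$ is $m$-measurable — in fact continuous, since $\exp$ is a smooth map on the tangent bundle $TX$ and the closed $\rho_0/4$-balls are compact, so each of the two Lipschitz constants is a supremum of $\|D\exp_x\|$, resp. $\|D(\exp_x^{-1})\|$, over a compact set that depends continuously on $x$ — and is therefore bounded on the compact manifold $X$. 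Put $M_{exp}\defeq \max\{2,\sup_{x\in X}L(x)\}<\infty$. Because this bound is uniform over all of $X$, one may in fact take $K_{exp}=X$, which has $\nu(K_{exp})=1\geq 1-\epsilon_{exp}$; alternatively, to mirror Lemma \ref{comp1} verbatim, one chooses $K_{exp}$ of $\nu$-measure at least $1-\epsilon_{exp}$ on which $L(x)\leq M_{exp}$, using only the measurability of $L$.

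The last step is the two-sided estimate. For $x\in K_{exp}$, $y\in B(x,\rho_0/4)$ and $v=\exp_x^{-1}(y)$, the Lipschitz bound on $\exp_x$ gives $d_X(x,y)=d_X(\exp_x(0),\exp_x(v))\leq L(x)\Vert v\Vert$, while the Lipschitz bound on $\exp_x^{-1}$ gives $\Vert v\Vert=\Vert\exp_x^{-1}(y)-\exp_x^{-1}(x)\Vert\leq L(x)\,d_X(x,y)$. Writing $\Vert v\Vert=M\,d_X(x,y)$, these two inequalities force $M\in[\frac{1}{M_{exp}},M_{exp}]$, which is the assertion.

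I do not expect any genuine obstacle here: the argument is of the same routine flavour as Lemmas \ref{comp1} and \ref{comp3}, and the only point needing any care at all is the measurability (indeed continuity) of $x\mapsto L(x)$, which is immediate from smoothness of the exponential map together with compactness of $X$. One could equivalently phrase the whole thing by noting that $(x,y)\mapsto \Vert\exp_x^{-1}(y)\Vert/d_X(x,y)$ extends continuously by the value $1$ across the diagonal and is hence bounded above and below by positive constants on the compact set $\{(x,y):d_X(x,y)\leq\rho_0/4\}$.
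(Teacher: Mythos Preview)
Your proposal is correct and follows exactly the approach the paper indicates: the paper's proof simply reads ``Proof is similar to those of Lemmas \ref{comp1} and \ref{comp3},'' and you have spelled that out. Your additional observation that compactness of $X$ makes the bi-Lipschitz constant of $\exp_x^{-1}$ uniform (so one could take $K_{exp}=X$) is a valid refinement, but the core argument is the same.
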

\begin{proof}
    Proof is similar to those of Lemmas \ref{comp1} and \ref{comp3}.
\end{proof}

\begin{lemma}\label{Kss}
    Let $\epsilon_s>0$ be given, there exists a set $K_s\subset Y$ of $m$-measure at least $(1-\epsilon_s)$ and a constant $k_U>0$ such that if $(x,\omega)\in K_s$, then $\varphi_{(x,\omega)}$ is defined and the domain of this function, $U(x,\omega)$, contains a ball of radius $k_U$ in the Riemannian distance metric $d_X$.
\end{lemma}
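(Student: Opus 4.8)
The plan is to combine a quantitative lower bound on the ``size'' of the Lyapunov chart domain $U(x,\omega)$ with a Lusin/exhaustion argument, exactly in the style of the proofs of Lemmas~\ref{comp1}, \ref{comp3} and~\ref{comp4} (and the exhaustion in Lemma~\ref{cCunif}); the measurable function doing the work is $r(x,\omega)$ from Section~\ref{sec:Lyapcharts}. The key step is the following claim: for every $(x,\omega)\in\Lambda$ one has $B_X\bigl(x,\tfrac12 r(x,\omega)^{-2}\bigr)\subseteq U(x,\omega)$, where $B_X$ denotes the $d_X$-ball.

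To prove the claim, recall that $\varphi_{(x,\omega)}\colon U(x,\omega)\to B(r(x,\omega)^{-1})$ is a homeomorphism onto an open Euclidean ball, with $\varphi_{(x,\omega)}(x)=0$, $\lip(\varphi_{(x,\omega)})\le r(x,\omega)$ with respect to $d_X$, and $r(x,\omega)\ge 1$. Suppose some point $y$ with $d_X(x,y)<\tfrac12 r(x,\omega)^{-2}$ lay outside $U(x,\omega)$; following a minimizing geodesic from $x$ to $y$ and letting $y'$ be the point at which it first leaves $U(x,\omega)$ gives $y'\in\partial U(x,\omega)$ with $d_X(x,y')\le d_X(x,y)<\tfrac12 r(x,\omega)^{-2}$. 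Choose $y_n\in U(x,\omega)$ with $y_n\to y'$. The Lipschitz bound gives $|\varphi_{(x,\omega)}(y_n)|\le r(x,\omega)\,d_X(x,y_n)$, which converges to $r(x,\omega)\,d_X(x,y')<\tfrac12 r(x,\omega)^{-1}$, so for all large $n$ the points $\varphi_{(x,\omega)}(y_n)$ lie in the compact subset $\overline{B(\tfrac12 r(x,\omega)^{-1})}$ of $B(r(x,\omega)^{-1})$. After passing to a subsequence, $\varphi_{(x,\omega)}(y_n)\to w\in B(r(x,\omega)^{-1})$; since $\varphi_{(x,\omega)}^{-1}$ is continuous on $B(r(x,\omega)^{-1})$, this forces $y_n=\varphi_{(x,\omega)}^{-1}(\varphi_{(x,\omega)}(y_n))\to\varphi_{(x,\omega)}^{-1}(w)\in U(x,\omega)$, hence $y'=\varphi_{(x,\omega)}^{-1}(w)\in U(x,\omega)$, contradicting $y'\in\partial U(x,\omega)$ together with the openness of $U(x,\omega)$.

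Granting the claim, the lemma follows by exhaustion as in Lemma~\ref{cCunif}: the function $(x,\omega)\mapsto\tfrac12 r(x,\omega)^{-2}$ is measurable and strictly positive on the conull set $\Lambda$, so the sets $E_n\defeq\{(x,\omega)\in\Lambda:\ r(x,\omega)^{2}\le n/2\}$ increase to $\Lambda$ and $m(E_n)\to 1$; picking $n$ with $m(E_n)>1-\epsilon_s$ and setting $K_s\defeq E_n$, $k_U\defeq 1/n$, we get that for $(x,\omega)\in K_s$ the chart $\varphi_{(x,\omega)}$ is defined and $B_X(x,k_U)\subseteq B_X\bigl(x,\tfrac12 r(x,\omega)^{-2}\bigr)\subseteq U(x,\omega)$. (Alternatively one could skip the explicit estimate: $U(x,\omega)$ is an open neighbourhood of $x$, hence contains some $B_X(x,\epsilon(x,\omega))$ with $\epsilon(x,\omega)>0$, and since the Lyapunov chart family is measurable in $(x,\omega)$ the largest admissible $\epsilon(x,\omega)$ is a measurable positive function to which the same Lusin argument applies.) I do not expect a real obstacle here; the only points needing care are ensuring the sequence $\varphi_{(x,\omega)}(y_n)$ does not escape to the boundary of the target ball --- which is precisely why the factor $\tfrac12$ is inserted --- and confirming that the Lipschitz bound from Section~\ref{sec:Lyapcharts} is read with respect to the ambient metric $d_X$ on $U(x,\omega)$, which is the natural reading.
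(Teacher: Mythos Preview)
Your proof is correct and follows essentially the same approach as the paper: both arguments bound the measurable function $r(x,\omega)$ on a large set $K_s$ via an exhaustion, then use the Lipschitz bound $\lip(\varphi_{(x,\omega)})\le r(x,\omega)$ together with the fact that the image of $\varphi_{(x,\omega)}$ is the ball $B(r(x,\omega)^{-1})$ to conclude that $U(x,\omega)$ contains a $d_X$-ball of radius comparable to $r(x,\omega)^{-2}$. The paper phrases the containment step as ``$\varphi_{(x,\omega)}^{-1}(B(S))$ contains $B_X(x,S/L)$'' (using that $\varphi^{-1}$ is a homeomorphism carrying $\partial B(S)$ to points at distance $\ge S/L$ from $x$), whereas you spell it out via a geodesic/boundary argument with an explicit factor $\tfrac12$; this is a cosmetic difference and your version is if anything slightly more careful.
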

\begin{proof}
We take steps similar to those of the proof of Lemma \ref{comp1}.

Again, the function $r$ defined in Section \ref{sec:Lyapcharts} is measurable. This function defines the co-domain of our diffeomorphism  $\varphi_{(x,\omega)} : U(x,\omega) \to B(1/r(x,\omega))$. Let $s(x,\omega) = \frac{1}{r(x,\omega)}$; this is also a measurable function. We can find a subset of $Y$ on which $s(x,\omega) \in ]0,1]$ is bounded from below by some constant $S$ on a set $K_{s}$ of measure $1-\epsilon_{s}$. One performs the following procedure: let $$E_n = \{(x,\omega) : s(x,\omega)>1/n\}\subset \Lambda,$$ this is a measurable set such that $E_n \to \Lambda$ as $n\to \infty$ where $\Lambda$ has full measure. Hence there is $N>0$ such that $m(E_N)>1-\epsilon_{s}$, take $S = \frac{1}{N}$. Additionally, if $s=\frac{1}{r}>S$, then $r<\frac{1}{S} = R$ on this set.

Now again, $\varphi_{(x,\omega)}^{-1}$ is a diffeomorphism of two compact sets and hence is Bi-Lipschitz for some Lipschitz constant $L$. To be more precise, for any $y,z\in B(s(x,\omega))$, we have that
\begin{align}
    \frac{1}{L}|y-z| \leq d_X(\varphi_{(x,\omega)}^{-1}(y),\varphi_{(x,\omega)}^{-1}(z)) \leq L|y-z|.
\end{align}
Then $\varphi_{(x,\omega)}^{-1}(B(S)) \subset U(x,\omega)$ contains the ball $B_X(x,S/L)$. 

In Section \ref{sec:Lyapcharts}, we have a uniform constant $\psi^{-1}$ such that for every $(x,\omega)\in \Lambda$
\begin{align}
    \psi^{-1} \leq \lip(\varphi_{(x,\omega)}) \leq r(x,\omega).
\end{align}
We only need the upper bound; on $K_s$, $r(x,\omega)<R = \frac{1}{S}$. So we replace $L$ with $R$ in the above argument for every $\varphi_{(x,\omega)}$ such that $(x,\omega)\in K_s$. Then on $K_s$ we have that $\varphi_{(x,\omega)}^{-1}(B(S)) \subset U(x,\omega)$ contains the ball $B_X(x,S/R)$. Let $k_U \defeq S/R = S^2<1$. This completes the proof. 
\end{proof}

\begin{lemma}\label{knbhd}
   Let $\epsilon_{nbhd}>0$ be given. There exists $r$ and a set $K_{nbhd}\subset Y$ of measure at least $1-\epsilon_{nbhd}$ such that for $(x,\omega)\in K_{nbhd}$, $\eta^{s/u}_{\omega}(x)$ contains a neighbourhood of size $r$ in $W^{s/u}(x,\omega)$.   
\end{lemma}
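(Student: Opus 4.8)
The plan is a routine Lusin-exhaustion argument, once one extracts a measurable ``radius'' function from the subordinate partitions $\eta^s$ and $\eta^u$. First I would recall that, by Definition \ref{part}, for $m$-a.e.\ $(x,\omega)\in Y$ the atom $\eta^s_\omega(x)$ contains an open neighbourhood of $x$ in $W^s(x,\omega)$ in the submanifold topology, and similarly $\eta^u_\omega(x)$ contains an open neighbourhood of $x$ in $W^u(x,\omega)$. Fixing the leafwise Riemannian distance $d^{W^s}$ on $W^s(x,\omega)$ (equivalently one may read the distance in the modified normal form coordinates of Section \ref{sec:NFC}, the two being comparable on a large set by Lemma \ref{comp3}), I would define
\[
    r^s(x,\omega) \defeq \sup\bigl\{\rho>0 : \{\,y\in W^s(x,\omega): d^{W^s}(x,y)<\rho\,\}\subset \eta^s_\omega(x)\bigr\},
\]
and analogously $r^u(x,\omega)$ from $W^u(x,\omega)$ and $\eta^u$; by the previous sentence these are strictly positive for $m$-a.e.\ $(x,\omega)$.

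The core step is to verify that $r^s$ and $r^u$ are measurable functions of $(x,\omega)$. This is where the structure of $\eta^s$ as a \emph{measurable} partition of $Y$ (Definition \ref{part}, Proposition \ref{2.1}) is used, together with the measurable dependence of $(x,\omega)\mapsto W^s(x,\omega)$ from Pesin theory (Section \ref{sec:stablemfld}) and of the charts in which the leafwise metric is read. Concretely, for each rational $\rho>0$ I would rewrite $\{(x,\omega): r^s(x,\omega)>\rho\}$ via countably many measurable conditions---for instance, that a fixed countable dense family of points lying at leafwise distance $<\rho$ from $x$ belongs to the same atom of $\eta^s$ as $(x,\omega)$---so that the set is measurable; the same reasoning handles $r^u$. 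I expect this measurability bookkeeping to be the only genuine obstacle; everything else is formal.

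Finally, I would set $g(x,\omega)\defeq \min\{r^s(x,\omega),r^u(x,\omega)\}$, a positive measurable function defined $m$-a.e., and observe that the measurable sets $E_n\defeq\{(x,\omega): g(x,\omega)>1/n\}$ increase to a conull subset of $Y$ as $n\to\infty$, so $m(E_n)\to 1$. Choosing $n$ with $m(E_n)>1-\epsilon_{nbhd}$ and putting $K_{nbhd}\defeq E_n$ and $r\defeq 1/n$, we get that for every $(x,\omega)\in K_{nbhd}$ the leafwise $r$-ball around $x$ in $W^s(x,\omega)$ lies inside $\eta^s_\omega(x)$, and likewise the leafwise $r$-ball in $W^u(x,\omega)$ lies inside $\eta^u_\omega(x)$, which is exactly the asserted statement.
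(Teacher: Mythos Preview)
Your proposal is correct and follows essentially the same approach as the paper: define a measurable radius function $r(x,\omega)$ recording the size of the neighbourhood of $x$ contained in the partition atom, then run the exhaustion $E_n=\{r>1/n\}$ to produce a uniform lower bound on a set of measure $>1-\epsilon_{nbhd}$. The paper's proof is terser and simply asserts measurability of $r(x,\omega)$ by reference to the construction of the subordinate partition, whereas you spell out a route to it; otherwise the arguments coincide.
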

\begin{proof}
    Now note that the partition elements of $X_{\omega}$, $\eta^{s/u}_{\omega}(x)$, defined in Section \ref{sec:cond} contain a neighbourhood of $W^{s/u}(x,\omega)$ of size $r(x,\omega)$. This $r(x,\omega)$ is a measurable function. Similar to the proof of Lemma \ref{Kss}, for a cost of $\epsilon_{nbhd}>0$ in measure, we can choose $r(x,\omega)$ uniformly as some $r$ on a set which we call $K_{nbhd}$.
\end{proof}

\begin{lemma}\label{errorr}
    On the set $K_{\xi} \defeq K_r\cap K_s \cap \Lambda \cap \Lambda_{loc}$, given $\xi>0$, if $(x,\omega) \in K_{\xi}$ and $y\in W^s_q(x,\omega)$, the image of $E^s(y,\omega)$ under $\exp$ approximates $W^s(x,\omega)$ up to error $\xi$ on a ball of radius $\mathfrak{r} = C\sqrt{\xi}$ with respect to the Riemannian distance metric $d_X$ (where $C$ is dependent on $(x,\omega)$). Similar if $z\in W^u_q(x,\omega)$. 
\end{lemma}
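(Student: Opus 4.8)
The plan is to pass to the Lyapunov chart at $(x,\omega)$, where $W^s_q(x,\omega)$ becomes a holomorphic graph, carry out a second-order Taylor expansion of that graph around the point corresponding to $y$, and then transport the resulting estimate back to ambient Riemannian coordinates, where it turns into the comparison with $\exp_y(E^s(y,\omega))$.

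First I would fix $(x,\omega)\in K_\xi$ and $y\in W^s_q(x,\omega)$, and recall from Proposition \ref{locstablemfld} that in the chart $\varphi_{(x,\omega)}$ the local stable manifold is $\mathrm{graph}(h^s_{(x,\omega)})$ for a holomorphic $h^s_{(x,\omega)}\colon \bbR^s(r(x,\omega)^{-1})\to \bbR^u(r(x,\omega)^{-1})$ with $h^s_{(x,\omega)}(0)=0$, $D_0h^s_{(x,\omega)}=0$ and $\Vert Dh^s_{(x,\omega)}\Vert\le 1/3$. On $\Lambda_{loc}$ the functions $q,k,r$ may be taken bounded, on $K_r$ the chart $\varphi_{(x,\omega)}$ and its inverse are bi-Lipschitz with constant $M_r$ (Lemma \ref{comp1}), and on $K_s$ the domain $U(x,\omega)$ contains a Riemannian ball $B_X(x,k_U)$ (Lemma \ref{Kss}). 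Since $h^s_{(x,\omega)}$ is holomorphic and bounded by $r(x,\omega)^{-1}$, the Cauchy estimates (cf. Remark \ref{higher} and \cite[Lemma 4.1]{CD}) give a bound $\Vert D^2h^s_{(x,\omega)}\Vert\le C_0$ on $\bbR^s(\tfrac12 r(x,\omega)^{-1})$ with $C_0=C_0(x,\omega)<\infty$.

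Next I would write $p=\varphi_{(x,\omega)}(y)=(s_0,h^s_{(x,\omega)}(s_0))$ and expand $h^s_{(x,\omega)}$ about $s_0$, so that for $\Vert s-s_0\Vert$ small,
\[
\big\Vert h^s_{(x,\omega)}(s)-h^s_{(x,\omega)}(s_0)-D_{s_0}h^s_{(x,\omega)}(s-s_0)\big\Vert\le \tfrac12 C_0\Vert s-s_0\Vert^2 .
\]
Thus in the chart $\mathrm{graph}(h^s_{(x,\omega)})$ stays within $\tfrac12 C_0\Vert s-s_0\Vert^2$ of the affine plane $A\defeq p+\{(w,D_{s_0}h^s_{(x,\omega)}w):w\in \bbR^s\}$. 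Since $\varphi_{(x,\omega)}$ carries $W^s_q(x,\omega)$ onto this graph and $W^s_q(x,\omega)$ is tangent to the Oseledets stable subspace at each of its points, the direction space of $A$ is $D_y\varphi_{(x,\omega)}\big(E^s(y,\omega)\big)$, so $\varphi_{(x,\omega)}^{-1}(A)$ is a smooth surface through $y$ tangent there to $E^s(y,\omega)$. Pushing the displayed inequality through $\varphi_{(x,\omega)}^{-1}$ with the $M_r$ bi-Lipschitz bound converts $\Vert s-s_0\Vert$ into $d_X$-distance and shows that on $B_X(y,\mathfrak{r})$ the local stable manifold stays within $C_1\mathfrak{r}^2$ of $\varphi_{(x,\omega)}^{-1}(A)$, with $C_1=C_1(x,\omega)$.

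It then remains to compare $\varphi_{(x,\omega)}^{-1}(A)$ with $\exp_y\big(E^s(y,\omega)\big)$: both are $C^\infty$ surfaces through $y$ with the same tangent space $E^s(y,\omega)$ there, so a second-order Taylor estimate — using the boundedness of the second derivatives of $\varphi_{(x,\omega)}^{-1}$ and of $\exp$ near $y$, finite and depending on $(x,\omega)$ — gives that on $B_X(y,\mathfrak{r})$ these two surfaces differ by at most $C_2\mathfrak{r}^2$. Chaining the two comparisons, $W^s(x,\omega)$ stays within $C_3\mathfrak{r}^2$ of $\exp_y(E^s(y,\omega))$ on $B_X(y,\mathfrak{r})$ with $C_3=C_3(x,\omega)$; taking $\mathfrak{r}=C\sqrt{\xi}$ with $C=C(x,\omega)$ small enough that $C_3\mathfrak{r}^2\le \xi$ and that $B_X(y,\mathfrak{r})$ lies inside $U(x,\omega)$ and inside the injectivity domain of $\exp_y$ (these small-radius constraints being controlled by $K_r$, $K_s$ and $\Lambda_{loc}$) yields the claim. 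The unstable statement follows from the identical argument applied to $h^u_{(x,\omega)}$, $W^u_q(x,\omega)$ and $E^u(y,\omega)$, equivalently by replacing $\omega$ with $\check\omega$. The main obstacle is purely bookkeeping: keeping track that the chart is centred at $x$ while the Taylor expansion and the exponential map are based at the off-centre point $y$, and verifying that $\varphi_{(x,\omega)}^{-1}$ of the tangent plane to the graph at $\varphi_{(x,\omega)}(y)$ is genuinely tangent to $E^s(y,\omega)$, so that the two second-order comparisons compose.
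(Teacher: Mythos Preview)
Your proposal is correct and follows essentially the same route as the paper: pass to the Lyapunov chart, use the graph representation $h^s_{(x,\omega)}$ from Proposition~\ref{locstablemfld}, control the second-order Taylor remainder of $h^s$ at the point corresponding to $y$, and push the quadratic error back to ambient coordinates using the $M_r$ bi-Lipschitz bound from $K_r$. The paper packages the Taylor step as a separate Claim (with $L=\lip(Dh^s_{(x,\omega)})$ playing the role of your $C_0$), obtaining $\mathfrak{r}'=\sqrt{\epsilon/L}$ in the chart and then $\mathfrak{r}=\mathfrak{r}'/M_r$, $C=1/(M_r\sqrt{L})$ ambiently.

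The one place you are more careful than the paper is your Step~6, the explicit comparison between $\varphi_{(x,\omega)}^{-1}(A)$ and $\exp_y(E^s(y,\omega))$; the paper simply ``returns to ambient coordinates'' and absorbs this into the bi-Lipschitz transfer without isolating it. Your treatment is legitimate since both surfaces share the tangent $E^s(y,\omega)$ at $y$ and the point-dependent constant $C(x,\omega)$ allowed in the statement accommodates the second derivatives of $\varphi_{(x,\omega)}^{-1}$ and $\exp_y$. A minor remark: your appeal to the Cauchy estimates for $h^s$ is fine because $h^s$ is holomorphic and bounded, but the citation of Remark~\ref{higher} is slightly off (that remark concerns the automorphisms $f$, not $h^s$); the paper instead treats $\lip(Dh^s_{(x,\omega)})$ directly as a measurable function of $(x,\omega)$ and uniformizes it in the subsequent Lemma~\ref{H}.
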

\begin{proof}

Take a point $(x,\omega)\in K_r\cap \Lambda_{loc}\cap \Lambda \cap K_s$; we can define the Lyapunov chart $\varphi_{(x,\omega)}$ centered at this point. By Theorem \ref{locstablemfld}, we know that $W_q^u(x,\omega),\ W^s_q(x,\omega) \subset X$ are pullbacks of graphs of holomorphic functions $h^u_{(x,\omega)}$ and $h^s_{(x,\omega)}$ respectively in $\bbR^4$ under $\varphi_{(x,\omega)}$.
\begin{claim}\label{error}
    Let $\epsilon>0$ be given. Let $f: B\to B'$ be holomorphic where $B, B'\subset \bbR^2$ are compact subsets. Let $L = \lip(Df)$ and  let $\mathfrak{r}' = \sqrt{\frac{\epsilon}{L}}$. Take any point $x\in B$ such that $B_{\mathfrak{r}'}(x)\subset B$. On $B_{\mathfrak{r}'}(x),$ $f$ is approximated up to error $\epsilon$ by its tangent plane at $x$.
\end{claim}
\begin{proof}
    We are given $\epsilon>0$. 
    We let $L = \lip(Df)$ and take $y\in B_{\mathfrak{r}'}(x)\subset B$, let $z$ be such that 
    \begin{align*}
        f(y)-f(x) = Df(z)(y-x),
    \end{align*}
    then 
    \begin{align*}
        |f(y) - (f(x) +  &Df(x) \cdot (y-x))| \\ &\leq 0 + |Df(z)-Df(x)|\cdot |y-x| \\ &\leq L|z-x|\cdot |y-x|\leq L|y-x|^2 \leq L(\mathfrak{r}')^2 = \epsilon.
    \end{align*}
\end{proof}

Using Claim \ref{error}, take $\epsilon = \xi/M_r$, take $f$ in the collection $$\calC \defeq \{h^s_{(x,\omega)},h^u_{(x,\omega)}\}_{(x,\omega)\in \Lambda}.$$  
For $\mathfrak{r}' = \sqrt{\frac{\epsilon}{L}}$, we get that for $(x,\omega)\in K_r\cap \Lambda_{loc}\cap \Lambda$, $h^s_{(x,\omega)}$ and $h^u_{(x,\omega)}$ are approximated up to error $\epsilon$ by tangent plane of any point on the graph.

Now we return to ambient coordinates and examine how well $E^s$ and $E^u$ approximate the local stable and unstable manifolds for points $(x,\omega)\in K_r\cap \Lambda_{loc}\cap\Lambda\cap K_s$. We see that since we are in $K_r$ the largest the error could grow is by a factor of $M_r$, leaving us with an error of $\xi$. The ball in $X$ around $x$ for which this estimate is valid has radius $\mathfrak{r} = \mathfrak{r}'/M_r$. So the constant $C$ can be taken to be $\frac{1}{M_r\sqrt{L}}$, where $L$ depends on $(x,\omega)$ (because $L$ is dependent on the function we choose from the collection $\calC$). This completes the proof.
\end{proof}

\begin{lemma}\label{H}
Let $\epsilon_{\xi'}>0$ be given. There exists a constant $H>0$ and subset $K_{\xi}'\subset K_{\xi}$ such that $m(K_{\xi}')>(1-\epsilon_{\xi'})m(K_{\xi})$ and if $(x,\omega)\in K_{\xi}'$ then $\lip(Dh^{s/u}_{(x,\omega)})$ is uniformly bounded above by $H$. In particular, on $K_{\xi}'$ we have that the constant $C$ in Lemma \ref{errorr} is uniformly bounded from below by $\frac{1}{M_r\sqrt{H}}$.
\end{lemma}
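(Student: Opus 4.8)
The plan is to run the same Lusin-type truncation argument used in Lemmas \ref{comp1}, \ref{comp3} and \ref{Kss}, once one knows that the functions $(x,\omega)\mapsto \lip(Dh^{s}_{(x,\omega)})$ and $(x,\omega)\mapsto \lip(Dh^{u}_{(x,\omega)})$ are measurable and finite for $m$-a.e.\ $(x,\omega)$. Granting this, the conclusion is immediate by exhaustion by measurable sets.

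First I would record the needed measurability. The graph functions $h^{s/u}_{(x,\omega)}$ from Proposition \ref{locstablemfld} are holomorphic on $\bbR^{s/u}(r(x,\omega)^{-1})$ with $\Vert Dh^{s/u}_{(x,\omega)}\Vert\le 1/3$, and they are produced by the standard graph-transform construction, whose output depends measurably on the measurable data entering the Lyapunov-chart picture of Section \ref{sec:Lyapcharts}: the function $r(x,\omega)$, and the chart representatives $\tilde f_{(x,\omega)}$ together with their first- and second-derivative bounds, which are controlled in terms of $r(x,\omega)$ (Section \ref{sec:distortion}) and the finitely many $f\in\supp(\mu)$. Applying Cauchy estimates to these holomorphic graphs on a ball of, say, half the radius shows that $\lip(Dh^{s/u}_{(x,\omega)})$ is bounded above by a measurable function of $(x,\omega)$ and is finite for $(x,\omega)\in\Lambda_{loc}$; set $\ell(x,\omega)\defeq\max\{\lip(Dh^s_{(x,\omega)}),\lip(Dh^u_{(x,\omega)})\}$, a measurable and $m$-a.e.\ finite function on $K_{\xi}$.

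Next I would truncate. Put $E_n\defeq\{(x,\omega)\in K_{\xi}: \ell(x,\omega)\le n\}$; these form an increasing sequence of measurable subsets of $K_{\xi}$ with $\bigcup_n E_n=K_{\xi}$ up to an $m$-null set, so $m(E_n)\to m(K_{\xi})$. Given $\epsilon_{\xi'}>0$, choose $n$ with $m(E_n)>(1-\epsilon_{\xi'})m(K_{\xi})$, and set $K_{\xi}'\defeq E_n$ and $H\defeq\max\{1,n\}$. Then $K_{\xi}'\subset K_{\xi}$, $m(K_{\xi}')>(1-\epsilon_{\xi'})m(K_{\xi})$, and for $(x,\omega)\in K_{\xi}'$ both $\lip(Dh^s_{(x,\omega)})$ and $\lip(Dh^u_{(x,\omega)})$ are at most $H$. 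For the ``in particular'' clause, recall that in the proof of Lemma \ref{errorr} the constant was exhibited as $C=\frac{1}{M_r\sqrt{L}}$ with $L=\lip(Dh^{s/u}_{(x,\omega)})$ the Lipschitz constant of the derivative of the graph function chosen from the collection $\calC$; on $K_{\xi}'$ we have $L\le H$, hence $C=\frac{1}{M_r\sqrt{L}}\ge\frac{1}{M_r\sqrt{H}}$, giving the asserted uniform lower bound.

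The only point requiring any care, and hence the (mild) main obstacle, is the measurability of $(x,\omega)\mapsto \lip(Dh^{s/u}_{(x,\omega)})$: this is inherited from the measurable dependence of the Pesin local stable/unstable manifold construction on the Oseledets and Lyapunov-chart data together with the Cauchy estimates for holomorphic graphs. Everything after that is the routine exhaustion argument already used repeatedly in Section \ref{sec:lusin1}.
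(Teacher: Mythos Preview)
Your proposal is correct and follows essentially the same approach as the paper: assert measurability of $(x,\omega)\mapsto\lip(Dh^{s/u}_{(x,\omega)})$ and then run the exhaustion/truncation argument already used in Lemmas \ref{comp1}, \ref{Kss}, etc. The paper's proof is terser---it simply declares measurability and invokes the argument of Lemma \ref{Kss}---whereas you supply additional justification for measurability via the graph-transform construction and Cauchy estimates, and you spell out the ``in particular'' clause about $C\ge \frac{1}{M_r\sqrt{H}}$; but the strategy is the same.
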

\begin{proof}
   The functions $$L^s(x,\omega):(x,\omega) \mapsto \lip(Dh^{s}_{(x,\omega)}), \ \ L^u(x,\omega):(x,\omega) \mapsto \lip(Dh^{u}_{(x,\omega)})$$ are measurable. Similar to the argument in Lemma \ref{Kss}, for a price of $\epsilon_{\xi'}$ to the measure of $K_{\xi}$ we get a set $K_{\xi}'$ on which $L^s(x,\omega)$ and $L^u(x,\omega)$ are bounded above by $H$. 
\end{proof}

Now we must give a similar lemma for the orbits $U^+[x,\omega,k]$ from our family of $U^+(x,\omega,k)$ specified in Theorem \ref{4.1.5}. Note that the orbits $U^+[x,\omega,k]$ are smooth real 1-dimensional curves in the case $d_+=1$ (see Section \ref{sec:breakdown}). In this setting, define the function $G^+:Y \to Gr(1,\bbR^4)$ which takes a pair $(x,\omega)$ to the tangent line through $x$ of $U^+[x,\omega]$.

\begin{lemma}\label{errorr1}
    In the case $d_+=1$, given $\xi>0$, for $(x,\omega) \in K_{\xi}$ as in Lemma \ref{errorr}, if $y\in U^+[x,\omega]$, $G^+(y,\omega)$ approximates $U^+[x,\omega]$ up to error $\xi$ on a ball of radius $\mathfrak{r} = C\sqrt{\xi}$ with respect to the Riemannian distance metric $d_X$. This $C$ can be taken to be independent of the point $(x,\omega)$.
\end{lemma}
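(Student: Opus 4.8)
The plan is to use the normal form coordinates of Sections~\ref{sec:NFC} and~\ref{sec:new} to reduce the statement to the observation that, in those coordinates, $U^+[x,\omega]$ is an honest affine line (hence coincides with its own tangent line), and then to push the resulting error back to $X$ through the normal form chart, arguing exactly as in the proof of Lemma~\ref{errorr} via a Claim~\ref{error}-type second-order estimate.

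First I would record the structural fact. Fix $(x,\omega,k)\in K_\xi$ with $d_+=1$ and let $y\in U^+[x,\omega]$. By property~(3) of Definition~\ref{compat} we have $U^+(y,\omega)=U^+(x,\omega)$, hence $U^+[y,\omega]=U^+(y,\omega)\cdot y=U^+(x,\omega)\cdot x=U^+[x,\omega]$. Passing to the modified normal form chart $H^u_{(y,\omega),new}$ centered at $y$, the subresonant structure realizes $G^{ssr}(W^u(y,\omega))$ as a group of affine transformations of $\bbC$ and $U^+(y,\omega)$ as a connected one-parameter unipotent subgroup; in complex dimension one such a subgroup is a one-parameter group of translations along some real line $\ell\subset\bbC$. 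Since $H^u_{(y,\omega),new}(y)=0$, it follows that $H^u_{(y,\omega),new}(U^+[x,\omega])=\ell$ is a real line through the origin, and since $D_yH^u_{(y,\omega),new}=e^{-\phi(y,\omega)}\,\mathrm{Id}$ is a scalar, the tangent direction $G^+(y,\omega)$ is carried onto $\ell$ as well. Thus in this chart $U^+[x,\omega]$ literally equals its tangent line at $y$.

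Next I would transfer the estimate to $X$. Parametrize $U^+[x,\omega]$ near $y$ by $t\mapsto\gamma(t)\defeq(H^u_{(y,\omega),new})^{-1}(tw)$, with $w$ a unit vector spanning $\ell$, so that the affine tangent line through $y$ is $\{\,y+s\,(D_yH^u_{(y,\omega),new})^{-1}w : s\in\bbR\,\}$. A second-order Taylor expansion of $\gamma$ at $0$, using $D_0(H^u_{(y,\omega),new})^{-1}=(D_yH^u_{(y,\omega),new})^{-1}$, bounds the distance from $\gamma(t)$ to this tangent line by $\tfrac{1}{2}\sup\|D^2(H^u_{(y,\omega),new})^{-1}\|\cdot t^2$, in complete analogy with Claim~\ref{error}; converting $t$ to the Riemannian distance via the bi-Lipschitz bound of Lemma~\ref{comp3} on $K_{NFC}$ yields an inequality $d_X(\gamma(t),G^+(y,\omega))\le C'\,d_X(y,\gamma(t))^2$. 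Taking $\mathfrak{r}=C\sqrt{\xi}$ with $C\defeq(C')^{-1/2}$ then forces the error to be at most $\xi$ on $B_X(y,\mathfrak{r})$.

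Finally, for the uniformity of $C$: the constant $C'$ depends only on the bi-Lipschitz constant $M_{NFC}$ of Lemma~\ref{comp3} (uniform on $K_{NFC}$) and on a bound for $\|D^2(H^u_{(y,\omega),new})^{-1}\|$ on a fixed-size neighbourhood of the origin --- and, in contrast with Lemma~\ref{errorr}, no term analogous to $\lip(Dh^{s/u}_{(x,\omega)})$ survives, precisely because $U^+[x,\omega]$ carries no intrinsic curvature in the normal form chart. Since the normal form maps are holomorphic in our setting, hence smooth with derivative estimates from the Cauchy estimates and the higher moment conditions of Remark~\ref{higher}, such a second-order bound can be made uniform after intersecting $K_\xi$ with a further Lusin set, exactly as $\lip(Dh^{s/u}_{(x,\omega)})$ was bounded in Lemma~\ref{H}. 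I expect this last point --- extracting a genuinely uniform second-derivative bound for the inverse normal form maps, which requires going slightly beyond the bare $C^1$ statement of Proposition~\ref{NFC} --- to be the main obstacle.
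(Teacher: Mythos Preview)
Your approach is correct in spirit but takes a genuinely different route from the paper. The paper's proof is the one-liner ``similar to that of Lemmas~\ref{errorr} and~\ref{H}'': one represents $U^+[x,\omega]$ locally as the graph of a smooth real function in the Lyapunov chart at $(x,\omega)$ (inside $\bbR^u$, after first using $h^u_{(x,\omega)}$), applies the second-order Taylor estimate of Claim~\ref{error}, and then invokes a Lusin argument exactly as in Lemma~\ref{H} to uniformly bound the relevant $\lip(D\cdot)$ constant. Your idea of passing to normal form coordinates is more structural: it exploits the fact that $U^+[x,\omega]$ is literally an affine line there, so the curve has no intrinsic curvature and all the error is pushed into the chart change. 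This is conceptually cleaner, but it trades one second-derivative bound (on the graph of $U^+$ in Lyapunov charts) for another (on $(H^u_{(\cdot,\omega),new})^{-1}$), and the latter goes beyond the bare $C^1$ statement of Proposition~\ref{NFC} --- you correctly flag this as the main obstacle, and the holomorphicity you invoke to resolve it is plausible but not stated anywhere in the paper.

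Two further points worth tightening. First, you center the NFC at $y$, but $(y,\omega)$ need not lie in any Lusin set, so the bi-Lipschitz constant of Lemma~\ref{comp3} is not available there; this is easily repaired by centering at $x$ instead and using the affine change-of-base-point property~(3) of Proposition~\ref{NFC}, which keeps $U^+[x,\omega]$ a line. Second, $K_\xi$ as defined in Lemma~\ref{errorr} does not contain $K_{NFC}$, so your argument (like the paper's, via the Lemma~\ref{H} step) implicitly passes to a further Lusin subset; this is fine and consistent with how the lemma is actually used downstream.
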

\begin{proof}
    The proof is similar to that of Lemmas \ref{errorr} and \ref{H}.
\end{proof}

\begin{lemma}\label{KG}
Let $\epsilon_G>0$ be given, in the case $d_+=1$, there exists a set $K_G\subset Y$ of measure at least $(1-\epsilon_G)$ such that the maps $G^s:(x,\omega) \to E^s(x,\omega)$ and $G^+$ are uniformly continuous on $K_G$. 
\end{lemma}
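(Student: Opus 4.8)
The plan is a direct application of Lusin's theorem, exactly as in the constructions of the other ``good sets'' in Section~\ref{sec:lusin1} (e.g.\ Lemmas~\ref{comp1} and \ref{Kss}). Recall that $Y = X\times\Omega$ is a compact metric space with the metric $d_Y$ and that $m$ is a Borel probability measure on $Y$; the target $Gr(1,\bbR^4)$ is a compact, hence separable, metric space. Lusin's theorem then says that any Borel-measurable map from $(Y,m)$ to a separable metric space is continuous on a compact subset of measure arbitrarily close to $1$, and a continuous map on a compact metric space is automatically uniformly continuous. So it suffices to know that $G^s$ and $G^+$ are measurable and then to intersect two Lusin sets.

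First I would check measurability. The map $G^s:(x,\omega)\mapsto E^s(x,\omega)$ is measurable by Oseledets' theorem (Section~\ref{sec:prelim}), once we view $E^s(x,\omega)$ as a point of $Gr(1,\bbR^4)$ through the fixed trivialization of $TX$ used for the Lyapunov charts. For $G^+$, recall that we are in the case $d_+=1$, so the orbits $U^+[x,\omega]$ are smooth real $1$-dimensional curves for $m$-a.e.\ $(x,\omega)$ and $G^+(x,\omega)$ is the tangent line to $U^+[x,\omega]$ at $x$. Since $(x,\omega,k)\mapsto U^+(x,\omega,k)$ is a measurable family of connected algebraic subgroups (Definition~\ref{compat}), passing to the orbit $U^+[x,\omega]$ and then to its tangent line at the base point is a measurable assignment into $Gr(1,\bbR^4)$; moreover, by the lemma in Section~\ref{stableholon} stating that $TU^+[x,\omega]$ does not depend on $\omega^+$, this assignment is unambiguous. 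Hence $G^+$ is measurable.

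Then I would apply Lusin's theorem to $G^s$ to obtain a compact set $K^s\subset Y$ with $m(K^s)>1-\epsilon_G/2$ on which $G^s$ is continuous, and likewise a compact set $K^+\subset Y$ with $m(K^+)>1-\epsilon_G/2$ on which $G^+$ is continuous, and set $K_G = K^s\cap K^+$. Then $K_G$ is compact, $m(K_G)>1-\epsilon_G$, and both $G^s$ and $G^+$ restrict to continuous maps on $K_G$; being continuous on the compact set $K_G$, they are uniformly continuous there, which is the assertion. There is essentially no obstacle in this argument; the only point requiring a moment's care is the measurability of $G^+$, which is inherited directly from the measurability of the family $U^+$ guaranteed by Theorem~\ref{4.1.5} and Definition~\ref{compat}.
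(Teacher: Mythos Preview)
Your proposal is correct and follows essentially the same approach as the paper: establish measurability of $G^s$ (via Oseledets' theorem) and of $G^+$ (via the measurability of the family $U^+$ from \cite{BEF}/Theorem~\ref{4.1.5}), then apply Lusin's theorem. The paper's proof is terser---it invokes Lusin once rather than intersecting two Lusin sets---but the content is identical.
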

\begin{proof}
Let us note that the function $G^s : Y\to Gr(1,\bbC^2)$, $G^s: (x,\omega) \mapsto E^s(x,\omega)$ is measurable by Oseledets' theorem ($Gr(1,\bbC^2)$ has a natural metric, see Equation \eqref{grassmetric}). We have a similar notion for $G^+ : Y \to Gr(1,\bbR^4)$ which takes $(x,\omega)$ to $G^+(x,\omega)$ which we denote as the tangent line through $x$ of $U^+[x,\omega]$, this is measurable by \cite{BEF}. Then letting $\epsilon_G>0$ be small, by Lusin's theorem let $K_G\subset Y$ be the set of size $1-\epsilon_G$ such that $G^s$ and $G^+$ are uniformly continuous in $x$ on $K_G$.
\end{proof}

\begin{lemma}\label{Kdelta'}
Fix $\delta'>0$ small. In the setting where $d_+=1$ (i.e. where Theorem \ref{thmC} case c) applies), for given $\epsilon_v>0$, there exists a set $K_{\varepsilon}\subset X$ of $\nu$-measure at least $(1-\epsilon_v)$ and a constant $\varepsilon>0$ such that if $x\in K_{\varepsilon}$, then inside any ball of radius $\varepsilon$ inside $Gr(1,\bbC)$, the measures $P^u_x$ and $P^s_x$ have measure at most $\delta'$ (see Section \ref{sec:fiber}).

\end{lemma}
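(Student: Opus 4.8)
The plan is to derive the uniform estimate from the (pointwise) non-atomicity of $P^u_x$ and $P^s_x$ by a soft compactness argument. Recall from Section~\ref{sec:fiber} that, since $d_+=1$ puts us in case c) of Theorem~\ref{thmC}, for $\nu$-a.e.\ $x\in X$ both $P^u_x$ and $P^s_x$ are non-atomic Borel probability measures on the compact metric space $Gr(1,\bbC^2)$; Lemma~\ref{Kdelta'} is the consequence of Lemma~\ref{anglecontrol} made uniform over a set of $x$'s of almost full measure. First I would observe that $x\mapsto P^u_x$ and $x\mapsto P^s_x$ are measurable maps from $X$ into the space $\mathcal{P}(Gr(1,\bbC^2))$ of probability measures with the weak-$*$ topology (itself compact metrizable): for any continuous $\varphi$ on $Gr(1,\bbC^2)$ the function $x\mapsto\int\varphi\,dP^u_x=\int_{\Omega^-}\varphi(E^u(x,\omega))\,d\mu^{\bbN}$ is measurable because $(x,\omega)\mapsto E^u(x,\omega)$ is measurable by Oseledets' theorem, and likewise for $P^s_x$. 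By Lusin's theorem, for the given $\epsilon_v>0$ there are compact sets $K^u,K^s\subset X$ with $\nu(K^u),\nu(K^s)>1-\epsilon_v/2$, contained in the full-measure set where non-atomicity holds, on which $x\mapsto P^u_x$ and $x\mapsto P^s_x$ respectively are continuous.

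The core step is then the following uniform non-atomicity statement: if $\mathcal{K}\subset\mathcal{P}(Gr(1,\bbC^2))$ is compact and every member of $\mathcal{K}$ is non-atomic, then there is $\varepsilon>0$ with $P(\overline{B(L,\varepsilon)})\le\delta'$ for all $P\in\mathcal{K}$ and all $L\in Gr(1,\bbC^2)$. I would prove this by contradiction: otherwise, for each $n$ there are $P_n\in\mathcal{K}$ and $L_n$ with $P_n(\overline{B(L_n,1/n)})>\delta'$; passing to a subsequence, $P_n\to P_*\in\mathcal{K}$ weak-$*$ and $L_n\to L_*$ in $Gr(1,\bbC^2)$, and for every fixed $\rho>0$ one has $\overline{B(L_n,1/n)}\subset\overline{B(L_*,\rho)}$ for $n$ large, so that by the portmanteau theorem applied to the closed set $\overline{B(L_*,\rho)}$ one gets $P_*(\overline{B(L_*,\rho)})\ge\limsup_n P_n(\overline{B(L_*,\rho)})\ge\delta'$. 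Letting $\rho\to0$ and using continuity of $P_*$ from above forces $P_*(\{L_*\})\ge\delta'>0$, contradicting non-atomicity of $P_*$.

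Finally I would apply this with $\mathcal{K}$ the (compact) image of $K^u$ under $x\mapsto P^u_x$, obtaining $\varepsilon_u>0$, and likewise get $\varepsilon_s>0$ from $K^s$; setting $K_{\varepsilon}\defeq K^u\cap K^s$, so that $\nu(K_\varepsilon)>1-\epsilon_v$, and $\varepsilon\defeq\min\{\varepsilon_u,\varepsilon_s\}$ completes the proof, since for $x\in K_\varepsilon$ and any $L$ both $P^u_x(\overline{B(L,\varepsilon)})\le\delta'$ and $P^s_x(\overline{B(L,\varepsilon)})\le\delta'$. The argument is entirely soft; the one place that needs care is the direction of the inequality in the compactness step, which is why one phrases everything with closed balls and uses the portmanteau theorem in its closed-set form (the upper semicontinuity of $P\mapsto P(\text{closed set})$ under weak-$*$ convergence). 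This, rather than any hard estimate, is the main—and essentially the only—obstacle.
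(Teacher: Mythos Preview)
Your argument is correct, but the paper takes a shorter and more elementary path. Rather than passing through Lusin continuity of $x\mapsto P^{u/s}_x$ and a compactness argument in $\mathcal{P}(Gr(1,\bbC^2))$, the paper simply defines, for each $x$ (in the conull set where $P^u_x,P^s_x$ are non-atomic), the quantity $\epsilon_{x,\delta'}$ recording the smallest radius below which every ball has $P^u_x$- and $P^s_x$-measure at most $\delta'$; Lemma~\ref{anglecontrol} guarantees this is finite and positive. One then observes that $x\mapsto \epsilon_{x,\delta'}$ is measurable and, exactly as in Lemma~\ref{Kss}, bounds it uniformly on a set $K_\varepsilon$ of $\nu$-measure at least $1-\epsilon_v$. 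This avoids the weak-$*$ topology on measures entirely.

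What your approach buys is a cleaner conceptual picture: you isolate the uniform non-atomicity of a compact family of non-atomic measures as a standalone fact, and the portmanteau step makes transparent why the bound is uniform. What the paper's approach buys is brevity and parallelism with the other Lusin-type lemmas in Section~\ref{sec:lusin1}: it reduces everything to ``measurable function, hence bounded on a large set,'' with no detour through continuity or compactness of image. Either works; yours is a legitimate alternative.
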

\begin{proof}
    We use Theorem \ref{thmC} and Lemma \ref{anglecontrol}.

    For each $x\in X$, we do not know that the measure $P^u_x$ (see Section \ref{sec:fiber}) is uniform on $Gr(1,\bbC^2)$. We only know that we can pick elements outside some small ball by Lemma \ref{anglecontrol}. For given $\delta'>0$, for each $x\in X$ let $\epsilon_{x,\delta'}$ be the size of the largest ball outside of which we have $P^u_x$-measure $(1-\delta')$ and $P^s_x$-measure $(1-\delta')$ in $Gr(1,\bbC)$. This is a measurable function for fixed $\delta'$. Let $K_{\varepsilon}$ be the set of measure $(1-\epsilon_{v})$ where $\epsilon_{x,\delta}$ is bounded above by some $\varepsilon.$ We do this the same way as we did in the proof of Lemma \ref{Kss}. 
\end{proof}

\begin{lemma}\label{epb}
    Let $\epsilon_b>0$ be given, there exists a set $K_{b}\subset Y$ of $m$-measure at least $(1-\epsilon_b)$ and a constant $\calC$ such that if $(x,\omega)\in K_b$ and $$\psi_W^{s} : \bbC \to \exp^{-1}_x(B(x,\rho_0/4)), \ \psi_W^s(z) = \vec{w}z + \vec{w}^{\perp}g(z),$$ is a parametrization of the holomorphic curve $\exp^{-1}_x(W^s(x,\omega))$ where $\vec{w}$ is the unit vector in direction $E^s(x,\omega)$, then $$\Vert D^2\psi_W^{s}(z)|_{v}\Vert\leq \calC,$$
    for $v\in \exp^{-1}_x(B(x,\rho_0/4))$.
\end{lemma}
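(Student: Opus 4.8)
The plan is to realise $\exp_x^{-1}(W^s(x,\omega))$ near $x$ as a holomorphic graph of small slope over the $E^s$-direction, to bound the second derivative of this graph by a Cauchy estimate, and to make every constant uniform by intersecting good sets already constructed and invoking Lusin's theorem, exactly in the spirit of Lemmas \ref{comp1}, \ref{errorr} and \ref{H}. First I would reduce the statement to a bound on the holomorphic function $g$ alone: since $\psi_W^s(z)=\vec w z+\vec w^{\perp}g(z)$ with $\vec w\perp\vec w^{\perp}$ unit vectors and the term $\vec w z$ is affine, one has $\|D^2\psi_W^s(z)\|=|g''(z)|$, and $g$ is holomorphic with $g(0)=0$, $g'(0)=0$ because $\psi_W^s$ parametrises the curve through $x$ whose tangent there is $E^s(x,\omega)=\bbC\vec w$. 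Moreover $g$ is \emph{automatically} bounded: choosing $\rho_0$ small enough that $\exp_x^{-1}(B(x,\rho_0/4))\subset B(0,\rho_0/2)\subset T_xX$ for every $x\in X$ (possible by compactness of $X$ and smoothness of $\exp$), the graph of $\psi_W^s$ lies in $B(0,\rho_0/2)$, so $\|g\|\le\rho_0/2$ throughout its domain.

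Next I would produce, on a set of large measure, a definite disk over which $g$ extends holomorphically with slope bounded away from $1$. By Proposition \ref{locstablemfld}, in the Lyapunov chart $\varphi_{(x,\omega)}$ the local stable manifold $W^s_{q(x,\omega)}(x,\omega)$ is the graph of a holomorphic $h^s_{(x,\omega)}:\bbR^s(r(x,\omega)^{-1})\to\bbR^u(r(x,\omega)^{-1})$ with $h^s_{(x,\omega)}(0)=0$, $Dh^s_{(x,\omega)}(0)=0$, $\|Dh^s_{(x,\omega)}\|\le 1/3$. On the set $K_b$ obtained by intersecting the good sets of Lemmas \ref{Kss}, \ref{comp1}, \ref{H} and $\Lambda_{loc}$ one has $r(x,\omega)^{-1}$ bounded below uniformly, $\varphi_{(x,\omega)}$ and $\varphi_{(x,\omega)}^{-1}$ uniformly bi-Lipschitz, and $\lip(Dh^s_{(x,\omega)})$ (hence $\|D^2 h^s_{(x,\omega)}\|$) uniformly bounded; using that $D\varphi_{(x,\omega)}(E^s(x,\omega))=\bbR^s$ exactly while $D_0\exp_x=\mathrm{Id}$, so the change of coordinates $\varphi_{(x,\omega)}\circ\exp_x$ fixes the $E^s$-direction to first order, it follows that $\exp_x^{-1}(W^s_{q(x,\omega)}(x,\omega))$ is, in coordinates adapted to $\vec w$, a holomorphic graph over a disk of some uniform radius $\delta_0>0$ with slope bounded by a uniform constant $<1$, and this graph function extends $g$.

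The Cauchy estimate applied on the disk of radius $\delta_0/2$ to the holomorphic function $g'$ (bounded on the disk of radius $\delta_0$ by the uniform slope bound) then gives $|g''(z)|\le\tfrac{2}{\delta_0}\cdot(\text{uniform slope bound})$; pushing this estimate back through the uniformly $C^1$- and $C^2$-controlled change of coordinates between exponential coordinates and the Lyapunov chart yields $\|D^2\psi_W^s(z)\|\le\calC$ for a uniform constant $\calC$ on $K_b$, and $m(K_b)\ge 1-\epsilon_b$ by shrinking each $\epsilon$ in the constituent Lusin sets. Alternatively one may skip the Cauchy step and simply transport the bound $\|D^2 h^s_{(x,\omega)}\|\le H$ from Lemma \ref{H} through the same change of coordinates.

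The step I expect to be the main obstacle is this last change of coordinates: Pesin theory hands us the graph structure and the derivative bounds in the Lyapunov charts, but the lemma is stated in exponential coordinates, so one must control $\varphi_{(x,\omega)}\circ\exp_x$ and its inverse up to second order, uniformly on a large set. If the Lyapunov charts are of the form $\varphi_{(x,\omega)}=A_{(x,\omega)}\circ\exp_x^{-1}$ with $A_{(x,\omega)}$ linear --- consistent with the construction in Section \ref{sec:Lyapcharts}, where the chart takes the Lyapunov norm to the Euclidean norm and the Oseledets subspaces to the coordinate axes --- then this change of coordinates is linear, a linear image of a graph of small slope is again such a graph, its second derivative is transformed by a bounded operator, and the obstacle evaporates; otherwise it is dealt with by the comparison estimates of Lemmas \ref{comp1} and \ref{H} together with a Lusin argument.
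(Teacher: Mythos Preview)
Your proof is correct, but it is considerably more elaborate than the paper's. The paper observes only that $W^s(x,\omega)$ is an injectively immersed holomorphic curve and $\exp_x^{-1}(B(x,\rho_0/4))$ is compact, so $\|D^2\psi_W^s\|$ is automatically bounded there by some finite $\mathcal{C}(x,\omega)$; this function of $(x,\omega)$ is measurable, and a Lusin-type truncation exactly as in Lemma~\ref{Kss} gives a uniform $\mathcal{C}$ on a set of measure $1-\epsilon_b$. No Cauchy estimate, no passage through Lyapunov charts, and no coordinate-change analysis is needed.

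Your route---realising the curve as a holomorphic graph of small slope over a disk of uniform radius and then applying a Cauchy estimate to $g'$---is a genuinely different argument. It has the virtue of producing an explicit constant in terms of $\delta_0$ and the slope bound, and it ties the estimate directly to the Pesin-theoretic data of Proposition~\ref{locstablemfld} and Lemma~\ref{H}. The cost is the coordinate-change step you flag as the main obstacle, which the paper's soft argument sidesteps entirely. For the purposes of this paper, where the lemma is only used qualitatively (a uniform second-derivative bound on a large set), the shorter ``bounded on compact sets plus Lusin'' argument suffices; your approach would be preferable if one later needed the constant to be effective or to track its dependence on the Lyapunov exponents.
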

\begin{proof}
    $W^{s/u}(x,\omega)$ are injectively immersed holomorphic curves a.e. and $X$ is compact. For each $(x,\omega)$ in a conull subset of $Y$, $\Vert D^2\psi^s(x,\omega)|_{v}\Vert $ is bounded from above by a constant $\calC(x,\omega)$ on the compact set $\exp^{-1}_x(B(x,\rho_0/4))$. This is a measurable function and so similar to the proof of Lemma \ref{Kss}, for the cost of $\epsilon_b$, we get a set $K_b$ of measure at least $1-\epsilon_b$ and a constant $\calC$ for which $\Vert D^2\psi_W^{s}(z)|_{v}\Vert\leq \calC$ on $\exp^{-1}_x(B(x,\rho_0/4))$.
\end{proof}

\begin{lemma}\label{ang}
    Let $\epsilon_{ang}>0$ be given, there exists a set $K_{ang}\subset Y$ of $m$-measure at least $(1-\epsilon_{ang})$ and a constant $\Theta>0$ such that if $(x,\omega)\in K_{ang}$ then $\angle(E^u(x,\omega),E^s(x,\omega) )>\Theta$.
\end{lemma}
\begin{proof}
    Use Oseledets' theorem and an argument similar to that of Lemma \ref{Kss}.
\end{proof}

The following lemma is standard and is a consequence of Birkhoff's theorem, one can see for example \cite{E}:
\begin{lemma}\label{birkhoff}
For a fixed set $K$, for all $\epsilon,\delta>0$ there is $S>0$, and $E\subset Z$ with $\hat{m}(E)<\epsilon$ and such that for $c \in E^{c}$ for all $T>S$,
$$|\frac{1}{T}\int_0^T \mathds{1}_{K}(F^t(c))dt - \int_{Z}\mathds{1}_{K} d\hat{m} |<\delta.$$
\end{lemma}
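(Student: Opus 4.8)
The plan is to deduce the statement from the Birkhoff ergodic theorem for flows together with Egorov's theorem. First I would record that $\hat{m} = m\times dt$ is $F^t$-invariant (this is noted in Section~\ref{sec:susp}) and, moreover, \emph{ergodic} for the flow $F^t$: by Remark~\ref{skewergodic} the measure $m$ is ergodic for the base map $F$, and a suspension (special) flow over an ergodic base transformation --- here with constant roof function $1$ --- is ergodic with respect to the natural product measure. This is a classical fact in ergodic theory. Note that one really does need ergodicity of the \emph{flow} and cannot argue via the time-$1$ map, since $F^1 = F\times \mathrm{id}_{[0,1[}$ is not ergodic.

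Next, apply the Birkhoff ergodic theorem to the flow $F^t$ on the probability space $(Z,\hat{m})$ and the bounded measurable observable $\mathds{1}_K$: for $\hat{m}$-a.e.\ $c\in Z$,
\[
\frac{1}{T}\int_0^T \mathds{1}_K(F^t(c))\,dt \;\longrightarrow\; \int_Z \mathds{1}_K\, d\hat{m} \qquad \text{as } T\to\infty.
\]
Write $a = \int_Z \mathds{1}_K\,d\hat{m}$ and $g_T(c) = \tfrac1T\int_0^T \mathds{1}_K(F^t c)\,dt$. For each fixed $c$ the map $T\mapsto g_T(c)$ is continuous, so for every $S>0$ the quantity $\Phi_S(c) \defeq \sup_{T>S}|g_T(c)-a|$ equals the supremum over rational $T>S$ and is therefore measurable; and $\Phi_S(c)\downarrow 0$ as $S\to\infty$ for $\hat{m}$-a.e.\ $c$.

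Finally I would run the standard Egorov-type argument: given $\epsilon,\delta>0$, the sets $\{c : \Phi_S(c)\ge \delta\}$ decrease as $S\to\infty$ to a $\hat{m}$-null set, so there is $S>0$ with $\hat{m}(\{c:\Phi_S(c)\ge\delta\})<\epsilon$. Set $E \defeq \{c:\Phi_S(c)\ge\delta\}$; then $\hat{m}(E)<\epsilon$, and for every $c\in E^c$ and every $T>S$ we have $|g_T(c)-a|<\delta$, which is exactly the assertion. I do not anticipate any genuine obstacle: the only two points deserving a word of care are the ergodicity of the suspension flow (standard, but worth stating because it is not an iterate of a single $F^t$) and the passage from a countable to a continuous time parameter, which is handled by the continuity of $T\mapsto g_T(c)$.
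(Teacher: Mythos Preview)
Your proposal is correct and follows the same approach the paper sketches: the paper's proof is a one-liner invoking Birkhoff's ergodic theorem together with ergodicity of $m$ under $F$ (Remark~\ref{skewergodic}), and you have simply supplied the details --- the ergodicity of the suspension flow over the ergodic base, the flow version of Birkhoff, and the Egorov-type extraction of $S$ and $E$. Your added remarks on why the flow (not just the time-$1$ map) is ergodic and on the measurability of $\Phi_S$ are exactly the points one would want spelled out.
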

\begin{proof}
    This follows from Birkhoff's ergodic theorem. The skew product map is ergodic with respect to $\nu\times \mu^{\bbZ}$ (see Remark \ref{skewergodic}) and the `standard' flow preserves the measure $\hat{m} = m\times dt$.
\end{proof}

\section{Proofs of Lemmas \ref{0.1} and \ref{0.5}}
\label{sec:twolem}

Both lemmas in this section work to prove that under certain conditions, the QNI condition of Definition \ref{QNIdef} holds. Recall from Section \ref{sec:breakdown} that $\calL^s[x,\omega,0]$ is the Zariski closure of the support of the stable conditional measure $m_{x,\omega}^s$ defined in Section \ref{sec:cond}. One should also review Section \ref{sec:QNI+} to see the definitions of the combinatorial unstables and stables (Equations \eqref{Wu1} and \eqref{Ws1} resp.), and objects that appear in Definition \ref{QNIdef}, i.e. the measures defined in Equations \eqref{measu} and \eqref{meass}, and the sets defined in \eqref{etau1} and \eqref{etas1}. Throughout this section, let $B^s(x,\omega,R)$ (or just $B^s(x,R)$ if it is implicit) be the ball of radius $R$ around $x$ in the induced metric inside $W^s(x,\omega)$. Also, since $\supp(\mu)<\infty$, let $\calL$ be the upper bound on the Lipschitz constants of the $f\in \supp(\mu)$.

\subsection{Outline of proof of Lemma \ref{0.1}}

We are given that the stable conditional measures are non-trivial $m$-a.e. Also in this setting, $d_+=0$ a.e. Let $K_+$ be the set where both these conditions are true. We demonstrate that random QNI (Definition \ref{QNIdef}) holds.

\subsubsection{Overview}

We are given $\delta>0$ and we will construct the set $K\subset Z$ (dependent on $\delta$) of $\hat{m}$-measure $1-\delta$ and will determine the constants $\alpha_0,$ $\ell_0$ and $C$ as in Definition \ref{QNIdef}. The initial properties of $K$ will include being a subset of $K^*$ which is an intersection of many good sets from Section \ref{sec:lusin1}. Other properties of $K$ will allow $S$ and $S'$ (as in Definition \ref{QNIdef}) to be large enough in measure and have to their own good properties, such as belonging to $K^*$. Taking $\hat{x}_{1/2} = (x_{1/2},\omega_{1/2},k_{1/2})\in K\subset K^*$ we will have that $U^+[\hat{x}_{1/2}] = \{x_{1/2}\}$ and so $S$ is a subset of $$\{x_{1/2}\}\times \{\omega_{1/2}^-\} \times \Omega^+ \times \{k_{1/2}\},$$ i.e. we only have to pick a new future (in a way that keeps us in a good set). We will have to pick this new future so that $W^s(\hat{y}_{1/2})$ is sufficiently distinct from $W^s(\hat{x}_{1/2})$. The set $S'$ is slightly more complicated, and is a subset of $$X\times \Omega^-\times \{\omega_{1/2}^+\} \times \{k_{1/2}\},$$ or more precisely of $\eta^s_{\ell/2}[\hat{x}_{1/2}]$ (for some $\ell>\ell_0$). This means that we will be picking a new $X$-component in $W^s(\hat{x}_{1/2})$ close to $x_{1/2}$ (depending on $\ell$) but not equal to $x_{1/2}$, and a new past vector (in a way that keeps us in a good set). The set $S'$ is dependent on our choice of $\hat{y}_{1/2}\in S$, but regardless of the choice of $\hat{y}_{1/2}$, we will choose $S'\subset K^*$ which gives us that if $\hat{z}_{1/2} = (z_{1/2},(\omega'')^-,\omega_{1/2}^+,k_{1/2})\in S'$, then $U^+[\hat{z}_{1/2}]=\{z_{1/2}\}$. The $X$-components of $S'$ will have to be chosen in some annulus around $x_{1/2}$ to ensure that we can pick $z_{1/2}$ sufficiently bounded away from $W^s(\hat{y}_{1/2})$.

Then we must demonstrate that there are constants $C$ and $\ell_0$ (dependent only on $\delta$) such that for all $\ell>\ell_0$ if we pick $\hat{x}_{1/2}\in K$ (where $K$ is dependent only on $\delta$), $\hat{y}_{1/2}\in S$ (where $S$ depends on $\hat{x}_{1/2}$ and $\ell$) and $\hat{z}_{1/2}\in S'$ (where $S'$ is dependent on $\hat{y}_{1/2}$ and $\ell$) we have

\begin{align*}
        d_X(U^+[\hat{z}_{1/2}],W^{s}(\hat{y}_{1/2})) = d_X(z_{1/2},W^{s}(\hat{y}_{1/2}))\geq Ce^{-\alpha_0\ell}.
    \end{align*}
These constants $C$ and $\ell_0$ are technically chosen with $K$ but we have to compute what they need to be at the end. The constant $\alpha_0$ needs to be chosen even before $\delta$ is given, we give it at the beginning and use it in a computation at the end.

\subsubsection{Details}

As explained above, when constructing $K\subset Z$, we start out with it needing to belong to $K^*$ which is an intersection of many good sets from Section \ref{sec:lusin1}. Of the many good sets that $K^*$ belongs to, some had to be constructed directly in this section. For example $K_{\eta}$ from Lemma \ref{keta} whose construction relied in Lemma \ref{algcurve1}.

Now we need the set $K$ to have the property that any point  $\hat{x}\in K$ is such that there is a large subset $V_{\hat{x}}\subset \Omega^+$ of new futures that we can pick from and still belong to $K^*$. We demonstrate this in Lemma \ref{G!}. This allows us to keep the measure of $S$ sufficiently large once we pick a point $\hat{x}_{1/2}\in K$, $$S\subset \{\hat{x}_{1/2}\}\times \{\omega^-_{1/2}\}\times V_{\hat{x}_{1/2}}\times \{k_{1/2}\}.$$ It is only a subset because we will want to further reduce the choices of futures in $V_{\hat{x}_{1/2}}$ so that for $\hat{y}_{1/2}\in S$, $W^s(\hat{y}_{1/2})$ will be bounded away from $W^s(\hat{x}_{1/2})$. We do this in Lemma \ref{tildeS} and the resulting bound is independent of the choice of $\hat{x}_{1/2}$. We can do this without diminishing the measure of $\{\hat{x}_{1/2}\}\times \{\omega^-_{1/2}\}\times V_{\hat{x}_{1/2}}\times \{k_{1/2}\}$ by much. This gives us our set $S$ for a given $\hat{x}_{1/2}\in K$.

Things are slightly more complicated when dealing with $S'$ because we have to change the $X$-component and the $\Omega^-$-component and still land in the good set (e.g. one such that we belong to $K^*$). This must be incorporated into the definition of $K$. In changing the $X$-component we in particular must be in the set $\eta^s_{\omega_{1/2},\ell/2}(x_{1/2})\subset W^s(\hat{x}_{1/2})$. We then are concerned that restricting ourselves to choosing the $X$-component here will not allow us to be in a good set. This is where Lemma \ref{3.11} comes in, it tells us that for good $\omega\in \Omega$, if one has a large $\nu$-measure subset $X'$ of $X$, there is a large $\nu$-measure subset $X^*\subset X'$ such that for $x\in X^*$, a large $m_{x,\omega}^s$-measure of $\eta^s_{\omega_{1/2},\ell/2}(x_{1/2})$ belongs to $X'$. We define a set 
$X'''_{\omega,k}\subset X$ to have two properties: the first is that if $x\in X_{\omega,k}'''$ then $V_{x,\omega,k} = \{(\omega'')^-:(x,(\omega'')^-,\omega^+,k)\in K^*\}$ is large, and the second is that $(x,\omega,k)$ belongs to another good set $K'$. The set $K'$ is $K^*$ with the added property that we can still pick good futures as in the previous paragraph and belong to $K^*$. We apply Lemma \ref{3.11} to this set $X'''_{\omega,k}$ for good choices of $\omega$ and $k$ (after using Lemmas \ref{41} and \ref{42} to show that the set of good choices of $\omega$ and $k$ such that $X'''_{\omega,k}$ is large is also large) to get sets $X^*_{\omega,k}$. We use this to define the large set $K''$ where each element is of the form $(x,\omega,k)$ where $\omega$ and $k$ are `good' and $x\in X^*_{\omega,k}$. This set $K''$ is big by Lemma \ref{fubini2}.

We are not quite done but now we see that we can make $S$ and $S'$ have properties of `good sets'. We need to ensure that for $\hat{x}_{1/2}\in K$, $x_{1/2}$ is not the only element in $\supp(m_{{x}_{1/2},\omega_{1/2}}^s)\cap \eta^s_{\omega_{1/2},\ell/2}(x_{1/2})$, this is Lemma \ref{481}. This is important because when we pick $\hat{z}_{1/2} = (z_{1/2},(\omega'')^-,\omega_{1/2}^+,k_{1/2})\in S'$ we have $U^+[\hat{z}_{1/2}]=z_{1/2}$ and we want $W^s(\hat{y}_{1/2})$ bounded away from it. If $z_{1/2}=x_{1/2}$, we lose because the $X$-component of $\hat{y}_{1/2}$ is $x_{1/2}$. 
Proving Lemma \ref{481} involves Corollary \ref{independent} which allows us to keep a ball (of some radius dependent on $\ell$) inside $\eta^s_{\omega,\ell/2}(x)$. It is also important that $x$ is not an atom (so that the measure of $S'$ can be sufficiently large when we restrict the $X$-component to some annulus around $x_{1/2}$). This comes from Lemma \ref{algcurve} and allows us to prove Lemma \ref{annbd} using that $K_{\eta}\subset K^*$. Lemma \ref{annbd} tells us that we can pick $z_{1/2}$ in an annulus without significant loss to the measure of $S'$. We in fact know everything about this annulus in terms of $\ell$. Knowing that will construct $S'$ with $X$-components belonging to this annulus allows us some foresight in defining $S$. We use Lemma \ref{tildeS} to reduce the choice of futures in the definition of $S$. In particular, we reduce the choice of futures in a way that allows us to know how $W^s(\hat{y}_{1/2})$ will be bounded away from some point $z\in W^s(\hat{x}_{1/2})$ given we know how far $z$ is from $x_{1/2}$. 

Combining all of this, we make a final computation demonstrating that for sufficiently large $\ell_0$ there is an appropriate constant $C$ (that can be chosen alongside $K$) such that for $\ell>\ell_0$,
\begin{align*}
        d_X(U^+[\hat{z}_{1/2}],W^{s}(\hat{y}_{1/2})) = d_X(z_{1/2},W^{s}(\hat{y}_{1/2}))\geq Ce^{-\alpha_0\ell}.
    \end{align*}

\subsection{Proof of Lemma \ref{0.1}}

\begin{proof}(Lemma \ref{0.1})

    We are given that the stable conditional measures $m^s_{x,\omega}$ are non-trivial $m$-a.e. Further, we are given that $U^+(x,\omega,k)=\{e\}$ a.e. and so $U^+[x,\omega,k]=x$ a.e. As above, let $K_+\subset Z$ be the set on which both these conull conditions are true; this is some full measure set.

     On the set $K_+$, the stable conditional measures $m_{x,\omega}^s$ are non-trivial, so there is some $z\in \supp(m_{x,\omega}^s) \subset W^s(x,\omega)$ such that $z\neq x$. For each $(x,\omega)$ there exists $\mathfrak{r}(x,\omega)$ such that $z\in (B(x,\mathfrak{r}(x,\omega))\cap W^s(x,\omega))\setminus B^s(x,\frac{1}{\mathfrak{r}(x,\omega)}$), i.e. it lives in some annulus around $x$. For a small price to the measure of $K_+$, we can ask that $\mathfrak{r}(x,\omega)$ be bounded above by some $\mathfrak{r}>0$ for $(x,\omega)\in K_+$. 
    
    Write $\calL$ as the Lipschitz constant for the $f\in \supp(\mu)$ and let $\calK$ be such that $\calL=e^{\calK}$.

    Let $\delta>0$ be given. We will determine $\ell_0$ and $C$ as in Definition \ref{QNIdef} (the QNI definition). Take $i>j>2$ such that $$(1-2\delta^j)(1-\frac{\delta^i/2}{\delta^j-\delta^i/2})>1-\delta, \ \ (1-\sqrt{2}\delta^{j/2})(1-\delta)>1-\delta/4.$$

    Recall the Oseledets' set $\Lambda$ (conull), $\Lambda_{loc}$ from Section \ref{sec:stablemfld}, and the set $\Lambda'$ from \ref{sec:NFC}. Also recall $K_r$, $K_s$, $K_{nbhd}$, and $K_{ang}$ from Section \ref{sec:lusin} from Lemmas \ref{comp1}, \ref{Kss}, \ref{knbhd}, and \ref{ang} respectively.

    Recall $\Lambda_s$ from Section \ref{sec:sect8}. On this set Corollary \ref{8.2} holds. Take $(x,\omega^+)\in \Lambda_s$, then we get a conull subset $\Omega_{x,\omega^+}^+\subset \Omega$ such that if $(\omega')^+\in \Omega^+_{x,\omega^+}$ then $W^s(x,\omega^+)\neq W^s(x,(\omega')^+)$ which means there is some $R=R((\omega')^+)$ such that if $\psi_x$ and $\psi_y$  are parametrizations of the holomorphic curves $ \exp^{-1}_{x}(W^s(x,\omega^+))$ and $ \exp^{-1}_x(W^s(x,(\omega')^+))$ respectively, they differ for the first time at the $R$th derivative, i.e. $D^n\psi_x(0) = D^n\psi_y(0)$ for $n=0,1,\dots R-1$ but $D^R\psi_x(0) \neq D^R\psi_y(0)$. The function $R:\Omega_{x,\omega^+} \to \bbN$ is dynamically invariant and hence constant a.e. by ergodicity.

    For an additional price to the measure of the set $\Lambda_s$, bound the $R$th derivative of $ \exp^{-1}_x(W^s(x,(\omega')^+))$ above by $C'$.

    We state the following Lemma from \cite{BEF}. It will be important to the proof of Lemma \ref{0.5} in this form, however for the purposes of the proof of Lemma \ref{0.1} it simply tells us that there are no atoms (i.e. take $d=0$ in the below).
    \begin{lemma}\label{algcurve1}
    For $m$-a.e. $(x,\omega)\in Y$ and for all $R>0$, $m^s_{x,\omega}|_{B^s(x,\omega, R)}$ does not assign mass to any algebraic curve of degree $d$.
    \end{lemma}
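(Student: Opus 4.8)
The lemma is quoted from \cite{BEF}; the plan is to reduce it to the case $d=0$ — the assertion that $m^s_{x,\omega}$ has no atoms — and then to prove the no‑atoms statement directly. For the reduction, recall that in the normal form coordinates of Section~\ref{sec:NFC} the leaf $W^s(x,\omega)$ is identified with $\bbC$, in which an algebraic curve of degree $d$ is a set of at most $d$ points, so the statement for $d\ge 1$ is subsumed by the no‑atoms statement. If instead one reads ``algebraic curve of degree $d$'' ambiently in $X$: when $W^s(x,\omega)\not\subset C$, pulling the defining equations of $C$ back along the entire-curve parametrization $\psi\colon\bbC\to W^s(x,\omega)$ exhibits $C\cap W^s(x,\omega)$ as the zero set of a holomorphic function on $\bbC$, hence discrete, so $C\cap B^s(x,\omega,R)$ is finite and $m^s_{x,\omega}$-null once $m^s_{x,\omega}$ is known to be atomless; and the remaining possibility that $W^s(x,\omega)$ lies inside an algebraic curve is excluded for $m$-a.e.\ $(x,\omega)$ by the standing hypothesis that there is no $\Gamma_{\mu}$-invariant algebraic curve, since $f_{\omega}(W^s(x,\omega))=W^s(F(x,\omega))$ with $f_{\omega}\in\aut(X)$ would otherwise make the Zariski closures of the stable manifolds a $\Gamma_{\mu}$-invariant family of curves of bounded degree with proper invariant union (cf.\ \cite{CD}).

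For the no-atoms statement I would work in normal form coordinates, using three facts: $m^s_{x',\omega}$ is proportional to $m^s_{x,\omega}$ whenever $x'\in W^s(x,\omega)$; $m^s_{F(x,\omega)}$ is proportional to $(f_{\omega})_* m^s_{x,\omega}$; and $m^s_{x,\omega}$ has infinite total mass on the leaf. Set $\alpha(x,\omega):=m_{x,\omega}^{\eta^s}(\{x\})\in[0,1]$, where $m^{\eta^s}$ is the probability conditional of $m$ along a subordinate partition $\eta^s$ as in Definition~\ref{part}; thus $\alpha(x,\omega)>0$ precisely when $x$ is an atom of $m^s_{x,\omega}$. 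Using $F$-invariance of $m$ together with equivariance of the conditionals along the increasing partitions $F^{-n}\eta^s$, whose atoms $F^{-n}\eta^s_{\omega}(x)$ increase to $W^s(x,\omega)$, one computes $\alpha(F^n(x,\omega))=m^s_{x,\omega}(\{x\})/m^s_{x,\omega}(F^{-n}\eta^s_{\omega}(x))$, and since the denominator increases to $m^s_{x,\omega}(W^s(x,\omega))=\infty$ this tends monotonically to $0$ on the set $N:=\{(x,\omega): m^s_{x,\omega}(\{x\})>0\}$. If $m(N)>0$, then by ergodicity of $F$ (Remark~\ref{skewergodic}) almost every orbit eventually enters $N$, so $\alpha(F^n(x,\omega))\to 0$ for $m$-a.e.\ $(x,\omega)$, whence $\int\alpha\,dm=\lim_N\frac1N\sum_{n<N}\alpha(F^n(x,\omega))=0$, contradicting $m(N)>0$; therefore $m(N)=0$. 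Disintegrating $m$ over the $\sigma$-algebra $\calA^s$ then turns $m(N)=0$ into: for $m$-a.e.\ $(x,\omega)$ the atomic part of $m^{\eta^s}_{x,\omega}$ carries no mass, i.e.\ $m^{\eta^s}_{x,\omega}$ — hence $m^s_{x,\omega}$ on $\eta^s_{\omega}(x)$, and then on all of $W^s(x,\omega)$ by equivariance along $F^{-n}\eta^s$ — is non-atomic; restricting to $B^s(x,\omega,R)$ gives the $d=0$ claim.

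The main obstacle is the final step: passing from ``$m^s_{x,\omega}(\{x\})=0$ for $m$-a.e.\ $(x,\omega)$'' (the centre of a generic leaf is not an atom) to ``$m^s_{x,\omega}$ is atomless for $m$-a.e.\ $(x,\omega)$'' (no point of a generic leaf is an atom) is not automatic, since a null set can meet almost every stable leaf; it is precisely the disintegration of $m$ over $\calA^s$ — rather than any naive leaf-saturation — that bridges this gap. A secondary technical point, relevant only under the ambient reading of the lemma, is making the ``bounded degree'' in the reduction step precise enough to produce an honest $\Gamma_{\mu}$-invariant curve from a positive-measure family of algebraic stable manifolds.
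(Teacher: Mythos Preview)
The paper does not give a proof here; it simply cites \cite{BEF}, Section~4. So there is no in-paper argument to compare to, but your reduction rests on a misreading of the statement.

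In this paper ``algebraic curve of degree $d$'' is neither a complex subvariety of the leaf $W^s(x,\omega)\cong\bbC$ (which would indeed be a finite set) nor an ambient curve in $X$: it is a \emph{real} algebraic curve of degree $d$ in the leaf, viewed in normal form coordinates as $\bbR^2$. This is visible from how the lemma is used. In Lemma~\ref{algcor} one speaks of tubular neighbourhoods of such curves and uses compactness of degree-$d$ curves in a ball; in Lemma~\ref{firstap} and the proof of Lemma~\ref{elllarge} the relevant ``curves'' are real lines $\ell_v$ inside $P=E^s(\hat{x}_{1/2})$, and one needs that $m^s_{x,\omega}$ does not charge a thin strip around them when $\dim\calL^s=2$. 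It is also consistent with the fact that $\dim\calL^s$ is allowed to take the value $1$ in Section~\ref{sec:alt22} and that $U^+[x,\omega,k]$ is a real one-dimensional orbit of a subgroup of $G^{ssr}\cong(\bbC,+)$ when $d_+=1$. Under this reading your reduction collapses: a real line in $\bbC$ has complex Zariski closure all of $\bbC$, and non-atomicity of $m^s_{x,\omega}$ says nothing about whether it charges such a line. So the ``$d\ge 1$ is subsumed by $d=0$'' step is a genuine gap.

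Your sketch of the $d=0$ (no-atoms) case is reasonable and the difficulty you flag --- upgrading ``the base point of a generic leaf is not an atom'' to ``$m^s_{x,\omega}$ is non-atomic on a generic leaf'' via disintegration over $\calA^s$ --- is exactly the point that needs care. That covers what the paper uses in Lemma~\ref{0.1}. But for Lemma~\ref{0.5} one needs the honest one-dimensional statement, and that is not a corollary of non-atomicity; it is a separate structural fact about the stable conditionals, which is why the paper defers to \cite{BEF}.
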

    \begin{proof}  
    See \cite{BEF}, Section 4.       
    \end{proof}
    Let $K_{alg}$ be the conull subset of $Y$ we get from Lemma \ref{algcurve1}. Now define
    $$\tilde{K} = ((\Lambda\cap \Lambda_{loc} \cap \Lambda'\cap K_+\cap K_s\cap K_r\cap K_{nbhd} \cap K_{ang} \cap K_{NFC} \cap K_{alg} \cap (\Lambda_s\times \Omega^+))\times [0,1[).$$

For this set $\tilde{K}$, let $\epsilon$ be small, and $\delta' = \frac{1}{100}\hat{m}_{\tau}(\tilde{K})$ apply Lemma \ref{birkhoff} to $\epsilon$ and $\delta = \delta'$ and get a set $E^c\subset Z$ and a constant $s>0$ where its conclusion holds: For all $c\in E^c$ and $T>s$,
\begin{align}\label{prime3}
    \left|\frac{1}{T} \int_0^T \mathds{1}_{\tilde{K}}(F^t_{tc}(c))dt - \int_Z\mathds{1}_{\tilde{K}}d\hat{m}_{\tau}  \right|<\delta',
\end{align}
where $F^t_{tc}$ is the `time changed' flow.

Take $\tilde{K}^* = \tilde{K}\cap E^c$.

Note that the definition of QNI only need hold for $\ell$ such that $$F^{\ell/2}(\hat{x}_{1/2}), \ F^{-\ell/2}(\hat{x}_{1/2})\in K,$$ once we pick $\hat{x}_{1/2}\in K$.

\begin{lemma}\label{ball}
    For $(x,\omega, k)\in \tilde{K}^*$, given $\ell$ such that $$F^{-\ell/2}(x,\omega, k), F^{\ell/2}(x,\omega,k)\in \tilde{K}^*,$$ we have that $\eta_{\omega,\ell/2}^s(x)$ contains a ball of radius $r'$ in $W^s(x,\omega)$. This $r'$ can be taken to be
    $$r ' = \frac{r}{M_{NFC}^2} e^{-t},$$
     where $r$ is as in in Lemma \ref{knbhd}, $M_{NFC}$ is as in Lemma \ref{comp3}, and $t$ is such that $F_{tc}^t(F^{-\ell/2}(x,\omega,k)) = (x,\omega,k)$ and $F_{tc}$ is the `time changed' flow. 
\end{lemma}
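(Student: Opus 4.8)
The plan is to track a stable ball of radius $r$ at the ``past'' endpoint of the relevant orbit segment forward under the time-changed flow, using that in the modified normal form coordinates of Section~\ref{sec:new} the time-changed flow contracts stable distances by the exact factor $e^{-t}$, and then to pay one factor of $M_{NFC}$ each time we convert between the Riemannian metric and the modified-NFC metric at the two endpoints of the segment.

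First I would fix notation. Put $\hat{x}' = (x',\omega',k') := F^{-\ell/2}(x,\omega,k)$; this lies in $\tilde{K}^*$ by hypothesis, and $t>0$ is, as in the statement, the time-changed duration with $F_{tc}^t(\hat{x}') = (x,\omega,k)$. Because the standard flow $F^{\ell/2}$ and the time-changed flow $F_{tc}^t$ carry $\hat{x}'$ to the very same point $(x,\omega,k)$, they apply the same number $n$ of diffeomorphisms and hence induce the same map $f_{\omega'}^n$ on the $X$-fibre; therefore, by definition of $\eta^s_{\omega,\ell/2}$ and as subsets of $W^s(x,\omega)$, we have $\eta^s_{\omega,\ell/2}(x) = F^{\ell/2}(\eta^s_{\omega'}(x')) = F_{tc}^t(\eta^s_{\omega'}(x'))$.

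Then I would run the chain of estimates. Since $\hat{x}' \in \tilde{K}^* \subset K_{nbhd}$, Lemma~\ref{knbhd} gives $B^s(x',\omega',r) \subset \eta^s_{\omega'}(x')$. Since $\hat{x}' \in K_{NFC}$, Lemma~\ref{comp3} shows this Riemannian ball contains the modified-NFC ball of radius $r/M_{NFC}$ about $x'$ in $W^s(x',\omega')$. Flowing forward by $t$: by Lemma~\ref{newNFC} and the concluding remark of Section~\ref{sec:new}, $F_{tc}^t$ maps the modified-NFC ball of radius $\rho$ about $x'$ onto the modified-NFC ball of radius $\rho e^{-t}$ about $x$ in $W^s(x,\omega)$, so $\eta^s_{\omega,\ell/2}(x) = F_{tc}^t(\eta^s_{\omega'}(x'))$ contains the modified-NFC ball of radius $(r/M_{NFC})e^{-t}$ about $x$. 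Finally, since $(x,\omega,k) \in \tilde{K}^* \subset K_{NFC}$, a second application of Lemma~\ref{comp3}, this time at $(x,\omega)$, shows the latter modified-NFC ball contains the Riemannian ball $B^s(x,\omega,r')$ with $r' = (r/M_{NFC}^2)e^{-t}$, which is the claim.

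The main thing to be careful about is the bookkeeping linking the two flows and the two coordinate systems: one must verify that the standard flow and the time-changed flow genuinely induce the same $X$-fibre map over the orbit segment (this is what licenses the exact $e^{-t}$ contraction), and that \emph{both} endpoints of the segment lie in $K_{NFC}$, so that Lemma~\ref{comp3} can be invoked at each. A minor point, handled using the bound $\Vert Dh^s \Vert \le 1/3$ of Proposition~\ref{locstablemfld}, is that on a small stable neighbourhood the induced metric on $W^s$ and the ambient metric $d_X$ are comparable, so it is harmless to move between the ``ball in the induced metric'' appearing in $B^s(\cdot,\cdot,\cdot)$ and the ambient balls implicit in Lemmas~\ref{knbhd} and~\ref{comp3}.
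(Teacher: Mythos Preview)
Your proof is correct and follows essentially the same approach as the paper: start from the ball of radius $r$ in $\eta^s_{\omega'}(x')$ guaranteed by $K_{nbhd}$, pass to modified NFCs at the past endpoint (costing a factor $M_{NFC}$), flow forward by $t$ under $F_{tc}$ to contract by $e^{-t}$, and convert back to ambient coordinates at $(x,\omega)$ (costing another factor $M_{NFC}$). You are slightly more explicit than the paper about why the standard and time-changed flows induce the same $X$-fibre map and about the need for both endpoints to lie in $K_{NFC}$, but the argument is the same.
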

\begin{proof}

    We assume that $\ell$ is such that $F^{-\ell/2}(x,\omega,k) \in \tilde{K}$. Then $\eta^s_{\sigma^{-\ell/2}\omega}(F_{\omega}^{-\ell/2}(x))$ contains a ball $B^s(F_{\omega}^{-\ell/2}(x),r)$ inside $W^s(F^{-\ell/2}(x,\omega,k))$ because $F^{-\ell/2}(x,\omega,k)$ belongs to $K_{nbhd}$. 

    Now we pass to (modified) NFCs (see Equation \eqref{newNFCform}, Lemma \ref{comp3}) since we are working in $W^s(F^{-\ell/2}(x,\omega,k))$ and $F^{-\ell/2}(x,\omega,k) \in \tilde{K}^*$. Since we have that $F^{-\ell/2}(x,\omega,k) \in K_{NFC}$, we have that the image of $B^s(F_{\omega}^{-\ell/2}(x),r)$ contains a ball of radius $$r_{new} = \frac{r}{M_{NFC}}.$$

    Now let $t$ be such that $F_{tc}^t(F^{-\ell/2}(x,\omega,k)) = (x,\omega,k)$. We know that in (modified) NFCs flowing the stable forwards by $t$ will shrink our ball of radius $r_{new}$ by a factor of $e^{-t}$ precisely. Hence, in (modified) NFCs for $W^s(x,\omega,k)$ we have that the image of our ball of radius $r_{new}$ is a ball of radius $\frac{r}{M_{NFC}}e^{-t}.$ When we return to ambient coordinates, the image of our ball of $\frac{r}{M_{NFC}}e^{-t}$ contains a ball of radius $r'= \frac{r}{M_{NFC}^2}e^{-t}$. This tells us that $$F^{\ell/2}(B^s(F_{\omega}^{-\ell/2}(x),r)) \subset F^{\ell/2}(\eta^s_{\sigma^{-\ell/2}\omega}(F_{\omega}^{-\ell/2}(x))) = \eta_{\omega,\ell/2}^s(x),$$
    contains a ball of radius $r'= \frac{r}{M_{NFC}^2}e^{-t}$. This completes the proof.
\end{proof}
\begin{corollary}\label{independent}
    For $(x,\omega, k)\in \tilde{K}^*$, given $\ell$ such that $$F^{-\ell/2}(x,\omega, k), F^{\ell/2}(x,\omega,k)\in \tilde{K}^*,$$ then $\eta_{\omega,\ell/2}^s(x)$ contains a ball of radius $r'$ in $W^s(x,\omega)$ where $$r' \geq\frac{r}{M_{NFC}^2}e^{-(\ell/2+2)(\lambda^++\epsilon)},$$
    and $M_{NFC}$ and $r$ are as in Lemmas \ref{comp3} and \ref{knbhd} respectively. 
   
\end{corollary}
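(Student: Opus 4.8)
The plan is to obtain this as an immediate refinement of Lemma \ref{ball}. That lemma already gives that $\eta^s_{\omega,\ell/2}(x)$ contains a ball of radius $r' = \frac{r}{M_{NFC}^2}e^{-t}$ inside $W^s(x,\omega)$, where $t>0$ is the new-time with $F^t_{tc}(F^{-\ell/2}(x,\omega,k)) = (x,\omega,k)$; so the only thing left to do is bound $t$ from above by $(\ell/2+2)(\lambda^++\epsilon)$ and substitute.

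First I would unwind the standard flow: $F^{-\ell/2}(x,\omega,k) = (f^n_\omega(x),\sigma^n(\omega),k-\ell/2 \bmod 1)$ with $n = \lfloor k-\ell/2\rfloor$, and since $0\le k<1$ and $\ell>0$ this forces $|n|\le \ell/2+1$. The identity $F^t_{tc}(F^{-\ell/2}(x,\omega,k)) = (x,\omega,k)$ is equivalent to $F^{-t}_{tc}(x,\omega,k) = F^{-\ell/2}(x,\omega,k)$, and since the standard flow applies exactly one diffeomorphism of $\omega$ per unit travelled in the $[0,1[$-coordinate, the right-hand side has re-applied exactly $|n|$ maps backwards. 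Hence the backward time-changed flow $F^{-t}_{tc}$ must also apply exactly $p^- = |n|$ maps backwards to reach the same point.

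Next I would estimate $t$ using the roof function. By the construction of $F^{-t}_{tc}$ in Section \ref{sec:flow}, the new-time needed to apply $p^-$ consecutive maps backwards from a point over $k\in[0,1[$ is at most $|\tau^-_{p^-+1}|$ evaluated along the orbit: the $p^-$ full steps account for $|\tau^-_{p^-}|$, and the partial step dictated by the fractional part of the $[0,1[$-clock costs at most one further term $|\tau^-(\cdot)|$. Since $|\tau^-(x,\omega)| = \tau(f_{-1}^{-1}(x),\sigma^{-1}(\omega))\le \lambda^++\epsilon$ by Equation \eqref{tau1}, this yields $t \le (p^-+1)(\lambda^++\epsilon) = (|n|+1)(\lambda^++\epsilon) \le (\ell/2+2)(\lambda^++\epsilon)$. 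Plugging this into $r' = \frac{r}{M_{NFC}^2}e^{-t}$ gives the claimed lower bound $r' \ge \frac{r}{M_{NFC}^2}e^{-(\ell/2+2)(\lambda^++\epsilon)}$.

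The only delicate point is the $[0,1[$-coordinate bookkeeping: one must check that the single extra term in the estimate — equivalently the $+2$ rather than $+1$ in the exponent — genuinely absorbs both the rounding in $n = \lfloor k-\ell/2\rfloor$ and the partial step of the time-changed clock. Everything else is a direct substitution into Lemma \ref{ball}, so I do not expect a real obstacle.
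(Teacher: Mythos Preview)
Your proposal is correct and follows essentially the same route as the paper: invoke Lemma \ref{ball} to get $r' = \frac{r}{M_{NFC}^2}e^{-t}$, then bound $t$ by counting how many diffeomorphisms the flow from $F^{-\ell/2}(x,\omega,k)$ back to $(x,\omega,k)$ applies (at most $\lfloor\ell/2\rfloor+1$) and using the uniform bound $\tau\le\lambda^++\epsilon$ from Equation~\eqref{tau1}. The paper does the diffeomorphism count directly via the forward standard flow rather than rephrasing through $F^{-t}_{tc}$ and $\tau^-$, but the arithmetic and the final estimate are identical.
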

\begin{proof}
In Lemma \ref{ball} we found $\eta_{\omega,\ell/2}^s(x)$ always contains a ball of radius $$r ' = \frac{r}{M_{NFC}^2} e^{-t}.$$ 

    We can compute the dependence of $t$ on $\ell$. 
    
    Note that the roof function $\tau(F^{-\ell/2}(x,\omega,k))$, though point dependent, is bounded: for any $(x,\omega)$, $\tau(x,\omega)\in [\lambda^+-\epsilon, \lambda^++\epsilon]$ (see Equation \eqref{tau1}). Flowing forwards by $\ell/2$ applies $n=\lfloor \ell/2 + k \rfloor$ diffeomorphisms by definition of the standard flow. Taking $k=0$, the smallest number of diffeomorphisms being applied is $n=\lfloor \ell/2\rfloor$, taking $k=1$ the largest number of diffeomorphisms being applied is $\lfloor \ell/2 + 1\rfloor = n+1$; this independent of the point $(x,\omega,k)\in \tilde{K}^*$. Hence, $$t\in [n(\lambda^+-\epsilon), (n+1)(\lambda^++\epsilon) ].$$ We then see that in terms of $\ell$ the smallest $r'$ can be is $$r'=\frac{r}{M_{NFC}^2}e^{-(n+1)(\lambda^++\epsilon)} = \frac{r}{M_{NFC}^2}e^{-(\lfloor \ell/2\rfloor+1)(\lambda^++\epsilon)} .$$ To remove the floor function we can go further and see that 
    $$r' \geq\frac{r}{M_{NFC}^2}e^{-(\ell/2+2)(\lambda^++\epsilon)}.$$
    This is independent of the point $(x,\omega,k)\in \tilde{K}^*$. This completes the proof.
\end{proof}

\begin{lemma} For $(x,\omega,k)\in \tilde{K}^*$ and for given $\ell$ such that $$F^{\ell/2}(x,\omega,k), F^{-\ell/2}(x,\omega,k)\in \tilde{K}^*,$$ the intersection $\eta_{\omega,\ell/2}^s({x})\cap \supp(m_{x,\omega}^s)$ is infinite. \label{481}
\end{lemma}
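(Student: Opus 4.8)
The plan is to combine Corollary~\ref{independent}, which guarantees that $\eta^s_{\omega,\ell/2}(x)$ contains a ball $B^s(x, r')$ in $W^s(x,\omega)$ with $r'$ controlled from below by an explicit exponential in $\ell$, with the non-atomicity of the stable conditional measures coming from Lemma~\ref{algcurve1}. Since $(x,\omega,k)\in\tilde{K}^*$, in particular $(x,\omega)\in K_+$, the measure $m^s_{x,\omega}$ is non-trivial, and since $(x,\omega)\in K_{alg}$, Lemma~\ref{algcurve1} (with $d=0$, so that ``algebraic curve of degree $0$'' means a point) tells us that $m^s_{x,\omega}|_{B^s(x,R)}$ has no atoms for any $R>0$. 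So $m^s_{x,\omega}$ is a non-trivial, locally finite, atomless measure supported on the entire leaf $W^s(x,\omega)$.

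Next I would argue that $\supp(m^s_{x,\omega})$ cannot be a finite set. If it were finite, then every point of it would be an atom (a non-trivial finite measure on a finite support must charge at least one point, and by the symmetry of the situation — or just because the support is finite and discrete — each point in the support carries positive mass), contradicting atomlessness; so $\supp(m^s_{x,\omega})$ is infinite. Moreover $\supp(m^s_{x,\omega})$ is a closed subset of the entire curve $W^s(x,\omega)\cong\bbC$, hence it has an accumulation point. Actually the cleanest route is: the restriction $m^s_{x,\omega}|_{B^s(x,r')}$ is either trivial or non-trivial. If it is non-trivial, then being atomless it is supported on an infinite set inside $B^s(x,r')\subset\eta^s_{\omega,\ell/2}(x)$, and we are done. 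So the remaining task is to rule out $m^s_{x,\omega}|_{B^s(x,r')}=0$, i.e.\ to show the support meets the ball $B^s(x,r')$ in more than the single point $x$ — in fact infinitely often.

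To handle that, I would use the recurrence built into $\tilde{K}^*$ via the time-changed flow together with the $F$-equivariance of the leafwise conditional measures. The key point is that $m^s_{x,\omega}$ is, up to normalization and the change of coordinates under $F^{-\ell/2}$, the push-forward of $m^s_{F^{-\ell/2}(x,\omega,k)}$; and since $F^{-\ell/2}(x,\omega,k)\in\tilde{K}^*\subset K_+\cap K_{nbhd}$, that measure is non-trivial and its support contains a genuine ball $B^s(F^{-\ell/2}_\omega(x), r)$ worth of mass (indeed on $K_+$ there is a point of the support in the annulus around the basepoint bounded by $\mathfrak{r}$, and $r$ as in Lemma~\ref{knbhd} bounds the neighbourhood of the leaf inside the partition element from below). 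Applying $F^{\ell/2}$ and the NFC/ambient comparison exactly as in the proof of Lemma~\ref{ball}, the support of $m^s_{x,\omega}$ inside $\eta^s_{\omega,\ell/2}(x)$ contains the image of that ball's worth of support, which has positive $m^s_{x,\omega}$-measure; atomlessness then upgrades ``positive measure'' to ``infinite cardinality''. So $\eta^s_{\omega,\ell/2}(x)\cap\supp(m^s_{x,\omega})$ is infinite.

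The main obstacle I anticipate is purely bookkeeping: making precise the interplay between (i) the conditional measure $m^s_{F^{-\ell/2}(x,\omega,k)}$ being non-trivial \emph{on a definite-size ball} (not merely non-trivial somewhere on the leaf) and (ii) the fact that the full $\eta^s$-atom at $F^{-\ell/2}(x,\omega,k)$ contains that ball, so that after flowing forward by $\ell/2$ and renormalizing via the formula defining $m^s$, the mass genuinely lands inside $\eta^s_{\omega,\ell/2}(x)$ rather than spilling out. One must be slightly careful that the normalizing denominators in the definition of the leafwise conditional measure are finite and non-zero — which is exactly the content of $F$-invariance of $m$ — and that the relevant compact set $K$ (here $B^s(F^{-\ell/2}_\omega(x),r)$) maps under $F^{\ell/2}$ into the correct atom. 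Once that is set up carefully, atomlessness from Lemma~\ref{algcurve1} does all the remaining work with no estimates needed.
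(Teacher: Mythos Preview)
Your approach is correct in spirit and considerably more direct than the paper's, but one step is imprecise. The paper does not invoke atomlessness here at all; instead it builds infinitely many distinct support points in $\eta^s_{\omega,\ell/2}(x)$ by hand, using Birkhoff recurrence along the time-changed flow (via $E^c\subset\tilde K^*$): for a sequence of times $T^{(i)}\to\infty$ with $F_{tc}^{-T^{(i)}}(x,\omega,k)\in\tilde K\subset K_+$, it picks the support point $z_i$ in the annulus at that backward time, flows it forward in modified NFCs, and staggers the scales so that the resulting points lie at mutually distinct distances from $x$ inside $B^s(x,r')$. Your route replaces this entire iteration by a single appeal to Lemma~\ref{algcurve1}.

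The imprecision is your claim that $m^s_{F^{-\ell/2}(x,\omega)}$ gives positive mass to the ball $B^s(F^{-\ell/2}_\omega(x),r)$: neither $K_+$ (which only supplies a support point in the annulus of outer radius $\mathfrak r$, with no relation imposed between $\mathfrak r$ and $r$) nor $K_{nbhd}$ (which only says the partition atom \emph{contains} that ball) yields this. What you actually need, and what is immediate, is that the partition atom at $F^{-\ell/2}(x,\omega)$ has mass $1$ by the very normalization of $m^s$; the $F$-equivariance up to scaling of the leafwise conditionals (a direct consequence of the definition) then gives $m^s_{x,\omega}\bigl(\eta^s_{\omega,\ell/2}(x)\bigr)>0$, since $\eta^s_{\omega,\ell/2}(x)$ is by definition the forward image of that atom. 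Atomlessness from $K_{alg}$ finishes: if $\supp(m^s_{x,\omega})\cap\eta^s_{\omega,\ell/2}(x)$ were finite it would carry zero mass, contradicting positivity. With this correction your argument goes through, and in fact Corollary~\ref{independent}, the annulus data from $K_+$, and the NFC comparison play no role.
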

\begin{proof}
   
   Since $(x,\omega,k)$ is chosen in $E^c\subset \tilde{K}^*$, we can unpack Lemma \ref{birkhoff}, Equation \eqref{prime3}: for $T>s$,
    $$(-\delta' + \hat{m}_{\tau}(\tilde{K}))T < \leb(\{t\in [0,T] : F_{tc}^t(\hat{x}_{1/2}) \in \tilde{K} \}) < (\delta' + \hat{m}_{\tau}(\tilde{K}))T,$$
    which works out to be
    $$0.99\hat{m}_{\tau}(\tilde{K})T < \leb(\{t\in [0,T] : F_{tc}^t(\hat{x}_{1/2}) \in \tilde{K} \}) < 1.01\hat{m}_{\tau}(\tilde{K}). $$

    Note that by Corollary \ref{independent} the set $\eta_{\omega,\ell/2}^s(x)\cap W^s(x,\omega,k)$ contains a neighbourhood of $x$ in $W^s(x,\omega,k)$, some ball radius $r'$ (which depends only on $\ell$).

    Take $\mathfrak{r}$ as above. If the point $F_{tc}^{-T}(x,\omega,k)\in \tilde{K}^*$ then there is a point $$z\in (\supp(m_{F_{tc}^{-T}(x,\omega,k)}^s)\cap B^s(F^{-T}_{tc}(x,\omega,k),\mathfrak{r}))\setminus B^s(F^{-T}_{tc}(x,\omega,k), \frac{1}{\mathfrak{r}}).$$ 

    We will work in (modified) normal form coordinates (NFCs) as in Equation \eqref{newNFCform} and Lemma \ref{comp3}. Let $z'$ represent $z$ in (modified) NFCs. Then the annulus that $z'$ belongs to has outer diameter $M_{NFC}\mathfrak{r}$ and inner diameter $\frac{1}{M_{NFC}\mathfrak{r}}$.

    Firstly, distances in (stable modified) NFCs shrink by a factor of $e^{-T}$ when we flow by $T$ under the time change flow $F^T_{tc}$. So take $T_0$ large such that $$M_{NFC}\mathfrak{r}e^{-T_0}<\frac{r'}{M_{NFC}},$$ and such that $T_0>s$ (where $s$ is chosen such that Equation \eqref{prime3} holds). Let $x'$ represent $x$ in (modified) NFCs.

    From the application of Lemma \ref{birkhoff}, since $(x,\omega,k)\in \tilde{K}^* \subset E^c$, we get that about 99\% of choices of $T\in [0,2T_0]$ will land us in the set $\tilde{K}$. Hence there is a positive Lebesgue measure choices of $T\in [T_0,2T_0]$ such that $F_{tc}^{-T}(x,\omega,k)$ lands in $\tilde{K}$. For such $T$, we get (ambiently) a point
 $$z_0\in (\supp(m_{F_{tc}^{-T}(x,\omega,k)}^s)\cap B^s(F^{-T}_{tc}(x,\omega,k),\mathfrak{r}))\setminus B^s(F^{-T}_{tc}(x,\omega,k),\frac{1}{\mathfrak{r}}).$$ 
 Denote $z_0'$ as $z_0$ in (modified) NFCs, then let $F_{tc}^T(z_0')$ be the result of flowing forwards $z_0'$ by $F_{tc}^T$ under the $\Omega$-component of $F_{tc}^{-T}(x,\omega,k)$. We have that $F_{tc}^T(z_0')$ belongs to the ball of radius $r'/M_{NFC}$ around $x'$ by construction (which means ambiently that $F^T_{tc}(z_0) \in B^s(x,r')$). Also, we have that the closest $F^T(z_0')$ can be to $x'$ is $\frac{1}{M_{NFC}\mathfrak{r}}e^{-2T_0}$. 

    Now take $T_1>2T_0+\ln(M_{NFC}^2\mathfrak{r}^2)$. Then 
    \begin{align}\label{NFCeq}
    M_{NFC}\mathfrak{r}e^{-T_1}<\frac{1}{M_{NFC}\mathfrak{r}}e^{-2T_0}.
    \end{align}
Now we pick $T^{(1)} \in [T_1,2T_1]$ such that $F_{tc}^{-T^{(1)}}(x,\omega,k)$ lands in $\tilde{K}$ using Lemma \ref{birkhoff}. We get its corresponding point$$z_1\in(\supp(m_{F_{tc}^{-T^{(1)}}(x,\omega,k)}^s)\cap B^s(F^{-T^{(1)}}_{tc}(x,\omega,k),\mathfrak{r}))\setminus B^s(F^{-T^{(1)}}_{tc}(x,\omega,k), \frac{1}{\mathfrak{r}}).$$ 
Let $z_1'$ be $z_1$ in (modified) NFCs.
The farthest away from $x'$ that $F_{tc}^{T^{(1)}}(z_1')$ can be is $M_{NFC}\mathfrak{r}e^{-T_1}$, but that is still closer than the point $F_{tc}^T(z_0')$ could be to $x'$ by construction (i.e. by Equation \eqref{NFCeq}). The closest $F_{tc}^{T^{(1)}}(z_1')$ could be to $x'$ is $\frac{1}{M_{NFC}\mathfrak{r}}e^{-2T_1}$.

We continue in this way, taking $T_2> 2T_1+\ln(M_{NFC}^2\mathfrak{r}^2)$ and $T^{(2)}\in [T_2,2T_2]$, we get a point $z_2'$ such that $F^{T^{(2)}}_{tc}(z_2')$ is closer to $x'$ than $F_{tc}^{T^{(1)}}(z_1')$ is, in fact the farthest $F^{T^{(2)}}_{tc}(z_2')$ can be from $x'$ is $M_{NFC}\mathfrak{r}e^{-T_2}$ where $$ M_{NFC}\mathfrak{r}e^{-T_2}<\frac{1}{M_{NFC}\mathfrak{r}}e^{-2T_1}. $$

We get a sequence of choices of $\{T^{(i)}\}_{i\geq 0}$ such that $F^{T(i)}(x,\omega,k)\in \tilde{K}$ and there is a point 
$$z_i\in(\supp(m_{F_{tc}^{-T^{(i)}}(x,\omega,k)}^s)\cap B^s(F^{-T^{(i)}}_{tc}(x,\omega,k),\mathfrak{r}))\setminus B^s(F_{tc}^{-T^{(i)}}(x,\omega,k),\frac{1}{\mathfrak{r}}).$$ 
    For $z_i'$ corresponding to $z_i$ in (modified) NFCs, we have that $F_{tc}^{T^{(i)}}(z_i')$ is strictly closer to $x'$ than $F_{tc}^{T^{(j)}}(z_j')$ for all $j<i$, but is strictly farther away from $x'$ than $F_{tc}^{T^{(p)}}(z_p')$ for all $p>i$. For each $i$, $F_{tc}^{T^{(i)}}(z_i')$ belongs to the (modified) NFCs chart based at $(x,\omega,k)$ which is a diffeomorphism. When returning to ambient coordinates, we have a sequence of distinct points, approaching $x$ (but not equal to $x$) that belong to $\supp(m_{x,\omega}^s)\cap B^s(x,\omega,r')$. This completes the proof.
    \end{proof}

    \begin{lemma}\label{keta}
Fix $\eta>0$ and $\delta>0$, and let $\epsilon>0$ be given. There exists a constant $\epsilon_{\eta}>0$ and a set $K_{\eta}$ of measure at least $(1-\epsilon)$, such that if $(x,\omega)\in K_{\eta}$, then in a ball of radius $\eta$ inside $W^s(x,\omega)$ (denoted $B^s(x,\omega,\eta)$), if one removes a ball of radius $\epsilon_{\eta}$ the resulting annulus has measure at least $(1-\delta/2)m_{x,\omega}^s(B^s(x,\omega,\eta))$.
\end{lemma}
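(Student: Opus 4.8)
The plan is to combine the absence of atoms for the stable conditional measures (Lemma \ref{algcurve1} with $d=0$) with continuity from above of a finite measure, and then to upgrade the resulting pointwise bound to a uniform one at the cost of $\epsilon$ in measure by a Lusin-type argument identical in spirit to the proof of Lemma \ref{Kss}.

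First I would fix the conull set $K_{alg}\subset Y$ from Lemma \ref{algcurve1}; taking $d=0$, on $K_{alg}$ we have $m^s_{x,\omega}(\{y\})=0$ for every $y\in W^s(x,\omega)$. For $(x,\omega)\in K_{alg}$ the restriction $m^s_{x,\omega}|_{B^s(x,\omega,\eta)}$ is a finite measure, so applying continuity from above to the decreasing sequence of balls $B^s(x,\omega,1/j)$ (for $j$ large these lie inside $B^s(x,\omega,\eta)$ and they intersect in $\{x\}$) gives $\lim_{j\to\infty} m^s_{x,\omega}(B^s(x,\omega,1/j)) = m^s_{x,\omega}(\{x\}) = 0$. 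Hence, setting
$$\epsilon_\eta(x,\omega) \defeq \tfrac12\sup\bigl\{\epsilon'\in(0,\eta] : m^s_{x,\omega}(B^s(x,\omega,\epsilon'))\leq (\delta/2)\,m^s_{x,\omega}(B^s(x,\omega,\eta))\bigr\},$$
the set over which the supremum is taken is nonempty for $m$-a.e. $(x,\omega)$ — if $m^s_{x,\omega}(B^s(x,\omega,\eta))>0$ this is immediate from the previous line, and if it is $0$ the inequality holds for every $\epsilon'\in(0,\eta]$ since $B^s(x,\omega,\epsilon')\subset B^s(x,\omega,\eta)$ — so $\epsilon_\eta(x,\omega)\in(0,\eta/2]$ a.e. By monotonicity of $\epsilon'\mapsto B^s(x,\omega,\epsilon')$, for $m$-a.e. $(x,\omega)\in K_{alg}$ we then have $m^s_{x,\omega}(B^s(x,\omega,\epsilon_\eta(x,\omega)))\leq (\delta/2)\,m^s_{x,\omega}(B^s(x,\omega,\eta))$, i.e. the annulus $B^s(x,\omega,\eta)\setminus B^s(x,\omega,\epsilon_\eta(x,\omega))$ carries at least a $(1-\delta/2)$ fraction of the $m^s_{x,\omega}$-mass of $B^s(x,\omega,\eta)$.

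Next I would observe that $(x,\omega)\mapsto\epsilon_\eta(x,\omega)$ is measurable; this reduces to the joint measurability of $(x,\omega,r)\mapsto m^s_{x,\omega}(B^s(x,\omega,r))$, which follows from the measurability of the family of leafwise conditional measures $(x,\omega)\mapsto m^s_{x,\omega}$ together with the fact that the balls $B^s(x,\omega,r)$ in the induced leaf metric vary measurably in $(x,\omega,r)$. Since $\epsilon_\eta(\cdot)$ is positive $m$-a.e., the sets $E_n\defeq\{(x,\omega):\epsilon_\eta(x,\omega)>1/n\}$ increase to a conull subset of $Y$ as $n\to\infty$; choosing $n$ large enough that $m(E_n)>1-\epsilon$ and $1/n<\eta$, I put $K_\eta\defeq E_n$ and $\epsilon_\eta\defeq 1/n$. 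Then for every $(x,\omega)\in K_\eta$ we have $\epsilon_\eta<\epsilon_\eta(x,\omega)$, hence $m^s_{x,\omega}(B^s(x,\omega,\epsilon_\eta))\leq m^s_{x,\omega}(B^s(x,\omega,\epsilon_\eta(x,\omega)))\leq(\delta/2)\,m^s_{x,\omega}(B^s(x,\omega,\eta))$, which is precisely the asserted bound.

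The argument is essentially routine once Lemma \ref{algcurve1} is available; the only point requiring genuine care is the measurability of $(x,\omega)\mapsto m^s_{x,\omega}(B^s(x,\omega,r))$ (and hence of the threshold function $\epsilon_\eta(x,\omega)$), which is what makes the Lusin-type truncation producing $K_\eta$ legitimate. In the application inside the proof of Lemma \ref{0.1} the stable conditionals are non-trivial, so in fact $m^s_{x,\omega}(B^s(x,\omega,\eta))>0$ and the degenerate case does not arise.
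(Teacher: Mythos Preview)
Your proof is correct and follows essentially the same approach as the paper's own proof: both invoke Lemma \ref{algcurve1} with $d=0$ to get that $x$ is not an atom, define a pointwise threshold $\epsilon_\eta(x,\omega)$, note its measurability, and then pass to a uniform lower bound on a set of measure $>1-\epsilon$ via the same Lusin-type truncation as in Lemma \ref{Kss}. Your version simply fills in more detail (the explicit continuity-from-above argument, the precise definition of $\epsilon_\eta(x,\omega)$ as half a supremum, and the treatment of the degenerate case $m^s_{x,\omega}(B^s(x,\omega,\eta))=0$) where the paper is content with a single sentence.
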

\begin{proof}
    It follows from Lemma \ref{algcurve1} that because $x$ is not an atom, for each $(x,\omega)$ there is a $\epsilon_{\eta}(x,\omega)$ such that by removing a ball of radius $\epsilon_{\eta}(x,\omega)$ the resulting annulus has measure at least $(1-\delta/2)m_{x,\omega}^s(B(x,\omega,\eta))$. This $\epsilon_{\eta}(x,\omega)$ is a measurable function because $\eta$ is fixed. We can then lower bound $\epsilon_{\eta}(x,\omega)$ by some constant $\epsilon_{\eta}$ on a set $K_{\eta}$ of measure at least $(1-\epsilon)$. This completes the proof.
\end{proof}

    Let $\alpha_0>0$ be defined by $\alpha_0 = \frac{1}{2}R(\lambda^++\epsilon)$. Note that this choice is independent of $\delta$ because $R$ is constant a.e.
    
    Lastly, let $\eta<\frac{r}{M_{NFC}^4}$ be fixed and consider the set $K_{\eta}$ from Lemma \ref{keta}.
    
     Take $\epsilon_{loc}$, $\epsilon_{nbhd}$, $\epsilon_{ang}$, and all other $\epsilon$-costs to be such that for $$K^* = \tilde{K}^*\cap (K_{\eta}\times [0,1[).$$
     we have that $\hat{m}(K^*)>1-\delta^{i+1}/8$. Note that belonging to $\Lambda_{loc}$ will ensure the existence of local stable manifolds of some size $q$.

    For each $(x,\omega^-,k)\in X\times \Omega^- \times [0,1[$, let $$V_{x,\omega^-,k} = \{(\omega')^+\in \Omega^+ : (x,\omega^-,(\omega')^+,k)\in K^*  \},$$ and define
    \begin{align}
        G \defeq \{ (x,\omega^-,k)\in X\times \Omega^-\times [0,1[ : \mu^{\bbN}(V_{x,\omega^-,k})>1-\delta/4 \}.
    \end{align}

    \begin{lemma}\label{G!}
        For the set $G$ defined above, $\nu\times \mu^{\bbN} \times dt(G)>1-\delta^{i}/2$.
    \end{lemma}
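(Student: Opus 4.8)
The plan is to integrate the inequality defining $G$ against the conditional measures of $\hat m$ along the future coordinate $\Omega^+$ and then apply a Markov-type estimate; the only nontrivial ingredient is the product structure of $\hat m$ over the factor $X\times\Omega^-\times[0,1[$.

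First I would record that, writing $\pi\colon Z\to X\times\Omega^-\times[0,1[$ for the coordinate projection and $\bar m\defeq\pi_*\hat m$, one has the identity $\hat m=\bar m\times\mu^{\bbN}$ on $Z\cong(X\times\Omega^-\times[0,1[)\times\Omega^+$; this is the measure denoted $\nu\times\mu^{\bbN}\times dt$ in the statement, in the sense that its marginals on $X$, $\Omega^-$ and $[0,1[$ are $\nu$, $\mu^{\bbN}$ and Lebesgue (although, since $\nu$ is not yet known to be $\mu$-invariant, $\bar m$ itself need not be a product of these). The identity is the natural-extension structure of $m$: by Lemma \ref{m}, $m=\lim_n(F^n)_*(\nu\times\mu^{\bbZ})$, and under $(F^n)_*(\nu\times\mu^{\bbZ})$ the coordinates $\omega_n,\omega_{n+1},\dots$ of the shifted point $\sigma^n\omega$ (its future) are i.i.d.\ of law $\mu$ and independent of everything with index $<n$, in particular of the new base point $f^n_\omega(x)$ and of the new past; hence each $(F^n)_*(\nu\times\mu^{\bbZ})\times dt$ is a product of a measure on $X\times\Omega^-\times[0,1[$ with $\mu^{\bbN}$ on $\Omega^+$, and this passes to the weak-$*$ limit (a weak-$*$ limit of product probability measures on a product of compact metric spaces with a fixed second factor is the product of the limit of the first marginals with that factor, by testing against products of continuous functions).

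Granting this, Fubini's theorem and the definition $V_{x,\omega^-,k}=\{(\omega')^+:(x,\omega^-,(\omega')^+,k)\in K^*\}$ give
\begin{align*}
\hat m(K^*)=\int_{X\times\Omega^-\times[0,1[}\mu^{\bbN}\bigl(V_{x,\omega^-,k}\bigr)\,d\bar m(x,\omega^-,k).
\end{align*}
Since $\mu^{\bbN}(V_{x,\omega^-,k})\le 1$ always and $\mu^{\bbN}(V_{x,\omega^-,k})\le 1-\delta/4$ whenever $(x,\omega^-,k)\notin G$,
\begin{align*}
1-\hat m(K^*)=\int\bigl(1-\mu^{\bbN}(V_{x,\omega^-,k})\bigr)\,d\bar m\ \ge\ \frac{\delta}{4}\,\bar m\bigl(G^c\bigr),
\end{align*}
so $\bar m(G^c)\le\frac{4}{\delta}\bigl(1-\hat m(K^*)\bigr)$. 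Using the choice of the $\epsilon$-costs, for which $\hat m(K^*)>1-\delta^{i+1}/8$, we obtain $\bar m(G^c)<\frac{4}{\delta}\cdot\frac{\delta^{i+1}}{8}=\frac{\delta^i}{2}$, i.e.\ $\bar m(G)>1-\delta^i/2$, which is the assertion.

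The routine parts are the Fubini identity and the final estimate; the step to watch is the product identity $\hat m=\bar m\times\mu^{\bbN}$, precisely because $\nu$ is not assumed $\mu$-invariant, so one must not write $\hat m$ as a four-fold product. The independence that does survive is exactly that of the future from the pair (base point, past), which is what the natural-extension description of $m$ in Lemma \ref{m} supplies, and this is all that is used.
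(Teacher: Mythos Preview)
Your argument is correct and is essentially the same Fubini/Markov estimate as the paper's: both bound $\bar m(G^c)$ by $\tfrac{1}{\delta/4}\,\hat m((K^*)^c)<\delta^i/2$ (the paper writes $\delta/2$ in place of $\delta/4$, but this is a harmless slip). The one point on which you are more careful than the paper is the justification that $\hat m$ disintegrates as $\bar m\times\mu^{\bbN}$ over the future; the paper simply writes the marginal on $X\times\Omega^-\times[0,1[$ as ``$\nu\times\mu^{\bbN}\times dt$'' and integrates, whereas you correctly note that without invariance of $\nu$ this marginal need not be a product, and supply the natural-extension argument for the independence of the future. This extra care is warranted and does not change the shape of the proof.
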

\begin{proof}

    Consider $G^c$ which is the set of $(x,\omega^-,k)$ such that the subset $W\subset \Omega^+$, defined by the property that if $(\omega')^+\in W$ then $(x,\omega^-,(\omega')^+,k)\in (K^*)^c$, has $\mu$-measure at least $\delta/2$.

    Now let $E\subset (K^*)^c$ be defined by
    \begin{align*}
    E \defeq \{(x,\omega^-,(\omega')^+,k)\in X\times \Omega^- \times \Omega^+ \times  [0,1[ : (x,\omega^-,k)\in G^c, \ (x,\omega^-, (\omega')^+, k) &\in (K^*)^c \} 
    \end{align*}

    Now using Fubini, note that
    \begin{align*}\hat{m}((K^*)^c) \geq \hat{m}(E) &= \int_{X \times \Omega^-  \times [0,1[} \int_{\Omega^+} d\mu^{\bbN} d(\nu\times \mu^{\bbN} \times  dt) = \mu^{\bbN}(W)\cdot (\nu\times \mu^{\bbN} \times  dt)(G^c)\\ &\geq  \theta \cdot (\nu\times \mu^{\bbN} \times dt)(G^c).
    \end{align*}

    Hence $(\nu\times \mu^{\bbN} \times dt)(G^c) \leq \frac{\hat{m}((K^*)^c)}{\delta/2}$. This gives us our result.
\end{proof}

    Note that the $\eta^{s/u}_{\omega}(x)$ (defined $m$-a.e.) are measurable partitions of $X$. We apply \cite[Lemma 3.11]{EM}, which we state below:
    \begin{lemma}[\cite{EM}, Lemma 3.11] \label{3.11}
    Let $\delta>0$, for a.e. $\omega \in \Omega$ given $X'_{\omega}\subset X_{\omega}$ with $\nu(X')>1-\delta^2$, there exists $X_{\omega}^*\subset X'$ with $\nu(X_{\omega}^*)>1-{\delta}$ and such that for $x\in X_{\omega}^*$ and any $\ell>0$ we have that
    \begin{align}\label{star}
        m_{x,\omega}^{s}(X_{\omega}'\cap \eta_{\omega,\ell}^{s}(x)) \geq (1-{\delta})m_{x,\omega}^{s}(\eta_{\omega,\ell}^{s}(x)).
    \end{align}
\end{lemma}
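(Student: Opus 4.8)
The plan is to prove this exactly as \cite[Lemma 3.11]{EM}, namely via Doob's maximal (weak-type) inequality applied to a reverse-martingale built from the refining stable partitions. Fix $\omega$ in the full-measure set on which all the structures of Sections~\ref{sec:cond}--\ref{sec:Sec22} are available. The two structural inputs I would set up first are: \emph{(i)} for each $\ell>0$, the normalized restriction $m^s_{x,\omega}|_{\eta^s_{\omega,\ell}(x)}/m^s_{x,\omega}(\eta^s_{\omega,\ell}(x))$ is the conditional measure at $x$ of the disintegration of the base measure on $X_\omega$ (the fiber $m|_{X_\omega}$ that enters the construction of $m^s_{x,\omega}$ in Section~\ref{sec:cond}, which is what the statement denotes $\nu$) along the measurable partition $\eta^s_{\omega,\ell}$; and \emph{(ii)} by Proposition~\ref{2.1}(1) the partitions $\{\eta^s_{\omega,\ell}\}_{\ell>0}$ are nested, $\eta^s_{\omega,\ell}(x)\subset\eta^s_{\omega,\ell'}(x)$ whenever $\ell>\ell'$, so the $\sigma$-algebras $\mathcal F_\ell=\sigma(\eta^s_{\omega,\ell})$ increase with $\ell$; moreover, viewed as a subset of $X$, $\eta^s_{\omega,\ell}(x)$ is locally constant in $\ell$, so every supremum over $\ell>0$ below is really a supremum over a countable set and no continuous-parameter issues arise.

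With this in hand I would put $g\defeq\mathds{1}_{X_\omega\setminus X'_\omega}$ and
$$M_\ell(x)\defeq\frac{m^s_{x,\omega}\bigl((X_\omega\setminus X'_\omega)\cap\eta^s_{\omega,\ell}(x)\bigr)}{m^s_{x,\omega}(\eta^s_{\omega,\ell}(x))},$$
so that, by \emph{(i)} and \emph{(ii)}, $M_\ell=\bbE[g\mid\mathcal F_\ell]$ is a nonnegative martingale with $\lVert M_\ell\rVert_{L^1}=\nu(X_\omega\setminus X'_\omega)<\delta^2$. Doob's maximal inequality then gives
$$\nu\bigl(\{x:\sup_{\ell>0}M_\ell(x)\ge\delta\}\bigr)\ \le\ \delta^{-1}\,\nu(X_\omega\setminus X'_\omega)\ <\ \delta.$$
Separately I would check that $\operatorname{diam}_X\eta^s_{\omega,\ell}(x)\to0$ as $\ell\to\infty$ — this follows from the contraction estimate in the local stable manifold theorem (Proposition~\ref{locstablemfld}(1)) together with the subexponential control of the auxiliary functions from Lemma~\ref{NUH} — so that $\bigcap_{\ell>0}\eta^s_{\omega,\ell}(x)=\{x\}$ for $\nu$-a.e.\ $x$ and martingale convergence gives $M_\ell(x)\to g(x)\in\{0,1\}$ a.e.; in particular $X_\omega\setminus X'_\omega\subset\{\sup_\ell M_\ell\ge\delta\}$ up to a $\nu$-null set. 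I would then set $X^*_\omega\defeq X'_\omega\cap\{x:\sup_{\ell>0}M_\ell(x)<\delta\}$: by construction $X^*_\omega\subset X'_\omega$, by the two displays $\nu(X^*_\omega)=1-\nu(\{\sup_\ell M_\ell\ge\delta\})>1-\delta$, and for $x\in X^*_\omega$ one has $M_\ell(x)<\delta$ for every $\ell>0$, which rearranges at once to \eqref{star}.

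The step I expect to be the main obstacle — and the only one that is not purely formal — is input \emph{(i)}: identifying the renormalized leafwise measures restricted to the finer atoms $\eta^s_{\omega,\ell}(x)$ with honest conditional measures of the base measure along $\eta^s_{\omega,\ell}$. This amounts to checking that the un-normalization procedure of Section~\ref{sec:cond} is compatible with passing from the partition $\eta^s_\omega$ to its refinements $\eta^s_{\omega,\ell}$ (using $F$-invariance of $m$ and Proposition~\ref{2.1}), and that all the objects involved can be chosen measurably in $\omega$ so that the ``for a.e.\ $\omega$'' in the statement is legitimate. Once this bookkeeping is in place the remainder is the standard Doob-inequality / L\'evy-convergence package, and I would cite \cite[\S3]{EM} for the full details while recording the statement in the form used here.
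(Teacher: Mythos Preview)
The paper does not supply its own proof of this lemma; it is stated with attribution to \cite[Lemma~3.11]{EM} and used as a black box (and restated verbatim as Lemma~\ref{3.112}). Your proposal is correct and is exactly the argument given in \cite{EM}: the Doob weak-type maximal inequality applied to the martingale $\bbE[\mathds{1}_{X_\omega\setminus X'_\omega}\mid\sigma(\eta^s_{\omega,\ell})]$, with martingale convergence used to absorb $(X'_\omega)^c$ into the maximal event so that the bound is $\delta$ rather than $\delta+\delta^2$. Your identification of input \emph{(i)} as the only nontrivial bookkeeping step is also accurate.
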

One can do the same for the unstable partitions. 

Let $\Omega'$ be the conull subset of $\Omega$ where Lemma \ref{3.11} holds.

Now let $(x,\omega^+,k)\in X\times \Omega^+ \times [0,1[$, let $$V_{x,\omega^+,k} = \{(\omega'')^-\in \Omega^- : (x,(\omega'')^-,\omega^+,k)\in K^*  \},$$ and define for a given $(\omega,k)$:
\begin{align}
    X''_{\omega,k} = \{x\in X : \mu^{\bbN}(V_{x,\omega^+,k})>1-\delta^j\}.
\end{align}

\begin{lemma}\label{41}
    There exists a subset $W' \subset \Omega\times [0,1[$ of measure at least $1-\delta^{i-2j}/2$ such that for $(\omega,k)\in W'$, $X''_{\omega,k}$ has measure at least $1-\delta^j$.
\end{lemma}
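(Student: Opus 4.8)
The plan is to deduce the statement from the single estimate $\hat m((K^*)^c)<\delta^{i+1}/8$ (arranged when $K^*$ was fixed) by a two-step Fubini-and-Markov argument, in the same spirit as the proof of Lemma \ref{G!}. Write an element of $Z$ as $(x,\omega^-,\omega^+,k)$, and recall from Lemma \ref{m} that $\hat m = m\times dt$ with $m$ projecting to $\nu$ on $X$, to $\mu^{\bbZ}$ on $\Omega$, and to $\nu\times\mu^{\bbN}$ on $X\times\Omega^+$; in particular $\hat m$ projects to $\nu\times\mu^{\bbN}\times dt$ on $X\times\Omega^+\times[0,1[$.

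First I would introduce the measurable function $g(x,\omega^+,k)\defeq\mu^{\bbN}\big((V_{x,\omega^+,k})^c\big)$, where $(V_{x,\omega^+,k})^c=\{(\omega'')^-\in\Omega^- : (x,(\omega'')^-,\omega^+,k)\in (K^*)^c\}$, so that $X''_{\omega,k}$ (which depends on $\omega$ only through $\omega^+$) is exactly $\{x : g(x,\omega^+,k)<\delta^j\}$. Fubini, carried out as in Lemma \ref{G!}, gives
\begin{align*}
\int_{X\times\Omega^+\times[0,1[} g\; d(\nu\times\mu^{\bbN}\times dt)\;=\;\hat m\big((K^*)^c\big)\;<\;\frac{\delta^{i+1}}{8}.
\end{align*}
Applying Markov's inequality in the variable $x$ (for fixed $(\omega^+,k)$) yields $\nu\big((X''_{\omega,k})^c\big)\le\delta^{-j}\int_X g(x,\omega^+,k)\,d\nu(x)$, and integrating this over $(\omega^+,k)$ and using the display above gives
\begin{align*}
\int_{\Omega^+\times[0,1[}\nu\big((X''_{\omega,k})^c\big)\; d(\mu^{\bbN}\times dt)\;\le\;\delta^{-j}\,\hat m\big((K^*)^c\big)\;<\;\frac{\delta^{i+1-j}}{8}.
\end{align*}
A second application of Markov's inequality, now in $(\omega^+,k)$, shows that the set of $(\omega^+,k)$ with $\nu\big((X''_{\omega,k})^c\big)\ge\delta^j$ has $(\mu^{\bbN}\times dt)$-measure at most $\delta^{-j}\cdot\delta^{i+1-j}/8=\delta^{i+1-2j}/8<\delta^{i-2j}/2$ (using $0<\delta<1$). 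I would then let $W'$ be the complement of this set, regarded as a subset of $\Omega\times[0,1[$ via the projection $\Omega\to\Omega^+$; it has $\mu^{\bbZ}\times dt$-measure at least $1-\delta^{i-2j}/2$, and for $(\omega,k)\in W'$ we have $\nu\big((X''_{\omega,k})^c\big)<\delta^j$, i.e. $\nu(X''_{\omega,k})>1-\delta^j$, as required.

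I do not expect a genuine obstacle here: the argument is two applications of Markov's inequality bracketing the defining estimate on $\hat m((K^*)^c)$, together with the elementary bound $\delta^{i+1-2j}/8<\delta^{i-2j}/2$. The only point to be careful about is the Fubini bookkeeping in the first display — identifying the iterated integral over $\Omega^-$, then $X$, then $\Omega^+\times[0,1[$ with $\hat m((K^*)^c)$ — which is precisely the computation already performed in the proof of Lemma \ref{G!}.
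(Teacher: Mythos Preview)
Your proposal is correct and takes essentially the same approach as the paper: both deduce the result from the single bound $\hat m((K^*)^c)<\delta^{i+1}/8$ by two nested Fubini--Markov steps (the paper phrases each step as ``using Lemma~\ref{G!}'', first passing through the intermediate set $Y''=\{(x,\omega^+,k):\mu^{\bbN}(V_{x,\omega^+,k})>1-\delta^j\}$ and then to $W'$). Your write-up is simply a more explicit unwinding of the same argument, with the sharper intermediate constant $\delta^{i+1-2j}/8$ in place of the paper's $\delta^{i-2j}/2$.
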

\begin{proof}
Using Lemma \ref{G!} we get that 
$$Y'' = \{(x,\omega^+,k) : \mu^{\bbN}(V_{x,\omega^+,k})>1-\delta^j  \},$$
has measure $\hat{m}(Y'') > 1-\delta^{i-j}/2$

Note that $X''_{\omega,k} = \{x\in X  :  (x,\omega^+,k) \in Y'' \}$; let $$W' = \{(\omega,k) : \nu(X''_{\omega,k})>1-\delta^j \},$$ by Lemma \ref{G!} the set $W'$ has measure $1-\delta^{i-2j}/2$.
\end{proof}

Let $W = W'\cap (\Omega'\times [0,1[)$.
Let $$K' = ((K_{\xi}\cap K_+ \cap K_{nbhd})\times [0,1[)\cap (G\times \Omega^+).$$ Note that $\hat{m}(K')>1-\delta^i/2$.

For $(\omega,k)\in W$ let 
\begin{align}
    X'_{\omega,k} = \{x\in X : (x,\omega,k)\in K'\}\subset X_{\omega}\\
    G' = \{(\omega,k)\in W : \nu(X'_{\omega,k})>1-\delta^j \}.
\end{align}

\begin{lemma}\label{42}
    The measure of $G'$ is $\mu^{\bbZ}\times dt(G')>1-\frac{\delta^i/2}{(\delta^j-\delta^i/2)}$.
\end{lemma}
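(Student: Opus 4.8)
The plan is to run, once more, the Fubini-and-Chebyshev scheme already used to prove Lemmas~\ref{G!} and~\ref{41}. Recall from just above that $\hat{m}(K')>1-\delta^i/2$, hence $\hat{m}(Z\setminus K')<\delta^i/2$, and that $W=W'\cap(\Omega'\times[0,1[)$, where $(\mu^{\bbZ}\times dt)(W')\ge 1-\delta^{i-2j}/2$ and $\Omega'$ is conull.

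First I would disintegrate $\hat{m}$ over the $(\omega,k)$--coordinates; since $m$ is $F$--invariant and has $X$--marginal $\nu$, the fibrewise measure to use is $\nu$, exactly as in Lemma~\ref{3.11} and in the proof of Lemma~\ref{41}. This expresses the small quantity $\hat{m}(Z\setminus K')$ as an integral over $(\omega,k)$ of $\nu(X\setminus X'_{\omega,k})$:
\[
\int_{\Omega\times[0,1[}\nu\big(X\setminus X'_{\omega,k}\big)\,d(\mu^{\bbZ}\times dt)(\omega,k)<\delta^i/2 .
\]
By definition $W\setminus G'=\{(\omega,k)\in W:\ \nu(X\setminus X'_{\omega,k})\ge\delta^j\}$, so Chebyshev's inequality applied to this integral bounds $(\mu^{\bbZ}\times dt)(W\setminus G')$ from above; combining with the lower bound for $(\mu^{\bbZ}\times dt)(W)$ and simplifying the resulting estimate in terms of $\delta^i$ and $\delta^j$ yields $(\mu^{\bbZ}\times dt)(G')>1-\frac{\delta^i/2}{\delta^j-\delta^i/2}$.

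The only point needing care is the arithmetic bookkeeping of the powers of $\delta$: one has to feed in the constants produced by Lemmas~\ref{G!} and~\ref{41} and check that, after the Chebyshev step and the subtraction accounting for $W^c$ (and for the already-excluded mass of $Z\setminus K'$), the denominator is precisely $\delta^j-\delta^i/2$ rather than $\delta^j$. Everything else --- the legitimacy of the disintegration, the measurability of $(\omega,k)\mapsto X'_{\omega,k}$, and the fact that $G'\subseteq W$ --- is routine and identical to what was done for the preceding lemmas, so I expect the whole proof to be only a few lines.
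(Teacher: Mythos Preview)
Your proposal is correct and is essentially identical to the paper's argument: define $H=W\setminus G'$, observe that for $(\omega,k)\in H$ the fibre $U_{(\omega,k)}=X\setminus X'_{\omega,k}$ has $\nu$-measure at least $\delta^j$, set $E=\{(x,\omega,k):(\omega,k)\in H,\ (x,\omega,k)\in (K')^c\}\subset (K')^c$, and compute $\hat m((K')^c)\ge\hat m(E)=\int_H\nu(U_{(\omega,k)})\,d(\mu^{\bbZ}\times dt)\ge(\delta^j-\delta^{i-j}/2)\,(\mu^{\bbZ}\times dt)(H)$, which is exactly the Fubini--Chebyshev step you describe. (Note that the paper's own proof in fact produces the denominator $\delta^j-\delta^{i-j}/2$ rather than the $\delta^j-\delta^i/2$ appearing in the statement; your caution about the arithmetic bookkeeping is well placed, but either form suffices for the subsequent use since $i>j$.)
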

\begin{proof}
    
    Consider $H\defeq W\setminus G'$. Define $U_{(\omega,k)}\subset X$ as the set of $x$ such that $(x,\omega,k)\in (K')^c$. The set $H$ is the set of $(\omega,k)\in W$ such that $U_{(\omega,k)}$ has measure at least $\delta^j$ inside $W$, (hence has measure $\delta^j(1-\delta^{i-2j}/2) = (\delta^j-\delta^{i-j}/2)$ where $i>j$).

    $$E = \{(x,\omega,k) : (\omega,k)\in H, (x,\omega,k)\in (K')^c  \} \subset (K')^c.$$

    \begin{align*}\hat{m}((K')^c) \geq \hat{m}(E) &= \int_{\Omega\times [0,1[} \int_{X} \mathds{1}_E  d\nu d(\mu^{\bbZ}\times dt)=\int_H \int_{U_{(\omega,k)}} d\nu d(\mu^{\bbZ}\times dt) \\
    &=\int_H \nu(U_{(\omega,k)}) \mu^{\bbZ}\times dt \geq (\delta^j-\delta^{i-j}/2) \cdot (\mu^{\bbZ}\times dt)(H)
    \end{align*}

    Note that $\hat{m}((K')^c) <\delta^i/2$, and so $(\mu^{\bbZ}\times dt)(H) \leq \frac{\delta^i/2}{(\delta^j-\delta^{i-j}/2)}$. Hence $$\mu^{\bbZ}\times dt(G') > 1-\frac{\delta^i/2}{(\delta^j-\delta^{i-j}/2)}.$$
    This completes the proof.
\end{proof}

Let $X'''_{\omega,k} = X'_{\omega,k}\cap X''_{\omega,k}$ and use this in Lemma \ref{3.11} to get a subset $X^*_{\omega,k}$ where Equation \eqref{star} holds. We know that $X'''_{\omega,k}$ has measure at least $1-2\delta^j$. Take

$$K = \left( \bigcup_{(\omega,k)\in G'} X^*_{\omega,k} \times \{\omega\} \times \{k\} \right) \subset K'.$$

\begin{lemma}
\label{fubini2}
    Let $(X_1,\eta_1)$ and $(X_2,\eta_2)$ be $\sigma$-finite measure spaces. Consider $V\subset X_1$, a set such that $\eta_1(V)>1-\theta$, and for each point $a\in V$ consider a set $W_a\subset X_2$ such that $\eta_2(W_a)>1-\theta'$. Then $K\defeq \bigcup_{a\in V}W_a\times \{a\}$ has measure at least $(1-\theta')(1-\theta)$ with respect to the $\eta_1\times \eta_2$ on $X_1\times X_2$.
\end{lemma}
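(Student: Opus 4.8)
The plan is to prove this by a direct application of Tonelli's theorem to the indicator function $\mathds{1}_K$; the statement is essentially a quantitative form of Fubini's theorem, and the only point requiring genuine care is measurability. Up to the obvious coordinate identification I would regard $K$ as the subset $\{(a,w) : a\in V,\ w\in W_a\}\subset X_1\times X_2$. As in every application of the lemma in this paper, I take $\eta_1$ and $\eta_2$ to be probability measures (so that the bounds $\eta_1(V)>1-\theta$ and $\eta_2(W_a)>1-\theta'$ have the intended meaning); the $\sigma$-finiteness hypothesis is exactly what is needed in order to form the product measure $\eta_1\times\eta_2$ and to invoke Tonelli for the non-negative function $\mathds{1}_K$.

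The computation then runs as follows. The $a$-section of $K$ is $K_a=\{w\in X_2:(a,w)\in K\}=W_a$ when $a\in V$, and $K_a=\emptyset$ when $a\notin V$. Hence by Tonelli,
\begin{align*}
    (\eta_1\times\eta_2)(K)=\int_{X_1}\eta_2(K_a)\,d\eta_1(a)=\int_V\eta_2(W_a)\,d\eta_1(a)\geq (1-\theta')\,\eta_1(V)\geq (1-\theta')(1-\theta),
\end{align*}
where the first inequality uses $\eta_2(W_a)>1-\theta'$ for every $a\in V$, and the second uses $\eta_1(V)>1-\theta$ (and $1-\theta'\geq 0$, the statement being vacuous otherwise). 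This is precisely the asserted bound.

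The only real obstacle is the measurability of $K$: for an arbitrary assignment $a\mapsto W_a$, the set $K$ need not be a measurable subset of the product, and the section function $a\mapsto\eta_2(K_a)$ need not be measurable, so strictly one should either add measurability of $K$ as a hypothesis or verify it in each application. In the use made of this lemma in the proof of Lemma \ref{0.1} this is not an issue: the base set $G'$ from Lemma \ref{42} is measurable, and the fibers $X^*_{\omega,k}$ are produced from the measurable constructions of Lemmas \ref{41}, \ref{42} and \ref{3.11}, so $K=\bigcup_{(\omega,k)\in G'}X^*_{\omega,k}\times\{\omega\}\times\{k\}$ is measurable and the section function is measurable as well; with that in hand the Tonelli computation above gives the claim.
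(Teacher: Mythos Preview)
Your proof is correct and follows essentially the same route as the paper: both apply Fubini/Tonelli to compute $(\eta_1\times\eta_2)(K)=\int_V\eta_2(W_a)\,d\eta_1(a)$ and then bound each factor. Your version is in fact more careful, since you flag the measurability of $K$ (which the paper's proof takes for granted) and verify it holds in the intended application.
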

\begin{proof}
    Writing $K = \{(w,a) : w\in W_a, \ a\in V\}$, we apply Fubini's theorem:
    \begin{align*}
        \eta_1\times \eta_2(K) = \int_V \int_{W_a}d\eta_2d\eta_1 = \int_V \eta_2(W_a)d\eta_1>(1-\theta')\eta_1(V)>(1-\theta')(1-\theta).
    \end{align*}
    This completes the proof.
\end{proof}

By Lemma \ref{fubini2}, the measure of $K$ is at least 
$$\hat{m}(K)>(1-2\delta^j)(1-\frac{\delta^i/2}{\delta^j-\delta^i/2})>1-\delta.$$

    Now take $\hat{x}_{1/2} = (x_{1/2},\omega_{1/2},k_{1/2})\in K$, then ${U}^+[\hat{x}_{1/2}] = \{{x}_{1/2}\}$, so $$\hat{U}^+[\hat{x}_{1/2}] = \{{x}_{1/2}\} \times \{\omega^-_{1/2}\} \times \Omega^+ \times \{k_{1/2}\}. $$

For any $\ell>0$, $$\eta_{\ell/2}^u[\hat{x}_{1/2}]=\eta^u_{\omega_{1/2},\ell/2}(x_{1/2}) \times \{\omega^-_{1/2}\} \times \Omega^+ \times \{k\}.$$ Hence, $$\hat{U}^+[\hat{x}_{1/2}]\cap \eta_{\ell/2}^u[\hat{x}_{1/2}] = \hat{U}^+[\hat{x}_{1/2}].$$

Now since $\{x_{1/2}\} \times \{\omega_{1/2}^-\} \times \{k_{1/2}\} \in G$ by construction, we have that the set $V_{x_{1/2},\omega^-_{1/2},k_{1/2}}\subset \Omega^+$ has measure greater than $(1-\delta/2)$.

Take $$\tilde{S} = \{x_{1/2}\} \times \{\omega_{1/2}^-\} \times V_{x_{1/2},\omega^-_{1/2},k_{1/2}}\times \{k_{1/2}\},$$ then $$\hat{m}^u_{\hat{x}_{1/2}}(\tilde{S})\geq (1-\delta/4)\hat{m}^u_{\hat{x}_{1/2}}(\hat{U}^+[\hat{x}_{1/2}]).$$ Note that by construction, $\tilde{S}\subset K$. We have one more restriction to add before we can define $S$ as in  Definition \ref{QNIdef}, this is seen in Lemma \ref{tildeS}.

In the lemma below, we use that our points belong to $$K^*\subset (\Lambda_s\times \Omega^+\times K_{ang})\times [0,1[.$$ We assume that $\ell_0$ is large enough in the following lemma:

\begin{lemma}\label{tildeS}

There exists a subset $S\subset \tilde{S}$ such that $$\hat{m}^u_{\hat{x}_{1/2}}(S)\geq (1-\delta/2)\hat{m}^u_{\hat{x}_{1/2}}(\hat{U}^+[\hat{x}_{1/2}]),$$ and for $\hat{y}_{1/2}\in S$ we have that if $z_{1/2}\in W^s(\hat{x}_{1/2})$ is such that $d_X(z_{1/2},x_{1/2})>\delta$ then $$d_X(z_{1/2},W^s(\hat{y}_{1/2}))>C'\delta^R,$$ (where $C'$ is as in the definition of $\Lambda_s$).

\end{lemma}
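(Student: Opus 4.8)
The plan is to build $S$ from $\tilde S$ by deleting a $\mu^{\bbN}$-small set of futures, retaining only those $\hat y_{1/2}=(x_{1/2},\omega_{1/2}^-,(\omega')^+,k_{1/2})$ for which the entire curve $W^s(\hat y_{1/2})$ is quantitatively pinched away from $W^s(\hat x_{1/2})$ at definite scales around the common point $x_{1/2}$. Since $\hat x_{1/2}\in K\subset K^*$, the pair $(x_{1/2},\omega_{1/2}^+)$ lies in $\Lambda_s$, so by Corollary~\ref{8.2} for $\mu^{\bbN}$-a.e.\ future $(\omega')^+$ one has $W^s(x_{1/2},(\omega')^+)\neq W^s(x_{1/2},\omega_{1/2}^+)$; with $\psi_x,\psi_y$ the parametrizations of $\exp_{x_{1/2}}^{-1}(W^s(\hat x_{1/2}))$ and $\exp_{x_{1/2}}^{-1}(W^s(\hat y_{1/2}))$ from the definition of $\Lambda_s$, these two entire curves share the same $(R-1)$-jet at $x_{1/2}$ but differ at order $R$, where $R$ is the a.e.\ constant value. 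In the case $d_+=0$ we are in alternative a) of Theorem~\ref{thmC}, so the Oseledets stable directions do not depend on the future and the two curves are tangent at $x_{1/2}$, i.e.\ $R\geq 2$; the argument below in fact works for any $R\geq 1$.

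First I would isolate the good futures. The $R$-jet gap $\Phi(\hat x_{1/2},(\omega')^+):=\Vert D^R\psi_x(0)-D^R\psi_y(0)\Vert$ is a jointly measurable function that is strictly positive for $(\mu^{\bbZ}\times\mu^{\bbN})$-a.e.\ pair; an exhaustion argument (as in the proof of Lemma~\ref{Kss}) produces $\gamma>0$ with $(\mu^{\bbZ}\times\mu^{\bbN})(\{\Phi<\gamma\})$ as small as we like, and then a Fubini step shows that, outside a set of base points of small $\hat m$-measure, the slice $\{(\omega')^+:\Phi(\hat x_{1/2},(\omega')^+)<\gamma\}$ has $\mu^{\bbN}$-measure at most $\delta/4$. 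Including this small loss among the $\epsilon$-costs already spent in the definition of $K^*$ (so that $K$ is unchanged), and noting that the Cauchy estimates together with compactness of $X$ and the higher moment condition of Remark~\ref{higher} bound all Taylor coefficients of $\psi_y$ above by (a constant comparable to) the constant $C'$ of the definition of $\Lambda_s$, we obtain for every $\hat x_{1/2}\in K$ a set $V'\subset V_{x_{1/2},\omega_{1/2}^-,k_{1/2}}$ with $\mu^{\bbN}(V_{x_{1/2},\omega_{1/2}^-,k_{1/2}}\setminus V')\leq\delta/4$, hence $\mu^{\bbN}(V')>1-\delta/2$, on which $\Phi\geq\gamma$. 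Put $S:=\{x_{1/2}\}\times\{\omega_{1/2}^-\}\times V'\times\{k_{1/2}\}\subset\tilde S$. As $U^+(x_{1/2},\omega_{1/2},k_{1/2})=\{e\}$, the measure $\hat m^u_{\hat x_{1/2}}$ restricted to $\hat U^+[\hat x_{1/2}]$ is, after normalization, $\mu^{\bbN}$ in the future coordinate, so $\hat m^u_{\hat x_{1/2}}(S)\geq(1-\delta/2)\,\hat m^u_{\hat x_{1/2}}(\hat U^+[\hat x_{1/2}])$, and $S\subset\tilde S\subset K$.

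Next I would prove the separation bound for $\hat y_{1/2}\in S$ and $z_{1/2}\in W^s(\hat x_{1/2})$ with $d_X(z_{1/2},x_{1/2})>\delta$, restricting attention to $z_{1/2}$ in the domain $B(x_{1/2},\rho_0/4)$ of $\exp_{x_{1/2}}$ (the only regime used afterwards, where in fact $d_X(z_{1/2},x_{1/2})$ is small). In $\exp_{x_{1/2}}^{-1}$-coordinates, $\Gamma_x:=\exp_{x_{1/2}}^{-1}(W^s(\hat x_{1/2}))$ and $\Gamma_y:=\exp_{x_{1/2}}^{-1}(W^s(\hat y_{1/2}))$ are holomorphic graphs over the line $E^s(x_{1/2})$, vanishing to first order, agreeing to order $R-1$, with $R$-jet gap $\geq\gamma$ and all higher coefficients uniformly bounded; let $g_x,g_y$ be the graphing functions. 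Using that $\hat x_{1/2}\in K^*$, which we take to be contained in $K_{exp}$ (Lemma~\ref{comp4}), $K_b$ (Lemma~\ref{epb}) and $\Lambda_{loc}$ (Proposition~\ref{locstablemfld}) — so that $\exp_{x_{1/2}}$ is uniformly bi-Lipschitz and $\Vert D^2\psi_W^s\Vert\leq\calC$ uniformly — the point $z_{1/2}$ corresponds to a base parameter $t_0$ with $|t_0|\geq c\,\delta$ for a uniform $c>0$. For $\delta$ below a uniform threshold depending on $\gamma$, $\calC$, and the coefficient bound, the higher-order terms are dominated, so $|g_x(t_0)-g_y(t_0)|\geq(\gamma/2)|t_0|^R$; since $g_y'(0)=0$ and $\Vert D^2\psi_W^s\Vert\leq\calC$ make $\Gamma_y$ the graph of a $1/3$-Lipschitz function at this scale, the distance from $(t_0,g_x(t_0))$ to $\Gamma_y$ is at least a uniform fraction of $|g_x(t_0)-g_y(t_0)|$. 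Pushing back through the bi-Lipschitz map $\exp_{x_{1/2}}$ yields $d_X(z_{1/2},W^s(\hat y_{1/2}))\geq C'\delta^R$, with $C'$ the constant of the definition of $\Lambda_s$ (fixed appropriately once and for all). Taking $\ell_0$ large enough that the points $z_{1/2}$ produced later in the QNI argument meet the smallness conditions above completes the proof.

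The main obstacle is the uniformity of the jet-gap constant $\gamma$: Corollary~\ref{8.2} only gives positivity of $\Phi$ for a.e.\ future at a.e.\ base point, so extracting one $\gamma>0$ that works for all $\hat x_{1/2}\in K$ and a $\mu^{\bbN}$-large family of futures forces the Lusin/Fubini bookkeeping above, with the resulting measure loss absorbed into the $\epsilon$-budget already present in $K^*$ (so that $K$ and the constant $\alpha_0$ remain as stated). Once the order-$(R-1)$ tangency and the two-sided control on the $R$-jet (and one-sided control on higher jets) are secured, the remainder is the routine Euclidean fact that two holomorphic graphs over a common line agreeing to order $R-1$ are $\gtrsim d^R$-separated at distance $d$ from the common point.
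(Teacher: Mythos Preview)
Your proof is correct and follows essentially the same approach as the paper, which simply says ``the proof is nearly identical to that of Lemma~\ref{omega+}'': write the two stable curves through $x_{1/2}$ as holomorphic graphs in $\exp_{x_{1/2}}^{-1}$-coordinates, use that they agree to order $R-1$ but differ at order $R$ (the paper's ``worst case'' being $E^s(x_{1/2},\omega_{1/2}^+)=E^s(x_{1/2},(\omega')^+)$, which holds here since we are in alternative a) of Theorem~\ref{thmC}), and conclude a $\delta^R$ separation. Your explicit Lusin/Fubini step extracting a uniform lower bound $\gamma$ on the $R$-jet gap is in fact more careful than the paper, which folds this into the constant called $C'$ attached to $\Lambda_s$ without distinguishing the upper bound on $D^R\psi_y$ from the lower bound on $D^R\psi_x-D^R\psi_y$.
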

\begin{proof}
Remember that the $X$-component of $\hat{y}_{1/2}=x_{1/2}$. One examines the worst case scenario, when for given $(\omega')^+$, $W^s(x_{1/2},\omega_{1/2}^+)\neq W^s(x_{1/2},(\omega'))$ but $$E^s(x_{1/2},\omega_{1/2}^+) = E^s(x_{1/2},(\omega')).$$ Note also that $K^*\subset K_{ang}$.

Now the proof is nearly identical to that of Lemma \ref{omega+}.
\end{proof}

 Now take $\hat{y}_{1/2} = (x_{1/2},\omega^-_{1/2},(\omega')^+,k_{1/2})\in S$.

Since $\hat{x}_{1/2}$ was chosen in $K$, by construction, $x_{1/2}\in X^*_{\omega_{1/2},k_{1/2}} \subset X_{\omega_{1/2},k_{1/2}}'''$ where $(\omega_{1/2},k_{1/2})\in W$. Hence by Lemma \ref{3.11} we have that 
\begin{align}
    m^s_{x_{1/2},\omega_{1/2}}(X'''_{\omega_{1/2},k_{1/2}}\cap \eta_{\omega_{1/2},\ell/2}^s({x}_{1/2}) )\geq (1- \sqrt{2}\delta^{j/2})m_{x_{1/2},\omega_{1/2}}^s(\eta_{\omega_{1/2},\ell/2}^s(x_{1/2})).
\end{align}

Let 
$$\tilde{G} = \bigcup_{z\in X'''_{\omega_{1/2},k_{1/2}}} \{z\} \times V_{z,\omega^+_{1/2},k_{1/2}} \times \{\omega_{1/2}^+\} \times \{k_{1/2}\}. $$

Take $\tilde{S'} = \eta_{\ell/2}^s[\hat{x}_{1/2}]\cap \tilde{G}$. By Lemma \ref{fubini2} and the conditions on $i$ and $j$ it has measure at least $(1-\delta/2)$ relative to the measure of $\eta_{\ell/2}^s[\hat{x}_{1/2}]$.

\begin{lemma}\label{annbd}
    There exists a constant $\tilde{\epsilon}>0$ small such that for all $\hat{x}_{1/2}\in K$ and $\ell$ large enough, if we assume that $F^{\ell/2}(\hat{x}_{1/2}),F^{-\ell/2}(\hat{x}_{1/2})\in K$, then removing a ball $B^s(x_{1/2},\tilde{\epsilon}r')$ from $\eta^s_{\ell/2,\omega_{1/2}}(x_{1/2})$ only reduces the measure of $\eta^s_{\ell/2}[\hat{x}_{1/2}]\cap \tilde{G}$ by at most $\delta/2$.

\end{lemma}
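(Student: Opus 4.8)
The plan is to show that removing a small ball around $x_{1/2}$ from the stable partition element costs only a small amount of $m^s$-measure, and then to transfer this through the Fubini structure defining $\tilde{G}$. First I would note that since $\hat{x}_{1/2}\in K\subset K^*\subset (K_\eta\times[0,1[)$, the point $(x_{1/2},\omega_{1/2})$ lies in $K_\eta$ from Lemma \ref{keta}. By Corollary \ref{independent}, $\eta^s_{\omega_{1/2},\ell/2}(x_{1/2})$ contains a ball $B^s(x_{1/2},r')$ with $r'\geq \frac{r}{M_{NFC}^2}e^{-(\ell/2+2)(\lambda^++\epsilon)}$, and by construction $\eta<\frac{r}{M_{NFC}^4}$ so that, for $\ell$ large, $r'$ is smaller than $\eta$; thus $B^s(x_{1/2},r')\subset B^s(x_{1/2},\omega_{1/2},\eta)$. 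Applying Lemma \ref{keta} with the fixed $\eta$ and the given $\delta$, there is a uniform constant $\epsilon_\eta>0$ so that removing the ball $B^s(x_{1/2},\epsilon_\eta)$ from $B^s(x_{1/2},\omega_{1/2},\eta)$ leaves an annulus of $m^s_{x_{1/2},\omega_{1/2}}$-measure at least $(1-\delta/2)m^s_{x_{1/2},\omega_{1/2}}(B^s(x_{1/2},\omega_{1/2},\eta))$.

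Next I would choose $\tilde\epsilon>0$ small enough that $\tilde\epsilon r' \leq \epsilon_\eta$ for all $\ell$ in the relevant range; this is possible because $r'$ is bounded above (say by $\eta$, or by $\mathrm{diam}$ of a Lyapunov chart) uniformly over $K$, so $\tilde\epsilon = \epsilon_\eta/\eta$ works. Then $B^s(x_{1/2},\tilde\epsilon r')\subset B^s(x_{1/2},\epsilon_\eta)$, and removing the former from $\eta^s_{\omega_{1/2},\ell/2}(x_{1/2})$ removes at most the $m^s$-mass of $B^s(x_{1/2},\epsilon_\eta)$ intersected with $\eta^s_{\omega_{1/2},\ell/2}(x_{1/2})\subset B^s(x_{1/2},\omega_{1/2},\eta)$, which by Lemma \ref{keta} is at most $(\delta/2)\,m^s_{x_{1/2},\omega_{1/2}}(B^s(x_{1/2},\omega_{1/2},\eta))$. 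Since this ball $B^s(x_{1/2},\omega_{1/2},\eta)$ is contained in $\eta^s_{\omega_{1/2},\ell/2}(x_{1/2})$ for $\ell$ large (again by Corollary \ref{independent} and the choice of $\eta$), the removed mass is at most $(\delta/2)\,m^s_{x_{1/2},\omega_{1/2}}(\eta^s_{\omega_{1/2},\ell/2}(x_{1/2}))$, i.e. a fraction $\delta/2$ of the total.

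Finally I would propagate this estimate to $\tilde{G}$. Recall $\tilde{S'}=\eta^s_{\ell/2}[\hat{x}_{1/2}]\cap\tilde{G}$, and $\hat m^s_{\hat x_{1/2}} = m^s_{x_{1/2},\omega_{1/2}}\times\tilde\mu^s_{\omega_{1/2},k_{1/2}}$ where the second factor is a probability measure; also $\eta^s_{\ell/2}[\hat x_{1/2}] = \eta^s_{\omega_{1/2},\ell/2}(x_{1/2})\times W^-_1$. Removing $B^s(x_{1/2},\tilde\epsilon r')\times W^-_1$ from $\eta^s_{\ell/2}[\hat x_{1/2}]$ removes, in the product measure, at most $(\delta/2)\,\hat m^s_{\hat x_{1/2}}(\eta^s_{\ell/2}[\hat x_{1/2}])$, hence reduces $\hat m^s_{\hat x_{1/2}}(\tilde S')=\hat m^s_{\hat x_{1/2}}(\eta^s_{\ell/2}[\hat x_{1/2}]\cap\tilde G)$ by at most that amount; rescaling to relative measure this is a reduction of at most $\delta/2$, giving the claim. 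The main obstacle is bookkeeping: ensuring all the inclusions $B^s(x_{1/2},\tilde\epsilon r')\subset B^s(x_{1/2},\epsilon_\eta)\subset B^s(x_{1/2},\omega_{1/2},\eta)\subset\eta^s_{\omega_{1/2},\ell/2}(x_{1/2})$ genuinely hold simultaneously for all $\hat x_{1/2}\in K$ once $\ell$ (equivalently $\ell_0$) is taken large enough, which relies on the uniform lower bound for $r'$ in Corollary \ref{independent} and on the uniformity of $\epsilon_\eta$ from Lemma \ref{keta} — both of which are available because $K\subset K^*$ is built from the good sets of Section \ref{sec:lusin1}.
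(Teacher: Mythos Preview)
Your argument has a genuine gap at the key comparison step. You correctly obtain from Lemma~\ref{keta}, applied at $(x_{1/2},\omega_{1/2})\in K_\eta$,
\[
m^s_{x_{1/2},\omega_{1/2}}\bigl(B^s(x_{1/2},\epsilon_\eta)\bigr)\ \leq\ \tfrac{\delta}{2}\, m^s_{x_{1/2},\omega_{1/2}}\bigl(B^s(x_{1/2},\eta)\bigr),
\]
but you then need the right-hand side to be at most $\tfrac{\delta}{2}\, m^s_{x_{1/2},\omega_{1/2}}(\eta^s_{\omega_{1/2},\ell/2}(x_{1/2}))$. For this you assert that $B^s(x_{1/2},\eta)\subset \eta^s_{\omega_{1/2},\ell/2}(x_{1/2})$ ``for $\ell$ large''. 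That inclusion is false: by definition $\eta^s_{\omega,\ell/2}(x)=F^{\ell/2}\bigl(\eta^s_{\sigma^{-\ell/2}\omega}(F^{-\ell/2}_\omega x)\bigr)$ is the forward image of a bounded stable-partition atom and therefore \emph{shrinks} to a point as $\ell\to\infty$; the paper says this explicitly after Definition~\ref{part}, and Corollary~\ref{independent} only guarantees it contains a ball of radius $r'\to 0$. For large $\ell$ the \emph{opposite} inclusion $\eta^s_{\omega_{1/2},\ell/2}(x_{1/2})\subset B^s(x_{1/2},\eta)$ holds, which gives the inequality in the wrong direction (and indeed you write this opposite inclusion one sentence earlier, so the argument is internally inconsistent). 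Since $m^s(B^s(x_{1/2},\eta))$ can be arbitrarily large compared to $m^s(\eta^s_{\omega_{1/2},\ell/2}(x_{1/2}))$, the bound you obtain says nothing about the relative mass removed from the partition atom.

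The paper's proof fixes exactly this by applying Lemma~\ref{keta} not at $\hat{x}_{1/2}$ but at the \emph{backward} iterate $F^{-\ell/2}(\hat{x}_{1/2})\in K\subset K_\eta\times[0,1[$ (this is where the hypothesis $F^{-\ell/2}(\hat{x}_{1/2})\in K$ is actually used). At that point the fixed-scale ball $B^s(F^{-\ell/2}_\omega x_{1/2},\eta)$ sits inside the partition atom (since $\eta<r$ from Lemma~\ref{knbhd}), and removing the $\epsilon_\eta$-ball costs at most $\delta/2$ of its mass. One then flows forward by $\ell/2$ (equivalently by $t$ in the time-changed flow), using dynamical invariance of the stable conditionals to preserve mass proportions and the modified NFC distortion bounds to track radii: the $\eta$-ball is carried into $B^s(x_{1/2},r')\subset\eta^s_{\omega_{1/2},\ell/2}(x_{1/2})$ and the $\epsilon_\eta$-ball into a set containing $B^s(x_{1/2},\tilde\epsilon r')$ with $\tilde\epsilon=\epsilon_\eta/r$. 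The inequality $\eta<r/M_{NFC}^4$ was chosen precisely so that this transport works.
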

\begin{proof}

   We know by Lemma \ref{ball} that $\eta_{\ell/2}^s(x_{1/2})$ contains a ball of radius $r'$. We got an explicit formula for this $r'$ as
    $$r ' = \frac{r}{M_{NFC}^2} e^{-t}.$$
     Here, $r$ is as in in Lemma \ref{knbhd}, $M_{NFC}$ is as in Lemma \ref{comp3}, and $t$ is such that $F_{tc}^t(F^{-\ell/2}(\hat{x}_{1/2})) = \hat{x}_{1/2}$ and $F_{tc}$ is the `time changed' flow.  

    Since $F^{-\ell/2}(\hat{x}_{1/2})\in K\subset K_{\eta}$, in a ball of radius $\eta$ around $F^{-\ell/2}(x_{1/2})$, if we remove a ball of radius $\epsilon_{\eta}$, then the measure of the resulting annulus is $$(1-\delta/2)m_{F^{-\ell/2}(\hat{x}_{1/2})}^s(B^s(F^{-\ell/2}(x_{1/2}),\eta)).$$

    Since $m_{x,\omega}^s$ is dynamically invariant, the proportion (in measure) between the images of annulus and the ball of radius $\eta$ stays the same under the dynamics. We now wish to flow forwards by $\ell/2$ back to the point $\hat{x}_{1/2}$ and shrink this ball of radius $\eta$ to be inside our ball of radius $r'$ around $x_{1/2}$. 

    When we pass to (modified) NFCs at $\hat{x}_{1/2}$, the smallest the image of $B^s(x_{1/2},r')$ could be is a ball of radius $\frac{r'}{M_{NFC}}$. Similarly, the largest the ball of radius $\eta$ in (modified) NFCs at $F^{-\ell/2}(x_{1/2})$ could be is a ball of radius $\frac{\eta}{M_{NFC}}$. Then to guarantee that after flowing by $\ell$ (or $t$ under `time changed' flow) that $$F^{\ell/2}(B^s(F^{-\ell/2}(x_{1/2}),\eta)) \subset B^s(\hat{x}_{1/2},r'),$$ we need that 
    $$\eta M_{NFC}e^{-t} < \frac{r'}{M_{NFC}} = \frac{r}{M_{NFC}^3}e^{-t}.$$
This holds true because $\eta$ was fixed such that $\eta <\frac{r}{M_{NFC}^4}$ at the beginning of the argument.

Now the smallest that the image our ball of radius $\epsilon_{\eta}$ could be in (modified) NFCs is a ball of radius $\frac{1}{M_{NFC}}\epsilon_{\eta}$. We flow forwards back to the point $x_{1/2}$ by going forwards by $\ell$, this ball shrinks to size $\frac{1}{M_{NFC}}\epsilon_{\eta}e^{-t}$.

Returning to ambient coordinates, we have that $B^s(\hat{x}_{1/2},r')$ which is of radius $r'=\frac{r}{M_{NFC}^2}e^{-t}$ contains a ball of radius $\epsilon_{\eta}'=\frac{1}{M_{NFC}^2}\epsilon_{\eta}e^{-t}$. If we remove this ball of radius $\epsilon_{\eta}'$, by construction this reduces the measure of $\eta^s_{\ell/s}[\hat{x}_{1/2}]\cap \tilde{G}$ by at most $\delta/2$. Let $\tilde{\epsilon} = \frac{\epsilon_{\eta}}{r}$, then $\tilde{\epsilon}r' = \epsilon_{\eta}'$ and this constant $\tilde{\epsilon}$ gives us what we wanted. This completes the proof.
\end{proof}
Let $S'$ be the subset of $\eta^s_{\ell/2}[\hat{x}_{1/2}]\cap \tilde{G}$ that we get from Lemma \ref{annbd}. It has the appropriate measure by construction.

Take a point $\hat{z}_{1/2} = (z_{1/2}, (\omega'')^-,\omega_{1/2}^+,k)\in S'$. Since $S'\subset K$, $U^+[\hat{z}_{1/2}] = z_{1/2}$ which lives on $W^s(\hat{x}_{1/2})$. Since we removed an annulus of radius $\tilde{\epsilon}r'$ around $x_{1/2}$, we have that $d_X(z_{1/2},x_{1/2})>\tilde{\epsilon}r'$. By Lemma \ref{tildeS}, we have that

\begin{align}
d_X(z_{1/2},W^s(\hat{y}_{1/2}))\geq C'(\tilde{\epsilon})^R(r')^R
\end{align}

We know by Corollary \ref{independent} that $r' \geq \frac{r}{M_{NFC}^2}e^{-(\ell/2+2)(\lambda^++\epsilon)}$. Plugging that in we get
\begin{align}
d_X(z_{1/2},W^s(\hat{y}_{1/2})) &\geq C'(\tilde{\epsilon})^R \left(\frac{r}{M_{NFC}^2}e^{-(\ell/2+2)(\lambda^++\epsilon)}\right)^R\\ \nonumber
    &= \left(C'(\tilde{\epsilon})^R\left(\frac{r}{M_{NFC}^2}\right)^Re^{-2(\lambda^++\epsilon)R}\right)e^{-\ell/2(\lambda^++\epsilon)R}\\ \nonumber
    &= \left(C'(\tilde{\epsilon})^R\left(\frac{r}{M_{NFC}^2}\right)^Re^{-2(\lambda^++\epsilon)R}\right)e^{-\ell \alpha_0},
\end{align}
where the last inequality is because we took $\alpha_0 = \frac{1}{2}R(\lambda^++\epsilon)$.

Take $C = C'(\tilde{\epsilon})^R\left(\frac{r}{M_{NFC}^2}\right)^Re^{-2(\lambda^++\epsilon)R}$, this is independent of $\delta$. This completes the proof of QNI and Lemma \ref{0.1}. 
\end{proof}

\subsection{Outline of the proof of Lemma \ref{0.5}}

\subsubsection{Overview}

In this subsection we are in the setting of Alternative 2, (see Section \ref{sec:alt22}). Here, $d_+=1$ a.e. We assume further that $\dim(\calL^s)=2$.

We let $\delta>0$ and we demonstrate that the QNI definition, i.e. Definition \ref{QNIdef}, holds in our setting.

We open with defining a good set $K'$ that we want the set $K$ to be a subset of. This set $K$ is where we will choose $\hat{x}_{1/2} = (x_{1/2},\omega_{1/2},k_{1/2})$ from as in Definition \ref{QNIdef}. Note that we need to pick $S \subset \eta^u[\hat{x}_{1/2}]\cap (U^+[\hat{x}_{1/2}]\times W^+_1)$ which means that we are picking a new $X$-component that lives along $U^+[x,\omega,k]$ and a new future but leaving the $\Omega^-\times [0,1[$-component the same. Note that in the last subsection, the setting of Lemma \ref{0.1} assumed that $U^+[x,\omega,k]=\{x_{1/2}\}$ a.e., this is no longer the case, here $\dim(U^+[x,\omega,k])=1$. The set $S'$ (dependent on the choice of $\hat{y}_{1/2}\in S$) is a subset of $\eta^s_{\omega_{1/2},\ell/2}(x_{1/2})$ meaning that we pick a new $X$-component belonging to $W^s(\hat{x}_{1/2})$, a new past component, and keep $\Omega^+\times [0,1[$-component the same. We work to ensure that $S,S'\subset K'$ so they have good properties. 

We then prove Lemma \ref{elllarge} which demonstrates the QNI condition, with Lemma \ref{firstap} as an intermediate linear approximation step. To get the idea, first think of $W^s(\hat{y}_{1/2})$ and $W^s(\hat{x}_{1/2})$ as a fixed planes. Then think of $U^+[z_{1/2},\omega^-]$ for $(z_{1/2},\omega,k)\in S'$ as a straight line. Vary the $X$-component $z_{1/2}$ throughout some ball $B$ centered around $x_{1/2}$ in $W^s(\hat{x}_{1/2})$ and imagine that for any $z'_{1/2}$ in this ball, $U^+[z_{1/2}',\omega^-]$ is a straight line with the same direction vector as $U^+[z_{1/2},\omega^-]$. The set of $z_{1/2}'\in B$ such that $U^+[z_{1/2}',\omega^-]$ will intersect $W^s(\hat{y}_{1/2})$ is a 1-dimensional set. However $\dim(\calL^s)=2$ and our measure $m_{x,\omega}^s$ cannot be supported on an algebraic curve by Lemma \ref{algcurve}. This allows us to get rid of this 1-dimensional set (for a choice of $\omega^-$) in the definition of $S'$. We then add back in the error terms we get from making these approximations and use Lemma \ref{algcor} to say we can eliminate a tubular neighbourhood of an algebraic curve and maintain the necessary measure required for $S'$. In this way we have that for $\hat{y}_{1/2}\in S$ and $\hat{z}_{1/2}\in S'$ (where $S'$ is dependent on $\hat{y}_{1/2}$) $U^+[\hat{x}_{1/2}]$ is sufficiently bounded away from $W^s(\hat{y}_{1/2})$ and hence have the QNI condition.

\subsubsection{Details}

Again, we are in the setting of Alternative 2, i.e. Section \ref{sec:alt22}. Here, $\dim(\calL^s)=2$ and $d_+=1$ a.e. We let $\delta>0$ be given and define a good set $K'$ with good properties from good sets defined in Sections \ref{stableholon} and \ref{sec:lusin1}. We add to $K'$ another good property that for fixed degree there is some lower bound on the size of tubular neighbourhood around an algebraic curve outside of which we still have large measure. This comes from Lemma \ref{algcor}.

In order to make sure that $S$ and $S'$ belong to $K'$ as well we have to do some work to construct $K$. We first construct $K_1$ and if $K\subset K_1$ then $S'$ can be constructed to belong to $K'$. We employ a Lemma of \cite{EM}, Lemma \ref{3.112} that allows us to say for a.e. $\omega$ and for a large measure subset $X'''$ of $X$, there is a large measure subset $X^*$ such that if $x\in X^*$ then the measure of $X'''$ inside $\eta^s_{\omega,\ell}(x)$ compared to the measure of $\eta^s_{\omega,\ell}(x)$ is large. We define a set $X''_{\omega,k}$ that is the set of $X$ such that there is a large set of $\Omega^-$-components to choose from such that if wee keep $k$ and $\omega^+$ the same we belong to $K'$. We then demonstrate in Lemma \ref{w'large} that the set of $(\omega,k)$ such that $X''_{\omega,k}$ is large in measure is also large in measure.  We take these $(\omega,k)$ and restrict to the conull subset of $\Omega$ such that Lemma \ref{3.112} holds and call this $W$. This construction allows us to change the past and land in a good set. Now we focus on changing the $X$-component and landing in a good set. We then let $X'_{\omega,k}\subset X$ be the set of $X$-components such that along with $\omega $ and $k$ we belong to $K'$. We let $G$ be the set of $(\omega,k)\in W$ such that $X'_{\omega,k}$ is large in measure. We then demonstrate in Lemma \ref{Glarge} that the size of $G$ is big. We let $X'''_{\omega,k} = X'_{\omega,k}\cap X''_{\omega,k}$ which is large for $(\omega,k)\in G$ and apply Lemma \ref{3.112} to get a large measure subset $X^*_{\omega,k}$. We define $K_1$ such that the $X$-component belongs to this set $X'''_{\omega,k}$ for $(\omega,k)\in G$.

We repeat this process to ensure that $S$ can be constructed to belong to $K'$. Here we use a modified version of Lemma \ref{3.112} which also comes from \cite[Lemma 6.3]{EM}. We state this as our Lemma \ref{6.3}. This allows us to say that if we have a large subset $X'''\subset X$, there is a subset $X^*$ such that restricting to the unstable supports this subset still has relatively large measure (relative to the conditional measure denoted $|\cdot |$ on the unstable support). This is used to ensure that we can change $X$-components from $\hat{x}_{1/2}$ to some element on $U^+[\hat{x}_{1/2}]$ and land in a good set. We also have to change futures and we go about that in the same way as we did above in the definition of $(X''_{\omega,k})^+$. We construct the set $K_2$ which is analogous to $K_1$ and take $K=K_1\cap K_2$. Now when we construct $S$ and $S'$ they will belong to $K'$.

Then Lemma \ref{elllarge} works to demonstrate the QNI condition directly. First we do a linear approximation step. For a fixed $(\omega')^-\in \Omega^-$, the tangent lines to the $U^+[z ,(\omega')^-]$ as we move $z$ around a small ball around $x_{1/2}$ inside $W^s(\hat{x}_{1/2})$ vary H\"older continuously. If we fix a direction vector $v$ representing such a tangent line and drag it around a small ball inside $E^s(\hat{x}_{1/2})$ we demonstrate the the set of points inside this small ball where the line in direction $v$ hits the tangent plane through $\exp^{-1}(y_{1/2})(W^s(\hat{y}_{1/2}))$ is 1-dimensional, i.e. is some curve in the ball. This is Lemma \ref{firstap}. Given that $\dim(\calL^s)=2$ and that $m_{\hat{x}_{1/2}}^s$ cannot be supported on some algebraic curve (Lemma \ref{algcurve}), we are almost done up to some error terms, but we must make sure that this curve, and some error around it, does not support too much measure. This allows us to ensure that when we define $S'$, it is sufficiently large. This is accomplished by using Lemma \ref{algcurve} and \ref{algcor}.

\subsection{Proof of Lemma \ref{0.5}}
\begin{proof}(Lemma \ref{0.5})\\
\label{proof0.5}

    Fix $\alpha_0>0$ and let $\delta>0$, we will determine $\ell_0$ and $C$ as in Definition \ref{QNIdef}.

    We are given that $\dim(U^+[x,\omega,k])=1$ a.e. and that $\dim(\calL^s)=2$ a.e. We restate Lemma \ref{algcurve1} from \cite[Section 4]{BEF}.
    
    \begin{lemma}\label{algcurve}
    For $m$-a.e. $(x,\omega)\in Y$ and for all $R>0$, $m^s_{x,\omega}|_{B^s(x,\omega, R)}$ does not assign mass to any algebraic curve of degree $d$.
    \end{lemma}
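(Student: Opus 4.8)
\textbf{Proof strategy for Lemma \ref{algcurve}.} The plan is to run a contracting‑dynamics argument inside normal form coordinates. The two structural inputs are: first, that the leafwise family $\{m^s_{x,\omega}\}$ is $F$‑equivariant up to scalars, i.e. $(f^1_\omega)_* m^s_{x,\omega} = c(x,\omega)\, m^s_{F(x,\omega)}$ for a measurable $c>0$ (this is how the $m^s_{x,\omega}$ were constructed in Section \ref{sec:cond}); and second, that in the normal form coordinates of Proposition \ref{NFC} the map $f^1_\omega$ acts on $W^s(x,\omega)\cong\bbC$ as an affine map $z\mapsto az+b$ with $|a|<1$ (contracting). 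The second point is what matters algebraically: an $\bbR$‑affine map of $\bbC=\bbR^2$ preserves the degree of a real‑algebraic curve, so the class of ``algebraic curves of degree $\le d$ in $W^s(x,\omega)$'' (understood via normal form coordinates, as in Section \ref{sec:breakdown}) is carried to itself by the dynamics.

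First I would reduce the statement to non‑atomicity, i.e. to the case $d=0$. Suppose toward a contradiction that on a positive‑$m$‑measure (hence, since the ``bad'' locus is $F$‑invariant, conull) set of $(x,\omega)$ there is some $R>0$ and a real‑algebraic curve of degree $\le d$ inside $B^s(x,\omega,R)$ carrying positive $m^s_{x,\omega}$‑mass. Because $m^s_{x,\omega}|_{B^s(x,\omega,R)}$ is a finite measure, there is, after minimizing first the degree and then a secondary complexity invariant (number of irreducible components), a canonical such curve $C(x,\omega)$ of maximal mass; one checks this choice is measurable and, by the equivariance and degree preservation above, $F$‑equivariant: $f^1_\omega(C(x,\omega))=C(F(x,\omega))$, with the degree and complexity a.e.\ constant by ergodicity. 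The contracting‑dynamics argument below then forces $m^s_{x,\omega}$ to be \emph{supported on} $C(x,\omega)$; restricting the dynamics to the affine real‑algebraic curve $C(x,\omega)$ lowers the real dimension of the ambient algebraic set by one, so one repeats the argument inside $C(x,\omega)$, terminating at the $0$‑dimensional case, where $C(x,\omega)$ is a finite set of atoms of maximal mass. In every case we have produced a nontrivial $m^s$ supported on an $F$‑equivariant, proper real‑algebraic subset, and it suffices to contradict the atomic case.

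Second, the heart of the matter. Assume $m^s_{x,\omega}$ is purely atomic on a conull set. Pass to normal form coordinates, normalize $m^s_{x,\omega}$ to be a probability measure on $B^s(x,\omega,1)$, and use Lusin's theorem to fix a compact set $K$ of large $m$‑measure on which the largest atom has mass $\ge a_0>0$ and on which the hyperbolicity constants are uniform. Now iterate $F^{-1}$, which \emph{expands} $W^s$ (the $F$‑stable leaf is $F^{-1}$‑unstable): in normal form coordinates the $F^{-1}$‑preimage of $B^s(F^{-1}(x,\omega),1)$ is a ball of radius $\approx e^{\lambda^-}<1$ about $x$, so equivariance shows that, under $F^{-1}$ and in the new normalization, the mass of the largest atom gets multiplied by $\big[m^s_{x,\omega}(B^s(x,\omega,e^{\lambda^-}))\big]^{-1}\ge 1$ while staying $\le 1$. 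Invoking Poincar\'e recurrence to iterate this one‑step estimate along a sequence $n_j\to\infty$ of return times to $K$, the largest‑atom mass must converge, which forces $m^s_{x,\omega}$ to give no mass to the annuli $B^s(x,\omega,1)\setminus B^s(x,\omega,e^{\lambda^-})$ for a.e.\ $(x,\omega)$; iterating this (and letting the scale $R$ vary) collapses $m^s_{x,\omega}$ to a Dirac mass at $x$, i.e.\ to a \emph{trivial} stable conditional. This is the desired contradiction in the regime in which the lemma is applied (where $m^s$ is assumed nontrivial, equivalently $\dim\calL^s\ge 1$).

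I expect the main obstacle to be the careful bookkeeping of normalization scalars through the $F$‑equivariance: the $m^s_{x,\omega}$ are only defined up to scale, so every quantity that is iterated must be phrased as a ratio of masses of the \emph{same} measure, and the ``largest atom'' / ``minimal canonical algebraic curve'' must be chosen both measurably and equivariantly. The recurrence step upgrading the one‑step mass inequality to the collapse statement, together with the dimension induction reducing the degree‑$d$ case to the atomic case, are the other places where the details need care; the purely algebraic ingredient (affine maps preserve degree) and the expansion estimate in normal form coordinates are routine. This is the same scheme as in \cite{BEF}, Section 4.
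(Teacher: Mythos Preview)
The paper gives no argument of its own here (nor at the first appearance, Lemma~\ref{algcurve1}); it simply cites \cite{BEF}, Section~4. Your sketch --- equivariance of the leafwise measures, the affine action in normal form coordinates preserving algebraic degree, and a contracting-dynamics/recurrence argument collapsing everything to a Dirac --- is the expected mechanism and matches what is being cited.

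One step does not work as written. In the atomic case you track the \emph{largest} atom in $B^s(x,\omega,1)$ and claim its normalized mass is non-decreasing under $F^{-1}$ because the normalizing denominator shrinks to $m^s_{x,\omega}(B^s(x,\omega,e^{\lambda^-}))$. But the largest atom in $B^s(x,\omega,1)$ may lie in the annulus $B^s(x,\omega,1)\setminus B^s(x,\omega,e^{\lambda^-})$; it then contributes nothing at $F^{-1}(x,\omega)$, the numerator drops, and your inequality can fail. The standard repair is to first prove (by disintegration along $\eta^s$ plus ergodicity: the set of $(y,\omega)$ with $m^s_{y,\omega}(\{y\})>0$ is $F$-invariant and carries positive $m$-mass once any atom exists, hence full mass) that for $m$-a.e.\ $(x,\omega)$ the basepoint $x$ itself is an atom, and then track $a(x,\omega)=m^s_{x,\omega}(\{x\})/m^s_{x,\omega}(B^s(x,\omega,1))$ instead. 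Since $x$ always sits in the shrunk ball, $a(F^{-1}(x,\omega))\ge a(x,\omega)$ is now honest; invariance of $\int a\,dm$ forces equality a.e., hence $m^s_{x,\omega}\big(B^s(x,\omega,1)\setminus B^s(x,\omega,e^{\lambda^-})\big)=0$, and iterating over return times covers $W^s(x,\omega)\setminus\{x\}$ by null annuli. The same correction --- anchor at the basepoint rather than at an extremal-mass object --- is needed in your curve step: ``canonical curve of maximal mass'' is neither obviously well-defined nor monotone under the dynamics, whereas ``minimal-degree mass-carrying curve through $x$'' is.
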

    \begin{proof}  
    See \cite{BEF}, Section 4.       
    \end{proof}
    Let the conull set where Lemma \ref{algcurve} holds be called $K_C\subset Y$. 

    \begin{lemma}\label{algcor}
        Fix $d\geq 1$ an integer and $R>0$. For any $\delta\in ]0,1[$, $(x,\omega)\in K_C$, there exists $\epsilon>0$ such that given any algebraic curve of degree $d$ in $B^s(x,\omega, R)$, outside of a tubular neighbourhood of diameter $\epsilon$ we have $m^s_{x,\omega}$-measure at least $(1-\delta) \cdot m^s_{x,\omega}(B^s(x,\omega,R))$.
    \end{lemma}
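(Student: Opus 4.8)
The plan is to deduce Lemma \ref{algcor} from Lemma \ref{algcurve} by a straightforward compactness/continuity argument applied to the family of algebraic curves of bounded degree. First I would fix $(x,\omega)\in K_C$, $d\geq 1$, $R>0$ and $\delta\in\,]0,1[$, and observe that the space of (reduced) algebraic curves of degree at most $d$ meeting $B^s(x,\omega,R)$, after passing to normal form coordinates (Section \ref{sec:NFC}) so that $W^s(x,\omega)$ becomes $\bbC$, is parametrized by a compact set: curves of degree $\le d$ in $\bbC^2$ (or the relevant ambient $\bbP^2$ after taking Zariski closure) form a projective space $\bbP^N$ with $N=\binom{d+2}{2}-1$, and intersecting with the closed ball $\overline{B^s(x,\omega,R)}$ keeps us in a compact subfamily $\calC_d$. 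For each $C\in\calC_d$ and each $\epsilon>0$, let $C^{(\epsilon)}$ denote the closed $\epsilon$-tubular neighbourhood of $C$ inside $B^s(x,\omega,R)$, and set $\phi(C,\epsilon)\defeq m^s_{x,\omega}(C^{(\epsilon)}\cap B^s(x,\omega,R))$.

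The key step is to show that $\sup_{C\in\calC_d}\phi(C,\epsilon)\to 0$ as $\epsilon\to 0^+$. Suppose not: then there are $\epsilon_n\to 0$ and curves $C_n\in\calC_d$ with $\phi(C_n,\epsilon_n)\geq \delta_0>0$. By compactness of $\calC_d$, after passing to a subsequence, $C_n\to C_\infty\in\calC_d$. Since $C_n^{(\epsilon_n)}$ shrinks onto $C_\infty$ in the Hausdorff sense (any point at distance $\ge\rho$ from $C_\infty$ is, for $n$ large, at distance $\ge\rho/2$ from $C_n$ hence outside $C_n^{(\epsilon_n)}$ once $\epsilon_n<\rho/2$), we get $\limsup_n m^s_{x,\omega}(C_n^{(\epsilon_n)}\cap B^s(x,\omega,R))\leq m^s_{x,\omega}(C_\infty\cap \overline{B^s(x,\omega,R)})$ by the portmanteau theorem (upper semicontinuity of measure on closed sets, using that $m^s_{x,\omega}$ restricted to $B^s(x,\omega,R)$ is finite by local finiteness). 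But $C_\infty$ is an algebraic curve of degree $\le d$, so $m^s_{x,\omega}|_{B^s(x,\omega,R)}(C_\infty)=0$ by Lemma \ref{algcurve} — contradiction. Hence the uniform vanishing holds, and one simply picks $\epsilon$ with $\sup_{C\in\calC_d}\phi(C,\epsilon)<\delta\cdot m^s_{x,\omega}(B^s(x,\omega,R))$, which is exactly the claim.

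The main obstacle is being careful about two points: (i) that "algebraic curve of degree $d$" makes sense here only after the normal form identification $W^s(x,\omega)\cong\bbC$ — as noted in Section \ref{sec:breakdown}, Zariski closure is taken in these coordinates and pushed back — so I must phrase $\calC_d$ and the tubular neighbourhoods consistently in those coordinates, where distances are comparable to the Riemannian ones on compact pieces by Lemma \ref{comp3}; and (ii) the upper semicontinuity argument needs $m^s_{x,\omega}$ to be a Radon measure that is finite on the bounded set $B^s(x,\omega,R)$, which holds since leafwise conditional measures are locally finite (Section \ref{sec:cond}). A minor subtlety is that Lemma \ref{algcurve} as stated rules out mass on curves of a fixed degree $d$; since we only need degree exactly $d$ (or, harmlessly, $\le d$, as finitely many applications of the lemma), this is not an issue. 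Everything else is routine: compactness of $\calC_d\subset\bbP^N$, Hausdorff convergence of shrinking tubular neighbourhoods, and the portmanteau theorem.
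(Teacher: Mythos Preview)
Your proposal is correct and follows essentially the same approach as the paper: a contradiction argument in which a sequence $\epsilon_n\to 0$ with offending curves $C_n$ is passed to a convergent subsequence by compactness of the space of degree-$d$ curves, and the limiting curve $C_\infty$ is shown to carry positive $m^s_{x,\omega}$-mass, contradicting Lemma \ref{algcurve}. Your version is simply more explicit about the parametrization of $\calC_d$, the Hausdorff convergence of the shrinking tubes, and the upper-semicontinuity step, all of which the paper compresses into the single phrase ``the sequence of $C_n$ has a convergent subsequence by compactness, and so we have some limiting curve $C$; this limiting curve supports mass.''
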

    \begin{proof}

    Suppose not, then there exists $\delta$ and $(x,\omega)\in K_C$ such that for all $\epsilon>0$, there exists an algebraic curve $C$ in $B^s(x,\omega,R)$ such that outside a tubular neighbourhood of radius $\epsilon$ around $C$ we have less than $(1-\delta) \cdot m^s_{x,\omega}(B^s(x,\omega,R))$.

    Take a sequence of $\epsilon_n\to 0$, each has an algebraic curve $C_n$ such that inside of a $\epsilon_n$ tubular neighbourhood of $C_n$ we have measure at greater than $(1-\delta)m^s_{x,\omega}(B^s(x,\omega,R))$. The sequence of $C_n$ has a convergent subsequence by compactness, and so we have some limiting curve $C$. This limiting curve $C$ supports mass which is a contradiction to Lemma \ref{algcurve}. This completes the proof.        
    \end{proof}

    Taking $\delta=\delta/4$ and $R=1$ in Lemma \ref{algcor}, one can get a Lusin set $K_{C,1}$ on which the $\epsilon$ from Lemma \ref{algcor} is bounded below by $\epsilon^*$.

    Fix $i>j>2$ such that $i\gg2j$, and 
    \begin{align*}(1-2\delta^j)(1-\frac{\delta^i/2}{(\delta^j-\delta^i/2)})>1-\delta/4,\\ (1-\delta^j)(1-\sqrt{2}\delta^{j/2})>(1-\delta^k),\end{align*} where $k$ is such that $(1-\delta^k)(1-\delta^{j/2})>1-\delta$. 

    Recall the Oseledets' set $\Lambda$ (conull) and $\Lambda_{loc}$ from Section \ref{sec:stablemfld}; also recall $K_r$ and $K_s$ from Section \ref{sec:lusin} from Lemmas \ref{comp1} and \ref{Kss} and $K_{\xi} = \Lambda\cap \Lambda_{loc} \cap K_r\cap K_s$ from Lemma \ref{errorr} and Lemma \ref{errorr1}. Belonging to $K_{\xi}$, Lemma \ref{errorr} tells us the size of ball on which we can approximate the stable and unstable manifolds by their tangent spaces up to error $\xi$ ambiently in $X$ is $R=\sqrt{\xi/L}\cdot \frac{1}{M_r^{3/2}}$ where $M_r$ is chosen before $\xi$. Also similar for the tangent lines of $U^+[x,\omega]$ using Lemma \ref{errorr1}. Belonging to $\Lambda_{loc}$ will ensure the existence of local stable manifolds of some size $q$. Also recall the set $K_{\beta}$ from Section \ref{stableholon}.

    Let $$K' = \Lambda\cap \Lambda_{loc}\cap K_r\cap K_s\cap K_{\xi}\cap K_{\beta}\cap K_{C,1}$$ and be such that it has measure $\hat{m}(K')>1-\delta^i/2$.

    Now we apply Lemma \ref{3.11} as we did in the proof of \ref{0.1}, we recall the statement below:
    \begin{lemma}[\cite{EM}, Lemma 3.11] \label{3.112}
    Let $\delta>0$, for a.e. $\omega \in \Omega$ given $X_{\omega}'\subset X_{\omega}$ with $\nu(X')>1-\delta^2$, there exists $X_{\omega}^*\subset X'$ with $\nu(X_{\omega}^*)>1-{\delta}$ and such that for $x\in X_{\omega}^*$ and any $\ell>0$ we have that
    \begin{align}
        m_{x,\omega}^{s}(X_{\omega}'\cap \eta_{\omega,\ell}^{s}(x)) \geq (1-{\delta})m_{x,\omega}^{s}(\eta_{\omega,\ell}^{s}(x)).
    \end{align}
\end{lemma}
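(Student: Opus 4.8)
The plan is to read Lemma~\ref{3.112} as a martingale maximal inequality (a ``Doob'' estimate). The structural input I would establish first is that the family $\{\eta^s_{\omega,\ell}\}_{\ell>0}$ is \emph{monotone refining} in $\ell$: iterating the Pesin partition axiom $F^{-1}\eta^s\le\eta^s$ of Proposition~\ref{2.1} gives, for $m$-a.e.\ $p\in Y$ and all $n\ge 0$, the nesting $\eta^s(p)\subseteq F^{-n}\big(\eta^s(F^n(p))\big)$; substituting $p=F^{-(t+s)}(x,\omega)$ and applying $F^{t+s}$ yields
\[
\eta^s_{\omega,\,t+s}(x)\ \subseteq\ \eta^s_{\omega,\,t}(x)\qquad(s,t>0).
\]
Hence, after fixing $\omega$ in a full-measure set and disintegrating $m$ over the fiber partition $\{X_\omega\}$ to obtain the fiber conditional $\nu_\omega$ (which equals $\nu$ when $\nu$ is invariant), the $\sigma$-algebras $\calA_\ell$ generated by $(F^{\ell}\eta^s)|_{X_\omega}$ form an \emph{increasing} filtration of $(X_\omega,\nu_\omega)$ as $\ell\uparrow\infty$. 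From the construction of $m^s_{x,\omega}$ in Section~\ref{sec:cond} together with the $F$-invariance of $m$, the conditional measure of $\nu_\omega$ with respect to $\calA_\ell$ at the point $x$ is precisely the normalized restriction $m^s_{x,\omega}|_{\eta^s_{\omega,\ell}(x)}\big/\,m^s_{x,\omega}\!\big(\eta^s_{\omega,\ell}(x)\big)$, a genuine probability measure since $\eta^s_{\omega,\ell}(x)\subseteq\eta^s_\omega(x)$ has finite $m^s_{x,\omega}$-mass. Therefore
\[
g_\ell(x)\ \defeq\ \frac{m^s_{x,\omega}\!\big((X_\omega\setminus X'_\omega)\cap\eta^s_{\omega,\ell}(x)\big)}{m^s_{x,\omega}\!\big(\eta^s_{\omega,\ell}(x)\big)}\ =\ \bbE_{\nu_\omega}\!\big[\,\mathbf 1_{X_\omega\setminus X'_\omega}\ \big|\ \calA_\ell\,\big](x)
\]
is a uniformly bounded $(\calA_\ell)$-martingale in the parameter $\ell$.

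The second step is to apply Doob's weak-type maximal inequality to $g_\ell$: for every $c>0$,
\[
\nu_\omega\Big(\big\{\,x:\ \sup_{\ell>0} g_\ell(x)>c\,\big\}\Big)\ \le\ \frac1c\,\nu_\omega\!\big(X_\omega\setminus X'_\omega\big)\ <\ \frac{\delta^2}{c}.
\]
The supremum over the continuum $\ell\in(0,\infty)$ is harmless: by the monotonicity above, $\ell\mapsto m^s_{x,\omega}\!\big(\eta^s_{\omega,\ell}(x)\big)$ and the numerator of $g_\ell$ are each monotone, so $g_\ell$ has one-sided limits everywhere and the supremum over $\ell>0$ agrees with the supremum over rational $\ell$. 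Taking $c=\delta$ and setting
\[
X^*_\omega\ \defeq\ X'_\omega\ \setminus\ \big\{\,x:\ \sup_{\ell>0} g_\ell(x)>\delta\,\big\},
\]
we obtain $\nu_\omega(X^*_\omega)\ge 1-\delta^2-\delta$, and for every $x\in X^*_\omega$ and every $\ell>0$ we have $g_\ell(x)\le\delta$, i.e.
\[
m^s_{x,\omega}\!\big(X'_\omega\cap\eta^s_{\omega,\ell}(x)\big)\ \ge\ (1-\delta)\,m^s_{x,\omega}\!\big(\eta^s_{\omega,\ell}(x)\big),
\]
which is the assertion. If the bound $\nu_\omega(X^*_\omega)>1-\delta$ is wanted exactly as stated, it suffices to run the same argument with the hypothesis strengthened to $\nu(X')>1-\delta^3$ and threshold $c=\delta^2$; I would absorb this into the choice of constants. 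The unstable version follows by the identical argument with $\eta^u_{\omega,\ell}$ and $m^u_{x,\omega}$.

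The step I expect to be the real work is not the maximal inequality itself but the measure-theoretic bookkeeping underneath it: checking that, restricted to a single fiber, $\{\eta^s_{\omega,\ell}\}$ is an honest increasing filtration, and, more delicately, verifying that the conditional measures of $\nu_\omega$ along the $\calA_\ell$-atoms are the normalized leafwise conditionals $m^s_{x,\omega}|_{\eta^s_{\omega,\ell}(x)}$ \emph{for every $\ell$ simultaneously}. This identification is precisely what the normalization in Section~\ref{sec:cond}, the $F$-invariance of $m$, and Proposition~\ref{2.1}(1) are designed to provide, but one must take care to fix a single conull set of $(x,\omega)$ — and then, via Fubini, a single conull set of $\omega$ — on which it holds uniformly in $\ell$. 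Once that uniform identification is in hand, the conclusion is immediate from Doob.
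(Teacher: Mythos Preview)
The paper does not supply its own proof of this lemma; both appearances (Lemmas~\ref{3.11} and~\ref{3.112}) are simply quoted from \cite[Lemma~3.11]{EM} without argument. Your proof via Doob's maximal inequality on the increasing filtration generated by $\{\eta^s_{\omega,\ell}\}_{\ell>0}$ is precisely the standard argument and is correct; your care in identifying the fiberwise conditionals of $\nu_\omega$ along the $\calA_\ell$-atoms with normalized restrictions of $m^s_{x,\omega}$, and in reducing the continuous-$\ell$ supremum to a countable one via monotonicity, are exactly the points that need checking.

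On the constant you flagged: you can recover the stated bound $\nu_\omega(X^*_\omega)>1-\delta$ (rather than $1-\delta-\delta^2$) without strengthening the hypothesis. The atoms $\eta^s_{\omega,\ell}(x)=F^\ell\big(\eta^s(F^{-\ell}(x,\omega))\big)$ are $F^\ell$-images of uniformly bounded stable plaques and hence shrink to points as $\ell\to\infty$; by martingale convergence $g_\ell\to\mathds 1_{X_\omega\setminus X'_\omega}$ a.e., so $\{\sup_\ell g_\ell\le\delta\}\subset X'_\omega$ modulo a null set and no separate intersection with $X'_\omega$ is required. Your parenthetical about $\nu_\omega$ versus $\nu$ is also the right caution: in the paper's applications (Lemmas~\ref{0.1} and~\ref{0.5}) invariance of $\nu$ has not yet been established, so the lemma should really be read with the fiber conditional $\nu_\omega$ throughout, exactly as your argument produces it.
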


Let $\Omega'$ be the conull subset of $\Omega$ where Lemma \ref{3.112} holds.

Let 
$$V_{x,\omega,k} = \{(\omega')^- \in \Omega^- : (x,(\omega^-)',\omega^+,k) \in K'\},$$
and $$X''_{\omega,k} = \{x\in X:\mu^{\bbN}(V_{x,\omega,k})>1-\delta^j\}.$$

\begin{lemma}\label{w'large}
    There exists a subset $W' \subset \Omega\times [0,1[$ of measure at least $1-\delta^{i-2j}/2$ such that for $(\omega,k)\in W'$, $X''_{\omega,k}$ has measure at least $1-\delta^j$.
\end{lemma}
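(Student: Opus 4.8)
The plan is to prove Lemma \ref{w'large} by the same Fubini/Chebyshev argument used for Lemma \ref{41} in the proof of Lemma \ref{0.1}. The statement is, up to renaming of the good set, identical to Lemma \ref{41}, so I would mirror that proof exactly. First I would set up the relevant product decomposition: write $\hat m$ on $Z = X\times\Omega^-\times\Omega^+\times[0,1[$ as $\nu\times\mu^{\bbZ}\times dt$ (valid here because invariance of $\nu$ is not yet known, but $\hat m$ still disintegrates over $(\omega,k)$ as $\mu^{\bbZ}\times dt$ with $\nu$-fibers after projecting away the past; more carefully, one uses the disintegration of $\hat m$ with respect to the projection to $\Omega^+\times[0,1[$ whose conditional is $\nu\times\mu^{\bbN}$, as in Lemma \ref{m}). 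The key point is that $K'$ has $\hat m((K')^c)<\delta^i/2$ by construction.

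The core step: consider the measurable set
\[
Y'' \defeq \{(x,\omega^+,k) : \mu^{\bbN}(V_{x,\omega,k}) > 1-\delta^j\},
\]
and apply the analogue of Lemma \ref{G!} — i.e. a Chebyshev/Fubini estimate — to conclude $\hat m(Y'')>1-\delta^{i-j}/2$. Concretely, on $(Y'')^c$ the set of pasts $(\omega')^-$ with $(x,(\omega')^-,\omega^+,k)\in (K')^c$ has $\mu^{\bbN}$-measure at least $\delta^j$; integrating the indicator of $(K')^c$ over those configurations and using Fubini gives
\[
\delta^{i}/2 > \hat m((K')^c) \ \geq\ \delta^j\cdot \hat m((Y'')^c),
\]
hence $\hat m((Y'')^c) < \delta^{i-j}/2$. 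Then, since $X''_{\omega,k} = \{x : (x,\omega^+,k)\in Y''\}$, I apply the same Chebyshev step one more level up: let
\[
W' \defeq \{(\omega,k) : \nu(X''_{\omega,k}) > 1-\delta^j\};
\]
on $(W')^c$ the fiber $\{x : x\notin X''_{\omega,k}\}$ has $\nu$-measure at least $\delta^j$, so integrating $\mathds 1_{(Y'')^c}$ over $(W')^c\times X$ and applying Fubini yields $\delta^j\cdot(\mu^{\bbZ}\times dt)((W')^c) \leq \hat m((Y'')^c) < \delta^{i-j}/2$, i.e. $(\mu^{\bbZ}\times dt)((W')^c) < \delta^{i-2j}/2$, which is exactly the claimed bound $1-\delta^{i-2j}/2$ on the measure of $W'$.

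I don't anticipate a genuine obstacle here — this is a routine two-step nested Markov-inequality argument, and the only thing to be careful about is bookkeeping of the exponents of $\delta$ (ensuring $i>2j$ so that $\delta^{i-2j}$ and $\delta^{i-j}$ are the small quantities, matching the constraints imposed on $i,j$ just before the lemma) and making sure the Fubini disintegration of $\hat m$ over $(\omega,k)$ with $\nu$-conditionals is the correct one in the $d_+=1$ setting (where $\nu$ is not yet known invariant, so one must disintegrate via the projection to $\Omega^+\times[0,1[$ as in Lemma \ref{m} rather than naively writing $\hat m = \nu\times\mu^{\bbZ}\times dt$). The cleanest write-up simply says: "The proof is identical to that of Lemma \ref{41}, replacing the set $K^*$ there with the set $K'$ here," and then reproduces the two displayed Fubini estimates above.

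\begin{proof}
The argument is the same nested Chebyshev--Fubini estimate used in the proof of Lemma \ref{41}. Recall $\hat m((K')^c)<\delta^i/2$. Disintegrate $\hat m$ over the projection to $\Omega^+\times[0,1[$, whose conditional measures are $\nu\times\mu^{\bbN}$ as in Lemma \ref{m}. Set
\[
Y'' \defeq \{(x,\omega^+,k)\in X\times\Omega^+\times[0,1[ \ :\ \mu^{\bbN}(V_{x,\omega,k})>1-\delta^j\}.
\]
If $(x,\omega^+,k)\in (Y'')^c$, then the set of pasts $(\omega')^-$ with $(x,(\omega')^-,\omega^+,k)\in (K')^c$ has $\mu^{\bbN}$-measure at least $\delta^j$; integrating $\mathds 1_{(K')^c}$ over the corresponding configurations and applying Fubini gives
\[
\delta^i/2 \ >\ \hat m((K')^c) \ \geq\ \delta^j\cdot \hat m\big((Y'')^c\big),
\]
so $\hat m((Y'')^c)<\delta^{i-j}/2$. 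Now $X''_{\omega,k}=\{x\in X : (x,\omega^+,k)\in Y''\}$, and we set
\[
W' \defeq \{(\omega,k)\in\Omega\times[0,1[ \ :\ \nu(X''_{\omega,k})>1-\delta^j\}.
\]
If $(\omega,k)\in (W')^c$, then $\{x : x\notin X''_{\omega,k}\}$ has $\nu$-measure at least $\delta^j$; integrating $\mathds 1_{(Y'')^c}$ over $(W')^c$ and applying Fubini once more yields
\[
\delta^{i-j}/2 \ >\ \hat m\big((Y'')^c\big) \ \geq\ \delta^j\cdot (\mu^{\bbZ}\times dt)\big((W')^c\big),
\]
hence $(\mu^{\bbZ}\times dt)((W')^c)<\delta^{i-2j}/2$. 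This gives the stated lower bound on the measure of $W'$, and by construction $X''_{\omega,k}$ has measure at least $1-\delta^j$ for $(\omega,k)\in W'$.
\end{proof}
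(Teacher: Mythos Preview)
Your proposal is correct and takes essentially the same approach as the paper, whose entire proof reads ``Same proof as in Lemma \ref{41}.'' You have simply written out the two-step Chebyshev--Fubini argument underlying Lemma \ref{41} (and Lemma \ref{G!}) in detail, and your remark that one must be careful about how $\hat m$ disintegrates when $\nu$ is not yet known to be invariant is a fair point that the paper leaves implicit.
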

\begin{proof}
Same proof as in Lemma \ref{41}.
\end{proof}
Let $W = W'\cap (\Omega'\times [0,1[)$.

For $(\omega,k)\in W$ let
$$X_{\omega,k}' = \{x\in X : (x,\omega,k)\in K'\} \subset X_{\omega},$$
\begin{align}G = \{(\omega,k) \in W : \nu(X_{\omega,k}')>1-\delta^j \}.
\end{align}

\begin{lemma}\label{Glarge}
    The measure of $G$ is $\mu^{\bbZ}\times dt(G)>1-\frac{\delta^i/2}{(\delta^j-\delta^i/2)}$.
\end{lemma}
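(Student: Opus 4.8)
The plan is to mirror the proof of Lemma \ref{41} (equivalently Lemma \ref{42}), which is exactly the Fubini-type counting argument we have already used twice. The set-up is: $W$ has measure at least $1-\delta^{i-2j}/2$ by Lemma \ref{w'large}, we have $\hat{m}(K')>1-\delta^i/2$, and for each $(\omega,k)\in W$ the fiber $X'_{\omega,k}$ consists of those $x$ with $(x,\omega,k)\in K'$. We want to bound the measure of the ``bad'' set of $(\omega,k)\in W$ for which $\nu(X'_{\omega,k})\le 1-\delta^j$.

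First I would set $H \defeq W\setminus G$, i.e. the set of $(\omega,k)\in W$ with $\nu((X'_{\omega,k})^c) \ge \delta^j$ (working relative to the full space $X$, since $K'$ is a subset of $Y$ and its complement fiber is $(X'_{\omega,k})^c = \{x : (x,\omega,k)\notin K'\}$). Then consider the set $E \defeq \{(x,\omega,k) : (\omega,k)\in H,\ (x,\omega,k)\in (K')^c\} \subset (K')^c$, so that $\hat{m}((K')^c)\ge \hat{m}(E)$. Applying Fubini, integrating first in the $X$-variable against $\nu$ and then in $(\omega,k)$ against $\mu^{\bbZ}\times dt$,
\begin{align*}
\hat{m}((K')^c) \ge \hat{m}(E) = \int_H \nu\big((X'_{\omega,k})^c\big)\, d(\mu^{\bbZ}\times dt) \ge \delta^j \cdot (\mu^{\bbZ}\times dt)(H).
\end{align*}
Hence $(\mu^{\bbZ}\times dt)(H) \le \frac{\delta^i/2}{\delta^j}$. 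Since $W$ has measure at least $1-\delta^{i-2j}/2$ and $G = W\setminus H$, this gives $(\mu^{\bbZ}\times dt)(G) \ge 1 - \delta^{i-2j}/2 - \frac{\delta^i/2}{\delta^j} > 1-\frac{\delta^i/2}{\delta^j-\delta^i/2}$, where the last inequality is a routine arithmetic comparison exactly as in the proof of Lemma \ref{42} (using $i\gg 2j$ so the $\delta^{i-2j}/2$ term is absorbed, and bounding $\delta^j$ below by $\delta^j-\delta^i/2$).

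There is no real obstacle here; the only point requiring a little care is bookkeeping of which measure (relative to $W$ or to the whole space) the density threshold $\delta^j$ is taken with respect to, and making sure the final chain of inequalities matches the stated constant $\frac{\delta^i/2}{\delta^j-\delta^i/2}$. This is the same computation as in Lemma \ref{42}, so I would simply write ``Same proof as in Lemma \ref{42}'' or reproduce the three displayed lines above.
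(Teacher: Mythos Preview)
Your approach is exactly the paper's: the paper simply writes ``This is the same as the proof of Lemma~\ref{42}'', and your Fubini computation giving $(\mu^{\bbZ}\times dt)(H)\le \tfrac{\delta^i/2}{\delta^j}$ is correct (in fact slightly sharper than the paper's version of Lemma~\ref{42}, which unnecessarily weakens $\delta^j$ to $\delta^j-\delta^{i-j}/2$).

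However, the final ``routine arithmetic comparison'' you invoke is false: one does \emph{not} have
\[
1-\tfrac{\delta^{i-2j}}{2}-\frac{\delta^i/2}{\delta^j}\;>\;1-\frac{\delta^i/2}{\delta^j-\delta^i/2}.
\]
The gap $\tfrac{\delta^i/2}{\delta^j-\delta^i/2}-\tfrac{\delta^i/2}{\delta^j}$ is of order $\delta^{2(i-j)}$, far smaller than the term $\tfrac{\delta^{i-2j}}{2}$ you need to absorb (e.g.\ with $\delta=0.1$, $i=10$, $j=3$ the left side is $\approx 1-5\cdot 10^{-5}$ while the right side is $\approx 1-5\cdot 10^{-8}$). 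The paper's own proof of Lemma~\ref{42} has the same bookkeeping slip---it passes from a bound on $H$ directly to $G'>1-(\text{bound on }H)$ without subtracting the measure defect of $W$. This is harmless for how the lemma is used downstream (only ``$G$ has measure close to $1$'' is needed, and your honest bound $1-\tfrac{\delta^{i-2j}}{2}-\tfrac{\delta^{i-j}}{2}$ amply suffices), but the constant displayed in the lemma statement is not what either argument actually yields.
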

\begin{proof}
  This is the same as the proof of Lemma \ref{42}.   
\end{proof}

Let $X'''_{\omega,k} = X'_{\omega,k} \cap X''_{\omega,k}$ and use this in Lemma \ref{3.112}, get a subset $X^*_{\omega,k}$ where Equation \eqref{star} holds. For $(\omega,k)\in G$ we know that $X'_{\omega,k}$ has measure at least $1-\delta^j$, and that $X''_{\omega,k}$ has measure at least $1-\delta^j$. This implies that $X'''_{\omega,k}$ has measure at least $1-2\delta^j$.

Then take 
$$K_1 = \left(\bigcup_{(\omega,k)\in G} X_{\omega,k}^* \times \{\omega\} \times \{k\} \right) \subset K'.$$
By Lemma \ref{fubini2}, we have that this has measure $$\hat{m}(K_1)> (1-2\delta^j)(1-\frac{\delta^i/2}{(\delta^j-\delta^i/2)})>1-\delta/4.$$

 We can do something very similar for the unstable supports. Let $$\calB_{\ell}[x] \defeq U^+[x,\omega]\cap \eta_{\omega,\ell}^{u}(x),$$ as in Section 6 of \cite{EM}. Additionally, define $|\cdot |$ as the conditional measure on $\calB_0[x]$.
 \begin{lemma}[\cite{EM}, Lemma 6.3]\label{6.3}
     Let $\delta>0$, for a.e. $\omega\in \Omega$, given $(X_{\omega}')^+\subset X_{\omega}$ with $\nu(K)>1-\delta^2$, there exists a subset $(X_{\omega}^*)^+\subset (X_{\omega}')^+$ with $\nu(K^*)>1-\delta$ such that for any $x\in (X_{\omega}^*)^+$ and $\ell>0$ we have
     \begin{align}\label{u+holds}
         |(X_{\omega}^*)^+\cap \calB_{\ell}[x]| \geq (1-\delta)|B_{\ell}[x]|.
     \end{align}
 \end{lemma}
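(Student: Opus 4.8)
The plan is to follow the proof of \cite[Lemma 6.3]{EM}, which is the same Hardy--Littlewood maximal-function argument as \cite[Lemma 3.11]{EM} (quoted above as Lemma \ref{3.112}), carried out for the unstable orbit conditional measures $|\cdot|$ in place of the stable conditionals. First I would fix once and for all an $F$-equivariant representative of the family $|\cdot|$ of conditional measures along the orbit sets $\calB_0[x]=U^+[x,\omega]\cap\eta^u_\omega(x)$; by item (2) of Definition \ref{compat} these are proportional to the restrictions of the Haar measures $\tilde m^u_{x,\omega}$, and together with $\eta^u$ they disintegrate $m$. Next I would record two structural facts: that the family $\{\calB_\ell[x]\}_{\ell\ge0}$ is nested and decreasing in $\ell$ (this follows from the unstable analogue of item (1) of Proposition \ref{2.1} together with the $F^t$-equivariance of $U^+$, exactly as the inclusion $\eta^u_{\omega,t}(x)\subset\eta^u_\omega(x)$ was obtained), and that $(x,\omega)\mapsto\calB_\ell[x]$ is measurable; consequently, for fixed $(x,\omega)$ the sub-atoms $\{\calB_\ell[y]\}$ generate an increasing filtration $\mathcal F_\ell$ of the probability space $(\calB_0[x],|\cdot|/|\calB_0[x]|)$.

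Then, given $\delta>0$ and a measurable choice $\omega\mapsto (X'_\omega)^+\subset X_\omega$ with $\nu((X'_\omega)^+)>1-\delta^2$ for a.e.\ $\omega$, I would introduce the maximal function
\[
\Phi(x,\omega)\;=\;\sup_{\ell\ge0}\ \frac{\big|\calB_\ell[x]\setminus (X'_\omega)^+\big|}{\big|\calB_\ell[x]\big|},
\]
which is the maximal function of the $\mathcal F_\ell$-martingale generated by $\mathbf 1_{(X'_\omega)^{+c}}$ on each atom. Doob's maximal inequality then gives, atomwise, $\big|\{y\in\calB_0[x]:\Phi(y,\omega)>\delta\}\big|\le\delta^{-1}\big|\calB_0[x]\setminus(X'_\omega)^+\big|$; integrating against the factor measure and using $F$-invariance of $m$ yields $m(\{\Phi>\delta\})<\delta$, and a Chebyshev argument in the $\omega$-variable promotes this to $\nu(\{x:\Phi(x,\omega)>\delta\})<\delta$ for a.e.\ $\omega$ (shrinking the numerical value of $\delta^2$ at the outset as needed to absorb the constants). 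Setting $(X^*_\omega)^+=(X'_\omega)^+\cap\{\Phi(\cdot,\omega)\le\delta\}$ produces a set with $\nu((X^*_\omega)^+)>1-\delta$ on which \eqref{u+holds} holds by the very definition of $\Phi$ (with $(X'_\omega)^+$ on the left-hand side; the form with $(X^*_\omega)^+$ follows from the additional observation that for $y\in\calB_\ell[x]$ one has $\calB_{\ell'}[y]=\calB_{\ell'}[x]$ for every $\ell'\le\ell$, so $\Phi(y,\omega)$ is governed only by the scales $\ell'>\ell$, and a second application of Doob inside $\calB_\ell[x]$, fed by the density bound just obtained, upgrades the conclusion).

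The only point requiring genuine care --- the main obstacle --- is the verification that $\{\calB_\ell[x]\}_\ell$ genuinely forms a nested measurable filtration to which Doob's inequality applies (equivalently, that these sets satisfy a Vitali-type covering property on the unstable leaf), together with the mild self-referential bookkeeping in the last parenthetical if one insists on $(X^*_\omega)^+$ rather than $(X'_\omega)^+$ inside \eqref{u+holds}. Both of these are handled exactly as in \cite{EM}, to which I would refer the reader for the remaining routine details, since the present statement is a verbatim transplant of that lemma with the stable conditionals replaced by $|\cdot|$.
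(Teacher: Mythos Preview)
The paper does not give its own proof of this lemma: it is stated as a direct citation of \cite[Lemma 6.3]{EM} and is used as a black box. Your proposal correctly reproduces the Doob/Hardy--Littlewood maximal-function argument from \cite{EM}, using the nested filtration $\{\calB_\ell[x]\}_{\ell\ge0}$ of the unstable orbit atoms and the conditional measures $|\cdot|$; this is exactly the intended approach, so in that sense your proof and the paper's (cited) proof coincide.

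One remark on the self-referential issue you flagged: the displayed inequality \eqref{u+holds} almost certainly has a typo and should read $(X'_\omega)^+$ rather than $(X^*_\omega)^+$ on the left, matching the stable version Lemma~\ref{3.112} (which has $X'_\omega$) and matching the paper's own subsequent use of the lemma, where the conclusion is applied with $(X'''_{\omega,k})^+$ in the intersection, not the star set. Your basic maximal-function argument already delivers this corrected statement directly; the ``upgrade'' paragraph you sketched (a second application of Doob inside $\calB_\ell[x]$) is not needed, and as written is a bit imprecise, since $\Phi(y,\omega)\le\delta$ does not immediately localize to scales $\ell'>\ell$ in the way you describe. I would simply drop that parenthetical and note the likely typo.
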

We do as before, let $(\Omega')^+$ be the conull subset of $\Omega$ where Lemma \ref{6.3} holds. 

Let $$V_{x,\omega,k}^+ = \{(\omega')^+ \in \Omega^+ : (x,\omega^-,(\omega^+)',k)\in K'  \}, $$
then set
$$(X''_{\omega,k})^+ = \{x\in X : \mu^{\bbN}(V_{x,\omega,k}^+)>1-\delta^j\}.$$

Lemma \ref{41} tells us that there is a subset $(W')^+\subset \Omega\times [0,1[$ of measure at least $1-\delta^{i-2j}/2$ such that for $(\omega,k)\in W'$, $(X_{\omega,k}'')^+$ has measure at least $1-\delta^j$.

Let $W^+ = (W')^+\cap ((\Omega')^+\times [0,1[)$

For $(\omega,k)\in W^+$ let
$$(X'_{\omega,k})^+ = \{x\in X : (x,\omega,k)\in K'\} \subset X_{\omega}$$
$$G^+ = \{(\omega,k)\in W : \nu((X'_{\omega,k})^+) >1-\delta^j\}.$$
Lemma \ref{42} gives us that $\mu^{\bbZ}\times dt(G^+)>1-\frac{\delta^i/2}{(\delta^j -\delta^i/2)}$.

Now let $$(X''')_{\omega,k}^+ = (X'_{\omega,k})^+\cap (X''_{\omega,k})^+,$$ use this in Lemma \ref{6.3} to get a subset $(X_{\omega,k}^*)^+$ where Equation \eqref{u+holds} holds. For $(\omega,k)\in G^+$ we know that $(X'_{\omega,k})^+ $ has measure at least $1-\delta^j$ and that $(X''_{\omega,k})^+$ has measure at least $1-\delta^j$ so $(X'''_{\omega,k})^+$ has measure at least $1-2\delta^j$.

Take 
$$K_2 = \left(\bigcup_{(\omega,k)\in G^+} (X^*_{\omega,k})^+ \times \{\omega\} \times \{k\} \right)\subset K'.$$

By Lemma \ref{fubini2} we know that the measure of $K_2$ is $$\hat{m}(K_2) > (1-2\delta^j)(1-\frac{\delta^i/2}{(\delta^j - \delta^i/2)})>1-\delta/4.$$

Let $K = K_1\cap K_2$, $\nu(K)>1-\delta$.

Now take $\hat{x}_{1/2} = (x_{1/2},\omega_{1/2},k_{1/2})\in K$, here 
\begin{align*}\hat{U}^+[\hat{x}_{1/2}] = U^+[x,\omega,k] \times \{\omega^-_{1/2}\} \times \Omega^+ \times \{k_{1/2}\}, \\ \eta^u_{\ell/2}[\hat{x}_{1/2}] = \eta^u_{\omega^-_{1/2},\ell/2}(x_{1/2}) \times \{\omega^-_{1/2}\}\times \Omega^+ \times \{k_{1/2}\}.
\end{align*}
Let

$$Y' \defeq \bigcup_{y\in (X'''_{\omega_{1/2},k_{1/2}})^+} \{y\} \times V_{y,\omega_{1/2},k_{1/2}}^+.$$
By Lemma \ref{fubini2}, the measure of $Y'$ is at least $(1-\delta^j)(1-\sqrt{2\delta^j})>(1-\delta^k)$

Let
$$S \defeq (Y'\cap ((\eta^u_{\omega^-_{1/2},\ell/2}(x_{1/2}) \cap U^+[x_{1/2},\omega_{1/2},k_{1/2}]) \times \Omega^+)))\times \{\omega^-_{1/2}\} \times \{k_{1/2}\}.$$

Note again that by Lemma \ref{6.3}, since $x_{1/2}\in (X^*_{\omega_{1/2},k_{1/2}})^+$,
\begin{align*}
        m_{x_{1/2},\omega_{1/2}}^{u}((X_{\omega_{1/2},k_{1/2}}''')^+\cap \eta_{\omega_{1/2},\ell/2}^{u}(x_{1/2})\cap U^+[x_{1/2},\omega_{1/2},k_{1/2}])  &\geq \\ (1-{\delta^{j/2}})m_{x_{1/2},\omega_{1/2}}^{u}(\eta_{\omega_{1/2},\ell/2}^{u}(x_{1/2})\cap U^+[x_{1/2},\omega_{1/2},k_{1/2}]).
    \end{align*}

And so, altogether,
$$\hat{m}_{\hat{x}_{1/2}}^u(S)\geq (1-\delta)\hat{m}_{\hat{x}_{1/2}}^u 
(\eta^u_{\ell/2}[\hat{x}_{1/2}]\cap \hat{U}^+[x_{1/2},\omega_{1/2},k_{1/2}])$$

Additionally, $S\subset K'$ by construction because of how $V_{y,\omega_{1/2},k_{1/2}}^+$ is defined.

Now let $\hat{y}_{1/2}\in S\subset K'$, it is of the form $\hat{y}_{1/2}=(y_{1/2}, \omega_{1/2}^-,(\omega')^+,k_{1/2})$.

Let $$Y'' = \bigcup_{y\in X'''_{\omega_{1/2},k_{1/2}}} \{y\} \times V_{y,\omega_{1/2},k_{1/2}}.$$

Again, by Lemma \ref{fubini2} the measure of $Y''$ is at least $(1-\delta^j)(1-\sqrt{2\delta^j})>(1-\delta^k)$.

Let 

$$\tilde{S}' = (Y'' \cap (\eta^s_{\omega^-_{1/2},k_{1/2}}(x_{1/2})\times \Omega^-)) \times \{\omega^+_{1/2}\} \times \{k_{1/2}\}.$$

Then as before 
$$\hat{m}_{\hat{x}_{1/2}}^s(\tilde{S}') \geq (1-\delta)\hat{m}^s_{\hat{x}_{1/2}}(\eta^s_{\ell/2}[\hat{x}_{1/2}]).$$

\begin{lemma} \label{elllarge}
    For $\ell$ large enough, we can find $S'\subset \tilde{S}'$ such that $\hat{m}_{\hat{x}_{1/2}}^s(S') \geq (1-\delta)\hat{m}^s_{\hat{x}_{1/2}}(\eta^s_{\ell/2}[\hat{x}_{1/2}])$ and that for $\hat{z}_{1/2}=(z_{1/2},(\omega')^-,\omega^+_{1/2},k_{1/2}) \in S'$, $U^+[\hat{z}_{1/2}]$ does not intersect $W^s(\hat{y}_{1/2}).$ 
\end{lemma}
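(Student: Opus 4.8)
The plan is to carry out the linear-approximation / algebraic-curve elimination argument sketched in the ``Details'' paragraph preceding the lemma. Fix $\hat{x}_{1/2}\in K$, $\hat{y}_{1/2}\in S$, and $\ell$ large. Working in Lyapunov charts at $\hat{x}_{1/2}$ (using that $\hat{x}_{1/2}\in K'\subset K_r\cap K_s$, so the chart is comparable to $d_X$), I will first produce the linearized picture: by Lemma~\ref{errorr} the local stable manifold $W^s_q(\hat{y}_{1/2})$ is, on a ball of radius $\mathfrak{r}\sim C\sqrt{\xi}$ about $y_{1/2}$, approximated to error $\xi$ by the affine plane $\exp_{y_{1/2}}(E^s(\hat{y}_{1/2}))$; similarly, by Lemma~\ref{errorr1}, for fixed $(\omega')^-\in\Omega^-$ and $z$ ranging over a ball in $W^s(\hat{x}_{1/2})$ around $x_{1/2}$, the curve $U^+[z,(\omega')^-]$ is approximated to error $\xi$ by the affine line through $z$ with direction $G^+(z,(\omega')^-)$, and by Lemma~\ref{KG} (uniform continuity of $G^+$ on $K_G\supset K'$) plus H\"older continuity of $P^-$ on $K_\beta$ these direction vectors all lie within a small cone, so I may as well fix a single direction vector $v$. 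This is the content of the intermediate step, which I will isolate as Lemma~\ref{firstap}.

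Second, with the linearized data in hand, the set of points $z$ in a small ball $B\subset E^s(\hat{x}_{1/2})$ (transported into the chart) for which the line $z+\mathbb{R}v$ meets the affine plane approximating $W^s(\hat{y}_{1/2})$ is the intersection of $B$ with an affine subspace; since $E^s(\hat{x}_{1/2})$ is $2$-real-dimensional (here $\dim\calL^s=2$ is used, so $m^s_{x,\omega}$ genuinely sees a $2$-dimensional set and is not concentrated on a curve) and the plane is $2$-real-dimensional while the line has dimension $1$, the solution locus is generically $1$-real-dimensional --- an affine line $L$, hence an algebraic curve of bounded degree $d$ inside $B^s(x_{1/2},1)$. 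Pulling back to $W^s(\hat{x}_{1/2})$ via the normal form / exponential charts (all bi-Lipschitz with uniform constants on $K'$) keeps the degree bounded. Now invoke Lemma~\ref{algcurve}: $m^s_{\hat{x}_{1/2}}$ charges no algebraic curve of degree $d$, and more quantitatively Lemma~\ref{algcor} (together with the Lusin set $K_{C,1}$, on which the relevant $\epsilon$ is bounded below by $\epsilon^\ast$): outside a tubular neighbourhood of $L$ of diameter $\epsilon^\ast$ the measure is at least $(1-\delta/4)m^s_{\hat{x}_{1/2}}(B^s(x_{1/2},1))$. Defining $S'$ to be $\tilde S'$ with this tubular neighbourhood (and its $\Omega^-$-fibre) removed, and using Fubini as in Lemma~\ref{fubini2} plus the constraints on $i,j,k$, gives $\hat{m}^s_{\hat{x}_{1/2}}(S')\geq (1-\delta)\hat{m}^s_{\hat{x}_{1/2}}(\eta^s_{\ell/2}[\hat{x}_{1/2}])$.

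Third, I must check that this linearized elimination survives the error terms at the scale dictated by $\ell$. Flowing forward by $\ell/2$ under the time-changed flow shrinks $\eta^s_{\ell/2}[\hat{x}_{1/2}]$ to scale $\sim e^{-\ell/2}$ in modified normal form coordinates, so all the relevant $z_{1/2}$, $y_{1/2}$ data live in a ball of that radius; choosing $\xi$ (hence $\mathfrak{r}$) appropriately and taking $\ell_0$ large enough that $e^{-\ell/2}\ll \mathfrak{r}$ and $e^{-\ell/2}\ll\epsilon^\ast$, the quadratic error terms from Lemmas~\ref{errorr}, \ref{errorr1} and the cone-width error from $G^+$, $P^-$ are all a definite fraction of the gap $\epsilon^\ast$, so after removing a slightly fattened tubular neighbourhood (still an algebraic curve's neighbourhood, Lemma~\ref{algcor} again) the exact object $U^+[\hat{z}_{1/2}]$ genuinely misses the exact object $W^s(\hat{y}_{1/2})$ for every $\hat{z}_{1/2}\in S'$. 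This yields the conclusion of Lemma~\ref{elllarge}, and feeding it back gives the QNI bound $d_X(U^+[\hat{z}_{1/2}],W^{cs}_q(\hat{y}_{1/2}))\geq Ce^{-\alpha_0\ell}$ with $C$ depending only on $\delta$.

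The main obstacle I anticipate is the bookkeeping in the third step: making the three independent error sources (the $O(\xi)$ graph-error of the stable manifold, the $O(\xi)$ graph-error of the $U^+$-leaf, and the H\"older modulus of the direction field $G^+$/connection $P^-$) uniformly smaller than the algebraic-curve gap $\epsilon^\ast$ \emph{at the $\ell$-dependent scale} $e^{-\ell/2}$, while simultaneously keeping $S'$ of measure $\geq(1-\delta)$ --- this requires choosing $\xi$, the Lusin-set costs, and $\ell_0$ in the right order, and verifying that $\epsilon^\ast$ (from Lemma~\ref{algcor}) does not itself depend on $\ell$. A secondary subtlety is that $U^+[z_{1/2},(\omega')^-]$ depends on the \emph{new} past $(\omega')^-$, not on $\omega_{1/2}^-$, so the ``curve to be removed'' in $W^s(\hat{x}_{1/2})$ varies with $(\omega')^-$; this is handled by removing, for each $(\omega')^-$ in the relevant fibre, the corresponding tubular neighbourhood, i.e. working fibrewise over $\Omega^-$ before applying Fubini, exactly as the ``(for a choice of $\omega^-$)'' phrasing in the outline indicates.
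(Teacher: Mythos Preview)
Your overall strategy matches the paper's --- linearize via Lemma~\ref{firstap}, control the variation of the $U^+$-direction using H\"older continuity of $P^-$ on $K_\beta$, then invoke Lemmas~\ref{algcurve}--\ref{algcor}. However, there is a genuine scale mismatch in your use of $K_{C,1}$ that breaks the argument as written.

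The set $K_{C,1}$ was defined with $R=1$: it guarantees that in $B^s(x,1)$, outside an $\epsilon^\ast$-tube around any degree-$d$ curve, one retains $(1-\delta/4)$ of the measure of $B^s(x,1)$. But $z_{1/2}$ must be chosen in $\eta^s_{\omega_{1/2},\ell/2}(x_{1/2})$, whose radius $r$ is of order $e^{-\ell/2}$, and the bad line $\ell_v$ produced by Lemma~\ref{firstap} passes through $p=P\cap Q$, which lies within $O(r)$ of $x_{1/2}$ (since $y_{1/2}\in\eta^u_{\ell/2}[\hat{x}_{1/2}]$). So once you impose your own hypothesis $e^{-\ell/2}\ll\epsilon^\ast$, the fixed-width $\epsilon^\ast$-tube about $\ell_v$ swallows all of $\eta^s_{\ell/2}$, and $S'$ is empty. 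More to the point, the bound you extract from $K_{C,1}$ is on measure relative to $B^s(x_{1/2},1)$, not relative to the much smaller $\eta^s_{\ell/2}$; nothing in your proposal descends from one to the other.

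The paper repairs this in two steps you omit. First, it shows that the tube one must actually excise has width $\sim r^{1+\alpha}$ inside the ball of radius $r$. This requires the angle control from $K_\varepsilon$ (Lemma~\ref{Kdelta'}) to bound $\sin(\alpha_v)$ below by $r^{1-\eta}$, so that the error $\epsilon/\sin(\alpha_v)\sim r^{1+\eta}$; your appeal to $K_G$ and $P^-$ controls how $v$ \emph{varies} as $z$ moves, but not the angle $v$ makes with the plane $P$, which is a separate constraint. Second, the measure estimate is transferred from scale $1$ to scale $r$ dynamically: one applies the $K_{C,1}$-type bound at $F^{-\ell/2}(\hat{x}_{1/2})$ (which lies in $K\subset K'$ by the QNI hypothesis), where the flowed-back tube has width $\sim r^{1+\alpha}\cdot r^{-1}=r^{\alpha}\ll\epsilon^\ast$ for $\ell$ large, and then pushes forward using $F$-equivariance of the stable conditionals. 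This is Claim~\ref{lastclaim}, whose proof the paper explicitly says parallels Lemma~\ref{annbd}. Without both the angle control and this dynamical rescaling, you cannot conclude that the excised set has small $m^s_{\hat{x}_{1/2}}$-measure inside $\eta^s_{\ell/2}[\hat{x}_{1/2}]$.
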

We will prove this in a series of approximation steps. First, the linear approximation.

We take $\ell_0$ large enough so that for all $\ell\geq \ell_0$, $\eta^{s/u}_{\omega_{1/2},\ell/2}({x}_{1/2})$ is in the ball of radius $\rho_0/4$ around $x_{1/2}$ and around $y_{1/2}$. Then it lives in the domain of $\exp^{-1}_{x_{1/2}}$ and $\exp^{-1}_{y_{1/2}}$. Let $P = E^s(\hat{x}_{1/2})$, $Q$ be the tangent plane at $\exp^{-1}_{x_{1/2}}(y_{1/2})$ of $\exp^{-1}_{x_{1/2}}(W^s(y_{1/2},(\omega')^+))$. Generically (in the choice of $(\omega')^+$), $P\cap Q = \{p\}$, we take $\ell$ large enough so that $p$ is guaranteed to be in $P\cap \exp_{x_{1/2}}^{-1}(B(x_{1/2},\rho_0/4))$. Fix a direction vector $v\in \bbR^4$, or really in $\exp_{x_{1/2}}^{-1}(B(x_{1/2},\rho_0/4))$. For any $z\in P\cap \exp_{x_{1/2}}^{-1}(B(x_{1/2},\rho_0/4))$, draw a line through $z$ in direction $v$ denoted $L_{z,v}$. These lines $L_{z,v}$ are to be thought of as $U^+[z,(\omega'')^-]$, which are 1-dimensional for most points. We ask what subset of $z\in P$ are such that $L_{z,v}$ does not intersect $Q$.

\begin{lemma}\label{firstap}

For generically chosen $v$, the set of $z\in P\cap \exp_{x_{1/2}}^{-1}(B(x_{1/2},\rho_0/4))$ such that $L_{z,v}\cap Q\neq \emptyset$ is a line in $P\cap \exp_{x_{1/2}}^{-1}(B(x_{1/2},\rho_0/4))$.

\end{lemma}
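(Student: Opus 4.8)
\textbf{Proof plan for Lemma \ref{firstap}.}

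The plan is to treat this as a pure linear-algebra statement about affine subspaces of $\bbR^4 \cong \exp_{x_{1/2}}^{-1}(B(x_{1/2},\rho_0/4))$. We have two $2$-dimensional linear subspaces $P = E^s(\hat{x}_{1/2})$ and $Q$ (the tangent plane, translated to pass through $\exp_{x_{1/2}}^{-1}(y_{1/2})$, so $Q$ is a $2$-dimensional affine plane). For a fixed direction vector $v \in \bbR^4$, and $z \in P$, the line $L_{z,v} = \{z + tv : t \in \bbR\}$ meets $Q$ if and only if $z \in Q + \bbR v$, the affine subspace spanned by $Q$ and the direction $v$. So the set we must describe is precisely $P \cap (Q + \bbR v)$.

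First I would observe that for generic $v$ (meaning $v \notin \mathrm{span}(Q - Q)$, i.e. $v$ not parallel to the linear part of $Q$), the set $Q + \bbR v$ is a $3$-dimensional affine subspace of $\bbR^4$. Then $P \cap (Q + \bbR v)$ is an intersection of an affine $3$-space with a linear $2$-space in $\bbR^4$; by the standard dimension count, generically this intersection is an affine subspace of dimension $2 + 3 - 4 = 1$, i.e. a line (it is nonempty because $p \in P \cap Q \subset P \cap (Q+\bbR v)$, using that $P \cap Q = \{p\}$ for generic $(\omega')^+$ as already noted in the setup). I would make the genericity precise: the intersection fails to be $1$-dimensional only when $P \subset Q + \bbR v$ or the affine spaces are positioned non-transversally, and for fixed $P, Q$ with $P \cap Q = \{p\}$ a single point, the set of $v$ for which transversality fails is a proper algebraic (hence measure-zero) subset of directions — one checks that $P + (Q - Q)$ is already $3$-dimensional (since $\dim P = \dim(Q-Q) = 2$ and $\dim(P \cap (Q-Q)) = 1$ generically, as $P\cap Q$ is a point forces the linear parts to meet in a line), so adding a generic $v$ keeps dimensions in general position. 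Intersecting with the ball $\exp_{x_{1/2}}^{-1}(B(x_{1/2},\rho_0/4))$ just truncates this line to a line segment, which is what is asserted.

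The only real subtlety — and the step I would be most careful about — is the bookkeeping of which genericity conditions on $(\omega')^+$ and on $v$ are being invoked, and checking they are compatible with the later application: we need $P \cap Q$ to be a single point (generic in $(\omega')^+$, as stated), and we need $v$ to be transverse to $Q + \bbR v$ being $3$-dimensional and meeting $P$ transversally. In the actual use of the lemma, $v$ will be the direction of $U^+[z,(\omega'')^-]$ for the relevant $z$, so "generically chosen $v$" must be interpreted as: for $\mu^{\bbN}$-a.e. choice of $(\omega'')^-$, the resulting direction vector avoids the bad set — this follows since the distribution of these directions is non-atomic by Theorem \ref{thmC} and Lemma \ref{anglecontrol}, so it assigns zero mass to the proper algebraic subset of bad directions. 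I would state this compatibility explicitly so that the subsequent approximation steps (reinstating the Hölder-continuity error from Section \ref{stableholon} and the curvature errors from Lemma \ref{errorr}, then applying Lemmas \ref{algcurve} and \ref{algcor}) can proceed. The computation itself is routine linear algebra and I would not grind through it in detail.
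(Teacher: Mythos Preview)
Your approach is correct and essentially the same as the paper's: both reduce the statement to elementary linear algebra of affine subspaces in $\bbR^4$. The paper chooses coordinates so that $Q=\{(0,0,*,*)\}$ and writes the incidence condition $L_{z,v}\cap Q\neq\emptyset$ explicitly as $t=\tfrac{m}{\ell}s$ (a linear equation on $P$), while you phrase the same computation abstractly as $z\in Q+\bbR v$ and do a dimension count on $P\cap(Q+\bbR v)$. The two are the same argument in different dress.

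One small slip: you claim that $P\cap Q=\{p\}$ forces the linear parts to meet in a line, hence $P+\vec Q$ is $3$-dimensional. In fact the opposite is true: writing $Q=p+\vec Q$ and using $p\in P$, one has $P\cap Q=p+(P\cap\vec Q)$, so a single-point intersection forces $P\cap\vec Q=\{0\}$ and $P\oplus\vec Q=\bbR^4$. This actually helps you: for any $v\notin\vec Q$ the space $\vec Q+\bbR v$ is $3$-dimensional and cannot contain $P$ (since that would force $\dim(\vec Q+\bbR v)\geq 4$), so $P\cap(Q+\bbR v)$ is automatically a line through $p$. The only excluded directions are $v\in\vec Q$, a $2$-dimensional (hence measure-zero) set, and your remark that the non-atomic distribution of $U^+$-directions avoids this set is then immediate.
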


\begin{proof}
     Note that $P\cap \exp_{x_{1/2}}^{-1}(B(x_{1/2},\rho_0/4))$ is a 2-dimensional ball and that here $\dim(\calL^s)=2$ a.e. including at the chosen point $\hat{x}_{1/2}$ by construction (which is sent to the origin under $\exp^{-1}_{x_{1/2}}$). We will first change coordinates.

    WLOG let $Q = \text{span}\{(0,0,1,0), (0,0,0,1)  \}$, and assume $P\cap Q=(0,0,0,0)$, and the direction vector $v =( \ell,m,n,p)$. Any $z\in P\cap \exp_{x_{1/2}}^{-1}(B(x_{1/2},\rho_0/4))$ is of the form $z = (s,t,u,v)$, if we take the line $L_{z,v}$ in direction $v$ thru $z$, we ask which values of $z$ will this line $L_{z,v}$ will intersect $Q$. Clearly, these $z$ will be such that both equations $s+\ell T = 0$ and $t+mT=0$ hold for some $T$. This amount to the condition $t=\frac{m}{\ell}s$ on the plane $P$ inside the ball. This is a line through the origin in the ball. 

    When we shift back to the original coordinates, we see that our solution set is a line through $p$ inside $P\cap \exp_{x_{1/2}}^{-1}(B(x_{1/2},\rho_0/4))$.
\end{proof}

\begin{proof}(of Lemma \ref{elllarge})

We were given a $\delta$ at the beginning of this subsection and by construction $K_{\epsilon} \supset \pi_X(\tilde{S}')$ (see Lemma \ref{Kdelta'} for the definition of $K_{\epsilon}$). 

We have that for our chosen $X$-component from an element of $\tilde{S}'$, outside of any ball of size we will call $M_{\delta}$ in $Gr(1,\bbC)$ (where the stable and unstable Oseledets' directions live),  we have measure at least $1-\delta/8$. Call the corresponding angle defining this ball $\alpha_{M_{\delta}}$.

Fix $\eta>0$ small. Take $r$ small such that $\sin(\alpha_{M_{\delta}})>r^{1-\eta}$.

We work in $T_{x_{1/2}}X$. Let $P$ and $Q$ be as above, we work inside a ball of radius $r$ based at the origin (which is $\exp_{x_{1/2}}^{-1}$). This radius $r$ is the largest ball around the origin inside $B(x_{1/2},\rho_0)$ such that $$B^s(\hat{x}_{1/2},r)\subset \eta^s_{\omega_{1/2},\ell/2}(x_{1/2}),  \ \ B^u(\hat{x}_{1/2},r)\subset \eta^u_{\omega_{1/2},\ell/2}(x_{1/2}).$$

Inside this ball of radius $r$, we know by Claim \ref{error} that $P$ and $Q$ are within of $\epsilon$ of $\exp^{-1}_{x_{1/2}}(W^s(\hat{x}_{1/2}))$ and $\exp^{-1}_{x_{1/2}}(W^s(y_{1/2},(\omega')^+))$ respectively. Here $\epsilon = r^2/L$ and $L$ is the maximum between the Lipschitz constants of $Dh_1$ and $Dh_2$, where $h_1$ is the function tracing out $\exp^{-1}_{x_{1/2}}(W^s(\hat{x}_{1/2}))$ and $h_2$ is the function tracing out $\exp^{-1}_{x_{1/2}}(W^s(y_{1/2},(\omega')^+))$.

Fix $\hat{z} = (z, (\omega'')^-,\omega_{1/2},k_{1/2})\in  \tilde{S}' $, consider $\exp^{-1}_{x_{1/2}}(U^+[\hat{z}])$. Let $v$ be the direction vector of the tangent line through $\exp_{x_{1/2}}^{-1}(z)$ of $\exp^{-1}_{x_{1/2}}(U^+[\hat{z}])$ for this fixed $\hat{z}$. Let $L_{z,v}$ be the line through $z$ in the direction $v$. Keep $v$ fixed and let $z$ vary in  $\exp^{-1}_{x_{1/2}}(W^s(\hat{x}_{1/2}))$, i.e. points $z'$ such that $(z',(\omega'')^-,\omega_{1/2},k_{1/2})$ is still in $S'$. We draw lines $L_{z',v}$ through these $z'$ in the direction $v$.

Let $\alpha_v$ be the angle $v$ makes with $P$, if $L_{z',v}$ hits an element of $P$, it is within $\epsilon /\sin(\alpha_v)$ of $\exp^{-1}_{x_{1/2}}(W^s(\hat{x}_{1/2}))$. Let $\ell_v$ be the line one gets applying Lemma \ref{firstap} to $P$, $Q$ and the collection of lines $L_{z',v}$. Then the set of bad points in $\exp^{-1}_{x_{1/2}}(W^s(\hat{x}_{1/2}))$ contains those points within $\epsilon /\sin(\alpha)$ of $\ell_v$. It suffices to have the size of the neighbourhood of potential bad points in $\exp^{-1}_{x_{1/2}}(W^s(\hat{x}_{1/2}))$ be at worse on the order of $r^{1+\eta}$. Thankfully, we can ask that $v$ always be such that $\sin(\alpha_v)>r^{1-\eta}$ (then $\epsilon/\sin(\alpha_v) \sim r^2/r^{1-\eta} = r^{1+\eta}$) because we pick $v$ outside of the angle $\alpha_M$ and we decided at the start that $\sin(\alpha_M)>r^{1-\eta}$. So we get a tubular neighbourhood inside $\exp^{-1}_{x_{1/2}}(W^s(\hat{x}_{1/2}))$ of error on the order of $r^{1+\eta}$ that we must get rid of. In throwing out this neighbourhood, we throw out choices of $(\omega'')^-$, but we do not lose more than some $\delta^2/2$ $\mu^{\bbN}$-measure worth. We can take $r$ small enough so that for these $v$ and for every $z'\in \exp^{-1}_{x_{1/2}}(W^s(\hat{x}_{1/2}))$, $L_{z',v}$ hits only one element of $P$.

Now $Q$ is within $\epsilon$ of $\exp^{-1}_{x_{1/2}}(W^s(y_{1/2},(\omega')^+))$, we deal with the cases where it could hit $Q$ but not $\exp^{-1}_{x_{1/2}}(W^s(y_{1/2},(\omega')^+))$. To answer that question for a given $v$ direction vector, we ask the following question: if we base the line at a point in $Q$ in direction $v$, will it hit $\exp^{-1}_{x_{1/2}}(W^s(y_{1/2},(\omega')^+))$? This is again a matter of taking $r$ small enough so that this is true for those $v$ we specified.

One now has to take into account that $v$ should vary as we change $z'$ because when we switch the $X$-component to $z'$ we should use the tangent vector $v'$ for $U^+[z',(\omega'')^-,\omega_{1/2},k_{1/2}]$. Then we examine $L_{z',v'}$ at the point $z'$ instead of $L_{z',v}$.

Here we use that $\hat{x}_{1/2}\in K_{\beta}$ (see Section \ref{stableholon}) so that along `good' points in $W^s(\hat{x}_{1/2})$ we have that $P^-_{\omega'}(x_{1/2},\cdot )$ is H\"older in the sense that for good $\omega'$ and for $z,z'\in\tilde{S}'$ good, we have that $$\Vert P^-_{\omega'}(z,z')-Id\Vert\leq C_{\beta} d_X(z,z')^{\beta}.$$ Here $P^-_{\omega'}(z,z')$ is such that $TU^+[z',\omega'] = P^-_{\omega'}(z,z') TU^+[z,\omega']$. Since all points we consider are within $r$, we can say that for good points $z,z'$
$$\Vert P^-(z,z')-Id\Vert\leq C_{\beta} r^{\beta}.$$
Note that $r$ are can be taken to be $r^{\beta}\ll 1$ by making $\ell$ large.
Then when one compares $v$ (a unit direction vector) to $v' = P^-(z,z')v$ one sees that 
\begin{align*}|\langle P^-(z,z')v,v\rangle - \langle v,v\rangle| &= |\langle (P^-(z,z') - Id) v ,v \rangle | = |v^T(P^-(z,z')-Id)v| \\ &\leq \Vert P^-(z,z')-Id\Vert \cdot \Vert v\Vert^2 \leq C_{\beta}r^{\beta}\end{align*}
hence,
$$\langle P^-(z,z')v,v\rangle \leq C_{\beta}r^{\beta} + 1.$$
Further,
$$\Vert (P^-(z,z')-Id)v\Vert \leq C_{\beta} r^{\beta}$$
and hence since $\cos(\angle (P^-(z,z')v,v)) = \frac{\langle P^-(z,z')v,v\rangle}{\Vert P^-(z,z')v\Vert}$  we get that 
$$1\geq \cos(\angle (P^-(z,z')v,v)) \geq \frac{1-C_{\beta}r^{\beta}}{1+C_{\beta}r^{\beta}} \geq 1-2C_{\beta}r^{\beta}.$$

For each $v'$ we apply Lemma \ref{firstap} to $P$, $Q$ and the collection of $L_{z,v'}$ and get a (possibly degenerate) line $\ell_{v'}$ in $P$. Now we want to argue that in $P$, $\ell_{v'}$ is close to $\ell_v$.

In the proof of Lemma \ref{firstap} we picked coordinates and let $$Q = \Span\{(0,0,1,0), (0,0,0,1) \}.$$ We saw that if $v = (\ell,m,n,p)$ is the unit direction vector, then $$\ell_v = \{(s,t,u,w)\in P : t = \frac{m}{\ell}s\}.$$ Now if $\gamma = \angle(v,v')$, we know that $\cos(\gamma) \geq 1-2C_{\beta}r^{\beta}$ so we have a upper bound on the angle $\gamma$. 

\begin{claim}\label{cochange}
    If $v = (\ell,m,n,p)$ is the initial unit direction vector, and we rotate it by $\gamma$, the most any coordinate (i.e. $\ell,$ $m$, $n$ or $p$) can change is $2\sqrt{C_{\beta}}r^{\beta/2}$.
\end{claim}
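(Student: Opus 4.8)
The plan is to reduce this to an elementary estimate about how the coordinates of a unit vector in $\bbR^4$ change under a rotation by a small angle $\gamma$, given that we already control $\gamma$ via $\cos(\gamma)\geq 1-2C_{\beta}r^{\beta}$. First I would extract the bound on $\gamma$ itself: from $\cos(\gamma)\geq 1-2C_{\beta}r^{\beta}$ and the inequality $1-\cos(\gamma)\geq \gamma^2/2 - \gamma^4/24 \geq \gamma^2/4$ for $\gamma$ small (which holds once $\ell$ is large enough that $r^{\beta}$ is small), we get $\gamma^2/4 \leq 2C_{\beta}r^{\beta}$, i.e. $\gamma\leq 2\sqrt{2}\sqrt{C_{\beta}}\,r^{\beta/2}$; absorbing constants this is $\gamma \lesssim \sqrt{C_{\beta}}\,r^{\beta/2}$.

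Next I would estimate the change in a single coordinate. Writing $v'=Rv$ for a rotation $R$, we have $|v'_i - v_i| \leq \Vert v' - v\Vert = \Vert (R-\mathrm{Id})v\Vert \leq \Vert R - \mathrm{Id}\Vert \cdot \Vert v\Vert = \Vert R - \mathrm{Id}\Vert$ since $v$ is a unit vector. For a rotation by angle $\gamma$ one has $\Vert R - \mathrm{Id}\Vert \leq 2\sin(\gamma/2) \leq \gamma$. Combining with the previous step, $|v'_i - v_i| \leq \gamma \lesssim \sqrt{C_{\beta}}\,r^{\beta/2}$, and after adjusting the implicit constant (the statement allows the slightly lossy constant $2$) this gives $|v'_i - v_i| \leq 2\sqrt{C_{\beta}}\,r^{\beta/2}$ for each coordinate $i\in\{1,2,3,4\}$, which is exactly the assertion. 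In fact the same bound $\Vert v'-v\Vert \leq 2\sqrt{C_{\beta}}\,r^{\beta/2}$ follows directly from $\Vert (P^-(z,z')-\mathrm{Id})v\Vert \leq C_{\beta}r^{\beta}$ together with the angle comparison already established in the text, so one could alternatively bypass the rotation language entirely and argue straight from $v' = P^-(z,z')v$ normalized.

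I do not expect a genuine obstacle here — this is a routine trigonometric/linear-algebra estimate. The only point requiring a little care is making sure the small-angle inequalities ($1-\cos\gamma \geq \gamma^2/4$ and $2\sin(\gamma/2)\leq\gamma$) are applied in a regime where they are valid, which is guaranteed since $r^{\beta}\ll 1$ by taking $\ell$ large, exactly as remarked just before the claim. The constant in the statement ($2\sqrt{C_\beta}r^{\beta/2}$) is generous enough to absorb the $\sqrt{2}$-type factors produced along the way, so no sharpness is needed.
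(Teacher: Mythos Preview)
Your approach is correct and close in spirit to the paper's, just phrased more abstractly. The paper argues by choosing coordinates so that $v=(1,0,0,0)$ and rotating in a coordinate plane: the first coordinate becomes $\cos\gamma$ (change $\leq 2C_\beta r^\beta$) and the only other nonzero coordinate is $\sin\gamma$, which it bounds via $\sin^2\gamma\leq 2(1-\cos\gamma)\leq 4C_\beta r^\beta$. Your route through the operator norm $\Vert R-\mathrm{Id}\Vert\leq 2\sin(\gamma/2)$ handles all coordinates at once and is arguably cleaner.

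One small correction: your chain gives $\gamma\leq 2\sqrt{2}\sqrt{C_\beta}\,r^{\beta/2}$, which is \emph{larger} than the claimed $2\sqrt{C_\beta}\,r^{\beta/2}$, so the constant $2$ is not in fact generous enough to absorb your extra $\sqrt{2}$; you have this backwards. It is harmless for the application (the next claim only needs an $O(r^{\beta/2})$ bound with some constant), but to recover the exact constant you should skip the intermediate bound on $\gamma$ and compute directly
\[
\Vert R-\mathrm{Id}\Vert \;=\; 2\sin(\gamma/2)\;=\;\sqrt{2(1-\cos\gamma)}\;\leq\;\sqrt{4C_\beta r^\beta}\;=\;2\sqrt{C_\beta}\,r^{\beta/2}.
\]
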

\begin{proof}
If one changes coordinates and WLOG lets $v = (1,0,0,0)$ and we rotate by angle $\gamma$ in the $xy$-plane, we get that the first coordinate is $\cos(\gamma)$, and hence changes by at most $2C_{\beta}r^{\beta}$ and the second coordinate is $\sin(\gamma) \leq 2\sqrt{C_{\beta}}r^{\beta/2}$.
\end{proof}

We then get the following claim:
\begin{claim}
    The line $\ell_v$ can deviate from $\ell_{v'}$ by at most $Cr^{1+\alpha}$ in the ball of radius $r$.
\end{claim}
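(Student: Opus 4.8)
The plan is to revisit the explicit coordinate picture from the proof of Lemma~\ref{firstap} and simply track how $\ell_v$ depends on the direction vector $v$. In those coordinates $Q=\Span\{(0,0,1,0),(0,0,0,1)\}$, the point $p=P\cap Q$ is the origin, and $P$ is the graph of a fixed (in particular $r$-independent) linear map over the $(x_1,x_2)$-plane. For a unit direction vector $v=(\ell,m,n,p)$ the set $\ell_v$ is the line through the origin in $P$ lying over $\{t=(m/\ell)s\}$; equivalently, its direction in $P$ is the image, under that fixed graph map, of the vector $(\ell,m)$, i.e.\ of the orthogonal projection of $v$ onto the $2$-plane complementary to $Q$. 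Since $P$ and $Q$ do not depend on $v$, both $\ell_v$ and $\ell_{v'}$ pass through the \emph{same} point $p$; hence it suffices to bound the angle between their directions inside $P$ and then multiply by the radius of the ball.

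To bound that angle I would argue as follows. Because the graph map $P\hookrightarrow\bbR^4$ is a fixed isomorphism onto $P$, $\angle(\ell_v,\ell_{v'})$ is comparable to the angle in the $(x_1,x_2)$-plane between $(\ell,m)$ and $(\ell',m')$, which is at most $|(\ell,m)-(\ell',m')|/\min\{|(\ell,m)|,|(\ell',m')|\}$. By Claim~\ref{cochange} the numerator is $\le 2\sqrt2\,\sqrt{C_\beta}\,r^{\beta/2}$, since $v'$ is obtained from $v$ by a rotation through the angle $\gamma$ estimated there. For the denominator one notes that $|(\ell,m)|=\sin\angle(v,Q)$ for a unit $v$, and similarly for $v'$, and here one invokes the transversality present in the argument: $v$ is a (near-)tangent direction to $U^{+}[\hat z]$, so lies in $E^{u}$ up to the $r^{\beta}$ holonomy error coming from $K_\beta$, the $U^{+}$-directions were chosen outside a controlled angular ball of the relevant stable directions, and $E^u$ is transverse to the stable directions on the Pesin--Lusin set (Lemma~\ref{ang}), the requisite choice of the past $(\omega'')^{-}$ being provided by Lemmas~\ref{anglecontrol} and~\ref{Kdelta'}. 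This gives a uniform lower bound $\sin\angle(v,Q)\ge s_0>0$, whence $\angle(\ell_v,\ell_{v'})\le C_1\,r^{\beta/2}$.

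It then remains to pass from the angle to the deviation: two lines through the common point $p$ that make an angle $\theta$ stay within $\rho\sin\theta\le\rho\theta$ of one another on the ball of radius $\rho$ about $p$, so taking $\rho\le r$ yields deviation at most $C_1\,r^{1+\alpha}$ on the ball of radius $r$ with $\alpha=\beta/2>0$. Feeding this back into the bookkeeping of the proof of Lemma~\ref{elllarge}: the set of bad points in $\exp^{-1}_{x_{1/2}}(W^s(\hat x_{1/2}))$ obtained as the unstable direction ranges H\"older-continuously over $S'$ lies in a tubular neighbourhood of $\ell_v$ of radius $O(r^{1+\alpha})+O(r^{1+\eta})=o(r)$, which by Lemma~\ref{algcor} discards only an arbitrarily small fraction of $\hat m^s_{\hat x_{1/2}}$-mass.

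The step I expect to be the main obstacle is the uniform lower bound $s_0>0$ on $\sin\angle(v,Q)$. The plane $Q$ is the stable tangent plane of the \emph{auxiliary} point $y_{1/2}$ with its own future $(\omega')^{+}$, while $v$ is an unstable direction at $z$ with the \emph{freely chosen} past $(\omega'')^{-}$, so the base points and the $\omega$-data do not match and $E^u\pitchfork E^s$ is not available off the shelf; the transversality has to be manufactured by discarding a further $\mu^{\bbN}$-null set of pasts using non-atomicity of $P^{u}_{z}$, and one must check this can be done compatibly with the already-restricted set $S'$ so that $\hat m^s_{\hat x_{1/2}}(S')\ge(1-\delta)\,\hat m^s_{\hat x_{1/2}}(\eta^s_{\ell/2}[\hat x_{1/2}])$ still holds.
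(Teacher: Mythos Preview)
The paper does not actually write out a proof of this claim; it is stated immediately after Claim~\ref{cochange} with the phrase ``We then get the following claim,'' so it is meant to follow directly. Your argument supplies exactly the intended details: both $\ell_v$ and $\ell_{v'}$ pass through the common point $p=P\cap Q$, the direction of $\ell_v$ inside $P$ depends on $[\ell:m]$ through a fixed ($r$-independent) linear map, Claim~\ref{cochange} bounds the coordinate change in $(\ell,m)$ by $O(r^{\beta/2})$, and multiplying the resulting angular change by the radius $r$ gives deviation $O(r^{1+\beta/2})$, i.e.\ $\alpha=\beta/2$.

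Your flagged obstacle---the lower bound on $|(\ell,m)|=\sin\angle(v,Q)$---is a real gap that the paper does not explicitly close at this spot, and your diagnosis is correct: $Q$ is the stable plane at $(y_{1/2},(\omega')^{+})$ while $v$ is an unstable direction at $(z,(\omega'')^{-})$, so Oseledets' transversality (Lemma~\ref{ang}) does not apply directly. Your proposed fix is the right one and is the mechanism the paper uses elsewhere in the same proof: restrict the choice of past $(\omega'')^{-}$ using non-atomicity of $P^{u}_{z}$ (Lemmas~\ref{anglecontrol} and~\ref{Kdelta'}) to force $v$ outside a fixed angular neighbourhood of $Q$, at the cost of a further $\delta$-fraction of $\mu^{\bbN}$-mass. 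The paper already performs exactly this kind of reduction a few lines earlier to obtain $\sin(\alpha_v)>r^{1-\eta}$ for the angle with $P$, and the bookkeeping (the choice $i\gg 2j$ and the successive halving of $\delta$) is set up precisely to absorb finitely many such losses while keeping $\hat m^{s}_{\hat x_{1/2}}(S')\ge(1-\delta)\hat m^{s}_{\hat x_{1/2}}(\eta^{s}_{\ell/2}[\hat x_{1/2}])$. So your proof is correct and coincides with what the paper intends.
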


So we have that we must eliminate choices of $z\in W^s(\hat{x}_{1/2})$ in the tubular neighbourhood around a curve of diameter $Cr^{1+\alpha}$ in the ball of radius $r$ inside $W^s(\hat{x}_{1/2})$ and then the remaining $S'$ must have $\hat{m}^s_{x,\omega}$-measure $1-\delta$. We call this tubular neighbourhood $T$.

Now note that Definition \ref{QNIdef} of QNI requires $\ell>\ell_0$ to be such that 
$$F^{\ell/2}(\hat{x}_{1/2}), F^{-\ell/2}(\hat{x}_{1/2})\in K.$$

\begin{claim}\label{lastclaim}
For $\ell$ large enough, the tubular neighbourhood $T$ inside $B^s(\hat{x}_{1/2})$ has $m^s_{x,\omega}$-measure at most $\delta/4$. 
  
\end{claim}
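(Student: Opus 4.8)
The plan is to transport the desired estimate, via the flow, from the microscopic set $\eta^s_{\omega_{1/2},\ell/2}(x_{1/2})$ to a point at which the stable partition element is macroscopic, and there to apply the uniform non-concentration bound of Lemma \ref{algcor}. Since the QNI condition only has to be verified for $\ell$ with $F^{\ell/2}(\hat{x}_{1/2}),F^{-\ell/2}(\hat{x}_{1/2})\in K$, I would set $\hat{x}'\defeq F^{-\ell/2}(\hat{x}_{1/2})\in K\subset K'$. By the definition of $\eta^s_{\omega_{1/2},\ell/2}$ one has $F^{-\ell/2}(\eta^s_{\omega_{1/2},\ell/2}(x_{1/2}))=\eta^s_{\sigma^{-\ell/2}(\omega_{1/2})}(F^{-\ell/2}_{\omega_{1/2}}(x_{1/2}))$, the ordinary stable partition element at $\hat{x}'$, which is macroscopic because $\hat{x}'\in K_{nbhd}$ (and, after adding the obvious upper bound to the defining good set, its $W^s$-size lies between two fixed constants). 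Working in the modified normal form coordinates of Section \ref{sec:new}, in which the standard flow $F^{\ell/2}$ contracts stable distances by exactly $e^{-t}$ with $t=\tau_n$, $n=\lfloor \ell/2+k_{1/2}\rfloor$ and $t\in[n(\lambda^+-\epsilon),(n+1)(\lambda^++\epsilon)]$ (using $\tau\in[\lambda^+-\epsilon,\lambda^++\epsilon]$, Equation \eqref{tau1}), and which are bi-Lipschitz (constant $M_{NFC}$) with the ambient metric on $K_{NFC}$, I would observe that $B^s(\hat{x}_{1/2},r)$ is exactly the $F^{-\ell/2}$-contraction of the element $\eta^s$ at $\hat{x}'$; hence $r\,e^{t}$ stays in a fixed bounded range, and in particular $r\asymp e^{-t}\to 0$ as $\ell\to\infty$.

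Next I would pin down $T$. Combining Lemma \ref{firstap} (the solution locus is a line), the H\"older comparison of the directions $v'$ of the $U^+[z',(\omega'')^-]$ along $W^s(\hat{x}_{1/2})$, and the linear-approximation errors $\epsilon/\sin\alpha_{v'}\lesssim r^{1+\eta}$, the set $T$ is contained in a tubular neighbourhood, of diameter at most $Cr^{1+\alpha}$ for some $\alpha>0$, of the line $\ell_v$ in the $r$-ball inside $W^s(\hat{x}_{1/2})$; such a line is (comparable to) an algebraic curve of degree $1$. Since $F^{-\ell/2}$ restricted to a stable leaf is affine in the modified normal form coordinates, it carries $\ell_v$ to a line; using $r\,e^{t}\asymp 1$, the image $F^{-\ell/2}(T)$ is then contained in a tubular neighbourhood, of diameter $\lesssim Cr^{\alpha}\to 0$, of a line inside the element $\eta^s$ at $\hat{x}'$. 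In particular, for all $\ell$ large enough this diameter is below the threshold $\epsilon^*$ coming from $\hat{x}'\in K_{C,1}$.

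I would then invoke Lemma \ref{algcor} at $\hat{x}'$, in the version whose reference region is the partition element $\eta^s$ at $\hat{x}'$ rather than a ball $B^s(x,\omega,R)$ (the proof, via Lemma \ref{algcurve} and a compactness argument, is unchanged): for every degree-$1$ algebraic curve in $\eta^s$ at $\hat{x}'$, the tubular neighbourhood of diameter $\epsilon^*$ has $m^s_{\hat{x}'}$-measure at most $(\delta/4)\,m^s_{\hat{x}'}(\eta^s\text{ at }\hat{x}')$. As $F^{-\ell/2}(T)$ lies in such a neighbourhood, $m^s_{\hat{x}'}(F^{-\ell/2}(T))\le (\delta/4)\,m^s_{\hat{x}'}(\eta^s\text{ at }\hat{x}')$. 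Finally, since the stable conditional measures $m^s_{x,\omega}$ are $F$-equivariant up to a positive scalar, the ratio of the $m^s$-mass of a subset of $\eta^s_{\omega_{1/2},\ell/2}(x_{1/2})$ to that of $\eta^s_{\omega_{1/2},\ell/2}(x_{1/2})$ is preserved under pushforward by $F^{-\ell/2}$, giving
\[ \frac{m^s_{\hat{x}_{1/2}}(T)}{m^s_{\hat{x}_{1/2}}(\eta^s_{\omega_{1/2},\ell/2}(x_{1/2}))}=\frac{m^s_{\hat{x}'}(F^{-\ell/2}(T))}{m^s_{\hat{x}'}(\eta^s\text{ at }\hat{x}')}\le \frac{\delta}{4}, \]
which is the assertion of the claim.

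The hard part will be the bookkeeping around the modified normal form coordinates and the flow: one must simultaneously keep $\hat{x}_{1/2}$ and $\hat{x}'$ in all the relevant Lusin sets ($K_{NFC}$, $K_{nbhd}$, $K_{C,1}$), verify that $r\,e^{t}$ is bounded away from $0$ and $\infty$ so that the rescaled picture really is macroscopic, and confirm that the diameter of $F^{-\ell/2}(T)$ genuinely shrinks like a positive power of $r$. Once these quantitative comparisons are in place, the conceptual heart — that $m^s_{x,\omega}$ assigns no mass to a line (Lemma \ref{algcurve}), promoted to the uniform quantitative statement of Lemma \ref{algcor} — immediately forces such a thin tube to carry at most $\delta/4$ of the mass of the partition element.
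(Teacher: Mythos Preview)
Your proposal is correct and takes essentially the same approach as the paper, whose proof simply reads ``similar to that of Lemma \ref{annbd}'': flow back by $\ell/2$ to $\hat{x}'=F^{-\ell/2}(\hat{x}_{1/2})\in K\subset K'$ where the stable partition element is macroscopic, use the affine nature of the dynamics in (modified) normal form coordinates to carry the tube $T$ of width $Cr^{1+\alpha}$ around a line to a tube of width $\lesssim r^{\alpha}$ around a line at the macroscopic scale, and then invoke the uniform non-concentration of Lemma \ref{algcor} (via $\hat{x}'\in K_{C,1}$) together with $F$-equivariance of the stable conditionals. The only cosmetic difference is that the paper's template argument (Lemma \ref{annbd}) removes a ball and uses $K_{\eta}$, whereas here one removes a tube around a degree-one curve and uses $K_{C,1}$, exactly as you do.
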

\begin{proof}
The proof is similar to that of Lemma \ref{annbd}.
\end{proof}
Claim \ref{lastclaim} concludes the proof of Lemma \ref{elllarge}.
\end{proof}

This concludes the proof of Lemma \ref{0.5}.
\end{proof}

\section{Proof of Proposition \ref{full}}
\label{sec:Sect3}
In order to finish the case where $d_+=1$ and get alternative 2 of Theorem \ref{bigthmeasy}, we must now turn our attention to the proof of Proposition \ref{full}.

\subsection{Outline of the proof of Proposition \ref{full}}
\label{sec:outline1}
Recall from Section \ref{sec:breakdown} that the subscript $loc$ in $U^{\pm}_{loc}(x,\omega)$, $W^{s/u}_{loc}(x,\omega)$ means that they are of size $q(x,\omega)$ as defined in Section \ref{sec:stablemfld}.

In order to prove Proposition \ref{full}, we start by reducing it to something slightly weaker, the following proposition:
\begin{proposition}
    For $m$-a.e. $(\hx,\ho)\in Y$, there exists a neighbourhood $U$ of $\hx$ in $X$ such that for $\nu$-a.e. $\hy\in U$ we have that $U^+_{loc}[\hy,\ho]\cap W^s_{loc}(\hx,\ho)\neq \emptyset$. \label{U-}
\end{proposition}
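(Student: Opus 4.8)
The plan is to prove Proposition \ref{U-} by the Benoist--Quint renormalization mechanism, using crucially that the family $U^+$ produced by Theorem \ref{4.1.5} fails the random QNI condition of Definition \ref{QNIdef}. Since we are in Alternative~2, Lemma \ref{0.6} applies: $\nu$ is $\Gamma_\mu$-invariant, $d_+=1$, the curves $\calL^u[x,\omega,0]=U^+[x,\omega,k]$ and $\calL^s[x,\omega,0]=U^-[x,\omega,k]$ are smooth and real $1$-dimensional, and $m^s_{x,\omega}$, $m^u_{x,\omega}$ are equivalent to Lebesgue on them; in particular $\supp(m^s_{x,\omega})=U^-[x,\omega,k]\subset W^s(x,\omega)$. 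First I would fix, for a suitable $\delta>0$, a compact Lusin set $K^{*}\subset Z$ with $\hat m(K^{*})>1-\delta$ on which all the relevant quantities are uniform: the Lyapunov charts and their bi-Lipschitz constants (Lemmas \ref{comp1}, \ref{Kss}, \ref{errorr}), the normal form constants (Lemma \ref{comp3}), the local manifold sizes $q$ and the constants of Proposition \ref{locstablemfld}, the H\"older constants of the standard connections $P^{\pm}$ on the set $K_\beta$ from Section \ref{stableholon}, the angle bound of Lemma \ref{ang}, the continuous dependence of the curves $U^{\pm}[\cdot,\cdot]$ and of their tangent lines on the base point (Lemmas \ref{errorr1}, \ref{KG}), together with a Birkhoff-generic set as in Lemma \ref{birkhoff}.

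Next I would negate Definition \ref{QNIdef}, with the exponent $\alpha_0$ chosen smaller than $\tfrac1{100}\min\{\lambda^{+},-\lambda^{-}\}$. Applying the resulting statement with $C=1/n$ and $\ell_0=n$ produces times $\ell_n\to\infty$, base points $\hat x^{(n)}_{1/2}\in K^{*}$ with $\hat x^{(n)}:=F^{-\ell_n/2}(\hat x^{(n)}_{1/2})\in K^{*}$, a point $\hat y^{(n)}_{1/2}\in\eta^u_{\ell_n/2}[\hat x^{(n)}_{1/2}]\cap\hat U^{+}[\hat x^{(n)}_{1/2}]$, and (using $\dim\calL^s=1$ and the absolute continuity of $m^s$, exactly as in Lemmas \ref{0.1} and \ref{0.5}) an $m^s$-almost-full set of $\hat z^{(n)}_{1/2}\in\eta^s_{\ell_n/2}[\hat x^{(n)}_{1/2}]$ with
\[
  d_X\!\left(U^{+}_{q}[\hat z^{(n)}_{1/2}],\,W^{cs}_{q}(\hat y^{(n)}_{1/2})\right)<\tfrac1n\,e^{-\alpha_0\ell_n}.
\]
Now I would renormalize by flowing backward by $\ell_n/2$. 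Because $\eta^s_{\omega,\ell/2}(x)=F^{\ell/2}\eta^s_{\sigma^{-\ell/2}\omega}(F^{-\ell/2}x)$, the point $\hat z^{(n)}:=F^{-\ell_n/2}(\hat z^{(n)}_{1/2})$ lands in the standard-size piece $\eta^s[\hat x^{(n)}]\subset U^{-}[\hat x^{(n)}]\subset W^s(\hat x^{(n)})$; by $F$-equivariance of $U^{\pm}$, the point $\hat y^{(n)}:=F^{-\ell_n/2}(\hat y^{(n)}_{1/2})$ lies on $U^{+}[\hat x^{(n)}]$ at distance $O(e^{-(\lambda^{+}-\epsilon)\ell_n})$ of $\hat x^{(n)}$, so $\hat y^{(n)}\to\hat x^{(n)}$; and $U^{+}_{q}[\hat z^{(n)}_{1/2}]$ maps into a small neighborhood of $\hat z^{(n)}$ inside $U^{+}[\hat z^{(n)}]$. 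Since a minimizing segment between an unstable-type curve and a center-stable manifold points essentially in the $E^{u}$ direction, which $F^{-1}$ contracts at rate at most $e^{-(\lambda^{+}-\epsilon)}$, while $\alpha_0<\lambda^{+}$, the gap above renormalizes to $O\!\big(n^{-1}e^{-(\lambda^{+}-\epsilon-\alpha_0)\ell_n}\big)\to 0$. Passing to a subsequence and using the uniformity of all constants on $K^{*}$ and the continuous dependence of the local manifolds and $U^{\pm}$-curves on their base points, $(\hat x^{(n)},\hat y^{(n)},\hat z^{(n)})$ converges to a limit with $\hat x_\infty\in K^{*}$, $\hat z_\infty\in W^s(\hat x_\infty)$, and a point of $U^{+}[\hat z_\infty]$ lying on $W^{cs}(\hat x_\infty)$; that is, $U^{+}_{loc}[\hat z_\infty,\hat\omega_\infty]\cap W^s_{loc}(\hat x_\infty,\hat\omega_\infty)\neq\emptyset$.

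Finally I would bootstrap this to Proposition \ref{U-}. Running the construction with $\delta\to 0$ and invoking ergodicity of $F$ for $m$ (Remark \ref{skewergodic}) upgrades it to: for $m$-a.e. $(\hat x,\hat\omega)\in Y$, for $m^s_{\hat x,\hat\omega}$-a.e. $\hat z$ in a neighborhood of $\hat x$ in $W^s(\hat x,\hat\omega)$, the curve $U^{+}[\hat z,\hat\omega]$ meets $W^s_{loc}(\hat x,\hat\omega)$. Translated through the unstable conditionals, whose supports are the $U^{+}$-orbits, this yields a local product description of $\nu$ near $\hat x$: the restriction of $\nu$ to a small box disintegrates over $W^s_{loc}(\hat x,\hat\omega)$ with transverse measure equivalent to $m^s_{\hat x,\hat\omega}$, hence (Lemma \ref{0.6}) to Lebesgue on $U^{-}_{loc}[\hat x,\hat\omega]\subset W^s_{loc}(\hat x,\hat\omega)$, and with fibers equal to pieces of the orbits $U^{+}[\hat w,\hat\omega]$ with $\hat w\in U^{-}_{loc}[\hat x,\hat\omega]$. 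Thus $\nu$-a.e. $\hat y$ in the box lies on $U^{+}[\hat w,\hat\omega]$ for some such $\hat w$, and by property~(3) of Definition \ref{compat} one has $U^{+}[\hat y,\hat\omega]=U^{+}[\hat w,\hat\omega]$, so $\hat w\in U^{+}_{loc}[\hat y,\hat\omega]\cap W^s_{loc}(\hat x,\hat\omega)$, proving Proposition \ref{U-}. I expect the main obstacle to be the renormalization step: tracking precisely how the sub-exponentially small gap transforms under the suspension flow, securing genuine $C^{1}$-convergence of the renormalized local unstable and center-stable manifolds (not merely of their tangent data), and then the measure-theoretic passage from the large family of witnesses to the local product structure and hence to ``$\nu$-a.e. $\hat y$'', where the absolute continuity of the conditionals from Lemma \ref{0.6} is indispensable.
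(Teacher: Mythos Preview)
Your approach diverges sharply from the paper's and contains a genuine gap.

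The paper proves Proposition \ref{U-} by contradiction. It assumes the statement fails on a set $A$ of positive $m$-measure, picks a pair $(\hat x,\hat\omega),(\hat y,\hat\omega)$ with the NJI condition and $d_X(\hat x,\hat y)<\rho$, and then runs a ``floating Benoist--Quint'' drift argument: flow forward by a large $T$ to make the two unstable planes nearly coincide, switch to a new $\tilde\omega$, flow back by $m$, choose a new future $(\omega')^+$, and flow forward by $\ell(m)$ under the time-changed flow, with a two-legged ``V'' configuration controlled by a bi-Lipschitz map $d_i(m)$ and the Law of Last Jump (Lemma \ref{LLJ}). Letting $\rho\to 0$ forces $\ell\to\infty$ and produces, in the limit, \emph{two distinct support curves in a single unstable leaf}, i.e.\ $\calL^u\supsetneq U^+$ on a positive-measure set. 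That contradicts Lemma \ref{0.6}, which says $\calL^u=U^+$ almost everywhere. The failure of QNI is not used here at all; it was already spent in proving Lemma \ref{0.5} (hence Lemma \ref{0.6}).

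Your plan tries instead to harvest the \emph{negation} of QNI directly and pass to a limit to manufacture an intersection point, then bootstrap to $\nu$-a.e.\ $\hat y$. The fatal gap is the bootstrap. What you extract from the limit is that, for certain $\hat z_\infty\in W^s(\hat x_\infty,\hat\omega_\infty)$, the curve $U^+_{loc}[\hat z_\infty,\hat\omega_\infty]$ meets $W^s_{loc}(\hat x_\infty,\hat\omega_\infty)$; but that is vacuous, since $\hat z_\infty$ itself lies in both. Your passage from this to ``$\nu$ has a local product structure with base $U^-_{loc}[\hat x,\hat\omega]$ and fibers $U^+[\hat w,\hat\omega]$'' is precisely the content of Proposition \ref{full} (and hence of Proposition \ref{U-}); asserting it here is circular. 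Lemma \ref{0.6} only tells you that each conditional $m^{s}_{x,\omega}$ and $m^{u}_{x,\omega}$ is Lebesgue on a single smooth curve; it gives no information about how the $U^+$-curves through nearby $\nu$-generic points are positioned relative to $W^s_{loc}(\hat x,\hat\omega)$, which is exactly the question at hand.

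There are secondary issues as well. In the negation of QNI, $\delta$ is determined by $\alpha_0$ and cannot be sent to $0$. Your renormalization by $F^{-\ell_n/2}$ is applied to $\hat z^{(n)}_{1/2}$, which carries a \emph{different past} $(\omega'')^-$ than $\hat x^{(n)}_{1/2}$; the skew-product flow then uses $(\omega'')^-$, so $F^{-\ell_n/2}(\hat z^{(n)}_{1/2})$ need not land in $\eta^s[\hat x^{(n)}]$ at all. Likewise $\hat y^{(n)}_{1/2}$ carries a different \emph{future}, so even if its $X$-component converges to $x_\infty$, $W^{s}(\hat y_\infty)$ is not $W^{s}(\hat x_\infty)$ and you cannot swap them in the limit. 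Finally, the minimizing segment between $U^+_q[\hat z]$ (real $1$-dimensional) and $W^{cs}_q(\hat y)$ (real $2$-dimensional) in a real $4$-manifold has no reason to lie in the $E^u$ direction, so your contraction estimate for the gap under backward flow is unjustified.

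In short, the argument needs the paper's indirect route: negate Proposition \ref{U-}, and use the NJI pair to \emph{build extra unstable support}, contradicting $\calL^u=U^+$.
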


Now we work by contradiction to prove Proposition \ref{U-}. We assume there is a $m$-positive measure subset of $Y$, which we will call $A$ where for any $(\hx,\ho)\in A$ and for any neighbourhood of $\hx$ in $X$, there is a $\nu$-positive measure subset of points $\hy$ such that $U^+_{loc}[\hy,\ho]\cap W^s_{loc}(\hx,\ho)= \emptyset$. We say that the pairs $(\hx,\ho)$ and $(\hy,\ho)$ satisfy the non-jointly integrable (NJI) condition.

The proof of Proposition \ref{U-} is based on the work of \cite{BQ} and \cite{EM}, and uses results and techniques from \cite{BEF}. By assuming that we have this set of positive measure such that NJI holds we will contradict what we demonstrated in Lemma \ref{0.6}, that each a.e. stable and unstable leaf has exactly 1 (smooth) real curve as the support of the conditional measure. 

We will start by finding a pair of points in $X\times \Omega$ that satisfy the NJI condition and that have a $[0,1[$-component that allows both points to live in some `good' subset of $Z$. This good subset will have properties that allow us to complete other aspects of the argument. The properties of all the `good' sets will be described in Section \ref{sec:lusin}.

Next, we will adjust these starting points by flowing by some very large time $T$. The purpose will be to get to points where the unstable planes are close, but note that the unstable supports are distinct (because of the NJI condition). We will follow the image of these unstable supports throughout the argument. The closeness of these unstable planes will be used in a future part of the argument. An argument that uses Birkhoff's theorem, Lemma \ref{birkhoff}, will be used to ensure these points are also in a `good' set that will allow other parts of the argument to go through. Once we have made these adjustments, we choose a new $\Omega$-component for these points. Note that when we choose this new $\Omega$-component, we will do so in a way that ensures we are still in a (different) `good' set. 

We will then define a `V' shape for each choice of a parameter $m \in \bbR$, see Figure \ref{fig:points}; the legs of the `V' are defined by flowing (both forwards and backwards) from our first pair of points (after flowing by $T$) to other pairs of points; there are 8 points (4 pairs of points) in total in our `V'. From our first pair of points (after flowing by $T$), we go backwards by $m$ under the `standard' flow and (after picking a new future) forwards by $\ell(m)$ under the `time change' flow. We also flow forwards from the first pair of points (after flowing by $T$) by a quantity that allows the legs of the `V' to be the same length w.r.t. the `time changed' flow. Let `V1' and `V2' denote the two components of our `V' (one for each point in our pair of fixed points); both are `V' shapes in their own right.
Note that if one switches $\ell(m)$ under the `time changed' flow to a quantity in the standard flow, this amount will differ between the two components because of the point dependency of the roof function $\tau$ of the time change flow (see Section \ref{sec:flow}). This is why within one component, the quantity that allows the legs of the `V' to be the same ($\ell(m)$ under the time change flow) will have to differ between `V1' and `V2'. We call this quantity $d_1(m)$ for `V1' and $d_2(m)$ for `V2'.

We will have to argue that $d_1$ and $d_2$ are Bi-Lipschitz in order to use a Birkhoff's theorem argument to choose $m$ such that flowing backwards from our fixed pair of points by $m$ we land in a good set and flowing forwards from our fixed points by $d_1(m)$ and $d_2(m)$ resp. lands us in a different good set. The good set that we land in flowing forwards by $d_1(m)$ and $d_2(m)$ will allow us to use a lemma, denoted by \cite{BQ} as the `Law of Last Jump' whereby we can use the Martingale convergence theorem to land our last pair of points, the ones achieved from flowing by $\ell(m)$ under the time change flow, in a good set.

Meanwhile, the good set we have landed in after flowing backwards by $m$ allows us to keep track of the distance between the image of the supports that we will compute in Lyapunov charts; this allows us to setup a situation where we pass to normal form coordinates. Normal form coordinates give us exacting control over this when flowing by $\ell$ once we project onto the unstable manifold of one of the points. The parameter we flow forwards by in normal form coordinates, $\ell(m)$, is computed as the distance we need to flow in order to separate the image of the supports to a distance greater than a fixed $\epsilon_f>0$. In the end, we will use a limiting argument to get the supports in the same plane; this plane is achieved by sending $\ell$ to infinity, shrinking the stable components of the distance between the image of the supports. This results in having two distinct support lines in one unstable manifold. It turns out that this suffices, we will argue this using the sets $K_{00}$ and $\calW$ defined in Section \ref{sec:lusin}.  

  The way we send $\ell$ to infinity is by shrinking the distance between the initially chosen points (this will be explained in Section \ref{sec:original}). The initial points will always be chosen in a ball of radius $\rho$ and we will send $\rho$ to 0. This will also force $T$ and $m$ to grow along with $\ell$.

    The parameter $T$ will be chosen such that the unstable supports are very close to being $\epsilon_f$-apart. Also the parameter $T$ will be very big compared to $m$ and $\ell$. The parameter $m$ will small compared to $\ell$. Flowing by $m$ ultimately undoes some of the good positioning we have achieved when flowing by $T$, so $T$ has to grow faster than $m$. Still, $m$ needs to grow because we need to be able to use the Birkhoff theorem argument to land the right endpoints of the `V' in the good set. This will be explained later.

\begin{figure}[H]
    \centering
    \includegraphics[width=0.95\textwidth]{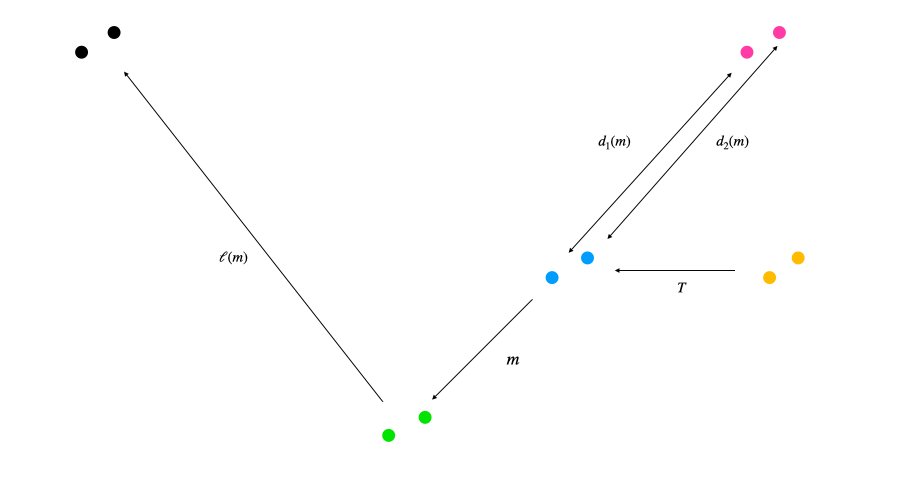}
    \caption{This figure shows the basic configuration of the points used in the proof of Proposition \ref{U-}. The magnitude of time that we flow between the points is labelled as well.}
    \label{fig:points}
\end{figure}

\subsection{Reduction of Proposition \ref{full} to Proposition \ref{U-}}

Assuming one can prove Proposition \ref{U-}, the same argument (run backwards) proves the following:

\begin{proposition} \label{other}
For $m$-a.e. $(\hx,\ho)\in Y$, there exists a neighbourhood $U$ of $\hx$ in $X$ such that for $\nu$-a.e. $\hy\in U$ we have that $U^-_{loc}[\hy,\ho]\cap W^u_{loc}(\hx,\ho)\neq \emptyset$.   
\end{proposition}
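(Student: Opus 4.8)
The plan is to derive Proposition \ref{other} from Proposition \ref{U-} by applying the latter to the reversed random walk, using the symmetry between stable/unstable objects under $\mu \mapsto \check\mu$ and $\omega \mapsto \check\omega$ already exploited in the proofs of Corollary \ref{0.2} and Lemma \ref{0.6}. Recall that $\check\mu(g) = \mu(g^{-1})$ and $\check\omega = (\dots, f_2^{-1}, f_1^{-1}, f_0^{-1}, f_{-1}^{-1}, f_{-2}^{-1}, \dots)$, and that under this involution one has $W^u(x,\omega) = W^s(x,\check\omega)$, $W^s(x,\omega) = W^u(x,\check\omega)$, $m^u_{x,\omega} = m^s_{x,\check\omega}$, $m^s_{x,\omega} = m^u_{x,\check\omega}$, and $\calL^u(x,\omega,0) = \calL^s(x,\check\omega,0)$. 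Since $\nu$ is also $\check\mu$-stationary (indeed $\check\mu$-invariant, once invariance has been established in Lemma \ref{0.6}), all hypotheses of Proposition \ref{full}/Proposition \ref{U-} hold equally for the reversed system: $\Gamma_{\check\mu} = \Gamma_\mu$ is non-elementary, there is no invariant algebraic curve, and the Lyapunov exponents of the reversed cocycle are $-\lambda^- > 0 > -\lambda^+$, so hyperbolicity persists and the sign condition $(-\lambda^+) + (-\lambda^-) \geq 0$ holds because $\lambda^+ + \lambda^- \geq 0$ — though in the present alternative ($d_+ = 1$) invariance and the structure of Lemma \ref{0.6} are what we really use.

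Concretely, I would first fix the compatible family for the reversed walk: set $U^-(x,\omega,k) \defeq U^+(x,\check\omega,k)$, exactly as in the last paragraph of the proof of Lemma \ref{0.6}, so that $U^-[x,\omega,k] = U^+[x,\check\omega,k] = \calL^u(x,\check\omega,0) = \calL^s(x,\omega,0)$, and $U^-_{loc}[x,\omega] = U^+_{loc}[x,\check\omega]$ with the subscript $loc$ referring to the size $q(x,\omega) = q(x,\check\omega)$ (the Lyapunov-chart data is symmetric under the involution). Then I would apply Proposition \ref{U-}, which we are assuming is proven, to the reversed data $(\check\mu, \nu)$: it yields that for $m$-a.e. $(\hx, \check\ho) \in Y$ there is a neighbourhood $U$ of $\hx$ in $X$ such that for $\nu$-a.e. $\hy \in U$ we have $U^+_{loc}[\hy, \check\ho] \cap W^s_{loc}(\hx, \check\ho) \neq \emptyset$. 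Translating back through the dictionary: $U^+_{loc}[\hy,\check\ho] = U^-_{loc}[\hy,\ho]$ and $W^s_{loc}(\hx,\check\ho) = W^u_{loc}(\hx,\ho)$, so this reads $U^-_{loc}[\hy,\ho] \cap W^u_{loc}(\hx,\ho) \neq \emptyset$ for $\nu$-a.e. $\hy \in U$, for $m$-a.e. $(\hx,\ho)$ (using that $\omega \mapsto \check\omega$ pushes $\mu^{\bbZ}$ to $\check\mu^{\bbZ}$ and preserves $m$-null sets, so an $m$-full-measure statement in $(\hx,\check\ho)$ is an $m$-full-measure statement in $(\hx,\ho)$). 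This is exactly the assertion of Proposition \ref{other}.

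The one point requiring a little care — and the only place where I'd expect friction — is verifying that Proposition \ref{U-}, whose proof (sketched in Section \ref{sec:outline1}) is built from forward-time constructions (the flow $F^T$, the "V" configuration, the Law of Last Jump, normal form coordinates on unstable manifolds), really does run verbatim for the reversed walk, i.e. that "the same argument run backwards" is legitimate. Here I would simply observe that the entire apparatus of Sections \ref{sec:prelim}–\ref{sec:flow} — Oseledets, Pesin local manifolds, Lyapunov charts, normal form coordinates, stable holonomies, the time-changed suspension flow, the good sets of Section \ref{sec:lusin1} — is set up symmetrically in $s$ and $u$ and in $\omega^{\pm}$, with the stable objects for $\omega$ being the unstable objects for $\check\omega$ and vice versa; the roof function $\tau$ for the reversed walk is built from $-\theta^- $ in the same way $\tau$ is built from $\theta$, and was in fact already written down in Section \ref{sec:invert} as $\tau^-$. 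Thus no new estimates are needed: Proposition \ref{other} is a formal consequence of Proposition \ref{U-} together with the involution $(\mu,\omega) \mapsto (\check\mu,\check\omega)$, and the proof is complete once Proposition \ref{U-} is established in Section \ref{sec:Sect3}.
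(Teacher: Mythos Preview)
Your proposal is correct and matches the paper's approach: the paper's entire proof is the single clause ``the same argument (run backwards) proves the following,'' and you have spelled out precisely what this means via the involution $(\mu,\omega)\mapsto(\check\mu,\check\omega)$, using the same stable/unstable dictionary already invoked in Corollary~\ref{0.2} and Lemma~\ref{0.6}. Your care about why the $m$-a.e.\ statement transfers (invariance of $\nu$ gives $m=\nu\times\mu^{\bbZ}$, and the involution pushes this to $\nu\times\check\mu^{\bbZ}$) is exactly the point that makes the one-liner rigorous.
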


\begin{remark}\label{another}
    Proposition \ref{U-} can be `flipped', i.e. the statement may also say $U^+_{loc}[\hx,\ho]\cap W^s_{loc}(\hy,\ho) \neq \emptyset$ with everything else unchanged. Remarks \ref{flip1} and \ref{flip2} in Section \ref{sec:blue} where NJI is used explain why. This is the only place where the NJI condition is used. We can also `flip' Proposition \ref{other} to say $U^-_{loc}[\hx,\ho]\cap W^u_{loc}(\hy,\ho)\neq \emptyset$.
\end{remark}

With Propositions \ref{U-}, \ref{other} and Remark \ref{another} in hand, we get Proposition~\ref{full}:

\begin{proof}(Proposition \ref{full})

    Proposition \ref{U-} gives us that we have a full $m$-measure subset $A_1\subset Y$ such that for $(\hx,\ho)\in A_1$ there exists a neighbourhood $U_1$ of $\hx$ in $X$ such that for $\nu$-a.e. $\hy\in U_1$ we have $$U^+_{loc}[\hy,\ho]\cap W^s_{loc}(\hx,\ho) = \{p\} \neq \emptyset.$$ Also (using Proposition \ref{other} and Remark \ref{another}) there is a full measure subset $A_2\subset Y$ where for $(\hx,\ho)\in A_2$ there exists a neighbourhood $U_2$ of $\hx$ in $X$ such that for $\nu$-a.e. $\hy\in U_2$ we have $$U^-_{loc}[\hx,\ho]\cap W^u_{loc}(\hy,\ho) = \{q\} \neq \emptyset.$$

    Note that these intersections must be only a point, otherwise the points would have two different behaviours, being in the unstable forces the points apart and being in the stable forces them together. Then on $A_1\cap A_2$ (which is still full measure) letting $U = U_1\cap U_2$ we get both results.This implies that $p=q$ because $U^-_{loc}[\hx,\ho] \subset W^s_{loc}(\hx,\ho)$ and $U^+_{loc}[\hy,\ho] \subset W^u_{loc}(\hy,\ho)$. Then we have that $$U^-_{loc}[\hx,\ho] \cap U^+_{loc}[\hy,\ho] = \{p\} \neq \emptyset.$$

     The complementary statement, that $$U^-_{loc}[\hy,\ho] \cap U^+_{loc}[\hx,\ho] \neq \emptyset,$$ is proven similarly.
\end{proof}

\subsection{Proof of Proposition \ref{U-}}

\label{proofstart}
As explained above, we work by contradiction, so we assume there exists a set of positive $m$-measure $A\subset Y$, $m(A)>\epsilon_0$, such that if $(\hx,\ho)\in A$ then any neighbourhood $U$ of $\hx$ has a positive measure subset $ U$ of points $\hy$ such that $U^+_{loc}[\hy,\ho]\cap W^s_{loc}(\hx,\ho)=\emptyset$. If two points $(\hx,\ho), \ (\hy,\ho) \in Y$ are such that $U^+_{loc}[\hy,\ho]\cap W^s_{loc}(\hx,\ho)=\emptyset$ we say that they satisfy the non-jointly integrable condition, abbreviated as the NJI condition.

\subsubsection{The `good' sets for the proof of Proposition \ref{U-}}\label{sec:lusin}

Now let us list the sets we will need for the proof of Proposition \ref{U-} and their properties. We also discuss their size and how they will be used. First let $L \defeq \max\{ 2\ln(e^{-(\lambda^--\epsilon)}+\frac{1}{2}C) +\lambda^++\epsilon, \frac{1}{\lambda^++\epsilon} \}.$ This quantity will be derived in Section \ref{sec:pink} in the proof of Lemma \ref{BiLip}, see Equation \eqref{Lipconst}. Let $\calL$ be upper bound on the Lipschitz constants of the automorphisms in $\supp(\mu)$ and let $\calK>0$ be such that $\frac{1}{\calL} = e^{-\calK}$. Fix $\tilde{\epsilon}>0$ small. Let $K_{3\tilde{\epsilon}}$ be the set defined in Section \ref{sec:Hcont} for $\epsilon = 3\tilde{\epsilon}$. This Lusin set has size $1-\epsilon_{hol}$ for some $\epsilon_{hol}$ small. We can ask $\epsilon_{hol}$ to be as small as needed and get a corresponding $\tilde{N}$. 

Also, let $C_b = \frac{(\lambda^+-\delta)}{2\ln(\calL)}$ and fix $i$ such that $C_b>\frac{1}{10^i}$, $\frac{C_b}{10^{i+2}}\ll(\lambda^++\delta)$ so that
\begin{align}
    \frac{\lambda^++\delta}{\frac{C_b}{10^{i+2}}}\gg \ln(k\epsilon_fM_r/2) + (\lambda^++\epsilon),
\end{align}
and $\frac{\calL^{\frac{C_b}{10^{i+2}}}}{e^{(\lambda^++\delta)}}<1$ (see Equations \eqref{lm}, \eqref{206} and \eqref{186} respectively).

Let $J$ be chosen such that $|\frac{C_b}{10^{i+J}}(\lambda^--\epsilon)| < |\frac{C_b}{10^{i+2}}(\lambda^-+\epsilon)| $ (needed for Equation \eqref{eta11}). Let $\alpha_1 = \frac{C_b}{10^{i+J}}$ and $\alpha_2 = \frac{C_b}{10^{i+2}}$, these choices will be used in the definition of the parameter $m$ but we can define them here which is important to the argument of Lemma \ref{dd}. Fix some $\varepsilon'>0$ small such that the expression
$$\calT = \varepsilon' - \frac{(\lambda^--\epsilon)\alpha_1}{-(\lambda^-+\epsilon)\alpha_2-\kappa} + \frac{(\lambda^-+\epsilon)\alpha_2}{-(\lambda^-+\epsilon)\alpha_2-\kappa},$$
is positive. Then fix $\eta<\calT$ and let $\calS = \left(1-\frac{1}{1+\eta}\right)$. We use these choices to define sets we need to use Lemma \ref{birkhoff} in Lemma \ref{dd}, one can see this choices made in the definitions of $\tilde{K}_2$, $E^c_2$, $K_3$ and $E^c_3$. This must be fixed from the beginning.
 
All the $\epsilon$ terms used in the definitions below will be assumed sufficiently small for their purpose unless otherwise stated more explicitly. Sometimes the size of these sets, stated here, will be proven later.

\begin{itemize}   
    \item $\calW \subset Z$, $$\calW = \{ (x,\omega,k)\in Z : \calL^u(x,\omega,k)\supsetneq U^+[x,\omega,k] \},$$ by Lemma \ref{0.6} we know that this set has measure zero.
    
    \item $K_{00} \subset Z$, arbitrary compact subset of size $\hat{m}(K_{00})> 1-\epsilon_{00}.$ We assume that $K_{00}\subset \calW^c$, we can do this since $\hat{m}(\calW^c)=1$.

    \item $K_0' \subset Y$, $K_0' \subset \Lambda \cap \Lambda_{loc} \cap K_s\cap K_r \cap K_{NFC} \cap K_{\xi}$, $m(K_0')>1-\frac{\epsilon_0}{2}$.

    \item $K_0 \subset Y,$ $K_0 \subset \Lambda \cap \Lambda_{loc} \cap K_s\cap K_r \cap K_{NFC} \cap K_{\xi}$, $m(K_0)\geq m(K_0')$.

    \item $K_1'\subset Y$, $K_1' \subset \Lambda \cap \Lambda_{loc} \cap K_s\cap K_r \cap K_{NFC} \cap K_{\xi}\cap K_{\xi}'\cap K_{exp} \cap\tilde{\Lambda}\cap K_{3\tilde{\epsilon}}\cap K_G\cap K_{\varepsilon}$, $m(K_1') > 1-\epsilon_1'$.

    \item $K_1 \subset Y$, $K_1 \subset \Lambda \cap \Lambda_{loc} \cap K_s\cap K_r\cap (K_{exp}\times \Omega)\cap K_{\varepsilon}$, $m(K_1) > 1-\epsilon_1$.

    \item $K_2' \subset Y$, $K_2' \subset \Lambda \cap \Lambda_{loc} \cap K_s\cap K_{\xi}\cap K_b\cap  K_{ang}\cap K_{exp} \cap (K_{\varepsilon}\times \Omega)$, $m(K_2')>1-\epsilon_2'$.

    \item $K_2 \subset Y$, $K_2 \subset \Lambda \cap \Lambda_{loc} \cap \Lambda' \cap K_s\cap K_r\cap K_{\xi}\cap K_{NFC}\cap K_{new} \cap K_G\cap K_b\cap K_{exp} \cap(K_{\varepsilon}\times \Omega)$, $m(K_2)>1-\epsilon_2$.

    \item $B' \subset X\times \Omega^-$,
    \begin{align*}
        B' \defeq &\{(x,k) : \exists V_{(x,\omega^-)}\subset \Omega^+, \mu^{\bbN}(V_{(x,\omega^-)})>1-\sqrt{\epsilon},\\ &\text{ such that if }\omega^+\in V_{(x,\omega^-)},
        \text{ then }(x,\omega^-,\omega^+) \in K_2 \},
    \end{align*}
    $\nu \times \mu^{\bbN} (B') > 1-\sqrt{\epsilon_2}$.

    \item $\tilde{K_2} \subset Y$, $\tilde{K}_2 \defeq (B' \times \Omega^+)\cap K_2'$, $\epsilon_2$ and $\epsilon_2'$ small such that the corresponding measure of $\tilde{K}_2$ which we call $\gamma_2=1-\tilde{\epsilon}_2$ satisfies 
    \begin{align*}
        \gamma_2> 1-\frac{1}{200L^4} >3/4.
    \end{align*}

    \item $E_2^c \subset Z$, take $\delta =\alpha_2(\tilde{K}_2)$ such that $\beta_2\defeq 2\gamma_2(1-\alpha_2)-1>\frac{1}{200L^4}$, $\epsilon \ll\epsilon_1$, $K=\tilde{K}_2 \times [0,1[$ and apply Lemma \ref{birkhoff}, we get $S>0$ such that the set 
    \begin{align*}
        E_2^c \defeq \{(x,\omega,k) : \ \forall T>S, \  |\frac{1}{T}\int_0^T \mathds{1}_{K}(F^t(x,\omega,k))dt - \int_{Y}\mathds{1}_{K} d\hat{m} |<\delta \},
    \end{align*}
    has $\hat{m}(E_2^c)>1-\epsilon$.

    \item $K_3 \subset Z$, $K_3 \subset (\Lambda \cap \Lambda_{loc})\times [0,1[$, additionally, $K_3$ satisfies the conditions of Lemma \ref{RN} and belongs to the $\hat{m}$-conull set from Lemma \ref{LLJ}. It further satisfies the properties of the set $L'$ in Lemma \ref{48}. The size of this set is $\gamma_3\defeq 1-\epsilon_3$ where
    \begin{align*}
       \gamma_3> (1-\frac{1}{100L^2}) + \frac{1}{100L^2}\left( \frac{1}{1+\eta} \right)>3/4.
    \end{align*}

    \item $E_3^c \subset Z$, take $\delta = \alpha_3\gamma_3$ such that $\beta_3 = 2\gamma_3(1-\alpha_3)-1> (1-\frac{1}{100L^2})+\frac{1}{100L^2}\left(\frac{1}{1+\eta}\right)$, $\epsilon \ll\epsilon_1$, $K=K_3$ and apply Lemma \ref{birkhoff}, we get $S>0$ such that the set 
    \begin{align*}
        E_3^c \defeq \{(x,\omega,k) : \ \forall T>S, \  |\frac{1}{T}\int_0^T \mathds{1}_{K}(F^t(x,\omega,k))dt - \int_{Y}\mathds{1}_{K} d\hat{m} |<\delta \},
    \end{align*}
    has $\hat{m}(E_3^c)>1-\epsilon$.

    \item $M \defeq (K_1\times [0,1[) \cap E_2^c \cap E_3^c$, $\hat{m}(M)>1-\epsilon_M$.

    \item $B\subset X\times [0,1[$, fix $\theta>0$ small
    \begin{align*}
        B \defeq \{(x,k) : \exists V_{(x,k)}\subset \Omega, \mu^{\bbZ}(V_{(x,k)})>1-\theta, \text{ such that if }\omega\in V_{(x,k)}, \text{ then }(x,\omega,k) \in M \},
    \end{align*}
    $\nu \times dt (B) > 1-\frac{\epsilon_M}{\theta}$.

    \item $\tilde{K}_1\subset Z$, $\tilde{K}_1 \defeq (B\times \Omega) \cap (K_1' \times [0,1[)$, $\epsilon_M, \epsilon_1'$ are small so that $\hat{m}(\tilde{K}_1)>3/4$.

    \item $K_4 \subset Y$, $K_4\subset \Lambda\cap \Lambda_{loc}\cap K_{new} \cap K_{NFC}$.

    \item $K_4' \subset Z$, $K_4' = (K_4\times [0,1[) \cap K_{00}$.

    \item $E^c \subset Z$, take $\delta = \frac{1}{100}\hat{m}((\tilde{K}_1)\cap E_2^c\cap E_3^c)$, $\epsilon \ll m(B)$, $K = \tilde{K}_1$ and apply Lemma \ref{birkhoff}, we get $S>0$ such that the set 
    \begin{align*}
        E^c \defeq \{(x,\omega,k) : \ \forall T>S, \  |\frac{1}{T}\int_0^T \mathds{1}_{K}(F^t(x,\omega,k))dt - \int_{Y}\mathds{1}_{K} d\hat{m} |<\delta \}.
    \end{align*}
    \end{itemize}

These sets are not in the order they appear in the proof because the definition of the earlier sets often requires foresight from later in the proof.

    \subsubsection{A brief sketch on how good sets will be used}\label{sketch}
    \begin{itemize}
    \item $\calW$ - we know this set has zero measure by Lemma \ref{0.6} but we will show this is not the case when we assume that Proposition \ref{U-} is false. This will prove Proposition \ref{U-}.

    \item $K_{00}$ - We will land in this set at the end and use it to show that $\calW$ must have positive measure when we assume that Proposition \ref{U-} is false. This will prove Proposition \ref{U-}.
        
    \item $K_0'\subseteq \ K_0\subset Y$ - Our starting points, $(\hx,\ho)$ and $(\hy,\ho)$ will be chosen from here. A placeholder set, $K_0'$ will be used at the beginning of the argument in Section \ref{sec:original}, but ultimately the starting points will live in $K_0$.
    
    \item $K_1'\subset Y$ - After flowing the starting points $(\hx,\ho,k_0)$ and $(\hy,\ho,k_0)$ by some time $T$ forward under the `standard' flow, we want the points $F^T(\hx,\ho)$ and $F^T(\hy,\ho)$ to land in this set. 

    \item $K_1\subset Y$ - After flowing the starting points $(\hx,\ho,k_0)$ and $(\hy,\ho,k_0)$ by $T$ under the `standard' flow we get the points $F^T(\hx,\ho)$ and $F^T(\hy,\ho)$. We denote the $X$-components as $\tilde{x} \defeq f_{\omega}^n(\hx)$ and $\tilde{y} \defeq f_{\omega}^n(\hy)$ where $n = \lfloor T+k_0\rfloor$. We will change the $\Omega$-component of $F^T(\hx,\ho)$ and $F^T(\hy,\ho)$ to some $\tilde{\omega} \in \Omega$ so that $(\tilde{x},\tilde{\omega})$ and $(\tilde{y},\tilde{\omega})$ belong to this set $K_1$.
    
    \item $E^c_2$, $E^c_3 \subset Z$ - When we take the points $F^T(\hx,\ho,k_0)$ and $F^T(\hy,\ho,k_0)$ and change the $\Omega$-component to $\tilde{\omega}$, we need the resulting points to belong to $E^c_2$ and $E^c_3$. This allows us to run arguments in Sections \ref{sec:green} and $\ref{sec:pink}$, related to Lemma \ref{birkhoff}, to get appropriate $m$ such that $m$, $d_1(m)$ and $d_2(m)$ are good times to flow by from the points $(\tilde{x},\tilde{\omega})$ and $(\tilde{y},\tilde{\omega})$ to land in sets $\tilde{K}_2$ and $K_3$ resp. 
    
     \item $B\subset X \times [0,1[$ - After flowing  $(\hx,\ho,k_0)$ and $(\hy,\ho,k_0)$ by some time $T$ forward under the `standard' flow, we want the $X\times [0,1[$-component of the resulting points to land in this set so that we will be able to pick a new $\omega$ so that $(x,\omega,k)\in M$.

    \item $\tilde{K}_1 \subset Z$ - We land in this set after flowing by $T$ from the starting points. Note that $K_1'\times [0,1[$ is part of the definition of this set. 

    \item $K_2'\subset Y$ - After flowing $(\tilde{x},\tilde{\omega})$ and $(\tilde{y},\tilde{\omega})$ backwards by $m$ under the `standard' flow we get the points $F^{-m}(\tilde{x},\tilde{\omega})$ and $F^{-m}(\tilde{y},\tilde{\omega})$ which need to be in this set $K_2'$. 

    \item $K_2 \subset Y$ - After flowing by $-m$ under the `standard' flow we get the points $F^{-m}(\tilde{x},\tilde{\omega},k_1)$ and $F^{-m}(\tilde{y},\tilde{\omega},k_1)$. We denote the $X$-components by ${x} \defeq f_{\omega}^{n'}(\hx)$ and ${y} \defeq f_{\omega}^{n'}(\hy)$ where $n' = \lfloor k_1-m\rfloor$. We will change the $\Omega^+$-component of $F^{-m}(\tilde{x},\tilde{\omega},k_1)$ and $F^{-m}(\tilde{y},\tilde{\omega},k_1)$ to get some ${\omega'} \in \Omega$ so that $({x},{\omega'})$ and $({y},{\omega'})$ belong to the set $K_2$. 

    \item $B'\subset X \times \Omega^-$ - After flowing  $(\tilde{x},\tilde{\omega},k_1)$ and $(\tilde{y},\tilde{\omega},k_1)$ by some time $m$ backwards under the `standard' flow, we want the points $f_{\omega}^{n'}(x) = {x}$ and $f_{\omega}^{-n}(y) = {y}$ to be such that $(x,\sigma^{n'}(\tilde{\omega}^-))$ and $(y,\sigma^{n'}(\tilde{\omega}^-))$ land in this set, where $n' = \lfloor k_1- m\rfloor$ and $\omega^-$ represents the part of the vector $\omega$ that contains positions 0, -1, -2... etc. This way we can pick a new future $\omega^+\in \Omega^+$ so that $(x,(\omega^-,\omega^+))\in K_2.$

    \item $\tilde{K}_2 \subset Y$ - when we flow backwards by $m$ starting from $(\tilde{x},\tilde{\omega},k_1)$ and $(\tilde{y},\tilde{\omega},k_1) \in E_2^c$ we take $K = \tilde{K}_2\times[0,1[$ as the set in Lemma \ref{birkhoff}. The bound on the measure of this set is for an argument in Section \ref{sec:pink}, Lemma \ref{landing}.

     \item $K_3\subset Z$ - Starting from the points $(\tilde{x},\tilde{\omega},k_1)$ and $(\tilde{y},\tilde{\omega},k_1)$, we will flow forwards by $d(m,(\tilde{x},\tilde{\omega},k_1))$ and $d(m,(\tilde{y},\tilde{\omega},k_1))$ resp. under the `time change' flow. We want the resulting points to land in the set $K_3$. The lower bound on the measure of this set is for the purposes of Lemma \ref{landing} in Section \ref{sec:pink}. 

    \item $K_{4}\subset Y, K_4'\subset Z$ - From the points $(x,\omega')$ and $(y,\omega')$ we will flow forwards by $\ell$ under the `time changed' flow. We want the points $F^{\ell}(x,\omega')$ and $F^{\ell}(y,\omega')$ to land in the set $K_4$. The set $K_4'$ just extends the properties of this set to $Z$.

    \item $E^c \subset Z$ - The original points $(\hx,\ho)$ and $(\hy,\ho)$ must belong to $E^c$ for some choice of $k\in [0,1[$ and it is demonstrated that we can do this in Section~\ref{sec:original}. 
\end{itemize}

\subsubsection{Notation for Section \ref{sec:Sect3}}
\label{sec:not5}
Here we recall/define a few constants that are needed throughout the rest of this section. It is important to note that all of these constants can be defined before beginning the argument. 

Recall that we are making the assumption that $\supp(\mu)$ is a finite set. We let $\calL$ be the upper bound on the Lipschitz constant and we can assume $\calL$ is at least 3. We also let $C$ be the upper bound on the second derivative of any automorphism in $\supp(\mu)$ as in Equation \eqref{Const} (given by $C_{\tilde{f}_i}$).

Recall $\tilde{\epsilon}>0$ from Section \ref{sec:lusin} and define $\alpha = \frac{a-b}{a-d}$ where $a = -(\lambda^--3\tilde{\epsilon})$, $b = -(\lambda^++3\tilde{\epsilon})$ and $d = \ln(2(C^-)^2) + 2c_0^- + |\ln(\rho_0/4)| + |a|$ where $C^-$ and $c_0^-$ are chosen uniformly on $\Omega_{\epsilon}$ as in Lemma \ref{cCunif} and $\rho_0$ is as in Section \ref{sec:Hcont}.

Lastly, let $\epsilon_f$ be such that
\begin{align}\epsilon_f<\frac{1}{2M_r}\max
\left\{q/2, k_U/2,\rho_0/4, \frac{1}{2M_r}, \frac{2k_1e^{-(\lambda^-+\epsilon)}}{CM_r}, \frac{2k_2e^{-(\lambda^++\epsilon)}}{CM_r}\right\},\end{align} 
where $k_1,k_2<1$ are fixed such that $(1-k_1)e^{-(\lambda^-+\epsilon)}>1$, $k$ and $q$ are from the definition of the set $K_{loc}$ in Section \ref{sec:stablemfld}, and $M_r$ and $k_U$ are from Section \ref{sec:lusin1}.

\subsubsection{The starting points}\label{sec:original}

We need to pick points $(\hx,\ho), \ (\hy,\ho) \in Y$ such that $(\hx,\ho) \in A$, and both $(\hx,\ho)$ and $(\hy,\ho)$ belong to a set with the features of $K_0$ as defined in Section \ref{sec:lusin}. We will need to use an intermediate set $K_0'$ and enlarge it at the end to get the points in $K_0$; the points also need to satisfy the NJI condition with respect to each other. Our approach relies on the fact that $m$ is the product measure and Fubini's theorem; we will use the following lemma:
\begin{lemma}
\label{fubini}
    Let $(X_1,\eta_1)$ and $(X_2,\eta_2)$ be $\sigma$-finite measure spaces. Consider $V\subset X_1$, a set such that $\eta_1(V)>0$, and for each point $a\in V$ consider a set $W_a\subset X_2$ such that $\eta_2(W_a)>0$. Then $S \defeq \bigcup_{a\in V}W_a\times \{a\}$ has positive measure with respect to the $\eta_1\times \eta_2$ on $X_1\times X_2$.
\end{lemma}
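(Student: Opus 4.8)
\textbf{Proof plan for Lemma \ref{fubini}.}

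The statement is a routine application of Tonelli/Fubini for $\sigma$-finite measure spaces together with a measurability observation, so the plan is to first address measurability of $S$ and then compute its measure from below. First I would note that $S = \{(w,a)\in X_2\times X_1 : a\in V,\ w\in W_a\}$. To invoke the product measure I need $S$ to be measurable; in our application the sets $V$ and the $W_a$ arise from measurable conditions (membership in the good sets $K_2$, $M$, etc.), and the assignment $a\mapsto W_a$ is a measurable family, so $S$ is measurable in the product $\sigma$-algebra. (If one wants to avoid even discussing measurability, one can replace $S$ by a measurable hull, or simply observe that in every place this lemma is applied the set $S$ is visibly a measurable set cut out by countably many measurable conditions; I would add a sentence to this effect.)

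Next, by Tonelli's theorem for the $\sigma$-finite product $\eta_1\times\eta_2$, we have
\begin{align*}
    (\eta_1\times\eta_2)(S) = \int_{X_1} \eta_2\bigl(\{w : (w,a)\in S\}\bigr)\, d\eta_1(a) = \int_{X_1} \eta_2(W_a)\,\mathds{1}_{V}(a)\, d\eta_1(a) = \int_{V} \eta_2(W_a)\, d\eta_1(a).
\end{align*}
Since $\eta_2(W_a)>0$ for every $a\in V$, the integrand is strictly positive on $V$, and since $\eta_1(V)>0$, the integral of a strictly positive function over a set of positive measure is strictly positive. (One can make this fully rigorous by the standard slicing argument: the sets $V_n \defeq \{a\in V : \eta_2(W_a)>1/n\}$ increase to $V$, so some $V_n$ has positive $\eta_1$-measure, whence $(\eta_1\times\eta_2)(S)\geq \tfrac1n \eta_1(V_n)>0$.) This gives $(\eta_1\times\eta_2)(S)>0$, as claimed.

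I do not anticipate any real obstacle here; the only point requiring a word of care is the measurability of $S$, which is why I would state explicitly that the lemma is applied to measurable families $\{W_a\}$ (as is the case for all the concrete sets built from the Lusin/Pesin good sets of Section \ref{sec:lusin}). Everything else is a direct invocation of Tonelli for $\sigma$-finite spaces, exactly parallel to the proof of Lemma \ref{fubini2} given earlier (which is the quantitative version of the same statement), so I would keep the write-up to a few lines and reference that parallel.
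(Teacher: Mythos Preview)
Your proposal is correct and follows essentially the same approach as the paper: both apply Fubini/Tonelli to write $(\eta_1\times\eta_2)(S)=\int_V \eta_2(W_a)\,d\eta_1(a)$ and then use the identical slicing $V_n=\{a\in V:\eta_2(W_a)>1/n\}$ to extract a positive lower bound. The only difference is that you add a measurability caveat for $S$, which the paper omits but which is a reasonable remark.
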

\begin{proof}
    Writing $S = \{(w,a) : w\in W_a, \ a\in V\}$, we apply Fubini's theorem:
    \begin{equation*}
        \eta_1\times \eta_2(S) = \int_S d\eta_1d\eta_2 = \int_V \int_{W_a}d\eta_2d\eta_1 = \int_V \eta_2(W_a)d\eta_1.
    \end{equation*}
    Let $V_n = \{a\in V : \eta_2(W_a)>1/n\} \subset V$, then $V_n \to V$ as $n\to \infty $. $V$ has positive measure, so there exists $N>0$ such that $V_N$ has positive measure as well. Then,
    \begin{align*}
        \eta_1\times \eta_2(S) = \int_V \eta_2(W_a)d\eta_1 \geq \int_{V_N} \eta_2(W_a) d\eta_1 = \int_{V_N} \frac{1}{N}d\eta_1 = \eta_1(V_N)\cdot \frac{1}{N}>0.
    \end{align*}
    This completes the proof.
\end{proof}

Let $K_0'$ be defined as in Section \ref{sec:lusin}. Since $m(K_0')>1-\frac{\epsilon_0}{2}$ and $m(A)>\epsilon_0$, we have that $B_1\defeq A\cap K_0'\subset Y$ has positive $m$-measure. Recall the natural projection map $\pi_{\Omega} : Y\to \Omega$. Since $B_1$ has positive $m$-measure and $m$ is the product measure, we must have that $\pi_{\Omega}(B_1)$ has positive $\mu^{\bbZ}$-measure in $\Omega$; if not, and $\pi_{\Omega}(B_1)$ has zero measure, then the set $X\times \pi_{\Omega}(B_1)$ has measure zero and contains $B_1$. This is a contradiction.

For each $(z,\omega) \in B_1\subset A$, every neighbourhood of $z$ in $X$ has a positive measure subset of points $y$ in $X$ such that NJI holds. Fix some $\rho>0$, and for each $(z,\omega)\in B_1$ take a ball of radius $\rho$ around $z$ and let $C_{(z,\omega)}$ be the positive measure subset of this ball of radius $\rho$ where NJI holds. Define $\tilde{B}_1 \defeq (B_1\times [0,1[)\cap E^c\subset Z$.

\begin{lemma}\label{start}
    There exists a set $D\subset Z$ is such that if we make $K_0$ large enough, $D\cap (K_0\times [0,1[)$ has positive measure. Then if $(\hy,\ho,k_0)\in D\cap (K_0\times [0,1[)$, there is a corresponding $(\hx,\ho,k_0)\in (B_1\times [0,1[)\cap E^c$, with $(\hx,\ho)\in A$ such that $(\hx,\ho)$ has the NJI condition with $(\hy,\ho)$.
\end{lemma}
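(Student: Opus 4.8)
The goal is to produce a positive-measure set of pairs $((\hx,\ho),(\hy,\ho))$ such that $(\hx,\ho)\in A$, both points land in a good set (eventually $K_0$, via the placeholder $K_0'$), the two points satisfy the NJI condition, and they share a common $[0,1[$-component $k_0$ placing the triples in $E^c$. The entire construction is an application of Fubini-type arguments (Lemma \ref{fubini}) to exploit the product structure of $m=\nu\times\mu^{\bbZ}$ (Remark \ref{nuinvar}) together with the NJI hypothesis.

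\textbf{First step: set up the fibered construction.} I would start from $B_1\defeq A\cap K_0'$, which has positive $m$-measure since $m(K_0')>1-\epsilon_0/2$ and $m(A)>\epsilon_0$. As already noted in the excerpt, $\pi_\Omega(B_1)\subset\Omega$ has positive $\mu^{\bbZ}$-measure. For each $(z,\omega)\in B_1$, the NJI hypothesis gives that inside the $\rho$-ball around $z$ there is a positive-$\nu$-measure set $C_{(z,\omega)}$ of points $y$ with which $(z,\omega)$ is NJI. The point is to produce, from this data, a positive-measure set of \emph{second} points $\hy$ that (a) lie in a good set and (b) each come equipped with at least one valid partner $\hx$. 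I would define a set of candidate second points by taking, for each $\omega\in\pi_\Omega(B_1)$, the union over all $z$ with $(z,\omega)\in B_1$ of the sets $C_{(z,\omega)}$; call the resulting fibered set $\tilde D\defeq\{(y,\omega):\omega\in\pi_\Omega(B_1),\ \exists z\text{ with }(z,\omega)\in B_1,\ y\in C_{(z,\omega)}\}$. Because each fiber $\tilde D_\omega$ contains some $C_{(z,\omega)}$ of positive $\nu$-measure, Lemma \ref{fubini} (applied with $X_1=\Omega$, $\eta_1=\mu^{\bbZ}$, $X_2=X$, $\eta_2=\nu$) shows $m(\tilde D)>0$. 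Then I would pass to the suspension: set $D\defeq(\tilde D\times[0,1[)\cap E^c$, and recall that $\hat m(E^c)$ can be made close to $1$ via Lemma \ref{birkhoff} applied to $\tilde K_1$ (with the $\delta$, $\epsilon$, $K$ specified in Section \ref{sec:lusin}), so $\hat m(D)>0$ still.

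\textbf{Second step: intersect with the good set $K_0$.} Now I would enlarge the placeholder $K_0'$ to $K_0$ (keeping $m(K_0)\geq m(K_0')$, i.e. $K_0$ is a larger Lusin-type set sitting inside the same intersection $\Lambda\cap\Lambda_{loc}\cap K_s\cap K_r\cap K_{NFC}\cap K_\xi$); this is the standard trick of first running the argument with a set of measure barely positive and then noting all the relevant quantitative estimates only require membership in the sets whose intersection defines $K_0$. Since $\hat m(D)>0$ and $K_0\times[0,1[$ has measure close to $1$, the intersection $D\cap(K_0\times[0,1[)$ has positive $\hat m$-measure. For any $(\hy,\ho,k_0)$ in this intersection, unwinding the definition of $\tilde D$ produces a $z$ with $(z,\omega)\in B_1\subset A\cap K_0'$ and $\hy\in C_{(z,\hat\omega)}$; set $\hx\defeq z$. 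Then $(\hx,\ho)\in A$, $(\hx,\ho)$ lies in $K_0'$ (hence in the good set), and by construction of $C_{(z,\hat\omega)}$ the pair $(\hx,\ho),(\hy,\ho)$ satisfies NJI. Finally, since $D\subset E^c$, the triple $(\hx,\ho,k_0)$ lies in $(B_1\times[0,1[)\cap E^c$, which is exactly the claimed conclusion.

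\textbf{Main obstacle.} The genuinely delicate point is the measurability and the bookkeeping in defining $\tilde D$: one needs the assignment $(z,\omega)\mapsto C_{(z,\omega)}$ to be measurable enough that the fibered union $\tilde D_\omega=\bigcup_{z:(z,\omega)\in B_1}C_{(z,\omega)}$ is $\nu$-measurable with positive measure, so that Lemma \ref{fubini} genuinely applies. I would handle this by taking $C_{(z,\omega)}$ to be (a measurable selection from) the \emph{full} set of NJI partners of $(z,\omega)$ inside $B(z,\rho)$ rather than some ad hoc subset — the NJI condition $U^+_{loc}[\hy,\ho]\cap W^s_{loc}(\hx,\ho)=\emptyset$ cuts out a measurable subset of $X\times X\times\Omega$ by the measurability of the local stable/unstable data (Proposition \ref{locstablemfld}, Proposition \ref{NFC}), so the set $\{(z,y,\omega):(z,\omega)\in B_1,\ y\in B(z,\rho),\ \text{NJI holds}\}$ is measurable, has positive $\nu\times\nu\times\mu^{\bbZ}$-measure by the defining property of $A$, and its projection to the $(y,\omega)$ coordinates is the desired $\tilde D$ (up to a null set, by measurable projection / the fact that $m$ is a product). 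Everything else — enlarging $K_0'$ to $K_0$, intersecting with $K_0\times[0,1[$ and with $E^c$ — is routine once that measurability point is secured.
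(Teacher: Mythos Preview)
Your overall strategy matches the paper's: build a fibered set of ``partner'' points via Lemma~\ref{fubini} (using the product structure $m=\nu\times\mu^{\bbZ}$), then intersect with the large good set $K_0$. The paper does exactly this, starting from $\tilde B_1\defeq(B_1\times[0,1[)\cap E^c$ and setting $D=\bigcup_{(z,\omega,k)\in\tilde B_1}\bigl(C_{(z,\omega)}\times\{(\omega,k)\}\bigr)$.

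However, there is a genuine error in your bookkeeping for the $E^c$ condition. You define $D=(\tilde D\times[0,1[)\cap E^c$, which forces $(\hy,\ho,k_0)\in E^c$, and then assert ``since $D\subset E^c$, the triple $(\hx,\ho,k_0)$ lies in $(B_1\times[0,1[)\cap E^c$.'' This is a non sequitur: membership of $(\hy,\ho,k_0)$ in $E^c$ says nothing about the partner $(\hx,\ho,k_0)$. Your $\tilde D$ was built over the base $B_1$ (not over $\tilde B_1$), so the existence statement you unwind only yields $(\hx,\ho)\in B_1$, with no control on whether $(\hx,\ho,k_0)\in E^c$. The lemma's conclusion explicitly requires $(\hx,\ho,k_0)\in(B_1\times[0,1[)\cap E^c$, and later (Lemma~\ref{bigint}) this is exactly what is used to run the Birkhoff argument from $\hx$.

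The fix is immediate and is what the paper does: impose the $E^c$ condition on the \emph{base} points before forming the fibered union. Replace $B_1$ by $\tilde B_1=(B_1\times[0,1[)\cap E^c$, which still has positive $\hat m$-measure, and for each $(z,\omega,k)\in\tilde B_1$ let $D_{(z,\omega,k)}=C_{(z,\omega)}\times\{(\omega,k)\}$. Then $D=\bigcup_{(z,\omega,k)\in\tilde B_1}D_{(z,\omega,k)}$ has positive measure by Lemma~\ref{fubini} (the projection of $\tilde B_1$ to $\Omega\times[0,1[$ has positive measure, and each fiber contains a $C_{(z,\omega)}$ of positive $\nu$-measure), and now any $(\hy,\ho,k_0)\in D\cap(K_0\times[0,1[)$ automatically comes with a partner $(\hx,\ho,k_0)\in\tilde B_1=(B_1\times[0,1[)\cap E^c$, as required. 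Your measurability discussion is fine and applies equally well to this corrected construction.
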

\begin{proof}
For each $(z,\omega,k) \in \tilde{B}_1$ (this set has positive $\hat{m}$-measure by the choices we made), we take $C_{(z,\omega)}\subset X$ and define $D_{(z,\omega,k)} = C_{(z,\omega)}\times \{(\omega,k)\}\subset Z$. Note that because $\tilde{B}_1$ has positive $\hat{m}$-measure, the projection of $\tilde{B}_1$ onto $\Omega \times [0,1[$ has positive measure as well. One defines $D = \bigcup_{(z,\omega,k) \in \tilde{B}_1} D_{(z,\omega,k)}$ and by Lemma \ref{fubini}, we have that $D$ has positive measure. Take $K_0$ to have all the specified properties including that it contains $K_0'$, and large enough such that $D\cap (K_0\times [0,1[)$ has positive measure. Take $(\hy,\ho,k_0)$ from $D\cap (K_0\times [0,1[)$, it comes with the corresponding $(\hx,\ho,k_0)\in (B_1\times [0,1[)\cap E^c $ (where $E^c$ is as defined in Section \ref{sec:lusin}) and naturally they have the NJI condition as needed by construction.  
\end{proof}
We use Lemma \ref{start} to get our starting points.

\begin{remark}
    The NJI condition between $(\hx,\ho)$ and $(\hy,\ho)$ gives us that $\hy$ does not belong to $W^s(\hx,\ho)$.
\end{remark}

\begin{remark}\label{close}
    It is worth noting that in the construction above, we can shrink $\rho$ as small as needed and always find points $(\hx,\ho)$ and $(\hy,\ho)$ that satisfy our conditions. This allows us to find points of arbitrary closeness, or more precisely, find $\hy$ arbitrarily close to $\hx$.
\end{remark}

\subsubsection{Flowing by $T$}
\label{sec:blue}

Using Lemma \ref{start}, we choose our starting points, $(\hx,\ho,k_0)$ and $(\hy,\ho,k_0)$ such that $d_X(\hx,\hy)\ll \rho$ where $$\rho \ll \frac{\epsilon_f}{10000}, $$ 
(see Remark \ref{close}).

These starting points belong to $K_{loc}$ by definition of the set $K_0$ (see Section \ref{sec:lusin}), this means we can define local stable and unstable manifolds for these points of size $q$, (see the definition of $q$ in Section \ref{locstablemfld}) note that $\epsilon_f< q/4$ so the scale we are working in is much smaller.

In this section we will take these points $(\hx,\ho,k_0)$ and $(\hy,\ho,k_0)$ and flow them by some very large time $T$ under this $\ho$ (using the `standard' flow) to put them in a `good' position. What we mean by this is that after flowing by $T$ we want the  image of $W^u_{q}(\hx,\ho)$ and $W^u_{q}(\hy,\ho)$ (i.e. $W^u_{q}(F^T(\hx,\ho))$ and $W^u_{q}(F^T(\hy,\ho))$ to be close (in some ball), but still want the image of the unstable supports, i.e. the image under $F^T$ of $U^+_q[\hx,\ho]$ and $U^+_q[\hy,\ho]$ (i.e. $U^+_q[F^T(\hx,\ho)]$ and $U^+_q[F^T(\hy,\ho)]$) to be bounded apart. The point of this is to have $U^+_q[F^T(\hx,\ho)]$ and $U^+_q[F^T(\hy,\ho)]$ roughly in the same complex plane. This will be used later in Section \ref{sec:green}.

 Consider $\hz \in W_q^s(\hx,\ho)\cap W^u_q(\hy,\ho)$. Let $n = \lfloor T+k_0\rfloor$ and denote $\tilde{x} = F^n_{\omega}(\hx)$, $\tilde{y} = F^n_{\omega}(\hy)$, and $\tilde{z} = F^n_{\omega}(\hz)$. We will see later in this Section that $T$ can be chosen such that $(\tilde{x},\sigma^n(\ho))$ and $(\tilde{y},\sigma^n(\ho))$ belong to the set $\tilde{\Lambda}\subset K_1'$ as defined in Section \ref{sec:Hcont}. For now we make the following standing assumption:

\begin{assumption}\label{ass1}
    The parameter $T$ can be chosen such that $(\tilde{x},\sigma^n(\ho))$ and $(\tilde{y},\sigma^n(\ho))$ belong to the set $K_1'$ as defined in Section \ref{sec:lusin}. We demonstrate that we can choose the parameter $T$ in this way in Lemma \ref{bigint}.
\end{assumption}

Take $\tilde{\epsilon}>0$ from the beginning of Section \ref{sec:blue}, we get $N_u\in \bbN$ such that $\tilde{x}, \tilde{y} \in \Lambda_{\tilde{\epsilon},N_u}^{u,\sigma^n(\ho)}$ (see Section \ref{sec:Hcont}).

\begin{lemma} \label{goodland}
   The point $\tilde{z}$ belongs to $\Lambda_{3\tilde{\epsilon},3N_u}^{u,\sigma^n(\ho)}$. 
\end{lemma}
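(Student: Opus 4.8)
The plan is to obtain the required backward $\sigma^n(\ho)$-hyperbolicity estimates at $\tilde z$ by transporting them from the two regular points $\tilde x$ and $\tilde y$ along the backward orbit of $\tilde z$. Since $\hz\in W^s_q(\hx,\ho)\cap W^u_q(\hy,\ho)$, applying $F^n$ gives $\tilde z=f^n_{\ho}(\hz)\in W^s_{q}(\tilde x,\sigma^n(\ho))$ and $\tilde z\in W^u(\tilde y,\sigma^n(\ho))$. Hence, by Proposition~\ref{locstablemfld}, for $0\le j\le n$ the point $f^{-j}_{\sigma^n(\ho)}(\tilde z)=f^{n-j}_{\ho}(\hz)$ stays within $k\,e^{(\lambda^-+\delta)(n-j)}d_X(\hx,\hz)$ of $f^{-j}_{\sigma^n(\ho)}(\tilde x)$ (part (1): forward contraction along $W^s_q(\hx,\ho)$), while for $j\ge n$ the point $f^{-j}_{\sigma^n(\ho)}(\tilde z)=f^{-(j-n)}_{\ho}(\hz)$ stays within $k\,e^{-(\lambda^+-\delta)(j-n)}d_X(\hy,\hz)$ of $f^{-j}_{\sigma^n(\ho)}(\tilde y)$ (part (2): backward contraction along $W^u_q(\hy,\ho)$); here $k$ is the uniform constant coming from $\Lambda_{loc}$. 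Finally $E^s(\tilde z)=T_{\tilde z}W^s(\tilde x,\sigma^n(\ho))$ is, in the Lyapunov chart at $\tilde x$, the tangent to the graph of $h^s_{(\tilde x,\sigma^n(\ho))}$ of slope at most $1/3$, hence lies in the stable cone there, while $E^u(\tilde z)=T_{\tilde z}W^u(\tilde y,\sigma^n(\ho))$ is tangent to an unstable manifold and so lies in the invariant unstable cone of every Lyapunov chart containing $\tilde z$.

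To verify that $\tilde z\in\Lambda^{u,\sigma^n(\ho)}_{3\tilde\epsilon,3N_u}$ one must check \eqref{b''} and \eqref{b'}, i.e. bound $\Vert D_{\tilde z}f^{-m}_{\sigma^n(\ho)}v\Vert$ for $v\in E^s(\tilde z)$ and $v\in E^u(\tilde z)$ and every $m\ge3N_u$. For $3N_u\le m\le n$ I would work in the Lyapunov charts along the backward orbit of $\tilde x$, where the chart cocycle is uniformly hyperbolic with rates in $[e^{-\lambda^\pm-\epsilon},e^{-\lambda^\pm+\epsilon}]$ and preserves narrow invariant stable and unstable cones into which $E^s(\tilde z)$ and $E^u(\tilde z)$ respectively fall; iterating produces the bounds defining $\Lambda^{u,\sigma^n(\ho)}_{3\tilde\epsilon,3N_u}$ up to bounded constants. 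For $m>n$ I would factor, using $F$-equivariance of the Oseledets splitting, $D_{\tilde z}f^{-m}_{\sigma^n(\ho)}=D_{\hz}f^{-(m-n)}_{\ho}\circ D_{\tilde z}f^{-n}_{\sigma^n(\ho)}$: the second factor is the case $m=n$ just treated, and for the first factor one runs the same chart-and-cone argument along the backward orbit of $\hy$, using $\hz\in W^u_q(\hy,\ho)$ and Proposition~\ref{locstablemfld}(2), the $O(1)$ transition steps near $j=n$ costing only a bounded factor. The point that makes the unstable-direction estimate work for $m\le n$ — where the backward orbit of $\tilde z$ shadows that of $\tilde x$, whose own unstable manifold need not be anywhere near $W^u(\tilde y)$ — is precisely that one never needs to align $E^u(\tilde z)$ with $E^u(\tilde x)$: it is enough that $E^u(\tilde z)$, being tangent to an unstable manifold, lies in the chart's invariant unstable cone.

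The main obstacle is bookkeeping: forcing the losses — an $\epsilon$ per Lyapunov chart, the chart distortion $r(\cdot)$, the constant $k$ from Proposition~\ref{locstablemfld}, and the norm-comparison constants $L(\cdot)$ of Lemma~\ref{NUH} — all to fit into the prescribed slack (from $\tilde\epsilon$ to $3\tilde\epsilon$, and from $N_u$ to $3N_u$) uniformly in $m$ and $n$. I expect to handle this using Remark~\ref{close}, which lets us take $d_X(\hx,\hy)$, hence $d_X(\hx,\hz)$ and $d_X(\hy,\hz)$, as small as needed so that the entire backward orbit of $\tilde z$ remains inside the compatible Lyapunov charts of the shadowed orbits, together with the H\"older continuity of the Oseledets directions on the sets $\Lambda^{s,\omega}_{3\tilde\epsilon,\tilde N}$ and $\Lambda^{u,\omega}_{3\tilde\epsilon,\tilde N}$ from Section~\ref{sec:Hcont} (available because $\tilde x,\tilde y\in K_1'\subset\tilde\Lambda\cap K_{3\tilde\epsilon}$), which pins down the chart data at the shadowed points; the enlargement to $3N_u$ then absorbs the residual bounded multiplicative constants into the exponent.
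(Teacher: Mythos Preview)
Your approach is substantially more complicated than necessary, and the complication introduces a genuine gap. The paper's proof is a single citation: since $\tilde z\in W^u(\tilde y,\sigma^n(\ho))$ and $\tilde y\in\Lambda^{u,\sigma^n(\ho)}_{\tilde\epsilon,N_u}$, Ruelle's Theorem~4.1 (see \cite{Ru}, or Theorem~A.1.6 of \cite{BEF}) immediately gives $\tilde z\in\Lambda^{u,\sigma^n(\ho)}_{3\tilde\epsilon,3N_u}$. The point is that the set $\Lambda^{u,\omega}_{\epsilon,N}$ is defined purely by \emph{backward} estimates \eqref{b''}--\eqref{b'}, and points on the same unstable manifold converge exponentially under \emph{all} backward iteration, so the backward hyperbolicity estimates at $\tilde y$ transfer to $\tilde z$ with bounded multiplicative loss --- exactly what Ruelle's theorem packages.

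Your splitting into the ranges $m\le n$ and $m>n$, and the shadowing of $\tilde x$ for $m\le n$, is therefore unnecessary: $d_X\bigl(f^{-m}_{\sigma^n(\ho)}(\tilde z),f^{-m}_{\sigma^n(\ho)}(\tilde y)\bigr)\le k\,e^{-(\lambda^+-\delta)m}d_X(\tilde z,\tilde y)$ already holds for every $m\ge0$ by Proposition~\ref{locstablemfld}(2), so you can work in the charts along the backward orbit of $\tilde y$ throughout. By bringing in $\tilde x$ you create the gap you then try to talk around: the assertion that ``$E^u(\tilde z)$, being tangent to an unstable manifold, lies in the invariant unstable cone of every Lyapunov chart containing $\tilde z$'' is not justified. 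The unstable cone in the chart at $(\tilde x,\sigma^n(\ho))$ is built from $E^{s/u}(\tilde x,\sigma^n(\ho))$; there is no a~priori reason the tangent to $W^u(\tilde y,\sigma^n(\ho))$ should sit inside it. (Indeed, controlling the relation between $E^u(\tilde z)$ and $E^u(\tilde x)$ is precisely the content of the \emph{next} lemma, Lemma~\ref{hun1}, which uses the conclusion of Lemma~\ref{goodland} as input.) If instead you shadow $\tilde y$ for all $m$, then in the chart at $(\tilde y,\sigma^n(\ho))$ the local unstable $W^u_q(\tilde y,\sigma^n(\ho))$ is the graph of $h^u_{(\tilde y,\sigma^n(\ho))}$ with slope $\le1/3$, so $E^u(\tilde z)$ genuinely lies in that chart's unstable cone, and the cone argument goes through --- which is effectively what Ruelle's theorem does.
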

\begin{proof}
     Since $\tilde{y}$ belongs to $\Lambda_{\tilde{\epsilon},N_u}^{u,\sigma^n(\ho)}$ and $\tilde{z} \in W^u(\tilde{y},\sigma^n(\ho))$, this follows from \cite{Ru}, Theorem 4.1; see Theorem A.1.6 of \cite{BEF}.
\end{proof}

The choices of Lusin sets we made in Section \ref{sec:lusin} will give a H\"older continuity between $E^u(\tilde{x}, \sigma^n(\ho))$ and $E^u(\tilde{z},\sigma^n(\ho))$ in the next lemma. The H\"older exponent and constant will be the same for any choice of starting points belonging to $K_0$ and any choice of $T$ that lands us in $\tilde{\Lambda}\subset K_1'$. This uniformity is required later in the argument.

\begin{lemma}\label{hun1}
    The points $\tilde{x}$ and $\tilde{z}$ belong to a set on which we have $\alpha$-H\"older continuity of $E^u(\cdot ,\sigma^n(\ho))$, where $\alpha$ is as defined in Section \ref{sec:lusin}. Additionally, the Holder constant can be taken to be $$C_{\alpha} \defeq 2 (\max_{\supp(\mu)}\{1,\Vert Df_i\Vert\})^{2\tilde{N}}e^{(\lambda^--\lambda^+-6\tilde{\epsilon})},$$ where $\tilde{N}$ is defined in Section \ref{sec:lusin}.
\end{lemma}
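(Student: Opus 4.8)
The statement of Lemma \ref{hun1} asserts two things: first, that $\tilde{x}$ and $\tilde{z}$ lie in a common set where $E^u(\cdot, \sigma^n(\ho))$ is $\alpha$-Hölder continuous with $\alpha$ as defined in Section \ref{sec:not5}, and second, that the Hölder constant can be taken to be the explicit $C_\alpha$ displayed in the statement. The plan is to feed the membership facts established so far into Proposition \ref{holder} (the $\alpha$-Hölder continuity of the Oseledets subspaces on a Pesin-type set $\Delta_{a,b,\hat C}$), after first translating the relevant points into membership in such a $\Delta$-set.

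First I would record what we already know. By Assumption \ref{ass1} (to be justified in Lemma \ref{bigint}), the points $(\tilde{x}, \sigma^n(\ho))$ and $(\tilde{y}, \sigma^n(\ho))$ lie in $K_1' \subset K_{3\tilde\epsilon} \cap \tilde\Lambda$; since $K_{3\tilde\epsilon}$ is the Lusin set from Section \ref{sec:Hcont} with parameter $\epsilon = 3\tilde\epsilon$, there is a uniform $\tilde N$ so that $\tilde x, \tilde y \in \Lambda^{u,\sigma^n(\ho)}_{3\tilde\epsilon, \tilde N}$. By Lemma \ref{goodland}, $\tilde z \in \Lambda^{u,\sigma^n(\ho)}_{3\tilde\epsilon, 3N_u}$ (here using the $\tilde\epsilon$-version; since $3\tilde\epsilon > \tilde\epsilon$ and $3N_u \ge N_u$, this membership persists — and I should note $\tilde z \in \Lambda^{u,\sigma^n(\ho)}_{3\tilde\epsilon, N}$ with $N = \max\{3N_u, \tilde N\}$). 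Combining, both $\tilde x$ and $\tilde z$ belong to $\Lambda^{u,\sigma^n(\ho)}_{3\tilde\epsilon, N}$ with $N = \max\{3N_u, \tilde N\}$ (using that $\Lambda^{u,\omega}_{\epsilon,N_1} \subset \Lambda^{u,\omega}_{\epsilon,N_2}$ for $N_1 < N_2$).

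Next I would invoke the inclusion lemma from Section \ref{sec:Hcont}: $\Lambda^{u,\omega}_{\epsilon,N} \subset \Delta^{\omega^-}_{-(\lambda^- - \epsilon), -(\lambda^+ + \epsilon), L_u}$, where with $\epsilon = 3\tilde\epsilon$ the constant $L_u = (\max_{\supp(\mu)}\{1, \Vert Df_i\Vert\})^N$ can be taken independent of $\omega$, and here $N = \max\{3N_u,\tilde N\}$, which the Lusin set $K_{3\tilde\epsilon}$ bounds above by (a uniform multiple of) $\tilde N$ — so we may take $L_u = (\max_{\supp(\mu)}\{1,\Vert Df_i\Vert\})^{2\tilde N}$ as a uniform bound (the factor $2$ absorbing $\max\{3N_u,\tilde N\} \le 2\tilde N$ on the Lusin set, after enlarging $\tilde N$ if necessary). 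Now apply Proposition \ref{holder} with the sequence $\{f_i\}$ coming from $\sigma^n(\ho)^-$ — whose $C^{1,1*}$-product growth is controlled on $\Omega_\epsilon$ by Lemma \ref{cCunif} with the uniform constants $C^-$, $c_0^-$, valid because $(\tilde x, \sigma^n(\ho)) \in \tilde\Lambda = \Lambda \cap (X \times \Omega_\epsilon)$ — with $a = -(\lambda^- - 3\tilde\epsilon)$, $b = -(\lambda^+ + 3\tilde\epsilon)$, $\hat C = L_u$. Proposition \ref{holder} then gives $\alpha$-Hölder continuity on $\Delta_{a,b,\hat C}$ with exponent $\alpha = \frac{a-b}{a-d}$ exactly matching the $\alpha$ of Section \ref{sec:not5} (with $d = \ln(2(C^-)^2) + 2c_0^- + |\ln(\rho_0/4)| + |a|$), and with constant $2\hat C^2 e^{b-a}$; substituting $\hat C = (\max_{\supp(\mu)}\{1,\Vert Df_i\Vert\})^{2\tilde N}$ and $b - a = -(\lambda^+ + 3\tilde\epsilon) + (\lambda^- - 3\tilde\epsilon) = \lambda^- - \lambda^+ - 6\tilde\epsilon$ yields precisely $C_\alpha = 2(\max_{\supp(\mu)}\{1,\Vert Df_i\Vert\})^{2\tilde N} e^{(\lambda^- - \lambda^+ - 6\tilde\epsilon)}$, as claimed.

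The main obstacle I anticipate is bookkeeping rather than conceptual: making sure the $\Delta$-set membership uses the same sign conventions and the same (unstable, backward) dynamics $\sigma^n(\ho)^-$ consistently, and verifying that $d$ in Proposition \ref{holder} genuinely reduces to the $d$ fixed in Section \ref{sec:not5} — in particular that the $C$, $c$ in Proposition \ref{holder}'s hypothesis can be taken to be the uniform $C^-$, $2c_0^-$ (noting $c = 2c_0^-$ in the notation there, so $2c = 4c_0^-$ — I need to check whether the $d$ in Section \ref{sec:not5} uses $2c_0^-$ or $4c_0^-$ and reconcile, possibly by absorbing constants into $\tilde N$). The uniformity claim — that $C_\alpha$ and $\alpha$ do not depend on the choice of starting points in $K_0$ or on $T$ — follows because $\tilde N$, $C^-$, $c_0^-$, $\rho_0$ are all fixed by the Lusin sets $K_{3\tilde\epsilon}$, $\Omega_\epsilon$ chosen once and for all in Section \ref{sec:lusin}, independently of the dynamical choices made later. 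I would close by remarking that this uniformity is exactly what will be needed downstream when $\rho \to 0$ forces $T \to \infty$.
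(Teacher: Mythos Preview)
Your approach is essentially identical to the paper's: place $\tilde x$ and $\tilde z$ in a common $\Lambda^{u,\sigma^n(\ho)}_{3\tilde\epsilon,N}$, include this in the Pesin set $\Delta_{-(\lambda^--3\tilde\epsilon),-(\lambda^++3\tilde\epsilon),L_u}$, and invoke Proposition~\ref{holder} with the constants made uniform via $\Omega_\epsilon$ (Lemma~\ref{cCunif}) and $K_{3\tilde\epsilon}$.

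There is one arithmetic slip in the final substitution. Proposition~\ref{holder} gives H\"older constant $2\hat C^{2}e^{b-a}$, so if you take $\hat C=(\max_{\supp(\mu)}\{1,\Vert Df_i\Vert\})^{2\tilde N}$ as you propose, squaring yields exponent $4\tilde N$, not $2\tilde N$. To recover the stated $C_\alpha$ one needs $\hat C=L_u=(\max_{\supp(\mu)}\{1,\Vert Df_i\Vert\})^{\tilde N}$; the paper gets this by noting that membership in $K_{3\tilde\epsilon}$ already bounds the relevant $N_u$ (at the $3\tilde\epsilon$ scale) by $\tilde N$, without introducing an extra factor. Relatedly, your inequality $\max\{3N_u,\tilde N\}\le 2\tilde N$ is not justified as written. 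None of this affects the argument downstream---any uniform constant would do---but the exponent should be fixed to match the statement.
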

\begin{proof}

Since $\Lambda_{\tilde{\epsilon},N_u}^{u,\sigma^n(\ho)} \subset \Lambda_{3\tilde{\epsilon},3N_u}^{u,\sigma^n(\ho)}$ we have that $\tilde{x},\tilde{z} \in \Lambda_{3\tilde{\epsilon},3N_u}^{u,\sigma^n(\ho)}$.

We showed in Section \ref{sec:Hcont} that for $$L_u = (\max_{\supp(\mu)}\{1,\Vert Df_i\Vert\})^{N_u(3\tilde{\epsilon})},$$ we have that $$\Lambda_{3\tilde{\epsilon},3N_u}^{u,\sigma^n(\ho)}\subset \Delta_{-(\lambda^- - 3\tilde{\epsilon}),-(\lambda^++3\tilde{\epsilon}),L_u}.$$ We know that $\sigma^n(\ho) \in \Omega_0^-$ (because $(\tilde{x},\sigma^n(\ho))$ and $(\tilde{y},\sigma^n(\ho))$ belong to $\tilde{\Lambda}$). Hence, by Proposition \ref{holder} we have $\alpha$-Holder continuity of $E^u(\cdot, \sigma^n(\ho))$ (with constant $ C_{N_u(3(\tilde{\epsilon}))} = 2 L_u^2e^{(\lambda^--\lambda^+-6\tilde{\epsilon})}$) on the set $\Delta_{-(\lambda^- - 3\tilde{\epsilon}),-(\lambda^++3\tilde{\epsilon}),L_u}$ to which $\tilde{x}$ and $\tilde{z}$ belong (because $ \tilde{x} \in \Lambda_{\tilde{\epsilon},N_u}^{u,\sigma^n(\ho)} \subset \Lambda_{3\tilde{\epsilon},3N_u}^{u,\sigma^n(\ho)}$). We get that
\begin{align} \label{156}
    d_{Gr}(E^u(\tilde{x},\sigma^n(\ho)),E^u(\tilde{z},\sigma^n(\ho)))\leq C_{N_u(3(\tilde{\epsilon}))}d_X(\tilde{x},\tilde{z})^{\alpha}.
\end{align}

Since $(\tilde{x},\sigma^n(\ho)),(\tilde{y},\sigma^n(\ho))\in K_1'$ they belong to $K_{3\tilde{\epsilon}}$ and hence $N_u$ is bounded above by $\tilde{N}$. This means that regardless of our starting points, the H\"older constant in Equation \eqref{156}, $C_{N_u(3(\tilde{\epsilon}))}$ is bounded above by 
$$C_{\alpha} \defeq 2 (\max_{\supp(\mu)}\{1,\Vert Df_i\Vert\})^{2\tilde{N}}e^{(\lambda^--\lambda^+-6\tilde{\epsilon})}.$$ 
Also regardless of our starting points, we will always have $\alpha$-H\"olderness where $\alpha$ is defined at the beginning of this section.
\end{proof}

\begin{lemma}\label{hun}
    The unstable manifold $W^u(\tilde{y},\sigma^n(\ho))$ is approximated by $\exp_z(E^u(\tilde{z},\sigma^{n}(\ho)))$ up to error $\xi= \mathfrak{r}^2\cdot M_r^3\cdot H$ on the ball of radius $\mathfrak{r}$ where $H$ is the uniform upper bound on $\lip(Dh^u_{(x,\omega)})$ for $(x,\omega)\in K_{\xi}'$ (see Lemma \ref{H}). 
\end{lemma}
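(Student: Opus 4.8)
\textbf{Plan for the proof of Lemma~\ref{hun}.}
The statement is a direct consequence of the local unstable manifold theorem (Proposition~\ref{locstablemfld}), Claim~\ref{error} (the second-order Taylor estimate for holomorphic functions), and the uniformity lemmas from Section~\ref{sec:lusin1}. The plan is to work in the Lyapunov chart at $(\tilde z,\sigma^n(\ho))$, where $W^u(\tilde z,\sigma^n(\ho)) = W^u(\tilde y,\sigma^n(\ho))$ is the graph of the holomorphic function $h^u_{(\tilde z,\sigma^n(\ho))}$ over $E^u(\tilde z,\sigma^n(\ho))$, apply Claim~\ref{error} to this graph to get that it is approximated by its tangent plane (which is $\exp_{\tilde z}(E^u(\tilde z,\sigma^n(\ho)))$ to first order) up to error $\epsilon' = L(\mathfrak r')^2$ on a ball of the appropriate radius in the chart, then transfer this estimate back to ambient coordinates paying the distortion factors $M_r$ from Lemma~\ref{comp1}.

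Concretely, I would first invoke that $(\tilde z,\sigma^n(\ho))$ lies in the relevant good set — note $\tilde z \in W^u(\tilde y,\sigma^n(\ho))$ and $\tilde y$ is in $K_1' \supset K_\xi'$, so by the forward-invariance/inclusion properties and Lemma~\ref{goodland} the point $\tilde z$ inherits membership in a Pesin set where $\lip(Dh^u_{(\cdot,\sigma^n(\ho))})$ is uniformly bounded by $H$ (Lemma~\ref{H}). Then apply Claim~\ref{error} with $f = h^u_{(\tilde z,\sigma^n(\ho))}$, $L \le H$, and $\mathfrak r' = \sqrt{\epsilon'/H}$: on the ball $B_{\mathfrak r'}$ in $\bbR^u$ the graph is within $\epsilon'$ of its tangent plane at $0$, which under $D_0\varphi^{-1}_{(\tilde z,\sigma^n(\ho))}$ corresponds to $E^u(\tilde z,\sigma^n(\ho))$. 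Since $D_0h^u = 0$ and $h^u(0)=0$ (Proposition~\ref{locstablemfld}, items (1)–(2)), the tangent plane is exactly $\bbR^u$, so the image under $\varphi^{-1}$ is $\exp_{\tilde z}(E^u(\tilde z,\sigma^n(\ho)))$ up to a further second-order correction absorbed into the error.

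Next I would return to ambient coordinates. By Lemma~\ref{comp1} (the point being in $K_r \subset K_\xi'$), distances in the chart and in $X$ differ by a factor in $[1/M_r, M_r]$; applying this to both the radius of the ball and to the size of the error, and also accounting for the bi-Lipschitz comparison between the Euclidean chart norm and the exponential-chart norm (Lemma~\ref{comp4}, via $K_{exp}$), the error $\epsilon'$ on a chart-ball of radius $\mathfrak r'$ becomes an error of at most $\xi = \mathfrak r^2 M_r^3 H$ on an ambient ball of radius $\mathfrak r = \mathfrak r'/M_r$ (the three factors of $M_r$ coming from: converting the radius $\mathfrak r \mapsto \mathfrak r'$ inside $(\mathfrak r')^2$ gives $M_r^2$, and converting the error back out of the chart gives one more $M_r$). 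This is exactly the asserted bound. The bookkeeping is completely parallel to the proof of Lemma~\ref{errorr}, with $h^u_{(\tilde z,\sigma^n(\ho))}$ in place of the functions there.

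\textbf{Main obstacle.} The only genuine subtlety is verifying that $(\tilde z,\sigma^n(\ho))$ actually lies in a set where the Lipschitz constant of $Dh^u$ is uniformly controlled by the same $H$ as in Lemma~\ref{H} — one cannot choose $\tilde z$ a priori, since it is determined as the intersection point $W^s_q(\hx,\ho)\cap W^u_q(\hy,\ho)$ pushed forward by $F^n$. The resolution is that $\tilde z \in W^u(\tilde y,\sigma^n(\ho))$ together with Lemma~\ref{goodland} (which places $\tilde z$ in $\Lambda^{u,\sigma^n(\ho)}_{3\tilde\epsilon,3N_u}$) and the fact that $Dh^u$ and its Lipschitz constant along a single unstable leaf are controlled by the uniform-hyperbolicity data of that leaf — so the bound $H$ for $(\tilde x,\tilde y)$ transfers to $\tilde z$ at the cost of replacing $\tilde\epsilon$ by $3\tilde\epsilon$, which has already been arranged in the definitions of $\alpha$ and the good sets in Section~\ref{sec:lusin}. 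Everything else is a routine repackaging of Claim~\ref{error} and the distortion lemmas.
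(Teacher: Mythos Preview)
Your overall strategy—apply Claim~\ref{error} to the graph function $h^u$, then convert back to ambient coordinates via the $M_r$ factors from Lemma~\ref{comp1}, and bound $\lip(Dh^u)$ by $H$ via Lemma~\ref{H}—is exactly the paper's strategy, and the bookkeeping you describe is correct.

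However, you create an unnecessary difficulty by centering the Lyapunov chart at $(\tilde z,\sigma^n(\ho))$. This forces you to verify that $\tilde z$ lies in a set where $\lip(Dh^u_{(\tilde z,\sigma^n(\ho))})$ is controlled by $H$, and your resolution (transferring the bound via Lemma~\ref{goodland} and membership in $\Lambda^{u,\sigma^n(\ho)}_{3\tilde\epsilon,3N_u}$) is plausible but not actually established anywhere in the paper: Lemma~\ref{H} only gives the bound on the Lusin set $K_\xi'$, and $\tilde z$ is not known to belong to it. The paper sidesteps this entirely by centering the chart at $(\tilde y,\sigma^n(\ho))$, which \emph{is} in $K_\xi'\subset K_1'$ by Assumption~\ref{ass1}. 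In that chart $W^u(\tilde y,\sigma^n(\ho))$ is the graph of $h^u_{(\tilde y,\sigma^n(\ho))}$, and Claim~\ref{error} (as already packaged in Lemma~\ref{errorr}) allows approximation by the tangent plane at \emph{any} point of the graph, in particular at the point corresponding to $\tilde z$. The bound $\lip(Dh^u_{(\tilde y,\sigma^n(\ho))})\le H$ then follows immediately from $(\tilde y,\sigma^n(\ho))\in K_\xi'$. This is the entire content of the paper's three-line proof, and it makes your ``main obstacle'' disappear.
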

\begin{proof}
Since $(\tilde{y},\sigma^n(\ho))\in K_{\xi}$, we know that the local unstable, $W^u(\tilde{y},\sigma^n(\ho))$, is well-approximated by (the image under exp of) $E^u(\tilde{z},\sigma^{n}(\ho))$ up to error $\xi$ on the ball of radius $\mathfrak{r}$. For given $\mathfrak{r}$, the error is given by $\xi = \mathfrak{r}^2\cdot M_r^3\cdot \lip(Dh^u_{(\tilde{y},\sigma^n(\ho))})$ (where $M_r$ is fixed from Section \ref{sec:lusin1} and $h_{(\tilde{y},\sigma^n(\ho))}$ is as in Proposition \ref{locstablemfld}). Note that $\lip(Dh^u_{(\tilde{y},\sigma^n(\ho))})$ is bounded above by $H$ as in Lemma \ref{H} because $(\tilde{y},\sigma^n(\ho))\in K_{\xi}'$. 
\end{proof}

Lemmas \ref{hun1} and \ref{hun} motivate us to contract the stable distance between $\hx$ and $\hy$ which is $d_X(\hx,\hz)$. We will also be interested in increasing the unstable distance between $\hx$ and $\hy$ which is $d_X(\hy,\hz)$. We will take $\mathfrak{r}$ in the above lemma to be twice the resulting unstable distance. This is the scale we want to view things on.

\begin{remark}\label{flip1}
    If the NJI condition was instead written as $W^s(\hy,\ho)\cap U^+[\hx,\ho] =\emptyset$ we would instead take $\hz\in W^s_q(\hy,\ho)\cap W^u_q(\hx,\ho)$ and work to compare $E^u(\tilde{y},\sigma^n(\ho))$ and $E^u(\tilde{z},\sigma^n(\ho))$ in Lemma \ref{hun1} (where $n=\lfloor T+k_0 \rfloor$). Then we would care about $\tilde{y}$ and $\tilde{z}$ being close. Also note that the stable distance between $\tilde{x}$ and $\tilde{y}$ would now be the distance between $\tilde{y}$ and $\tilde{z}$ and the unstable distance would now be the distance between $\tilde{x}$ and $\tilde{z}$.
\end{remark}

By Theorem \ref{locstablemfld} we know exactly how the distance between $\hat{z}$ and $\hx$ contracts and $\hat{z}$ and $\hy$ expands.

Let $T_{big}\in \bbR$ be such that $d_X(\tilde{y},\tilde{z}) = \epsilon_f$, let $n_B = \lfloor T_{big}+k_0\rfloor$. We know we can reach $\epsilon_f$ in the unstable distance because by Theorem \ref{locstablemfld} we have 
\begin{align}\label{thm8bd}
    d_X(\tilde{y},\tilde{z}) \geq \frac{1}{k}d_X(\hat{y},\hz)e^{n(\lambda^+-\delta)}.
\end{align}
Note also that since the original points are in $\Lambda_{loc}$ we have replaced $k(\hx,\ho)$ with the constant $k$ and $\delta$ is as in Section \ref{sec:stablemfld}.

\begin{lemma}\label{Tbig}
The parameter $T_{big}$ increases with decreasing $d_X(\hy,\hz)$.
\end{lemma}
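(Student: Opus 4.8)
The statement to prove is Lemma \ref{Tbig}: the parameter $T_{big}$ (defined so that $d_X(\tilde y, \tilde z) = \epsilon_f$ after flowing by $T_{big}$) increases as $d_X(\hy,\hz)$ decreases. Here is my plan.

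\medskip

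\textbf{Setup.} Recall $\hz \in W^s_q(\hx,\ho)\cap W^u_q(\hy,\ho)$, so $\hz$ lies on the \emph{unstable} local manifold of $\hy$. As we flow forward by the standard flow, the images of $\hy$ and $\hz$ separate at an unstable rate, and $T_{big}$ is the first time this unstable distance reaches the fixed threshold $\epsilon_f$. The key tool is Proposition \ref{locstablemfld} (the local stable/unstable manifold theorem, stated as \cite{BRH}, Theorem 6.4): since $\hz \in W^u_{q(\hx,\ho)}(\hy,\ho)$ and the original points lie in $\Lambda_{loc}$, we have the two-sided estimate
\begin{align*}
    \tfrac{1}{k}\,d_X(\hy,\hz)\,e^{n(\lambda^+-\delta)} \;\leq\; d_X(F^n_\omega(\hy), F^n_\omega(\hz)) \;\leq\; k\,d_X(\hy,\hz)\,e^{(\lambda^++\delta')n},
\end{align*}
where the lower bound is Equation \eqref{thm8bd} and the upper bound follows from the same local manifold theorem applied to the inverse dynamics (or, more simply, from the na\"ive Lipschitz bound $\calL$ on $\supp(\mu)$, giving $d_X(F^n_\omega(\hy),F^n_\omega(\hz)) \le \calL^n d_X(\hy,\hz)$, which suffices for a crude upper bound). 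The point is that $d_X(F^n_\omega(\hy),F^n_\omega(\hz))$ is sandwiched between two expressions each of which is a fixed constant times $d_X(\hy,\hz)$ times a quantity depending only on $n$ (hence on the flow time $T$), and both of those $n$-dependent quantities are strictly increasing in $n$ once $n$ is large.

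\medskip

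\textbf{Main argument.} Fix all other data ($\hx,\ho,k_0$, the sequence $\omega$). Define $g(T) \defeq d_X(F^n_\omega(\hy), F^n_\omega(\hz))$ where $n = \lfloor T + k_0\rfloor$; this is the unstable distance after flowing by $T$. By the lower bound in \eqref{thm8bd}, $g(T) \to \infty$ as $T\to\infty$, so $T_{big}$ is well-defined as (essentially) the infimum of $T$ with $g(T)\geq \epsilon_f$. Now I want to compare two choices of starting configuration with $d_X(\hy,\hz) = r_1 > r_2 = d_X(\hy',\hz')$. The clean way: from the sandwich estimate, at flow time $T$ with $n=\lfloor T+k_0\rfloor$,
\begin{align*}
    \tfrac{1}{k}\,r_i\,e^{n(\lambda^+-\delta)} \;\leq\; g_i(T) \;\leq\; k\,r_i\,e^{(\lambda^++\delta')n}.
\end{align*}
If $T_{big}^{(2)}$ is the threshold time for the configuration with the smaller initial distance $r_2$, then at that time $g_2(T_{big}^{(2)}) = \epsilon_f$, so $\tfrac1k r_2 e^{n_2(\lambda^+-\delta)} \le \epsilon_f$, giving a lower bound on $n_2$, hence on $T_{big}^{(2)}$, that grows as $r_2 \to 0$: explicitly $n_2 \geq \frac{1}{\lambda^+-\delta}\log\!\big(\frac{\epsilon_f}{k r_2}\big)$. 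Conversely for the larger initial distance $r_1 > r_2$: using the upper bound $g_1(T) \le k r_1 e^{(\lambda^++\delta')n}$, the threshold $\epsilon_f$ is reached no later than the time $n_1$ at which $k r_1 e^{(\lambda^++\delta')n_1} = \epsilon_f$, i.e. $n_1 \le \frac{1}{\lambda^++\delta'}\log\!\big(\frac{\epsilon_f}{k r_1}\big)$ — wait, that's not quite monotone in the right direction across the two different rates. The cleanest fix is to use a \emph{single} two-sided estimate with the \emph{same} constants: since $\hz\in W^u_q(\hy,\ho)$, Proposition \ref{locstablemfld} part (2) gives $d_X(F^{-n}(\hy,\ho),F^{-n}(\hz,\ho)) \le k e^{-(\lambda^+-\delta)n}d_X(\hy,\hz)$, which rearranged along the forward orbit says $d_X(F^n_\omega(\hy),F^n_\omega(\hz)) \geq \tfrac1k e^{(\lambda^+-\delta)n} d_X(\hy,\hz)$ (this is \eqref{thm8bd}) and symmetrically $g(T)$ is \emph{increasing} in $T$ up to bounded multiplicative fluctuation. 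So: $g_2(T) \le k^2 (r_2/r_1) g_1(T)$ pointwise (combining the upper estimate for $g_2$ and lower estimate for $g_1$ with matched rates is not literally available, but the weaker statement $g_2(T) \le C\,(r_2/r_1)\,g_1(T) \cdot e^{2\delta n}$ is, and for the monotonicity claim the $e^{2\delta n}$ factor only helps when $r_2 < r_1$ provided we absorb it — I will instead argue directly as in the next paragraph to avoid this bookkeeping).

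\medskip

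\textbf{Cleanest route.} Rather than comparing two configurations, I will simply show $T_{big}$ is a (weakly) decreasing function of $d_X(\hy,\hz)$ by exhibiting $T_{big}$ as bounded below by an explicit decreasing function of $d_X(\hy,\hz)$ and observing the forward orbit separation depends on $d_X(\hy,\hz)$ only through the single multiplicative factor in \eqref{thm8bd}. Concretely: at time $T_{big}$, with $n_B = \lfloor T_{big}+k_0\rfloor$, we have $d_X(\tilde y,\tilde z) = \epsilon_f$; the lower bound \eqref{thm8bd} reads $\epsilon_f \geq \tfrac1k d_X(\hy,\hz) e^{n_B(\lambda^+-\delta)}$, hence $n_B \geq \frac{1}{\lambda^+-\delta}\log\!\big(\frac{\epsilon_f}{k\,d_X(\hy,\hz)}\big)$, so $T_{big} \geq n_B - 1 - k_0$ satisfies the same kind of lower bound, which strictly increases as $d_X(\hy,\hz)\to 0$. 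That gives the quantitative content we actually need downstream (that $T_{big}\to\infty$ as $\rho\to 0$, matching Remark \ref{close}). For the bare monotonicity statement, I will additionally note that $g(T) = d_X(F^{\lfloor T+k_0\rfloor}_\omega(\hy),F^{\lfloor T+k_0\rfloor}_\omega(\hz))$, viewed as a function of the pair, is monotone in $d_X(\hy,\hz)$ in the limiting/averaged sense (the forward-iterate distance is an increasing function of the initial distance along an unstable leaf, up to the bounded Pesin fluctuation captured by $k$), so the first passage time to level $\epsilon_f$ is correspondingly decreasing in $d_X(\hy,\hz)$.

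\medskip

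\textbf{Main obstacle.} The real subtlety is that $g(T)$ need not be exactly monotone in $T$ or exactly monotone in $d_X(\hy,\hz)$ pointwise — only up to the multiplicative Pesin constant $k$ and the $e^{\pm\delta n}$ fluctuations — so "$T_{big}$ increases as $d_X(\hy,\hz)$ decreases" must be read in the appropriate sense (and in practice the paper only uses the quantitative lower bound $T_{big}\gtrsim \log(1/d_X(\hy,\hz))$, which follows cleanly from \eqref{thm8bd} as above). I expect the write-up to simply invoke \eqref{thm8bd} together with the definition $d_X(\tilde y,\tilde z)=\epsilon_f$, solve for $n_B$, and conclude; the one-line monotonicity is then immediate from the fact that the right-hand side of the resulting inequality $T_{big} \geq \frac{1}{\lambda^+-\delta}\log\!\big(\frac{\epsilon_f}{k\,d_X(\hy,\hz)}\big) - 1 - k_0$ is decreasing in $d_X(\hy,\hz)$.
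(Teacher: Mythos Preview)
Your ``Cleanest route'' contains an inequality reversal that breaks the argument. You write that \eqref{thm8bd} gives $\epsilon_f \geq \tfrac1k d_X(\hy,\hz) e^{n_B(\lambda^+-\delta)}$ --- this part is correct --- but then conclude $n_B \geq \frac{1}{\lambda^+-\delta}\log\!\big(\frac{\epsilon_f}{k\,d_X(\hy,\hz)}\big)$. Rearranging correctly gives $e^{n_B(\lambda^+-\delta)} \leq \frac{k\epsilon_f}{d_X(\hy,\hz)}$, hence $n_B \leq \frac{1}{\lambda^+-\delta}\log\!\big(\frac{k\epsilon_f}{d_X(\hy,\hz)}\big)$: an \emph{upper} bound on $n_B$, not a lower bound. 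The point is that \eqref{thm8bd} is a lower bound on the \emph{distance}, which means the orbit separates \emph{at least} that fast, so it reaches the threshold $\epsilon_f$ \emph{sooner} rather than later. A lower bound on growth rate yields an upper bound on hitting time.

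To get $T_{big}$ large you need the opposite: an \emph{upper} bound on the distance at time $n$, so that the orbit cannot reach $\epsilon_f$ before a certain time. The paper does exactly this with the crude Lipschitz estimate you yourself mention in the Setup paragraph: $d_X(\tilde y,\tilde z)\leq \calL^{n}d_X(\hy,\hz)$, whence $n \geq \frac{1}{\ln(\calL)}\ln\!\big(\frac{\epsilon_f}{d_X(\hy,\hz)}\big)$, and the right-hand side visibly increases as $d_X(\hy,\hz)\to 0$. You had the right ingredient in hand but then abandoned it for \eqref{thm8bd}, which points the wrong way for this lemma. Swap in the Lipschitz upper bound in your final step and the argument goes through in one line, matching the paper.
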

\begin{proof}
Since the Lipschitz constant of all the diffeomorphisms in $\supp(\mu)$ is bounded above by $\calL$, we have
\begin{align}
    d_X(\tilde{y},\tilde{z})\leq \calL^nd_X(\hy,\hz).
\end{align}
Rearranging, we find that
\begin{align}\label{lipcal}
  \frac{1}{\ln(\calL)}  \ln\left( \frac{d_X(\tilde{y},\tilde{z})}{d_X(\hy,\hz)}\right) \leq n.
\end{align}
Setting $d_X(\tilde{y},\tilde{z}) = \epsilon_f$, it is clear that the closer the original points (in unstable distance) are to each other, the larger $n_B$ has to be, i.e. the longer we have to flow for to reach distance $\epsilon_f$.
\end{proof}

\begin{choice}\label{choice1}
We will pick $T\in [T_{big}(1-\theta_1) , T_{big}(1-\theta_2)] =I $ for some $\theta_1,\theta_2>0$ that will be determined later.
\end{choice}
We will pick the $\theta_i$'s such that flowing by $T$ does not take us all the way to distance $\epsilon_f$. It will become clear why we have to pick $T$ in some interval shortly. 

Let $\alpha^s = d_X(\tilde{x},\tilde{z})$, $\alpha^u = d_X(\tilde{z},\tilde{y})$ for a chosen $T$. Denote the $X$-components of $F^{T_{big}}(\hy,\ho,k_0)$ and $F^{T_{big}}(\hz,\ho,k_0)$ are $y_B$ and $z_B$ respectively.

\begin{lemma}\label{Tshort}
    For fixed $\theta_1, \theta_2>0$ as $T_{big}\to \infty$, $\alpha^u\to 0$.
\end{lemma}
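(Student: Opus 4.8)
The plan is to compare $\alpha^u$, the unstable distance between $\tilde y$ and $\tilde z$ at flow time $T$ (i.e.\ after $n=\lfloor T+k_0\rfloor$ steps), with $\epsilon_f=d_X(y_B,z_B)$, the unstable distance at flow time $T_{big}$ (after $n_B=\lfloor T_{big}+k_0\rfloor$ steps), and to show the former is exponentially smaller in $n_B-n$. Since $\hz\in W^u_q(\hy,\ho)$ and the distance between the forward iterates of $\hy$ and $\hz$ stays $\le\epsilon_f<q/4$ at every step up to $n_B$ (it is increasing along the unstable leaf up to that time, by definition of $T_{big}$), all the iterates of $\hy$ and $\hz$ between time $0$ and time $n_B$ remain in the size-$q$ local unstable manifold, so the local unstable manifold theorem, Proposition~\ref{locstablemfld}, applies throughout.

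First I would apply the backward contraction estimate of Proposition~\ref{locstablemfld}(2) at the base point $F^{n_B}(\hy,\ho)=(y_B,\sigma^{n_B}(\ho))$ to the point $z_B$, with $n_B-n$ backward steps; since $F^{-(n_B-n)}$ carries $(y_B,\sigma^{n_B}(\ho))$ and $(z_B,\sigma^{n_B}(\ho))$ back to $(\tilde y,\sigma^{n}(\ho))$ and $(\tilde z,\sigma^{n}(\ho))$, this yields
\[
\alpha^u \;=\; d_X(\tilde y,\tilde z)\;\le\; k\, e^{-(\lambda^+-\delta)(n_B-n)}\, d_X(y_B,z_B)\;=\; k\,\epsilon_f\, e^{-(\lambda^+-\delta)(n_B-n)},
\]
where $k$ is the uniform constant furnished by the remark following Proposition~\ref{locstablemfld}. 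By Choice~\ref{choice1} we have $T\le T_{big}(1-\theta_2)$, and since $n_B>T_{big}+k_0-1$ while $n\le T+k_0$ we get $n_B-n\ge T_{big}\theta_2-2$, hence
\[
\alpha^u \;\le\; k\,\epsilon_f\, e^{-(\lambda^+-\delta)(T_{big}\theta_2-2)}.
\]
Because $\delta<\epsilon_0\ll\lambda^+/200$ we have $\lambda^+-\delta>0$, so for $\theta_1,\theta_2>0$ fixed the right-hand side tends to $0$ as $T_{big}\to\infty$, which is the assertion. (By Lemma~\ref{Tbig} and Remark~\ref{close} this regime is genuinely reached, by sending the initial separation $d_X(\hy,\hz)$, hence $\rho$, to $0$.)

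The main technical point is the bookkeeping for the constant $k$: it is a priori a point-dependent measurable function, and the estimate above is invoked at the far-iterated point $(y_B,\sigma^{n_B}(\ho))=F^{n_B-n}(\tilde y,\sigma^{n}(\ho))$ rather than at a point we have directly placed in a Lusin set. This is handled by recalling (Assumption~\ref{ass1}) that $(\tilde y,\sigma^{n}(\ho))\in K_1'\subset\Lambda_{loc}\subset K_{loc}$, where $k$ (through its dependence on $r$) is uniform, together with the tempering bound $k(F^m(p))\le k(p)e^{\epsilon_0|m|}$; this only degrades the estimate by a factor $e^{\epsilon_0(n_B-n)}$, which is absorbed because $\lambda^+-\delta-\epsilon_0>\lambda^+(1-\tfrac{1}{100})>0$, so the exponent $-(\lambda^+-\delta)$ above is merely replaced by $-(\lambda^+-\delta-\epsilon_0)<0$. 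One also checks that the interval $I$ of Choice~\ref{choice1} is non-empty and that $n<n_B$ for the chosen $T$ once $T_{big}$ is large, but this is immediate from $0<\theta_2<1$; everything else is the routine computation displayed above.
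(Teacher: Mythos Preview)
Your argument is correct and follows essentially the same route as the paper: apply the backward contraction estimate from Proposition~\ref{locstablemfld}(2) starting at the time-$T_{big}$ points $(y_B,z_B)$ and flow back $n_B-n\asymp T_{big}\theta$ steps to $(\tilde y,\tilde z)$, obtaining $\alpha^u\le k\,\epsilon_f\,e^{-(\lambda^+-\delta)(n_B-n)}\to 0$. You are in fact more careful than the paper about two points: the discrete/continuous time conversion (your bound $n_B-n\ge T_{big}\theta_2-2$) and the location of the Pesin constant $k$, which you handle by tempering from $(\tilde y,\sigma^n(\ho))\in\Lambda_{loc}$; the paper simply writes $k$ as if it were uniform at $(y_B,\sigma^{n_B}(\ho))$. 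One minor remark: your parenthetical that the distance is ``increasing along the unstable leaf up to that time'' is not literally true in non-uniform hyperbolicity, but it is not needed---what you actually use is that $\hz\in W^u_q(\hy,\ho)$ implies $z_B$ lies on the global unstable of $(y_B,\sigma^{n_B}(\ho))$, and the nesting property~(3) of Proposition~\ref{locstablemfld} together with tempering of $q$ keeps the backward iterates inside the relevant local unstables.
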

\begin{proof}

 Since $\theta_1$ and $\theta_2$ are fixed and $\theta \in [\theta_2,\theta_1]$ is such that $T = T_{big}(1-\theta)$, we see that $T_{big}-T = T_{big}-T_{big}(1-\theta) = T_{big}\theta$ grows with $T_{big}$. We apply Theorem \ref{locstablemfld} to the points $y_B$ and $z_B$ and flow backwards to the points $\tilde{y}$ and $\tilde{z}$. We get that 

\begin{align}
    \alpha^u= d(\tilde{y},\tilde{z}) \leq ke^{-(\lambda^++\delta)T_{big}\theta}\epsilon_f.
\end{align}
Since $\lambda^+>0$ we have that $\alpha^u\to 0$ as $T_{big}\to \infty$.
\end{proof} 

 Let $n = \lfloor T+k_0 \rfloor$.
 
\begin{lemma}\label{sshort}
    For fixed $\theta_1,\theta_2>0$, as $T_{big}\to \infty$, $\alpha^s \to 0.$
\end{lemma}
 \begin{proof}
 By Theorem \ref{locstablemfld} we get
\begin{align}
    \alpha^s = d_X(\tilde{x},\tilde{z})\leq k e^{n (\lambda^- + \delta)} d_X(\hx,\hat{z}),
\end{align}
As $T_{big}$ grows, $T$ will also grow, so $\alpha^s\to 0$.
\end{proof}

\begin{remark}
Given that $\lambda^-<0$, we have that
\begin{align}\label{stbldist}
    \alpha^s= d_X(\tilde{x},\tilde{z}) \leq d_X(\hx,\hat{z}) k \exp \left((\lambda^-+\delta)\cdot \left( \ln \left(\frac{d_X(\tilde{y},\tilde{z})}{d_X(\hy,\hat{z})} \right) \frac{1}{\ln(\calL)} \right)\right).
\end{align}
So we see that all the distances are related. 
\end{remark}

Recall that $M_{exp}$ is as in Lemma \ref{comp4} and that our points belong to $K_{exp}$, see Section \ref{sec:lusin}.

Combining Lemmas \ref{hun1}--\ref{sshort} we get the following corollary:

\begin{corollary}\label{finals}
    The distance between $W^u(\tilde{x},\sigma^n(\ho))$ and $W^u(\tilde{y},\sigma^n(\ho))$ goes to zero as $T_{big} \to \infty$, i.e. as the initial points get closer.
\end{corollary}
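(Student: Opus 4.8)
\textbf{Proof of Corollary \ref{finals}.} The plan is to chain together the four preceding lemmas, all under the standing hypotheses that the starting points lie in $K_0$ and that $T$ is chosen as in Choice \ref{choice1} so that $(\tilde x,\sigma^n(\ho)),(\tilde y,\sigma^n(\ho))\in K_1'$. Fix a scale $\mathfrak r$ proportional to $\alpha^u = d_X(\tilde y,\tilde z)$, say $\mathfrak r = 2\alpha^u$, as suggested in the discussion following Lemma \ref{hun}; this is the ball on which all the approximations below are compared. I would estimate the distance between $W^u(\tilde x,\sigma^n(\ho))$ and $W^u(\tilde y,\sigma^n(\ho))$ inside $\exp_{\tilde z}^{-1}(B(\tilde z,\rho_0/4))$ by inserting the linear space $\exp_{\tilde z}(E^u(\tilde z,\sigma^n(\ho)))$ as an intermediary and applying the triangle inequality.

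First I would bound the distance from $W^u(\tilde y,\sigma^n(\ho))$ to $\exp_{\tilde z}(E^u(\tilde z,\sigma^n(\ho)))$: since $\tilde z\in W^u(\tilde y,\sigma^n(\ho))$ and $(\tilde y,\sigma^n(\ho))\in K_\xi\cap K_\xi'$, Lemma \ref{hun} gives that on the ball of radius $\mathfrak r$ this distance is at most $\xi = \mathfrak r^2 M_r^3 H$, which is $O((\alpha^u)^2)$. Second I would bound the distance from $\exp_{\tilde z}(E^u(\tilde z,\sigma^n(\ho)))$ to $\exp_{\tilde x}(E^u(\tilde x,\sigma^n(\ho)))$ (equivalently to $W^u(\tilde x,\sigma^n(\ho))$ up to another $O((\alpha^s)^2)$-type error from the analogue of Lemma \ref{hun} applied at $\tilde x$): by Lemma \ref{hun1} the points $\tilde x,\tilde z$ lie in a set on which $E^u(\cdot,\sigma^n(\ho))$ is $\alpha$-Hölder with the uniform constant $C_\alpha$, so $d_{Gr}(E^u(\tilde x,\sigma^n(\ho)),E^u(\tilde z,\sigma^n(\ho)))\le C_\alpha\, d_X(\tilde x,\tilde z)^\alpha = C_\alpha (\alpha^s)^\alpha$; translating this Grassmannian bound through $\exp$ (using that our points lie in $K_{exp}$, Lemma \ref{comp4}, and that $\mathfrak r<\rho_0/4$) costs only a bounded multiplicative factor, and moving the base point from $\tilde z$ to $\tilde x$ over the distance $\alpha^s$ costs an additive $O(\alpha^s)$ plus a $D^2$-curvature term controlled on $K_b$. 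Altogether the distance between the two unstable manifolds on the ball of radius $\mathfrak r$ is bounded by a fixed constant times $(\alpha^u)^2 + (\alpha^s)^2 + (\alpha^s)^\alpha$.

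Now I would invoke Lemmas \ref{Tshort} and \ref{sshort}: for the fixed choice of $\theta_1,\theta_2>0$, both $\alpha^u\to 0$ and $\alpha^s\to 0$ as $T_{big}\to\infty$, and by Remark \ref{close} (and Lemma \ref{Tbig}) $T_{big}\to\infty$ is exactly what happens as the initial points get closer, i.e. as $\rho\to 0$. Hence each of $(\alpha^u)^2$, $(\alpha^s)^2$, $(\alpha^s)^\alpha$ tends to $0$, and therefore the distance between $W^u(\tilde x,\sigma^n(\ho))$ and $W^u(\tilde y,\sigma^n(\ho))$ (measured on the ball of radius $\mathfrak r$ around $\tilde z$, which itself shrinks) tends to $0$. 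This is the assertion of the corollary.

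The only mildly delicate point — not really an obstacle, given the machinery already in place — is bookkeeping the comparison of subspaces sitting in different tangent spaces $T_{\tilde x}X$, $T_{\tilde z}X$, $T_{\tilde y}X$: one must move everything into a common chart (e.g. $\exp_{\tilde z}^{-1}$), absorb the parallel-transport discrepancy over a distance $\le \alpha^s$ into the Hölder constant exactly as in Section \ref{sec:Hcont} (Equation \eqref{dist2}), and use the uniform second-derivative bound $\calC$ on $K_b$ (Lemma \ref{epb}) to control how the curves $W^u$ deviate from their tangent planes. All the relevant uniformities — $C_\alpha$, $H$, $M_r$, $M_{exp}$, $\calC$ — were arranged in Section \ref{sec:lusin} precisely so that the bound above is independent of the choice of starting points in $K_0$ and of the admissible $T$, which is what makes the limiting statement meaningful.
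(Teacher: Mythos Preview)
Your proposal is correct and follows essentially the same approach as the paper: both bound the distance between the two unstable manifolds by $\alpha^s + O((\alpha^u)^2) + M_{exp}\,C_\alpha(\alpha^s)^\alpha$ via the triangle inequality through $\exp_{\tilde z}(E^u(\tilde z,\sigma^n(\ho)))$, invoking Lemmas \ref{hun} and \ref{hun1} for the two comparisons and then Lemmas \ref{Tshort} and \ref{sshort} to send each term to zero. Your write-up is in fact more careful than the paper's about the change-of-basepoint and chart bookkeeping; the only cosmetic difference is that the paper records the basepoint shift as the linear term $\alpha^s$ rather than your $(\alpha^s)^2$, and handles the curvature of $W^u$ via $K_\xi,K_\xi'$ (Lemmas \ref{errorr}, \ref{H}) rather than $K_b$.
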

\begin{proof}
Since $(\tilde{x},\sigma^n(\ho))\in K_{\xi}$ and $(\tilde{y},\sigma^n(\ho))\in K_{\xi}' \subset K_{\xi}$, the distance between $W^u(\tilde{x},\sigma^n(\ho))$ and $W^u(\tilde{y},\sigma^n(\ho))$ is bounded above by that quantity
$$\alpha^s + 2\xi + M_{exp}d_{Gr}(E^u(\tilde{x},\sigma^n(\ho)),E^u(\tilde{z},\sigma^n(\ho)))$$
where $\xi$ is the error in approximating $W^u$ by the image of $E^u$ under $\exp$ (see Lemma \ref{errorr} and Lemma \ref{hun}). 

This error term $\xi = (2\alpha^u)^2M_r^3 \lip(Dh_{\tilde{y},\sigma^n(\ho)}) \leq (2\alpha^u)^2M_r^3 H$ (because $(\tilde{y},\sigma^n(\ho))\in K_{\xi}'$, see Lemma \ref{H}) then since $\alpha^u \to 0$ by Lemma \ref{Tshort}, this term goes to zero. 

Additionally, $\alpha^s \to 0$ with $T_{big} \to \infty$ by Lemma \ref{sshort} and by Lemma \ref{hun1} we have 
$$d_{Gr}(E^u(\tilde{x},\sigma^n(\ho)),E^u(\tilde{x},\sigma^n(\ho))) \leq C_{\alpha} d_X(\tilde{x},\tilde{z})^{\alpha} = C_{\alpha}(\alpha^s)^{\alpha}.$$
We explained how $\alpha$ and $C_{\alpha}$ do not change with our choice of starting points and hence do not change as $T_{big}\to \infty$. Hence $d_{Gr}(E^u(\tilde{x},\sigma^n(\ho)),E^u(\tilde{x},\sigma^n(\ho))) \to 0$ as $T_{big}\to \infty$.
\end{proof}

As explained in the outline of the proof, we also want to make sure that $F^T(U^+[\hx,\ho])$ and $F^T(U^+[\hy,\ho])$ are bounded away from each other. 

\begin{claim}\label{doo}
     To analyse the distance between $F^T(U^+[\hx,\ho])$ and $F^T(U^+[\hy,\ho])$, 
     it suffices to analyse the distance $d_X(\tilde{y},\tilde{z})$. 
\end{claim}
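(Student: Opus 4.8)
\textbf{Plan of proof for Claim \ref{doo}.}

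The idea is that the two unstable supports $F^T(U^+[\hx,\ho])$ and $F^T(U^+[\hy,\ho])$ are images under the dynamics of the local unstable supports based at $\hx$ and $\hy$, and that $\hz$ was chosen to lie on $W^u_q(\hy,\ho)$; so after flowing by $T$ the point $\tilde z$ lies on $W^u_q(\tilde y, \sigma^n(\ho))$ and in particular on the same global unstable leaf as $\tilde y$. Since, by Lemma \ref{0.6}, the unstable support $U^+[\cdot]$ coincides with $\calL^u(\cdot)$ and this is a single (smooth, real one-dimensional) curve inside the whole unstable leaf, the support $F^T(U^+[\hy,\ho])$ is a curve inside $W^u(\tilde y, \sigma^n(\ho)) = W^u(\tilde z, \sigma^n(\ho))$. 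The first step will therefore be to record that $F^T(U^+[\hx,\ho]) = U^+[\tilde x, \sigma^n(\ho)]$ and $F^T(U^+[\hy,\ho]) = U^+[\tilde y,\sigma^n(\ho)] = U^+[\tilde z,\sigma^n(\ho)]$, using the $F^t$-equivariance of the family $U^+$ from item (1) of Definition \ref{compat} together with item (3) (so that the support attached to $\tilde z$ agrees with the one attached to $\tilde y$, as $\tilde z\in U^+[\tilde y,\sigma^n(\ho)]$ — note $\hz\in W^u_q(\hy,\ho)$ forces $\tilde z$ onto $W^u_q(\tilde y,\sigma^n(\ho))$ which is contained in the unstable support leaf only after intersecting, so one actually wants $\tilde z$ on the support itself; this is where I expect the friction).

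Next I would bound the distance $d_X\big(F^T(U^+[\hx,\ho]),\,F^T(U^+[\hy,\ho])\big)$ from above and below in terms of $d_X(\tilde x,\tilde z)=\alpha^s$, $d_X(\tilde y,\tilde z)=\alpha^u$, and the Grassmannian distance $d_{Gr}(E^u(\tilde x,\sigma^n(\ho)),E^u(\tilde z,\sigma^n(\ho)))$. The upper bound is essentially already contained in Corollary \ref{finals}: the two unstable \emph{leaves} are within $\alpha^s+2\xi+M_{exp}d_{Gr}(\cdot)$ of each other, so the supports inside them are a fortiori that close plus a correction controlled by $\alpha^s$ (the basepoint offset). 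For the lower bound, which is the content of the claim, I would argue that the support curve through $\tilde y$ (equivalently through $\tilde z$, since they lie on a common local unstable) stays within distance $O(\alpha^u)$ of the point $\tilde z$ on the relevant scale — using Lemma \ref{errorr1} (the tangent line to $U^+[\cdot]$ approximates $U^+[\cdot]$ up to error $\xi'$ on a ball of radius $C\sqrt{\xi'}$) and that $\tilde z$ itself sits at distance $\alpha^u$ from $\tilde y$ on $W^u_q(\tilde y,\sigma^n(\ho))$ — while the support curve through $\tilde x$ is anchored at $\tilde x$, which is at distance $\alpha^s$ from $\tilde z$ in the stable direction and whose unstable leaf meets a neighbourhood of $\tilde z$ only transversally to the stable; so the two support curves are separated by at least a fixed fraction of $d_X(\tilde y,\tilde z)=\alpha^u$ once the $\alpha^s$ and $d_{Gr}$ contributions are made negligible (which, by Corollary \ref{finals}, happens as $T_{big}\to\infty$). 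In other words, for $T$ large enough all the error terms are dominated by $\alpha^u$, and $\alpha^u = d_X(\tilde y,\tilde z)$ is both an upper and (up to a uniform constant) a lower bound for the separation of the two supports.

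Concretely I would organise the step as: (i) reduce the distance between the two curves to a comparison, inside the single leaf $W^u(\tilde y,\sigma^n(\ho))$ after projecting $U^+[\tilde x,\sigma^n(\ho)]$ onto it, of two affine-ish curves whose basepoints are $\approx\tilde z$ (for the $\tilde y$ one) and the projection of $\tilde x$ (for the $\tilde x$ one); (ii) use the normal-form/affine structure (Proposition \ref{NFC}, items (3)) plus the H\"older control of $P^+$ from Section \ref{stableholon} to say the two curves, once brought to the same leaf, differ by a transformation that is $C^0$-close to a translation by the projected stable offset $O(\alpha^s)$ plus a rotation of size $O(d_{Gr})$; (iii) conclude that their mutual distance is comparable to $\alpha^u$ up to these lower-order corrections. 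The main obstacle I anticipate is keeping the geometry honest when one unstable support is projected into the other's leaf: one must make sure the projection is well-defined on the relevant ball (this needs $\epsilon_f$ small relative to $q$, which is exactly the hypothesis fixed in Section \ref{sec:not5}), and that the projection does not itself distort distances by more than a controlled factor — this is where the sets $K_{exp}$, $K_r$, $K_{ang}$ and the bound $\epsilon_f<\tfrac{1}{2M_r}\max\{\dots\}$ are used. Once that bookkeeping is done, the claim follows because every quantity other than $d_X(\tilde y,\tilde z)$ has been shown (Corollary \ref{finals}, Lemmas \ref{Tshort}--\ref{sshort}) to vanish faster as the initial points are taken closer together.
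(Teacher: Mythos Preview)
Your proposal has a genuine gap: you never invoke the NJI condition, and without it the argument cannot produce a \emph{lower} bound on the separation of the two supports. Indeed, you write that the support curve through $\tilde y$ is ``equivalently through $\tilde z$, since they lie on a common local unstable,'' but this is exactly what NJI forbids: the condition $W^s_{loc}(\hx,\ho)\cap U^+_{loc}[\hy,\ho]=\emptyset$ says precisely that $\hz\notin U^+[\hy,\ho]$, hence $\tilde z\notin U^+[\tilde y,\sigma^n(\ho)]$. Your step (iii), concluding that the two curves ``differ by a transformation close to a translation by $O(\alpha^s)$ plus a rotation of size $O(d_{Gr})$,'' would (if correct) make the supports \emph{close}, not bounded apart; the geometric bookkeeping you outline controls the upper bound, which is already Corollary \ref{finals}, but gives nothing for the lower bound.

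The paper's argument is both shorter and structurally different. First, since $(\tilde x,\sigma^n(\ho)),(\tilde y,\sigma^n(\ho))\in K_1'\subset K_G$ and $d_X(\tilde x,\tilde y)\to 0$ (Lemmas \ref{Tshort}, \ref{sshort}), uniform continuity of $G^+$ on $K_G$ (Lemma \ref{KG}) makes the tangent directions of the two supports nearly parallel, so it suffices to bound the distance from the single point $\tilde x$ to the curve $F^T(U^+[\hy,\ho])$. Second, the NJI condition gives $\hz\notin U^+[\hy,\ho]$, so $\hz$ sits at positive distance from every $\hy'\in U^+_q[\hy,\ho]$ \emph{inside} $W^u_q(\hy,\ho)$; this is an unstable-direction distance, and by Theorem \ref{locstablemfld} it grows under $F^T$. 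Third, a triangle inequality $d_X(\tilde x,\tilde y')\geq d_X(\tilde y',\tilde z)-d_X(\tilde x,\tilde z)$ absorbs the stable offset $\alpha^s$. Since the growth computation for $d_X(\tilde y',\tilde z)$ is identical for any $\tilde y'\in F^T(U^+[\hy,\ho])$, one simply takes $\tilde y'=\tilde y$, which is the content of the claim. (The paper flags this explicitly: Remarks \ref{flip1} and \ref{flip2} note that this is the sole place NJI is used.) None of the holonomy or projection apparatus you propose is needed.
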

\begin{proof}
    Lemma \ref{KG} gives us that on the set $K_G$ we have uniform continuity of the map $G^+$ which takes a point $(x,\omega)$ and returns the unit tangent vector of $U^+[x,\omega]$ through $x$. Since $\tilde{x},\tilde{y}\in K_G$ we have this property. Combining this with Lemmas \ref{Tshort} and \ref{sshort} this gives us that for $T_{big}$ large, $G^+(F^T(\hx,\ho))$ and $G^+(F^T(\hy,\ho))$ are almost parallel, hence it suffices to show that $\tilde{x}$ maintains a bounded distance from $F^T(U^+[\hy,\ho])$. 

Now note that the triangle inequality gives us that for $\tilde{y}'\in F^T(U^+[\hy,\ho])$
\begin{align}\label{tri}
    d_X(\tilde{x},\tilde{y}')\geq d_X(\tilde{y}',\tilde{z}) - d_X(\tilde{x},\tilde{z}).
\end{align}

Now, by the NJI condition, $W^s_{loc}(\hx,\ho)\cap U^+_{loc}[\hy,\ho] = \emptyset$. That is to say, the point $\hz$, which is the intersection of $W^s_q(\hx,\ho)$ with $W^u_q(\hy,\ho)$ is not an element of $U^+[\hy,\ho]$. There is positive distance from $\hz$ to any point on $U^+_q[\hy,\ho]$ along $W^u_q(\hy,\ho)$, this is unstable distance between $\hx$ and $U^+_q[\hy,\ho].$ Using Theorem \ref{locstablemfld} one can choose $T$ large enough such that this unstable distance grows and $d_X(\tilde{y}',\tilde{z})$ is sufficiently large. It suffices to examine $d_X(\tilde{y},\tilde{z})$ as any other point $\tilde{y}'$ would amount to the same computation.
\end{proof}

\begin{remark}\label{flip2}
Like in Remark \ref{flip1}, if instead the NJI condition was written as $W^s(\hy,\ho)\cap U^+[\hx,\ho] = \emptyset$, we would change claim \ref{doo} slightly too. We would say that $\hz$ is not an element of $U^+[\hx,\ho]$ and we would analyse the distance between $\tilde{z}$ and $\tilde{x}$ (same computation).
\end{remark}

By Claim \ref{doo} we will analyse $d_X(\tilde{x},\tilde{y})$ and require that $T$ be chosen such that we always exceed the stable distance with the unstable distance. So we use this to compute a restriction on $\theta$.

\begin{lemma}\label{tgap}
    Let $x_t, y_t, z_t$ be the $X$-components resp. of $F^t(\hx,\ho)$, $F^t(\hy,\ho)$, $F^t(\hz,\ho)$, if $t$ is the time it takes such that $d_X(y_t,z_t)>2d_x(x_t,z_t)$, then
    \begin{align}\label{tt}     
    t \leq \frac{\ln(k)}{\ln(\calL)-(\lambda^-+\delta)} + \frac{\ln(\calL)}{\ln(\calL)-(\lambda^-+\delta)}T_{big}.
    \end{align}
\end{lemma}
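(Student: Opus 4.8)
\textbf{Proof plan for Lemma \ref{tgap}.}

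The plan is to track the two distances $d_X(x_t,z_t)$ and $d_X(y_t,z_t)$ separately using the two tools already available, namely the local stable/unstable manifold contraction/expansion estimates of Theorem \ref{locstablemfld} (with the uniform constant $k$ and $\delta$ from Section \ref{sec:stablemfld}, valid because the starting points lie in $\Lambda_{loc}$) and the crude Lipschitz bound $\calL$ on $\supp(\mu)$. Write $n_t = \lfloor t + k_0\rfloor$ for the number of diffeomorphisms applied after flowing for time $t$. Since $\hz \in W^s_q(\hx,\ho)$, Theorem \ref{locstablemfld}(1) gives the upper bound
\begin{align}\label{stbd}
    d_X(x_t, z_t) \leq k\, e^{(\lambda^-+\delta) n_t}\, d_X(\hx,\hz).
\end{align}
Since $\hz \in W^u_q(\hy,\ho)$, I want a \emph{lower} bound on $d_X(y_t,z_t)$. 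Here I will instead use that the Lipschitz constant of each inverse diffeomorphism is also at most $\calL$ (this is part of the finite-support hypothesis; alternatively one applies Theorem \ref{locstablemfld}(2) read backwards from $F^t$), which yields
\begin{align}\label{unbd}
    d_X(y_t,z_t) \geq \calL^{-n_t}\, d_X(\hy,\hz),
\end{align}
or, being a little more careful and using that $z_t$ and $y_t$ stay on the same local unstable leaf so we may instead invoke the relation between $d_X(\hy,\hz)$ and $d_X(\tilde y,\tilde z)=\epsilon_f$ from Equation \eqref{lipcal}; either way one gets a bound of the shape $d_X(y_t,z_t)\geq \calL^{-n_t}\epsilon_f \calL^{-n_B}$ up to the benign constant, but for the present lemma the clean bound \eqref{unbd} suffices. [Concretely: the \emph{cleanest} route is to note $d_X(\hy,\hz) \geq \calL^{-n_B}\epsilon_f$ from \eqref{lipcal} with $n_B = \lfloor T_{big}+k_0\rfloor$, hence $d_X(y_t,z_t)\geq \calL^{-(n_t+n_B)}\epsilon_f$; I will choose whichever makes the final inequality cleanest — the structure of the argument is identical.]

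Next I impose the requirement $d_X(y_t,z_t) > 2\, d_X(x_t,z_t)$. Combining \eqref{stbd} and \eqref{unbd}, a sufficient condition is
\begin{align}\label{suff}
    \calL^{-n_t}\, d_X(\hy,\hz) \;\geq\; 2k\, e^{(\lambda^-+\delta) n_t}\, d_X(\hx,\hz),
\end{align}
which, taking logarithms and solving for $n_t$, becomes
\begin{align}\label{solve}
    n_t\bigl(\ln(\calL) - (\lambda^-+\delta)\bigr) \;\leq\; \ln\!\Bigl(\tfrac{d_X(\hy,\hz)}{2k\, d_X(\hx,\hz)}\Bigr).
\end{align}
Since $\lambda^- < 0 < \ln(\calL)$, the coefficient $\ln(\calL)-(\lambda^-+\delta)$ is positive, so this is a genuine upper bound on $n_t$, hence on $t$ (as $t \leq n_t + 1$, or more simply $n_t \geq t-1$). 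Finally I substitute the relation between the initial distances: from \eqref{lipcal} (Lemma \ref{Tbig}) with $d_X(\tilde y,\tilde z)=\epsilon_f$ at time $T_{big}$ we have $\ln\bigl(\epsilon_f/d_X(\hy,\hz)\bigr)\leq n_B \ln(\calL)$, i.e. $\ln d_X(\hy,\hz) \geq \ln\epsilon_f - n_B\ln(\calL)$; feeding this (and absorbing $d_X(\hx,\hz)$, $2k$, $\epsilon_f$ into the constants, noting $d_X(\hx,\hz)\leq d_X(\hx,\hy)+d_X(\hy,\hz)$ is comparable to $d_X(\hy,\hz)$ up to our freely-shrinkable $\rho$) into \eqref{solve} and dividing through gives exactly
\begin{align*}
    t \;\leq\; \frac{\ln(k)}{\ln(\calL)-(\lambda^-+\delta)} \;+\; \frac{\ln(\calL)}{\ln(\calL)-(\lambda^-+\delta)}\, T_{big},
\end{align*}
as claimed (the first term may come with a harmless additional bounded contribution from $\epsilon_f$, $d_X(\hx,\hz)$ and the $+1$ from the floor, which one folds into the constant).

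\textbf{Main obstacle.} The only delicate point is bookkeeping between $n_t$ and $t$ (the floor function) and making sure the constant $k$ appearing in Theorem \ref{locstablemfld} can be treated as uniform — but that is exactly what membership in $\Lambda_{loc}$, i.e. the Lusin reduction in Section \ref{sec:stablemfld}, provides, and it was already invoked just after Equation \eqref{thm8bd}. A secondary bit of care is needed to ensure the lower bound \eqref{unbd} is legitimate: $z_t$ and $y_t$ must remain within a local unstable leaf of controlled size for the whole interval $[0,t]$, which holds since $t\leq T_{big}$ and $d_X(\tilde y,\tilde z)=\epsilon_f < q/4$, so all intermediate distances stay below $q$. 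With those two points handled, the estimate is a two-line manipulation of logarithms exactly as above.
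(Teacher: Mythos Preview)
Your lower bound on $d_X(y_t,z_t)$ is the wrong one, and this is a genuine gap, not just bookkeeping. The bound $d_X(y_t,z_t)\geq \calL^{-n_t}d_X(\hy,\hz)$ (and likewise your ``cleanest route'' $d_X(y_t,z_t)\geq \calL^{-(n_t+n_B)}\epsilon_f$) is a \emph{decreasing} function of $t$, so it cannot be used to show that the unstable distance eventually overtakes the stable one. Indeed, if you carry out the algebra from your \eqref{suff} correctly you get
\[
n_t\bigl(\ln\calL \,\boldsymbol{+}\,(\lambda^-+\delta)\bigr)\;\leq\;\ln\!\Bigl(\tfrac{d_X(\hy,\hz)}{2k\,d_X(\hx,\hz)}\Bigr),
\]
with a $+$, not the $-$ you wrote in \eqref{solve}; in the worst case ($d_X(\hy,\hz)$ tiny, $d_X(\hx,\hz)$ of order $\epsilon_f$) the right-hand side is large and negative, so the inequality is vacuous. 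The sign slip in \eqref{solve} happens to reproduce the target formula, but it is masking the fact that your estimate on the unstable side points the wrong way.

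The paper's idea is to bound $d_X(y_t,z_t)$ from below by running the crude Lipschitz estimate \emph{backwards from $T_{big}$}, where by definition the distance equals $\epsilon_f$: since each $f^{-1}$ is $\calL$-Lipschitz,
\[
d_X(y_t,z_t)\;\geq\;\calL^{-(T_{big}-t)}\epsilon_f,
\]
which is \emph{increasing} in $t$. Equating this with the stable upper bound $2k\epsilon_f e^{(\lambda^-+\delta)t}$ (worst case $d_X(\hx,\hz)=\epsilon_f$) and solving gives exactly the coefficient $\ln\calL - (\lambda^-+\delta)$ and the $T_{big}$-linear term in the statement. Your mention of ``Theorem \ref{locstablemfld}(2) read backwards from $F^t$'' is in the right spirit but is not what you actually use; the simple Lipschitz bound from $T_{big}$ back to $t$ is all that is needed and avoids any Pesin-set issues at the intermediate time.
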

\begin{proof}

In the worse case scenario, the unstable distance starts out small and the stable distance starts out large. As much as we have asked that $\hx$ and $\hy$ are chosen such that $d_X(\hx,\hy)<\rho\ll \epsilon_f$, let us assume that the stable distance is close to $\epsilon_f$ (or say it is $\epsilon_f$) and let us assume that the unstable distance is close to 0 (we know that it cannot be zero by the NJI assumption).

We will compute the time $t$ such that $d_X(y_t,z_t)=2d_X(x_t,z_t)$.

If $d_X(\hx,\hz) = \epsilon_f$, then after time $t$ we would have $d_X(\tilde{x},\tilde{z})\leq k\epsilon_f e^{(\lambda^-+\delta)t}$ and hence we have $2d_X(\tilde{x},\tilde{z})\leq 2k\epsilon_f e^{(\lambda^-+\delta)t}$. Additionally, $d_X(\tilde{y},\tilde{z}) \geq \left(\frac{1}{\calL}\right)^{T_{big}-t}\epsilon_f$. So we set equal 
\begin{align}
    \left( \frac{1}{\calL} \right)^{T_{big}-t}\epsilon_f = \epsilon_f 2k e^{(\lambda^-+\delta)t},
\end{align}
and solve for $t$. We get
\begin{align}\label{tcomp}
    t = \frac{\ln(2k)}{\ln(\calL)-(\lambda^-+\delta)} + \frac{\ln(\calL)}{\ln(\calL)-(\lambda^-+\delta)}T_{big}.
\end{align}
Note that both terms in Equation \eqref{tcomp} are positive because $\lambda^-<0$, but also that $\frac{\ln(\calL)}{\ln(\calL)-(\lambda^-+\delta)}<1$. 
\end{proof}

This gives us information on how to pick $\theta_1$ and $\theta_2$ so that $T$ is such that $d_X(\tilde{y},\tilde{z})>2d_X(\tilde{x},\tilde{z})$.

\begin{lemma}\label{Q22}
    We can take $T_{big}$ large enough (i.e. the initial points close enough together) and find a constant $Q_0<1$ such that fixing $\theta_1\ll \frac{1-Q_0}{1000}$, we always have that $T\gg t$. 
\end{lemma}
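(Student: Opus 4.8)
\textbf{Proof plan for Lemma \ref{Q22}.}

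The plan is to turn the constraint from Lemma \ref{tgap} into an explicit inequality on $\theta$ and then choose $\theta_1,\theta_2$ (hence the interval $I$ in Choice \ref{choice1}) accordingly. Recall from Lemma \ref{tgap} that if $t$ is the time needed to reach $d_X(y_t,z_t) > 2d_X(x_t,z_t)$, then
\begin{equation}\label{eq:tbound-Q22}
t \leq \frac{\ln(2k)}{\ln(\calL)-(\lambda^-+\delta)} + \frac{\ln(\calL)}{\ln(\calL)-(\lambda^-+\delta)}T_{big}.
\end{equation}
Set $Q_0 \defeq \frac{\ln(\calL)}{\ln(\calL)-(\lambda^-+\delta)}$; since $\lambda^-<0$ and $\delta$ is small (so $\lambda^-+\delta < 0$) we have $\ln(\calL) - (\lambda^-+\delta) > \ln(\calL) > 0$, hence $0 < Q_0 < 1$. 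The right side of \eqref{eq:tbound-Q22} is therefore $Q_0 T_{big} + O(1)$, so for $T_{big}$ large enough it is bounded above by, say, $\tfrac{1+Q_0}{2}T_{big}$.

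Now I would compare this with $T = T_{big}(1-\theta)$ for $\theta \in [\theta_2,\theta_1]$. Fixing $\theta_1 \ll \frac{1-Q_0}{1000}$ guarantees $1-\theta \geq 1-\theta_1 > \tfrac{1+Q_0}{2}$ (for $\theta_1$ small enough depending only on $Q_0$), so that $T = T_{big}(1-\theta) \geq T_{big}(1-\theta_1) > \tfrac{1+Q_0}{2}T_{big} \geq t$ once $T_{big}$ is large. To get the strict domination $T \gg t$ rather than merely $T \geq t$, I would simply note that the gap $T - t \geq \big((1-\theta_1) - \tfrac{1+Q_0}{2}\big)T_{big} - O(1)$ grows linearly in $T_{big}$, which by Lemma \ref{Tbig} is achieved by taking the initial points $\hx,\hy$ sufficiently close together (Remark \ref{close}). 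This is exactly the mechanism: shrinking $\rho$ forces $T_{big}\to\infty$, which makes the additive $O(1)$ term in \eqref{eq:tbound-Q22} negligible and opens up an arbitrarily large margin between $T$ and $t$.

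The only mild subtlety — and the place to be slightly careful — is bookkeeping the constants: one must confirm that the choice of $\theta_1$ depends only on $Q_0$ (equivalently on $\calL$, $\lambda^-$, $\delta$), and not on the starting points or on $T_{big}$, so that the interval $I = [T_{big}(1-\theta_1),T_{big}(1-\theta_2)]$ from Choice \ref{choice1} is well-defined uniformly. Since $Q_0$ is a fixed quantity determined by $\supp(\mu)$ and the Lyapunov data, this is immediate, and $\theta_2$ can then be chosen freely in $(0,\theta_1)$; the lower endpoint plays no role here and is pinned down later when we also need $T$ to avoid overshooting to distance $\epsilon_f$. I expect no real obstacle in this lemma — it is a direct consequence of $Q_0<1$ together with the linear growth of $T_{big}$ from Lemma \ref{Tbig}.
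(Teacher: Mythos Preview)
Your proposal is correct and follows essentially the same route as the paper: both absorb the additive constant $\frac{\ln(2k)}{\ln(\calL)-(\lambda^-+\delta)}$ into the linear term $Q_0 T_{big}$ for $T_{big}$ large, then choose $\theta_1$ small enough that $1-\theta_1$ exceeds this coefficient. The only cosmetic difference is that the paper defines $Q_0 = \frac{\ln(\calL)}{\ln(\calL)-(\lambda^-+\delta)} + \epsilon_s$ with a small slack $\epsilon_s$ absorbing the constant, whereas you take the bare ratio as $Q_0$ and use the midpoint $\frac{1+Q_0}{2}$ to absorb it; the effect is identical.
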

\begin{proof} 
In the equation for $t$ given by Equation \eqref{tt} the term $\frac{\ln(2k)}{\ln(\calL)-(\lambda^-+\delta)}$ is a constant and does not grow with $T_{big}$, we can start by taking $T_{big}$ large enough such that 
$$\frac{\ln(2k)}{\ln(\calL)-(\lambda^-+\delta)} + \frac{\ln(\calL)}{\ln(\calL)-(\lambda^-+\delta)}T_{big} < \left(\frac{\ln(\calL)}{\ln(\calL)-(\lambda^-+\delta)} + \epsilon_{s}\right)T_{big}.$$
where $\epsilon_s$ is chosen small such that $Q_0\defeq \left(\frac{\ln(\calL)}{\ln(\calL)-(\lambda^-+\delta)} + \epsilon_{s}\right)<1$ (remember $\lambda^-<0$).

We want to take $\theta_1$ such that $Q_0\ll(1-\theta_1)$, that way $T\gg t$. We fix arbitrarily a $\theta_1$ such that 
\begin{align}
    \theta_1 \ll \frac{1-Q_0}{1000}.
\end{align}
\end{proof}

\begin{choice}\label{thetabds}
    We will also require that $\theta_1$ and $\theta_2$ be small enough such that \begin{align}
        \frac{e^{(\lambda^-+\delta)(1-\theta_1)}}{e^{-(\lambda^+-\delta)\theta_1}}<1, \left( \frac{\calL^{\frac{C_b}{10^{i+2}}\theta_2}}{e^{-(\lambda^-+\delta)(1-\theta_2)\alpha}} \right) <1, \ \ \ 
    \end{align}
where $\alpha$ is defined at the beginning of Section \ref{sec:blue} and $C_b$ and $i$ are as defined in Section \ref{sec:lusin}, i.e. $C_b = \frac{(\lambda^+-\delta)}{2\ln(\calL)}$ and $i$ was fixed such that $C_b>\frac{1}{10^i}$ and $\frac{C_b}{10^{i+2}}\ll(\lambda^++\delta)$ so that
\begin{align}
    \frac{\lambda^++\delta}{\frac{C_b}{10^{i+2}}}\gg \ln(k\epsilon_fM_r/2) + (\lambda^++\epsilon).
\end{align}

This will be used in Lemma \ref{bdnded1} (see Claim \ref{claim1} and Equation \eqref{theta1bd}), Lemma \ref{bdnded2} (see Equations \eqref{theta2bd}, \eqref{188}) and Lemma \ref{bdnded3} (See Equation \eqref{lm} and \eqref{206}) in Section \ref{sec:green}. 
\end{choice}

Recall the definitions of $K_{1}$ and $K_1'$ from Section \ref{sec:lusin}; They are two sets of measure $m(K_1)>1-\epsilon_1, \ m(K_1')>1-\epsilon_1'$ resp. We will have to initially land in $K_1'$ after flowing by $T$, i.e. $X\times \Omega$-components of the points $F^T(\hx,\ho,k_0)$ and $F^T(\hy,\ho,k_0)$ which we denote $ (\tilde{x},\sigma^n(\ho))$ and $(\tilde{y},\sigma^n(\ho))$ (where $n = \lfloor T+k_0\rfloor$) need to be in $K_1'$. Then we argue that we can change the $\Omega$-component of $(\tilde{x},\sigma^n(\ho))$ and $(\tilde{y},\sigma^n(\ho))$ to be some $\tilde{\omega}$ such that $(\tilde{x},\tilde{\omega})$ and $(\tilde{y},\tilde{\omega})$ that are in $K_1$.  We need to change the $\Omega$-component because in the next part of the argument we need to flow backwards by a new past vector (we change the future vector as well).

To do any of this, we actually have to work in $Z$ because we can only define the flow on $Z$. Additionally, we also have to land in a subset of $Z$ such that when we change the $\Omega$-component our points will land in $E^c_2 \subset Z$ and $E^c_3\subset Z$ (fixed from the beginning in Section \ref{sec:lusin}). This will allow us to run the same argument as in Lemma \ref{bigint} (which uses Lemma \ref{birkhoff}, a standard corollary of Birkhoff's theorem) in Sections \ref{sec:green} and \ref{sec:pink}.

Recall the definition of $M$ from Section \ref{sec:lusin}. For a point $(x,k)\in X\times [0,1[$, define $$V_{(x,k)} = \{\omega\in \Omega : (x,\omega,k)\in M\}.$$ 
Now also recall from Section \ref{sec:lusin} the set 
$$B \defeq \{ (x,k)\in [0,1[ : \mu(V_{(x,k)})>1-\theta \}\subset X\times [0,1[ .$$
Then we need to land the points $F^T(\hx,\ho,k_0)$ and $F^T(\hy,\ho,k_0)$ so that their $X\times [0,1[$-components live in this set $B$. Note that the set $M$ has $\hat{m}$-measure $1-\epsilon_M$, where $\epsilon_M$ is small.

\begin{lemma}\label{B}
    The set $B$ defined above has $\nu \times dt$-measure at least $1-\frac{\epsilon_M}{\theta}$ in $X \times [0,1[$.
\end{lemma}
\begin{proof}
    Let $W_{(x,k)}=\{\omega : (x,\omega,k)\in M^c \}.$

    Note that $B^c$ can be written as the set $$B^c = \{(x,k)\in X\times [0,1[ : \mu(W_{(x,k)})>\theta \}.$$
    
    Now let 
    $$E \defeq \{(x,\omega,k)\in X\times \Omega\times  [0,1[ : (x,k)\in B^c, \ (x,\omega, k) \in M^c \}\subset M^c.$$

    Now using Fubini, note that
    $$\hat{m}(M^c) \geq \hat{m}(E) = \int_{X\times [0,1[} \int_{\Omega} d\mu^{\bbZ} d(\nu\times dt) = \mu^{\bbZ}(W)\cdot (\nu\times dt)(B^c) \geq  \theta \cdot (\nu\times dt)(B^c).$$

    Hence $(\nu\times dt)(B^c) \leq \frac{\hat{m}(M^c)}{\theta}$. This completes the proof.
\end{proof}

We take $\theta = \sqrt{\epsilon_M}$, then $(\nu\times dt)(B)>1-\sqrt{\epsilon_M}$.

Knowing that $B$ has $\nu \times dt$-measure $(1-\sqrt{\epsilon_M})$, we have that $B\times \Omega$ has $\hat{m}$-measure $(1-\sqrt{\epsilon_M})$. The size of $K_1'$ is large enough ($\epsilon_1'>0$ small enough) such that $\tilde{K}_1 = (B\times\Omega) \cap (K_1' \times [0,1[)$ has $\hat{m}$-measure like $(1-\tilde{\epsilon}_1)$ for some $\tilde{\epsilon}_1$ small. All we really need is that $\hat{m}(\tilde{K}_1)>1/2$ (we asked that $\hat{m}(\tilde{K}_1)>3/4$ just for convenience in Section \ref{sec:lusin}) which can clearly achieve. We now use Lemma \ref{birkhoff} to land $F^T(\hx,\ho,k_0)$ and $F^T(\hy,\ho,k_0)$ in the set $\tilde{K}_1$.

\begin{lemma}\label{bigint}
  For $T_{big}$ large enough, given the interval $[T_{big}(1-\theta_1),T_{big}(1-\theta_2)]$, there exists a subset of positive measure $\calG \subset [T_{big}(1-\theta_1),T_{big}(1-\theta_2)]$ such that, if $T\in \calG$, then flowing forwards by $T$ from the points $(\hx,\ho,k_0)$ and $(\hy,\ho,k_0)$ we land in the set $\tilde{K}_1$.
\end{lemma}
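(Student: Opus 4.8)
The plan is to deduce Lemma \ref{bigint} from the Birkhoff-averaging set $E^c$ that the starting points were placed in, exactly as in the standard Benoist–Quint/Eskin–Mirzakhani scheme. Recall from Section \ref{sec:original} that $(\hx,\ho,k_0)$ was chosen in $E^c$ (and the corresponding $(\hy,\ho,k_0)$ lies in $E^c$ as well, since $E^c$ was built for $\tilde{K}_1$ and $(\hy,\ho,k_0)\in D\cap(K_0\times[0,1[)$, which we arrange to sit inside $E^c$). First I would unpack the defining property of $E^c$: for all $T>S$,
\begin{align}
\left| \frac{1}{T}\int_0^T \mathds{1}_{\tilde{K}_1}(F^t(\hx,\ho,k_0))\,dt - \hat{m}(\tilde{K}_1) \right| < \delta,
\end{align}
where $\delta = \frac{1}{100}\hat{m}(\tilde{K}_1\cap E_2^c\cap E_3^c)$, and the same for $(\hy,\ho,k_0)$. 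Since $\hat{m}(\tilde{K}_1)>3/4$ (Section \ref{sec:lusin}) and $\delta$ is a small fraction of that, the Lebesgue measure of the set of times $t\in[0,T]$ with $F^t(\hx,\ho,k_0)\in \tilde{K}_1$ is at least $(\hat{m}(\tilde{K}_1)-\delta)T$, comfortably more than $\tfrac12 T$; likewise for $\hy$. Hence the set of $t\in[0,T]$ landing \emph{both} points in $\tilde{K}_1$ has Lebesgue measure at least $(2\hat{m}(\tilde{K}_1)-2\delta-1)T>0$.

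Next I would intersect this good-time set with the target interval $I=[T_{big}(1-\theta_1),T_{big}(1-\theta_2)]$ rather than $[0,T]$. The point is that $\theta_1,\theta_2$ are fixed constants with $0<\theta_2<\theta_1$, so $I$ has length $(\theta_1-\theta_2)T_{big}$, a fixed fraction of $T_{big}$. Taking $T=T_{big}(1-\theta_2)$ as the averaging horizon, the measure of good times in $[0,T]$ is at least $(2\hat m(\tilde K_1)-2\delta-1)T$, while the measure of the "bad" complement $[0,T_{big}(1-\theta_1)]$ is $T_{big}(1-\theta_1)$. The difference
\begin{align}
(2\hat{m}(\tilde{K}_1)-2\delta-1)T_{big}(1-\theta_2) - T_{big}(1-\theta_1)
\end{align}
is positive provided $\theta_1-\theta_2$ was chosen large enough relative to how much $2\hat m(\tilde K_1)-2\delta-1$ exceeds $1-\theta_1$; since $2\hat m(\tilde K_1)-2\delta-1$ is a fixed constant strictly bigger than $1/2$ and $\theta_1,\theta_2$ are at our disposal (Choice \ref{choice1}, Lemma \ref{Q22}, Choice \ref{thetabds}), this can be arranged, and it shows the set of $T\in I$ landing both points in $\tilde{K}_1$ has positive Lebesgue measure. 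Calling that set $\calG$ finishes the lemma. I would also note the standing requirement that $T_{big}$ be large enough that $T_{big}(1-\theta_1)>S$, so that the Birkhoff estimate is valid on the whole interval $I$; this is compatible with Lemmas \ref{Tbig}, \ref{Tshort}, \ref{sshort} since $T_{big}\to\infty$ as the initial points are taken closer (Remark \ref{close}).

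The only mild subtlety — and the step I would be most careful about — is bookkeeping the constants so that the three conditions interact correctly: the averaging horizon $T$ must exceed the Birkhoff time $S$; the interval $I$ must be long enough (i.e. $\theta_1-\theta_2$ large enough) to absorb the loss from restricting away from $[0,T_{big}(1-\theta_1)]$; and $\theta_1$ must \emph{simultaneously} be small enough to satisfy the separate constraints imposed in Lemmas \ref{Q22} and \ref{bdnded1}--\ref{bdnded3} (via Choice \ref{thetabds}). Since all of those are one-sided constraints ($\theta_1$ small, $\theta_1-\theta_2$ not too small, $T_{big}$ large) there is no circularity: one first fixes $\theta_1$ satisfying Lemma \ref{Q22} and Choice \ref{thetabds}, then picks $\theta_2<\theta_1$ with $\theta_1-\theta_2$ as needed here, then takes $T_{big}$ large. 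No genuinely hard analysis is involved; this is a Fubini-plus-Birkhoff pigeonhole argument, and it validates Assumption \ref{ass1}.
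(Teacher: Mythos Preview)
Your argument mirrors the paper's: apply the Birkhoff bound from $E^c$ to both starting points, intersect the good-time sets by inclusion--exclusion to get measure at least $(2\gamma-2\delta-1)T_2$ in $[0,T_2]$, and compare this to $T_1=\mathrm{Leb}([0,T_1])$; the paper records the key inequality as $T_2(1.98\gamma-1)>T_1$.

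One correction to your bookkeeping paragraph: the required inequality $c(1-\theta_2)>1-\theta_1$ (with $c=2\gamma-2\delta-1<1$ always) forces $\theta_1>1-c$, independently of how $\theta_2$ is then chosen. So it is not enough that ``$c>\tfrac12$ and $\theta_1,\theta_2$ are at our disposal,'' since $\theta_1$ carries upper bounds from Lemma~\ref{Q22} and Choice~\ref{thetabds} that may lie below $1-c$ (with the paper's $\delta=\gamma/100$ one has $1-c\ge 0.02$ even as $\gamma\to 1$). The paper glosses over the same point---it simply asserts $T_2(1.98\gamma-1)\gg T_1$ without checking---so you have not introduced a new gap; the honest fix is to observe that the constant $1/100$ in the definition of $E^c$ is arbitrary and can be shrunk, and $\hat m(\tilde K_1)$ pushed closer to $1$, so that $1-c$ drops below whatever $\theta_1$ the earlier constraints permit.
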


\begin{proof}

We take $T_{big}$ such that $T_{big}(1-\theta_1)$ is greater $S$ where $S$ is as in the definition of $E^c$ in Section \ref{sec:lusin} (a quantity fixed at the beginning of the argument, i.e. before beginning Section \ref{sec:blue}). This allows us to use Lemma \ref{birkhoff}.

Take $K=\tilde{K}_1$ and apply Lemma \ref{birkhoff}. Our starting points $(\hx,\ho,k_0)$ and $(\hy,\ho,k_0)$ belong to $E^c$ (see Section \ref{sec:original}). Remember that $E^c$ was fixed (in Section \ref{sec:lusin}) by the choice of $\delta = \frac{1}{100}\gamma$ where $\gamma = \hat{m}(K)>3/4$. We will denote these starting points $\alpha$ and $\beta$ resp.  Let $T_1 = T_{big}(1-\theta_1) $  $T_0> S$. Let $T_2 = T_{big}(1-\theta_2)$ and note that $T_2(1.98\gamma -1)\gg T_1$. Then let
$$\int_0^{T_2} \mathds{1}_{K}(F^{t}(\alpha))dt = Leb\{t\in [0,T_2] : F^t(\alpha)\in K\} = : \eta_{T_2,\alpha}.$$
Then,
\begin{align}
   (-\delta+\gamma)T_2<\eta_{T_2,\alpha}<(\delta + \gamma)T_2,\label{gamma} \\
    0.99 \gamma T_2 < \eta_{T_2,\alpha} < 1.01\gamma T_2. \label{eta}
\end{align}

Run the same process for $\beta$. Then, using Equation \eqref{eta} we have
\begin{align}\label{inter}
Leb(\eta_{T_2,\alpha}\cap \eta_{T_2,\beta}) &= Leb(\eta_{T_2,\alpha}) + Leb(\eta_{T_2,\beta}) - Leb(\eta_{T_2,\alpha}\cup \eta_{T_2,\beta})\\ \nonumber
&\geq T_2(1.98\gamma -1)>T_1 = Leb([0,T_1]). 
\end{align}
So then there is a positive measure subset of $[T_1,T_2]$ where both $F^t(\alpha)$ and $F^t(\beta)$ belong to ${K}$. 
 \end{proof}

  \begin{remark}\label{half}
    If one takes Equation \eqref{gamma} and uses it to compute the inequality in Equation \eqref{inter}, one finds that 
    \begin{align*}
        Leb(\eta_{T,\alpha}\cap \eta_{T,\beta}) = Leb(\eta_{T,\alpha}) + Leb(\eta_{T,\beta}) - Leb(\eta_{T,\alpha}\cup \eta_{T,\beta}) \geq 2(\gamma - \delta) T -T.
    \end{align*}
    Hence one needs $2(\gamma - \delta)-1 >0$. This happens if and only if $\gamma -\frac{1}{2}>\delta>0$. This is why we needed the condition $\gamma = (\nu\times \mu^{\bbZ})({K})>3/4$, though really $>1/2$ suffices as we could find an appropriate $\delta$. 
 \end{remark}

\subsubsection{Flowing by $m$}
\label{sec:green}

\begin{convention}
    We pick the convention that $m$ is given as a positive real number and we will flow backwards by it and hence write $F^{-m}$.
\end{convention}

From the points $\tilde{x}$ and $\tilde{y}$ we will go in two directions. First, we will choose a new $ \Omega$-component and flow backwards by some $m$ (under the `standard' flow) from $\tilde{x}$ and $\tilde{y}$ to the points which we will denote $x$ and $y$. Let $\tilde{\omega}\in \Omega$ be such that $(\tilde{x},\tilde{\omega})$ and $(\tilde{y},\tilde{\omega}) \in K_1$; we can choose such $\tilde{\omega}$ because we landed the points $F^T(\hx,\ho,k_0) = (\tilde{x},\sigma^{n}(\ho), k_1)$ and $F^T(\hy,\ho,k_0)=(\tilde{y},\sigma^{n}(\ho), k_1)$ (where $n = \lfloor T + k_0 \rfloor$) in the set $\tilde{K}_1$ in Section \ref{sec:blue}. This gives us that we have a (large) positive measure subset of $\Omega$ (the intersection of $V_{(\tilde{x},k_1)}$ and $V_{(\tilde{y},k_1)}$ which has large positive measure because they are both large) from which to choose $\tilde{\omega}$ from. Still, we ask more from our choice of $\tilde{\omega}$ in the next Lemma.

\begin{lemma}\label{apartm}
    We can choose $\tilde{\omega}$ such that $W^s(\tilde{x},\tilde{\omega})$ is bounded away from $W^u(\tilde{x},\sigma^n(\ho))$. Similarly we ask that the stables and unstables of $\sigma^n(\ho)$ and $\tilde{\omega}$ all distinct.
\end{lemma}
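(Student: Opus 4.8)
The plan is to produce $\tilde\omega$ by a measure-theoretic pigeonhole argument, exactly analogous to how new futures/pasts are chosen elsewhere in the paper (Lemmas \ref{tildeS}, \ref{omega+}, \ref{G!}). Recall from Section \ref{sec:blue} that having landed $F^T(\hx,\ho,k_0)$ and $F^T(\hy,\ho,k_0)$ in $\tilde K_1 = (B\times\Omega)\cap(K_1'\times[0,1[)$, we know $(\tilde x,k_1),(\tilde y,k_1)\in B$, so the sets $V_{(\tilde x,k_1)}=\{\omega:(\tilde x,\omega,k_1)\in M\}$ and $V_{(\tilde y,k_1)}=\{\omega:(\tilde y,\omega,k_1)\in M\}$ each have $\mu^{\bbZ}$-measure $>1-\theta$ with $\theta=\sqrt{\epsilon_M}$ small; hence $V\defeq V_{(\tilde x,k_1)}\cap V_{(\tilde y,k_1)}$ has measure $>1-2\theta$. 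All the genuinely new content of this lemma is that one can additionally arrange the stable/unstable manifolds (and Oseledets directions) at $\tilde\omega$ to be distinct from, and bounded away from, those at $\sigma^n(\ho)$.

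First I would invoke Corollary \ref{8.2} (and Remark \ref{8.2'} for the unstable analogue): for $m$-a.e.\ $(x,\omega)$ and $\mu^{\bbZ}$-a.e.\ $\omega'$ we have $W^s(x,\omega)\neq W^s(x,\omega')$, $W^u(x,\omega)\neq W^u(x,\omega')$, and moreover, since the two stable/unstable currents $T^s_{\omega}$, $T^s_{\omega'}$ (resp.\ unstable) lie in distinct cohomology classes, the corresponding manifolds are genuinely different entire curves. Applying this at the point $\tilde x$ with the reference future $\sigma^n(\ho)^+$ yields a conull set $\Omega^+_{\tilde x}\subset\Omega^+$ of futures $(\omega')^+$ such that $W^s(\tilde x,(\omega')^+)\neq W^s(\tilde x,\sigma^n(\ho))$; doing the same for the unstable with the reference past, and combining with the analogous statement at $\tilde y$ and the fact that $W^u(\tilde x,\omega)$ depends only on the past while $W^s$ depends only on the future, gives a conull subset $\Omega_{good}\subset\Omega$ on which the four manifolds $W^{s}(\cdot,\tilde\omega)$, $W^u(\cdot,\tilde\omega)$, $W^s(\cdot,\sigma^n(\ho))$, $W^u(\cdot,\sigma^n(\ho))$ (at both $\tilde x$ and $\tilde y$) are pairwise distinct. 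Since $\Omega_{good}$ is conull and $V$ has measure $>1-2\theta$, the intersection $V\cap\Omega_{good}$ is nonempty and in fact of measure $>1-2\theta$, so we may pick $\tilde\omega$ there.

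To upgrade ``distinct'' to ``bounded away'', I would use the same finite-order-of-tangency device as in the proof of Lemma \ref{0.1}: for each $(\omega')\in\Omega_{good}$, since $W^s(\tilde x,\tilde\omega)\neq W^s(\tilde x,\sigma^n(\ho))$ the parametrizations $\psi$ of $\exp_{\tilde x}^{-1}(W^s(\tilde x,\tilde\omega))$ and $\exp_{\tilde x}^{-1}(W^u(\tilde x,\sigma^n(\ho)))$ first differ at some finite derivative order, giving a quantitative separation $d_X(z,W^u(\tilde x,\sigma^n(\ho)))\geq c\,d_X(z,\tilde x)^{R}$ for $z\in W^s(\tilde x,\tilde\omega)$ near $\tilde x$ (noting also $(\tilde x,\sigma^n(\ho))\in K_{ang}$, so the tangent directions are uniformly transverse, ruling out the degenerate case). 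Since $R$ is $\sigma$-invariant hence a.e.\ constant and the constant $c$ can be made uniform on a Lusin set of large measure, the separation is uniform; intersecting that Lusin set's $\omega$-slice with $V\cap\Omega_{good}$ still leaves positive (indeed near-full) measure, so a valid $\tilde\omega$ exists. The main obstacle — and the only real subtlety — is handling the borderline case where the manifolds are distinct as curves but tangent to high order at $\tilde x$: here one cannot get separation from the tangent direction alone and must instead run the Taylor-expansion bound at the order-$R$ derivative, controlled using the uniform bound on that derivative over the relevant Lusin set (exactly as $C'$ is introduced in the setup of $\Lambda_s$ in Section \ref{sec:twolem}); everything else is a routine Fubini/pigeonhole on product measures.
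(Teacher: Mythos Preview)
Your approach is workable in spirit but takes a considerably more circuitous route than the paper, and contains one concrete misstep.

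The paper's proof is short and operates entirely at the first-order (tangent direction) level. Recall that this lemma lives in Section~\ref{sec:Sect3}, the $d_+=1$ setting, where Theorem~\ref{thmC} case~c) applies: the Oseledets stable directions $E^s(x,\omega)$ genuinely depend on the future, so the measure $P^s_{\tilde x}$ on $Gr(1,\bbC^2)$ is non-atomic. The paper then uses Lemma~\ref{Kdelta'} (the set $K_{\varepsilon}$, which is built into $K_1'$ and $K_1$) together with Lemma~\ref{anglecontrol} to choose $\tilde\omega$ so that $E^s(\tilde x,\tilde\omega)$ lies outside a fixed $\varepsilon$-neighbourhood of the fixed line $E^u(\tilde x,\sigma^n(\ho))$. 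Since the relevant scale is $\mathfrak r = 2\alpha^u$, which shrinks with $T_{big}$, the linear approximation error $\xi = \mathfrak r^2 M_r^3 H$ from Lemma~\ref{hun} can be made $\ll \varepsilon$, and angular separation of the tangent lines immediately gives separation of the manifolds on that scale. No Corollary~\ref{8.2}, no order-of-tangency analysis.

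Your route instead invokes Corollary~\ref{8.2} (randomness of stable \emph{manifolds}, not of Oseledets directions) and then upgrades distinctness to quantitative separation via the order-$R$ tangency device. This is essentially the $d_+=0$ machinery from Section~\ref{sec:alt1} (compare Lemma~\ref{omega+}), where Theorem~\ref{thmC}~c) is unavailable and one is genuinely forced to work at higher order. Here it is overkill: the ``borderline case'' you flag (tangent directions coincide but curves differ only at order $R>1$) has measure zero in $\tilde\omega$ precisely because $P^s_{\tilde x}$ is non-atomic. Note also two gaps in your argument as written: (i) Corollary~\ref{8.2} compares $W^s$ with $W^s$ for different futures, not $W^s$ with $W^u$, so your ``pairwise distinct'' claim needs the extra observation that if $W^s(\tilde x,\tilde\omega)=W^u(\tilde x,\sigma^n(\ho))$ for a positive-measure set of $\tilde\omega$ then those stables all coincide, contradicting Corollary~\ref{8.2}; and (ii) your appeal to $K_{ang}$ to rule out the degenerate case is incorrect --- $K_{ang}$ bounds $\angle(E^s(x,\omega),E^u(x,\omega))$ for the \emph{same} $\omega$, and says nothing about $E^s(\tilde x,\tilde\omega)$ versus $E^u(\tilde x,\sigma^n(\ho))$. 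The tool that actually does this job is exactly $K_{\varepsilon}$ plus Lemma~\ref{anglecontrol}, which is what the paper uses.
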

\begin{proof}
    The unstable plane $W^u(\tilde{x},\sigma^n(\ho))$ (where $n = \lfloor T + k_0 \rfloor$) is fixed at this point for some $T_{big}$ choice large enough. Only $W^s(\tilde{x},\tilde{\omega})$ changes with $\tilde{\omega}$.

    The tangent planes $E^s(\tilde{x},\omega)$ and $E^u(\tilde{x},\sigma^n(\ho))$ approximate $W^s(\tilde{x},\tilde{\omega})$ and $W^u(\tilde{x},\sigma^n(\ho))$ respectively up to error $\xi = \mathfrak{r}^2M_r^3H$ on a ball of radius $\mathfrak{r}$ using Lemma \ref{hun}. We need to take $\mathfrak{r} = 2\alpha^u$ as this is the scale we view things on as explained in Section \ref{sec:blue}. We can assume we have taken $T_{big}$ large enough such that $4\xi\ll \varepsilon$ (see the definition of $K_{\varepsilon}$ in Lemma \ref{Kdelta'} and note that $(\tilde{x},\sigma^n(\ho))\in K_1' \subset K_{\varepsilon}$ and $(\tilde{x},\tilde{\omega})\in K_1\subset K_{\varepsilon}$). 
    
    Then we use Lemma \ref{anglecontrol} to say we can pick $\tilde{\omega}$ such that $E^s(\tilde{x},\omega)$ and $E^u(\tilde{x},\sigma^n(\ho))$ are at least $4\xi$ apart and so $W^s(\tilde{x},\tilde{\omega})$ is bounded away from $W^u(\tilde{x},\sigma^n(\ho))$ on the appropriate scale, i.e. a ball of size $\mathfrak{r}$.
\end{proof}

Let $K_2$ and $K_2'$ be as defined in Section \ref{sec:lusin} (they have measure $1-\epsilon_2$ and $1-\epsilon_2'$ resp.). As before in Section \ref{sec:blue}, when we flow from the points $(\tilde{x},\tilde{\omega},k_1)$ and $(\tilde{y},\tilde{\omega},k_1)$ to the points $(x,\sigma^{n'}\tilde{\omega},k_2)$ and $(y,\sigma^{n'}\tilde{\omega},k_2)$ (where $n' = \lfloor  k_1-m \rfloor$, $k_2 = k_1-m \mod 1$), we need to land in $K_2' \times [0,1[ \subset Y$. In this case we additionally need that $(x,\sigma^{n'}\tilde{\omega}^-)$ and $(y,\sigma^{n'}\tilde{\omega}^-)$ are a good set we called $B' \subset X\times \Omega^-$ (see Section \ref{sec:lusin}). 

 Note that the measures on $\Omega^+$ and $\Omega^-$ are both $\mu^{\bbN}$ in the obvious way. 

\begin{lemma}\label{B'}
    The set $B'$ defined above has $\nu\times \mu^{\bbN}$-measure $1-\sqrt{\epsilon_2}$ in $X \times \Omega^-$.
\end{lemma}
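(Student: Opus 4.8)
The plan is to prove Lemma \ref{B'} exactly as Lemma \ref{B} (and, in the same spirit, Lemmas \ref{41}, \ref{42} and \ref{G!}) were proven, namely by a Fubini argument on a product measure. The first step is to record that we may work with $m$ as a genuine product: since we are in the setting $d_+=1$, Lemma \ref{0.6} gives that $\nu$ is $\Gamma_{\mu}$-invariant, so by Remark \ref{nuinvar} we have $m = \nu\times\mu^{\bbZ}$ on $Y$. Under the decomposition $\Omega = \Omega^-\times\Omega^+$ the measure $\mu^{\bbZ}$ is $\mu^{\bbN}$ on $\Omega^-$ times $\mu^{\bbN}$ on $\Omega^+$, so $m = \nu\times\mu^{\bbN}\times\mu^{\bbN}$ on $X\times\Omega^-\times\Omega^+$. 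This is exactly what lets us integrate out the $\Omega^+$-coordinate separately.

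Next, for $(x,\omega^-)\in X\times\Omega^-$ set $W_{(x,\omega^-)} \defeq \{\omega^+\in\Omega^+ : (x,\omega^-,\omega^+)\in K_2^c\}$. By the definition of $B'$, its complement is $(B')^c = \{(x,\omega^-) : \mu^{\bbN}(W_{(x,\omega^-)})\geq \sqrt{\epsilon_2}\}$. Put $E \defeq \{(x,\omega^-,\omega^+) : (x,\omega^-)\in(B')^c,\ (x,\omega^-,\omega^+)\in K_2^c\}\subset K_2^c$. Applying Fubini to the product measure and using $m(K_2)>1-\epsilon_2$ yields
\begin{align*}
\epsilon_2 > m(K_2^c) \geq m(E) = \int_{(B')^c}\mu^{\bbN}(W_{(x,\omega^-)})\, d(\nu\times\mu^{\bbN}) \geq \sqrt{\epsilon_2}\cdot(\nu\times\mu^{\bbN})\big((B')^c\big),
\end{align*}
so $(\nu\times\mu^{\bbN})((B')^c)\leq \sqrt{\epsilon_2}$, i.e. $(\nu\times\mu^{\bbN})(B')>1-\sqrt{\epsilon_2}$, which is the assertion.

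There is no genuine obstacle here; the argument is a direct copy of the proof of Lemma \ref{B} with $K_2$, $\Omega^+$ and $\sqrt{\epsilon_2}$ in place of $M$, $\Omega$ and $\theta$. The only point requiring a word of care is the bookkeeping of which factor each measure lives on and the observation that the product structure of $m$ is available precisely because $\nu$ is invariant in the $d_+=1$ case; accordingly I would keep the write-up short, spelling out the $W_{(x,\omega^-)}$/$E$/Fubini chain above and otherwise referring to the proof of Lemma \ref{B}.
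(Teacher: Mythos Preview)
Your proof is correct and follows exactly the same Fubini argument the paper intends (the paper's own proof just reads ``identical to the argument of Lemma \ref{B}''). Your explicit invocation of Lemma \ref{0.6} and Remark \ref{nuinvar} to justify the product structure $m=\nu\times\mu^{\bbN}\times\mu^{\bbN}$ on $X\times\Omega^-\times\Omega^+$ is a useful clarification that the paper leaves implicit.
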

\begin{proof}
    The proof is identical to the argument of Lemma \ref{B}.
\end{proof}

Let $\tilde{K}_2 = (B'\times\Omega^+) \cap K_2'$ be as in Section \ref{sec:lusin}.

\begin{lemma}\label{m2}
    We can choose $m$ such that $F^{-m}(\tilde{x},\tilde{\omega},k_1)$ and $F^{-m}(\tilde{y},\tilde{\omega},k_1)$ belong to the set $\tilde{K}_2 \times [0,1[$.
\end{lemma}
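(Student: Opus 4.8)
The plan is to prove Lemma \ref{m2} by the same Birkhoff-type argument used in Lemma \ref{bigint}, now run with the backwards `standard' flow. First I would observe that the starting points for this step, $(\tilde{x},\tilde{\omega},k_1)$ and $(\tilde{y},\tilde{\omega},k_1)$, were obtained in Section \ref{sec:blue} as $F^T(\hx,\ho,k_0)$ and $F^T(\hy,\ho,k_0)$ after changing the $\Omega$-component to $\tilde{\omega}$; and crucially, $\tilde{\omega}$ was chosen so that these two points lie not only in $K_1\times[0,1[$ but also in $E^c_2$, the Birkhoff-good set defined in Section \ref{sec:lusin}. (Recall $\tilde{\omega}$ ranges over a large positive-measure subset of $\Omega$, namely $V_{(\tilde{x},k_1)}\cap V_{(\tilde{y},k_1)}$, and that $V_{(\cdot,k_1)}$ is defined precisely so that membership forces the point into $M=(K_1\times[0,1[)\cap E^c_2\cap E^c_3$.) So both $F^T(\hx,\ho,k_0)$ and $F^T(\hy,\ho,k_0)$ with the new fiber $\tilde{\omega}$ belong to $E^c_2$.

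Next I would invoke the defining property of $E^c_2$: for any point in it, and for all $T'>S$ (the constant from the application of Lemma \ref{birkhoff} used to build $E^c_2$, with $K=\tilde{K}_2\times[0,1[$ and $\delta=\alpha_2\hat m(\tilde K_2)$), the time-$T'$ Birkhoff average of $\mathds 1_{\tilde K_2\times[0,1[}$ along the flow is within $\delta$ of $\hat m(\tilde K_2\times[0,1[)=\gamma_2$. Applying this to the backward flow $F^{-t}$ at both points $(\tilde x,\tilde\omega,k_1)$ and $(\tilde y,\tilde\omega,k_1)$ and intersecting the two resulting sets of good times exactly as in Equation \eqref{inter}: for $m'$ large the Lebesgue measure of
\[
\{t\in[0,m']: F^{-t}(\tilde x,\tilde\omega,k_1)\in \tilde K_2\times[0,1[\}\cap\{t\in[0,m']: F^{-t}(\tilde y,\tilde\omega,k_1)\in \tilde K_2\times[0,1[\}
\]
is at least $2(\gamma_2-\delta)m'-m'=(2\gamma_2(1-\alpha_2)-1)m'=\beta_2 m'>0$, using the choice $\beta_2=2\gamma_2(1-\alpha_2)-1>\tfrac{1}{200L^4}>0$ from Section \ref{sec:lusin}. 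Hence there is a positive-Lebesgue-measure set of $m$ for which $F^{-m}(\tilde x,\tilde\omega,k_1)$ and $F^{-m}(\tilde y,\tilde\omega,k_1)$ simultaneously land in $\tilde K_2\times[0,1[$; and since $\tilde K_2=(B'\times\Omega^+)\cap K_2'$ (with $\nu\times\mu^{\mathbb N}(B')>1-\sqrt{\epsilon_2}$ by Lemma \ref{B'} and $m(K_2')>1-\epsilon_2'$), this is exactly the assertion that the $X\times\Omega$-components lie in $K_2'$ and the $X\times\Omega^-$-components lie in $B'$, which is what the lemma claims.

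The one subtlety — and the main thing to get right rather than a true obstacle — is that the relevant window of $m$ is constrained: in the next section we need $m$ to be small compared to $\ell$ but to still grow as $T_{big}\to\infty$ (this is the $\alpha_1,\alpha_2$ and $m$-scale bookkeeping set up in Section \ref{sec:lusin}), so I must check that the Birkhoff-good set of $m$'s has positive measure inside that prescribed window $[m_1,m_2]$, not merely inside $[0,m']$. This is handled exactly as in Lemma \ref{bigint} and Remark \ref{half}: one takes $m_2$ with $m_2(2\gamma_2(1-\alpha_2)-1)=m_2\beta_2\gg m_1$, which is possible because $\beta_2>0$ is a fixed positive constant independent of all the large parameters, so the interval-intersection bound $\beta_2 m_2>m_1=\mathrm{Leb}([0,m_1])$ forces a positive-measure choice of $m\in[m_1,m_2]$. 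I would also remark that all the $\epsilon$-costs ($\epsilon_2,\epsilon_2',\tilde\epsilon_2$) were fixed in Section \ref{sec:lusin} small enough that $\gamma_2>3/4$, which (as in Remark \ref{half}) is what makes $2(\gamma_2-\delta)-1>0$ and the whole interval-intersection argument go through. This completes the proof.
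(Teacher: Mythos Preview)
Your proposal is correct and follows essentially the same approach as the paper: both use that $(\tilde{x},\tilde{\omega},k_1),(\tilde{y},\tilde{\omega},k_1)\in M\subset E_2^c$ by the choice of $\tilde{\omega}$ in Section~\ref{sec:blue}, then rerun the Birkhoff/Lemma~\ref{bigint} intersection argument with $K=\tilde{K}_2\times[0,1[$ on the interval $[m_1,m_2]$, using $\gamma_2>3/4$ to guarantee a positive-measure set of good $m$. Your write-up simply unpacks the computation (the $\beta_2 m_2>m_1$ bound) that the paper leaves implicit when it says ``run the same argument as in Lemma~\ref{bigint}.''
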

\begin{proof}
Knowing that $B'$ has $\nu \times \mu^{\bbN}$-measure at least $(1-\sqrt{\epsilon_2})$, we have that $B'\times \Omega^+$ has $\nu\times \mu^{\bbZ}$-measure at least $(1-\sqrt{\epsilon_2})$. We asked in Section \ref{sec:lusin} that $K_2'$ be large enough ($\epsilon_2'>0$ small enough) such that $\tilde{K}_2 = (B'\times\Omega^+) \cap K_2'$ has $\nu\times \mu^{\bbZ}$-measure like $(1-\tilde{\epsilon}_2)$ for some $\tilde{\epsilon}_2$ small such that 
\begin{align}
    1-\tilde{\epsilon_2}>3/4.
\end{align}

In Section \ref{sec:blue} we showed that we could land $F^T(\hx,\ho,k_0)$ and $F^T(\hy,\ho,k_0)$ in a set $\tilde{K}_1$ where $\tilde{\omega}$ could be picked such that the resulting points $(\tilde{x},\tilde{\omega}, k_1)$ and $(\tilde{y},\tilde{\omega},k_1)$ (where $k_1 = F^T(k_0)$) belong to the set $M = (K_1\times [0,1[)\cap E_2^c\cap E_3^c \subset Z$. The set $E_2^c$ as defined in Section \ref{sec:lusin} is where we need to start from in order to use Lemma \ref{birkhoff} to choose $m$ such that we land in $\tilde{K}_2\times [0,1[$. We run the same argument as in Lemma \ref{bigint} on an interval $[m_1,m_2]$ (for $m_1$ sufficiently large), starting from the points $(\tilde{x},\tilde{\omega}, k_1)$ and $(\tilde{x},\tilde{\omega},k_1)$ in $E_2^c$ and taking $K = \tilde{K}_2 \times [0,1[$ (Note that $\epsilon$ and $\delta$ were fixed along with $E_2^c$ in Section \ref{sec:lusin}). Then we get a positive measure subset of $m\in [m_1,m_2]$ such that we land the $X\times \Omega$ -components of $F^{-m}(\tilde{x},\tilde{\omega},k_1)$ and $F^{-m}(\tilde{y},\tilde{\omega},k_1)$ in the set $\tilde{K}_2$. 
\end{proof}

We will be more explicit about the proportion of good choices of $m$ we can choose inside some interval $[m_1,m_2]$ in Lemma \ref{dd}.

We have some conditions on $m$, firstly, the minimum value of $m$ in the interval we need, which we call $m_1$, must be bigger than the $S$ required from Lemma \ref{birkhoff} corresponding to the set $E^c_2$. Additionally, the maximum value of $m$ for the interval, called $m_2$, is such that we do not surpass a certain amount of the remaining distance to $\epsilon_f$. Additionally, of course the error between the unstable planes that we worked for should not be significantly changed by flowing by $m$. We will discuss this more later, but in Lyapunov charts.

Lastly, we want the relationship between $m_1$ and $m_2$ to be $m_2>2m_1$. Ensuring that $[m_1,m_2]$ is at least $50\%$ of the interval $[0,m_2]$ allows us to ensure that the `bad' set is still small;  We will use this in Section \ref{sec:pink}. We will also demonstrate another property in our choice of $m$ in Section \ref{sec:pink}, i.e. that $d_1(m_1)\geq (1+\eta)d_1(m_2)$ and $d_2(m_1)\geq (1+\eta)d_2(m_2)$; see Lemma \ref{dd}.

Let $C_b$ and $i$ be as in Section \ref{sec:lusin}/Choice \ref{thetabds}, and let 
\begin{align}\label{mchoice}
m_1 \defeq \frac{C_b}{10^{i+J}}T_{big}\theta_2, \ \ \ m_2 \defeq \frac{C_b}{10^{i+2}}T_{big}\theta_2,
\end{align}
where $J$ is chosen such that $m_2>2m_1$ and  $|\frac{C_b}{10^{i+J}}(\lambda^--\epsilon)| < |\frac{C_b}{10^{i+2}}(\lambda^-+\epsilon)| $. Also start take $T_{big}$ large enough so that $m_1>S$.
\begin{lemma}\label{bdnded1}
    For $m\in [m_1,m_2]$, the distance between $\tilde{x}$ and $\tilde{y}$ does not surpass $\epsilon_f$ after flowing backwards by $m$ under $\tilde{\omega}$.
\end{lemma}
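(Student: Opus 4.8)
The plan is to control the distance between $\tilde{x}$ and $\tilde{y}$ as we flow backwards by $m\in[m_1,m_2]$ using the uniform Lipschitz bound $\calL$ on elements of $\supp(\mu)$ together with the fact that $m_2$ has been chosen to be a very small fraction of $T_{big}\theta_2$. First I would recall the setup: after flowing forwards by $T$ from the starting points, we have $d_X(\tilde{x},\tilde{z})=\alpha^s$ and $d_X(\tilde{y},\tilde{z})=\alpha^u$, and by the triangle inequality $d_X(\tilde{x},\tilde{y})\leq \alpha^s+\alpha^u$. Because $T\in[T_{big}(1-\theta_1),T_{big}(1-\theta_2)]$ and $d_X(\tilde{y},\tilde{z})=\epsilon_f$ would require flowing all the way to $T_{big}$, the choice of $\theta_1,\theta_2$ in Choice \ref{thetabds} ensures that $\alpha^u$ and $\alpha^s$ are both strictly (and in fact comfortably) less than $\epsilon_f$; this was already quantified in Lemmas \ref{Tshort} and \ref{sshort}, where we saw $\alpha^u\leq ke^{-(\lambda^++\delta)T_{big}\theta}\epsilon_f$ with $\theta\in[\theta_2,\theta_1]$.

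Next I would estimate how much $d_X(\tilde{x},\tilde{y})$ can grow when we apply $n'=\lfloor k_1-m\rfloor$ inverse diffeomorphisms (with $m\leq m_2$). Since each $f\in\supp(\mu)$ and each $f^{-1}$ has Lipschitz constant at most $\calL$, flowing backwards by $m$ expands the distance by a factor of at most $\calL^{\,\lceil m\rceil}\leq \calL^{\,m_2+1}$. Writing $m_2=\frac{C_b}{10^{i+2}}T_{big}\theta_2$ and using $C_b=\frac{\lambda^+-\delta}{2\ln\calL}$, the exponent $m_2\ln\calL=\frac{\lambda^+-\delta}{2\cdot 10^{i+2}}T_{big}\theta_2$ is a tiny multiple of $T_{big}\theta_2$, whereas the contraction in $\alpha^u$ (and $\alpha^s$) from the extra flow of length $T_{big}\theta$ already carries a factor like $e^{-(\lambda^++\delta)T_{big}\theta_2}$. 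Comparing the two, $\calL^{m_2}\cdot(\alpha^s+\alpha^u)$ is bounded by $\epsilon_f$ times $\calL\cdot k\cdot\big(e^{(\lambda^-+\delta)(1-\theta_2)\alpha}+e^{-(\lambda^++\delta)\theta_2}\big)\cdot\calL^{\,m_2}$, and the quantity $\frac{\calL^{\frac{C_b}{10^{i+2}}\theta_2}}{e^{-(\lambda^-+\delta)(1-\theta_2)\alpha}}<1$ from Choice \ref{thetabds} (together with a further shrinking obtained by taking $T_{big}$ large) is exactly what forces the product to stay below $\epsilon_f$. So the argument is: bound $d_X(x,y)\leq \calL^{m_2+1}(\alpha^s+\alpha^u)$, substitute the explicit bounds for $\alpha^s,\alpha^u$ in terms of $T_{big}$ and $\theta_2$, and invoke the inequalities fixed in Choice \ref{thetabds} plus largeness of $T_{big}$ to conclude the right-hand side is $<\epsilon_f$.

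The main obstacle is purely bookkeeping: making sure the exponential rates line up so that the contraction gained from the "unused" portion $T_{big}\theta_2$ of the flow genuinely dominates the crude Lipschitz expansion over a window of length $m_2$. This is why $m_2$ was defined with the factor $\frac{C_b}{10^{i+2}}$ rather than just some constant times $\theta_2 T_{big}$ — the hierarchy $m\ll T_{big}\theta_2 \ll T_{big}$ must be respected, and the claim $\frac{C_b}{10^{i+2}}\ll(\lambda^++\delta)$ from Section \ref{sec:lusin} is used here. I would isolate the comparison of rates as a short claim (analogous to \textbf{Claim \ref{claim1}} referenced after this lemma, cf. Equation \eqref{theta1bd}), prove it by taking logarithms and using $\lambda^-<0<\lambda^+$, and then assemble the final inequality. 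No genuinely new idea beyond the careful choice of constants already made in Section \ref{sec:lusin} and Choice \ref{thetabds} is needed.
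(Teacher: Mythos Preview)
Your proposal is correct and follows essentially the same approach as the paper: both arguments use the crude Lipschitz bound $\calL$ on the backwards flow together with the hierarchy $m_2=\tfrac{C_b}{10^{i+2}}T_{big}\theta_2\ll T_{big}\theta_2$, so that the expansion over $m_2$ steps cannot undo the contraction gained from the unused portion $T_{big}\theta$ of the forward flow. The only organizational difference is that the paper introduces $m_{big}$ (the first backwards time at which the distance reaches $\epsilon_f$), derives the lower bound $m_{big}\geq \tfrac{\lambda^+-\delta}{\ln\calL}T_{big}\theta-\tfrac{\ln C_s}{\ln\calL}$ from $\bigl(\tfrac{1}{\calL}\bigr)^{m_{big}}\epsilon_f\leq d_X(\tilde{x},\tilde{y})\leq C_s\epsilon_f e^{-(\lambda^+-\delta)T_{big}\theta}$, and concludes $m_2<m_{big}$; you instead bound $d_X(x,y)\leq \calL^{m_2+1}(\alpha^s+\alpha^u)$ directly and show it is $<\epsilon_f$. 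These are logically equivalent rearrangements of the same inequality. One small remark: the inequality you cite from Choice~\ref{thetabds}, namely $\tfrac{\calL^{\frac{C_b}{10^{i+2}}\theta_2}}{e^{-(\lambda^-+\delta)(1-\theta_2)\alpha}}<1$, is the one the paper uses later in Lemma~\ref{bdnded2} to control the $\alpha^s$ term; for Lemma~\ref{bdnded1} the paper invokes the \emph{first} inequality of Choice~\ref{thetabds} (via Claim~\ref{claim1}) to fix $C_s$, and the dominant $\alpha^u$ comparison reduces simply to $\tfrac{C_b}{10^{i+2}}\ll 1$, which is immediate from the definition of $C_b$.
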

\begin{proof}
    
Lemma \ref{apartm} bounded $E^s(\tilde{x},\omega)$ away from $E^u(\tilde{x},\sigma^n(\ho))$, so we see that the stable distance (which is the one growing because we are flowing backwards by $m$) is strictly less than the unstable distance with respect to $\sigma^n(\ho)$. After flowing by $T$, $d_X(\tilde{x},\tilde{y})$ is mostly in the unstable direction, but now with respect to this new $\omega$ it has some stable and unstable component by Lemma \ref{apartm}. Let $m_{big}$ be the amount of time we flow backwards with respect to $\omega$ until this distance is now $\epsilon_f$. We have
\begin{align}\label{tric2}
    \left(\frac{1}{\calL}\right)^{m_{big}}\epsilon_f \leq d_X(\tilde{x},\tilde{y}).
\end{align}

We can also say that 
\begin{align}\label{Cseq}
    d_X(\tilde{x},\tilde{y}) \leq d_X(\tilde{y},\tilde{z})+d_X(\tilde{x},\tilde{z}) \leq \epsilon_f e^{-(\lambda^+-\delta)(T_{big}-T)} + d_X(\hx,\hz)e^{(\lambda^-+\delta)T}.
\end{align}

We simplify Equation \eqref{Cseq} and note that we can find $C_s>0$ such that \begin{align}\label{tric}
    \epsilon_f e^{-(\lambda^+-\delta)(T_{big}-T)} + d_X(\hx,\hz)e^{(\lambda^-+\delta)T} \leq C_s\epsilon_fe^{-(\lambda^+-\delta)(T_{big}-T)}.
\end{align}

\begin{claim}\label{claim1}
    We can fix $C_s$ for all $T_{big}$ because of our choice of $\theta_1$ in Choice \ref{thetabds} in Section \ref{sec:blue}
\end{claim}
\begin{proof}
    We see that for $C_s$ to satisfy Equation \eqref{tric}, we require it to satisfy 
\begin{align} \label{theta1bd}
    C_s\geq 1+\frac{d_X(\hx,\hz)}{\epsilon_f}\left(\frac{e^{(\lambda^-+\delta)(1-\theta)}}{e^{-(\lambda^+-\delta)\theta}} \right)^{T_{big}}.
\end{align}
The right-hand most term of Equation \eqref{theta1bd} goes to 0 as $T_{big}\to \infty$, so we can fix $C_s$ for all $T_{big}$ and $\rho$.
\end{proof}

Combining the inequalities \eqref{tric2} and \eqref{tric}, we get:
\begin{align}
    \left(\frac{1}{\calL} \right)^{m_{big}}\epsilon_f \leq C_s\epsilon_f e^{-(\lambda^+-\delta)(T_{big}-T)}
\end{align}
We find that then
\begin{align}
    m_{big}&\geq \frac{(\lambda^+-\delta)}{\ln(\calL)}(T_{big}-T) - \frac{\ln(C_s)}{\ln(\calL)}\\
    &=\frac{(\lambda^+-\delta)}{\ln(\calL)}(T_{big}\theta) - \frac{\ln(C_s)}{\ln(\calL)}.
\end{align}
Since $\frac{\ln(C_s)}{\ln(\calL)}$ is fixed and $$m_2 = \frac{C_b}{10^{i+2}}T_{big}\theta_2 = \frac{(\lambda^+-\delta)}{2\ln(\calL)} \frac{1}{10^{i+2}}T_{big}\theta_2,$$ where $\theta_2<\theta$ we have that for $T_{big}$ large enough, $m_2<m_{big}$. This implies that the distance between $\tilde{x}$ and $\tilde{y}$ does not surpass $\epsilon_f$ after flowing backwards by $m<\leq m_2$. This completes the proof.
\end{proof}

Let $d_m$ be the distance between $F^{-m}_{\tilde{\omega}}(W^u(F^n(\hx,\ho)))$ and $F^{-m}_{\tilde{\omega}}(W^u(F^n(\hy,\ho)))$ ($n= \lfloor T+k_0\rfloor$).  Let $\delta(m) = d_X(F^{-m}_{\tilde{\omega}}(\tilde{x}),F^{-m}_{\tilde{\omega}}(\tilde{y}))$ (i.e. represents the distance between $F^{-m}_{\tilde{\omega}}(U^+(F^n(\hx,\ho)))$ and $F^{-m}_{\tilde{\omega}}(U^+(F^n(\hy,\ho)))$ by Claim \ref{doo}).

\begin{lemma}\label{bdnded2}
    The distance $d_m$ goes to zero as $T_{big},m\to \infty$. 
    
\end{lemma}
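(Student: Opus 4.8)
The plan is to make the convergence from Corollary \ref{finals} quantitative and then to observe that flowing backwards by $m$ can spoil it by at most a controlled exponential factor, which the parameter choices of Choice \ref{thetabds} are designed to absorb. Set $D(T_{big}):=d\big(W^u(\tilde x,\sigma^n(\ho)),\,W^u(\tilde y,\sigma^n(\ho))\big)$, the distance between the two unstable leaves before flowing by $m$. From Corollary \ref{finals} and the estimates in its proof (via Lemmas \ref{Tshort}, \ref{sshort}, \ref{hun1}, \ref{hun}) we have, on the scale $\mathfrak{r}=2\alpha^u$ on which we work, $D(T_{big})\leq\alpha^s+2\xi+M_{exp}C_{\alpha}(\alpha^s)^{\alpha}$ with $\alpha^s\leq k\,e^{n(\lambda^-+\delta)}d_X(\hx,\hz)$, $\alpha^u\leq k\,\epsilon_f e^{-(\lambda^+-\delta)(T_{big}-T)}$, $\xi\leq(2\alpha^u)^2M_r^3H$, and $\alpha,C_{\alpha}$ the uniform Hölder data of Lemma \ref{hun1}. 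Since $n\geq T_{big}(1-\theta_1)-1$, $T_{big}-T\geq T_{big}\theta_2$ and $d_X(\hx,\hz)\leq 2\rho$, each summand is bounded by an explicit exponentially decaying function of $T_{big}$; write $D(T_{big})\leq C'e^{-\gamma_0 T_{big}}$ for suitable $C',\gamma_0>0$ depending only on the fixed data.

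Next, $F^{-m}_{\tilde\omega}$ (the standard flow) is a composition of at most $m+2$ maps, each an element of $\supp(\mu)$ or the inverse of one, all of Lipschitz constant at most $\calL$ (as in Section \ref{sec:not5}); applying it to the relevant local pieces of the two leaves multiplies the distance between them by at most $\calL^{m+2}$. Hence, using $m\leq m_2=\tfrac{C_b}{10^{i+2}}T_{big}\theta_2$ from \eqref{mchoice},
\begin{align}\label{prop-dm-bd}
d_m\ \leq\ \calL^{m+2}\,D(T_{big})\ \leq\ \calL^{2}\,\calL^{\frac{C_b}{10^{i+2}}T_{big}\theta_2}\,C'e^{-\gamma_0 T_{big}}.
\end{align}
It remains to see that the exponent here is negative as $T_{big}\to\infty$; inspecting the three terms of the bound on $D(T_{big})$ separately, this reduces for the $(\alpha^s)^{\alpha}$-term to $\calL^{\frac{C_b}{10^{i+2}}\theta_2}/e^{-(\lambda^-+\delta)(1-\theta_1)\alpha}<1$, for the $\alpha^s$-term to the same inequality without the factor $\alpha<1$, and for the $\xi$-term to $\tfrac{C_b}{10^{i+2}}\ln\calL<2(\lambda^+-\delta)$ — all guaranteed by the constraints fixed in Choice \ref{thetabds} and on $i$ in Section \ref{sec:lusin} (taking $i$ larger if needed, using $-(\lambda^-+\delta)>0$, $\alpha>0$ and $C_b=\tfrac{\lambda^+-\delta}{2\ln\calL}$). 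Plugging these into \eqref{prop-dm-bd} gives $d_m\to0$ as $T_{big},m\to\infty$.

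The only real content is the bookkeeping in this last step: one must ensure that the Lipschitz blow-up $\calL^m$ incurred by flowing backwards by $m$ is strictly dominated by the exponential contraction of the inter-leaf distance purchased by flowing forwards by $T$. This is exactly why $m$ is forced to be only a tiny fraction ($\sim\tfrac{C_b}{10^{i+2}}\theta_2$) of $T_{big}$ and why the auxiliary inequalities relating $\theta_1,\theta_2,i$ and $\alpha$ were imposed in Choice \ref{thetabds} and Section \ref{sec:lusin}; once those are in force the estimate is routine.
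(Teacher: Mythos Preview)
Your proof is correct and follows essentially the same route as the paper: bound the pre-flow inter-leaf distance by $\alpha^s+2\xi+M_{exp}C_\alpha(\alpha^s)^\alpha$ via Corollary~\ref{finals}, apply the crude Lipschitz blow-up $\calL^{m}$ incurred by flowing backwards, and then check term-by-term that the exponential decay in $T_{big}$ beats the blow-up using the constraints of Choice~\ref{thetabds} and the definition of $C_b,i$. The paper frames the second step as ``in the worst case the error is entirely in the stable direction for $\tilde\omega$'' rather than invoking Lipschitz directly, but the resulting estimate is identical.
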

\begin{proof}
 
We computed that the distance between $W^u(F^n(\hx,\ho))$ and $W^u(F^n(\hy,\ho))$ as $$\calD\defeq (\alpha^s+2\xi + M_{exp}d_{Gr}(E^u(\tilde{x},\sigma^n(\ho)),E^u(\tilde{z},\sigma^n(\ho))),$$ and showed it goes to zero as $T_{big}\to \infty$ in Lemma \ref{finals}.

The largest $m$ could be is $\frac{C_b}{10^{i+2}}T_{big}\theta_2$. Originally this error between the unstable planes that we computed was mostly in the stable direction for $\sigma^n(\ho)$. We have asked that $\tilde{\omega}$ be chosen such that the distance between has now stable and unstable components (see Lemma \ref{anglecontrol}).

Say in the worst case scenario that the distance $\calD$ between unstable planes that we measured is entirely in the stable direction for $\tilde{\omega}$. Let $\calD_m$ be the distance between the images of $W^u(F^n(\hx,\ho))$ and $W^u(F^n(\hy,\ho))$ after flowing backwards by $m$, then
\begin{align*}
    \calD_m &\leq \calL^{\frac{C_b}{10^{i+2}}T_{big}\theta_2}(\alpha^s+2\xi + M_{exp}d_{Gr}(E^u(\tilde{x},\sigma^n(\ho)),E^u(\tilde{z},\sigma^n(\ho)))    
\end{align*}

We deal with each term in $\calD$ one at a time.
\begin{align}
     \calL^{\frac{C_b}{10^{i+2}}T_{big}\theta_2}\alpha^s\leq \frac{\calL^{\frac{C_b}{10^{i+2}}T_{big}\theta_2}}{e^{-(\lambda^-+\delta)T_{big}(1-\theta)}} d_X(\hx,\hz) = \frac{\calL^{\frac{C_b}{10^{i+2}}T_{big}\theta_2}}{e^{(-(\lambda^-+\delta)(1-\theta))T_{big}}} d_X(\hx,\hz)
\end{align}

 where $\theta \in [\theta_2,\theta_1]$ because we used Theorem \ref{locstablemfld} for $T$. Recall from Choice \ref{thetabds} that
 \begin{align}\label{theta2bd}
     \frac{\calL^{\frac{C_b}{10^{i+2}\theta_2}}}{e^{-(\lambda^-+\delta)(1-\theta_2)}}<\frac{\calL^{\frac{C_b}{10^{i+2}\theta_2}}}{e^{-(\lambda^-+\delta)(1-\theta_2)\alpha}}<1,
 \end{align}
 So as $T_{big}\to \infty$, this quantity goes to zero.

\begin{align}\label{186}
    \calL^{\frac{C_b}{10^{i+2}}T_{big}\theta_2}2\xi \leq \calL^{\frac{C_b}{10^{i+2}}T_{big}\theta_2} k\epsilon_fe^{-(\lambda^++\delta)T_{big}\theta}
\end{align}
but note that by choices made in Section \ref{sec:lusin} that $\frac{\calL^{\frac{C_b}{10^{i+2}}}}{e^{\lambda^++\delta}}<1$ and so this goes to zero as $T_{big},m\to \infty$.

Lastly, 
\begin{align}
    \calL^{\frac{C_b}{10^{i+2}\theta_2}}M_{exp}d_{Gr}(E^u(\tilde{x},\sigma^n(\ho)),E^u(\tilde{z},\sigma^n(\ho))) &\leq \calL^{\frac{C_b}{10^{i+2}\theta_2}}M_{exp} (\alpha^s)^{\alpha}\\
    &\leq M_{exp}\frac{\calL^{\frac{C_b}{10^{i+2}}T_{big}\theta_2}}{e^{(-(\lambda^-+\delta)(1-\theta)\alpha)T_{big}}} d_X(\hx,\hz)^{\alpha}. \label{188}
\end{align}
Equation \eqref{theta2bd} then gives us that this goes to zero as $T_{big},m\to \infty$. Then $\calD_m$ goes to zero as $m\to \infty.$ This completes the proof.
 \end{proof}

Let $\tilde{D}$ be the distance (in ambient coordinates) between $\tilde{x}$ and $\tilde{y}$. Note that by construction, this distance is still less than $q$ and that the set $M$ is contained in $\Lambda_{loc}$ as defined in \ref{sec:lusin}. The points $(\tilde{x}, \tilde{\omega},k_1)$ and $(\tilde{y},\tilde{\omega},k_1)$ belong to $M$. Additionally, the distance $\tilde{D}$ between $\tilde{x}$ and $\tilde{y}$ is less than $k_U$ and hence they live in each other's Lyapunov chart domain.

Originally, the distance $\tilde{D}$ was largely in the unstable direction for that choice of $\Omega$-component, i.e. $\sigma^n(\ho)$. Now that we have switched $\Omega$-components to $\tilde{\omega}$, generically, the distance $\tilde{D}$ has stable and unstable distance components with respect to $\tilde{\omega}$. Also we picked $\tilde{\omega}$ such that $\tilde{x}$ and $\tilde{y}$ are not on each other's stable or unstable manifolds. Also, since we are going backwards by $m$, the stables grow and the unstables contract. 

In Lyapunov charts, the images of the subspaces $E^u(\tilde{x},\tilde{\omega})$ and $E^s(\tilde{x},\tilde{\omega})$ form a coordinate axis, and $\tilde{y}$ is sent to some $v\in \bbR^4$ in these charts. We can decompose $v$ into a component in the $E^s$-direction and the $E^u$-direction denoted respectively $v_s$ and $v_u$, i.e. $v=v_s+v_u$. In these charts, we know exactly $v_s$ and $v_u$ grow and shrink resp. under flowing by $-m$. We denote $\alpha_s = |v_s|,$ $\alpha_u = |v_u|$.

Then, the distance between 0 and $v$ after flowing by $m$, is $D_m\defeq |\tilde{F}^{-m}_{(\tilde{x},\tilde{\omega}),k_1}(v)-\tilde{F}^{-m}_{(\tilde{x},\tilde{\omega}),k_1}(0)|$. Let $\alpha_u^m \defeq|\tilde{F}^{-m}_{(\tilde{x},\tilde{\omega})}(v_u)-\tilde{F}^{-m}_{(\tilde{x},\tilde{\omega})}(0)|$ and $\alpha_s^m \defeq |\tilde{F}^{-m}_{(\tilde{x},\tilde{\omega}),k_1}(v_s)-\tilde{F}^{-m}_{(\tilde{x},\tilde{\omega}),k_1}(0)|$.

Now we perform a calculation that we will need in Section \ref{sec:pink}.

\begin{lemma}\label{bdnded3}
    For $T_{big}$ large enough (hence $m$ large enough), we have $$D_{m+1} \leq \sqrt{1+k^2} (e^{-(\lambda^--\epsilon)}+\frac{1}{2}C)\alpha_s^m = (e^{-(\lambda^--\epsilon)}+\frac{1}{2}C)D_m.$$
    Further, we have $D_{m}<D_{m+1}$.
\end{lemma}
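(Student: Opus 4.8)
The plan is to work entirely inside the Lyapunov chart at $(\tilde{x},\tilde{\omega})$ and exploit two facts: the uniform hyperbolicity estimates in charts (Section \ref{sec:Lyapcharts}), which control the linear part $D_0\tilde{f}$ along $\mathbb{R}^s$ by a factor between $e^{\lambda^--\epsilon}$ and $e^{\lambda^-+\epsilon}$ per step (so the backwards contraction on the stable subspace expands distances by at most $e^{-(\lambda^--\epsilon)}$ per step), and the na\"ive second-derivative bound $|D^2\tilde{F}^t|\le C^n$ from the distortion lemma in Section \ref{sec:distortion}, where here $C$ is the fixed constant coming from $\supp(\mu)<\infty$ and compactness of $X$. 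The point $\tilde{y}$ is sent to $v=v_s+v_u\in\mathbb{R}^4$ with $|v_s|=\alpha_s$, $|v_u|=\alpha_u$, and since the distance between the local unstable manifolds $d_m\to 0$ as $T_{big},m\to\infty$ (Lemma \ref{bdnded2}), the unstable component $\alpha_u^m$ can be taken negligible compared to $\alpha_s^m$ for $T_{big}$ large; more precisely we can arrange $\alpha_u^m\le k\alpha_s^m$ using that $(\tilde{x},\tilde{\omega},k_1)\in M\subset\Lambda_{loc}$, which lets us replace $k(\tilde{x},\tilde{\omega})$ by the uniform constant $k$ from Section \ref{sec:stablemfld}. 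This is what produces the factor $\sqrt{1+k^2}$ relating $D_m$ to $\alpha_s^m$: $D_m=\sqrt{(\alpha_s^m)^2+(\alpha_u^m)^2}\le\sqrt{1+k^2}\,\alpha_s^m$, and also $D_m\ge\alpha_s^m$.

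The core estimate is to bound $D_{m+1}$ in terms of $\alpha_s^m$. I would apply the Taylor expansion of the chart flow, Equation \eqref{tay} or its upper-bound consequence \eqref{upper}, to one more step $\tilde{f}^{-1}$ applied at the point $\tilde{F}^{-m}_{(\tilde{x},\tilde{\omega}),k_1}(v)$. The linear part contributes at most $e^{-(\lambda^--\epsilon)}$ times the current stable-direction size plus (a negligible contribution from) the unstable-direction size, and the quadratic correction contributes at most $\tfrac12 C\,|D_m|^2$; since all the points involved have ambient separation well below $q<1$ and in fact below $1$, we have $|D_m|^2\le |D_m|$, so the quadratic term is bounded by $\tfrac12 C\,D_m$. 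Collecting, $D_{m+1}\le e^{-(\lambda^--\epsilon)}\alpha_s^m+\tfrac12 C\,D_m\le (e^{-(\lambda^--\epsilon)}+\tfrac12 C)\,D_m$, and feeding in $D_m\le\sqrt{1+k^2}\,\alpha_s^m$ gives the stated chain $D_{m+1}\le\sqrt{1+k^2}(e^{-(\lambda^--\epsilon)}+\tfrac12 C)\alpha_s^m=(e^{-(\lambda^--\epsilon)}+\tfrac12 C)D_m$. (I note the constant $L$ in Section \ref{sec:lusin} is defined precisely as $\max\{2\ln(e^{-(\lambda^--\epsilon)}+\tfrac12 C)+\lambda^++\epsilon,\,\tfrac{1}{\lambda^++\epsilon}\}$, so this is the quantity the whole argument is tuned to; Equation \eqref{Lipconst} is presumably where $2\ln(e^{-(\lambda^--\epsilon)}+\tfrac12 C)$ reappears.)

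For the last assertion $D_m<D_{m+1}$: flowing backwards by one more unit expands the stable component by a genuine factor of at least $e^{-(\lambda^-+\epsilon)}>1$ (lower bound in the chart hyperbolicity estimate for $\tilde{f}^{-1}$ on $\mathbb{R}^s$), while the unstable component only shrinks and the nonlinear correction is a lower-order $O(C D_m^2)=O(CD_m\cdot D_m)$ perturbation of this. Since $D_m\to 0$ as $T_{big}\to\infty$ (it is squeezed between $\alpha_s^m$ and $\sqrt{1+k^2}\,\alpha_s^m$, and $\alpha_s^m$ is bounded by the shrinking quantities of Lemma \ref{bdnded2} up to the relevant hyperbolicity factors), for $T_{big}$ large enough the multiplicative correction factor $(1-O(CD_m))$ stays above $1/e^{-(\lambda^-+\epsilon)}$, so the net effect is still strictly expanding, giving $D_{m+1}>D_m$. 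I would carry this out via the reverse-triangle-inequality lower bound \eqref{lower}: $D_{m+1}\ge\big|\,|D_0\tilde{f}^{-1}(\text{stable part})|-\tfrac12 C D_m^2\,\big|\ge e^{-(\lambda^-+\epsilon)}\alpha_s^m - \tfrac12 C D_m^2 > D_m$ once $D_m$ is small enough that $e^{-(\lambda^-+\epsilon)}-\tfrac12 C D_m>1$ (using $\alpha_s^m\ge D_m/\sqrt{1+k^2}$ if one wants to be careful, or simply $\alpha_s^m$ comparable to $D_m$).

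The main obstacle I anticipate is bookkeeping the unstable-direction contribution $\alpha_u^m$ correctly: a priori flowing backwards contracts it, but the Taylor remainder couples the $\mathbb{R}^u$ and $\mathbb{R}^s$ parts, so one must check that the cross-terms (quadratic in $v$, hence controlled by $|D_m|^2$) do not spoil either the upper bound or the strict-monotonicity lower bound. This is handled by the single inequality $|D_m|^2\le|D_m|$ (valid because $\tilde D<q<1$) and by taking $T_{big}$ large so that $D_m$ is as small as needed — both of which are already available, the first from the scale choice $\epsilon_f<q/4$ and the construction, the second from Lemma \ref{bdnded2} together with the uniform hyperbolicity factors relating $\alpha_s^m,\alpha_u^m$ to $d_m$ and $\tilde D$. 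A secondary point is making the replacement of the measurable constant $k(\tilde x,\tilde\omega)$ by the uniform $k$ legitimate, which follows from $(\tilde x,\tilde\omega,k_1)\in M\subset K_1\times[0,1[$ with $K_1\subset\Lambda_{loc}$ and the Lusin-set uniformization recalled after Proposition \ref{locstablemfld}.
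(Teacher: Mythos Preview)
Your upper-bound argument is in the right spirit (one-step Taylor expansion in charts plus the $C$-bound on the second derivative), but you have misidentified the constant $k$. The equality $\sqrt{1+k^2}\,\alpha_s^m = D_m$ in the statement forces $k=\alpha_u^m/\alpha_s^m$; the paper makes this explicit at the start of its proof (``Write $\alpha_u = k\alpha_s$ for some $k>0$. Note that this is dependent on the points\ldots''). It is \emph{not} the uniform $\Lambda_{loc}$-constant from Proposition~\ref{locstablemfld}. With the correct $k$ the paper bounds $\alpha_s^{m+1}$ and $\alpha_u^{m+1}$ separately and recombines by Pythagoras; your direct bound on $D_{m+1}$ can be made to work, but your intermediate line $D_{m+1}\le e^{-(\lambda^--\epsilon)}\alpha_s^m+\tfrac12 C D_m^2$ is not what Taylor gives (the linear part has norm $\le e^{-(\lambda^--\epsilon)}D_m$, not $e^{-(\lambda^--\epsilon)}\alpha_s^m$).

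The real gap is in $D_m<D_{m+1}$. Your argument rests on ``$D_m\to 0$ as $T_{big}\to\infty$,'' which is false: $D_m$ is the chart distance between $F^{-m}_{\tilde\omega}(\tilde x)$ and $F^{-m}_{\tilde\omega}(\tilde y)$, and backward flow \emph{expands} the stable component, so $D_m$ does not shrink --- it is only bounded (by Lemma~\ref{bdnded1}) below $M_r\epsilon_f$. You also invoke Lemma~\ref{bdnded2} to control $\alpha_s^m$, but that lemma controls $d_m$, the separation of the \emph{old} $\hat\omega$-unstable planes, which is a different quantity from either $\alpha_s^m$ or $\alpha_u^m$. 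What is actually needed is that the \emph{ratio} $\alpha_u^m/\alpha_s^m$ becomes small, i.e.\ that after flowing back by $m$ under $\tilde\omega$ the stable component dominates. The paper obtains this by a genuine computation: it first shows the full second-derivative term $\tfrac12 C^{|n'|}\alpha_s^2$ is dominated by the linear term $e^{-|n'|(\lambda^-+\epsilon)}\alpha_s$ (and similarly for $\alpha_u$), using the specific size of $m\in[m_1,m_2]$ relative to $T_{big}$ and the constants $C_b,i,J$ fixed in Choice~\ref{thetabds}, and then reduces $D_m<D_{m+1}$ to the inequality $(\alpha_s^m/\alpha_u^m)^2>\Psi$ for a fixed $\Psi$, which follows from the linearized estimates once $m$ (hence $T_{big}$) is large. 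Your sketch skips exactly this step, and without it the reverse-triangle lower bound does not give $D_{m+1}>D_m$ unless you already know $\sqrt{1+k^2}<e^{-(\lambda^-+\epsilon)}$, which is precisely the dominance of the stable component you have not established.
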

\begin{proof}
    
Write $\alpha_u = k\alpha_s$ for some $k>0$. Note that this is dependent on the points but it will not matter for the computation we will do.  In fact, by Pythagoras, $D_m =  \sqrt{1+k^2}\alpha_s^m$. 

We examine how this compares to $D_{m+1} =|\tilde{F}^{-(m+1)}_{(\tilde{x},\tilde{\omega}),k_1}(v)-\tilde{F}^{-(m+1)}_{(\tilde{x},\tilde{\omega}),k_1}(0)|$. Again by Pythagoras, $D_{m+1} = \sqrt{(\alpha_u^{m+1})^2 + (\alpha_s^{m+1})^2}$. Then using distortion estimates from Section \ref{sec:distortion} as before, we additionally compute that 
\begin{align}
    \alpha_s^{m+1} &\leq e^{-(\lambda^--\epsilon)}\alpha_s^m  +\frac{1}{2}C(\alpha_s^m)^2\\
    &\leq (e^{-(\lambda^--\epsilon)}+\frac{1}{2}C)\alpha_s^m,\label{108}
\end{align}
\begin{align}
    \alpha^{m+1}_u &\leq e^{-(\lambda^+-\epsilon)}\alpha_u^m + \frac{1}{2}C(\alpha_u^m)^2\\
    &\leq k(e^{-(\lambda^+-\epsilon)} +\frac{1}{2}C)\alpha_s^m\label{110}\\
    &\leq k(e^{-(\lambda^--\epsilon)}+\frac{1}{2}C)\alpha_s^m.\label{alpu}
\end{align}
Where Equations \eqref{108} and \eqref{110} hold because $\alpha_s^m>(\alpha_s^m)^2$ because $\epsilon_f<\frac{1}{M_r}$ and $\alpha_u^m= k\alpha_s^m$.

Then
\begin{align}\label{Dmrel}
    D_{m+1} \leq \sqrt{1+k^2} (e^{-(\lambda^--\epsilon)}+\frac{1}{2}C)\alpha_s^m = (e^{-(\lambda^--\epsilon)}+\frac{1}{2}C)D_m.
\end{align}

 The above bound is necessary for the proof of Lemma \ref{BiLip}, however we also need to demonstrate the fact that $D_m<D_{m+1}$. 

Again, from Section \ref{sec:distortion} we have that
\begin{align}
    \alpha_s^{m+1} \geq \alpha_s^m(e^{-(\lambda^-+\epsilon) }-\frac{1}{2}C\alpha_s^m),\\
    \alpha_u^{m+1} \geq \alpha_u^m(e^{-(\lambda^++\epsilon) }-\frac{1}{2}C\alpha_u^m).
\end{align}

At the beginning of Section \ref{sec:blue} we fixed $k_1<1$ and $k_2<1$ small but be such that $(1-k_1)e^{-(\lambda^-+\epsilon)}>1$ and added a condition to $\epsilon_f$, we use it now to get that
\begin{align}
    \alpha_s^{m+1} \geq (1-k_1)e^{-(\lambda^- +\epsilon)}\alpha_s^m,\\
    \alpha_u^{m+1} \geq (1-k_2) e^{-(\lambda^++\epsilon)}\alpha_u^m.
\end{align}
Note that everything was chosen such that $(1-k_1)e^{-(\lambda^- +\epsilon)}>1$ and also that naturally $(1-k_2) e^{-(\lambda^++\epsilon)}<1$.

Then to have that $D_m<D_{m+1}$ (or the square which we use below) we can show that 
\begin{align}\label{Dm}
    (\alpha_s^m)^2(1-k_1)^2e^{-2(\lambda^- +\epsilon)} + (\alpha_u^m)^2(1-k_2)^2e^{-2(\lambda^+ +\epsilon)} > (\alpha_s^m)^2 + (\alpha_u^m)^2
\end{align}
Solving we see that if we call $\psi_s = (1-k_1)^2e^{-2(\lambda^- +\epsilon)} $ and $\psi_u = (1-k_2)^2e^{-2(\lambda^+ +\epsilon)}$ we need
\begin{align}
  \left(\frac{\alpha_s^m}{\alpha_u^m} \right)^2 > \left(\frac{1-\psi_u}{\psi_s-1}\right)\defeq \Psi.
\end{align}
Again, note here that $\psi_s>1$ and $\psi_u<1$. We can assume that $\Psi>1$ and argue that $m$ can be taken large enough that this ratio holds true. That will complete the proof.

So we need $\alpha_u^m < \frac{1}{\sqrt{\Psi}}\alpha_s^m$. We will demonstrate that $\frac{1}{2}C^{|n'|}\alpha_s^2 < e^{-|n'|(\lambda^-+\epsilon)}\alpha_s$ (where again $n' = \lfloor k_1-m\rfloor$), then using distortion estimates, it will suffice to show the following bound:
\begin{align}\label{ineq}
    e^{-(\lambda^++\epsilon)|n'|}\alpha_u + \frac{1}{2}C^{|n'|}\alpha_u^2 < \frac{1}{\sqrt{\Psi}}\left( e^{-|n'|(\lambda^-+\epsilon)}\alpha_s - \frac{1}{2}C^{|n'|}\alpha_s^2 \right).
\end{align}
Note $\Psi$ is completely independent of any of the other parameters, in fact could be computed at the very start of this argument. Note also that $C $, our bound on the second derivative is also fixed from the beginning and for ease of notation we will write $C= e^j$ and work with this. So to begin with, we want
\begin{align}\label{jm}
    e^{jm}\alpha_s \leq 2e^{-(\lambda^-+\epsilon)m}.
\end{align}
Note that $$\alpha_s, \alpha_u \leq M_r d_X(\tilde{y},\tilde{x}) \leq M_r k \epsilon_f e^{-(\lambda^++\delta)T_{big}\theta} 
,$$ for $\theta \in [\theta_2,\theta_1]$ and that at its smallest, $m =\frac{C_b}{10^{i+J}}T_{big}\theta_2$. Then instead of Equation \eqref{jm} we look at
\begin{align}
    e^{jm}\alpha_s \leq M_r k \epsilon_f e^{j\frac{C_b}{10^{i+J}}T_{big}\theta_2 -(\lambda^++\delta)T_{big}\theta}\leq 2e^{-(\lambda^-+\epsilon)\frac{C_b}{10^{i+J}}T_{big}\theta_2}.
\end{align}
We solve this and see that we need
\begin{align} \label{lm}
    j\leq \left( \frac{(\lambda^++\delta)}{\frac{C_b}{10^{i+J}}}- (\lambda^-+\epsilon)  \right) - \frac{\ln(k\epsilon_fM_r)}{T_{big}\theta \frac{C_b}{10^{i+J}}}.
\end{align}
Note that $i$ was chosen such that $\frac{C_b}{10^{i+J}} < \frac{C_b}{10^{i+2}}<<(\lambda^++\delta)$ (see Choice \ref{thetabds}) and we can start with $T_{big}$ large enough so that term $\frac{\ln(k\epsilon_fM_r)}{T_{big}\theta \frac{C_b}{10^{i+J}}}$ is small enough so that Equation \eqref{lm} is true for all $T_{big}$ afterwards. Hence we establish dominance of the linear term in the Taylor expansion of $\alpha_s$ because we have 
\begin{align}
    \frac{1}{2}|v_s^TD^2\tilde{F}_{(x,\omega),k}^t(w)v_s| \leq \frac{1}{2}C^m \alpha_s^2 \leq e^{-(\lambda^-+\epsilon)m}\alpha_s \leq |D\tilde{F}^t_{(x,\omega),k}(0)v_s|.
\end{align}

So now we have to show Equation \eqref{ineq}. The same argument as before shows that $\frac{1}{2}C^m\alpha_u^2 \leq e^{-(\lambda^++\epsilon)}\alpha_u$, we get that
\begin{align}\label{206}
    j \leq \left( \frac{(\lambda^++\delta)}{\frac{C_b}{10^{i+J}}}- (\lambda^++\epsilon)  \right) - \frac{\ln(k\epsilon_fM_r)/2}{T_{big}\theta \frac{C_b}{10^{i+J}}}.
\end{align}
Again, in Choice \ref{thetabds}, we chose everything so that this would be true because $\lambda^-,\lambda^+,M_r,$ $\epsilon_f,k$ are all chosen at the very start.

Then Equation \eqref{ineq} becomes
\begin{align}\label{endeq}
    2e^{-(\lambda^++\epsilon)m}\alpha_u < \frac{1}{2\sqrt{\Psi}} e^{-(\lambda^-+\epsilon)m}\alpha_s,
\end{align}
which is satisfied for $m$ large, i.e. $T_{big}$ large.

Then Equation \eqref{ineq} gives us $\alpha_u^m < \frac{1}{\sqrt{\Psi}}\alpha_s^m$. This establishes that $D_m<D_{m+1}$.
\end{proof}

For the purposes of Lemma \ref{dd}, we need the following Lemma:
\begin{lemma} \label{bb}
The constant $0<\eta<1$ as fixed in Section \ref{sec:lusin} is such that for $T_{big}$ large enough (hence $m$ large enough) $D_{m_2}^{(1+\eta)}>D_{m_1}$. 
\end{lemma}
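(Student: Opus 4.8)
The plan is to pass to logarithms and reduce the inequality to a lower bound on a ratio of exponential rates that, by the construction of Section~\ref{sec:lusin}, exceeds $\calT$. Since the scale on which $D_{m_1}$ and $D_{m_2}$ live is much smaller than $\epsilon_f$ (so $D_{m_1},D_{m_2}<1$ by the choice of $\epsilon_f$ in Section~\ref{sec:not5}) and $D_{m_1}<D_{m_2}$ by Lemma~\ref{bdnded3}, both logarithms are negative and
$$D_{m_2}^{(1+\eta)}>D_{m_1}\ \Longleftrightarrow\ (1+\eta)\ln D_{m_2}>\ln D_{m_1}\ \Longleftrightarrow\ \eta<\frac{\ln(D_{m_2}/D_{m_1})}{|\ln D_{m_2}|},$$
the last ratio being positive; so it suffices to bound it below by a constant exceeding $\eta$ for $T_{big}$ large. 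For the numerator: by Lemma~\ref{bdnded3}, for $m$ large each unit of backward flow multiplies $D_m$ by at least $(1-k_1)e^{-(\lambda^-+\epsilon)}>1$ (indeed $D_m$ and $\alpha_s^m$ are comparable up to a fixed factor and $\alpha_s^{m+1}\geq (1-k_1)e^{-(\lambda^-+\epsilon)}\alpha_s^m$), so
$$\ln(D_{m_2}/D_{m_1})\ \geq\ (m_2-m_1)\ln\!\big((1-k_1)e^{-(\lambda^-+\epsilon)}\big)-O(1),$$
and since $J$ is chosen in \eqref{mchoice} so that $m_2>2m_1$, this is $\Theta(T_{big})$, bounded below by $\tfrac{C_b}{2\cdot 10^{i+2}}T_{big}\theta_2\ln\!\big((1-k_1)e^{-(\lambda^-+\epsilon)}\big)-O(1)$.

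For the denominator I would iterate the upper estimate $D_{m+1}\leq (e^{-(\lambda^--\epsilon)}+\tfrac12 C)D_m$ of Lemma~\ref{bdnded3} from $m=0$, obtaining $D_{m_2}\leq (e^{-(\lambda^--\epsilon)}+\tfrac12 C)^{m_2}D_0$ with $D_0$ comparable to $d_X(\tilde x,\tilde y)$. From the estimates of Section~\ref{sec:blue} and the Lipschitz bound $\calL$ one has $d_X(\tilde x,\tilde y)\geq \tfrac12\epsilon_f\calL^{-T_{big}\theta}$; moreover $\tilde\omega$ can be chosen in Lemma~\ref{apartm} using the fixed-scale set $K_{\varepsilon}$ of Lemma~\ref{Kdelta'} — so that $E^u(\tilde x,\tilde\omega)$ stays a distance $\geq\varepsilon$ from $E^u(\tilde x,\sigma^n(\ho))$, rather than only the shrinking $4\xi$-separation — which keeps the stable component $\alpha_s=|v_s|$ a fixed fraction of $d_X(\tilde x,\tilde y)$. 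Hence $|\ln D_0|\leq\ln(\calL)T_{big}\theta_1+O(1)$, and as $m_2=O(T_{big})$,
$$|\ln D_{m_2}|\ \leq\ \ln(\calL)T_{big}\theta_1+\tfrac{C_b}{10^{i+2}}T_{big}\theta_2\ln\!\big(e^{-(\lambda^--\epsilon)}+\tfrac12 C\big)+O(1),$$
which is again $\Theta(T_{big})$.

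Combining the two bounds, the ratio $\ln(D_{m_2}/D_{m_1})/|\ln D_{m_2}|$ stays above a positive constant as $T_{big}\to\infty$, uniformly for $\theta\in[\theta_2,\theta_1]$. Inspecting the formula defining $\calT$ in Section~\ref{sec:lusin} — with $\alpha_1=\tfrac{C_b}{10^{i+J}}$ and $\alpha_2=\tfrac{C_b}{10^{i+2}}$ in the roles of $m_1/(T_{big}\theta_2)$ and $m_2/(T_{big}\theta_2)$, and $\varepsilon'$, $\kappa$ absorbing the $\epsilon$-errors and the $\ln\calL$-term — shows that this constant is at least $\calT$; since $\eta<\calT$ was fixed there, we conclude $\eta<\ln(D_{m_2}/D_{m_1})/|\ln D_{m_2}|$ for $T_{big}$ large enough, i.e. $D_{m_2}^{(1+\eta)}>D_{m_1}$.

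The main obstacle is the bookkeeping in the denominator estimate: one must verify that every implied constant is genuinely independent of $T_{big}$, and in particular that $\tilde\omega$ in Lemma~\ref{apartm} can be picked so that $|v_s|$ is a fixed proportion of $d_X(\tilde x,\tilde y)$. This is precisely why the fixed-scale set $K_{\varepsilon}$ of Lemma~\ref{Kdelta'} is used here rather than the $4\xi$-separation: the latter is of order $(\alpha^u)^2$ and would be crushed after multiplication by $\calL^{m}$, forcing $|\ln D_0|$ to grow with too large a coefficient and the final ratio potentially to fall below $\calT$. The dependence of $\theta$ on the choice of $T$ is harmless since $\theta$ ranges over the fixed interval $[\theta_2,\theta_1]$.
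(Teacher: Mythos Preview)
Your overall strategy — take logarithms, bound the numerator $\ln(D_{m_2}/D_{m_1})$ from below and the denominator $|\ln D_{m_2}|$ from above, then compare the resulting ratio to $\calT$ — is exactly the paper's approach. Your observation that $\tilde\omega$ must be chosen using the fixed-scale set $K_{\varepsilon}$ (so that $\alpha_s$ is a fixed fraction $\Delta$ of $D$) is also right and matches the paper's use of $\Delta$.

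There is, however, a genuine direction error in your denominator estimate. Iterating the \emph{upper} bound $D_{m+1}\leq (e^{-(\lambda^--\epsilon)}+\tfrac12 C)D_m$ gives $D_{m_2}\leq (\text{factor})^{m_2}D_0$, which for $D_{m_2}<1$ yields a \emph{lower} bound on $|\ln D_{m_2}|$, not the upper bound you claim. What you need is a \emph{lower} bound on $D_{m_2}$: the paper uses $D_m\geq \tfrac{\Delta}{2}e^{-(\lambda^-+\epsilon)m}D$ (from the per-step lower growth of $\alpha_s^m$ together with $\alpha_s\geq \Delta D$), which then gives the correct upper bound on $|\ln D_{m_2}|$.

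The second gap is the final step: you assert that ``inspecting $\calT$'' shows your ratio is at least $\calT$, but you never carry out the computation, and with your stated bounds the constants involve $\ln(e^{-(\lambda^--\epsilon)}+\tfrac12 C)$ and $\theta_1$, which do not appear in $\calT$. The paper does this explicitly: from $D_{m_1}\leq 2e^{-(\lambda^--\epsilon)m_1}D$ and $D_{m_2}\geq \tfrac{\Delta}{2}e^{-(\lambda^-+\epsilon)m_2}D$, together with $D\geq \tfrac{\epsilon_f}{M_r}\calL^{-T_{big}\theta_2}$, one reduces $D_{m_2}^{1+\eta}>D_{m_1}$ to the inequality
\[
\calC\, e^{-(\lambda^--\epsilon)\alpha_1 T_{big}\theta_2}\ <\ e^{-(1+\eta)(\lambda^-+\epsilon)\alpha_2 T_{big}\theta_2 - \calK\eta T_{big}\theta_2},
\]
and solving for $\eta$ gives precisely Equation~\eqref{eta11}, i.e.\ the threshold defining $\calT$ up to the $O(1/T_{big})$ term that $\varepsilon'$ absorbs. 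You need to actually perform this calculation; the constants must be tracked, they do not simply ``absorb''.
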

 \begin{proof}
As in the previous argument, we can replace the usual distortion bounds with 
\begin{align}
    2e^{-(\lambda^+-\epsilon)m}\alpha_s \leq \alpha_s^m \leq \frac{1}{2}e^{-(\lambda^--\epsilon)m}\alpha_s,\\
    2e^{-(\lambda^+-\epsilon)m}\alpha_u \leq \alpha_u^m \leq \frac{1}{2}e^{(\lambda^+-\epsilon)m}\alpha_u.
\end{align}
Again, $T_{big}$ has to be large enough for this to hold. We assume this throughout the remainder of the computation wherever $m$ appears, $T_{big}$ is large. Let $$D = \sqrt{(\alpha_s)^2+(\alpha_u)^2},$$ i.e. the original distance we compute in Lyapunov charts, then we compute
\begin{align}
    D_m \leq 2\sqrt{e^{-2(\lambda^--\epsilon)m}\alpha_s^2 + e^{-2(\lambda^+-\epsilon)m}\alpha_u^2} &\leq 2e^{-(\lambda^--\epsilon)m} \sqrt{ \alpha_s^2 + (e^{-(\lambda^+-\epsilon)m+(\lambda^--\epsilon)m}\alpha_u)^2}\\ &\leq 2e^{-(\lambda^--\epsilon)m} D.
\end{align}

Further we have using Lemma \ref{anglecontrol} we can ask that $\alpha_s$ always be at least some fixed fraction of $D$, let us say that $\alpha_s>\Delta D$ uniformly.

\begin{align}
    D_m\geq  \frac{1}{2} \sqrt{e^{-2(\lambda^-+\epsilon)m}\alpha_s + e^{-2(\lambda^+-\epsilon)m}\alpha_u} \geq \frac{1}{2}e^{-(\lambda^-+\epsilon)m}\alpha_s \geq \frac{\Delta}{2}e^{-(\lambda^--\epsilon)m}D.
\end{align}

Further since $D$ is the distance $d_X(\tilde{x},\tilde{y})$ but in Lyapunov charts, we can account for the change between ambient coordinates and Lyapunov charts and get
\begin{align}\label{needed}
    \frac{1}{M_r}\epsilon_f\left(\frac{1}{\calL}  \right)^{T_{big}\theta_2} \leq D.
\end{align}

Now in order to find $\eta$ such that $D_{m_1}<D_{m_2}^{(1+\eta)}$ we can show the middle inequality in 
\begin{align}
    D_{m_1}\leq 2e^{-(\lambda^--\epsilon)m_1}D < \left(\frac{\Delta}{2} \right)^{(1+\eta)} e^{-(1+\eta)(\lambda^--\epsilon)m_2} D^{(1+\eta)} \leq D_{m_2}^{(1+\eta)}.
\end{align}
Further, plugging in Equation \eqref{needed} it suffices to demonstrate that
\begin{align}
     2e^{-(\lambda^--\epsilon)m_1} <\left(\frac{\Delta}{2}\right)^{(1+\eta)}\left( \frac{\epsilon_f}{M_r} \right)^{\eta} \left(\frac{1}{\calL}  \right)^{T_{big}\theta_2\eta} e^{-(1+\eta)(\lambda^--\epsilon)m_2}.
\end{align} 
Note that $\Delta/2<1$, $\epsilon_f/M_r<1$ and $\eta<1$ so we can solve
\begin{align}
     2e^{-(\lambda^--\epsilon)m_1} <\left( \frac{\Delta}{2} \right) \left(\frac{1}{\calL}  \right)^{T_{big}\theta_2\eta} e^{-(1+\eta)(\lambda^--\epsilon)m_2}.
\end{align} 

Writing $m_1 = \alpha_1 T_{big}\theta_2 $, $m_2 = \alpha_2 T_{big}\theta_2$, $\calC \defeq \frac{4}{\Delta}$ and $\frac{1}{\calL} = e^{-\calK}$ (where $\calK >0)$ we solve
\begin{align}
    \calC e^{-(\lambda^--\epsilon)\alpha_1T_{big}\theta_2} < e^{-(1+\eta)(\lambda^-+\epsilon)\alpha_2T_{big}\theta_2 - \calK \eta T_{big}\theta_2},
\end{align}
which gives us that we need
\begin{align}\label{eta11}
   \eta < \frac{\ln(C)}{T_{big}\theta_2} - \frac{(\lambda^--\epsilon)\alpha_1}{-(\lambda^-+\epsilon)\alpha_2-\calK} + \frac{(\lambda^-+\epsilon)\alpha_2}{-(\lambda^-+\epsilon)\alpha_2-\calK}.
\end{align}
We can ignore the constant term $\frac{\ln(C)}{T_{big}\theta_2}$ as $T_{big}$ can be taken large so that this term is appropriately small. Then we note that the right hand side of Equation \eqref{eta11} is positive because of choices made in Section \ref{sec:lusin} that give us that $(\lambda^-+\epsilon)\alpha_2 > (\lambda^--\epsilon)\alpha_1$. The parameter $\eta$ defined in Section \ref{sec:lusin} fulfills this requirement.
\end{proof}

Now we take a (good) choice of $m\in [m_1,m_2]$ as in Lemma \ref{m2}, and denote $x \defeq F^{-m}_{\tilde{\omega}}(\tilde{x})$ and $y \defeq F^{-m}_{\tilde{\omega}}(\tilde{y})$. We get a positive measure subset $V\subset \Omega^+$ of futures $\omega'^+$ to choose from that allow the points $(x,(\sigma^{-\lfloor k_1- m \rfloor}(\tilde{\omega})^-, \omega'^+ ))$ and $(y,(\sigma^{-\lfloor k_1-m\rfloor}(\tilde{\omega})^-, \omega'^+ ))$ where $\tilde{\omega}^- \in \Omega^-$ is the past part of $\tilde{\omega}$. Let $\omega'^+\in V$ be arbitrary, let $\omega' = (\sigma^{-\lfloor k_1- m \rfloor}(\tilde{\omega})^-, \omega'^+ ) \in \Omega$. We will have additional restrictions on choosing $(\omega')^+$, but certainly, $(\omega')^+$ will belong to $V$.

At the points $(x,\omega')$ and $(y,\omega')$ we will project $(y,\omega')$ along $W^s(y,\omega')$, onto $W^u(x,\omega')$ and use (modified) NFCs (see Equation \eqref{newNFCform}) to control the distance between the final points when flowing by $\omega'$ by $\ell$. We want this distance to be some fixed $\epsilon_f$.

Note that our good sets make the changes between ambient coordinates, Lyapunov charts, and (modified) NFCs, a bounded change as described in Section \ref{sec:lusin}. Also note that $W^u(x,\omega')$ is the same as $W^u(x,\sigma^{-\lfloor k_1- m \rfloor}\tilde{\omega})$ because the past vectors of $\omega'$ and $\sigma^{-\lfloor k_1-m \rfloor}\tilde{\omega}$ are the same. The stable manifold changes passing from $\sigma^{-\lfloor k_1-m \rfloor}\tilde{\omega}$ to $\omega'$.

Recall $\tilde{D}$ represents the distance between $U^+[F^T(\hx,\ho)]$ and $U^+[F^T(\hy,\ho)]$  
in ambient coordinates because $\tilde{D}$ is the distance between $\tilde{x}$ and $\tilde{y}$. Denote $L_x(m)$ and $L_y(m)$ as the image of $U^+[F^T(\hx,\ho)]$ and $U^+[F^T(\hy,\ho)]$ after flowing backwards by $m$. Recall that $\delta(m)$ is the distance between $x\defeq f^{-m}_{\tilde{\omega}}(\tilde{x})$ and $y\defeq f^{-m}_{\tilde{\omega}}(\tilde{y})$, it represents the distance between $L_x(m)$ and $L_y(m)$. Then, we let $\gamma$ be the distance between $x$ and $y'$ where $y'$ is the projection of $y$ onto $W^u(x,\omega')$ along $W^s(y,\omega')$.

Note that $x,y\in K_{\varepsilon}$ (See Lemma \ref{Kdelta'} and Section \ref{sec:lusin} for the definition of $K_2'$ and $K_2$). Recall from Lemma \ref{bdnded2} that $d_m$ is the error between $F^{-m}_{\tilde{\omega}}(W^u(F^n(\hx,\ho)))$ and $F^{-m}_{\tilde{\omega}}(W^u(F^n(\hy,\ho)))$ and that $d_m\to 0$ as $T_{big},m\to \infty$.

\begin{lemma}\label{dvg}
    There exists $m'$ and a constant $M_a>1$ such that for all $m>m'$ one can choose $(\omega')^+$ such that $\gamma(m) \in [\frac{1}{M_a}\delta(m),M_a\delta(m)]$. 
\end{lemma}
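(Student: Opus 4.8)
The plan is to pass to the exponential chart at $x$, replace both $W^u(x,\omega')$ and $W^s(y,\omega')$ by their tangent planes, and reduce the two‑sided estimate on $\gamma(m)$ to an elementary transversality computation in $\bbR^4$; the only freedom available is the choice of the future $(\omega')^+$, which fixes the stable directions $E^s(x,\omega')$ and $E^s(y,\omega')$ but leaves the unstable manifold $W^u(x,\omega')=W^u(x,\sigma^{n'}\tilde\omega)$ unchanged.

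First I would pin down the choice of $(\omega')^+$. It must lie in the positive‑measure set $V\subset\Omega^+$ produced in Lemma \ref{m2}, so that $(x,\omega'),(y,\omega')\in K_2$ and in particular lie in $\Lambda_{loc}$, $K_{exp}$, $K_r$, $K_{\xi}$, $K_b$, $K_G$ and $K_{\varepsilon}$. On top of this, since $x,y\in K_{\varepsilon}$, Lemma \ref{Kdelta'} (together with Lemma \ref{anglecontrol} and Theorem \ref{thmC}) says that the laws $P^s_x$ and $P^s_y$ of $E^s(x,\omega')$ and $E^s(y,\omega')$ — viewed as functions of $(\omega')^+\in(\Omega^+,\mu^{\bbN})$ — give measure at most $\delta'$ to any $\varepsilon$‑ball in $Gr(1,\bbC)$. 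I would therefore discard from $V$ the futures for which $E^s(x,\omega')$ is $\varepsilon$‑close to $E^u(x,\omega')$, those for which $E^s(y,\omega')$ is $\varepsilon$‑close to $E^u(y,\omega')$, and those for which $E^s(y,\omega')$ is $\varepsilon$‑close to the complex line $\bbC\cdot\exp_x^{-1}(y)$ (parallel‑transported to $y$). Each discarded family has $\mu^{\bbN}$‑measure at most $\delta'$, so taking $\delta'$ small (and the $\epsilon_2$‑type constants hidden in $V$ small) leaves a positive‑measure set of admissible $(\omega')^+$. For such a future the two manifolds $W^u(x,\omega')$ and $W^s(y,\omega')$ are transverse with angle bounded below by a constant $c(\varepsilon)>0$ uniform in $m$, and the vector $\exp_x^{-1}(y)$ is not almost tangent to $E^s(y,\omega')$.

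Next I would carry out the geometry. Put $v\defeq\exp_x^{-1}(y)$, so that $|v|$ is comparable to $\delta(m)$ by Lemma \ref{comp4}, and recall $\delta(m)\le\epsilon_f$ by Lemma \ref{bdnded1}. On a ball of radius $\mathfrak r\approx c(\varepsilon)^{-1}\delta(m)$ in the chart, Lemmas \ref{errorr}, \ref{hun} and \ref{H} (using the $C^2$ bounds from $K_b$) give that $W^u(x,\omega')$ differs from the linear plane $E^u(x,\omega')$, and $W^s(y,\omega')$ from the affine plane $v+E^s(y,\omega')$, by at most a constant times $\delta(m)^2$. For the linear model, transversality forces the intersection to be the single point $v_u$, the component of $v$ in the oblique splitting $\bbR^4=E^u(x,\omega')\oplus E^s(y,\omega')$; the lower bound on the angle between the planes gives $|v_u|\le c_4(\varepsilon)|v|$, while the non‑tangency of $v$ to $E^s(y,\omega')$ gives $|v_u|\ge\sin(c(\varepsilon))\,|v|$, since the orthogonal projection of $v$ away from $E^s(y,\omega')$ equals that of $v_u$. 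Propagating the $O(\delta(m)^2)$ perturbations through this transverse intersection shifts the true intersection point $y'$ by $O(\delta(m)^2)$; converting back to ambient distance with $M_{exp}$ and $M_r$ and using $\delta(m)\le\epsilon_f$ with $\epsilon_f$ small then yields
\[
\tfrac{1}{M_a}\,\delta(m)\ \le\ \gamma(m)=d_X(x,y')\ \le\ M_a\,\delta(m)
\]
for a constant $M_a=M_a(\varepsilon,M_{exp},M_r,\epsilon_f)>1$ independent of $m$. Finally I would take $m'$ large enough that all the points in play sit well inside the relevant good sets and at a scale where the charts and the comparison above are legitimate (for instance $\delta(m)<\rho_0/4$, automatic from $\delta(m)\le\epsilon_f$).

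The hard part will be the lower bound $\gamma(m)\ge M_a^{-1}\delta(m)$: a priori the stable‑holonomy projection of $y$ onto $W^u(x,\omega')$ could collapse onto $x$ if $y-x$ were nearly tangent to $W^s(y,\omega')$, and it is exactly the third family of discarded futures that prevents this. The point is that this bad family has $P^s_y$‑mass at most $\delta'$ \emph{uniformly} in the line $\bbC\cdot\exp_x^{-1}(y)$, which changes with $m$; this uniformity is what lets $M_a$ and the selection mechanism be taken independent of $m$. A secondary nuisance is that $\delta(m)=D_m$ need not go to $0$ — it is increasing and bounded by $\epsilon_f$ by Lemmas \ref{bdnded3} and \ref{bdnded1} — so the $O(\delta(m)^2)$ linearization errors are only controlled relatively, by factors of size $\delta(m)/\epsilon_f$, which is why the smallness of $\epsilon_f$ fixed in Section \ref{sec:not5} has to be invoked again here.
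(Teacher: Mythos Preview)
Your argument is correct and follows the same overall scheme as the paper's proof: pass to an exponential chart, replace the two leaves by their tangent planes, reduce to a linear transversality computation, and absorb the $O(\delta(m)^2)$ curvature errors using the smallness of $\epsilon_f$. The paper centers the chart at $y$ rather than $x$ and phrases the linear step as a law-of-sines computation in the triangle with vertices $0,\vec v,\vec w$, which is the same content as your oblique decomposition $v=v_u+v_s$.

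The one genuine difference is how the second angle is controlled. You bound \emph{both} relevant angles (transversality of $E^u(x,\omega')$ with $E^s(y,\omega')$, and non-tangency of $v$ to $E^s(y,\omega')$) by discarding small-$P^s$ families of futures via Lemma~\ref{Kdelta'}. The paper only uses $K_{\varepsilon}$ for the non-tangency angle $\theta_1$; for the transversality angle it instead observes that for $m$ large the displacement $\vec v$ is almost along $E^s(x,\sigma^{-n'}(\tilde\omega))$, and then invokes $K_{ang}$ to get $\angle(E^s(x,\sigma^{-n'}(\tilde\omega)),E^u(x,\omega'))>\Theta$. Your route is a bit cleaner in that it uses a single mechanism throughout and does not need the ``displacement becomes stable'' input; the paper's route has the mild advantage that one of the two angle bounds is deterministic (from $K_{ang}$) rather than costing additional $\mu^{\bbN}$-measure in the choice of $(\omega')^+$.
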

\begin{proof}

First let us take $m$ large enough ($T_{big}$ large enough) such that $$M_{exp}^3d_m/100<\varepsilon,$$ where $\varepsilon$ is from Lemma \ref{Kdelta'}.

Now $\delta(m)=d_X(x,y)$ and $\gamma = d_X(x,y')$. We will work in $\exp_y^{-1}(B(y,\rho_0))$, note that our scale of interest is smaller than $\rho_0/4$ so this is appropriate. Let $\vec{v}=\exp_y^{-1}(x)$, note that there exists $M\in [\frac{1}{M_{exp}},M_{exp}]$ such that $\Vert v\Vert = \delta(m)M$.

Let $\vec{w}$ be the vector in $T_yX$ representing $E^s(y,\omega')$ of length $\Vert w\Vert =\eta$ corresponding to the intersection with $\exp^{-1}_y(W^u(x,\omega))$. Using Lemma \ref{anglecontrol} and Lemma \ref{Kdelta'} one can choose $\omega'$ such that the angle $\theta_{1}$ between $\vec{v}$ and $\vec{w}$ is bounded from below by $\varepsilon$ (so this is much bigger than the error between the tangent plane through $F^{-m}(W^u(F^n(\hy,\ho))$ at $y$ and the tangent plane through $F^{-m}(W^u(F^n(\hx,\ho))$ at $x$).

\begin{claim}
    There exists $m'$, $c'$ and $d'$ such that for all $m>m'$,
    $$\Vert w\Vert = \eta = c\Vert v\Vert = c\delta(m)M, \ \ \ \Vert v-w\Vert = d\Vert v\Vert = d\delta(m)M$$
    for some $c\in [\frac{1}{c'},c']$, $d\in [\frac{1}{d'},d']$.
\end{claim}
\begin{proof}
Fixing $\delta'>0$ small we choose $m'$ large enough so that $F^{-m}(W^u(F^n(\hx,\ho))$ is within $\delta'/M_{exp}$ of $W^s(x,\sigma^{-n'}(\tilde{\omega}))$. Then we note that $(x,\sigma^{-n'}(\tilde{\omega})),(y,\sigma^{-n'}(\tilde{\omega})) \in K_2'\subset K_{ang}$ and so $$\angle(E^s(x,\sigma^{-n'}(\tilde{\omega})),E^u(x,\sigma^{-n'}(\tilde{\omega})) ) = \angle(E^s(x,\sigma^{-n'}(\tilde{\omega})),E^u(x,\omega') )>\Theta.$$

The law of sines says that $\Vert v-w\Vert = \Vert v\Vert \frac{\sin(\theta_1)}{\sin(\beta)} = \delta(m)M\frac{\sin(\theta_1)}{\sin(\beta)} $ where $$\beta = \pi-\theta_1-\theta_2,$$ $\theta_2 \in [\Theta,\pi-\Theta]$, and $\theta_1\in [\varepsilon ,\pi-\varepsilon]$. We let $d = \frac{\sin(\theta_1)}{\sin(\beta)}$ which is clearly bounded. 

The law of sines similarly gives the required relation between $\Vert w\Vert $ and $\Vert v\Vert $.
\end{proof}

Let $\psi_W(z) = \vec{w}z + \vec{w}^{\perp}g(z)$ be the parametrization of $\exp^{-1}_y(W^s(y,\omega'))$. The function $g$ is an error term representing the Taylor remainder and  $|g(z)|\leq C''|z|^2$ by Lemma \ref{epb}. We will take $m$ large enough so that $C''(M\delta(m)c')^3\ll \frac{\delta(m)M}{2d'}$

Then, using Taylor's theorem we get for some $z'$,
\begin{align*}
    \gamma \cdot M = \Vert\vec{v}-\vec{w}-\vec{w}^{\perp}g(z')\Vert &\geq \left| \Vert\vec{v}-\vec{w}\Vert - \Vert\vec{w}^{\perp}g(z')\Vert \right|\\
    &\geq \frac{1}{2d'}\delta(m)M .
\end{align*}

Now for the upper bound we see that 

 \begin{align*}
 \gamma \cdot M = \Vert\vec{v}-\vec{w}-\vec{w}^{\perp}f(z')\Vert   
 &\leq \Vert\vec{v}\Vert + \Vert\vec{w}\Vert + \Vert\vec{w}^{\perp}g(z')\Vert \\
 &\leq M\delta(m) + Mc'\delta(m) + C''(Mc'\delta(m))^3 \\
&\leq M\delta(m) + Mc'\delta(m) + \frac{\delta(m)M}{2d'} \\
&= (M+Mc' +\frac{M}{2d'}) \delta(m).
\end{align*}
We take $M_a = \max\{(M+Mc' +\frac{M}{2d'}) , \frac{2d'}{M}\}$. This completes the proof.
\end{proof}

\begin{lemma}\label{omega+1}
    For the choice of $(\omega')^+$ made in Lemma \ref{dvg}, the projection of $L_y$ along $W^s(y,\omega')$ onto $W^u(x,\omega')$ does not intersect $L_x$. 
\end{lemma}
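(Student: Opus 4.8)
The plan is to establish Lemma \ref{omega+1} by a first-order (linear) comparison of the two curves, carried out in a chart centred at $x$ on the scale $\mathfrak r\sim\delta(m)$ on which the good sets give quantitative control, with the heart of the matter being a transversality statement that the choice of $(\omega')^+$ has to be made to guarantee. On $\exp_x^{-1}(B(x,\mathfrak r))$ — legitimate since $x,y\in K_{exp}\cap K_2\cap K_2'$ and $\mathfrak r\ll\rho_0$ — the curve $L_x$ is approximated to order $\mathfrak r^2$ by the segment through $x$ in its tangent direction $u_x$ (Claim \ref{error}, Lemmas \ref{errorr}, \ref{errorr1}), $W^s(y,\omega')$ by the plane $E^s(y,\omega')$ through $y$, and $L_y$ by the segment through $y$ in its tangent direction $u_y$. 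Since $L_x$ and $L_y$ are the images of the unstable-support curves of the two nearby original points $\hx,\hy$ under one and the same sequence of diffeomorphisms, and since $\hx,\hy$ lie in the H\"older sets $K_{3\tilde{\epsilon}}$, $K_G$, the directions $u_x$ and $u_y$ differ by at most $C\delta(m)^{\alpha}$; and since $y\in K_\beta$ the stable connection carrying $u_y$ along $W^s(y,\omega')$ to $W^u(x,\omega')$ is within $C_\beta\delta(m)^{\beta}$ of the identity (Section \ref{stableholon}, Ledrappier invariance principle).

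Next I would carry out the projection in this model. Because $E^s(y,\omega')$ is uniformly transverse to $E^u(x,\omega')$ ($x,y\in K_{ang}$, together with the H\"older continuity of $E^s$), the stable projection is well defined, and it sends $L_y$ to a segment through $y'$ whose direction is within $C\delta(m)^{\min(\alpha,\beta)}$ of $u_x$. Thus, up to a vanishing error of order $\delta(m)^{1+\min(\alpha,\beta)}$, $L_x$ and $\pi^s(L_y)$ are two near-parallel segments in (a neighbourhood of) the plane $W^u(x,\omega')$, through $x$ and $y'$ respectively, with $d_X(x,y')=\gamma\in[\delta(m)/M_a,\,M_a\delta(m)]$ by Lemma \ref{dvg}. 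They will fail to intersect on the scale $\mathfrak r$ precisely when the component of the displacement $y'-x$ transverse to the common direction $u_x$ dominates that error — i.e.\ when $y'$ lies a definite angular amount off the line carrying $L_x$.

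The remaining point, which I expect to be the main obstacle, is to guarantee this transversality for the future $(\omega')^+$. The direction of $y'-x$ inside $W^u(x,\omega')$ is a function of $E^s(y,(\omega')^+)$, and $E^s(y,\cdot)$ depends only on the future; the "bad" futures, for which $y'-x$ is within $\delta(m)^{\min(\alpha,\beta)}$ of being parallel to $u_x$, correspond to $E^s(y,(\omega')^+)$ lying in a set of radius $\sim\delta(m)^{\min(\alpha,\beta)}$ in $Gr(1,\bbC^2)$. Since $y\in K_\varepsilon$, the law $P^s_y$ of $E^s(y,(\omega')^+)$ has no atoms, so by Lemma \ref{anglecontrol} (together with Lemma \ref{Kdelta'}) one may choose $(\omega')^+$ avoiding this small bad set while keeping a positive-measure family of admissible futures; this is exactly the kind of constraint already imposed in Lemma \ref{dvg}, and I would simply fold the extra condition into that choice. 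One genuine subtlety — handled as in the proof of Lemma \ref{omega+} — is the degenerate case where $E^s(y,\omega')$ is positioned so that $y'$ is off $L_x$ only to high order: there one pushes the comparison to the order $R$ at which the relevant holomorphic graphs first differ (Corollary \ref{8.2} provides such an $R$ and Lemma \ref{algcurve} excludes the degeneracies), obtaining $d_X(y',L_x)\ge C\gamma^{R}$, which still beats the approximation error once $m,T$ are large and $\rho$ is small, by the same parameter balancing used throughout this section. Combining this with the linear comparison yields $\pi^s(L_y)\cap L_x=\emptyset$.
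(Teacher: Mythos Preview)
Your geometric picture is off in a way that makes the argument break down. You treat $L_x$ and $\pi^s(L_y)$ as two near-parallel real curves inside the same complex curve $W^u(x,\omega')$, and then try to force non-intersection by a transversality condition on the displacement $y'-x$. But $L_x$ is \emph{not} contained in $W^u(x,\omega')$: by definition $L_x=F^{-m}_{\tilde\omega}\bigl(U^+[F^T(\hx,\ho)]\bigr)$ sits inside $P':=F^{-m}_{\tilde\omega}\bigl(W^u(F^T(\hx,\ho))\bigr)$, the image of the \emph{old} unstable (determined by the past of $\ho$), whereas $W^u(x,\omega')$ is determined by the past of $\tilde\omega$. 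These are two distinct holomorphic curves through $x$ (by the choice in Lemma~\ref{apartm}), and the paper's Fact~\ref{C} gives $P'\cap W^u(x,\omega')=\{x\}$, hence $L_x\cap W^u(x,\omega')=\{x\}$. So your claim that $L_x$ lies ``in (a neighbourhood of) the plane $W^u(x,\omega')$'' up to error $O(\delta(m)^{1+\min(\alpha,\beta)})$ is false: on the scale $\delta(m)$ the curve $L_x$ is genuinely transverse to $W^u(x,\omega')$.

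Once one has $L_x\cap W^u(x,\omega')=\{x\}$, the lemma reduces to the single-point statement that $x\notin\pi^s(L_y)$, i.e.\ that no point of $L_y$ lies on $W^s(x,\omega')$. This is exactly where the work of Section~\ref{sec:blue} is used: $L_y$ is within $d_m\to 0$ of $P'$ (Lemma~\ref{bdnded2}), while $W^s(x,\omega')$ was chosen bounded away from $P'$ by much more than $d_m$ (Lemma~\ref{dvg}); since $E^s(x,\omega')$ and $E^s(y,\omega')$ are nearly parallel (uniform continuity of $G^s$ on $K_G$), the stable through any point of $L_y$ misses $x$. Your argument never invokes Fact~\ref{C} and never uses the closeness $d_m\to 0$ of $L_y$ to $P'$ --- which was the entire purpose of flowing by $T$ --- so it both misidentifies what needs to be shown and discards the main input the section provides. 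The transversality condition you try to impose on $y'-x$ relative to $u_x$ is a red herring; it is neither needed nor, as stated, tied to the actual location of $L_x$.
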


We will exploit the fact that $L_x$ and $L_y$ are almost in the same plane and the following fact about complex subspaces:

 \begin{fact} \label{C}
    If two subspaces of complex dimension 1 intersect, they are the same (excluding their intersection at the origin).
\end{fact}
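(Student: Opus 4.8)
The statement is a standard linear-algebra fact, so the proposal here is short and the ``proof'' will amount to unwinding the definition of a one-dimensional complex subspace. The plan is to work in a fixed finite-dimensional complex vector space $E$ (in our application, $E = T_xW^u(x,\omega')$ together with its identification with $\bbC$ under normal form coordinates, in which $L_x$ and the projection of $L_y$ have been arranged to be complex lines), and to argue directly.

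First I would record the elementary observation that any complex subspace $V \subset E$ with $\dim_{\bbC} V = 1$ is of the form $V = \bbC v$ for a single nonzero vector $v$: pick any $v \in V \setminus \{0\}$; then $\bbC v \subset V$ and $\dim_{\bbC}(\bbC v) = 1 = \dim_{\bbC} V$, so $\bbC v = V$. Next, suppose $V$ and $W$ are two complex subspaces of complex dimension $1$ whose intersection contains a point $u$ other than the origin. Applying the observation to $V$ with the vector $u$ gives $V = \bbC u$, and applying it to $W$ with the same $u$ gives $W = \bbC u$; hence $V = W$. Contrapositively, two distinct complex one-dimensional subspaces meet only at the origin, which is exactly the content of Fact \ref{C}.

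I do not expect any obstacle. The one point worth emphasizing, precisely because it is where the word ``complex'' is doing the work, is that the argument fails for real one-dimensional subspaces: two distinct real lines inside a common complex line meet only at the origin yet are unequal. This is why, in the application, one must first use the near-coplanarity of $L_x$ and $L_y$ (and the normal form/holonomy control set up in the preceding lemmas) to place them inside, and identify them with, genuine complex lines before invoking the Fact.
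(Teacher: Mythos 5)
Your proof is correct. The paper states Fact \ref{C} without proof, treating it as a standard linear-algebra fact, and your argument — any nonzero vector in a one-dimensional complex subspace spans it, so a common nonzero point forces both subspaces to equal its span — is exactly the standard justification, and your closing remark about why the complex (rather than real) hypothesis is the one doing the work in the application is apt.
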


First I will explain things heuristically, thinking of all our stable and unstable manifolds as complex 1 dimensional subspaces:

\begin{sketch}[of Lemma \ref{omega+1}]
    
Let $P' \defeq F^{-m}(W^u(F^n(\hx,\ho)))$ (where $n = \lfloor T+k_0\rfloor$). Since $U^+[F^n(\hx,\ho)]\subset W^u(F^n(\hx,\ho))$, we have that $L_x \subset P'$. At the points $F^n(\hx,\ho)$ and $F^n(\hy,\ho)$, the corresponding $U^+[F^n(\hx,\ho)]$ and $U^+[F^n(\hy,\ho)]$ are almost in the same plane, $W^u(F^n(\hx,\ho))$ (i.e. $W^u(F^n(\hx,\ho))$ and $W^u(F^n(\hy,\ho))$ where $U^+[F^n(\hx,\ho)]$ and $U^+[F^n(\hy,\ho)]$ live are very close). We calculated an upper bound on the error and showed it goes to zero as $T_{big} \to \infty$ in Corollary \ref{finals}. After flowing by $m$ we showed that $W^u(F^n(\hx,\ho))$ and $W^u(F^n(\hy,\ho))$ do not move too far apart because of the size of $m$, in fact we showed that this error still goes to zero in Lemma \ref{bdnded2}. Hence $L_y$ is almost in $P'$. Since we have a positive measure subset $V$ from which to choose $\omega'$ after flowing backwards by $m$, by Lemma \ref{dvg} we can pick $\omega'$ such that $W^s(x,\omega')\neq P'$. We know that $P'$ and $W^s(x,\omega')$ both contain $x$ and by our Fact \ref{C} we know that then they must not share any other points; in particular, $L_y$ belongs (roughly) to $P'$ and hence not to $W^s(x,\omega')$. Further, $W^s(x,\omega')\cap W^u(x,\omega') = \{x\}$ and $W^s(x,\omega')$ and $W^s(y,\omega')$ are roughly parallel, hence $W^s(y,\omega')\cap W^u(x,\omega') = \{y'\}$ where $y'\neq x$ (so $y$ does not project to $x$). Similarly, any other point of $L_y$ lives (roughly) in $P'$ and hence not in $W^s(x,\omega')$ and so does not project to $\{x\}$ because $W^s(x,\omega')$ and $W^s(y,\omega')$ are parallel. This suffices because $\{x\}$ is the only point of $L_x$ on $W^u(x,\omega')$ by Fact \ref{C}.

\end{sketch}

\begin{proof}(of Lemma \ref{omega+1})
We will first approximate all the stable/unstable manifolds by their image under exp of $E^s/E^u$. We do the same for the unstable supports. Now write $P'$ as the tangent plane of $F^{-m}(W^u(F^n(\hx,\ho))$ at $x$, let $Q'$ be the tangent plane of $F^{-m}(W^u(F^n(\hy,\ho))$ at $y$, and $L_x$ and $L_y$ now are the tangent lines through $x$ and $y$ resp. of $F^{-m}(U^+[F^n(\tilde{x},\tilde{\omega})])$ and $F^{-m}(U^+[F^n(\tilde{y},\tilde{\omega})]$ respectively. 

Now we will need that (the images of) $E^s(x,\omega')$ and $E^s(y,\omega')$ are roughly parallel. Note that $G^s$ is uniformly continuous on $K_2$ and that $d_X(x,y) \to 0$ by Lemma \ref{bdnded2} and so we can take this angle to be as small as necessary. 

Now we begin the argument; the distance between $P'$ and $Q'$ was calculated to be upper bounded by $d_m$ (which goes to zero as $m\to \infty$ by Lemma \ref{bdnded2}) hence $L_y$ is in $P'$ up to error $d_mM_{exp}$ which can be made arbitrarily small. Now, $\omega'$ in Lemma \ref{dvg} is chosen such that $E^s(x,\omega')$ is bounded away from $P'$ by much more than this error between $L_y$ and $P'$. We also see that $E^s(x,\omega')\neq P'$, both are complex planes. Note that since $E^s(x,\omega')$ and $E^s(y,\omega')$ are roughly parallel (again we can make $\theta_s$ appropriately small) and distinct and $E^s(x,\omega')\cap E^u(x,\omega') = \{x\}$ so we must have that the projection of any point on $L_y$ along $E^s(y,\omega')$ does not intersect $x$. Going back to the original problem of projecting along $W^s(y,\omega')$, the error with $E^s(y,\omega')$ approximating $W^s(y,\omega)$ goes to zero for $T_{big}$ large and so we can also say that the projection of any point on $L_y$ along $E^s(y,\omega')$ does not intersect $x$. 
\end{proof}

Now $y'$ and $y$ both belong to $W^s(y,\omega')$ so as we flow forwards under $\omega'$ this distance contracts and so $\gamma$ is a good proxy for $\delta$.

Recall that we were working towards having our end points be $\epsilon_f$ apart in ambient coordinates (up to some bounded constant).

Now that we know that $\gamma \neq 0$ and is comparable to $\delta$, we want to work with this instead of $\delta$ because this is a distance that lives in $W^u(x,\omega')$ and we can use (modified) normal form coordinates (NFCs) (see Equation \eqref{newNFCform}). When we pass to (modified) NFCs we can solve for $\ell(m)$, i.e. the amount we want to flow by from the points $(x,\omega')$ and $(y,\omega')$ regardless of the choice of $\omega'$. Ultimately we will have to compute $\ell$ slightly differently, but the idea is that $\gamma$ gets changed by (modified) NFCs to some $C_1\gamma$ where $C_1 \in [\frac{1}{M_{NFC}},M_{NFC}]$ and we would like to grow this distance to be $\epsilon_f\cdot M_{NFC}$ (so that when we go back to ambient coordinates it is bounded from below by $\epsilon_f$). Then we solve $C_1 \gamma e^{\ell} = M_{NFC}\epsilon_f$ for $\ell$ and get
\begin{align}
    \ell(m) = \ln\left(\frac{M_{NFC}\epsilon_f}{C_1\gamma(m)}  \right).
\end{align}
Since $C_1$ changes with $m$, we instead replace it with $\frac{1}{M_{NFC}}$ (i.e. its lower bound). Solving $\ell = \ln\left(\frac{M_{NFC}\epsilon_f}{\frac{1}{M_{NFC}}\gamma}  \right)$ means we would most certainly reach $\epsilon_f$ or go past $\epsilon_f$ by a (uniformly) bounded amount since $C_1$ is bounded from above as well. This fact is important to Section \ref{sec:conc1}.

To make the computation in Lemma \ref{BiLip} simpler, we use $D_m$ instead of $\gamma$ which would make the formula look more like this: 
\begin{align} \label{olddef1}
    \ell(m) = \ln\left(\frac{M_{NFC}\epsilon_f}{ \frac{1}{M_{NFC}M_rM_a} D_m}  \right).
\end{align}
Note that the constants in front of $D_m$ appear because we are thinking of $D_m$ in relation to $\gamma$. $D_m$ is related to $\delta(m)$ by a constant bounded from below by $\frac{1}{M_r}$ and $\delta(m)$ is related to $\gamma$ by $C'$, the lower bound being $\frac{1}{M_a}$. In this we clearly surpass $\epsilon_f$ still the distance is (uniformly) bounded above since $C'$ and the change between $\delta(m)$ and $D_m$ is bounded from above as well by $M_a$ and $M_r$ respectively. This fact is again important to Section \ref{sec:conc1}.

However, this definition for $\ell(m)$ is not continuous in $m$ because $D_m$ changes only every integer (up to taking into account $k_1$). We need continuity for Section \ref{sec:pink}. Note that we showed that $D_m$ is increasing in $m$, so we define $\ell$ to be the piecewise linear interpolation between these points. We can write this explicitly as 
\begin{equation} \label{betterdef1}
\begin{aligned}
    \ell(m) &= \ln\left(\frac{M_{NFC}\epsilon_f}{ \frac{1}{M_{NFC}M_rM_a} D_m }  \right) \\
&~~+ (m-\lfloor m \rfloor ) \left(\ln\left(\frac{M_{NFC}\epsilon_f}{ \frac{1}{M_{NFC}M_rM_a} D_{m+1} }  \right) - \left(\ln\left(\frac{M_{NFC}\epsilon_f}{ \frac{1}{M_{NFC}M_rM_a} D_{m} } \right) \right) \right)
\end{aligned}
\end{equation}
which simplifies to 
\begin{align}
    \label{bettererdef1}
    \ell(m) = \ln\left(\frac{M_{NFC}\epsilon_f}{ \frac{1}{M_{NFC}M_rM_a} D_m }  \right) + (m-\lfloor m \rfloor ) \ln\left( \frac{D_m}{D_{m+1}}\right).
\end{align}

We make one last modification to this definition of $\ell$, to correct concerns that in the interpolation, if $m$ is between two integers, technically $\ell(m)$ computed using Equation \eqref{olddef1} is smaller than that of $\ell(m)$ computed using Equation \eqref{bettererdef1}, we tack on a last constant, let $$M_{D} \defeq \left( e^{-(\lambda^-  +\delta)} + \frac{1}{2}C \right),$$ then let
\begin{align} \label{ellrel1}
    \ell(m) = \ln\left(\frac{M_{NFC}\epsilon_f}{ \frac{1}{M_{NFC}M_rM_aM_D} D_m }  \right) + (m-\lfloor m \rfloor ) \ln\left( \frac{D_m}{D_{m+1}}\right).
\end{align}
This expression, \eqref{ellrel1}, is how we define $\ell$. Note that the second term is negative because $\ell$ decreases in $m$.

Additionally, take note of the fact that $D_m$ increases with $m$ because $m$ is large enough; see Lemma \ref{bdnded3}.

\subsubsection{Flowing by $\ell$}
\label{sec:pink}

 Once we have made our initial adjustments (from the starting points to the points $(\tilde{x},\tilde{\omega}), \ (\tilde{y},\tilde{\omega})$) we have a `V' shape where we go backwards by $m$ (under the standard flow) and forwards by $\ell$ (under the time change flow) where $\ell$ is determined by $m$ and the distance $\epsilon_f$ that we reach (within some constant) at the end points using (modified) NFCs. But we need the `legs' of the `V', i.e. the forward and backward directions to be of the same length to run the argument in Section \ref{sec:MCT}. This is why we will introduce extra points in this section.

We fix $T$ and the points $(\tilde{x},\tilde{\omega},k_1)$ and $(\tilde{y},\tilde{\omega},k_1)$ from Section \ref{sec:blue}, and for each $m$ we pick in Section \ref{sec:green} (from the positive measure subset of `good choices' in some interval, see Lemma \ref{m2}) we get the points $(x,\omega',k_2)$ and $(y,\omega',k_2)$ where $\omega'\in \Omega$ is any vector such that $\omega'^- = \sigma^{-\lfloor k_1-m \rfloor}(\tilde{\omega})^-$ and $\omega'$ belongs to the positive measure subset $V$ corresponding to the points $(x,\sigma^{-\lfloor k_1-m \rfloor}(\tilde{\omega}))$ and $(y,\sigma^{-\lfloor k_1-m \rfloor}(\tilde{\omega}))$ which live in $\tilde{K}_2$.

Additionally, for each choice of $m$ we can compute $\ell(m)$ as we did in Section \ref{sec:green}. This is independent of the choice of future $\omega'$ that we pick from some good set. Starting from the points $(\tilde{x},\tilde{\omega})$ and $(\tilde{y},\tilde{\omega})$ (i.e. we are in the set $K_1$), we want to flow forwards by an amount that will allow us to complete our `V' shape.

Let us reiterate our `V' shape more definitively. Since the points $(\tilde{x},\tilde{\omega},k_1)$ and $(\tilde{y},\tilde{\omega},k_1)$ are fixed at this point, our `V' is determined entirely by a choice of $m$ (which is in a fixed interval at this point). The length of the legs is $\ell(m)$ under time change flow, with the middle point of the `V' being the points $(x,\omega',k_2)$ and $(y,\omega',k_2)$. Note that because $\tau$ in the  `time change' flow depends on which point you start at, the resulting pair of points after flowing by $\ell(m)$ will have different $\Omega$ and $[0,1[$-components compared to one another, which was never the case with the `standard' flow. We have to convert $m$, that we flowed by under the standard flow, into the quantity we would flow by from $(x,\sigma^{n'}(\tilde{\omega}),k_2)$ and $(y,\sigma^{n'}(\tilde{\omega}),k_2)$ to $(\tilde{x},\tilde{\omega},k_1)$ and $(\tilde{y},\tilde{\omega},k_1)$ under the time change flow. This quantity will differ between points. We define $m_{\tau, (\tilde{x},\tilde{\omega},k_1)}$ and $m_{\tau, (\tilde{y},\tilde{\omega},k_1)}$ to be these quantities resp. Now to complete our `V' we flow backwards from $(\tilde{x},\tilde{\omega},k_1)$ and $(\tilde{y},\tilde{\omega},k_1)$ by $d(m,(\tilde{x},\tilde{\omega},k_1)) \defeq \ell(m) + (-m)_{\tau, (\tilde{x},\tilde{\omega},k_1)} $ and $d(m, (\tilde{y},\tilde{\omega},k_1)) \defeq \ell(m) + (-m)_{\tau, (\tilde{y},\tilde{\omega},k_1)} $ (note that both functions $d$ are positive). We denote the resulting points $(\overline{x}, \overline{\omega}_x, k_x)$ and $(\overline{y},\overline{\omega}_y,k_y)$ resp. Note that $k_x$ and $k_y$ are not necessarily the same; neither are $\overline{\omega}_x$ and $\overline{\omega}_y$.

\begin{remark}
    The minus sign was included next to the $m$ before doing the time change because we want to remember to compute it using $\tau^-$, i.e. we are going backwards and we need to remember that as we compute the time changed value.
\end{remark}

For simplicity, we will take our overall `V' and divide it into its two components, the `V' we have at the fixed point $(\tilde{x},\tilde{\omega},k_1)$ will be called `V1' and the `V' we have at the point  $(\tilde{y},\tilde{\omega},k_1)$ will be called `V2'. This allows us to drop some notation regarding time changed $m$ and the function $d$. Let us just focus on `V1' as everything we have to say will hold for `V2' as well. Rename $m_{\tau, (\tilde{x},\tilde{\omega},k_1)}$ as $m_1^{\tau}$ and $d(m,(\tilde{x},\tilde{\omega},k_0))$ as $d_1(m)$. We will have to show that $d_1$ is Bi-Lipschitz in $m$ (on a good set) in order to pick $m$ such that flowing backwards by $m$ from $(\tilde{x},\tilde{\omega},k_1)$ we land in the set $\tilde{K}_2$, and flowing forwards by $d_1(m)$ we land in the set $K_3$. We can use Lemma \ref{birkhoff} to do one or the other, but we have to run a different argument to ensure we can do both at the same time. A similar argument will work for `V2' where we use the notation $m_2^{\tau}$ and $d_2(m)$ for $m_{\tau, (\tilde{y},\tilde{\omega},k_1)}$ and $d(m,(\tilde{y},\tilde{\omega},k_0))$ respectively.

\begin{remark}\label{bi}
    It is worthwhile to recall how flowing backwards works. When one passes zero in the $[0,1[$-component, one applies the inverse of $f_{-1}$(i.e. the automorphism in the -1 spot of the $\Omega$-component). So if one starts at $k=0.22$ (the $[0,1[$-component) and flows backwards by $0.23$ one has applied a single diffeomorphism.

    The time changed flow going backwards was described in Section \ref{sec:invert}. Being closer to 1 in the $[0,1[$-component is being further away from zero and hence being further away from reaching $\tau^-$ of the point (where $\tau^-$ is always negative). 

\end{remark}

\begin{lemma}\label{BiLip}
    Take the interval $[m_1,m_2]$ from Section \ref{sec:green}. The map $$d_1 : [m_1,m_2] \to ]-,\infty,0],$$ is Bi-Lipschitz, (similar for $d_2$ and we get the same Lipschitz constant works in this case).
\end{lemma}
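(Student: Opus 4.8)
The goal is to show that $d_1(m) = \ell(m) + (-m)_{\tau,(\tilde{x},\tilde{\omega},k_1)}$ is Bi-Lipschitz in $m$ on the interval $[m_1,m_2]$, with a Lipschitz constant that can be taken uniform (and the same bound works for $d_2$). I would decompose $d_1$ into its two summands and bound the Lipschitz behavior of each separately. For the first summand, recall from Equation \eqref{ellrel1} that $\ell(m)$ is the piecewise-linear interpolation of $m \mapsto \ln(M_{NFC}\epsilon_f / (\tfrac{1}{M_{NFC}M_rM_aM_D}D_m)) = \mathrm{const} - \ln D_m$, so on each unit interval its slope is exactly $\ln(D_m/D_{m+1})$. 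By Lemma \ref{bdnded3}, $D_m < D_{m+1} \leq (e^{-(\lambda^--\epsilon)}+\tfrac12 C)D_m$, which gives $-\ln(e^{-(\lambda^--\epsilon)}+\tfrac12 C) \leq \ln(D_m/D_{m+1}) < 0$; hence the slope of $\ell$ is bounded in absolute value by $\ln(e^{-(\lambda^--\epsilon)}+\tfrac12 C)$, and bounded away from $0$ is obtained from $D_m < D_{m+1}$ together with the quantitative lower bound $\alpha_s^{m+1}\geq (1-k_1)e^{-(\lambda^-+\epsilon)}\alpha_s^m$ from Lemma \ref{bdnded3} (which forces $D_m/D_{m+1}$ to be uniformly bounded below $1$). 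So $\ell$ is Bi-Lipschitz with constants depending only on the fixed quantities $\lambda^\pm, \epsilon, C, k_1$.

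For the second summand, $(-m)_{\tau,(\tilde{x},\tilde{\omega},k_1)}$ is just $m$ re-expressed in the time-scale of the time-changed flow: it is the amount of time-changed flow needed to undo $m$ units of standard flow backward from $(\tilde{x},\tilde{\omega},k_1)$. Since the roof function $\tau$ satisfies $\lambda^+-\epsilon \leq \tau \leq \lambda^++\epsilon$ by Equation \eqref{tau1} (and correspondingly $-(\lambda^++\epsilon)\leq \tau^- \leq -(\lambda^+-\epsilon)$), flowing by one unit of standard flow corresponds to somewhere between $\lambda^+-\epsilon$ and $\lambda^++\epsilon$ units of time-changed flow. Thus $m \mapsto (-m)_\tau$ is monotone increasing and Bi-Lipschitz with constants $\lambda^+-\epsilon$ and $\lambda^++\epsilon$ (this needs a little care at the boundaries of unit intervals where the count of diffeomorphisms jumps, but the two-sided bound on $\tau$ handles this uniformly, exactly as in the derivation of Corollary \ref{independent}). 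Composing, $d_1$ is a sum of two Bi-Lipschitz functions, one increasing and one decreasing.

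The one genuine subtlety — and the main obstacle — is that $\ell(m)$ is \emph{decreasing} in $m$ while $(-m)_\tau$ is \emph{increasing}, so in the sum the derivatives partially cancel and one must verify they do not cancel \emph{exactly}, i.e. that $d_1$ stays strictly monotone (or at least Bi-Lipschitz) rather than collapsing to something with zero derivative. This is precisely what the constant $M_D = e^{-(\lambda^-+\delta)}+\tfrac12 C$ tacked onto the definition of $\ell$ in Equation \eqref{ellrel1} is designed to address: the slope of $(-m)_\tau$ is at least $\lambda^+-\epsilon$, and one checks that the slope of $-\ell(m)$ in magnitude never exceeds $\ln(e^{-(\lambda^--\epsilon)}+\tfrac12 C)$, which by the definition of $L$ in Section \ref{sec:lusin} — namely $L = \max\{2\ln(e^{-(\lambda^--\epsilon)}+\tfrac12 C)+\lambda^++\epsilon, \tfrac{1}{\lambda^++\epsilon}\}$, see Equation \eqref{Lipconst} — is controlled so that $d_1' = (-m)_\tau' - (-\ell)' $ is bounded below by a positive constant and above by $L$. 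I would assemble the Lipschitz constant explicitly as $L$ to match the forward reference, observe that none of the bounds used depend on the choice of $m$ within $[m_1,m_2]$ nor on whether we are at point $(\tilde{x},\tilde{\omega},k_1)$ or $(\tilde{y},\tilde{\omega},k_1)$ (both use the same uniform bounds on $\tau$ and on $D_m/D_{m+1}$), and conclude that $d_1$ and $d_2$ are both Bi-Lipschitz with the same constant $L$, completing the proof.
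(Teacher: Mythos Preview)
Your overall strategy matches the paper's: decompose $d_1 = \ell(m) + (-m)_1^\tau$ and bound the Lipschitz behaviour of each summand separately, using Lemma~\ref{bdnded3} for $\ell$ and the two-sided bound $\lambda^+-\epsilon \leq \tau \leq \lambda^++\epsilon$ for $(-m)_1^\tau$. The upper Lipschitz bound goes through exactly as you describe (and in fact your piecewise-linear slope argument gives the slightly sharper constant $\ln(e^{-(\lambda^--\epsilon)}+\tfrac12 C)$ for the $\ell$ part, where the paper's case analysis produces an extra factor of $2$).

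The gap is in your lower Lipschitz bound, and it comes from a sign error. In the paper's convention $(-m)_1^\tau$ is written as a sum of values of $\tau^-$ (which is negative), so $(-m)_1^\tau$ is negative and \emph{decreases} as $m$ increases; it does not increase. Since $\ell$ also decreases, both summands of $d_1$ move in the same direction and there is no cancellation whatsoever: for $m' > m$ one has $d_1(m)-d_1(m') = [(-m)_1^\tau - (-m')_1^\tau] + [\ell(m)-\ell(m')]$, both brackets positive, hence $\geq (-m)_1^\tau - (-m')_1^\tau \geq (\lambda^+-\epsilon)(m'-m)$, which is exactly the paper's argument. Your proposed route --- arguing that the slope of $(-m)_\tau$ dominates the slope of $-\ell$ --- would require $(\lambda^+-\epsilon) > \ln(e^{-(\lambda^--\epsilon)}+\tfrac12 C) > |\lambda^-|+\epsilon$, i.e.\ $\lambda^+ + \lambda^- > 2\epsilon$; this fails precisely in the volume-preserving case $\lambda^+ + \lambda^- = 0$, which is the main case of Theorem~\ref{bigthmeasy}. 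Relatedly, the constant $M_D$ in Equation~\eqref{ellrel1} is not there to prevent cancellation; it was inserted so that the piecewise-linear interpolation of $\ell$ still guarantees the endpoints reach distance at least $\epsilon_f$ (see the paragraph immediately preceding \eqref{ellrel1}).
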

\begin{proof}
    First we want to show that there is $L>0$ such that $|d_1(m) - d_1(m')|\leq L|m-m'|$ for arbitrary $m$ and $m'$. 
    Recall the definition of $d_1$ from Equation \eqref{d} and plug in:
\begin{align*}
    |d_1(m) - d_1(m')| &= |(-m)_1^{\tau}-(-(m'))_1^{\tau} + \ell(m) -\ell(m')|\\
    &\leq |(-(m'))_1^{\tau}-(-m)_1^{\tau}| + |\ell(m) - \ell(m')|.
\end{align*}
By Equation \eqref{ellrel1} we have that this is equal to 
\begin{align*}
    |(-m)_1^{\tau}-(-(m'))_1^{\tau}| + |\ln\left(\frac{M_{NFC}\epsilon_f}{\frac{1}{M_{NFC}M_rM_aM_D}D_{m'} }\right) + (m'-\lfloor m' \rfloor)\ln\left(\frac{D_{m'}}{D_{m'+1}} \right) \\ - \ln\left(\frac{M_{NFC}\epsilon_f}{\frac{1}{M_{NFC}M_rM_aM_D}D_m }\right) - (m-\lfloor m \rfloor)\ln\left(\frac{D_m}{D_{m+1}}\right) |
\end{align*}

Let us control the first term in the sum above. Recall Remark \ref{bi} and Section \ref{sec:invert}. Note also that $k_1$ plays a role here, we had defined $d_1$ once we fixed a point that had this $[0,1[$-component. It will show up in our computations. Making sure to keep the convention that $k_1$ is written as a positive number between 0 and 1, we can write
\begin{align}
    (-m)^{\tau}_1 &= k_1\tau^-(\tilde{x},\tilde{\omega}) + \tau^-(F^{-1}(\tilde{x},\tilde{\omega})) +\dots \tau^-(F^{\lfloor k_1-m\rfloor + 1}(\tilde{x},\tilde{\omega})) + a, \\
    (-m')^{\tau}_1 &= k_1\tau^-(\tilde{x},\tilde{\omega}) + \tau^-(F^{-1}(\tilde{x},\tilde{\omega})) +\dots \tau^-(F^{\lfloor k_1-m' \rfloor +1}(\tilde{x},\tilde{\omega})) + b.
\end{align}
Note that the first term $k_1\tau^-(\tilde{x},\tilde{\omega})$ appears because once we pass 0 we hit it with the automorphism $f_{-1}^{-1}$. For example if $k_1=0.22$, then we are already $78\%$ on our way to hitting $\tau^-(\tilde{x},\tilde{\omega})$, the remainder that we need is that $22\%$. The $a$ and $b$ terms show up because 
we have not quite reached the next $\tau^-$ value, i.e. $a\in [0, \tau^-(F^{\lfloor k_1-m\rfloor }(\tilde{x},\tilde{\omega}))[$ and $b\in [0, \tau^-(F^{\lfloor k_1-m'\rfloor }(\tilde{x},\tilde{\omega}))[$. We can easily compute $a$ and $b$ from what we have as clearly the following relations holds:
\begin{align}
    \psi_m \defeq \lfloor k_1-m\rfloor - (k_1-m) +1 &= \frac{a}{\tau^-(F^{\lfloor k_1-m\rfloor }(\tilde{x},\tilde{\omega}))}>0,\\
       \psi_{m'} \defeq \lfloor k_1-m'\rfloor -(k_1-m') +1 &= \frac{b}{\tau^-(F^{\lfloor k_1-m'\rfloor }(\tilde{x},\tilde{\omega}))}>0.
\end{align}
WLOG assume $m'>m$ (remember these are positive numbers), then $$(-(m'))^{\tau}_1> (-m)^{\tau}_1.$$ We have
\begin{align*}
    |(-(m'))^{\tau}_1 - (-m)^{\tau}_1| = \tau^-(F^{\lfloor k_1-m' \rfloor +1}(\tilde{x},\tilde{\omega})) + \dots  \tau^-(F^{\lfloor k_1-m\rfloor }(\tilde{x},\tilde{\omega})) + b-a
    \end{align*}
    \begin{align*}
    =| \tau^-(F^{\lfloor k_1-m' \rfloor +1}&(\tilde{x},\tilde{\omega})) + \dots  \tau^-(F^{\lfloor k_1-m\rfloor }(\tilde{x},\tilde{\omega})) \\ \nonumber &+ \psi_{m'}\cdot \tau^-(F^{\lfloor k_1-m'\rfloor }(\tilde{x},\tilde{\omega})) - \psi_m\cdot \tau^-(F^{\lfloor k_1-m\rfloor }(\tilde{x},\tilde{\omega}))|,
\end{align*}
\begin{align}\label{expanded1}
    =| \tau^-(F^{\lfloor k_1-m' \rfloor +1}&(\tilde{x},\tilde{\omega})) + \dots + \tau^-(F^{\lfloor k_1-m\rfloor -1 }(\tilde{x},\tilde{\omega}))\\ \nonumber &+  \tau^-(F^{\lfloor k_1-m\rfloor }(\tilde{x},\tilde{\omega}))(1-\psi_m) + \psi_{m'}\cdot \tau^-(F^{\lfloor k_1-m'\rfloor }(\tilde{x},\tilde{\omega}))|, 
\end{align}
\begin{align*}
    \leq | \tau^-(F^{\lfloor k_1-m' \rfloor +1}(\tilde{x},\tilde{\omega}))| &+ \dots + |\tau^-(F^{\lfloor k_1-m\rfloor -1 }(\tilde{x},\tilde{\omega}))|\\ &+  |\tau^-(F^{\lfloor k_1-m\rfloor }(\tilde{x},\tilde{\omega}))|(1-\psi_m) + \psi_{m'}|\tau^-(F^{\lfloor k_1-m'\rfloor }(\tilde{x},\tilde{\omega}))|.
\end{align*}

Using the bounds on $\tau$ we get that this is bounded above by 

\begin{align*}
   \left[\left( \left( \lfloor ( k_1-m \rfloor -1) - (\lfloor k_1-m' \rfloor +1 \right)+1\right) + (1-\psi_m) + \psi_{m'}\right] (\lambda^++\epsilon) =(\lambda^++\epsilon)(m'-m).
\end{align*}

On the other hand, we have to lower bound this quantity too. So we consider again $m'>m$ and take Equation \eqref{expanded1}. Again, the $\tau^-$-terms are lower bounded (in absolute value) by $\lambda^+-\epsilon$, so we get that Equation \eqref{expanded1} is bounded below by $(\lambda^+-\epsilon)(m'-m)$.

    Now we control the second term. It reduces to 
    \begin{align*}
       |\ln\left(\frac{M_{NFC}\epsilon_f}{\frac{1}{M_{NFC}M_rM_aM_D}D_{m'} }\right) + (m'-\lfloor m' \rfloor)\ln\left(\frac{D_{m'}}{D_{m'+1}} \right) \\ - \ln\left(\frac{M_{NFC}\epsilon_f}{\frac{1}{M_{NFC}M_rM_aM_D}D_m }\right) - (m-\lfloor m \rfloor)\ln\left(\frac{D_m}{D_{m+1}}\right) |
    \end{align*}
    
    We learned in Section \ref{sec:green} how $D_{m+1}$ and $D_m$ are related, see Lemma \ref{bdnded3} or Equation \eqref{Dmrel}. When we apply an automorphism, we never grow bigger than a factor of $\ln(e^{-(\lambda^--\epsilon)}+\frac{1}{2}C)$. Remember throughout that $D_m$ is constant on an interval of length 1. 

We deal with cases:

The first case is that $m'>m$ but $(m'-m)< 1$. There are two subcases, one is that $D_m=D_{m'}$, the other is that $D_{m'}<D_m$.

Let us deal with the first subcase: $D_m = D_{m'}$, then 
\begin{align*}
    \ell(m)-\ell(m') = (m'-m')\ln\left( \frac{D_{m+1}}{D_m} \right) \leq (m'-m) \ln(e^{-(\lambda^--\epsilon)}+\frac{1}{2}C).
\end{align*}
    So we are done.

    Now we deal with the second subcase: $D_{m'}<D_m$. Then really $D_{m'}=D_{m+1}$.
    \begin{align*}
        \ell(m)-\ell(m') &= \ln\left(\frac{D_{m+1}}{D_m} \right) - (m'-\lfloor m'\rfloor )\ln\left(\frac{D_{m+1}}{D_{m+2}} \right) - (m - \lfloor m \rfloor )\ln\left( \frac{D_{m+1}}{D_m} \right)\\ \nonumber
        &= (1-m+\lfloor m \rfloor)\ln\left( \frac{D_{m+1}}{D_m}\right) + (m'-\lfloor m' \rfloor) \ln\left( \frac{D_{m+2}}{D_{m+1}} \right)\\ \nonumber
        &\leq (1-m+\lfloor m \rfloor + m' - \lfloor m' \rfloor) \ln(e^{-(\lambda^--\epsilon)}+\frac{1}{2}C)\\ \nonumber &= (m'-m)\ln(e^{-(\lambda^--\epsilon)}+\frac{1}{2}C).
    \end{align*}
    The last equality holds because the condition of the subcase is really that $\lfloor m \rfloor + 1 = \lfloor m' \rfloor$.

The second case is $m'>m$ but $(m'-m)\geq 1$. We could have stopped with the above and just said that, since $d$ is continuous on a compact interval,  $|d(m)-d(m')|$ is uniformly bounded from above and hence we are done. However, we actually want that this Lipschitz constant computed in this proof is independent of the interval $[m_1,m_2]$. The only restriction we have on $[m_1,m_2]$ is that we need $m$ large enough so that that Lemma \ref{bdnded3} holds. Hence we do more work.

Note first that $\ell(m) = \ell(m') \leq \ell(\lfloor m \rfloor) - \ell(m')$. We will work with this quantity.
\begin{align}
    \frac{\ell(\lfloor m \rfloor) - \ell(m')}{m'-m} = \frac{\ln \left(\frac{D_{m'}}{D_m} \right) - (m'-\lfloor m' \rfloor)\ln \left( \frac{D_{m'}}{D_{m'+1}}  \right)}{m'-m}.
\end{align}
Since $(m'-m)\geq 1$, we have that $(m'-m)>m'-\lfloor m' \rfloor$. Also note that $D_{m'}<D_{m'+1}$ and so $\ln \left( \frac{D_{m'}}{D_{m'+1}}  \right)$ is negative.
\begin{align*}
    &\leq \frac{\ln\left( \frac{D_{m'}}{D_m}\right)}{m'-m} + \ln \left( \frac{D_{m'+1}}{D_{m'}} \right)\\
    &\leq \frac{\lfloor m'-m \rfloor}{m'-m}\ln(e^{-(\lambda^--\epsilon)}+\frac{1}{2}C) + \ln(e^{-(\lambda^--\epsilon)}+\frac{1}{2}C) \\
    &\leq 2\ln(e^{-(\lambda^--\epsilon)}+\frac{1}{2}C).
\end{align*}

Now we do the other side of the computation. Note that $d$ is decreasing because as $m$ increases, $(-m)_1^{\tau}$ decreases and $\ell$ decreases (in fact both do so strictly and hence so does $d$). Take $m,m'\in [m_1,m_2]$ without loss of generality, $m'>m$, then
\begin{align}
    |d_1(m)-d_1(m')| = d_1(m)-d_1(m') &\geq (-m)_1^{\tau}-(-m')_1^{\tau} + \ell(m)-\ell(m')\\
    &= (\lambda^+-\epsilon)(m'-m) - \ell(m+1) + \ell(m) \geq \lambda^+-\epsilon.
\end{align}

We get Lipschitz constant to be 
\begin{align} \label{Lipconst}
L \defeq \max\{ 2\ln(e^{-(\lambda^--\epsilon)}+\frac{1}{2}C) +\lambda^++\epsilon, \frac{1}{\lambda^++\epsilon} \}.
\end{align}
\end{proof}

Now that we have established that $d_1$ is $L$-BiLipschitz, a similar argument works for $d_2$, we get the same Lipschitz constant; now we can use the following lemma.

\begin{lemma}\label{Ld}
    Consider $d:[a,b] \to [0, \infty[$ where $0\leq a,b <\infty$. If $E\subset [a,b]$ is such that $Leb(E)<\epsilon (b-a)$ and $d$ is $L$-Lipschitz then $Leb(d(E))<L\cdot \epsilon (b-a)$. Further, if additionally $d$ is BiLipschitz, then for any subset $F \subset [a,b]$ such that $Leb(F)>\eta (b-a)$, we have $Leb(d(F))>\frac{1}{L}\cdot \eta (b-a)$.
\end{lemma}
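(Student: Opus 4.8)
The plan is to prove Lemma \ref{Ld} directly from the definition of the Lipschitz (and Bi-Lipschitz) property together with elementary measure theory, in particular the fact that a Lipschitz image of a set with small Lebesgue measure has small Lebesgue measure, and a standard covering/approximation argument for the reverse inequality. None of this requires any of the dynamical machinery; it is purely a real-analysis statement.

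First I would prove the first assertion. Given $E \subset [a,b]$ with $\leb(E) < \epsilon(b-a)$, choose an open set $O \supset E$ (relatively open in $[a,b]$) with $\leb(O) < \epsilon(b-a)$; write $O = \bigsqcup_n I_n$ as a countable disjoint union of intervals $I_n$ of length $|I_n|$. Since $d$ is $L$-Lipschitz, $d(I_n)$ is contained in an interval of length at most $L|I_n|$, hence $\leb(d(I_n)) \leq L|I_n|$. Then $d(E) \subset d(O) = \bigcup_n d(I_n)$ gives, by countable subadditivity, $\leb(d(E)) \leq \sum_n \leb(d(I_n)) \leq L\sum_n |I_n| = L\,\leb(O) < L\epsilon(b-a)$, as required. (If one is fussy about measurability of $d(E)$ one works with outer measure throughout, which is all the statement needs; for Borel $E$ the image under a Lipschitz map is analytic hence measurable anyway.)

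Next I would prove the Bi-Lipschitz part. Suppose $d$ is $L$-Bi-Lipschitz, meaning $\tfrac1L|s-t| \leq |d(s)-d(t)| \leq L|s-t|$ for all $s,t$. Let $F \subset [a,b]$ with $\leb(F) > \eta(b-a)$. The key point is that $d$ is then a bijection onto its image with $d^{-1}$ being $L$-Lipschitz. Apply the already-established first assertion to the map $d^{-1} : d([a,b]) \to [a,b]$ and the set $d(F) \subset d([a,b])$: if we had $\leb(d(F)) \leq \tfrac1L \eta(b-a)$, then since $d^{-1}$ is $L$-Lipschitz we would get $\leb\big(d^{-1}(d(F))\big) \leq L \cdot \tfrac1L\eta(b-a) = \eta(b-a)$; but $d^{-1}(d(F)) = F$ since $d$ is injective, contradicting $\leb(F) > \eta(b-a)$. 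Hence $\leb(d(F)) > \tfrac1L\eta(b-a)$. (One should note the interval $d([a,b])$ is itself an interval, so "Lipschitz image has controlled measure" applies verbatim on its domain.)

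I do not expect any serious obstacle here; the only mildly delicate points are purely bookkeeping: being careful that $d$ is defined on a compact interval so its image is compact (hence measurable), that Bi-Lipschitz implies injective so that $d^{-1}(d(F)) = F$, and handling outer measure if one does not want to invoke that analytic sets are Lebesgue measurable. The whole argument is a few lines and could even be folded into the statement as "immediate from the definition of (Bi-)Lipschitz", but writing out the covering argument for the forward direction and the inverse-function trick for the reverse direction makes it self-contained.
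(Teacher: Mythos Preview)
Your proof is correct and follows essentially the same approach as the paper. The paper's argument for the first assertion is the same outer-regularity plus interval-decomposition argument you give (it phrases the interval step via the max/min of $d$ on the closure, but this is your observation that $d(I_n)$ lies in an interval of length $\leq L|I_n|$); for the second assertion the paper simply says ``the other argument is similar,'' and your inverse-function trick (apply the first part to $d^{-1}$, which is $L$-Lipschitz) is exactly the natural way to cash that out.
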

\begin{proof}
    The Lebesgue measure is regular and hence $$Leb(E) = \inf\{Leb(A):A\supset E,\text{ open and measurable}\}.$$ Since any open subset $A$ can be written as a countable disjoint union of open intervals, if we can prove that for $A = ]a',b'[\subset [a,b]$, $Leb(d(A))\leq L \cdot Leb(A)$ we will be done.

    Since $d$ is Lipschitz on $[a,b]$, it is continuous on $[a,b]$. If we close the interval $]a',b'[=A$ by adding the endpoints it is now compact and $d$ attains a max and min on $[a',b']$; call the maximizer and minimizers $c_1,c_2$. Then $d(]a',b'[)\subset [d(c_1),d(c_2)]$ and so the measure of $d(A) = d(]a',b'[)$ is bounded above by $|d(c_1)-d(c_2)|$. Since $d$ is Lipschitz, we have that $$|d(c_1)-d(c_2)|\leq L|c_1-c_2|\leq L|b'-a'|,$$ (since $c_1,c_2\in [a',b']$). Hence $Leb(d(A))\leq L\cdot Leb(A).$

    We conclude this is true for any open set $A$. Then by regularity, for any $A\supset E$, $Leb(d(A))\leq L \cdot Leb(A)$, taking infinimum of both sides over open sets $A\supset E$ we get $\inf_{A\supset E}Leb(d(A)) \leq L \cdot Leb(E)$, but since $d(E)\subset d(A)$ for every $A\supset E$, by monotonicity $Leb(d(E))\leq \inf_{A\supset E}Leb(d(A))$ so we have the result.

    The other argument is similar.
\end{proof}

Note that since $d$ is injective (as it is strictly decreasing), it is invertible on its image. Note also that the inverse of a Bilipschitz function is Bilipschitz with the same Bilipschitz constant.

\begin{lemma}\label{dd}
    For our choice of $m_1$ and $m_2$ as in Section \ref{sec:green} Equation \eqref{mchoice}, and $T_{big}$ large enough, we have that the interval $[d_1(m_2),d_1(m_1)]$ inside $[0,d_1(m_1)]$ has proportion at least $(1-\frac{1}{1+\eta})$ in measure (where $\eta$ is as in Section \ref{sec:lusin} and Lemma \ref{bb}). A similar result holds for $d_2$.
\end{lemma}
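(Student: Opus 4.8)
\textbf{Proof strategy for Lemma \ref{dd}.}
The plan is to show that the ratio $d_1(m_1)/d_1(m_2)$ is at least $1+\eta$ for $T_{big}$ large; once this is established, the interval $[d_1(m_2),d_1(m_1)]$ sits inside $[0,d_1(m_1)]$ with the complement $[0,d_1(m_2)]$ having length a fraction at most $\frac{1}{1+\eta}$ of $d_1(m_1)$, which is exactly the claimed proportion $\bigl(1-\tfrac{1}{1+\eta}\bigr)$. So the whole lemma reduces to the single inequality
\begin{align}\label{eq:dd-target}
    d_1(m_1) \geq (1+\eta)\, d_1(m_2),
\end{align}
and the analogous one for $d_2$. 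Since $d_1(m) = \ell(m) + (-m)^{\tau}_1$ with both summands positive, and by the Bi-Lipschitz estimates in Lemma \ref{BiLip} the term $(-m)^{\tau}_1$ is comparable to $m$ up to the constants $\lambda^+\pm\epsilon$, the dominant contribution to $d_1(m)$ for large $T_{big}$ is governed by $\ell(m)$ together with the linear-in-$m$ piece. I would therefore first isolate $\ell(m)$ and bound $d_1$ above and below by affine functions of $\ell(m)$ and $m$ using Lemma \ref{BiLip}.

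The key input is Lemma \ref{bb}, which gives $D_{m_2}^{(1+\eta)} > D_{m_1}$ for $T_{big}$ large, i.e. $(1+\eta)\ln D_{m_2} > \ln D_{m_1}$. Recalling the definition \eqref{ellrel1} of $\ell$, up to the fixed constants $M_{NFC}, M_r, M_a, M_D, \epsilon_f$ and the bounded interpolation term $(m-\lfloor m\rfloor)\ln(D_m/D_{m+1})$ (which is $O(1)$ since $D_m$ grows by at most a bounded factor per unit of $m$ by Lemma \ref{bdnded3}), we have $\ell(m) = \ln(\text{const}) - \ln D_m + O(1)$. Because the initial distance $D$ — hence $D_{m_1}$ and $D_{m_2}$ — goes to $0$ as $T_{big}\to\infty$ (it is bounded above by $2e^{-(\lambda^--\epsilon)m}D$ with $D\to 0$, as in the proof of Lemma \ref{bb}), we have $-\ln D_m \to +\infty$, so $\ell(m_1)$ and $\ell(m_2)$ both tend to infinity while their difference and the $O(1)$ terms stay controlled. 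Concretely, I would write $\ell(m_1) - (1+\eta)\ell(m_2)$, substitute $\ell(m) \approx -\ln D_m$, and use Lemma \ref{bb} in the form $(1+\eta)\ln D_{m_2} - \ln D_{m_1} > 0$ to conclude $\ell(m_1) - (1+\eta)\ell(m_2)$ exceeds a quantity that is, up to bounded error, $(1+\eta)\ln D_{m_2} - \ln D_{m_1}$ minus $O(1)$ and minus $\eta\ln(\text{const})$; since the leading term of this can be made as large as desired by taking $T_{big}$ large (it grows like $\eta(\lambda^--\epsilon)m_2$ in magnitude, and $m_2 = \tfrac{C_b}{10^{i+2}}T_{big}\theta_2 \to \infty$), it dominates. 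Adding the positive term $(-m_1)^{\tau}_1 - (1+\eta)(-m_2)^{\tau}_1$, which by Lemma \ref{BiLip} is bounded below by $(\lambda^+-\epsilon)m_1 - (1+\eta)(\lambda^++\epsilon)m_2$ — a quantity that is negative but only linear in $T_{big}$ with a controlled coefficient — I would check that the $\ell$-contribution still wins; this is where the precise choice of $\alpha_1, \alpha_2$ (via $J$) and $\eta < \calT$ from Section \ref{sec:lusin} enters, exactly mirroring the balancing done in Lemma \ref{bb}.

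The main obstacle I anticipate is the bookkeeping of the interpolation and the $O(1)$ error terms in \eqref{ellrel1}: one must verify that the piecewise-linear correction $(m-\lfloor m\rfloor)\ln(D_m/D_{m+1})$ and the chain of fixed multiplicative constants $M_{NFC}^2 M_r M_a M_D/\epsilon_f$ do not swamp the gap produced by Lemma \ref{bb}. The resolution is that all of these are bounded uniformly in $T_{big}$ (by Lemma \ref{bdnded3} for the interpolation term, and by the definitions in Section \ref{sec:lusin1} for the constants), whereas the gap $(1+\eta)\ln D_{m_2} - \ln D_{m_1}$, by the proof of Lemma \ref{bb}, grows like $T_{big}$; so for $T_{big}$ sufficiently large \eqref{eq:dd-target} holds. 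Finally, the statement for $d_2$ follows by the identical computation since Lemma \ref{BiLip} gives the same Lipschitz constant $L$ and the quantities $D_m$ are the same (they depend on the fixed points $(\tilde x,\tilde\omega)$, $(\tilde y,\tilde\omega)$ and $m$, not on which leg of the ``V'' we are on), and then $\calS = 1 - \tfrac{1}{1+\eta}$ is precisely the asserted proportion. $\qed$
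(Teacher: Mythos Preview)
Your reduction to the single inequality $d_1(m_1)\geq(1+\eta)\,d_1(m_2)$ and the invocation of Lemma~\ref{bb} match the paper's proof exactly. The paper's execution is a bit simpler, however: since $(-m)^\tau_1<0$ and $m_1<m_2$, one has $(-m_1)^\tau_1\geq(1+\eta)(-m_2)^\tau_1$ outright (both sides are negative and the right side is more so), so the $\tau$-part requires no compensation from the $\ell$-part; the remaining inequality $\ell(m_1)\geq(1+\eta)\,\ell(m_2)$ then follows immediately from Lemma~\ref{bb} together with $\epsilon_f M<1$ (where $M=M_{NFC}^2M_rM_aM_D$), without extracting a quantitative growth rate from inside the proof of Lemma~\ref{bb}. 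Note also that your lower bound $(\lambda^+-\epsilon)m_1-(1+\eta)(\lambda^++\epsilon)m_2$ for the $\tau$-difference uses the wrong sign convention for $(-m)^\tau_1$; with the correct (negative) sign the term is already positive, which is precisely why the paper can dispense with your compensation argument.
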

\begin{proof}
For $T_{big}$ large enough we can use Lemma \ref{bb}. Let $\eta$ be as in this lemma.
\begin{claim}
   We have that $d_1(m_1)\geq (1+\eta)d_1(m_2)$.
\end{claim}

 \begin{proof}
 Let $M = M_{NFC}^2M_rM_aM_D$.

We can drop the interpolation terms for a small price and simply check that
\begin{align*}
    \ln\left(\frac{\epsilon_fM}{D_{\lfloor m_1+1\rfloor}}\right) + \left(-\frac{C_b}{10^{i+J}}T_{big}\theta_2\right)^{\tau} > (1+\eta)\left( \ln\left( \frac{\epsilon_fM}{D_{m_2}} \right) + \left( -\frac{C_b}{10^{i+2}} T_{big}\theta_2 \right)^{\tau}  \right).
\end{align*}

Note that $\tau$ is bounded (between $\lambda^+-\epsilon$ and $\lambda^++\epsilon)$. Also as $T_{big} \to \infty$, the length of $[m_1,m_2]$ goes to infinity, so we have that eventually $$\left(-\frac{C_b}{10^{i+3}}T_{big}\theta_2\right)^{\tau} \geq (1+\eta) \left( -\frac{C_b}{10^{i+2}} T_{big}\theta_2 \right)^{\tau},$$ (remember both terms are negative), so it suffices to compare the remaining terms (coming from the definition of $\ell$).

So we want to show $\ln\left(\frac{\epsilon_fM}{D_{\lfloor m_1+1\rfloor}}\right) \geq (1+\eta) \ln\left( \frac{\epsilon_fM}{D_{m_2}} \right)$ which reduces to showing
\begin{align}\label{ddd}
   (1+\eta) \ln(D_{m_2})> \ln(D_{\lfloor m_1+1 \rfloor}) + \ln(\epsilon_f M).
\end{align}
Remember $\epsilon_f$ is small and $\epsilon_fM<1$. Then this is equivalent to 
$D_{m_2}^{(1+\eta)} > D_{m_1}\epsilon_f M$.

This can be done for $T_{big}$ large by Lemma \ref{bb} because it gives us that $D_{m_2}^{(1+\eta)}>D_{m_1}$.
\end{proof}
Then, note that the Lebesgue measure of $[0,d_1(m_1)]$ is $d_1(m_1)$ while the Lebesgue measure of $[d_1(m_2),d_1(m_1)]$ is $d_1(m_1)-d_1(m_2)$ which by our claim is at least $d_1(m_1)(1-\frac{1}{1+\eta})$.    
\end{proof}

\begin{lemma}\label{goodlanding}
     Let $K\subset Z$ be such that $\hat{m}(K) = 1-\epsilon' = \gamma$ for some $\epsilon'>0$ small. Let $\delta = \alpha \gamma$ for some $\alpha>0$ small, and fix $\epsilon>0$ small. Let $E^c$ be the corresponding Birkhoff set (for this $\delta$, $\epsilon$ and $K$) from applying Lemma \ref{birkhoff} and $S$ the corresponding parameter from this lemma. Take $b>S$ and $\eta\in ]0,1]$, and consider $a$ such that $b>(1+\eta)a$. Let $\xi>0$ be small and $x,y\in E^c$. If $$\beta = 2\gamma(1-\alpha)-1>(1-\xi) + \xi\left( \frac{1}{1+\eta}\right),$$ and if $G\subset [a,b]$ is such that $G = \{t\in [a,b] : F^t(x),F^t(y)\in K\}$, then $G$ is at least $(1-\xi)\times 100 \%$ of the Lebesgue measure of the interval $[a,b]$. That is to say, $G$ has Lebesgue measure at least $(b-a)(1-\xi)$.
\end{lemma}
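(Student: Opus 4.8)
\textbf{Proof proposal for Lemma \ref{goodlanding}.}

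The plan is to run a union-bound/inclusion-exclusion argument on the interval $[a,b]$ together with the Birkhoff estimate guaranteed by $x,y\in E^c$, in the spirit of the computation in Lemma \ref{bigint} (see also Remark \ref{half}), and then to feed in the hypothesis $b>(1+\eta)a$ to convert a bound on $[0,b]$ into a bound on $[a,b]$. First I would set, exactly as in Lemma \ref{bigint},
\begin{align*}
\eta_{b,x} \defeq \mathrm{Leb}\{t\in[0,b] : F^t(x)\in K\}, \qquad \eta_{b,y} \defeq \mathrm{Leb}\{t\in[0,b] : F^t(y)\in K\}.
\end{align*}
Since $b>S$ and $x,y\in E^c$, the defining property of the Birkhoff set (Lemma \ref{birkhoff}) with the chosen $\delta=\alpha\gamma$ gives $\eta_{b,x},\eta_{b,y} > (\gamma-\delta)b = \gamma(1-\alpha)b$, and likewise the upper bound $\eta_{b,x},\eta_{b,y} < (\gamma+\delta)b$. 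Then inclusion–exclusion on $[0,b]$ yields
\begin{align*}
\mathrm{Leb}\big(\{t\in[0,b] : F^t(x)\in K\}\cap\{t\in[0,b] : F^t(y)\in K\}\big) \;\geq\; \eta_{b,x}+\eta_{b,y}-b \;>\; \big(2\gamma(1-\alpha)-1\big)b \;=\; \beta b.
\end{align*}

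The next step is to locate this good mass inside $[a,b]$ rather than $[0,b]$. The bad part of $[0,a]$ has Lebesgue measure at most $a$, so the set $G=\{t\in[a,b] : F^t(x),F^t(y)\in K\}$ has $\mathrm{Leb}(G)\geq \beta b - a$. Using the hypothesis $b>(1+\eta)a$, i.e. $a < \frac{b}{1+\eta}$, I get
\begin{align*}
\mathrm{Leb}(G) \;\geq\; \beta b - a \;>\; \beta b - \frac{b}{1+\eta} \;=\; \Big(\beta - \frac{1}{1+\eta}\Big) b.
\end{align*}
On the other hand $b-a < b$, so to conclude $\mathrm{Leb}(G) \geq (1-\xi)(b-a)$ it suffices to have $\big(\beta - \frac{1}{1+\eta}\big) b \geq (1-\xi) b$, equivalently $\beta \geq (1-\xi) + \frac{1}{1+\eta}$. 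Comparing with the stated hypothesis $\beta > (1-\xi) + \xi\big(\frac{1}{1+\eta}\big)$: since $\xi\in(0,1)$ one has $\xi\cdot\frac{1}{1+\eta} \le \frac{1}{1+\eta}$, so the hypothesis as literally written is slightly weaker than $\beta \ge (1-\xi)+\frac{1}{1+\eta}$; I would therefore either (i) interpret the hypothesis in the strong form $\beta \geq (1-\xi)+\frac{1}{1+\eta}$ (which is what the sets $\tilde K_2, K_3, E_2^c, E_3^c$ in Section \ref{sec:lusin} are actually built to satisfy, since there $\gamma>3/4$ and $\alpha,\xi$ are tiny), or (ii) sharpen the trivial bound $a<b/(1+\eta)$ to $b-a > \frac{\eta}{1+\eta}b$, hence $b < \frac{1+\eta}{\eta}(b-a)$, and re-run the arithmetic so that the $\xi\cdot\frac{1}{1+\eta}$ form of the hypothesis is exactly what is needed; I expect the bookkeeping in option (ii) is where the only real care is required.

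The main (mild) obstacle is thus not conceptual but the exact matching of the numerical threshold on $\beta$ with the placement of $G$ inside $[a,b]$ — getting the $\xi$ versus $\xi/(1+\eta)$ factors to line up. Everything else is a direct application of Lemma \ref{birkhoff} plus inclusion–exclusion, identical in structure to Lemma \ref{bigint} and Remark \ref{half}. I would also remark at the end that $G$ is measurable (it is the preimage of $K\times K$ under the continuous-in-$t$, measurable map $t\mapsto (F^t(x),F^t(y))$ intersected with $[a,b]$), so that ``$\mathrm{Leb}(G)$'' makes sense, and note that the same argument applies verbatim with the time-changed flow $F^t_{tc}$ in place of $F^t$ whenever that is the flow under consideration in the application.
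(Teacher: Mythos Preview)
Your overall strategy---inclusion--exclusion on $[0,b]$ to get $\mathrm{Leb}(G')\geq\beta b$, then pass to $[a,b]$---is exactly the paper's approach. However, your option (i) is not merely ``slightly stronger'': the condition $\beta\geq(1-\xi)+\tfrac{1}{1+\eta}$ is unsatisfiable for the parameters in play, since $\beta<1$ while $\tfrac{1}{1+\eta}\geq\tfrac12$ and $\xi$ is small. So you must commit to option (ii), and once you do, the arithmetic lines up exactly with the stated hypothesis. The clean way (which is what the paper does, phrased via the worst-case sub-interval $[\tfrac{b}{1+\eta},b]$) is to bound the \emph{bad} mass rather than the good mass: the bad set in $[0,b]$ has measure at most $(1-\beta)b$, hence so does the bad set in $[a,b]$; since $b-a>\tfrac{\eta}{1+\eta}b$, the bad proportion in $[a,b]$ is at most
\[
\frac{(1-\beta)b}{b-a}<\frac{(1-\beta)(1+\eta)}{\eta}.
\]
Requiring this to be $\leq\xi$ is equivalent to $1-\beta\leq\tfrac{\xi\eta}{1+\eta}$, i.e.\ $\beta\geq 1-\tfrac{\xi\eta}{1+\eta}=(1-\xi)+\tfrac{\xi}{1+\eta}$, which is precisely the hypothesis. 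Your error in the first pass was using two lossy bounds in opposite directions (replacing $a$ by $\tfrac{b}{1+\eta}$ in $\beta b-a$, and $b-a$ by $b$ in $(1-\xi)(b-a)$); dropping either one recovers the correct threshold.
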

\begin{proof}
    Similar to the argument in Lemma \ref{bigint}, we have that given $x,y\in E^c$ Lemma \ref{birkhoff} gives us that the set of `good points' in $[0,b]$,
    $$G' = \{t\in [0,b] : F^t(x),F^t(y)\in K \} \subset [0,b],$$
    has proportion $\beta = 2\gamma(1-\alpha)-1$ inside $[0,m_2]$ in (Lebesgue) measure, i.e. $G'$ is $\beta \times 100\%$ of $[0,m_2]$ in measure.

    Now we are interested in the the proportion of `good points' inside $[a,b]$ where $b>(1+\eta)a$. This means that $a<\frac{b}{(1+\eta)}$ and so $[\frac{b}{1+\eta},b] \subset [a,b]$. We can analyse the proportion of good points in $[\frac{b}{(1+\eta)},b]$ to get a lower bound. 

    We know that inside $[0,b]$, a proportion of $(1-\beta)$ is `bad', i.e. belongs to $B'=[0,b]\setminus G'$. This is $(1-\beta)\times 100\%$ of the interval $[0,b]$. Let us assume (the worst case) that all of the bad points in $[0,b]$ belong to $[\frac{b}{(1+\eta)},b]$. Without loss of generality we can assume then that inside the interval $[\frac{b}{(1+\eta)},b]$, the interval $[\frac{b}{(1+\eta)},b-(1-\beta)b]$ is all good points and the interval $[b-(1-\beta)b,b]$ is all bad points. Then the proportion of good points in $[\frac{b}{(1+\eta)},b]$, i.e. the proportion of $G$ inside $[\frac{b}{(1+\eta)},b]$ is
    $$P\defeq \frac{\text{Leb}(G)}{\text{Leb}([\frac{b}{1+\eta},b])}=\frac{b-(1-\beta)b-\frac{b}{(1+\eta)}}{b-\frac{b}{(1+\eta)}}.$$

    Now we want to show that $P \geq (1-\xi)$, then we are done. Solving
    $$\frac{b-(1-\beta)b-\frac{b}{(1+\eta)}}{b-\frac{b}{(1+\eta)}}\geq (1-\xi),$$
    for $\beta$ gives us exactly that we require $\beta>(1-\xi) + \xi\left( \frac{1}{1+\eta}\right)$. This completes the proof.
\end{proof}

Recall the definitions of $m_1$ and $m_2$ as given in Equation \eqref{mchoice} with $T_{big}$ is as appropriately large as specified in that section. With this, we have the following lemma.

\begin{lemma}\label{landing}
    Given the interval $[m_1,m_2]$ (where $m_1,\ m_2$ as specified above), there exists a positive measure subset $G$ such that,
    \begin{enumerate}
        \item if $m\in G$ then flowing backwards by $m$ from the points $(\tilde{x},\tilde{\omega},k_1)$ and $(\tilde{y},\tilde{\omega},k_1)$ lands us in the set $\tilde{K}_2\times [0,1[$,
        \item if $m\in G$ then flowing backwards by $d_1(m)$ and $d_2(m)$ from the points $(\tilde{x},\tilde{\omega},k_1)$ and $(\tilde{y},\tilde{\omega},k_1)$ resp. lands both points in the set $K_3$.
    \end{enumerate}
\end{lemma}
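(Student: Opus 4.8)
\textbf{Proof plan for Lemma \ref{landing}.}

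The strategy is to combine the two applications of Birkhoff's theorem that have already been set up (the sets $E_2^c$ and $E_3^c$ from Section \ref{sec:lusin}) with the Bi-Lipschitz property of $d_1$ and $d_2$ established in Lemma \ref{BiLip}, using Lemma \ref{Ld} to transfer good-measure sets back and forth under the change of variables $m \mapsto d_i(m)$. First I would record the key measure bounds: the points $(\tilde x,\tilde\omega,k_1)$ and $(\tilde y,\tilde\omega,k_1)$ belong to $E_2^c \cap E_3^c$ by construction (they lie in $M = (K_1\times[0,1[)\cap E_2^c\cap E_3^c$, which is where we landed after flowing by $T$ in Lemma \ref{m2}), and we have fixed $m_1,m_2$ so that $m_1 > S$ for all the relevant $S$ coming from Lemma \ref{birkhoff}, and $m_2 > 2m_1$.

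For conclusion (1): apply Lemma \ref{goodlanding} (equivalently, the argument of Lemma \ref{bigint}) to the set $K = \tilde K_2\times[0,1[$ with the parameters $\gamma_2$, $\alpha_2$, $\beta_2$ chosen in Section \ref{sec:lusin}, starting from the points $(\tilde x,\tilde\omega,k_1)$, $(\tilde y,\tilde\omega,k_1) \in E_2^c$. This produces a set $G_1 \subset [m_1,m_2]$ of $m$ for which flowing backwards by $m$ from both points lands in $\tilde K_2\times[0,1[$, and the measure bound $\beta_2 > \frac{1}{200L^4}$ together with $m_2 > 2m_1$ guarantees (via the proportion computation in Lemma \ref{goodlanding}, using that $[m_1,m_2]$ is at least half of $[0,m_2]$) that $\mathrm{Leb}(G_1)$ is a definite positive fraction of $\mathrm{Leb}([m_1,m_2])$, say $\mathrm{Leb}(G_1) > (1-\frac{1}{100L^2})\mathrm{Leb}([m_1,m_2])$.

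For conclusion (2): here the subtlety is that we want both $d_1(m)$ and $d_2(m)$ to be good times to flow forward (landing in $K_3$), but $d_1$ and $d_2$ are different functions of $m$. I would first apply Lemma \ref{goodlanding} with $K = K_3$, starting from $(\tilde x,\tilde\omega,k_1) \in E_3^c$, over the interval $[d_1(m_2), d_1(m_1)]$ — note by Lemma \ref{dd} this interval has measure at least $(1-\frac{1}{1+\eta})d_1(m_1)$ inside $[0, d_1(m_1)]$, which is exactly the hypothesis shape needed in Lemma \ref{goodlanding} with $\xi = \frac{1}{100L^2}$ and the $\eta$ fixed in Section \ref{sec:lusin}; the measure condition $\beta_3 > (1-\frac{1}{100L^2}) + \frac{1}{100L^2}(\frac{1}{1+\eta})$ from the definition of $E_3^c$ is precisely what makes this work. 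This gives a set $H_1 \subset [d_1(m_2),d_1(m_1)]$ of good forward-flow times for the point, with $\mathrm{Leb}(H_1) \geq (1-\frac{1}{100L^2})\mathrm{Leb}([d_1(m_2),d_1(m_1)])$; pulling back through $d_1^{-1}$ (which is $L$-Bi-Lipschitz by Lemma \ref{BiLip} and Lemma \ref{Ld}) gives $\tilde H_1 = d_1^{-1}(H_1) \subset [m_1,m_2]$ with $\mathrm{Leb}(\tilde H_1) \geq \frac{1}{L}(1-\frac{1}{100L^2})\mathrm{Leb}([d_1(m_2),d_1(m_1)]) \geq \frac{1}{L}(1-\frac{1}{100L^2})(1-\frac{1}{1+\eta})d_1(m_1) \cdot (\text{ratio to } \mathrm{Leb}[m_1,m_2])$; converting via the Lipschitz bounds relating $\mathrm{Leb}([m_1,m_2])$ and $\mathrm{Leb}([d_1(m_2),d_1(m_1)])$, $\tilde H_1$ occupies a definite positive fraction of $[m_1,m_2]$. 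Do the same for $d_2$ to get $\tilde H_2$. Then take $G \defeq G_1 \cap \tilde H_1 \cap \tilde H_2$. The heart of the argument — and the main obstacle — is the bookkeeping that all three fractions $\mathrm{Leb}(G_1)/\mathrm{Leb}([m_1,m_2])$, $\mathrm{Leb}(\tilde H_1)/\mathrm{Leb}([m_1,m_2])$, $\mathrm{Leb}(\tilde H_2)/\mathrm{Leb}([m_1,m_2])$ each exceed $1 - \frac{1}{10L^2}$ (or some such explicit bound strictly above $2/3$), so that their triple intersection $G$ is nonempty, indeed of positive measure; this is where the carefully chosen constants $L$, $\eta$, $\gamma_2,\gamma_3$, $\beta_2,\beta_3$ from Section \ref{sec:lusin} are all spent, and one must check the inequalities chain through with the factor-of-$L$ losses incurred by each Bi-Lipschitz change of variables. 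Once $G$ is produced, any $m \in G$ satisfies both conclusions by construction.
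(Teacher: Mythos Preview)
Your proposal is correct and follows essentially the same route as the paper: Birkhoff (via Lemma \ref{goodlanding}) on $[m_1,m_2]$ for $\tilde K_2$, Birkhoff on $[d_i(m_2),d_i(m_1)]$ for $K_3$, transfer via the Bi-Lipschitz bound of Lemma \ref{BiLip} and Lemma \ref{Ld}, then intersect. The only organizational difference is that the paper pushes $G_1$ forward under $d_1$ first, intersects with $G_2$ in the image, and then pulls back once (defining $S_1 = G_1 \cap d_1^{-1}(G_2\cap d_1(G_1))$, which by injectivity equals your $G_1\cap \tilde H_1$); this slightly streamlines the constant tracking and yields the explicit ``$S_1$ is at least $98\%$ of $[m_1,m_2]$'' via the chain $\tfrac{1}{100L^4}\to\tfrac{1}{100L^3}\to\tfrac{1}{100L^2}\to\tfrac{2}{100}$ under two Lipschitz distortions, whereas your deferred bookkeeping would need to bound the complements the same way.
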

\begin{proof}
First let $\gamma_2 = m(\tilde{K}_2)$ and recall $\delta_2=\alpha_2\gamma_2$ in the definition of $E^c_2$ from Section \ref{sec:lusin}. We showed in Lemma \ref{m2} that there is some subset of positive measure of $[m_1,m_2]$, call it $G_1$, where we could choose $m$ from in Section \ref{sec:green} to get the points $(x,\sigma^{n'}\tilde{\omega})$ and 
$(y,\sigma^{n'}\tilde{\omega})$ (where $n' = \lfloor k_1-m \rfloor$) in the good set $\tilde{K}_2$ (i.e these are `good' choices of $m$). Now we will be explicit with the size of $G_1$.

Since $m_2>2m_1$ we can apply Lemma \ref{goodlanding} with $\eta=1$ and $\xi = \frac{1}{100L^4}$ since by construction $\beta > \left(1 - \frac{1}{200L^4} \right) = (1-\frac{1}{100L^4}) + \frac{1}{100L^4}\frac{1}{1+\eta}$ (see Section \ref{sec:lusin}). We get that $G_1$ as proportion $(1-\frac{1}{100L^4})$ inside $[m_1,m_2]$ and so $B_1=[m_1,m_2]$ has proportion $\frac{1}{100L^4}$ inside $[m_1,m_2]$.

We take this interval $[m_1,m_2]$ and apply $d_1$ to it, since it is a decreasing function we get the interval $[d_1(m_2),d_1(m_1)]$ and by Lemma \ref{Ld} we get that $$Leb([d_1(m_2),d_1(m_1)])>\frac{1}{L}(m_2-m_1).$$ We run the same Birkhoff argument for the points $(\overline{x},\overline{\omega}_x,k_x)$ and $(\overline{y},\overline{\omega}_y,k_y)$ using Lemma \ref{birkhoff} with $K=K_3$ (see Section \ref{sec:lusin}) using this interval $[d_1(m_2),d_1(m_1)]$. We get a set $G_2$ which is the set of `good' choices in $[d_1(m_2),d_1(m_1)]$ to flow by from the points $(\tilde{x},\tilde{\omega})$ and $(\tilde{y},\tilde{\omega})$ to land in the set $K_3$. We can do so since $|d_1(m_2)|>S_p$ where $S_p$ is the Birkhoff bound one gets with $E^c_3$ when using Lemma \ref{birkhoff}. Here, we are using $\hat{m}(K_3)=\gamma_2$ and $\delta=\alpha_3\gamma_3$ as in Section \ref{sec:lusin}; we let $$\beta_3 = 2\gamma_3(1-\alpha_3)-1.$$ By Lemma \ref{bb}, we have that $d_1(m_1)>(1+\eta)d_1(m_2)$ where $\eta$ is fixed in Section \ref{sec:lusin}. We take this to be the $\eta$ in Lemma \ref{goodlanding} with $\xi = \frac{1}{100L^2}$. Since $$\beta_3>(1-\frac{1}{100L^2}) + \frac{1}{100L^2}(1-\frac{1}{1+\eta}),$$ we get that $G_2$ has proportion $(1-\frac{1}{100L^2})$ inside $[d_1(m_2),d_1(m_1)].$ Let $$B_2=[d_1(m_2),d_1(m_1)]\setminus G_2,$$ it has proportion at most $\frac{1}{100L^2}$ inside $[d_1(m_2),d_1(m_1)]$.

Our ultimate goal will be to choose $m$ in the set $$S_1 \defeq G_1\cap d_1^{-1}(G_2\cap d_1(G_1)) \subset [m_1,m_2].$$ If we can do this, then being in $G_1$ ensures that we land in $\tilde{K}_2$ when we flow by $m$, but also that such an $m$ in $G_1$ will give a $d_1(m)$ that allows us to land in $K_3$ after flowing by $d_1(m)$. So we show that $S_1$ has large positive measure.

The measure of $B_1$ is $\frac{1}{100L^4}(m_2-m_1)$, when we apply $d_1$ to this set, its measure (by Lemma \ref{Ld}) is at most $(m_2-m_1)\frac{1}{100L^3}$ whereas the size of $[d_1(m_2),d_1(m_1)]$ is at least $\frac{1}{L}(m_2-m_1)$ (also by Lemma \ref{Ld}). So the proportion of $d_1(B_1)$ inside $[d_1(m_2),d_1(m_1)]$ is at most $\frac{1}{100L^2}$. This means that $d_1(G_1)$ has proportion at least $1-\frac{1}{100L^2}$ inside $[d_1(m_2),d_1(m_1)]$. Meanwhile $G_2$ has proportion $1-\frac{1}{100L^2}$ inside $[d_1(m_2),d_1(m_1)]$ and so $d_1(G_1)\cap G_2$ has proportion at least $1-\frac{2}{100L^2}$ inside $[d_1(m_2),d_1(m_1)]$. Let $$B_3 \defeq [d_1(m_2),d_1(m_1)]\setminus (d_1(G_1)\cap G_2),$$ it has proportion at most $\frac{2}{100L^2}$ inside $[d_1(m_2),d_1(m_1)]$.

Now we apply $d_1^{-1}$ to $[d_1(m_2),d_1(m_1)]$ to recover $[m_1,m_2]$. By Lemma \ref{Ld} we have that $$Leb([m_1,m_2])\geq\frac{1}{L}Leb([d_1(m_2),d_1(m_1)]),$$ and that $$Leb(d_1^{-1}(B_3))\leq L \cdot \frac{2}{100L^2} Leb([d_1(m_2),d_1(m_1)]) = \frac{2}{100L}Leb(d_1([m_1,m_2])).$$ Hence we have shown that $S_1\subset [m_1,m_2]$ is at least $98\%$ of the interval $[m_1,m_2]$.

Run the same argument for $d_2$ and get the corresponding analogous set $S_2$. Since they are both about 98\% of the interval, we take their intersection and get a subset of positive measure. We call this $G=S_1\cap S_2$. This completes the proof.
\end{proof}

\subsubsection{The end points}
\label{sec:MCT}
From the points $x, y\in X$, we flow forwards by the `time change' flow for time $\ell$ under a new future $\omega'$ to our final pair of points denoted $x',y'\in X$. First we will discuss how we pick this new future. We will need to do so in a way that ensures that we land in an appropriate set called $K_{4}'$ (see Section \ref{sec:lusin}). Throughout this section we let $K\defeq K_4'.$

\begin{claim}\label{Q1}
    We have that $ \hat{m}^{\tau}(K^c) < Q\epsilon_4.$
\end{claim}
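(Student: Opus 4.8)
The plan is to transfer the estimate from $\hat{m}$ to $\hat{m}^{\tau}$ using the quantitative comparison of the two measures. Recall from Equation \eqref{timemeas} that $d\hat{m}^{\tau}(x,\omega,k) = C_{\tau}\tau(x,\omega)\,dm(x,\omega)\,dt(k)$ and that $\tau$ is bounded above and below by positive constants (Equation \eqref{tau1}); hence $\hat{m}^{\tau}$ and $\hat{m}$ are mutually absolutely continuous, and by Lemma \ref{RN} the Radon--Nikodym derivatives $\frac{d\hat{m}^{\tau}}{d\hat{m}}$ and $\frac{d\hat{m}}{d\hat{m}^{\tau}}$ are bounded by a finite constant $Q$ off a set of small $\hat{m}^{\tau}$-measure.

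First I would note that $\hat{m}(K^c)$ is small. Since $K = K_4' = (K_4\times[0,1[)\cap K_{00}$ with $K_4\subset \Lambda\cap\Lambda_{loc}\cap K_{new}\cap K_{NFC}$, and each of these factor sets has $\hat{m}$-complement at most its respective (small) $\epsilon$-cost from Sections \ref{sec:lusin1} and \ref{sec:lusin}, subadditivity of $\hat{m}$ gives $\hat{m}(K^c) < \epsilon_4$, where $\epsilon_4$ denotes the sum of these costs (which, as is implicitly permitted throughout Section \ref{sec:lusin}, may be taken as small as needed).

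Next, apply Lemma \ref{RN} with $\epsilon_{\tau}$ chosen at most $\epsilon_4$ to obtain a compact $\calK\subset Z$ with $\hat{m}^{\tau}(\calK)>1-\epsilon_{\tau}$ and $\frac{d\hat{m}^{\tau}}{d\hat{m}}\leq Q$ on $\calK$. Splitting over $\calK$ and $\calK^c$,
\begin{align*}
    \hat{m}^{\tau}(K^c) &= \hat{m}^{\tau}(K^c\cap\calK) + \hat{m}^{\tau}(K^c\cap\calK^c) \\
    &\leq \int_{K^c\cap\calK}\frac{d\hat{m}^{\tau}}{d\hat{m}}\,d\hat{m} + \hat{m}^{\tau}(\calK^c) \leq Q\,\hat{m}(K^c) + \epsilon_{\tau} < Q\epsilon_4 + \epsilon_{\tau}.
\end{align*}
Since $Q\geq 1$ (the two Radon--Nikodym derivatives being reciprocals, both bounded by $Q$) and $\epsilon_{\tau}\leq\epsilon_4$, the final quantity is at most $2Q\epsilon_4$, so after replacing $Q$ by a slightly larger constant (still finite, still the constant furnished by Lemma \ref{RN} for the relevant $\epsilon_{\tau}$) we conclude $\hat{m}^{\tau}(K^c) < Q\epsilon_4$.

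This is a routine argument and I do not expect any genuine obstacle. The only points requiring attention are bookkeeping: ensuring the $\epsilon$-costs defining $K_4'$ sum to the intended $\epsilon_4$, and being careful that the constant $Q$ used here is consistent with the $Q$ appearing in the statement of the claim (which it is, once $\epsilon_{\tau}$ has been fixed in the application of Lemma \ref{RN}).
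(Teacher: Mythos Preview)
Your proof is correct and uses the same idea as the paper --- transfer the $\hat{m}$-bound on $K^c$ to $\hat{m}^{\tau}$ via the Radon--Nikodym derivative --- but you work harder than necessary. The paper simply writes $\hat{m}^{\tau}(K^c)=\int_{K^c}\frac{d\hat{m}^{\tau}}{d\hat{m}}\,d\hat{m}\leq Q\,\hat{m}(K^c)<Q\epsilon_4$, using that $\frac{d\hat{m}^{\tau}}{d\hat{m}}=C_{\tau}\tau$ is \emph{globally} bounded (Equation \eqref{tau1} gives $\lambda^+-\epsilon\leq\tau\leq\lambda^++\epsilon$, so no exceptional set is needed). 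Since you already observed this boundedness in your opening paragraph, the subsequent split over $\calK$ and $\calK^c$ from Lemma \ref{RN} is redundant; it produces the extra $\epsilon_{\tau}$ term and the awkward renaming of $Q$ at the end, both of which disappear if you just take $Q$ to be the global supremum of $C_{\tau}\tau$.
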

\begin{proof}
The set $K_3$ belongs to the set where Lemma \ref{RN} holds and so by Lemma \ref{RN}, and the fact that $\hat{m}(K^c)<\epsilon_4$, we have that 
\begin{align}
    \hat{m}^{\tau}(K^c) = \int_{K} \frac{d\hat{m}^{\tau}}{d\hat{m}}d\hat{m} \leq Q m(K^c) < Q\epsilon_4.
\end{align}
This gives us that $\hat{m}^{\tau}(K) >1-Q\epsilon_4$.
\end{proof}

Let $\hat{\calF}_0^{\tau}$ be the sigma algebra of $\hat{m}^{\tau}$-measurable functions, i.e. the $\hat{m}^{\tau}$ completion of the natural sigma algebra on $Z$. Since $\hat{m}^{\tau}$ is absolutely continuous with respect to $\hat{m}$, they have the same measure zero sets and so this is the same as the $\hat{m}$-completion. Let $Q_{\ell}^{\tau} \defeq (F^{-\ell}_{tc})^{-1}(\hat{\calF}^{\tau}_0)$; for a function to be measurable with respect to this $\sigma$-algebra means that it only depends on events after $\ell$ time in the past under the time changed flow with roof function $\tau$. 

We start our discussion at the points $(\overline{x},\overline{\omega}_x,k_x)$ and $(\overline{y},\overline{\omega}_y,k_y)$ which live in $K_3$. By our definition of $K_3$ in Section \ref{sec:lusin}, this means that they satisfy the following lemma from \cite[Corollary 3.8]{BQ}:

\begin{lemma}[Law of last jump, \cite{BQ}, Corollary 3.8]\label{LLJ}
     For any $\hat{m}$-measurable function $\varphi:Z \to \bbR$ (e.g. $\mathds{1}_K)$, for every $\ell\geq 0$, for $\hat{m}$-a.e. $(\overline{x},\overline{\omega},\overline{k})\in Z$ we have
    \begin{align}
    \bbE_{\tau}[\varphi | Q_{\ell}^{\tau}](\overline{x},\overline{\omega},\overline{k}) = \int_{\Omega^+} \varphi(F_{tc}^{\ell} (x,(\omega^-,\omega^+),k))d\mu^{\bbN}(\omega^+),
\end{align}
where $\bbE_{\tau}$ is expectation with respect to $\hat{m}^{\tau}$.
\end{lemma}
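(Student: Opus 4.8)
\textbf{Proof proposal for Lemma \ref{LLJ} (Law of last jump).}

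The plan is to deduce this statement directly from the standard ``law of last jump'' of Benoist--Quint \cite[Corollary 3.8]{BQ}, after reconciling two discrepancies: first, their statement is phrased for the one-step random walk and its associated skew product, whereas ours is phrased for the time-changed suspension flow $F_{tc}^\ell$ with roof function $\tau$ on $Z$; second, the expectation in our statement is taken with respect to $\hat{m}^\tau$ rather than $\hat{m}$. Since the excerpt instructs us that we may assume any earlier result, and Lemma \ref{LLJ} is quoted verbatim from \cite{BQ}, the work here is really to verify that the $\sigma$-algebra $Q_\ell^\tau = (F_{tc}^{-\ell})^{-1}(\hat{\calF}_0^\tau)$ is the correct translation of their ``$\sigma$-algebra of events before the last jump'', and that $\hat{m}^\tau$ is the natural invariant measure making this work.

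First I would recall the underlying mechanism. For the discrete skew product, $\bbE[\varphi \mid \text{past before time } n]$ is computed by freezing the past coordinates $(\dots, f_{-1}, f_0, \dots, f_{n-1})$ and integrating $\varphi \circ F^n$ over the remaining future coordinates with respect to $\mu^{\bbN}$; the content of \cite[Corollary 3.8]{BQ} is precisely that conditioning on $(F^n)^{-1}(\hat{\calF}_0)$ (``everything that happened at least $n$ steps ago'') yields this integral formula, for $\nu\times\mu^{\bbZ}$-a.e.\ point. I would then translate this to the flow: by the construction in Section \ref{sec:flow} (Equations \eqref{p+}, \eqref{mchoice} and Lemma \ref{newNFC}), flowing by $\ell$ under $F_{tc}$ applies some number $p_\ell$ of diffeomorphisms drawn from the future of $\omega$, and the $[0,1[$-coordinate records the residual fractional progress. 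The key observation is that $Q_\ell^\tau$-measurable functions depend only on the data $\ell$ units of time in the past under $F_{tc}^{\ell}$, which after tracking the roof function means they depend only on the $\Omega^-$-coordinate (the past) together with the frozen intermediate draws; hence conditioning on $Q_\ell^\tau$ is exactly averaging $\varphi \circ F_{tc}^\ell$ over $\omega^+ \in \Omega^+$ against $\mu^{\bbN}$, which is the right-hand side of the displayed formula.

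Next I would address the measure. The natural invariant measure for $F_{tc}^t$ is $\hat{m}^\tau$, given in Equation \eqref{timemeas} by $d\hat{m}^\tau = C_\tau \tau\, dm\, dt$, which by the remark following that equation is absolutely continuous with respect to $\hat{m}$ with the same null sets (and, on the set $\calK$ of Lemma \ref{RN}, bounded Radon--Nikodym derivatives in both directions). The law of last jump is an almost-everywhere statement about conditional expectations, so it is insensitive to passing between two mutually absolutely continuous measures \emph{provided} one also uses the conditional expectation operator adapted to the chosen measure; this is why the statement writes $\bbE_\tau$, the conditional expectation with respect to $\hat{m}^\tau$. Concretely, I would apply \cite[Corollary 3.8]{BQ} to the skew product over the base, pull the resulting identity through the suspension and the time change (which is a measurable isomorphism of flows by the explicit cohomology in Lemma \ref{newNFC} and the invertibility discussion of Section \ref{sec:invert}), and observe that the reparametrization sends $\hat{m}$ to a measure equivalent to $\hat{m}^\tau$, so the identity holds $\hat{m}^\tau$-a.e.\ as well.

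The main obstacle I anticipate is purely bookkeeping: making precise the claim that $Q_\ell^\tau$-measurability means ``depends only on events more than $\ell$ time-units in the past,'' because under the time change the number of diffeomorphisms $p_\ell$ consumed in flowing by $\ell$ is itself a function of the starting point (through $\tau$), so ``$\ell$ units of time'' does not correspond to a fixed number of coordinate-shifts. The resolution is that $F_{tc}^\ell$ is nonetheless a measurable automorphism of $(Z, \hat{m}^\tau)$, so $(F_{tc}^{-\ell})^{-1}(\hat{\calF}_0^\tau)$ is an honest sub-$\sigma$-algebra, and the conditional-expectation identity of \cite{BQ} is a statement about the Markov structure of the \emph{one-sided} shift on futures, which is preserved under any such reparametrization that does not touch the past. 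I would therefore phrase the reduction at the level of the one-sided system $(X\times\Omega^+, F, \nu\times\mu^{\bbN})$, where the identification of the conditioning $\sigma$-algebra with ``the past'' is transparent, invoke \cite[Corollary 3.8]{BQ} there, and then lift back up to $Z$; this keeps the argument short and isolates the only delicate point to a single sentence about equivalence of measures.
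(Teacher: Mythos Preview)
The paper does not supply a proof of this lemma at all: it is stated with the attribution ``\cite{BQ}, Corollary 3.8'' and then used as a black box. So there is no in-paper proof to compare your proposal against; the author simply imports the result.

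Your proposal goes further than the paper by attempting to explain how the discrete-time statement in \cite{BQ} transfers to the time-changed suspension flow $F_{tc}^\ell$. You correctly isolate the two nontrivial translation issues (variable number of diffeomorphisms consumed by $F_{tc}^\ell$, and passage from $\hat m$ to $\hat m^\tau$), and your resolution --- that the Markov structure of the one-sided future is unaffected by a measurable reparametrization that touches only past and fractional-time data --- is the right idea. The one place where your sketch remains genuinely informal is the identification of $Q_\ell^\tau$ with a ``past'' $\sigma$-algebra when $p_\ell$ is itself point-dependent: you would need to say precisely that for $\hat m^\tau$-a.e.\ basepoint $(\overline x,\overline\omega,\overline k)$, the $\sigma$-algebra $(F_{tc}^{-\ell})^{-1}(\hat\calF_0^\tau)$ restricted to the fiber over $(\overline x,\overline\omega^-,\overline k)$ is generated by $\omega^+$-independent sets, which is what makes the conditional expectation an $\Omega^+$-average. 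This is true and not hard, but it is the step that separates a sketch from a proof; the paper avoids the issue entirely by citation.
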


Consider the tail $\sigma$-algebra $Q_{\infty}^{\tau} = \bigcap Q_{\ell}^{\tau}$.

\begin{claim} \label{Q2} We have that 
    \begin{align}
\int \bbE_{\tau}[\mathds{1}_K|Q_{\infty}^{\tau}](x,\omega,k)d\hat{m}^{\tau}(x,\omega,k) >1-Q\epsilon_4.
\end{align}
\end{claim}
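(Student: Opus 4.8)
The statement to prove is Claim \ref{Q2}, namely that
$$\int \bbE_{\tau}[\mathds{1}_K|Q_{\infty}^{\tau}](x,\omega,k)\,d\hat{m}^{\tau}(x,\omega,k) > 1-Q\epsilon_4.$$
The plan is to reduce this to Claim \ref{Q1} by the elementary properties of conditional expectation. The key observation is that $\bbE_{\tau}[\,\cdot\,|Q_{\infty}^{\tau}]$ is the conditional expectation operator with respect to the tail $\sigma$-algebra $Q_{\infty}^{\tau}$, taken inside the probability space $(Z,\hat{\calF}_0^{\tau},\hat{m}^{\tau})$ (recall $\hat{m}^{\tau}(Z)=1$ by the normalization in Equation \eqref{timemeas}). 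For any $\hat{m}^{\tau}$-integrable function $\varphi$, the tower property of conditional expectation gives
$$\int_Z \bbE_{\tau}[\varphi|Q_{\infty}^{\tau}]\,d\hat{m}^{\tau} = \int_Z \varphi\,d\hat{m}^{\tau}.$$

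\textbf{Carrying it out.} First I would apply the tower property with $\varphi = \mathds{1}_K$, which is bounded and hence $\hat{m}^{\tau}$-integrable since $\hat{m}^{\tau}$ is a probability measure. This yields
$$\int_Z \bbE_{\tau}[\mathds{1}_K|Q_{\infty}^{\tau}]\,d\hat{m}^{\tau} = \int_Z \mathds{1}_K\,d\hat{m}^{\tau} = \hat{m}^{\tau}(K).$$
Next I would invoke Claim \ref{Q1}, which establishes precisely that $\hat{m}^{\tau}(K) > 1-Q\epsilon_4$ (using Lemma \ref{RN}, the absolute continuity of $\hat{m}^{\tau}$ with respect to $\hat{m}$, and the bound $\hat{m}(K^c)<\epsilon_4$ coming from the choice of $K = K_4'$ in Section \ref{sec:lusin}). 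Combining the two displays gives the desired inequality immediately. One small point to record along the way is that $Q_{\infty}^{\tau} = \bigcap_{\ell} Q_{\ell}^{\tau}$ is a genuine sub-$\sigma$-algebra of $\hat{\calF}_0^{\tau}$ (each $Q_{\ell}^{\tau} = (F_{tc}^{-\ell})^{-1}(\hat{\calF}_0^{\tau})$ is, since $F_{tc}^{-\ell}$ is measurable and $\hat{m}^{\tau}$-preserving — this is why the natural invariant measure $\hat{m}^{\tau}$ was introduced in Section \ref{sec:flow}), so the conditional expectation $\bbE_{\tau}[\,\cdot\,|Q_{\infty}^{\tau}]$ is well-defined and the tower property applies without subtlety.

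\textbf{Main obstacle.} There is essentially no obstacle here: this claim is a one-line consequence of the tower property once Claim \ref{Q1} is in hand, and its role in the larger argument is purely to set up the Martingale convergence / Law of Last Jump machinery (Lemma \ref{LLJ}) that follows — specifically, it guarantees that the tail-measurable function $\bbE_{\tau}[\mathds{1}_K|Q_{\infty}^{\tau}]$ is close to $1$ in the $L^1(\hat{m}^{\tau})$ sense, so that on a set of nearly full measure it is pointwise close to $1$, which is what is needed to land the final endpoints in $K = K_4'$. The only thing worth being careful about is to phrase everything in the probability space $(Z,\hat{m}^{\tau})$ rather than $(Z,\hat{m})$, since $\hat{m}$ is the one whose restriction to good sets was controlled but $\hat{m}^{\tau}$ is the $F_{tc}$-invariant one; the passage between them is exactly Lemma \ref{RN}, already used in Claim \ref{Q1}.
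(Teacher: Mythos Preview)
Your proposal is correct and follows exactly the same approach as the paper: use the tower property of conditional expectation to identify the integral with $\hat{m}^{\tau}(K)$, then invoke Claim \ref{Q1}. The paper's proof is a single displayed line doing precisely this, so your additional remarks about well-definedness of $\bbE_{\tau}[\,\cdot\,|Q_{\infty}^{\tau}]$ and the role of $\hat{m}^{\tau}$ versus $\hat{m}$ are sound elaborations rather than departures.
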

\begin{proof} 
Using Claim \ref{Q1}, we have that
\begin{align}
\int \bbE_{\tau}[\mathds{1}_K|Q_{\infty}^{\tau}](x,\omega,k)d\hat{m}^{\tau}(x,\omega,k) = \hat{m}^{\tau}(K)>1-Q\epsilon_4.
\end{align}
This completes the proof.
\end{proof}

\begin{lemma}\label{48}
We have that 
    \begin{align}\label{V'}
\mu^{\bbN}(\{ \omega^+ \in \Omega^+ : F_{tc}^{\ell} (x,(\omega^-,\omega^+),k)\in K \}) \geq 1-\sqrt{Q\epsilon_4}.
\end{align}
\end{lemma}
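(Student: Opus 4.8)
\textbf{Proof plan for Lemma \ref{48}.}

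The plan is to deduce this from the Law of Last Jump (Lemma \ref{LLJ}), Claim \ref{Q2}, and a Markov-type (or Chebyshev-type) inequality applied to the conditional expectation function. First I would set, for $\hat{m}^{\tau}$-a.e.\ $(x,\omega,k)\in Z$,
\begin{align*}
\Phi(x,\omega,k) \defeq \mu^{\bbN}\big(\{\omega^+\in\Omega^+ : F_{tc}^{\ell}(x,(\omega^-,\omega^+),k)\in K\}\big) = \int_{\Omega^+}\mathds{1}_K\big(F_{tc}^{\ell}(x,(\omega^-,\omega^+),k)\big)\,d\mu^{\bbN}(\omega^+).
\end{align*}
By Lemma \ref{LLJ} (applied with $\varphi = \mathds{1}_K$), this equals $\bbE_{\tau}[\mathds{1}_K \mid Q_{\ell}^{\tau}](x,\omega,k)$ for $\hat{m}^{\tau}$-a.e.\ point. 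Then the tower property of conditional expectation gives $\bbE_{\tau}[\Phi \mid Q_{\infty}^{\tau}] = \bbE_{\tau}[\mathds{1}_K \mid Q_{\infty}^{\tau}]$, so by Claim \ref{Q2},
\begin{align*}
\int_Z \Phi(x,\omega,k)\,d\hat{m}^{\tau}(x,\omega,k) = \int_Z \bbE_{\tau}[\mathds{1}_K\mid Q_{\infty}^{\tau}]\,d\hat{m}^{\tau} > 1 - Q\epsilon_4.
\end{align*}

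Next I would apply a Markov-type inequality to $1-\Phi$, which is a nonnegative function with $\int_Z (1-\Phi)\,d\hat{m}^{\tau} < Q\epsilon_4$. Chebyshev gives $\hat{m}^{\tau}(\{1-\Phi \geq \sqrt{Q\epsilon_4}\}) \leq \sqrt{Q\epsilon_4}$, i.e.\ on a set of $\hat{m}^{\tau}$-measure at least $1-\sqrt{Q\epsilon_4}$ we have $\Phi(x,\omega,k) \geq 1-\sqrt{Q\epsilon_4}$, which is exactly Equation \eqref{V'}. This is the content of the lemma: the point $(x,\omega,k)$ (or more precisely the relevant point in our configuration, namely one of $(\overline{x},\overline{\omega}_x,k_x)$, $(\overline{y},\overline{\omega}_y,k_y)$) is chosen in the good set where this bound holds, and the bound says that a large $\mu^{\bbN}$-proportion of futures $\omega^+$ keep us in $K = K_4'$ after flowing forward by $\ell$ under the time-changed flow.

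I do not anticipate a serious obstacle here; the only minor point to be careful about is the interplay between $\hat{m}^{\tau}$ and $\hat{m}$ (they are mutually absolutely continuous, so the $\sigma$-algebras and null sets coincide, and Lemma \ref{RN} controls the Radon--Nikodym derivative on the relevant compact set), and the fact that Lemma \ref{LLJ} is stated $\hat{m}$-a.e.\ but used here $\hat{m}^{\tau}$-a.e.\ — which is fine since the two measures have the same null sets. One should also note that $\omega^-$ in the displayed formula is the past part of $\omega$ (so $\Phi$ genuinely depends only on $(x,\omega^-,k)$ up to the flow), matching the role this lemma plays in fixing a good future in Section \ref{sec:MCT}.
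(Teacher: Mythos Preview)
Your Chebyshev argument is correct for a single fixed $\ell$: it produces a set $G_\ell$ with $\hat{m}^\tau(G_\ell)\geq 1-\sqrt{Q\epsilon_4}$ on which the displayed inequality holds. The difficulty is that $G_\ell$ depends on $\ell$, and this is not what the paper needs. The set $K_3$ (into which the points $(\overline{x},\overline{\omega}_x,k_x)$ and $(\overline{y},\overline{\omega}_y,k_y)$ are landed by the Birkhoff argument) is fixed at the very start of the proof, \emph{before} $m$ and hence $\ell=\ell(m)$ are chosen; moreover in the conclusion (Section~\ref{sec:conc1}) one sends $\ell\to\infty$. So one needs a single compact set $L'$, independent of $\ell$, with $\hat{m}^\tau(L')>1-\sqrt{Q\epsilon_4}$, and a threshold $\ell_0$, such that the bound holds for all $(x,\omega,k)\in L'$ and all $\ell\geq\ell_0$. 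This uniformity is exactly what $K_3$ is required to carry (see its definition in Section~\ref{sec:lusin}: ``It further satisfies the properties of the set $L'$ in Lemma~\ref{48}'').

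The paper obtains this uniformity by inserting two steps you omit: first the (reverse) martingale convergence theorem, which gives $\bbE_\tau[\mathds{1}_K\mid Q_\ell^\tau]\to \bbE_\tau[\mathds{1}_K\mid Q_\infty^\tau]$ as $\ell\to\infty$; then Chebyshev is applied to the $\ell$-independent limit $\bbE_\tau[\mathds{1}_K\mid Q_\infty^\tau]$ (using Claim~\ref{Q2}) to cut out $L'$, and finally Egorov upgrades the a.e.\ convergence to uniform convergence on $L'$, yielding the bound for all $\ell\geq\ell_0$ simultaneously. Your route bypasses the tail $\sigma$-algebra entirely and therefore cannot produce an $\ell$-independent good set; this is the missing idea.
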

\begin{proof} 

Using Lemma \ref{Q2}, there is a compact subset $L'$ of the conull set where Lemma \ref{LLJ} holds with $\varphi = \mathds{1}_K$, with $\hat{m}^{\tau}(L')>1-\sqrt{Q\epsilon_4}$, such that for $(x,\omega,k)\in L'$ we have
$$\bbE_{\tau}[\mathds{1}_K|Q_{\infty}^{\tau}](x,\omega,k)>1-\sqrt{Q\epsilon_4}.$$

On the other hand, we apply the Martingale convergence theorem to the Lemma~\ref{LLJ} with $\varphi = \mathds{1}_K$:
\begin{align*}\bbE_{\tau}[\mathds{1}_K|Q_{\infty}^{\tau}](x,\omega,k) &= \lim_{\ell\to \infty} \bbE_{\tau}[\mathds{1}_K|Q_{\ell}^{\tau}](x,\omega,k)\\ &= \lim_{\ell\to \infty}  \int_{\Omega^+} \mathds{1}_K(F_{tc}^{\ell} (x,(\omega^-,\omega^+),k))d\mu^{\bbN}(\omega^+).\end{align*}

Egorov's theorem ensures that (with some arbitrarily small loss to $L'$) that this convergence is uniform, hence there is $\ell_0$ such that for all $\ell\geq \ell_0$ we have for every $(x,\omega,k)\in L'$
$$\bbE_{\tau}[\mathds{1}_K|Q_{\ell}^{\tau}](x,\omega,k) =  \int_{\Omega^+} \mathds{1}_K(F_{tc}^{\ell} (x,(\omega^-,\omega^+),k))d\mu^{\bbN}(\omega^+)\geq 1-\sqrt{Q\epsilon_4}.$$
This translates to the statement that
\begin{align*}
\mu^{\bbN}(\{ \omega^+ \in \Omega^+ : F_{tc}^{\ell} (x,(\omega^-,\omega^+),k)\in K \}) \geq 1-\sqrt{Q\epsilon_4}.
\end{align*}
This completes the proof.
\end{proof}

So there is a large measure set of appropriate new futures to choose from. Run this argument for each of the points, $(\overline{x},\overline{\omega}_x,k_x)$ and $(\overline{y},\overline{\omega}_y,k_y)$ (separately).

\subsubsection{Conclusion}
\label{sec:conc1}

From Section \ref{sec:MCT}, Lemma \ref{48}, for each choice of $T$ and $m$ fixed (where $m$ is chosen in an interval determined by $T_{big}$) we are given $V_{T,m}$ ($\ell$ is determined and we assume it is greater than $\ell_0$ by taking $T$ large), a positive measure subset of $\Omega^+$ such that we can choose $(\omega')^+$ from.

 Additionally, we got in Section \ref{sec:green}, i.e. when we used Lemma \ref{birkhoff} we landed in a set $\tilde{K}_2$ where the points have a large measure set of $\omega'$ to choose from (just the future part, the past stays the same), i.e. these sets $V_{(x,\sigma^{n'}(\tilde{\omega}^-))}, V_{(y,\sigma^{n'}(\tilde{\omega}^-))} \subset \Omega^+$ such that picking $\omega'$ from there lands us in the set $K_2$. This set $K_2$ had properties we needed to make arguments in Section \ref{sec:green}. We defined $V = V_{(x,\sigma^{n'}(\tilde{\omega}^-))} \cap V_{(y,\sigma^{n'}(\tilde{\omega}^-))}$. Now let $V' = V_{T,m}\cap V $. The parameter $Q$ is fixed at the beginning and $\epsilon_4$, and $\epsilon_2$ were chosen (at the beginning) such that $V'$ has positive measure. We have one last restriction on $\omega'$ that we used in Lemma \ref{dvg}. This comes from the use of Lemma \ref{anglecontrol}, the required $\epsilon$ in this lemma gets smaller as $T_{big}$ gets bigger. So we can take $T_{big}$ large enough so that the corresponding is $\delta$ is very small and such that this restriction added to $V\cap V'$ still has (large) positive measure.

Take $\ell\to \infty$, more precisely, I want the legs of the V to go to infinity. To achieve this, we take $\rho\to 0$, in turn, $T_{big}$ gets bigger, and since $\theta_1$ and $\theta_2$ are fixed, even though $T$ increases, we see that the proportion between $T$ and $T_{big}$ decreases. This implies that the amount $T$ separates the unstable component (i.e. the distance between $\tilde{y}$ and $\tilde{z}$ decreases even though we are flowing for longer) and the stable distance still goes to zero. Together this constitutes the distance between the unstable planes going to zero. The parameter $m$ grows as $\rho \to 0$ but the error between the unstable planes still goes to zero as in Lemma \ref{bdnded2}. We also have that $d_1(m_1)>(1+\eta)d_1(m_2)$ and $d_2(m_1)>(1+\eta)d_2(m_2)$ as in Lemma \ref{dd} so that we can make an argument for our points along the way to stay in a good set as in Lemma \ref{landing}. Being in a good set after flowing by $d_1$ or $d_2$ allows us to use Lemmas \ref{LLJ} and \ref{48} to get the set $V_{T,m}$ in the previous paragraph. As $\ell$ goes to infinity, the image of the supports land in the same plane as the stable components for $\omega'$ shrink. We end up with two support lines in one unstable manifold separated by at least distance $\epsilon_f$ (but not more than some bounded constant multiple of $\epsilon_f$). However, these limit points by construction belong to the compact set $K_{00}\subset \calW^c$. This is a contradiction to the definition of $\calW$. This means $\calW$ must have positive measure. This is a contradiction to Lemma \ref{0.6}. So we have proven Proposition \ref{U-} and hence also proven Proposition \ref{full}.

\section{Proof of Proposition \ref{finite}}
\label{sec:alt1}

\subsection{Outline of the proof of Proposition \ref{finite}}
\label{sec:outline2}

In this setting, our fiber entropy is zero and unlike in the proof of Proposition \ref{U-}, we cannot use Theorem \ref{thmC} to say that the Oseledets' directions are random. We only have Corollary \ref{8.2} that tells us that the $W^s(x,\omega)$ are random. 

We will work by contradiction and run a `floating Benoist and Quint' argument. We will assume that $\nu$ is not finitely supported and derive a contradiction to the fact that $m^u_{x,\omega}$ is trivial a.e. in a very similar vain to the proof of Proposition \ref{U-}. To be more precise, we will construct an unstable manifold with two distinct support points. This will suffice in the same way it did for the proof of Proposition \ref{U-}; we will argue using the sets $K_{00}$ and $\calW$ as defined in Section \ref{sec:lusin2}.

In assuming that $\nu$ is not finitely supported, we will show in Section \ref{sec:starting}  that we can pick points $(\hx,\ho,k_0)$ and $(\hy,\ho,k_0)$ such that both belong to a good set $K_0$, $\hy\not \in W^s(\hx,\ho)$ and $\hx$ and $\hy$ can be chosen arbitrarily close together. This setup allows us to follow aspects of the process of the proof of Proposition \ref{U-}. 

We then follow the same steps as in Section \ref{sec:Sect3} to choose parameters $T$ and $m$. This is so we can use all the work we did in Section \ref{sec:Sect3}, we do not actually require the step of flowing by a large $T$ to ``get the unstable planes of the points close" which we needed for the proof of Lemma \ref{dvg} and \ref{omega+1}. It also allows us to keep almost all the sets (their definitions and purpose) the same, even if some of it is unnecessary. We then compute $\ell$ similarly, up to a minor caveat seen in Lemma \ref{omega+}. Then the argument that $d$ (which is defined similarly as in Section \ref{sec:Sect3}) is Bi-Lipschitz is nearly identical to that of Lemma \ref{BiLip}. This allows us the run the same Martingale convergence theorem argument as in Section \ref{sec:MCT} and draw the conclusion that assuming that $\nu$ is not finitely supported contradicts the fact that $m^u_{x,\omega}$ is trivial a.e. in a very similar way to that of Section \ref{sec:conc1}.

\subsection{Proof of Proposition \ref{finite}}

\subsubsection{The `good' sets for the proof of Proposition \ref{finite}}
\label{sec:lusin2}

We keep the definitions of the sets as in Section \ref{sec:lusin} with the exception of the few listed below. We keep all the constants such as $C_b$, $J$, $\eta$, etc. except we let $L \defeq \max\{ 2r\ln(e^{-(\lambda^--\epsilon)}+\frac{1}{2}C) +\lambda^++\epsilon, \frac{1}{\lambda^++\epsilon} \} $.

As before, all the $\epsilon$ terms used in the definitions below will be assumed sufficiently small for their purpose unless otherwise stated more explicitly.

\begin{itemize}

\item $\calW \subset Z$, $\calW = \{(x,\omega,k) : m_{x,\omega}^u \text{ is non-trivial} \} $, by the argument at the start of Section \ref{sec:alt1} we know this set has measure zero.

\item $K_{00} \subset Z$, an arbitrary compact subset of size $\hat{m}(K_{00})>1-\epsilon_{00}$. We assume $K_{00}\subset \calW^c$, we can do this since $\hat{m}(\calW^c)=1$.

\item $K_+\subset Y$, defined as the conull subset where $m_{x,\omega}^u$ and $m_{x,\omega}^s$ are trivial.

\item $K_0\subset Z$, $K_0 \subset ((\Lambda\cap \Lambda_{loc}\cap K_+\cap K_s\cap K_r\cap K_{\xi}K_{NFC})\times [0,1[) \cap E^c$, has size $\hat{m}(K_0)>1-\epsilon_0.$

   \item $\tilde{K_2} \subset Y$, $\tilde{K}_2 \defeq (B' \times \Omega^+)\cap K_2'$, $\epsilon_2$ and $\epsilon_2'$ small such that the corresponding measure of $\tilde{K}_2$ which we call $\gamma_2=1-\tilde{\epsilon}_2$ satisfies 
    \begin{align}
        \gamma_2> 1-\frac{1}{200L^4} >3/4.
    \end{align}

    \item $E_2^c \subset Z$, take $\delta =\alpha_2(\tilde{K}_2)$ such that $\beta_2\defeq 2\gamma_2(1-\alpha_2)-1>\frac{1}{200L^4}$, $\epsilon \ll\epsilon_1$, $K=\tilde{K}_2 \times [0,1[$ and apply Lemma \ref{birkhoff}, we get $S>0$ such that the set 
    \begin{align}
        E_2^c \defeq \{(x,\omega,k) : \ \forall T>S, \  |\frac{1}{T}\int_0^T \mathds{1}_{K}(F^t(x,\omega,k))dt - \int_{Y}\mathds{1}_{K} d\hat{m} |<\delta \},
    \end{align}
    has $\hat{m}(E_2^c)>1-\epsilon$.

   \item $K_3 \subset Z$, $K_3 \subset (\Lambda \cap \Lambda_{loc})\times [0,1[$, additionally, $K_3$ satisfies the conditions of Lemma \ref{RN} and belong to the $\hat{m}$-conull set from Lemma \ref{LLJ}. The size of this set is $\gamma_3\defeq 1-\epsilon_3$ where
    \begin{align}
       \gamma_3> (1-\frac{1}{100L^2}) + \frac{1}{100L^2}\left( \frac{1}{1+\eta} \right)>3/4.
    \end{align}

     \item $E_3^c \subset Z$, take $\delta = \alpha_3\gamma_3$ such that $\beta_3 = 2\gamma_3(1-\alpha_3)-1> (1-\frac{1}{100L^2})+\frac{1}{100L^2}\left(\frac{1}{1+\eta}\right)$, $\epsilon \ll\epsilon_1$, $K=K_3$ and apply Lemma \ref{birkhoff}, we get $S>0$ such that the set 
    \begin{align}
        E_3^c \defeq \{(x,\omega,k) : \ \forall T>S, \  |\frac{1}{T}\int_0^T \mathds{1}_{K}(F^t(x,\omega,k))dt - \int_{Y}\mathds{1}_{K} d\hat{m} |<\delta \},
    \end{align}
    has $\hat{m}(E_3^c)>1-\epsilon$.
\end{itemize}

The sets are not necessarily in the order they appear in the proof because the definition of earlier sets is often dependent on what is needed later.

All sets are used as in Section \ref{sketch} with the exception of some of those listed above. The sets $E^c_2$, $E^c_3$, $\tilde{K}_2$ and $K_3$ have the same descriptions.

\begin{itemize}

    \item $\calW$ - we know this set has full measure by Lemma \ref{0.2} but we will show this is not the case when we assume that Proposition \ref{finite} is false. This will prove Proposition \ref{finite}.

    \item $K_{00}$ - We will land in this set at the end and use it to show that $\calW$ must have positive measure when we assume that Proposition \ref{finite} is false. This will prove Proposition \ref{finite}.

    \item $K_+$ - This set is local to Section \ref{sec:alt1} (i.e. is only conull in the zero entropy case as given by Lemma \ref{0.2}) and is needed to define the set $K_0$ where we choose our starting points.
        
    \item $ K_0\subset Z$ - Our starting points, $(\hx,\ho)$ and $(\hy,\ho)$ will be chosen from here. 
\end{itemize}

\subsubsection{Original points}
\label{sec:starting}
From now onwards, we assume that $\nu$ is not finitely supported.

In this section we use a proof strategy from \cite{E}, Section 3, Lemma 3.4.

\begin{lemma}\label{firstpts}
    For every $\rho>0$, we can find points $(\hx,\ho,k_0), \ (\hy,\ho,k_0) \in K_0$ such that $\hy \not \in W^s(\hx,\ho)$ and $d_X(\hx,\hy)<\rho$.
    
\end{lemma}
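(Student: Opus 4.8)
The plan is to prove Lemma~\ref{firstpts} by contradiction, following the strategy of \cite[Section~3, Lemma~3.4]{E}. Suppose for some fixed $\rho>0$ there is no such pair of points. Since $\hat{m}(K_0)>1-\epsilon_0$, the set $K_0$ has positive measure, and by Fubini (as in Lemma~\ref{fubini}) its projection to $\Omega$ has positive $\mu^{\bbZ}$-measure; fix a good $\omega$ so that the fiber $(K_0)_\omega\subset X$ has positive $\nu$-measure. Under the contradiction hypothesis, for $\nu$-a.e. $x$ in this fiber, every point $y\in B_X(x,\rho)$ that is also in the fiber must lie on $W^s(x,\omega)$. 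Since the stable manifolds partition a full-measure set and each is an injectively immersed copy of $\bbC$ (in particular a set of zero $\nu$-measure when $\nu$ is not supported on it), the idea is that this forces the conditional measures of $\nu$ along the local stable manifolds to carry all the mass locally, which in turn forces $\nu$ to be essentially atomic on stable leaves and hence, using ergodicity and the fact that $m^s_{x,\omega}$ is trivial (we are in the case $d_+=0$, $(x,\omega)\in K_+$), to be finitely supported.

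More concretely, here is the sequence of steps I would carry out. First, reduce to a statement at a single fixed good $\omega$: the contradiction hypothesis says that for $\nu$-a.e. $x$ with $(x,\omega,k_0)\in K_0$, the set $B_X(x,\rho)\cap \pi_X((K_0)_\omega)$ is contained in $W^s(x,\omega)$. Second, observe that since $(x,\omega)\in K_+$, the stable conditional measure $m^s_{x,\omega}$ is trivial, i.e. it is the point mass at $x$; combined with the Ledrappier--Young / entropy analysis already invoked (the measurable partition $\eta^s$ subordinate to stable manifolds, Proposition~\ref{2.1}) this means that conditioned on the $\sigma$-algebra of stable leaves, $\nu$ sees only the single point $x$ on each leaf through a point of $K_0$. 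Third, assemble these local pieces: the relation ``$y\in B_X(x,\rho)$ and both in $K_0 \Rightarrow y\in W^s(x,\omega)$'' together with triviality of $m^s$ shows that within a $\rho$-ball the measure $\nu$ restricted to $\pi_X((K_0)_\omega)$ is supported on a single stable leaf through each of its points, but a stable leaf meets $B_X(x,\rho)\cap \pi_X((K_0)_\omega)$ in an at most countable set (since the leaf is 1-dimensional and the $\nu$-mass on any positive-length arc would be detected by $m^s$ being nontrivial). Hence $\nu$ restricted to the $\rho$-ball is atomic. Fourth, a covering argument: cover $X$ by finitely many balls of radius $\rho$, conclude $\nu$ is purely atomic, and then use ergodicity of $\nu$ (and $\mu$-stationarity) to conclude that the set of atoms of maximal weight is finite and $\Gamma_\mu$-invariant, so $\nu$ is finitely supported, contradicting our standing assumption.

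I would organize the write-up so that the only genuinely new content is the passage from ``local stable leaves carry all nearby mass'' to ``$\nu$ is atomic,'' invoking the already-established facts (triviality of $m^s_{x,\omega}$ from Corollary~\ref{0.2}/the $d_+=0$ analysis, and the partition $\eta^s$ of Proposition~\ref{2.1}) rather than reproving them. The main obstacle I anticipate is the third step: making rigorous the claim that ``the stable leaf meets the good set in a countable set'' using only triviality of $m^s_{x,\omega}$ and not, say, additional transversality input. The cleanest route is probably to argue at the level of the conditional measures: if $\nu$ were non-atomic on a positive-measure piece of $K_0$ inside a $\rho$-ball, then by the contradiction hypothesis all of that non-atomic mass would have to be concentrated along individual stable leaves, forcing $m^s_{x,\omega}$ to be non-trivial on a positive-measure set of $(x,\omega)$, contradicting $(x,\omega)\in K_+$. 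So the logical skeleton is: (non-finitely-supported $\Rightarrow$ non-atomic on some positive-measure ball, by a standard decomposition of measures) $\wedge$ (contradiction hypothesis) $\Rightarrow$ ($m^s$ non-trivial) $\Rightarrow$ contradiction with $K_+$. Once this is phrased correctly, Remarks~\ref{close}-style flexibility in $\rho$ is automatic, and the lemma follows.
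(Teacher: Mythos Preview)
Your approach is correct but takes a different route from the paper. Both arguments are by contradiction and both terminate by showing $\nu$ would be finitely supported; the divergence is at the step ``if all $\rho$-close points of $(K_0)_\omega$ lie on the same stable leaf, then $\nu$ has an atom.'' You exploit the triviality of $m^s_{x,\omega}$ built into $K_0\subset K_+\times[0,1[$: covering $X$ by finitely many $\rho/2$-balls, the $\nu$-mass of $(K_0)_\omega$ in each ball sits on a single stable leaf, and if that leaf carried anything other than a single Dirac mass the leafwise conditional would be non-trivial on a positive-$m$-measure set, contradicting that $K_+$ is conull. The paper instead argues without invoking $m^s$-triviality: it first shows $\supp(\nu)$ lies in finitely many stable leaves, then (Claims~\ref{switch} and~\ref{contr}) uses $\nu$-invariance and backward dynamics---small stable balls grow under iteration until they absorb all the leaf's mass---to force each such leaf to carry a single atom. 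Your route is shorter because it leverages a hypothesis already in hand in this section; the paper's route is more self-contained and would establish the lemma even before Corollary~\ref{0.2}. One small point to tighten: the implication ``non-finitely-supported $\Rightarrow$ non-atomic on some ball'' is not a pure measure-decomposition fact; ruling out a countably-infinite purely atomic $\nu$ requires ergodicity, exactly as in the paper's Claim~\ref{uncount}. In fact your argument does not need this reduction: once each $\rho/2$-ball contributes at most one atom you already have an atom of $\nu$, and ergodicity plus invariance finishes directly.
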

\begin{proof}

 There are two cases when it comes to the support of $\nu$. Either it is countable or uncountable. We follow the argument of \cite{E} Lemma 3.4:

 \begin{claim}\label{uncount}
     The support of $\nu$ is uncountable.
 \end{claim}
 \begin{proof}

     If the support of $\nu$ is countable, then $\nu$ is supported on a sequence of points $x_n$ each with some mass $\delta_n$.  Note that the dynamics does not change the mass of a point, so a set of these points with the same mass is an invariant set. Using that $\nu$ is ergodic we get that all points must have the same mass. This forces $\nu$ to have finite support which is a contradiction.
 \end{proof}

Let $W_{\ho,k_0} = \{\hx \in X : (\hx,\ho,k_0)\in K_0\}$ then let $$V = \{(\ho,k_0) : \mu^{\bbZ}\times dt(W_{\ho,k_0}) >1-\sqrt{\epsilon_0} \}. $$
\begin{claim}
    The set $V$ has measure at least $1-\sqrt{\epsilon_0}$.
\end{claim}
\begin{proof}
    The proof is identical to that of Lemma \ref{G!}.
\end{proof}
Now take $(\ho,k_0)\in V$ and consider $S'=W_{\ho,k_0}\cap \supp(\nu)$. This set $S'$ is also uncountable and it contains uncountably many accumulation points. One takes $\hx\in S'\subset X$ an accumulation point, one can find a sequence $\hy_n\in S'$ converging to $\hx$. Now we can pick $\hy$ as one of the $\hy_n$ such that $d_X(\hy_n,\hx)<\rho$ for the given $\rho$. Then since $\hx$ and $\hy$ belong to $S'$, $(\hx,\ho,k_0)$ and $(\hy,\ho,k_0)$ both belong to $K_0$.

Lastly, we must argue that $\hy$ can be chosen such that $\hy\not\in W^s(\hx,\ho)$. 

If not, then $\supp(\nu)$ is contained in a finite union of stable manifolds;, $$\supp(\nu) \subset \bigcup_{i=1}^n W^s(x_i,\omega_i).$$ We note that the $W^s(x_i,\omega_i)$ may not be measurable sets, but if we let $B^s(x,\omega,R)$ be the ball of radius $R$ inside $W^s(x,\omega)$ in the induced metric, this is a measurable set.

\begin{claim}\label{switch}
    The leaves $W^s(x_i,\omega_i)$ in the union above that carry positive measure have the same measure.
\end{claim}
\begin{proof}
     Let $f(x,\omega) = \lim_{R\to \infty} \nu(B^s(x,\omega,R))$; this limit exists along some subsequence because $f$ is upper bounded by 1. Since the support of the probability measure is contained in this finite union of leaves, for some $i$, $f(x_i,\omega_i)$ is positive. 

    If we apply the skew product map, we know that $F(W^s(x_i,\omega_i)) = W^s(F(x_i,\omega_i))$. Further since $\nu$-measure is preserved under the dynamics $F$, we have that $f$ is invariant under $F$. This implies that it is constant a.e. by ergodicity of $m$.

    This tells us that amongst the finitely many leaves containing the support of $\nu$, those that contain positive measure have the same amount of measure.
\end{proof}

\begin{claim}\label{contr}
    Each leaf $W^s(x_i,\omega_i)$ that contains positive measure has finitely many points in the support of $\nu$. 
\end{claim}
\begin{proof}
  
   In Claim \ref{switch} we learned that under $F$, the leaves of positive measure must map to one another.
   
    Take a leaf $W^s(x_i,\omega_i)$ in the finite union that contains positive measure, and let $y\in \supp(\nu)$ belong to this leaf. Let $\epsilon>0$, and consider $B^s(y,\epsilon)\subset W^s(x_i,\omega_i)$, i.e. the ball of radius $\epsilon$ around $y$ in the induced metric.
    
    Running the dynamics backwards, Theorem \ref{locstablemfld} tells us that the radius of this ball grows and it also stays within the finite union of leaves. In a finite number of steps, one can grow the radius of this ball to the point which it contains most of the measure of leaf (there are finitely many leafs). Since $\nu$ is invariant and Claim \ref{switch} tells us that the measure of all the leaves is the same, this tells us that $B^s(y,\epsilon)$ contains the entire measure of the leaf. This is true for any $\epsilon>0$ and hence $y$ is an atom containing the entire measure of $W^s(x_i,\omega_i)$. One can do this for each positive measure containing leaf. 
\end{proof}
Claim \ref{contr} is a contradiction to our assumption that $\nu$ is not finitely supported.
\end{proof}

\subsubsection{Flowing by $T$ and $m$}
\label{sec:green2}
Fix $\epsilon_f>0$ as in Section \ref{sec:not5}.

To start, take any $\rho>0$ such that
$$\rho<\epsilon_f/10000,$$
and use Lemma \ref{firstpts} to pick a point $(\hx,\ho,k_0)\in K_0$ with its corresponding $(\hy,\ho,k_0)\in K_0$ such that $U^+[\hx,\ho] = \hx$, $U^+[\hy,\ho] = \hy$, $\hy\not \in W^s(\hx,\ho)$, and $d_X(\hx,\hy)<\rho$. From here on out, we acknowledge that $U^+[\hx,\ho,k_0]$ and $U^+[\hy,\ho,k_0]$ are just the points $\hx$ and $\hy$ themselves.

 We first note that for our choice of $\hx$ and $\hy$, $\hy\not \in W^s(\hx,\ho)$ and so there exists a point $\hz\in W^s_q(\hy,\ho)\cap W^u_q(\hx,\ho)$. We call $d_X(\hz,\hy)$ the `unstable distance'. As before in Section \ref{sec:Sect3}, the local stable manifold theorem, Theorem \ref{locstablemfld}, tells us that there is number $T_{big}$ such that if $y_B = F^{T_{big}}_{\ho}(\hy)$, $z_B = F^{T_{big}}_{\ho}(\hz)$, $d_X(y_B,z_B)=\epsilon_f$.

Let $T$ be chosen as in Choice \ref{choice1}, i.e. $T\in [T_{big}(1-\theta_1),T_{big}(1-\theta_2)]$ where $\theta_1$ and $\theta_2$ satisfy the requirements of Choice \ref{thetabds}.

We run the same Birkhoff arguments using Lemma \ref{birkhoff} , i.e. Lemmas \ref{B} and \ref{bigint} which allow us to pick $T$ in the given interval such that $F^T(\hx,\ho,k_0)$ and $F^T(\hy,\ho,k_0)$ land in the set $\tilde{K}_1$. This set $\tilde{K}_1$ has the properties of the good set $K_1'$ and also is a subset of $(B\times \Omega)$ so we can pick a good new $\tilde{\omega}$ such that our points $(\tilde{x},\tilde{\omega},k_1)$ and $(\tilde{y},\tilde{\omega},k_1)$ (where $k_1 = \lfloor T + k_0\rfloor$) belong to $K_1$. The properties of these sets allow us to use the same arguments in Section \ref{sec:green} when we choose $m$.

When we choose $\tilde{\omega}$, we do so as in Lemma \ref{apartm}.

\begin{convention}
    We pick the convention that $m$ is given as a positive real number and we will flow backwards by it and hence write $F^{-m}$.
\end{convention}

From the points $\hx$ and $\hy$ we will go in two directions. Firstly we flow backwards by some $m$ (under the `standard' flow) from $\hx$ and $\hy$ to the points which we will denote $x$ and $y$ respectively.

Let $m_1$ and $m_2$ be the same as in Equation \eqref{mchoice} with $T_{big}$ large enough so that $m_2>2m_1$ and $m_1>S$ (we will put further restrictions on how large $T_{big}$ should be).

We can demonstrate, similarly to Section \ref{sec:green}, that we can choose $m\in [m_1,m_2]$ such that the $X\times \Omega$-component of $F^{-m}(\tilde{x},\tilde{\omega},k_1)$ and $F^{-m}(\tilde{y},\tilde{\omega},k_1)$ land in the set $\tilde{K}_2 \subset K_2'$. These are good choices of $m$. We get that there is a large positive measure set of futures $(\omega')^+\in \Omega^+$ we can pick such that if $x = F^{-m}_{\tilde{\omega}}(\tilde{x})$ and $y = F^{-m}_{\tilde{\omega}}(\tilde{y})$, then $(x,(\sigma^{-n'}(\tilde{\omega}))^-,(\omega')^+)$ and $(y,(\sigma^{-n'}(\tilde{\omega}))^-,(\omega')^+)$ (where $n'=\lfloor k_1-m\rfloor$) belong to $K_2$. 

We still have that flowing by $m$ does not take our points all the way to distance $\epsilon_f$ (see Lemma \ref{bdnded1}), we also have the following statement (made obvious by Theorem \ref{locstablemfld}):

\begin{lemma}\label{tend}
As $T_{big}\to \infty$ (and hence $m\to \infty$), the distance between $y$ and $W^s(x,\sigma^{-n'}(\tilde{\omega}))$ goes to zero.
\end{lemma}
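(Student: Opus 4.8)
\textbf{Plan for the proof of Lemma \ref{tend}.}
The statement asserts that as $T_{big}\to\infty$ (hence $m\to\infty$), the distance $d_X(y, W^s(x,\sigma^{-n'}(\tilde{\omega})))$ tends to zero, where $x = F^{-m}_{\tilde{\omega}}(\tilde{x})$, $y = F^{-m}_{\tilde{\omega}}(\tilde{y})$, and $n' = \lfloor k_1 - m\rfloor$. The plan is to trace where the point $\tilde{y}$ lands relative to the stable manifold of $x$ after flowing backwards by $m$, using the auxiliary point $\hz$ and the local stable manifold theorem, Theorem \ref{locstablemfld}.

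First I would recall the geometric setup established in Section \ref{sec:green2}: by Lemma \ref{firstpts} and the choice of $\hz \in W^s_q(\hy,\ho)\cap W^u_q(\hx,\ho)$, after flowing forward by $T$ the point $\tilde{z} = F^T_{\omega}(\hz)$ lies on $W^s(\tilde{y},\sigma^n(\ho))$ and on $W^u(\tilde{x},\sigma^n(\ho))$. Crucially, $\tilde{z}$ lies on the \emph{unstable} manifold of $\tilde{x}$, so it lies on $W^s(\tilde{x},\cdot)$ only trivially; but the relevant observation is that the distance $\alpha^u = d_X(\tilde{x},\tilde{z})$ — the stable distance between $\tilde{x}$ and $\tilde{z}$ with respect to $\tilde{\omega}$ (after the change of past) — is what controls how far $y$ is from $W^s(x,\sigma^{-n'}(\tilde{\omega}))$, since $\hz$ (and its images) witnesses a point genuinely on the stable leaf of the $\tilde y$-branch. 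Concretely: the point $\tilde z$ lies on $W^u(\tilde x,\sigma^n(\ho))$, whose image $F^{-m}_{\tilde\omega}(W^u(\tilde x,\sigma^n(\ho)))$ we have already shown is close to $W^s(x,\sigma^{-n'}(\tilde\omega))$ in the analogue Lemma \ref{bdnded2}; and $\tilde z$ lies on $W^s(\tilde y,\sigma^n(\ho))$, so after flowing backwards by $m$ the point $F^{-m}_{\tilde\omega}(\tilde z)$ stays within bounded distortion of $y$ along a set that is (up to the small error $d_m$) contained in $W^s(x,\sigma^{-n'}(\tilde\omega))$. Then the triangle inequality bounds $d_X(y, W^s(x,\sigma^{-n'}(\tilde\omega)))$ by $d_X(y, F^{-m}_{\tilde\omega}(\tilde z)) + d_m$.

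For the first term, I would note that $\tilde y$ and $\tilde z$ lie on the common stable manifold $W^s(\tilde y,\sigma^n(\ho))$, with $d_X(\tilde y,\tilde z) = \alpha^u$; but after the change of past vector to $\tilde{\omega}$ this distance is generically split into stable and unstable components with respect to $\tilde\omega$. Flowing backwards by $m$ expands the stable component by at most $e^{|n'|(-\lambda^- + \epsilon)}$-type factors and contracts nothing favorably, so a naive estimate does not obviously go to zero. The honest route is instead: since $\alpha^u = d_X(\tilde y,\tilde z) \to 0$ as $T_{big}\to\infty$ by Lemma \ref{Tshort} (with $\theta_1,\theta_2$ fixed), and since $m \le \tfrac{C_b}{10^{i+2}}T_{big}\theta_2$ grows only sublinearly relative to $T_{big}\theta$, the competition between the shrinking of $\alpha^u$ (which decays like $e^{-(\lambda^+-\delta)T_{big}\theta}$) and the expansion over time $m$ (which grows like $\calL^{m}$) is won by the decay, by exactly the same estimate used in Lemma \ref{bdnded2}, Equations \eqref{theta2bd} and \eqref{186}. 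So $d_X(F^{-m}_{\tilde\omega}(\tilde y), F^{-m}_{\tilde\omega}(\tilde z)) = d_X(y, F^{-m}_{\tilde\omega}(\tilde z)) \to 0$; and $F^{-m}_{\tilde\omega}(\tilde z)$ lies on $F^{-m}_{\tilde\omega}(W^u(\tilde x,\sigma^n(\ho)))$, which lies within $d_m$ of $W^s(x,\sigma^{-n'}(\tilde\omega))$ by Lemma \ref{bdnded2}, and $d_m\to 0$ by that same lemma. Combining, $d_X(y, W^s(x,\sigma^{-n'}(\tilde\omega))) \le d_X(y,F^{-m}_{\tilde\omega}(\tilde z)) + d_m \to 0$.

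The main obstacle — and the reason this is not completely immediate from Theorem \ref{locstablemfld} alone — is bookkeeping the interplay between the backward flow time $m$, the forward flow time $T$, and the decay rate of $\alpha^u$: one must make sure the choices of $m_1, m_2$ from Equation \eqref{mchoice} and of $\theta_1,\theta_2$ from Choice \ref{thetabds} are exactly what force the relevant ratio of the form $\calL^{m}/e^{(\lambda^+-\delta)T_{big}\theta}$ to go to zero. Since those choices were already made precisely so that Lemma \ref{bdnded2} holds, the cleanest presentation is to reduce Lemma \ref{tend} directly to Lemma \ref{bdnded2} plus Lemma \ref{Tshort}, via the two-step triangle inequality through the point $F^{-m}_{\tilde\omega}(\tilde z)$ described above, rather than re-deriving the estimates; I expect the writeup to be short once that reduction is spelled out.
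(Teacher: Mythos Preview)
Your reduction has a genuine gap. The central claim --- that ``$F^{-m}_{\tilde\omega}(W^u(\tilde x,\sigma^n(\ho)))$ we have already shown is close to $W^s(x,\sigma^{-n'}(\tilde\omega))$ in the analogue Lemma \ref{bdnded2}'' --- is not what Lemma \ref{bdnded2} says. Lemma \ref{bdnded2} bounds the distance $d_m$ between the two backward images $F^{-m}_{\tilde\omega}(W^u(F^n(\hx,\ho)))$ and $F^{-m}_{\tilde\omega}(W^u(F^n(\hy,\ho)))$; it compares two \emph{unstable} images to each other, and says nothing about either of them being close to the stable leaf $W^s(x,\sigma^{-n'}(\tilde\omega))$. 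Moreover no analogue of Lemma \ref{bdnded2} is stated or used in Section \ref{sec:alt1}. The underlying confusion is that the auxiliary point $\tilde z$ is defined via the $\ho$-foliations; once the $\Omega$-component is changed to $\tilde\omega$, the point $\tilde z$ lies on neither $W^s(\tilde x,\tilde\omega)$ nor $W^u(\tilde y,\tilde\omega)$, so routing the triangle inequality through $F^{-m}_{\tilde\omega}(\tilde z)$ does not connect $y$ to the correct stable leaf.

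The paper's argument (flagged as ``obvious by Theorem \ref{locstablemfld}'') is one line and bypasses $\tilde z$ entirely: since $(\tilde x,\tilde\omega),(\tilde y,\tilde\omega)\in K_1\subset\Lambda_{loc}$ and $d_X(\tilde x,\tilde y)<\epsilon_f<q$, the local product structure \emph{with respect to $\tilde\omega$} yields a point $\tilde w\in W^s_q(\tilde x,\tilde\omega)\cap W^u_q(\tilde y,\tilde\omega)$. Then $w:=F^{-m}_{\tilde\omega}(\tilde w)$ lies on $W^s(x,\sigma^{-n'}(\tilde\omega))$ by invariance of stable leaves, and since $\tilde w$ and $\tilde y$ lie on the same local unstable leaf for $\tilde\omega$, Theorem \ref{locstablemfld}(2) gives
\[
d_X\bigl(y, W^s(x,\sigma^{-n'}(\tilde\omega))\bigr)\;\le\; d_X(y,w)\;\le\; k\, e^{-(\lambda^+-\delta)|n'|}\, d_X(\tilde y,\tilde w)\;\longrightarrow\;0
\]
as $m\to\infty$, since $d_X(\tilde y,\tilde w)$ is bounded (by a constant times $d_X(\tilde x,\tilde y)<\epsilon_f$). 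No competition between $\calL^m$ and $e^{-(\lambda^+-\delta)T_{big}\theta}$ is needed here; the contraction is purely in $m$.
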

Lemma \ref{tend} is required in the proof of Lemma \ref{omega+}.

Let $\tilde{D}$ be the distance (in ambient coordinates) between $\tilde{x}$ and $\tilde{y}$. Note that by construction, this distance is still less than $q$ and that the set $M$ is contained in $\Lambda_{loc}$ as defined in \ref{sec:lusin}. The points $(\tilde{x}, \tilde{\omega},k_1)$ and $(\tilde{y},\tilde{\omega},k_1)$ belong to $M$. Additionally, the distance $\tilde{D}$ between $\tilde{x}$ and $\tilde{y}$ is less than $k_U$ and hence they live in each other's Lyapunov chart domain.

Originally, the distance $\tilde{D}$ was largely in the unstable direction for that choice of $\Omega$-component, i.e. $\sigma^n(\ho)$. Now that we have switched $\Omega$-components to $\tilde{\omega}$, generically, the distance $\tilde{D}$ has stable and unstable distance components with respect to $\tilde{\omega}$. Also we picked $\tilde{\omega}$ such that $\tilde{x}$ and $\tilde{y}$ are not on each other's stable or unstable manifolds. Also, since we are going backwards by $m$, the stables grow and the unstables contract.

In Lyapunov charts, the images of the subspaces $E^u(\tilde{x},\tilde{\omega})$ and $E^s(\tilde{x},\tilde{\omega})$ form a coordinate axis, and $\tilde{y}$ is sent to some $v\in \bbR^4$ in these charts. We can decompose $v$ into a component in the $E^s$-direction and a component in the $E^u$-direction denoted respectively $v_s$ and $v_u$, i.e. $v=v_s+v_u$. In these charts, we know exactly $v_s$ and $v_u$ grow and shrink resp. under flowing by $-m$. We denote $\alpha_s = |v_s|,$ $\alpha_u = |v_u|$.

Then, the distance between 0 and $v$ after flowing by $m$, is $$D_m\defeq |\tilde{F}^{-m}_{(\tilde{x},\tilde{\omega}),k_1}(v)-\tilde{F}^{-m}_{(\tilde{x},\tilde{\omega}),k_1}(0)|.$$ Let $\alpha_u^m \defeq|\tilde{F}^{-m}_{(\tilde{x},\tilde{\omega})}(v_u)-\tilde{F}^{-m}_{(\tilde{x},\tilde{\omega})}(0)|$ and $\alpha_s^m \defeq |\tilde{F}^{-m}_{(\tilde{x},\tilde{\omega}),k_1}(v_s)-\tilde{F}^{-m}_{(\tilde{x},\tilde{\omega}),k_1}(0)|$.

\begin{lemma}\label{bdnded32}
    For $T_{big}$ large enough (hence $m$ large enough), we have $$D_{m+1} \leq \sqrt{1+k^2} (e^{-(\lambda^--\epsilon)}+\frac{1}{2}C)\alpha_s^m = (e^{-(\lambda^--\epsilon)}+\frac{1}{2}C)D_m.$$
    Further, we have $D_{m}<D_{m+1}$.
\end{lemma}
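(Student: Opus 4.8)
The statement of Lemma~\ref{bdnded32} is word-for-word the statement of Lemma~\ref{bdnded3}, so the plan is simply to reproduce the argument of Lemma~\ref{bdnded3} in the present setting; nothing about that computation used anything specific to Section~\ref{sec:Sect3} beyond the standing objects (the Lyapunov chart distortion estimates, the choices $k_1,k_2$, and the bound on $\epsilon_f$), all of which are in force here via Section~\ref{sec:lusin2} and the definition of $\epsilon_f$ in Section~\ref{sec:not5}. First I would recall the decomposition $v = v_s + v_u$ with $\alpha_s = |v_s|$, $\alpha_u = |v_u|$, write $\alpha_u = k\alpha_s$ for the point-dependent constant $k>0$, and note by Pythagoras that $D_m = \sqrt{1+k^2}\,\alpha_s^m$ and $D_{m+1} = \sqrt{(\alpha_s^{m+1})^2 + (\alpha_u^{m+1})^2}$.

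Next I would invoke the distortion estimates of Section~\ref{sec:distortion} (Equations~\eqref{upper} and~\eqref{lower} together with the uniform bound $|D^2\tilde F^t|\le C^n$) to get, exactly as in Lemma~\ref{bdnded3},
\begin{align*}
    \alpha_s^{m+1} &\leq e^{-(\lambda^--\epsilon)}\alpha_s^m + \tfrac12 C(\alpha_s^m)^2 \leq \bigl(e^{-(\lambda^--\epsilon)}+\tfrac12 C\bigr)\alpha_s^m,\\
    \alpha_u^{m+1} &\leq e^{-(\lambda^+-\epsilon)}\alpha_u^m + \tfrac12 C(\alpha_u^m)^2 \leq k\bigl(e^{-(\lambda^--\epsilon)}+\tfrac12 C\bigr)\alpha_s^m,
\end{align*}
where the reductions $\alpha_s^m > (\alpha_s^m)^2$ and $\alpha_u^m = k\alpha_s^m$ are justified by $\epsilon_f < 1/M_r$ as in Section~\ref{sec:not5}. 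Combining these two lines via Pythagoras yields $D_{m+1} \leq \sqrt{1+k^2}\,(e^{-(\lambda^--\epsilon)}+\tfrac12 C)\alpha_s^m = (e^{-(\lambda^--\epsilon)}+\tfrac12 C)D_m$, which is the first claimed inequality.

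For the second claim, $D_m < D_{m+1}$, I would reuse the lower-bound distortion estimates $\alpha_s^{m+1} \geq \alpha_s^m(e^{-(\lambda^-+\epsilon)} - \tfrac12 C\alpha_s^m)$ and $\alpha_u^{m+1} \geq \alpha_u^m(e^{-(\lambda^++\epsilon)} - \tfrac12 C\alpha_u^m)$, then apply the constants $k_1,k_2<1$ fixed at the start of Section~\ref{sec:blue} with $(1-k_1)e^{-(\lambda^-+\epsilon)}>1$ and the corresponding condition built into $\epsilon_f$, to deduce $\alpha_s^{m+1} \geq (1-k_1)e^{-(\lambda^-+\epsilon)}\alpha_s^m$ and $\alpha_u^{m+1} \geq (1-k_2)e^{-(\lambda^++\epsilon)}\alpha_u^m$. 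Squaring and requiring $(\alpha_s^m)^2\psi_s + (\alpha_u^m)^2\psi_u > (\alpha_s^m)^2 + (\alpha_u^m)^2$ with $\psi_s = (1-k_1)^2e^{-2(\lambda^-+\epsilon)}>1$, $\psi_u = (1-k_2)^2e^{-2(\lambda^++\epsilon)}<1$, reduces to showing $\alpha_u^m < \frac{1}{\sqrt\Psi}\alpha_s^m$ with $\Psi = (1-\psi_u)/(\psi_s-1)$, and this is achieved for $m$ (hence $T_{big}$) large by the same chain of inequalities as in Lemma~\ref{bdnded3} — dominance of the linear Taylor term via $\frac12 C^{m}\alpha_s^2 < e^{-(\lambda^-+\epsilon)m}\alpha_s$ (Equations~\eqref{jm}--\eqref{lm}), the analogous bound for $\alpha_u$ (Equation~\eqref{206}), and finally Equation~\eqref{endeq}.

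There is no real obstacle here: the only thing to check is that every ingredient invoked above is available in Section~\ref{sec:alt1}, which it is, since Section~\ref{sec:lusin2} only modifies the constant $L$ and leaves $C_b$, $J$, $\eta$, $k_1$, $k_2$, $\epsilon_f$, $C$ and the Lyapunov-chart machinery unchanged. I would therefore keep the proof extremely short, writing ``The proof is identical to that of Lemma~\ref{bdnded3}'' after noting that all the constants and distortion estimates used there are still in force; if a fuller treatment is wanted, the displays above can be inserted verbatim. The mildly delicate point — worth one sentence — is simply to remark that the point-dependent $k$ in $\alpha_u = k\alpha_s$ cancels out of the first inequality and is harmless in the second because only a lower bound on $\alpha_s^m/\alpha_u^m$ is needed, so no uniformity in $k$ is required.
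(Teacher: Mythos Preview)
Your proposal is correct and matches the paper's own proof exactly: the paper simply writes ``The proof is the same as in Lemma~\ref{bdnded3}.'' Your additional verification that all the required constants and distortion estimates from Section~\ref{sec:Sect3} remain available in Section~\ref{sec:alt1} is accurate and is precisely the check one should make before invoking the earlier argument verbatim.
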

\begin{proof}
    The proof is the same as in Lemma \ref{bdnded3}.
\end{proof}
This is used in argument that $d$ is bi-Lipschitz, see Lemma \ref{BiLip2}.

For the purposes of Lemma \ref{dd2}, we need the following Lemma:
\begin{lemma} \label{bb2}
The constant $0<\eta<1$ as fixed in Section \ref{sec:lusin} is such that for $T_{big}$ large enough (hence $m$ large enough) $D_{m_2}^{(1+\eta)}>D_{m_1}$. 
\end{lemma}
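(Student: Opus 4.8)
The statement to prove is Lemma \ref{bb2}: for $T_{big}$ large enough (hence $m$ large enough), the fixed constant $\eta \in (0,1)$ from Section~\ref{sec:lusin} satisfies $D_{m_2}^{(1+\eta)} > D_{m_1}$. This is the exact analogue of Lemma \ref{bb} from Section~\ref{sec:green}, and the proof declared there for Lemma \ref{bdnded32} (``The proof is the same as in Lemma \ref{bdnded3}'') signals that the author intends to reuse the computation verbatim. So my plan is simply to carry out the same argument, noting that nothing in the proof of Lemma \ref{bb} used anything specific to the joint-integrability setting of Section~\ref{sec:Sect3}: it only used the distortion estimates in Lyapunov charts, the control of $\alpha_s$ versus $D$ via Lemma \ref{anglecontrol}, the change-of-coordinates bound $\frac{1}{M_r}\epsilon_f(1/\calL)^{T_{big}\theta_2} \le D$, and the definitions $m_1 = \alpha_1 T_{big}\theta_2$, $m_2 = \alpha_2 T_{big}\theta_2$ together with the inequality $(\lambda^-+\epsilon)\alpha_2 > (\lambda^--\epsilon)\alpha_1$ guaranteed by the choices in Section~\ref{sec:lusin2}. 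All of these ingredients are available identically here (the only difference is the minor redefinition of $L$, which is irrelevant to this lemma).

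\smallskip

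The steps, in order: First, use the distortion estimates of Section~\ref{sec:distortion} in the form $2e^{-(\lambda^+-\epsilon)m}\alpha_s \le \alpha_s^m \le \frac12 e^{-(\lambda^--\epsilon)m}\alpha_s$ and $2e^{-(\lambda^+-\epsilon)m}\alpha_u \le \alpha_u^m \le \frac12 e^{(\lambda^+-\epsilon)m}\alpha_u$, valid for $T_{big}$ large, exactly as in Lemma \ref{bb}. Setting $D = \sqrt{\alpha_s^2+\alpha_u^2}$ (the distance $d_X(\tilde x,\tilde y)$ computed in Lyapunov charts), this gives the two-sided bound
\begin{align*}
\frac{\Delta}{2}e^{-(\lambda^--\epsilon)m}D \;\le\; D_m \;\le\; 2e^{-(\lambda^--\epsilon)m}D,
\end{align*}
where $\Delta$ is the uniform lower bound on $\alpha_s/D$ provided by Lemma \ref{anglecontrol} (this is where we use that $\tilde{\omega}$ was chosen via Lemma \ref{apartm} so that the distance has a genuine stable component). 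Second, plug in the lower bound $D \ge \frac{1}{M_r}\epsilon_f(1/\calL)^{T_{big}\theta_2}$ coming from the change between ambient coordinates and Lyapunov charts. Third, to obtain $D_{m_1} < D_{m_2}^{(1+\eta)}$ it suffices to show the middle inequality in
\begin{align*}
D_{m_1} \le 2e^{-(\lambda^--\epsilon)m_1}D \;<\; \left(\tfrac{\Delta}{2}\right)^{1+\eta} e^{-(1+\eta)(\lambda^--\epsilon)m_2}D^{1+\eta} \le D_{m_2}^{1+\eta},
\end{align*}
and after substituting $m_1 = \alpha_1 T_{big}\theta_2$, $m_2 = \alpha_2 T_{big}\theta_2$, $\calC = 4/\Delta$, $1/\calL = e^{-\calK}$ and using the lower bound on $D$, this reduces to
\begin{align*}
\calC\, e^{-(\lambda^--\epsilon)\alpha_1 T_{big}\theta_2} \;<\; e^{-(1+\eta)(\lambda^-+\epsilon)\alpha_2 T_{big}\theta_2 - \calK\eta T_{big}\theta_2},
\end{align*}
i.e. to the requirement
\begin{align*}
\eta \;<\; \frac{\ln \calC}{T_{big}\theta_2} - \frac{(\lambda^--\epsilon)\alpha_1}{-(\lambda^-+\epsilon)\alpha_2 - \calK} + \frac{(\lambda^-+\epsilon)\alpha_2}{-(\lambda^-+\epsilon)\alpha_2 - \calK},
\end{align*}
which, discarding the vanishing term $\ln\calC /(T_{big}\theta_2)$ for $T_{big}$ large, holds because the right-hand side is positive by the choice $(\lambda^-+\epsilon)\alpha_2 > (\lambda^--\epsilon)\alpha_1$ made in Section~\ref{sec:lusin2}, and $\eta$ was fixed in Section~\ref{sec:lusin} (retained in Section~\ref{sec:lusin2}) precisely to satisfy this bound.

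\smallskip

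There is essentially no obstacle here: the lemma is a transcription of Lemma \ref{bb}, and the only thing to check is that each ingredient invoked (the distortion bounds, Lemma \ref{anglecontrol}, Lemma \ref{apartm}, the definitions of $m_1,m_2,\alpha_1,\alpha_2,\eta$, and the change-of-charts estimate) remains available in the zero-fiber-entropy setting of Section~\ref{sec:alt1} — which it does, since the setup of Section~\ref{sec:green2} was deliberately arranged to mirror that of Section~\ref{sec:green}. The one genuinely substantive point to emphasize in writing it up is that the constants $\alpha,\,\alpha_1,\,\alpha_2,\,\eta,\,\Delta,\,C,\,M_r,\,\epsilon_f,\,\calK$ are all fixed before the argument begins and are independent of $T_{big}$, so that the single condition ``$T_{big}$ large enough'' suffices uniformly; hence the cleanest write-up is simply: ``The proof is identical to that of Lemma \ref{bb},'' followed by a one-line indication of the reduction above for the reader's convenience.
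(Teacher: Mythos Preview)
Your proposal is correct and matches the paper exactly: the paper's proof of Lemma~\ref{bb2} is simply ``The proof is the same as in Lemma~\ref{bb},'' and you have faithfully reproduced that argument, correctly observing that every ingredient (distortion estimates, Lemma~\ref{anglecontrol}, Lemma~\ref{apartm}, the definitions of $m_1,m_2,\alpha_1,\alpha_2,\eta$, and the chart comparison) carries over unchanged to the zero-fiber-entropy setting.
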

\begin{proof}
    The proof is the same as in Lemma \ref{bb}.
\end{proof}

For a choice of $m$, define $x \defeq F_{\tilde{\omega}}^{-m}(\tilde{x})$ and $y \defeq F_{\tilde{\omega}}^{-m}(\tilde{y})$.

For any (good) $m$ chosen from the interval $[m_1,m_2]$, we get a positive measure subset $V\subset \Omega^+$ of futures $\omega'^+$ to choose from that allow the points $(x,(\sigma^{-\lfloor k_1- m \rfloor}(\tilde{\omega})^-, \omega'^+ ))$ and $(y,(\sigma^{-\lfloor k_1-m\rfloor}(\tilde{\omega})^-, \omega'^+ ))$ to belong to the set $K_2$ (where $\tilde{\omega}^- \in \Omega^-$ is the past part of $\tilde{\omega}$). Let $\omega'^+\in V$ be arbitrary, let $\omega' = (\sigma^{-\lfloor k_1- m \rfloor}(\tilde{\omega})^-, \omega'^+ ) \in \Omega$. We will have additional restrictions on choosing $(\omega')^+$, but certainly, $(\omega')^+$ will belong to $V$.

At the points $(x,\omega')$ and $(y,\omega')$ we will project $(y,\omega')$ along $W^s(y,\omega')$, onto $W^u(x,\omega')$ and use (modified) normal form coordinates (NFCs) (see Equation \eqref{newNFCform}) to control the distance between the final points when flowing by $\omega'$ by $\ell$. We want this distance to be some fixed $\epsilon_f$.

Note that our good sets make the changes between ambient coordinates, Lyapunov charts and (modified) NFCs a bounded change as described in Section \ref{sec:lusin2}. Also note that $W^u(x,\omega')$ is the same as $W^u(x,\sigma^{-\lfloor k_1- m \rfloor}\tilde{\omega})$ because the past vectors of $\omega'$ and $\sigma^{-\lfloor k_1-m \rfloor}\tilde{\omega}$ are the same. It is the stable manifold that changes when passing from $\sigma^{-\lfloor k_1-m \rfloor}\tilde{\omega}$ to $\omega'$.

Let $\delta(m)$ be the distance between $x=f^{-m}_{\tilde{\omega}}(\tilde{x})$ and $y=f^{-m}_{\tilde{\omega}}(\tilde{y})$ in ambient coordinates. Let $\gamma$ be the distance between $x$ and the projection of $y$ along $W^s(y,\omega')$ onto $W^u(x,\omega')$ which we denote as $y'$.

We will work in $T_yX$, i.e. since we are working on a scale less than $\rho_0/4$, we have that $\exp_y^{-1}$ is the origin represented by $\vec{0}$, $\exp^{-1}_y(x) = \vec{v}$ and the distance between them is some $M\cdot \delta(m)$ where $M\in [\frac{1}{M_{exp}},M_{exp}]$. Let $\vec{u}$ be the unit length vector in the direction $v$. Consider the function $g(z) = z\vec{u}$. For each choice of $(\omega')^+$ there exists a natural number $r$ such that in any chart of $T_yX$, $\exp^{-1}_y(W^s(y,\omega')$ and $g(z)$ will differ for the first time at the $r$th derivative at the origin. Note that this natural number $r$ is $F$-equivariant and so is constant a.e. Call this conull set $U\subset \Omega^+$. We can further restrict this conull set to a set $U'\subset U$ of size $1-\epsilon_r$ on which the norm of this $r$th derivative is bounded above by $C_r$.

\begin{lemma}\label{omega+}
    There exists $m'$ and $M_a>1$ such that for all $m>m'$ we can choose $(\omega')^+$ such that $\gamma\neq 0$ and further $\gamma(m) = C'\delta(m)$ where $C'\in [\frac{1}{M_a},M_a]$.
    \end{lemma}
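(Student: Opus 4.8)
\textbf{Proof plan for Lemma \ref{omega+}.}

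The plan is to run essentially the same geometric argument as in Lemma \ref{dvg} in Section \ref{sec:green}, but without needing the ``unstable planes close'' input, since here the measure $m^u_{x,\omega}$ is trivial a.e. and the unstable supports are just the single points $\hx$ and $\hy$. The key point is that we only need $\gamma \ne 0$ and $\gamma$ comparable to $\delta(m)$; we do not need $U^+[\hx,\ho]$ and $U^+[\hy,\ho]$ to lie in almost the same plane. So I would first replace all the stable/unstable manifolds by (images under $\exp_y$ of) their tangent spaces $E^s$, $E^u$, controlling the approximation error via Claim \ref{error} / Lemma \ref{errorr} / Lemma \ref{epb} on the ball of radius $\mathfrak{r}$ at the scale $\delta(m)$, using that $(x,\sigma^{-n'}(\tilde{\omega})),(y,\sigma^{-n'}(\tilde{\omega}))\in K_2'\subset K_b\cap K_{exp}\cap K_{ang}$.

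Next I would carry out the law-of-sines computation exactly as in the claim inside the proof of Lemma \ref{dvg}: working in $\exp_y^{-1}(B(y,\rho_0))$, set $\vec{v}=\exp_y^{-1}(x)$ with $\Vert\vec v\Vert = M\delta(m)$, $M\in[\tfrac{1}{M_{exp}},M_{exp}]$, and let $\vec w$ be the vector in the direction $E^s(y,\omega')$ reaching the intersection with $\exp^{-1}_y(W^u(x,\omega'))$. Using Lemma \ref{anglecontrol} together with $K_{\varepsilon}$ (Lemma \ref{Kdelta'}), choose $(\omega')^+\in V$ so that the angle $\theta_1$ between $\vec v$ and $\vec w$ is bounded below by $\varepsilon$; using $(x,\sigma^{-n'}(\tilde\omega))\in K_{ang}$ bound the angle $\theta_2 = \angle(E^s(x,\omega'),E^u(x,\omega'))$ below by $\Theta$; then $\beta=\pi-\theta_1-\theta_2$ is bounded away from $0$ and $\pi$, so the law of sines gives $\Vert\vec v-\vec w\Vert = \Vert\vec v\Vert\,\tfrac{\sin\theta_1}{\sin\beta}$ and $\Vert\vec w\Vert = \Vert\vec v\Vert\,\tfrac{\sin\theta_1}{\sin\theta_2}$, both comparable to $\delta(m)$ with uniform constants $c',d'$. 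Then, parametrizing $\exp^{-1}_y(W^s(y,\omega'))$ as $\psi_W(z)=\vec w z + \vec w^{\perp}g(z)$ with $|g(z)|\le C''|z|^2$ by Lemma \ref{epb} (using $K_b$), a Taylor-expansion / reverse-triangle-inequality estimate for $\ell$ (equivalently $T_{big}$, $m$) large gives $\gamma M = \Vert \vec v - \vec w - \vec w^{\perp} g(z')\Vert \ge \tfrac{1}{2d'}\delta(m)M$ from below, and $\gamma M \le (M + Mc' + \tfrac{M}{2d'})\delta(m)$ from above. Setting $M_a = \max\{M+Mc'+\tfrac{M}{2d'},\ 2d'/M\}$ (absorbing $M$) yields $\gamma(m)\in[\tfrac{1}{M_a}\delta(m),M_a\delta(m)]$, and in particular $\gamma\ne 0$. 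The role of Lemma \ref{tend} is precisely to guarantee that for $m>m'$ the stable manifold $W^s(x,\sigma^{-n'}(\tilde\omega))$ is close enough to $W^u(F^n(\hy,\ho))$ (i.e.\ $y$ close to $W^s(x,\omega')$) that these approximations and the choice of $\omega'$ are legitimate, playing the role that Lemma \ref{bdnded2} played in Lemma \ref{dvg}.

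The only genuinely new wrinkle compared to Lemma \ref{dvg} is the use of the integer $r$: here, instead of invoking the full strength of randomness of Oseledets directions, we only know (Corollary \ref{8.2}, via $K_+$ triviality) that the $W^s$ are random, so $\exp_y^{-1}(W^s(y,\omega'))$ and the straight line $g(z)=z\vec u$ agree to some finite order $r$ and first disagree at the $r$th derivative, with $r$ constant a.e.\ by $F$-equivariance and ergodicity, and bounded $r$th derivative $\le C_r$ on $U'$. This does not affect the statement of Lemma \ref{omega+} itself — the error term $g$ is still $O(|z|^2)$ and the law-of-sines bound goes through verbatim — but it is what will later make $\ell$ computed as a separation-to-order-$r$ quantity, and it is the input that replaces Lemma \ref{tildeS}'s use of $\Lambda_s$. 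I expect the main obstacle to be bookkeeping: making sure the choice of $(\omega')^+$ simultaneously lies in $V$, in $U'$, and satisfies the angle lower bound from Lemma \ref{anglecontrol}/Lemma \ref{Kdelta'}, all while the relevant $\varepsilon$ and $\delta$ shrink as $T_{big}\to\infty$; this is handled exactly as in Section \ref{sec:conc1}, taking $T_{big}$ large enough that the intersection of these positive-measure constraints on $\Omega^+$ is still of large positive measure.
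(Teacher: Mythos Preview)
Your proposal has a genuine gap in the case $r>1$. You invoke Lemma \ref{Kdelta'} and Lemma \ref{anglecontrol} to bound the angle $\theta_1=\angle(\vec v,E^s(y,\omega'))$ away from zero, but Lemma \ref{Kdelta'} is stated only in the $d_+=1$ setting (positive fiber entropy, Theorem \ref{thmC} c)), and Lemma \ref{anglecontrol} requires $P^s_y$ to be non-atomic. In the present zero-entropy setting we have neither: the Oseledets stable direction $E^s(y,\omega')$ need not depend on the future at all, so $P^s_y$ may be a Dirac mass and no choice of $(\omega')^+$ can make $\theta_1\geq\varepsilon$. Precisely when $r>1$ we have $E^s(y,\omega')=\vec u$ for a.e.\ $(\omega')^+$, so $\theta_1=0$ and your law-of-sines estimate collapses to $\Vert\vec v-\vec w\Vert=0$ at the linear level.

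The paper therefore splits into two cases. Case $r=1$ is exactly your argument (and indeed is the same as Lemma \ref{dvg}). Case $r>1$ requires a genuinely different computation: one parametrizes $\exp_y^{-1}(W^s(y,\omega'))$ as $\psi_S(z)=z\vec u+f(z)\vec u^\perp$ with $f$ vanishing to order $r$, uses the bounded $r$th derivative $C_r$ on $U'$ together with the angle bound $\angle(E^u(x,\omega'),E^s(x,\sigma^{-n'}(\tilde\omega)))>\Theta$ from $K_{ang}$, and obtains $\gamma\asymp\delta(m)^r$, not $\delta(m)$. Your claim that $r$ ``does not affect the statement of Lemma \ref{omega+}'' is exactly backwards: the exponent $r$ in the comparison $\gamma\asymp\delta^r$ is the reason the subsequent definition of $\ell$ in Equation \eqref{ellrel} uses $D_m^r$ rather than $D_m$, and why the Lipschitz constant in Lemma \ref{BiLip2} picks up the factor $r$. (The lemma statement as written is slightly loose on this point, but the proof and its downstream use make the $\delta^r$ scaling unambiguous.)
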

    \begin{proof}
     
 The points $x$ and $y$ are separated by distance $\delta(m)$. As explained above, since this distance is smaller than $\rho_0/4$ we have that there is some $v\in T_yX$ such that $\exp_y(v)=x$. The distance between the origin (which represents $y$) and $v$, given by $\Vert v\Vert $ is equal to $\delta(m)\cdot M$ where $M\in [\frac{1}{M_{exp}},M_{exp}]$.

 Let $\vec{u}$ be the unit vector in the direction of $\vec{v}$. We can choose $\omega'\in U$ such that the holomorphic function $\exp_y^{-1}(W^s(y,\omega'))$ differs the function $g:\bbC\to T_yX$, $g(z)=z\vec{u}$ for the first time at the $r$th derivative. 

 \underline{Case 1: $r=1$}
 
The argument is the same as in Lemma \ref{dvg} as we are in a setting much like the consequence of Theorem \ref{thmC} c) and our setup (i.e. parameter choice) is the same as in Section \ref{sec:Sect3}.

 \underline{Case 2: $r>1$}

We assume that the first derivative of $\exp_y^{-1}(W^s(y,\omega'))$ is $\vec{u}$ and so we parametrize this holomorphic function as $\psi_S(z) = z\vec{u} + f(z)\vec{u}^{\perp}$.

    Fixing $\delta'>0$ small, we can choose $m'$ large such that $y$ is within $\delta'/M_{exp}$ of $W^s(x,\sigma^{-n'}(\tilde{\omega}))$. We can do this by Lemma \ref{tend}.
    
    Then, since $(x,\sigma^{-n'}(\tilde{\omega})),(y,\sigma^{-n'}(\tilde{\omega}))\in K_2'\subset K_{ang}$ we have that $$\angle(E^u(x,\sigma^{-n'}(\tilde{\omega})),E^s(x,\sigma^{-n'}(\tilde{\omega})))=\angle(E^u(x,\omega'),E^s(x,\sigma^{-n'}(\tilde{\omega})))>\Theta.$$ This means that $E^u(x,\sigma^{-n'}(\tilde{\omega}))$ lives in some cone centered around $E^s(x,\sigma^{-n'}(\tilde{\omega}))^{\perp}$, i.e. $E^u(x,\sigma^{-n'}(\tilde{\omega}))$ is bounded away from $E^s(x,\sigma^{-n'}(\tilde{\omega}))$ by at least some angle $\theta$.  Further, since $E^u(x,\omega')$ approximates $\exp^{-1}_y(W^u(x,\omega')$ up to some error $\delta(m)^2$ (times a bounded constant) we can say that $\exp^{-1}_y(W^u(x,\omega'))$ is in some fixed cone around $E^s(x,\sigma^{-n'}(\tilde{\omega}))^{\perp}$. Additionally, we know that the $r$th derivative of $\exp_y^{-1}(W^s(y,\omega'))$ is bounded. Together we get the following claim:
    
    \begin{claim}
There exists $m'$ and $c'$ such that for all $m>m'$ we have that $\frac{1}{c'}|z_0|\leq |z_1|\leq c'|z_0|$.   
\end{claim}

This tells us that $\gamma = |z_1|^r/\sin(\theta) = c^r(C_r)^r\delta(m)^r/\sin(\theta)$ where $c\in [\frac{1}{c'},c']$ and $\theta \in [\Theta,\pi/2]$. This completes the proof.
    \end{proof}

Now $y'$ and $y$ both belong to $W^s(y,\omega')$ so as we flow forwards under $\omega'$ this distance contracts and so $\gamma$ is a good proxy for $\delta$.

Recall that we were working towards having our end points be $\epsilon_f$ apart in ambient coordinates, in reality it suffices to land within some bounded constant of $\epsilon_f$. Ensuring this final distance is bounded from above will allow us to take limits in Section \ref{sec:conc2} and bounding it from below ensure our goal of distinct unstables is achieved (see the heuristic argument in Section \ref{sec:Sect3}). 

Knowing the relationship between $\gamma$ and $\delta$ is important in defining $\ell$. As explained before we can use (modified) normal form coordinates (NFCs) to control how $\gamma$ grows because this is a distance that lives in $W^u(x,\omega')$. We use this for a preliminary computation for what $\ell(m)$ should be, i.e. the amount we want to flow by from the points $(x,\omega')$ and $(y,\omega')$ regardless of the choice of $\omega'$. Ultimately we will have to compute $\ell$ slightly differently but the first consideration that we make is that $\gamma$ gets changed by (modified) NFCs to some $C_1\gamma$ where $C_1 \in [\frac{1}{M_{NFC}},M_{NFC}]$ and we would like to grow this distance to be $\epsilon_f\cdot M_{NFC}$ (so that when we go back to ambient coordinates it is bounded from below by $\epsilon_f$). Then we solve $C_1 \gamma e^{\ell} = M_{NFC}\epsilon_f$ for $\ell$ and get

\begin{align}
    \ell(m) = \ln\left(\frac{M_{NFC}\epsilon_f}{C_1\gamma(m)}  \right).
\end{align}

Since $C_1$ changes with $m$, we instead replace it with $\frac{1}{M_{NFC}}$ (i.e. its lower bound). Solving $\ell = \ln\left(\frac{M_{NFC}\epsilon_f}{\frac{1}{M_{NFC}}\gamma}  \right)$ is a good first step. If one returns to ambient coordinates, $F^{\ell}_{\omega'}(x)$ and $F^{\ell}_{\omega'}(y')$ differ by at least $\epsilon_f$ and at most $\epsilon_f\cdot M_{NFC}^2$.

To make the computation in Lemma \ref{BiLip} simpler, we use $D_m$ instead of $\gamma$ which would make the formula look more like this: 
\begin{align} \label{olddef}
    \ell(m) = \ln\left(\frac{M_{NFC}\epsilon_f}{ \frac{1}{M_{NFC}M_r^rM_a} D_m^r}  \right).
\end{align}
Note that the constants in front of $D_m$ appear because we are thinking of $D_m$ in relation to $\gamma$. $D_m$ is related to $\delta(m)$ by a constant bounded from below by $\frac{1}{M_r}$ and $\delta(m)^r$ is related to $\gamma$ by $C'$ (where $r$ is as in Lemma \ref{omega+}), the lower bound being $\frac{1}{M_a}$. Note however that $C'$ is also bounded from above by $M_a$ and the relation between $D_m$ and $\delta(m)$ is bounded above by $M_r$. This means that this definition pushes us to at least distance $\epsilon_f$ ambiently, but does not take us further than $M_aM_r^rM_{NFC}^2\epsilon_f$ this is important for Section \ref{sec:conc2}. 

However, this definition for $\ell$ is not continuous as $D_m$ changes only every integer (up to taking into account $k_1$). We will need continuity of $\ell$ for Section \ref{sec:pink2}. Note also that we showed that $D_m$ is increasing in $m$, so we define $\ell$ to be the piecewise linear interpolation between these points. We can write this explicitly as 
\begin{align} \label{betterdef}
    \ell(m) &= \ln\left(\frac{M_{NFC}\epsilon_f}{ \frac{1}{M_{NFC}M_r^rM_a} D_m^r }  \right) \\ \nonumber &+ (m-\lfloor m \rfloor ) \left(\ln\left(\frac{M_{NFC}\epsilon_f}{ \frac{1}{M_{NFC}M_r^rM_a} D_{m+1}^r }  \right) - \left(\ln\left(\frac{M_{NFC}\epsilon_f}{ \frac{1}{M_{NFC}M_r^rM_a} D_{m}^r } \right) \right) \right),
\end{align}
which simplifies to 
\begin{align}
    \label{bettererdef}
    \ell(m) = \ln\left(\frac{M_{NFC}\epsilon_f}{ \frac{1}{M_{NFC}M_r^rM_a} D_m^r }  \right) + (m-\lfloor m \rfloor ) \ln\left( \frac{D_m^r}{D_{m+1}^r}\right).
\end{align}

We make one last modification to this definition of $\ell$, to correct concerns that in the interpolation if $m$ is between two integers, technically $\ell(m)$ computed using Equation \eqref{olddef} is smaller than that of $\ell(m)$ computed using Equation \eqref{bettererdef}, we tack on a last constant. Let $M_{D} \defeq \left( e^{-(\lambda^-  +\delta)} + \frac{1}{2}C \right)$ then define

\begin{align} \label{ellrel}
    \ell(m) = \ln\left(\frac{M_{NFC}\epsilon_f}{ \frac{1}{M_{NFC}M_r^rM_aM_D^r} D_m^r }  \right) + (m-\lfloor m \rfloor ) \ln\left( \frac{D_m^r}{D_{m+1}^r}\right).
\end{align}
This expression, \eqref{ellrel}, is how we define $\ell$. Note that the second term is negative because $\ell$ decreases in $m$.

Additionally, take note of the fact that $D_m$ increases with $m$ because $m$ is large enough; see Lemma \ref{bdnded32}.

\subsubsection{Flowing by $\ell$}
\label{sec:pink2}

Now for completeness, we repeat the setup from Section \ref{sec:pink}:

 We have a `V' shape where we go backwards by $m$ (under the standard flow) and forwards by $\ell$ (under the time changed flow). Here $\ell$ is determined by $m$ and the distance $\epsilon_f$ that we want to reach (within some constant) at the end points using (modified) NFCs. We need the `legs' of the `V', i.e. the forward and backward directions to be of the same length to run the argument in Section \ref{sec:MCT2}. This is why we will introduce extra points in this section.

We fix $T$ and the points $(\tilde{x},\tilde{\omega},k_1)$ and $(\tilde{y},\tilde{\omega},k_1)$ from Section \ref{sec:green2}, and for each $m$ we pick in Section \ref{sec:green2} (from the positive measure subset of `good choices' in some interval) we get the points $(x,\omega',k_2)$ and $(y,\omega',k_2)$. Here, $\omega'\in \Omega$ is any vector such that $\omega'^- = \sigma^{-\lfloor k_1-m \rfloor}(\tilde{\omega})^-$ and $\omega'$ belongs to the positive measure subset $V$ corresponding to the points $(x,\sigma^{-\lfloor k_1-m \rfloor}(\tilde{\omega}))$ and $(y,\sigma^{-\lfloor k_1-m \rfloor}(\tilde{\omega}))$ which live in $\tilde{K}_2$.

Additionally, for each choice of $m$ we can compute $\ell(m)$ as we did in Section \ref{sec:green2}. This is independent of the choice of future $\omega'$ that we pick from some a good set. Now, starting from the points $(\tilde{x},\tilde{\omega})$ and $(\tilde{y},\tilde{\omega})$ (i.e. we are in the set $K_1$), we want to flow forwards by an amount that will allow us to complete our `V' shape.

Let us reiterate our `V' shape more definitively. Since the points $(\tilde{x},\tilde{\omega},k_1)$ and $(\tilde{y},\tilde{\omega},k_1)$ are fixed at this point, our `V' is determined entirely by a choice of $m$ (which is in a fixed interval at this point). The length of the legs is $\ell(m)$ under time change flow, with the middle points of the `V' being the points $(x,\omega',k_2)$ and $(y,\omega',k_2)$. Note that because $\tau$ in the  `time change' flow depends on which point you start at, the resulting pair of points after flowing by $\ell(m)$ will have different $\Omega$ and $[0,1[$-components compared to one another. This was never the case with the `standard' flow. We have to convert the parameter $m$ that we flowed by under the standard flow into the quantity we would flow by from $(x,\sigma^{n'}(\tilde{\omega}),k_2)$ and $(y,\sigma^{n'}(\tilde{\omega}),k_2)$ to $(\tilde{x},\tilde{\omega},k_1)$ and $(\tilde{y},\tilde{\omega},k_1)$ under the time changed flow. This quantity will differ between points. We define $m_{\tau, (\tilde{x},\tilde{\omega},k_1)}$ and $m_{\tau, (\tilde{y},\tilde{\omega},k_1)}$ to be these quantities resp. Now to complete our `V' we flow backwards from $(\tilde{x},\tilde{\omega},k_1)$ and $(\tilde{y},\tilde{\omega},k_1)$ by $d(m,(\tilde{x},\tilde{\omega},k_1)) \defeq \ell(m) + (-m)_{\tau, (\tilde{x},\tilde{\omega},k_1)} $ and $d(m, (\tilde{y},\tilde{\omega},k_1)) \defeq \ell(m) + (-m)_{\tau, (\tilde{y},\tilde{\omega},k_1)} $ (note that both functions $d$ are positive). We denote the resulting points $(\overline{x}, \overline{\omega}_x, k_x)$ and $(\overline{y},\overline{\omega}_y,k_y)$ resp. Note that $k_x$ and $k_y$ are not necessarily the same; neither are $\overline{\omega}_x$ and $\overline{\omega}_y$.

\begin{remark}
    The minus sign was included next to the $m$ before doing the time change because we want to remember to compute it using $\tau^-$, i.e. we are going backwards when flowing by $,$ and we need to remember that as we compute the time changed value.
\end{remark}

For simplicity, we will take our overall `V' and divide it into its two components, the `V' we have at the fixed point $(\tilde{x},\tilde{\omega},k_1)$ will be called `V1' and the `V' we have at the point  $(\tilde{y},\tilde{\omega},k_1)$ will be called `V2'. This allows us to drop some notation regarding time changed $m$ and the function $d$. Let us just focus on `V1' as everything we have to say will hold for `V2' as well. Rename $m_{\tau, (\tilde{x},\tilde{\omega},k_1)}$ as $m_1^{\tau}$ and $d(m,(\tilde{x},\tilde{\omega},k_0))$ as $d_1(m)$. We will have to show that $d_1$ is Bi-Lipschitz in $m$ in order to pick $m$ such that flowing backwards by $m$ from $(\tilde{x},\tilde{\omega},k_1)$ we land in the set $\tilde{K}_2$ and flowing forwards by $d_1(m)$ we land in the set $K_3$. We can use Lemma \ref{birkhoff} to do one or the other, but we have to run a different argument to ensure we can do both at the same time. A similar argument will work for `V2' where we use the notation $m_2^{\tau}$ and $d_2(m)$ for $m_{\tau, (\tilde{y},\tilde{\omega},k_1)}$ and $d(m,(\tilde{y},\tilde{\omega},k_0))$ respectively.

We run the argument to demonstrate that $d$ is Bi-Lipschitz given our new definition for $\ell$ in Equation \eqref{ellrel}, we see that the constant $r$, introduced due to Lemma \ref{omega+} (rather than Lemma \ref{dvg}) does not change anything.

\begin{lemma}\label{BiLip2}
    Take the interval $[m_1,m_2]$ from Section \ref{sec:green2}. The map $d_1 : [m_1,m_2] \to ]-\infty,0]$ is Bi-Lipschitz, (similar for $d_2$ and the same Lipschitz constant works in this case).
\end{lemma}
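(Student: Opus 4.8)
The plan is to reduce Lemma~\ref{BiLip2} to the already-proven Lemma~\ref{BiLip} by observing that the only structural difference between the two settings is the exponent $r$ appearing in the definition of $\ell$ (Equation~\eqref{ellrel} here versus Equation~\eqref{ellrel1} in Section~\ref{sec:pink}), which arises because we invoked Lemma~\ref{omega+} instead of Lemma~\ref{dvg}. I will therefore recycle the decomposition $d_1(m) = (-m)_1^{\tau} + \ell(m)$ and bound the two pieces separately, exactly as in the proof of Lemma~\ref{BiLip}, explaining why each bound survives the replacement of $D_m$ by $D_m^r$ in the logarithm.

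First I would handle the $(-m)_1^{\tau}$ term. This part is \emph{identical} to the corresponding part of Lemma~\ref{BiLip}: the quantity $(-m)_1^{\tau}$ is the sum of $\tau^-$-values over the integers crossed when flowing backwards, plus the fractional pieces governed by the interpolation factors $\psi_m,\psi_{m'}$, and the bounds $\lambda^+-\epsilon \leq |\tau^-| \leq \lambda^+ + \epsilon$ (Equation~\eqref{tau1} applied to $\tau^-$) give, for $m' > m$,
\begin{align*}
(\lambda^+ - \epsilon)(m'-m) \leq |(-m')_1^{\tau} - (-m)_1^{\tau}| \leq (\lambda^+ + \epsilon)(m'-m).
\end{align*}
No change is needed here. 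For the $\ell(m)$ term, I would use Equation~\eqref{ellrel} and the relation $D_{m+1} \leq (e^{-(\lambda^- - \epsilon)} + \tfrac12 C) D_m$ together with $D_m < D_{m+1}$ from Lemma~\ref{bdnded32}. Raising to the $r$-th power gives $D_{m+1}^r \leq (e^{-(\lambda^- - \epsilon)} + \tfrac12 C)^r D_m^r$, so every appearance of $\ln(e^{-(\lambda^- - \epsilon)} + \tfrac12 C)$ in the proof of Lemma~\ref{BiLip} is simply replaced by $r\ln(e^{-(\lambda^- - \epsilon)} + \tfrac12 C)$ when one passes to $D_m^r$. Running the same case analysis (the case $m' - m < 1$ with subcases $D_m = D_{m'}$ and $D_{m'} < D_m$, and the case $m' - m \geq 1$), one obtains that $|\ell(m) - \ell(m')|$ is bounded above by $2r\ln(e^{-(\lambda^- - \epsilon)} + \tfrac12 C)|m - m'|$, and bounded below (using $D_m < D_{m+1}$, hence $\ell(m) - \ell(m+1) > 0$, exactly as in Lemma~\ref{BiLip}) by a fixed positive multiple of $|m - m'|$. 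Combining the two estimates gives the Lipschitz constant
\begin{align*}
L \defeq \max\left\{ 2r\ln(e^{-(\lambda^- - \epsilon)} + \tfrac12 C) + \lambda^+ + \epsilon, \ \frac{1}{\lambda^+ + \epsilon} \right\},
\end{align*}
which is precisely the $L$ fixed in Section~\ref{sec:lusin2}. The lower Lipschitz bound comes, as before, from the fact that $(-m)_1^{\tau}$ strictly decreases at rate at least $\lambda^+ - \epsilon$ while $\ell$ also decreases, so $d_1$ decreases at rate at least $\lambda^+ - \epsilon$, hence the bi-Lipschitz lower constant $\frac{1}{\lambda^+ + \epsilon}$ (the reciprocal appearing because we also need the inverse to be Lipschitz). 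The identical argument with $(\tilde y,\tilde\omega,k_1)$ in place of $(\tilde x,\tilde\omega,k_1)$ handles $d_2$, and the bounds on $|\tau^-|$ and on $D_{m+1}/D_m$ are point-independent, so the same $L$ works for $d_2$.

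I do not expect a genuine obstacle here: the proof is essentially a verification that the substitution $D_m \rightsquigarrow D_m^r$ only inflates the relevant logarithmic constants by the fixed factor $r$, and that $r$ is a constant fixed in advance (it is $F$-equivariant, hence a.e.\ constant, by the discussion preceding Lemma~\ref{omega+}). The one point requiring a sentence of care is that the interpolation term $(m - \lfloor m\rfloor)\ln(D_m^r/D_{m+1}^r)$ in Equation~\eqref{ellrel} behaves under the case split exactly as its counterpart $(m - \lfloor m\rfloor)\ln(D_m/D_{m+1})$ in Equation~\eqref{ellrel1}, just scaled by $r$; since $D_m$ is constant on unit intervals and increasing, this is immediate. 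Thus the proof is: ``The proof is the same as that of Lemma~\ref{BiLip}, with $\ln(e^{-(\lambda^- - \epsilon)} + \tfrac12 C)$ replaced by $r\ln(e^{-(\lambda^- - \epsilon)} + \tfrac12 C)$ throughout, using Lemma~\ref{bdnded32} in place of Lemma~\ref{bdnded3}; the resulting Lipschitz constant is the $L$ fixed in Section~\ref{sec:lusin2}.''
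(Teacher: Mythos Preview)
Your proposal is correct and takes essentially the same approach as the paper: the paper's proof of Lemma~\ref{BiLip2} is a line-by-line repetition of the proof of Lemma~\ref{BiLip}, with $D_m$ replaced by $D_m^r$ throughout (using Lemma~\ref{bdnded32} in place of Lemma~\ref{bdnded3}), yielding the same Lipschitz constant $L = \max\{2r\ln(e^{-(\lambda^--\epsilon)}+\tfrac12 C) + \lambda^++\epsilon,\ \tfrac{1}{\lambda^++\epsilon}\}$ that was fixed in Section~\ref{sec:lusin2}. Your observation that the only structural change is the factor of $r$ in the logarithmic bound is exactly what the paper does.
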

\begin{proof}
    First we want to show that there is $L>0$ such that $|d_1(m) - d_1(m')|\leq L|m-m'|$ for arbitrary $m$ and $m'$. 
    Recall the definition of $d_1$ from Equation \eqref{d} and plug in:
\begin{align*}
    |d_1(m) - d_1(m')| &= |(-m)_1^{\tau}-(-(m'))_1^{\tau} + \ell(m) -\ell(m')|\\
    &\leq |(-(m'))_1^{\tau}-(-m)_1^{\tau}| + |\ell(m) - \ell(m')|.
\end{align*}
By Equation \eqref{ellrel} we have that this is equal to 
\begin{align*}
    |(-m)_1^{\tau}-(-(m'))_1^{\tau}| + |\ln\left(\frac{M_{NFC}\epsilon_f}{\frac{1}{M_{NFC}M_r^rM_aM_D^r}D_{m'}^r }\right) + (m'-\lfloor m' \rfloor)\ln\left(\frac{D_{m'}^r}{D_{m'+1}^r} \right) \\ - \ln\left(\frac{M_{NFC}\epsilon_f}{\frac{1}{M_{NFC}M_r^rM_aM_D^r}D_m^r }\right) - (m-\lfloor m \rfloor)\ln\left(\frac{D_m^r}{D_{m+1}^r}\right) |
\end{align*}

Let's control the first term in the sum above. Note that $k_1$ plays a role here, we had defined $d_1$ once we fixed a point that had this $[0,1[$-component. It will show up in our computations. Making sure to keep the convention that $k_1$ is written as a positive number between 0 and 1, we can write
\begin{align}
    (-m)^{\tau}_1 &= k_1\tau^-(\tilde{x},\tilde{\omega}) + \tau^-(F^{-1}(\tilde{x},\tilde{\omega})) +\dots \tau^-(F^{\lfloor k_1-m\rfloor + 1}(\tilde{x},\tilde{\omega})) + a, \\
    (-m')^{\tau}_1 &= k_1\tau^-(\tilde{x},\tilde{\omega}) + \tau^-(F^{-1}(\tilde{x},\tilde{\omega})) +\dots \tau^-(F^{\lfloor k_1-m' \rfloor +1}(\tilde{x},\tilde{\omega})) + b.
\end{align}
Note that the first term $k_1\tau^-(\tilde{x},\tilde{\omega})$ appears because once we pass 0 we hit it with the automorphism $f_{-1}^{-1}$. For example if $k_1=0.22$, then we are already $78\%$ on our way to hitting $\tau^-(\tilde{x},\tilde{\omega})$, the remainder that we need is that $22\%$. The $a$ and $b$ terms show up because we have not quite reached the next $\tau^-$ value, i.e. $a\in [0, \tau^-(F^{\lfloor k_1-m\rfloor }(\tilde{x},\tilde{\omega}))[$ and $b\in [0, \tau^-(F^{\lfloor k_1-m'\rfloor }(\tilde{x},\tilde{\omega}))[$. We can easily compute $a$ and $b$ from what we have as clearly the following relations holds:
\begin{align*}
    \psi_m \defeq \lfloor k_1-m\rfloor - (k_1-m) +1 &= \frac{a}{\tau^-(F^{\lfloor k_1-m\rfloor }(\tilde{x},\tilde{\omega}))}>0,\\
       \psi_{m'} \defeq \lfloor k_1-m'\rfloor -(k_1-m') +1 &= \frac{b}{\tau^-(F^{\lfloor k_1-m'\rfloor }(\tilde{x},\tilde{\omega}))}>0.
\end{align*}
Without loss of generality assume $m'>m$ (remember these are positive numbers), then $(-(m'))^{\tau}_1> (-m)^{\tau}_1$. We have
\begin{align*}
    |(-(m'))^{\tau}_1 - (-m)^{\tau}_1| = \tau^-(F^{\lfloor k_1-m' \rfloor +1}(\tilde{x},\tilde{\omega})) + \dots  \tau^-(F^{\lfloor k_1-m\rfloor }(\tilde{x},\tilde{\omega})) + b-a
    \end{align*}
    \begin{align*}
    =| \tau^-(F^{\lfloor k_1-m' \rfloor +1}&(\tilde{x},\tilde{\omega})) + \dots  \tau^-(F^{\lfloor k_1-m\rfloor }(\tilde{x},\tilde{\omega}))\\ \nonumber &+ \psi_{m'}\cdot \tau^-(F^{\lfloor k_1-m'\rfloor }(\tilde{x},\tilde{\omega})) - \psi_m\cdot \tau^-(F^{\lfloor k_1-m\rfloor }(\tilde{x},\tilde{\omega}))|
\end{align*}
\begin{align}\label{expanded}
    =| \tau^-(F^{\lfloor k_1-m' \rfloor +1}(\tilde{x},\tilde{\omega}))  \dots + \tau^-(F^{\lfloor k_1-m\rfloor -1 }(\tilde{x},\tilde{\omega}))&+ 
      \tau^-(F^{\lfloor k_1-m\rfloor }(\tilde{x},\tilde{\omega}))(1-\psi_m)\\ \nonumber &+ \psi_{m'}\cdot \tau^-(F^{\lfloor k_1-m'\rfloor }(\tilde{x},\tilde{\omega}))|. 
\end{align}
Using Equation \eqref{expanded}, we then have that 
\begin{align*}
   |(-(m'))^{\tau}_1 - (-m)^{\tau}_1| &\leq | \tau^-(F^{\lfloor k_1-m' \rfloor +1}(\tilde{x},\tilde{\omega}))| + \dots + |\tau^-(F^{\lfloor k_1-m\rfloor -1 }(\tilde{x},\tilde{\omega}))|\\ \nonumber &+  |\tau^-(F^{\lfloor k_1-m\rfloor }(\tilde{x},\tilde{\omega}))|(1-\psi_m) + \psi_{m'}|\tau^-(F^{\lfloor k_1-m'\rfloor }(\tilde{x},\tilde{\omega}))|.
\end{align*}

Using the bounds on $\tau$ we get that this is bounded above by 

\begin{align*}
   &\left[\left( \left( \lfloor ( k_1-m \rfloor -1) - (\lfloor k_1-m' \rfloor +1 \right)+1\right) + (1-\psi_m) + \psi_{m'}\right] (\lambda^++\epsilon)\\ \nonumber &=(\lambda^++\epsilon)(m'-m).
\end{align*}

On the other hand, we have to lower bound this quantity too. So we consider again $m'>m$ and take Equation \eqref{expanded}. Again, the $\tau^-$-terms are lower bounded (in absolute value) by $\lambda^+-\epsilon$, so we get that Equation \eqref{expanded} is bounded below by $(\lambda^+-\epsilon)(m'-m)$.

    Now we control the second term. It reduces to 
    \begin{align*}
       |\ln\left(\frac{M_{NFC}\epsilon_f}{\frac{1}{M_{NFC}M_r^rM_aM_D^r}D_{m'}^r }\right) + (m'-\lfloor m' \rfloor)\ln\left(\frac{D_{m'}^r}{D_{m'+1}^r} \right) \\ - \ln\left(\frac{M_{NFC}\epsilon_f}{\frac{1}{M_{NFC}M_r^rM_aM_D^r}D_m^r }\right) - (m-\lfloor m \rfloor)\ln\left(\frac{D_m^r}{D_{m+1}^r}\right) |
    \end{align*}
    
    We know how $D_{m+1}$ and $D_m$ are related (see Lemma \ref{bdnded32}). When we apply an automorphism, we never grow bigger than a factor of $\ln(e^{-(\lambda^--\epsilon)}+\frac{1}{2}C)$. Remember throughout that $D_m$ is constant on an interval of length 1. 

We deal with cases:

The first case is that $m'>m$ but $(m'-m)< 1$. There are two subcases, one is that $D_m=D_{m'}$, the other is that $D_{m'}<D_m$.

Let us deal with the first subcase: $D_m = D_{m'}$, then 
\begin{align*}
    \ell(m)-\ell(m') = (m'-m')r\ln\left( \frac{D_{m+1}}{D_m} \right) \leq (m'-m) r\ln(e^{-(\lambda^--\epsilon)}+\frac{1}{2}C).
\end{align*}
    So we are done.

    Now we deal with the second subcase: $D_{m'}<D_m$. Then really $D_{m'}=D_{m+1}$.
    \begin{align*}
        \ell(m)-\ell(m') &= r\ln\left(\frac{D_{m+1}}{D_m} \right) - (m'-\lfloor m'\rfloor )r\ln\left(\frac{D_{m+1}}{D_{m+2}} \right) - (m - \lfloor m \rfloor )r\ln\left( \frac{D_{m+1}}{D_m} \right)\\ \nonumber
        &= r\left[(1-m+\lfloor m \rfloor)\ln\left( \frac{D_{m+1}}{D_m}\right) + (m'-\lfloor m' \rfloor) \ln\left( \frac{D_{m+2}}{D_{m+1}} \right)\right]\\ \nonumber
        &\leq r(1-m+\lfloor m \rfloor + m' - \lfloor m' \rfloor) \ln(e^{-(\lambda^--\epsilon)}+\frac{1}{2}C) \\ \nonumber &= (m'-m)r\ln(e^{-(\lambda^--\epsilon)}+\frac{1}{2}C).
    \end{align*}
    The last equality holds because the condition of the subcase is really that $\lfloor m \rfloor + 1 = \lfloor m' \rfloor$.

The second case is $m'>m$ but $(m'-m)\geq 1$. We could have stopped with the above and just said that since $d$ is continuous on a compact interval, $|d(m)-d(m')|$ is uniformly bounded from above and hence we are done. However, we actually want that this Lipschitz constant computed in this proof is independent of the interval $[m_1,m_2]$ that we choose except for the fact that we need $m_1$ large enough so that Lemma \ref{bdnded32} holds. Hence we do more work.

Note first that $\ell(m) = \ell(m') \leq \ell(\lfloor m \rfloor) - \ell(m')$. We will work with this quantity.
\begin{align*}
    \frac{\ell(\lfloor m \rfloor) - \ell(m')}{m'-m} = \frac{r\ln \left(\frac{D_{m'}}{D_m} \right) - (m'-\lfloor m' \rfloor)r\ln \left( \frac{D_{m'}}{D_{m'+1}}  \right)}{m'-m}.
\end{align*}
Since $(m'-m)\geq 1$ we have that $(m'-m)>m'-\lfloor m' \rfloor$. Also note that $D_{m'}<D_{m'+1}$ and so $\ln \left( \frac{D_{m'}}{D_{m'+1}}  \right)$ is negative. Then,
\begin{align*}
    \frac{\ell(\lfloor m \rfloor) - \ell(m')}{m'-m}&\leq r\left[\frac{\ln\left( \frac{D_{m'}}{D_m}\right)}{m'-m} + \ln \left( \frac{D_{m'+1}}{D_{m'}} \right)\right]\\
    &\leq r\left[\frac{\lfloor m'-m \rfloor}{m'-m}\ln(e^{-(\lambda^--\epsilon)}+\frac{1}{2}C) + \ln(e^{-(\lambda^--\epsilon)}+\frac{1}{2}C)\right] \\
    &\leq 2r\ln(e^{-(\lambda^--\epsilon)}+\frac{1}{2}C).
\end{align*}

Now we do the other side of the computation. Note that $d$ is decreasing because as $m$ increases, $(-m)_1^{\tau}$ decreases and $\ell$ decreases (in fact both do so strictly and hence so does $d$). Take $m,m'\in [m_1,m_2]$ such that $m'>m$ without loss of generality. Then,
\begin{align*}
    |d_1(m)-d_1(m')| = d_1(m)-d_1(m') &\geq (-m)_1^{\tau}-(-m')_1^{\tau} + \ell(m)-\ell(m')\\ \nonumber
    &= (\lambda^+-\epsilon)(m'-m) - \ell(m+1) + \ell(m) \\ \nonumber &\geq \lambda^+-\epsilon.
\end{align*}

We get Lipschitz constant to be 
\begin{align} \label{Lipconst2}
L \defeq \max\{ 2r\ln(e^{-(\lambda^--\epsilon)}+\frac{1}{2}C) +\lambda^++\epsilon, \frac{1}{\lambda^++\epsilon} \}.
\end{align}
This completes the proof.
\end{proof}

Now that we have established that $d_1$ is $L$-BiLipschitz, a similar argument works for $d_2$, we get the same Lipschitz constant and a similar result to Lemma \ref{dd}.

\begin{lemma}\label{dd2}
 For our choice of $m_1$ and $m_2$ as in Equation \eqref{mchoice}, and $T_{big}$ large enough, we have that the interval $[d_1(m_2),d_1(m_1)]$ inside $[0,d_1(m_1)]$ has proportion at least $(1-\frac{1}{1+\eta})$ in measure (where $\eta$ is as in Section \ref{sec:lusin} and Lemma \ref{bb2}). Similar result holds for $d_2$.
\end{lemma}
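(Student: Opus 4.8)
The proof of Lemma \ref{dd2} will follow the template of Lemma \ref{dd} almost verbatim, with the only change being the appearance of the exponent $r$ coming from Lemma \ref{omega+} in the definition of $\ell$ in Equation \eqref{ellrel}. The plan is to first establish the key inequality $d_1(m_1) \geq (1+\eta)d_1(m_2)$, and then deduce the measure statement by a direct computation comparing the Lebesgue measures of $[0,d_1(m_1)]$ and $[d_1(m_2),d_1(m_1)]$, exactly as in the last line of the proof of Lemma \ref{dd}.

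For the key inequality, I would proceed as follows. Write $M = M_{NFC}^2 M_r^r M_a M_D^r$ as in Equation \eqref{ellrel}. As in the proof of Lemma \ref{dd}, one can drop the piecewise-linear interpolation terms at a small cost (they are bounded in terms of the bounds on $D_{m+1}/D_m$ from Lemma \ref{bdnded32}, and hence negligible for $T_{big}$ large since the lengths $m_1,m_2$ grow), so it suffices to compare the leading logarithmic terms of $d_1$ at $m_1$ and $m_2$. Recalling $m_1 = \frac{C_b}{10^{i+J}}T_{big}\theta_2$ and $m_2 = \frac{C_b}{10^{i+2}}T_{big}\theta_2$ from Equation \eqref{mchoice}, and that $\tau^-$ is bounded between $-(\lambda^++\epsilon)$ and $-(\lambda^+-\epsilon)$, the contribution of the $(-m)_1^{\tau}$ terms satisfies $\left(-\frac{C_b}{10^{i+J}}T_{big}\theta_2\right)^{\tau} \geq (1+\eta)\left(-\frac{C_b}{10^{i+2}}T_{big}\theta_2\right)^{\tau}$ for $T_{big}$ large (both sides are negative and the left is smaller in magnitude by the choice $J$ with $m_2 > 2m_1$, combined with $\eta < 1$). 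Thus it remains to check $\ln\left(\frac{\epsilon_f M}{D_{\lfloor m_1+1\rfloor}^r}\right) \geq (1+\eta)\ln\left(\frac{\epsilon_f M}{D_{m_2}^r}\right)$, which, since $\epsilon_f M < 1$, reduces to $D_{m_2}^{r(1+\eta)} > D_{m_1}^r \epsilon_f M$, i.e. $D_{m_2}^{(1+\eta)} > D_{m_1}(\epsilon_f M)^{1/r}$. By Lemma \ref{bb2} we have $D_{m_2}^{(1+\eta)} > D_{m_1}$ for $T_{big}$ large, and since $(\epsilon_f M)^{1/r} \leq $ some fixed constant while $D_{m_1}$ and $D_{m_2}$ can be separated as much as needed by taking $T_{big}\to\infty$ (the interval $[m_1,m_2]$ grows and $D$ is strictly increasing on it), the inequality holds. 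Note that the exponent $r$ only scales both sides of the logarithmic comparison and thus changes nothing essential — this is the one point where this proof differs from Lemma \ref{dd}, and it is harmless.

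Finally, with the claim $d_1(m_1) \geq (1+\eta)d_1(m_2)$ in hand, the Lebesgue measure of $[0,d_1(m_1)]$ is $d_1(m_1)$ while the Lebesgue measure of $[d_1(m_2),d_1(m_1)]$ is $d_1(m_1) - d_1(m_2) \geq d_1(m_1)\left(1 - \frac{1}{1+\eta}\right)$, giving the claimed proportion. The identical argument with $d_2$ in place of $d_1$ (using the same Lipschitz constant $L$ from Lemma \ref{BiLip2} and Lemma \ref{bb2}) yields the statement for $d_2$. The only genuine obstacle is bookkeeping: ensuring that all the ``$T_{big}$ large enough'' conditions invoked here (for dropping interpolation terms, for the $\tau^-$-term comparison, and for invoking Lemma \ref{bb2}) are compatible with those already imposed earlier in Section \ref{sec:green2} and in the definition of the good sets in Section \ref{sec:lusin2} — but since all of these are of the form ``take $T_{big}$ above some threshold,'' finitely many such constraints cause no conflict.
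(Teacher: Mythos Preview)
Your proposal is correct and follows the same approach as the paper, which simply states that the proof is identical to that of Lemma \ref{dd}. You have correctly spelled out how the argument carries over with the exponent $r$ from Equation \eqref{ellrel}; the only minor point is that the reduction to $D_{m_2}^{(1+\eta)} > D_{m_1}(\epsilon_f M)^{1/r}$ follows immediately from Lemma \ref{bb2} since $(\epsilon_f M)^{1/r} < 1$, so the additional remark about ``separating $D_{m_1}$ and $D_{m_2}$'' is unnecessary.
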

\begin{proof}
    The proof is the same as in Lemma \ref{dd}.
\end{proof}

Using Lemma \ref{dd2} we get the following lemma:

\begin{lemma}\label{landing2}
    Given the interval $[m_1,m_2]$ (where $m_1,\ m_2$ as specified above), there exists a positive measure subset $G$ such that,
    \begin{enumerate}
        \item if $m\in G$ then flowing backwards by $m$ from the points $(\tilde{x},\tilde{\omega},k_1)$ and $(\tilde{y},\tilde{\omega},k_1)$ lands us in the set $\tilde{K}_2\times [0,1[$,
        \item if $m\in G$ then flowing backwards by $d_1(m)$ and $d_2(m)$ from the points $(\tilde{x},\tilde{\omega},k_1)$ and $(\tilde{y},\tilde{\omega},k_1)$ resp. lands both points in the set $K_3$.
    \end{enumerate}
\end{lemma}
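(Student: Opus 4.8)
The plan is to transcribe the proof of Lemma \ref{landing} essentially word for word, replacing the good sets $\tilde K_2$, $K_3$, $E_2^c$, $E_3^c$ by their counterparts from Section \ref{sec:lusin2} and the Bi-Lipschitz bound of Lemma \ref{BiLip} by that of Lemma \ref{BiLip2}. The one structural difference in this setting is that $\ell(m)$ carries an extra exponent $r$ coming from Lemma \ref{omega+} (rather than Lemma \ref{dvg}), but this is already absorbed into the Lipschitz constant $L=\max\{2r\ln(e^{-(\lambda^--\epsilon)}+\tfrac12 C)+\lambda^++\epsilon,\ \tfrac{1}{\lambda^++\epsilon}\}$ of \eqref{Lipconst2}; since the measure thresholds in Section \ref{sec:lusin2} were defined with exactly this $L$, the numerology of the argument is unchanged.

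First I would fix $m_1,m_2$ as in \eqref{mchoice} and take $T_{big}$ large enough that $m_2>2m_1$, that $m_1$ exceeds the Birkhoff constant $S$ attached to $E_2^c$, and that Lemma \ref{bdnded32} applies, so that $D_m$ is increasing on the relevant range and $\ell$, $d_1$, $d_2$ are genuinely strictly decreasing and Bi-Lipschitz. Exactly as in Lemma \ref{m2}, since the pair $(\tilde x,\tilde\omega,k_1)$, $(\tilde y,\tilde\omega,k_1)$ lies in $E_2^c$, Lemma \ref{birkhoff} — in the quantitative form of Lemma \ref{goodlanding} with $\eta=1$ and $\xi=\tfrac{1}{100L^4}$, using $\beta_2>1-\tfrac{1}{200L^4}$ — produces $G_1\subset[m_1,m_2]$ of proportion at least $1-\tfrac{1}{100L^4}$ consisting precisely of the $m$ for which $F^{-m}(\tilde x,\tilde\omega,k_1)$ and $F^{-m}(\tilde y,\tilde\omega,k_1)$ land in $\tilde K_2\times[0,1[$. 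This is conclusion (1).

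Next I would carry out the interval transfer through $d_1$ and $d_2$. By Lemma \ref{BiLip2}, $d_1$ is $L$-Bi-Lipschitz and decreasing, so Lemma \ref{Ld} gives $Leb([d_1(m_2),d_1(m_1)])\geq\tfrac{1}{L}(m_2-m_1)$, and Lemma \ref{bb2} (hence Lemma \ref{dd2}) gives $d_1(m_1)>(1+\eta)d_1(m_2)$ for the $\eta$ fixed in Section \ref{sec:lusin}. Since the endpoints $(\overline x,\overline\omega_x,k_x)$, $(\overline y,\overline\omega_y,k_y)$ lie in $E_3^c$ and $|d_1(m_2)|$ exceeds the Birkhoff constant for $E_3^c$, Lemma \ref{goodlanding} — with this $\eta$ and $\xi=\tfrac{1}{100L^2}$, using $\beta_3>(1-\tfrac{1}{100L^2})+\tfrac{1}{100L^2}\tfrac{1}{1+\eta}$ — yields $G_2\subset[d_1(m_2),d_1(m_1)]$ of proportion $\geq 1-\tfrac{1}{100L^2}$ along which flowing by $d_1(m)$ lands both points in $K_3$. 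Pushing $G_1$ forward by $d_1$, intersecting with $G_2$, and pulling back by $d_1^{-1}$ (Bi-Lipschitz with the same constant, so again controlled by Lemma \ref{Ld}), one checks exactly as in Lemma \ref{landing} that $S_1\defeq G_1\cap d_1^{-1}(G_2\cap d_1(G_1))$ occupies at least $98\%$ of $[m_1,m_2]$. The identical argument for $d_2$ produces $S_2$ of the same size, and $G\defeq S_1\cap S_2$ has positive measure and satisfies both (1) and (2).

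This lemma is routine relative to the rest of Proposition \ref{finite}; the only points needing attention are (i) that the Lipschitz constant returned by Lemma \ref{BiLip2} is exactly the $L$ used to size $\gamma_2,\gamma_3,\beta_2,\beta_3$ in Section \ref{sec:lusin2} — which holds automatically because that section fixes $L$ via \eqref{Lipconst2} — and (ii) the composition of the two interval transfers in the definition of $S_1$, which is where the requirements $m_2>2m_1$ and $d_1(m_1)>(1+\eta)d_1(m_2)$ are used. Since neither introduces anything new beyond the proof of Lemma \ref{landing}, in the write-up I would simply record that the proof is identical to that of Lemma \ref{landing} with the sets of Section \ref{sec:lusin2} and Lemma \ref{BiLip2} in place of Lemma \ref{BiLip}, and with Lemmas \ref{bb2} and \ref{dd2} in place of Lemmas \ref{bb} and \ref{dd}.
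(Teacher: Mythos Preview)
Your proposal is correct and follows exactly the paper's approach: the paper's proof of Lemma \ref{landing2} consists of the single sentence ``The proof is the same as in Lemma \ref{landing},'' and you have correctly unpacked what that means, substituting the good sets of Section \ref{sec:lusin2}, the Bi-Lipschitz constant of Lemma \ref{BiLip2}, and Lemmas \ref{bb2}, \ref{dd2} for their Section \ref{sec:Sect3} counterparts. One minor wording slip: the points required to lie in $E_3^c$ for the second Birkhoff argument are the starting points $(\tilde x,\tilde\omega,k_1)$ and $(\tilde y,\tilde\omega,k_1)$ (which belong to $M\subset E_3^c$), not the endpoints $(\overline x,\overline\omega_x,k_x)$, $(\overline y,\overline\omega_y,k_y)$; this does not affect the argument.
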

\begin{proof}
    The proof is the same as in Lemma \ref{landing}.
\end{proof}

\subsubsection{The end points}
\label{sec:MCT2}

From the points $x, y\in X$, we flow forwards by the `time change' flow for time $\ell$ under a new future to our final pair of points denoted $x',y'\in X$. We will need to ensure that we land in an appropriate set called $K_{4}'$ (see Section \ref{sec:lusin}). Throughout this section we will call $K\defeq K_4'.$

Claims \ref{Q1}, \ref{Q2} and Lemma \ref{LLJ} prove that 

\begin{lemma}\label{482}
We have that 
    \begin{align}\label{V'2}
\mu^{\bbN}(\{ \omega^+ \in \Omega^+ : F_{tc}^{\ell} (x,(\omega^-,\omega^+),k)\in K \}) \geq 1-\sqrt{Q\epsilon_4}.
\end{align}
\end{lemma}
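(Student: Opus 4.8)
\textbf{Plan for the proof of Lemma \ref{482}.}

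The plan is to run verbatim the same argument used for Lemma \ref{48}, since the only role of the ambient distinctions between the two settings (positive fiber entropy versus zero fiber entropy) was in \emph{locating} the points $(\overline{x},\overline{\omega}_x,k_x)$ and $(\overline{y},\overline{\omega}_y,k_y)$ inside the set $K_3$; once we are at such points, the $\sigma$-algebra machinery and the martingale argument are completely insensitive to how we got there. So the first step is to recall that, by Lemma \ref{landing2}, for a positive measure choice of $m\in[m_1,m_2]$ the points $(\overline{x},\overline{\omega}_x,k_x)$ and $(\overline{y},\overline{\omega}_y,k_y)$ obtained by flowing backwards by $d_1(m)$ and $d_2(m)$ respectively land in $K_3$. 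By the definition of $K_3$ in Section \ref{sec:lusin2}, such points lie in the $\hat{m}$-conull set where Lemma \ref{LLJ} (the ``law of last jump'' of \cite{BQ}) holds, and in the set where Lemma \ref{RN} applies.

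The steps then proceed exactly as in the proof of Lemma \ref{48}. First, by Claim \ref{Q1} (which uses Lemma \ref{RN} and $\hat{m}(K^c)<\epsilon_4$), we have $\hat{m}^{\tau}(K^c)<Q\epsilon_4$, hence $\hat{m}^{\tau}(K)>1-Q\epsilon_4$. Second, by Claim \ref{Q2},
\begin{align*}
\int \bbE_{\tau}[\mathds{1}_K|Q_{\infty}^{\tau}](x,\omega,k)\,d\hat{m}^{\tau}(x,\omega,k) = \hat{m}^{\tau}(K) > 1-Q\epsilon_4.
\end{align*}
Therefore there is a compact subset $L'$ of the conull set where Lemma \ref{LLJ} holds with $\varphi=\mathds{1}_K$, with $\hat{m}^{\tau}(L')>1-\sqrt{Q\epsilon_4}$, such that $\bbE_{\tau}[\mathds{1}_K|Q_{\infty}^{\tau}](x,\omega,k)>1-\sqrt{Q\epsilon_4}$ on $L'$. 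Applying the Martingale convergence theorem to Lemma \ref{LLJ},
\begin{align*}
\bbE_{\tau}[\mathds{1}_K|Q_{\infty}^{\tau}](x,\omega,k) = \lim_{\ell\to\infty}\int_{\Omega^+}\mathds{1}_K(F_{tc}^{\ell}(x,(\omega^-,\omega^+),k))\,d\mu^{\bbN}(\omega^+),
\end{align*}
and Egorov's theorem (at the cost of an arbitrarily small further loss to $L'$) makes this convergence uniform, giving $\ell_0$ such that for all $\ell\geq\ell_0$ and all $(x,\omega,k)\in L'$,
\begin{align*}
\int_{\Omega^+}\mathds{1}_K(F_{tc}^{\ell}(x,(\omega^-,\omega^+),k))\,d\mu^{\bbN}(\omega^+)\geq 1-\sqrt{Q\epsilon_4},
\end{align*}
which is exactly the asserted inequality \eqref{V'2}. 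Since our points $(\overline{x},\overline{\omega}_x,k_x)$, $(\overline{y},\overline{\omega}_y,k_y)$ landed in $K_3\subset L'$ by Lemma \ref{landing2}, the conclusion applies to each of them separately.

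I do not expect any genuine obstacle here: the lemma is a transcription of Lemma \ref{48}, and every input it relies on (the conull set from Lemma \ref{LLJ}, the Radon--Nikodym control from Lemma \ref{RN}, and the landing statement Lemma \ref{landing2}) has already been established in this setting. The one point worth a sentence of care is checking that the values of $\epsilon_4$ and $Q$ were fixed at the outset (they were, in Section \ref{sec:lusin} / Section \ref{sec:lusin2}, so that $\sqrt{Q\epsilon_4}$ is as small as needed), and that the constant $r$ coming from Lemma \ref{omega+} — which modified the definition of $\ell$ in \eqref{ellrel} relative to \eqref{ellrel1} — plays no role whatsoever in this step, since the martingale argument only uses that $\ell\to\infty$ along the relevant choices, which is guaranteed by taking $\rho\to 0$ as in Section \ref{sec:conc1}.
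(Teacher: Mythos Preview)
Your proposal is correct and takes exactly the same approach as the paper: the paper's proof of Lemma~\ref{482} is literally ``See the proof of Lemma~\ref{48},'' and you have faithfully reproduced that argument (Claims~\ref{Q1} and~\ref{Q2}, Lemma~\ref{LLJ}, martingale convergence, Egorov) together with the observation that landing in $K_3$ via Lemma~\ref{landing2} is what makes the inputs available. Your additional remarks about $r$ and the choice of $\epsilon_4,Q$ being fixed in advance are correct and harmless context.
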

\begin{proof}
    See the proof of Lemma \ref{48}.
\end{proof}

So there is a large measure set of appropriate new futures to choose from. Run this argument for each of the points, $(\overline{x},\overline{\omega}_x,k_x)$ and $(\overline{y},\overline{\omega}_y,k_y)$ (separately).

\subsubsection{Conclusion}
\label{sec:conc2}

From Section \ref{sec:MCT2}, Lemma \ref{482}, for each choice of $T$ and $m$ fixed (where $m$ is chosen in an interval determined by $T$) we are given $V_{T,m}$ ($\ell$ is determined and we assume it is greater than $\ell_0$ by taking $T$ large), a positive measure subset of $\Omega^+$ such that we can choose $(\omega')^+$ from.

 Additionally, in Section \ref{sec:green2} we landed in a set $\tilde{K}_2$ where the points had a large measure set of $(\omega')^+$ to choose from such that if we change our $\Omega$-component to $(\omega')^+$ we belong to the set $K_2$. This set $K_2$ had properties we needed to make arguments in Section \ref{sec:green}. We define $V = V_{(x,\sigma^{n'}(\tilde{\omega}^-))} \cap V_{(y,\sigma^{n'}(\tilde{\omega}^-))}$ as the set of good $(\omega')^+$. Now let $V' = V_{T,m}\cap V $. The parameter $Q$ is fixed at the beginning and $\epsilon_4$, and $\epsilon_2$ were chosen (at the beginning) such that $V'$ has positive measure. We have one last restriction on $\omega'$ that comes from Lemma \ref{omega+}.

Take $\ell\to \infty$, more precisely, I want the legs of the V to go to infinity. To achieve this, we take $\rho\to 0$, in turn, $T_{big}$ gets bigger, and since $\theta_1$ and $\theta_2$ are fixed, even though $T$ increases, we see that the proportion between $T$ and $T_{big}$ decreases. This implies that the amount flowing by $T$ separates the unstable component (the distance between $\tilde{y}$ and $\tilde{z}$) and the stable distance (the distance between $\tilde{x}$ and $\tilde{z}$) goes to zero. Together this constitutes the distance between the unstable planes going to zero. The parameter $m$ grows as $\rho \to 0$ but does not prevent the error between the unstable planes from going to zero. We also have that $d_1(m_1)>(1+\eta)d_1(m_2)$ and $d_2(m_1)>(1+\eta)d_2(m_2)$ as in Lemma \ref{dd} so that we can make an argument for our points along the way to stay in good sets as in Lemma \ref{landing}. Being in a good set after flowing by $d_1$ or $d_2$ allows us to use Lemma \ref{482} to get the set $V_{T,m}$ in the previous paragraph.  As $\ell$ goes to infinity, the image of the supports land in the same plane as the stable components for $\omega'$ (which is changing with $\ell$) shrink. We end up with two support points in one unstable manifold. However, these limit points by construction belong to the compact set $K_{00}\subset \calW$. This means $\calW$ must have positive measure. This is a contradiction to Lemma \ref{0.6}, so we have proven Proposition \ref{finite}.

\appendix

\section{A K3 surface with non-elementary actions but no parabolic elements}

\label{sec:example}

\phantom{\subsection{ss}}

We now demonstrate the existence of surfaces that fall outside of the results of \cite{CDinv}. Consider an algebraic K3 surface over $\bbC$, denoted $X$, and consider the N\'eron-Severi group:
$$\ns(X) \defeq H^{1,1}(X;\bbR)\cap H^2(X;\bbZ).$$
In \cite[Remark 2.4]{Csurvey}, is it shown that because $X$ is projective, $\langle |\rangle$ restricts to a non-degenerate quadratic form on $\ns(X)$. This makes $\ns(X)$ an even lattice of signature $(1,\rho-1)$ where $\rho$ is the Picard number (see \cites{Mo, ClD, Csurvey}). 

Let $O^+(NS(X)) = O(NS(X))\cap O^+(NS(X)\otimes \bbR)$ where $NS(X)\otimes \bbR$ is with its induced quadratic form. Consider $$\Delta \defeq \{\delta \in NS(X) : q_X(\delta) = -2 \}.$$
We call $\Delta$ the set of roots. For each root $\delta$ we get a reflection map $s_{\delta} \in O^+(NS(X))$
$$s_{\delta} : x \mapsto \langle x|\delta \rangle \delta . $$

We define the Weyl group $$W(X) \defeq \{s_{\delta} : q_X(\delta) = -2 \}\subset O^+(NS(X)) \subset O(NS(X)),$$ this is a normal subgroup of $O^+(NS(X))$, see \cite[Chapter 8.2]{Huy}.

Below we abstractly construct $X$ with the properties we desire.
This relies heavily on the work of \cite{N} and \cite{Mo} which we will state.

\begin{theorem}
    
    There exist K3 surfaces $X$ carrying non-elementary actions such that $\aut(X)$ contains no parabolic elements and there is no algebraic invariant curve.

\end{theorem}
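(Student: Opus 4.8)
The plan is to construct the K3 surface $X$ abstractly via its N\'eron--Severi lattice, using the surjectivity of the period map together with the Torelli-type theorem for K3 surfaces, and then to arrange the lattice so that the three required properties hold simultaneously. The strategy follows the approach of Cantat--Dolgachev and builds on the work of Nikulin \cite{N} and Morrison \cite{Mo} on K3 lattices and their automorphisms.

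First I would choose an even lattice $L$ of signature $(1,\rho-1)$ which is to be realized as $\ns(X)$. The key structural input is the exact sequence relating $\aut(X)$ to the orthogonal group of the lattice: up to finite-index and finite-kernel issues, the image of $\aut(X)$ in $O^+(\ns(X))$ is the subgroup $O^+(\ns(X))$ modulo the Weyl group $W(X)$ generated by the $(-2)$-reflections (this is the content of the global Torelli theorem for K3 surfaces combined with the fact that, for suitable $X$ with no extra Hodge structure, the period constraint forces the map to be essentially an isomorphism onto this quotient). So the problem reduces to a purely lattice-theoretic one: find $L$ even of signature $(1,\rho-1)$ such that (a) $O^+(L)/W(L)$ is non-elementary, i.e. contains a non-abelian free group acting with spectral radii $>1$; (b) $O^+(L)/W(L)$ contains no parabolic (virtually unipotent non-identity) isometry; and (c) the corresponding $X$ carries no $\aut(X)$-invariant algebraic curve, which by the correspondence between curves and lattice vectors translates into: there is no nonzero class in $L$ (equivalently no ray in the nef cone, or no $(-2)$-class or class of a curve) fixed by a finite-index subgroup of $O^+(L)/W(L)$.

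For (a), I would appeal to the standard fact that for $\rho$ large enough (or for well-chosen signature, e.g. $\rho - 1 \geq 2$ hyperbolic rank), $O^+(L)$ contains hyperbolic isometries in abundance, and by a ping-pong argument two generic hyperbolic isometries with disjoint fixed-point sets on the boundary $\partial \bbH_L$ generate a non-abelian free group whose nontrivial elements are loxodromic; passing to the quotient by $W(L)$ preserves non-elementariness since $W(L)$ is normal and the quotient still acts on the hyperbolic space $\bbH_L$ discretely cocompactly or with finite covolume. For (b), the essential point is to choose $L$ so that the Weyl chamber (a fundamental domain for $W(L)$ on the nef-cone side of $\bbH_L$) has \emph{no cusps}, i.e. the nef cone of $X$ is generated by classes $v$ with $v^2 > 0$ and contains no isotropic boundary ray; an isometry of a hyperbolic lattice is parabolic iff it fixes an isotropic ray, and such a ray would have to be a cusp of the automorphism group's fundamental domain. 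Morrison and Nikulin's classification gives explicit lattices (for instance certain lattices of rank $\geq 3$ with no isotropic vectors primitively embedded in a controlled way, or lattices where $-2$-curves and isotropic classes are absent in the effective cone) for which $\aut(X)$ acts cocompactly on a convex domain in $\bbH_L$ with compact fundamental polytope; cocompactness forces no parabolics. For (c), cocompactness of the $\aut(X)$-action on (a convex subset of) $\bbH_X$, or more directly the fact that $O^+(L)/W(L)$ has no finite orbit on $\bbP(L\otimes\bbR)$ inside the relevant cone, rules out an invariant curve: an invariant algebraic curve would give an invariant class (or finite union of classes, hence an invariant subspace) contradicting non-elementariness by \cite[Lemma 2.16, 2.17]{CD} and the Furstenberg-type uniqueness of the stationary measure on the limit set.

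The main obstacle I expect is simultaneously achieving (b) and (a): non-elementariness wants the automorphism group to be ``large'' and to contain many hyperbolic elements, but ruling out parabolics requires the fundamental domain to be compact (no cusps), which is a delicate arithmetic condition on the lattice — generic hyperbolic lattices of rank $\geq 3$ \emph{do} have cusps. The resolution is to invoke the existence, due to Nikulin and Morrison, of specific anisotropic (or suitably constrained) even lattices of signature $(1,\rho-1)$ — essentially lattices arising from arithmetic groups that are cocompact, e.g. coming from quaternion algebras — which can be realized as $\ns(X)$ of a projective K3 by the surjectivity of the period map, and for which the reflection subgroup has finite index with compact quotient polytope. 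Verifying that such an $L$ genuinely occurs as a N\'eron--Severi lattice (the period/embedding condition $L \hookrightarrow \Lambda_{K3}$ with the right orthogonal complement, plus a very general choice of periods so that $\ns(X) = L$ exactly and there are no unexpected algebraic classes) is the technical heart, and is where I would spend the bulk of the argument, following the outline Serge Cantat provided.
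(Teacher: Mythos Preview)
Your proposal and the paper's proof share the same architecture: reduce to lattice theory via the global Torelli theorem (the paper states this as $\phi:\aut(X)\to O(\ns(X))/W(X)$ having finite kernel and cokernel, citing \cite{PSS}), then invoke Nikulin--Morrison (\cite[Theorem~1.14.4]{N}, \cite[Corollary~2.9]{Mo}) to realize a chosen even lattice of signature $(1,\rho-1)$ as $\ns(X)$ of some algebraic K3, and arrange three conditions on the lattice: no $(-2)$-vectors (so $W(X)$ is trivial), no isotropic vectors (so $O(\ns(X))$ contains no parabolics), and enough hyperbolic isometries. Your argument for ``no invariant algebraic curve'' also matches in spirit --- the paper uses Borel density to say the Zariski closure $SO(1,1)$ fixes no vector, which is the same mechanism as your ``no finite orbit on $\bbP(\ns(X)\otimes\bbR)$''.

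The substantive difference is concreteness and rank. Instead of cocompact arithmetic lattices coming from quaternion algebras, the paper simply writes down a rank-$2$ form $q_X=7(x^2-2y^2)$ on $\bbZ^2$: anisotropy over $\bbQ$ is immediate from the irrationality of $\sqrt{2}$, the factor of $7$ kills all $(-2)$-classes, and hyperbolic isometries come from the Pell units of $\bbZ[\sqrt{2}]$. This bypasses all the machinery you anticipate (quaternion constructions, compact fundamental polytopes, delicate embedding checks --- the Morrison corollary handles every $\rho\le 10$ in one line). On the other hand, your instinct to work in $\rho\ge 3$ is not idle: for an anisotropic binary form the integral orthogonal group is virtually infinite cyclic, so the passage from ``two distinct hyperbolic elements'' to ``contains a non-abelian free group'' is genuinely delicate in rank~$2$ and automatic in rank~$\ge 3$, where $O^+(L)$ is a cocompact lattice in $O(1,\rho-1)$ and non-elementariness is immediate. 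Your route is less explicit but makes that step transparent.
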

\begin{proof}

First we state a result of \cite[Section 7]{PSS} (see also \cite[Chapter 15, Theorem 2.6]{Huy}); it relies heavily on the global Torelli theorem.

\begin{proposition}[\cite{PSS}, see also \cite{Huy}, Chapter 15, Theorem 2.6]\label{PS}
    The natural map $$\phi:\aut(X) \to O(NS(X))/W(X),$$ given by $f\mapsto f^*$ has finite kernel and cokernel.
\end{proposition}

Now we note a corollary of \cite[Theorem 1.14.4]{N}  which can be found in \cite[Corollary 2.9]{Mo}. It tells us that we can realize certain even lattices with small signature as N\'eron-Severi groups of algebraic K3s.

\begin{proposition}[\cite{N} Theorem 1.14.4, see also \cite{Mo} Corollary 2.9]
    For $\rho\leq 10$, every even lattice of signature $(1,\rho-1)$ occurs as the N\'eron-Severi group of an algebraic K3 surface. \label{evenlat}
\end{proposition}

In particular, we can construct an even lattice $S$ of signature $(1,1)$ and it will be the N\'eron-Severi group of a K3 surface $X$. We will construct $S$ such that the following is true:
\begin{enumerate}
    \item There are no elements of self-intersection -2,
    \item There are no null vectors,
    \item There are at least 2 distinct hyperbolic elements in $O(S)$.
\end{enumerate}

Condition (1) will give us that $W(X)$ is trivial for the K3 we get when we apply Proposition \ref{evenlat}, then Proposition \ref{PS} reduces to the map 
$$\phi:\aut(X)\to O(NS(X)),$$
having finite kernel and cokernel.

Condition (2) gives us that there are no parabolic elements inside $O(NS(X))$ because parabolic elements fix a null vector. Combined with the fact that the map $$\phi:\aut(X)\to O(NS(X))$$ has finite kernel we get that $\aut(X)$ has no parabolic elements. This is because if $f\in \aut(X)$ is parabolic, then for some $k$, $(f^*)^k$ acting on $H^{1,1}(X;\bbR)$ is unipotent, i.e. has only 1s as eigenvalues. This should be true for the action of $(f^*)^k$ on $\ns(X)$ too (making $f^*$ a parabolic element of $O(NS(X))$), the only concern would be if the action of $f^*$ on $\ns(X)$ is the identity (in which case it is not a parabolic element of $O(NS(X))$). Since the kernel of $\phi:\aut(X)\to O(NS(X))$ is finite it contains no infinite order elements, such as parabolic elements of $\aut(X)$. This means that parabolic elements $f\in \aut(X)$ map to  parabolic elements $f^*\in O(NS(X))$. Since there are no parabolic elements in $O(NS(X))$ we must have that $\aut(X)$ contains no parabolic elements.

Condition (3) combined with the fact that $\phi:\aut(X)\to O(NS(X))$ has finite cokernel (i.e. $O(NS(X))/Im(\phi)$ is finite) gives us that $\aut(X)$ is non-elementary. This is because the two distinct hyperbolic elements in $O(S) = O(NS(X))$ (which are infinite order) must be trivial in the cokernel and hence belong to $Im(\phi)$. Additionally, they will still have an eigenvector with eigenvalue $\lambda >1$ when viewed acting on $H^{1,1}(X;\bbR)$. We get that $\aut(X)$ has at least two sufficiently distinct hyperbolic elements that give us a non-elementary subgroup in $\aut(X)$.

We can take $S=\bbZ^2$, $q_X' = x^2-2y^2$; this does not represent zero because $\pm\sqrt{2}$ is not rational and it has signature $(1,1)$. We now take $q_X'$ as above and let $q_X = 7q_X'$, this gives a quadratic form that does not represent -2, hence condition (1) is satisfied. Condition (3) is also satisfied.

  Making sure that there is no invariant algebraic curve relies on the fact that we guaranteed that there are no elements with self-intersection -2, i.e. condition 1. To reiterate, condition (1) gives us that $\aut(X)\to O(NS(X))$ has finite kernel and cokernel by Proposition \ref{PS}. However, $O(NS(X))$ (and any finite index subgroup) is Zariski dense in $O(NS(X)\otimes \bbR)\cong SO(1,1)$ by the Borel density theorem. Hence, if $\aut(X)$ fixes a curve $C\subset X$, then $SO(1,1)$ fixes $[C]$ which cannot happen as this group does not fix any vector. This completes the proof.
\end{proof}

 \begin{bibdiv}
\begin{biblist}

\bib{ASV}{article}{
  title={Holonomy invariance: rough regularity and applications to Lyapunov exponents},
  author={Avila, Artur},
  author={Santamaria, Jimmy},
  author={Viana, Marcelo},
  journal={Ast{\'e}risque},
  volume={358},
  pages={13--74},
  year={2013}
}

\bib{AV}{article}{
  title={Extremal Lyapunov exponents: an invariance principle and applications},
  author={Avila, Artur},
  author={Viana, Marcelo},
  journal={Inventiones mathematicae},
  volume={181},
  number={1},
  pages={115--178},
  year={2010},
  publisher={Springer}
}

\bib{BP}{book}{
      author={Barreira, Luis},
      author={Pesin, Yakov},
       title={Nonuniform hyperbolicity: Dynamics of systems with nonzero Lyapunov exponents},
      series={Encyclopedia of Mathematics and its Applications},
   publisher={Cambridge University Press},
     address={Cambridge},
        date={2007},
      volume={115},
        ISBN={978-0-521-83258-8; 0-521-83258-6},
      }

    \bib{BQ}{article}{
  title={Mesures stationnaires et ferm{\'e}s invariants des espaces homog{\`e}nes},
  author={Benoist, Yves},
  author={Quint, Jean-Fran{\c{c}}ois},
  journal={Annals of mathematics},
  pages={1111--1162},
  year={2011},
  publisher={JSTOR}
}  
\bib{BQ2}{book}{
  title={Random walks on reductive groups},
  author={Benoist, Yves},
  author={Quint, Jean-Fran{\c{c}}ois},
  year={2016},
  publisher={Springer}
}

\bib{BEF}{article}{
  title={TBD},
  author={Brown, Aaron W},
  author={Eskin, Alex},
  author={Filip,Simion},
  author={Rodriguez Hertz, Frederico},
  journal={Forthcoming},
}

\bib{BRH}{article}{
  title={Measure rigidity for random dynamics on surfaces with positive entropy},
  author={Brown, Aaron W},
  author={Rodriguez Hertz, Federico},
  journal={Journal of the American Mathematical Society},
  volume={30},
  number={4},
  pages={1055–1132},
  year={2017}
}

\bib{Brun}{article}{
  title={Courbes enti{\`e}res et feuilletages holomorphes},
  author={Brunella, Marco},
  journal={Enseign. Math.(2)},
  volume={45},
  pages={195--216},
  year={1999}
}

\bib{C}{article}{
  title={Dynamique des automorphismes des surfaces projectives complexes},
  author={Cantat, Serge},
  journal={Comptes Rendus de l'Acad{\'e}mie des Sciences-Series I-Mathematics},
  volume={328},
  number={10},
  pages={901--906},
  year={1999},
  publisher={Elsevier}
}
\bib{Csurvey}{article}{
  title={Dynamics of automorphisms of compact complex surfaces},
  author={Cantat, Serge},
  booktitle={Frontiers in complex dynamics},
  year={2014}
}

\bib{CD}{article}{
  title={Random dynamics on real and complex projective surfaces},
  author={Cantat, Serge},
  author={Dujardin, Romain},
  journal={arXiv preprint arXiv:2006.04394},
  year={2020}
}

\bib{CDpara}{article}{
  title={Hyperbolicity for large automorphism groups of projective surfaces},
  author={Cantat, Serge},
  author={Dujardin, Romain},
  journal={arXiv preprint arXiv:2211.02418},
  year={2022}
}
\bib{CDsurvey}{article}{
  title={Dynamics of automorphism groups of projective surfaces: classification, examples and outlook},
  author={Cantat, Serge},
    author={Dujardin, Romain},
  journal={arXiv preprint arXiv:2310.01303},
  year={2023}
}
\bib{CDinv}{article}{
  title={Invariant measures for large automorphism groups of projective surfaces},
  author={Cantat, Serge},
  author={Dujardin, Romain},
  journal={Transformation Groups},
  pages={1--71},
  year={2023},
  publisher={Springer}
}

 \bib{ClD}{article}{
  title={Lattice polarized K3 surfaces and Siegel modular forms},
  author={Clingher, Adrian},
  author={Doran, Charles F},
  journal={Advances in Mathematics},
  volume={231},
  number={1},
  pages={172--212},
  year={2012},
  publisher={Elsevier}
}

\bib{E}{article}{
  title={Unipotent flows and applications},
  author={Eskin, Alex},
  journal={Homogeneous flows, moduli spaces and arithmetic},
  volume={10},
  pages={71--129},
  year={2010}
}

\bib{EM}{article}{
  title={Invariant and stationary measures for the SL2R action on moduli space},
  author={Eskin, Alex},
  author={Mirzakhani, Maryam},
  journal={Publications math{\'e}matiques de l'IH{\'E}S},
  volume={127},
  pages={95--324},
  year={2018},
  publisher={Springer}
}

\bib{FM}{article}{
  title={Almost isometric actions, property (T), and local rigidity},
  author={Fisher, David},
  author={Margulis, Gregory},
  journal={Inventiones mathematicae},
  volume={162},
  number={1},
  pages={19--80},
  year={2005},
  publisher={Springer}
}

\bib{G1}{article}{
  title={Entropy, homology and semialgebraic geometry},
  author={Gromov, Mikhail},
  journal={Ast{\'e}risque},
  volume={145},
  number={146},
  pages={225--240},
  year={1987}
}
\bib{G2}{article}{
  title={On the entropy of holomorphic maps},
  author={Gromov, Mikhail},
  journal={Enseign. Math},
  volume={49},
  number={3-4},
  pages={217--235},
  year={2003}
}

\bib{Huy}{book}{
  title={Lectures on K3 surfaces},
  author={Huybrechts, Daniel},
  volume={158},
  year={2016},
  publisher={Cambridge University Press}
}

\bib{KK}{article}{
  title={Invariant measures for actions of higher rank abelian groups},
  author={Kalinin, Boris},
  author={Katok, Anatole},
    title={Proceedings of Symposia in Pure Mathematics},
  volume={69},
  pages={593--638},
  year={2001},
  organization={Providence, RI; American Mathematical Society; 1998}
}

\bib{KK2}{article}{
  title={Measure rigidity beyond uniform hyperbolicity: Invariant Measures for Cartan actions on Tori},
  author={Kalinin, Boris},
  author={Katok, Anatole},
  journal={Journal of Modern Dynamics},
  year={2007},
  volume = {1},
number = {1},
  pages={123--146}
}

\bib{KS}{article}{
      author={Kalinin, Boris},
      author={Sadovskaya, Victoria},
       title={Normal forms for non-uniform contractions},
        year={2017},
        ISSN={1930-5311},
     journal={J. Mod. Dyn.},
      volume={11},
       pages={341\ndash 368},
  url={https://doi-org.turing.library.northwestern.edu/10.3934/jmd.2017014}
      }

\bib{K}{book}{
  title={Ergodic theory of random transformations},
  author={Kifer, Yuri},
  volume={10},
  year={2012},
  publisher={Springer Science \& Business Media}
}

\bib{LQ}{book}{
  title={Smooth ergodic theory of random dynamical systems},
  author={Liu, Pei-Dong},
  author={Qian, Min},
  year={2006},
  publisher={Springer}
}

\bib{Led84}{article}{
  title={Propri{\'e}t{\'e}s ergodiques des mesures de Sina{\"\i}},
  author={Ledrappier, Fran{\c{c}}ois},
  journal={Publications Math{\'e}matiques de l'IH{\'E}S},
  volume={59},
  pages={163--188},
  year={1984}
}

\bib{Led}{article}{
  title={Positivity of the exponent for stationary sequences of matrices},
  author={Ledrappier, Fran{\c{c}}ois},
  title={Lyapunov Exponents: Proceedings of a Workshop held in Bremen, November 12--15, 1984},
  pages={56--73},
  year={2006},
  organization={Springer}
}
\bib{LY}{article}{
  title={The metric entropy of diffeomorphisms: Part I: Characterization of measures satisfying Pesin's entropy formula},
  author={Ledrappier, Fran{\c{c}}ois},
  author={Young, Lai-Sang},
  journal={Annals of Mathematics},
  pages={509--539},
  year={1985},
  publisher={JSTOR}
}
\bib{LY2}{article}{
  title={The metric entropy of diffeomorphisms: part II: relations between entropy, exponents and dimension},
  author={Ledrappier, Fran{\c{c}}ois},
  author={Young, Lai-Sang},
  journal={Annals of Mathematics},
  volume={122},
  number={3},
  pages={540--574},
  year={1985},
  publisher={JSTOR}
}
\bib{Les}{article}{
  title={Tri-Coble surfaces and their automorphisms},
  author={Lesieutre, John},
  journal={arXiv preprint arXiv:2003.01799},
  year={2020}
}

\bib{MT}{article}{
  title={Invariant measures for actions of unipotent groups over local fields on homogeneous spaces},
  author={Margulis, Gregori Aleksandrovitch},
  author={Tomanov, Georges Metodiev},
  journal={Inventiones mathematicae},
  volume={116},
  number={1},
  pages={347--392},
  year={1994},
  publisher={Springer}
}

\bib{Mo}{article}{
  title={On K3 surfaces with Large Picard number},
  author={Morrison, D.R},
  journal={Invent. math},
  volume={75},
  pages={105--121},
  year={1984}
}

\bib{N}{article}{
  title={Integral symmetric bilinear forms and some of their applications},
  author={Nikulin, Viacheslav V},
  journal={Mathematics of the USSR-Izvestiya},
  volume={14},
  number={1},
  pages={103},
  year={1980},
  publisher={IOP Publishing}
}

\bib{P1}{article}{
  title={Families of invariant manifolds corresponding to nonzero characteristic exponents},
  author={Pesin, Yakov B},
  journal={Izvestiya: Mathematics},
  volume={10},
  number={6},
  pages={1261--1305},
  year={1976},
  publisher={Turpion Ltd}
}
\bib{P2}{article}{
  title={Characteristic Lyapunov exponents and smooth ergodic theory},
  author={Pesin, Yakov B},
  journal={Russian Mathematical Surveys},
  volume={32},
  number={4},
  pages={55},
  year={1977},
  publisher={IOP Publishing}
}

\bib{PSS}{article}{
  title={A Torelli theorem for algebraic surfaces of type K3},
  author={Pjatecki\u{i}-\v{S}apiro, Ilya I},
  author={\v{S}afarevi\v{c}, Igor R},
  journal={Mathematics of the USSR-Izvestiya},
  volume={5},
  number={3},
  pages={547},
  year={1971},
  publisher={IOP Publishing}
}
 \bib{Ru}{article}{
      author={Ruelle, David},
       title={Ergodic theory of differentiable dynamical systems},
        date={1979},
        ISSN={0073-8301},
     journal={Inst. Hautes \'Etudes Sci. Publ. Math.},
      number={50},
       pages={27\ndash 58},
         url={http://www.numdam.org/item?id=PMIHES_1979__50__27_0}
}

\bib{Yom}{article}{
  title={Volume growth and entropy},
  author={Yomdin, Yosef},
  journal={Israel Journal of Mathematics},
  volume={57},
  pages={285--300},
  year={1987},
  publisher={Springer}
}

 \end{biblist}
 \end{bibdiv}

 \end{document}